\newcommand*\numcirc[1]{\tikz[baseline=(char.base)]{
            \node[shape=circle,draw,inner sep=2pt] (char) {#1};}}
\newcommand{\obsolete}[1]{}
\newcommand{\comment}[1]{}
\title{Six-Functor-Formalisms on Higher Stacks}
\date{April 6, 2022}
\author{Fritz H\"ormann\\ Mathematisches Institut, Albert-Ludwigs-Universit\"at Freiburg}
\newtheorem{SATZ}{Theorem}[section]
\newtheorem{HAUPTSATZ}[SATZ]{Main theorem}
\newtheorem{LEMMA}[SATZ]{Lemma}
\newtheorem{KEYLEMMA}[SATZ]{Key Lemma}
\newtheorem{DEF}[SATZ]{Definition}
\newtheorem{MC}[SATZ]{Main Construction}
\newtheorem{PROP}[SATZ]{Proposition}
\newtheorem{DEFLEMMA}[SATZ]{Definition/Lemma}
\newtheorem{BEISPIEL}[SATZ]{Example}
\newtheorem{KOR}[SATZ]{Corollary}
\newtheorem{BEM}[SATZ]{Remark}
\newtheoremstyle{bare}        
  {}            
  {}            
  {\normalfont}                 
  {}                            
  {\bfseries}                   
  {}                            
  {.0em}                           
  {\thmnumber{#2}#1. \thmnote{\normalfont\textsc{(#3)}} } 
\theoremstyle{bare}
\newtheorem{PAR}[SATZ]{}
\newcommand{\iso}{\stackrel{\sim}{\longrightarrow}}
\newcommand{\N}{ \mathbb{N} }
\newcommand{\HH}{ \mathbb{H} }
\newcommand{\DD}{ \mathbb{D} }
\newcommand{\PP}{ \mathbb{P} }
\newcommand{\SSS}{ \mathbb{S} }
\DeclareMathOperator{\CechDia}{\check{C}}
\DeclareMathOperator{\Cech}{\check{C}ech}
\DeclareMathOperator{\new}{new}
\DeclareMathOperator{\old}{old}
\DeclareMathOperator{\dia}{Dia}
\DeclareMathOperator{\Flip}{Flip}
\DeclareMathOperator{\Fib}{Fib}
\DeclareMathOperator{\Mor}{Mor}
\DeclareMathOperator{\Cat}{Cat}
\DeclareMathOperator{\id}{id}
\DeclareMathOperator{\Hom}{Hom}
\DeclareMathOperator{\Dia}{Dia}
\DeclareMathOperator{\cor}{cor}
\DeclareMathOperator{\loc}{loc}
\DeclareMathOperator{\lax}{lax}
\DeclareMathOperator{\oplax}{oplax}
\DeclareMathOperator{\cov}{cov}
\DeclareMathOperator{\comp}{comp}
\DeclareMathOperator{\hcor}{hcor}
\DeclareMathOperator{\holim}{holim}
\DeclareMathOperator{\hocolim}{hocolim}
\DeclareMathOperator{\op}{op}
\DeclareMathOperator{\cart}{cart}
\DeclareMathOperator{\cocart}{cocart}
\DeclareMathOperator{\pr}{pr}
\DeclareMathOperator{\Fun}{Fun}
\DeclareMathOperator{\Cor}{Cor}
\DeclareMathOperator{\Dir}{Dir}
\DeclareMathOperator{\Inv}{Inv}
\DeclareMathOperator{\Catf}{Catf}
\DeclareMathOperator{\Catlf}{Catlf}
\DeclareMathOperator{\Dirf}{Dirf}
\DeclareMathOperator{\Invf}{Invf}
\DeclareMathOperator{\Pos}{Pos}
\DeclareMathOperator{\Posf}{Posf}
\DeclareMathOperator{\Dirpos}{Dirpos}
\DeclareMathOperator{\Invpos}{Invpos}
\DeclareMathOperator{\Dirlf}{Dirlf}
\DeclareMathOperator{\Invlf}{Invlf}
\newcommand{\tm}[1]{ {{}^{\downarrow \downarrow} #1 }} 
\newcommand{\tw}[1]{ {{}^{\downarrow \uparrow} #1 }} 
\newcommand{\twwc}[1]{ {{}^{\downarrow \uparrow \uparrow \downarrow} #1 }} 
\newcommand{\twtw}[1]{ {{}^{\downarrow \downarrow \uparrow\uparrow} #1 }}
\begin{document}

\maketitle

{\footnotesize  {\em 2020 Mathematics Subject Classification:} 55U35, 14C15, 14F08, 18N40, 18G80 }

{\footnotesize  {\em Keywords:} derivator six-functor-formalisms, stable homotopy categories, higher geometric stacks }

\section*{Abstract}

In this article, it is shown that derivator six-functor-formalisms on any (classical) site canonically extend to higher geometric stacks as defined by To\"en-Vezzosi under some
natural locality conditions. As an application, it is shown that the six-functor-formalisms of Morel-Voevodsky-Ayoub extend to higher (Nisnevich\nobreakdash-)Artin stacks locally of finite type over some fixed base scheme.

\tableofcontents

\section{Introduction}

This  is the third article in a series of three \cite{Hor16, Hor17} concerning {\em derivator six-functor-formalisms}. Six-functor-formalisms were first introduced by Grothendieck, Verdier and Deligne \cite{Ver77, SGAIV1, SGAIV2, SGAIV3} and there 
has been increasing interest in them in various contexts in the last two decades \cite{FHM, Ayo07I, Ayo07II, LO08I, LO08II, CD19, LH09, Ayo14, ZL14, ZL14b, Zh10, BS15, Sch15, GR16, Sch17, Hoy17, Dre18, AGV20, DG20, Cho21, KR21, GHW22}.
When the author started this project quite a long time ago, it was not completely clear in which language such a theory should be developed. This work --- most notably influenced by the work of Ayoub \cite{Ayo07I, Ayo07II} and Cisinski \cite{Cis03, Cis04, Cis08} --- is written in the language of {\em derivators}.
Recently, many similar constructions have been made in the language of $\infty$-categories, and this is certainly the more modern way. Nevertheless, the author hopes that the language of derivators, which has definitely its own elegance, will not completely fade into obscurity. 

Recall that a symmetric {\em six-functor-formalism} is defined on a base category $\mathcal{S}$ with finite limits and specifies a (usually derived) category $\mathcal{D}_S$ for each object in $\mathcal{S}$, adjoint pairs of functors
\vspace{0.1cm}
\[
\begin{array}{lcrp{1cm}l}
f^* & &f_* &  & \text{ for each $f$ in $\Mor(\mathcal{S})$}  \\
f_! & &f^! & & \text{ for each $f$ in $\Mor(\mathcal{S})$} \\
\otimes & & \mathcal{HOM} & & \text{ in each fiber $\mathcal{D}_S$}
\end{array}
\]

and isomorphisms between the left adjoints (corresponding isomorphisms between the right adjoints follow formally --- see Proposition~\ref{PROPCONSEQUENCES}):
\begin{equation}\label{eqiso} \text{
\begin{tabular}{r|lc}
& isomorphisms \\ 
& between left adjoints \\
\hline
$(*,*)$ & $(fg)^* \iso g^* f^*$ & for composable morphisms $f, g$ \\
$(!,!)$ & $(fg)_! \iso f_! g_!$  &  ---''---   \\ 
$(!,*)$ & $g^* f_! \iso F_! G^*$ & where the morphisms $f, g, F, G$  \\
$(\otimes,*)$ & $f^*(- \otimes -) \iso f^*- \otimes f^* -$ &  form a Cartesian square \\
$(\otimes,!)$ & $f_!(- \otimes f^* -) \iso  (f_! -) \otimes -$  \\ 
$(\otimes, \otimes)$ &  $(- \otimes -) \otimes - \iso - \otimes (- \otimes -)$  
\end{tabular}}
\end{equation} 
as well as distinguished isomorphisms $f^! \cong f^*$ for isomorphisms $f$. 
For a detailed introduction to classical six-functor-formalisms the reader is referred to \cite{Hor16}. 
Without the tensor product we speak of a {\em $*,!$-formalism}. If $\mathcal{S}$ is small\footnote{We will always make this assumption. Of course, one can work with universes, and has to make sure that the universe used to define diagrams, pre-sheaves, the coproduct completion, etc.\@ is large enough such that $\mathrm{Ob}(\mathcal{S})$ is contained.}, and equipped with a Grothendieck topology, consider 
the model category of simplicial pre-sheaves $\mathcal{M} := \mathcal{SET}^{\mathcal{S}^{\op} \times \Delta^{\op}}$. It has a left Bousfield localization $\mathcal{M}_{\loc}$ at all \v{C}ech hypercovers. 
The objective of this article is to find necessary conditions under which the six-functor-formalism extends to $\mathcal{M}_{\loc}$ (or to a suitable subcategory) such that for $X \in \mathcal{M}_{\loc}$ the category 
$\mathcal{D}_X$ does  --- up to equivalence --- only depend on the weak equivalence class of $X$, and to construct isomorphisms between the six functors analogous to (\ref{eqiso}). It was explained in \cite{Hor21c} that, at least if there is a derivator enhancement of the six-functor-formalism
with stable and well-generated fibers, there are two potential ways of defining $\mathcal{D}_X$, one using cohomological descent for the $*$-functors, and one using homological descent for the $!$-functors. 
Informally, those categories are defined as follows: Since $\mathcal{S}$ is small, one can represent $X$ by a simplicial object $\widetilde{X}_\bullet \in \mathcal{S}^{\amalg, \Delta^{\op}}$. Then define
\begin{eqnarray} 
 \mathcal{D}_X^! &:=& \{ ( (\mathcal{E}_n)_n, (\mathcal{E}_m \cong \widetilde{X}(\alpha)^! \mathcal{E}_n)_{\alpha: \Delta_n \rightarrow \Delta_m}) \ | \   \mathcal{E}_n \in \mathcal{D}_{\widetilde{X}_n} \},   \label{eqcocartnaive1} \\
 \mathcal{D}_X^* &:=& \{ ( (\mathcal{E}_n)_n, (\widetilde{X}(\alpha)^* \mathcal{E}_n \cong \mathcal{E}_m)_{\alpha: \Delta_n \rightarrow \Delta_m}) \ | \   \mathcal{E}_n \in \mathcal{D}_{\widetilde{X}_n} \}.  \label{eqcocartnaive2}  
\end{eqnarray}
Of course, these definitions have to be understood in the sense of fibered derivators, i.e.\@ as {\em coherent} diagrams of shape $\Delta^{\op}$ (resp.\@ $\Delta$) which are Cartesian (resp.\@ coCartesian). For simplicity, we ignore the question of coherence for the moment, emphasizing however that it is precisely this problem that makes the objective of this article rather difficult and technically demanding.  

Fix a pre-topology on $\mathcal{S}$. 
There are three sufficient conditions for cohomological descent, namely for each covering $\{ f_i: U_i \rightarrow S \}$:
\begin{enumerate}
\item[(C1)]  $f_i^*$, as morphism of derivators, commutes with homotopy limits for all $i$;\footnote{If $\DD^*$ has well-generated but not compactly generated fibers then (C1) has to be replaced by a stronger axiom (C1'')}
\item[(C2)]  For each $i$ and each Cartesian square
\[ \xymatrix{ \ar[r]^{F_i} \ar[d]_G & \ar[d]^g \\ \ar[r]_{f_i} &  }\]
the base change morphism $f_{i}^* g_* \rightarrow G_* F_i^*$ is an isomorphism;
\item[(C3)]  The $f_i^*$ are jointly conservative. 
\end{enumerate}
Similarly there are three sufficient conditions for homological descent:
\begin{enumerate}
\item[(H1)]  $f_i^!$, as morphism of derivators, commutes with homotopy colimits for all $i$;
\item[(H2)]  For each $i$ and each Cartesian square
\[ \xymatrix{ \ar[r]^{F_i} \ar[d]_G & \ar[d]^g \\ \ar[r]_{f_i} &  }\]
the base change morphism $G_! F_i^! \rightarrow f_{i}^! g_! $ is an isomorphism;
\item[(H3)]  The $f_i^!$ are jointly conservative. 
\end{enumerate}

If these conditions hold, the Theorems of (co)homological descent \cite[Theorems 3.5.4.--3.5.5]{Hor15} combined with \cite[Main~Theorem~6.9]{Hor21c} imply that $\mathcal{D}_X^*$, and $\mathcal{D}_X^!$, respectively, depend only on the weak equivalence class of $X$ in $\mathcal{M}_{\loc}$. 
Furthermore for each morphism of stacks $X \rightarrow Y$ there are adjunctions
\[ \xymatrix{  \mathcal{D}_X^* \ar@/^10pt/[r]^{f_*} & \ar@/^10pt/[l]^{f^*}  \mathcal{D}_Y^* } \quad\text{resp.\@} \quad \xymatrix{  \mathcal{D}_Y^! \ar@/^10pt/[r]^{f^!} & \ar@/^10pt/[l]^{f_!}  \mathcal{D}_X^!. } \]
in which the functors $f^*$ and $f^!$ are induced by the homonymous functors in the given six-functor-formalism. However, their given adjoints $f_!$, $f_*$ do not preserve the (co)Cartesianity conditions in general and thus have to be composed with a (co)Cartesian projector to yield the above adjoints on $\mathcal{D}_X^!$, and $\mathcal{D}_X^*$, respectively. This is not constructive\footnote{not knowing the generators explicitly...}; the existence of these projectors is implied by the theory of well-generated triangulated categories. 
So far the interplay of the $!$-functors and the $*$-functors, i.e.\@ the base-change formula, did not play any role. 
Consider the following additional axiom: 
\begin{enumerate}
\item[(CH)]  For each Cartesian square
\[ \xymatrix{ X \ar[r]^{F} \ar[d]_G & X' \ar[d]^g \\ \prod_i Y_i \ar[r]_{\prod_i f_i} & \prod_i Y_i'  }\]
in which the $f_i: Y_i \rightarrow Y_i'$, $i=1, \dots, k$ \footnote{For a $*,!$-formalism we understand $k=1$ in axiom (CH) otherwise the statement does not make sense.} are part of a covering of $Y_i'$ for all $i$, 
the exchange $G_1^*f_1^!- \otimes \cdots \otimes G_k^* f_k^!- \rightarrow F^! (g^*_1 - \otimes \cdots \otimes g_k^* -)$ and for $k=1$ the exchange $F^*g^! \rightarrow G^! f^*$  (of the respective base-change)   are isomorphisms.
\end{enumerate}

The objective of this article is to show that, in the presence of the axioms (H1--H3), (C1--C3) and (CH), a given $*,!$-formalism (or a given six-functor-formalism) extends to the subcategory of $\mathcal{M}_{\loc}$ consisting of higher geometric stacks in the sense of To\"en-Vezzosi (w.r.t.\@ the class $C$ of morphisms induced by coverings in the chosen pre-topology, cf.\@ Remark~\ref{BEMTV} for a discussion). 
This means in particular: 
\begin{enumerate}
\item For a higher geometric stack $X$, the category $\mathcal{D}_X$ is equivalent to both the categories $\mathcal{D}_X^!$ and $\mathcal{D}_X^*$ described above in such a way that the $f_!$, $f^!$-functors, and the  $f_*$, $f^*$-functors agree (up to unique isomorphism) with those naturally defined on $\mathcal{D}_X^*$, and $\mathcal{D}_X^!$, respectively. 
\item For each {\em homotopy Cartesian} square of higher geometric stacks
\[ \xymatrix{ \ar[r]^{F} \ar[d]_G & \ar[d]^g \\ \ar[r]_{f} &  }\]
 we have a base-change isomorphism
\[ g^*  f_! \cong F_! G^*.  \]
\item For a weak equivalence $f: X \rightarrow Y$ between higher geometric stacks there is a canonical isomorphism $f^! \cong f^*$ and both functors are equivalences.
\item If the input was a six-functor-formalism (not just a $*,!$-formalism), then also a monoidal structure exists on $\mathcal{D}_X$ in such  a way that the $f^*$ are monoidal functors and the projection formula holds true. 
\end{enumerate}

We emphasize that the construction of the categories $\mathcal{D}_X$ and of the $*,!$-functors does not involve the tensor product. Nevertheless, assuming the existence of the tensor product, we can state stronger axioms in a more compact form which imply the conditions (C1--C3), (H1--H3) and (CH): We call a symmetric six-functor-formalism {\em strongly local} w.r.t.\@ the pre-topology, if for each covering $\{ f_i: U_i \rightarrow S \}$: 
 
 \begin{enumerate}
\item[(O1)] For each $i$ the object $f^!_i 1_S$ is tensor-invertible\footnote{i.e.\@ the functor $\mathcal{E} \mapsto \mathcal{E} \otimes (f^! 1_S)$ is an equivalence};
\item[(O2)] For each $i$ the natural morphism (exchange of the projection formula)
\[ f^!_i1_S \otimes (f_i^* -) \rightarrow  f_i^! -  \]
is an isomorphism;
 \item[(O3)] The $f^*_i$ or, equivalently\footnote{assuming (O1) and (O2)}, the $f_i^!$ are jointly conservative; 
 \item[(O4)] For each $i$ and each Cartesian square 
 \[ \xymatrix{
X' \ar[r]^{F_i} \ar[d]_G & S' \ar[d]^g \\
X \ar[r]_{f_i} & S
} \]
 the natural morphism (exchange of the base-change) 
\[  G^*f^!_i 1_S \rightarrow F^!_i g^* 1_S = F^!_i 1_{S'}  \]
is an isomorphism. 
 \end{enumerate}
It is (almost) an easy exercise (cf.\@ Proposition~\ref{PROPSTRONGLYLOCAL}) to prove that, if $F$ is strongly local, then the previous axioms (C1--C3), (H1--H3) and (CH) hold true. 
Of course, six-functor-formalisms are often strongly local, the object $f^!_i 1_S$ being the (relative) {\em dualizing object}. 

\vspace{0.2cm}
{\em Coherence of the six functors:}
\vspace{0.2cm}

Of course, to form a valid six-functor-formalism, the isomorphisms (\ref{eqiso}) have to fulfill compatibilities as, for example, the pentagon axiom, and many more. 
In \cite{Hor15, Hor15b} it was explained that using the language of (op)fibrations of 2-multicategories one can package all this information into a neat definition:
Let $\mathcal{S}^{\mathrm{cor}}$ be the symmetric 2-multicategory whose objects are the objects of $\mathcal{S}$ and in which a 1-morphism $\xi \in \Hom(S_1, \dots, S_n; T)$ is a multicorrespondence
\begin{equation}\label{excor}
 \vcenter{ \xymatrix{ 
 &&&  \ar[llld]_{g_1} A \ar[ld]^{g_n} \ar[rd]^{f} &\\
 S_1 & \cdots & S_n & ; &  T   } }
 \end{equation}
The composition of 1-morphisms is given by forming fiber products and the 2-morphisms are the isomorphisms of such multicorrespondences. 

\vspace{0.2cm}

{\bf Definition. }{\em 
A (symmetric) {\bf six-functor-formalism} on $\mathcal{S}$ is a 1-bifibration and 2-bifibration of (symmetric) 2-multicategories with 1-categorical fibers}
\[ p: \mathcal{D} \rightarrow \mathcal{S}^{\mathrm{cor}}. \]
Such a fibration can also be seen as a pseudo-functor of 2-multicategories
\[ \mathcal{S}^{\cor} \rightarrow \mathcal{CAT} \]
with the property that all multivalued functors in the image have right adjoints w.r.t.\@ all slots. 
Note that $\mathcal{CAT}$, the ``category''\footnote{having, of course, a {\em higher class} of objects} of categories, has naturally the structure of a symmetric 2-``multicategory'' where the 1-multimorphisms are functors of several variables.

The pseudo-functor maps the correspondence (\ref{excor}) to a functor isomorphic to 
\[ f_! ((g_1^* -) \otimes_A \cdots \otimes_A (g_n^* -)) \]
where $\otimes_A$, $f_!$, and $g^*_i$, are the images of  the following correspondences
\[
 \vcenter{ \xymatrix{
&&&  A \ar@{=}[rd] \ar@{=}[ld] \ar@{=}[llld] \\
A &  & A  & &  A
} } \quad
 \vcenter{ \xymatrix{
& A \ar[rd]^f \ar@{=}[ld]  \\
A   & &  T
} } \quad
 \vcenter{ \xymatrix{
&  A \ar@{=}[rd] \ar[ld]_{g_i}  \\
S_i  & &  A
} }
\]

\vspace{0.2cm}
{\em Coherence of diagrams:}
\vspace{0.2cm}

For constructions like the categories $\mathcal{D}^!_X$ and $\mathcal{D}^*_X$ above, it is essential to be able to speak of coherent diagrams of objects involving the four functors --- as opposed to just diagrams in the derived category. The language of fibered derivators provides a suitable framework:
A {\em derivator six-functor-formalism} $\DD \rightarrow \SSS^{\cor}$ (cf.\@ \ref{PARDER6FU}) specifies not only a (derived) category for each object in $\mathcal{S}$ but
also a (derived) category for each diagram $I \rightarrow \mathcal{S}^{\cor}$ of correspondences in $\mathcal{S}$ for each small category $I$.
Over {\em constant diagrams} with value $S \in \mathcal{S}$ such datum gives back the derivator enhancement of the derived category $\mathcal{D}_S$.  
Objects in these categories $\DD(I)_{p^*S}$ should be seen as coherent versions of diagrams $I \rightarrow \mathcal{D}_S$, e.g.\@
commutative diagrams of actual complexes (or commutative diagrams in a model category) up to point-wise weak-equivalences as opposed to diagrams commuting only up to homotopy.
The extension to stacks envisioned above is a fibered multiderivator $\DD^{(\infty)} \rightarrow \SSS^{\hcor, (\infty)}$ where $\SSS^{\hcor, (\infty)}$ is the 2-pre-multiderivator of
 multicorrespondences between higher geometric stacks (cf.\@ Definition~\ref{DEFSHCOR}) in which composition is defined using {\em homotopy} fiber products.
 
 \vspace{0.2cm}
 {\bf Main Theorem \ref{HAUPTSATZ}. }{\em Let 
\[ \DD \rightarrow \SSS^{\cor} \]
be a derivator six-functor-formalism (i.e.\@ fibered multiderivator)
with stable, well-generated fibers which is local w.r.t.\@ the pre-topology on $\mathcal{S}$.
There is a derivator six-functor-formalism
\[ \DD^{(\infty)} \rightarrow \SSS^{\hcor, (\infty)} \]
extending $\DD  \to \SSS^{\cor}$ such that, for a higher geometric stack $X \in \SSS^{\hcor, (\infty)} (\cdot)$ represented by a simplicial object $\widetilde{X}_\bullet \in \mathcal{S}^{\amalg, \Delta^{\op}}$, there are equivalences:
\[ \DD^{(\infty)}(\cdot)_{X}  \cong \DD(\Delta^{\op})_{\widetilde{X}_\bullet}^{\cart} \cong \DD(\Delta)_{\widetilde{X}_\bullet^{\op}}^{\cocart}.  \] 
}

Here $\DD(\Delta^{\op})_{\widetilde{X}_\bullet}^{\cart}$ and $\DD(\Delta)_{\widetilde{X}_\bullet^{\op}}^{\cocart}$ are the correct derivator versions of the naive definitions (\ref{eqcocartnaive1}), and (\ref{eqcocartnaive2}), respectively.  
The extension $\DD^{(\infty)}$ is the union of analogous extensions $\DD^{(k)}$ to $k$-geometric stacks. 
The construction of  $\DD^{(k)}$  is done by induction on $k$. For a detailed explanation of the induction step the reader is referred to \ref{PARIDEA}.

\vspace{0.2cm}
{\em Example:} \nopagebreak
\vspace{0.2cm}

Let $S$ be a base scheme and let $\mathcal{S}$ be the category of quasi-affine schemes of finite type over $S$ (or quasi-projective schemes over $S$ ---  that amounts to the same class of higher Artin stacks). 
In \cite[Section~25]{Hor17} we showed that there is a symmetric derivator six-functor-formalism with domain $\Cat$
\[ \mathbb{SH}_{\mathcal{M}}^T \rightarrow  \mathcal{S}^{\op} \]
(for any choice of objects as in \ref{PARSETTINGAYOUB})
such that:
\begin{enumerate}
\item for a diagram $F: I \rightarrow \mathcal{S}^{\op}$ of quasi-affine schemes  of finite type over $S$ (embedded via the inclusion $\SSS^{\op} \rightarrow \SSS^{\cor}$), we have an equivalence of monoidal categories
\[ \mathbb{SH}_{\mathcal{M}}^T(I)_F \cong \mathbb{SH}_{\mathcal{M}}^T(F^{\op}, I^{\op}) \]
where the right hand side is the ``algebraic derivator'' defined by Ayoub \cite[D\'efinition~4.2.21]{Ayo07II};
\item the push-forward along a multicorrespondence 
\[ \xymatrix{ & & & A  \ar[llld]_{g_1}\ar[ld]^{g_n}\ar[rd]^f \\
S_1 & \cdots & S_n & ; & T 
 }\]
 in $\SSS^{\cor}(\cdot)$ is given, up to unique isomorphism, by
\[ f_! ( L g^*_1 - \overset{L}{\otimes} \cdots \overset{L}{\otimes} Lg_n^* -) \]
with the functors $f_!$ of \cite[Proposition~1.6.46]{Ayo07I} and the $g_i^*$ of \cite[Th\'eor\`eme~4.5.23]{Ayo07II}, cf.\@ also \cite[Scholie~1.4.2]{Ayo07I}.
\end{enumerate}

It follows from Ayoub's work \cite{Ayo07I,Ayo07II} (cf.\@ Section~\ref{SECTIONEXAMPLE}) that $\mathbb{SH}_{\mathcal{M}}^T$ is strongly local w.r.t.\@ to the Nisnevich-smooth\footnote{in which coverings are jointly surjective collections of smooth morphisms that are refined by a Nisnevich covering} pre-topology.  Sometimes it is even strongly local w.r.t.\@ the smooth\footnote{in which coverings are jointly surjective collections of smooth morphisms} pre-topology --- of course, the decisive question being here whether axiom (O3), i.e.\@ conservativity, holds. For example, the derivator six-functor-formalism $\mathbb{DA}^{et}$ of etale motives (without transfers) satisfies conservativity for etale covers and thus it is strongly local w.r.t.\@ the smooth pre-topology.

Let $\SSS^{\hcor, (\infty)}$ be the 2-pre-multiderivator of multicorrespondences of higher geometric stacks on $\mathcal{S}$\footnote{i.e.\@ higher Artin stacks that are locally of finite type over $S$} w.r.t.\@ the Nisnevich-smooth pre-topology (or, if appropriate, w.r.t.\@ the smooth pre-topology), cf.\@ Definition~\ref{DEFSHCOR}.
As a corollary of the fact that Ayoub's six-functor-formalism is strongly local, we thus get

\vspace{0.2cm}
{\bf Theorem~\ref{SATZEXTAYOUB}. }{\em There is a symmetric derivator six-functor-formalism 
\[ \mathbb{SH}_{\mathcal{M}}^{T, (\infty)} \rightarrow \SSS^{\hcor, (\infty)} \]
extending $\mathbb{SH}_{\mathcal{M}}^T  \to \SSS^{\cor}$ such that, given a higher geometric stack $X \in \SSS^{\hcor, (\infty)}(\cdot)$ represented by a simplicial object $\widetilde{X}_\bullet \in \mathcal{S}^{\amalg, \Delta^{\op}}$, we have equivalences:
\[ \mathbb{SH}_{\mathcal{M}}^{T, (\infty)}(\cdot)_{X}  \cong \mathbb{SH}_{\mathcal{M}}^T(\Delta^{\op})_{\widetilde{X}_\bullet}^{\cart} \cong \mathbb{SH}_{\mathcal{M}}^T(\Delta)_{\widetilde{X}_\bullet^{\op}}^{\cocart}.  \] 
}%
Note that $\mathbb{SH}_{\mathcal{M}}^T(\Delta)_{\widetilde{X}_\bullet^{\op}}$ (not the full subcategory of coCartesian objects) is just a value of the algebraic derivator defined by Ayoub. 

\section{Preliminaries and Notation}

Recall the following from \cite[7.3]{Hor16}:

\begin{PAR}\label{PARTW}
Let $I$ be a diagram, $n$ a natural number and let $\Xi = (\Xi_1, \dots, \Xi_n) \in \{ \uparrow, \downarrow \}^n$ be a sequence of arrow directions. We define a diagram
\[ {}^\Xi I \]
whose objects are sequences of $n$ objects and $k-1$ morphisms in $I$
\[ \xymatrix{
i_1 \ar[r] & i_2 \ar[r] & \cdots \ar[r]  & i_n
} \]
and whose morphisms are commutative diagrams
\[ \xymatrix{
i_1 \ar[r] \ar@{<->}[d] &i_2 \ar[r] \ar@{<->}[d] & \cdots \ar[r]  & i_n \ar@{<->}[d] \\
i_1' \ar[r] &i_2' \ar[r] & \cdots \ar[r]  & i_n' \\
} \]
in which the $j$-th vertical arrow goes in the direction indicated by $\Xi_j$. 
We call a morphism {\bf of type $j$} if at most the morphism $i_j \rightarrow i_j'$ is {\em not} an identity. 

For a diagram $I$ and an object $i \in I$ we adopt the convention that $i$ also denotes the subcategory of $I$ consisting only of $i$ and its identity. 
In coherence with this convention ${}^\Xi i$ denotes the subcategory of ${}^\Xi I$ consisting of the sequence $i = \cdots = i$ and its identity. 

Examples: ${}^{\downarrow \downarrow} I = I \times_{/I} I$ is the comma category, ${}^{\uparrow} I = I^{\op}$, and $\tw I$ is the twisted arrow category. 
\end{PAR}

\begin{PAR}\label{PARTW2}
For any ordered subset $\{i_1, \dots, i_m\} \subseteq \{1, \dots, n\}$, denoting $\Xi'$ the restriction of $\Xi$ to the subset, we get an obvious restriction functor
\[ \pi_{i_1, \dots, i_m}:\  {}^\Xi I \rightarrow {}^{\Xi'} I. \]

If $\Xi = \Xi' \circ \Xi'' \circ \Xi'''$ is a concatenation, then the projection
\[ \pi_{1,\dots,n'}: \ {}^{\Xi}  I \rightarrow {}^{\Xi'} I   \]
is a {\em fibration} if the last arrow of $\Xi'$ is $\downarrow$, and an {\em opfibration} if the last arrow of $\Xi'$ is $\uparrow$ while the projection
\[ \pi_{n-n'''+1,\dots,n}: \  {}^{\Xi} I \rightarrow {}^{\Xi'''} I   \]
is an {\em opfibration} if the first arrow of $\Xi'''$ is $\downarrow$ and a {\em fibration} if the first arrow of $\Xi'''$ is $\uparrow$.
\end{PAR}

\begin{PAR}\label{MORLIST}
Let $\mathcal{C}$ be a multicategory. For each pair of ordered set of objects $\mathcal{E}:=(\mathcal{E}_1, \dots, \mathcal{E}_n)$ and $\mathcal{F}:=(\mathcal{F}_1, \dots, \mathcal{F}_m)$ in $\mathcal{C}$ we define the set of {\bf morphisms} from $\mathcal{E}$ to $\mathcal{F}$ to be
a sequence of integers $0 \le n_1 \le \dots \le n_{m-1} \le n$ and multimorphisms
\[ \mathcal{E}_1, \dots, \mathcal{E}_{n_1} \rightarrow \mathcal{F}_1; \ 
 \mathcal{E}_{n_1+1}, \dots, \mathcal{E}_{n_2} \rightarrow \mathcal{F}_2; \ 
 \dots; \ 
 \mathcal{E}_{n_{m-1}+1}, \dots , \mathcal{E}_{n} \rightarrow \mathcal{F}_m  \]
The integers $n_i$ may be equal and also $n=0$ is allowed.  If $n=m=0$ we understand there to be exactly one morphism. 
\end{PAR}

\begin{PAR}\label{DEFTREE}
Recall that a {\bf tree} is a finite connected multicategory freely generated by a set of multimorphisms such that each
object occurs at most once as a source and at most once as a destination of one of these generating multimorphisms. 
The generating multimorphisms are allowed to be 0-ary. 
Each tree has a finite number of source objects (which do not occur as the destination of any non-identity morphism) and a unique destination object 
(which does not occur as the source of any non-identity morphism).

{\em Examples:}
\[ \xymatrix{
&&\cdot \ar@{-}[dr] &   \\ 
\cdot \ar[rr] & & \cdot \ar@{-}[r] & \ar[r] & \cdot  \ar@{-}[rd] & &  \\
&&\cdot \ar@{-}[ur] & & & \ar@{->}[r] & \cdot \\
&&\cdot \ar[rr] & & \cdot \ar@{-}[ur] \\
 & \ar@{o->}[r]^{\text{0-ary}} & \cdot \ar@{-}[dr]&   \\
 && \cdot \ar@{-}[r]& \ar[r] & \cdot  \\
} \]

A {\bf symmetric tree} $\tau^S$ is obtained from a tree $\tau$ adding images (in the most free way possible) of the multimorphisms (not only the generating ones) under the respective symmetric groups. Observe that there is an obvious composition turning a symmetric tree into a symmetric multicategory. Giving a functor (of multicategories) from a tree to a symmetric multicategory $\mathcal{S}$ is the same as giving a functor (of symmetric multicategories) from its symmetric variant to $\mathcal{S}$. 

The most basic tree is $\Delta_{1,n}$ 
\[ \xymatrix{
1 \ar@{-}[rd]  &  \\
\vdots & \ar[r] & n+1 \\
 n \ar@{-}[ur]
} \]
consisting of $n+1$ objects and one $n$-ary morphism connecting them. 
Each tree has a well-defined destination object and a number (possibly zero) of source objects. Two trees $\tau_1$ and $\tau_2$ can by concatenated to a tree $\tau_2 \circ_i \tau_1$ choosing any source object $i$ of the tree $\tau_2$. 
\end{PAR}

\begin{PAR}\label{PARTWMULTI}
There exists the following multidiagram-variant of \ref{PARTW}.
Let $\Xi \in \{ \downarrow, \uparrow \}^l$ be sequence of arrow directions and let $M$ be a multidiagram (i.e.\@ a small multicategory like $\Delta_{1,n}$).
If $\Xi_l = \downarrow$, we define a small multicategory ${}^\Xi M$ and 
if $\Xi_l = \uparrow$, we define a small opmulticategory ${}^\Xi M$. We concentrate on the case $\Xi_l = \downarrow$ for definiteness. 
Objects are sequences
\[ \xymatrix{ [S_{1,1}, \dots, S_{1,n_1}] \ar[r] & [S_{2,1}, \dots, S_{2,n_2}] \ar[r] & \cdots \ar[r] &   [S_{l,1}] &   } \]
of $l$ lists of objects (can be empty) and morphisms in the sense of \ref{MORLIST} between them, where however the $l$-th list consist of exactly one object. 
Multimorphisms $S^{(1)}, \dots, S^{(k)} \rightarrow T$ are diagrams
\[ \xymatrix{ [S_{1,1}^{(1)}, \dots, S_{1,n_1}^{(1)}, \dots] \ar[r] \ar@{<->}[d] & [S_{2,1}^{(1)}, \dots, S_{2,n_2}^{(1)}, \dots] \ar[r]\ar@{<->}[d] & \cdots \ar[r] &   [S_{l,1}^{(1)}, \dots, S^{(k)}_{l,1}] \ar@{->}[d] &   \\
 [T_{1,1}, \dots, T_{1,n_1}] \ar[r] & [T_{2,1}, \dots, T_{2,n_2}] \ar[r] & \cdots \ar[r] &   [T_{l,1}] &   } \]
where the arrow direction in the $i$-th column is determined by $\Xi_i$. 
Such a morphism is called of {\bf type $i$} if all vertical morphisms except the $i$-th one are identities of lists. There are thus only $n$-ary morphisms for $n\not=1$ of type $l$ and not of any other type.

{\em Example:} For the tree $\Delta_{1,n}$ 
the multidiagram ${}^{\uparrow \downarrow} (\Delta_{1,n})$ is 
\[ \xymatrix{
&& [1,\dots,n] \rightarrow [n+1]  \\
& {\phantom{xxx}} \\ 
\id_{[1]} \ar[urur]^-{\text{type 2}}  & \cdots &  \ar@{-}[lu] \id_{[n]}  &  & \id_{[n+1]}  \ar[ulul]_{\text{type 1}}
} \]
\end{PAR}

\begin{DEF}\label{DEFDIAGRAMCAT}
A {\bf diagram category} is a full sub-2-category $\Dia \subset \mathrm{Cat}$,
satisfying the following axioms:
\begin{enumerate}
\item[(Dia1)] The empty category $\emptyset$, the final category $\cdot$ (or $\Delta_0$), and $\Delta_1$ are objects of $\Dia$.
\item[(Dia2)] $\Dia$ is stable under taking finite coproducts and fibered products.
\item[(Dia3)] All comma categories 
$I \times_{/J} K$ for functors $I \rightarrow J$ and $K \rightarrow J$ in $\Dia$ are in $\Dia$.
\end{enumerate}

A diagram category $\Dia$ is called {\bf infinite}, if it satisfies in addition:
\begin{itemize}
\item[(Dia5)] $\Dia$ is stable under taking arbitrary coproducts.
\end{itemize}
\end{DEF}

Under the term {\bf diagram} we understand a small category in the sequel.

\begin{BEISPIEL}\label{EXDIA}
We have the following diagram categories\footnote{We alert the reader that the notions $\Dir$ and $\Inv$ unfortunately are mixed up at several places in \cite{Hor15}  including \cite[2.1.2]{Hor15}.}:
\begin{itemize}
\item[$\Cat$:] the category of all {\bf diagrams}. 
\item[$\Dir$:] the category of {\bf directed diagrams} $C$, i.e.\@ small categories $C$ such that there exists a functor $C \rightarrow \N_0$ with the property that
the preimage of an identity consists of identities\footnote{In some sources $\N_0$ is replaced by any ordinal.}. An example is the injective simplex category $\Delta^\circ$:
\[ \xymatrix{ \cdots & \ar@<-1.5ex>[l]\ar@<-0.5ex>[l]\ar@<0.5ex>[l]\ar@<1.5ex>[l]    \cdot & \ar@<1ex>[l]\ar@<0ex>[l]\ar@<-1ex>[l] \cdot & \ar@<0.5ex>[l] \ar@<-0.5ex>[l]  \cdot  } \]
\item[$\Inv$:] the category of {\bf inverse diagrams} $D$, i.e.\@ small categories such that $D^{\op}$ is inverse. An example is the opposite of the injective simplex category $(\Delta^\circ)^{\op}$:
\[ \xymatrix{ \cdots \ar@<-1.5ex>[r]\ar@<-0.5ex>[r]\ar@<0.5ex>[r]\ar@<1.5ex>[r]  &   \cdot \ar@<1ex>[r]\ar@<0ex>[r]\ar@<-1ex>[r] & \cdot \ar@<0.5ex>[r] \ar@<-0.5ex>[r] & \cdot  } \]
\item[$\Catf$,] $\Dirf$, and $\Invf$ are defined as before but consisting of {\bf finite diagrams}.  We have obviously $\Dirf = \Invf$.
\item[$\Pos$,] $\Posf$, $\Dirpos$, and $\Invpos$: the categories of {\bf posets, finite posets, directed posets,} and {\bf inverse posets}. 
We have $\Posf \subset \Dirf = \Invf \subset \Catf$. 
\item[$\Catlf$:] the category of {\bf locally finite diagrams}, i.e.\@ in which morphisms factor only in a finite number of ways into non-identity morphisms. 
\end{itemize}
\end{BEISPIEL}

\begin{PAR}For a category $\mathcal{S}$ we denote by $\Cat(\mathcal{S})$ the 2-category of diagrams in $\mathcal{S}$ whose objects are pairs $(I, S)$ consisting of a diagram $I$, and a functor $S \in \mathcal{S}^I$. 1-Morphisms $(I,S) \rightarrow (J, T)$ are pairs $(\alpha, f)$ with $\alpha: I \rightarrow J$ and $f: S \rightarrow \alpha^*T$. 2-morphisms are natural transformations between the $\alpha$ satisfying an obvious compatibility with the $f$. Similarly
$\Cat^{\op}(\mathcal{S})$ is defined with the same objects and 1-morphisms $(I,S) \rightarrow (J, T)$ being pairs $(\alpha, f)$ with $f: \alpha^*T \rightarrow S$, instead. 
There is an equivalence $\op: \Cat(\mathcal{S})^{2-\op} \rightarrow \Cat^{\op}(\mathcal{S}^{\op})$ mapping $(I, S)$ to $(I^{\op}, S^{\op})$. 
\end{PAR}

\begin{PAR}\label{PARINTNABLA}
We denote by $\mathcal{S}^{\amalg} \subset \Cat(\mathcal{S})$ the free coproduct completion of $\mathcal{S}$ whose objects are pairs $(S_0, S)$ consisting of a set $S_0$ (considered as a discrete category) together with a functor (i.e.\@ collection indexed by $S_0$) $S \in \mathcal{S}^{S_0}$. 
Recall that there is a natural functor extending the Grothendieck construction $\int: \mathcal{SET}^{\Delta^{\op}} \rightarrow \Cat$ to 
\[ \int^{\amalg}: \mathcal{S}^{\amalg, \Delta^{\op}} \rightarrow \Cat(\mathcal{S}) \]
and a similar functor
\[ \nabla^{\amalg}:= \op \circ (\int^{\amalg}): \mathcal{S}^{\amalg, \Delta^{\op}} \rightarrow \Cat^{\op}(\mathcal{S}^{\op}). \]
\end{PAR}

\begin{PAR}\label{PARCWFO}
We say that a triple $(\mathcal{M}, \Fib, \mathcal{W})$ consisting of a category $\mathcal{M}$ and classes $\Fib$ (fibrations) and $\mathcal{W}$ (weak equivalences) of morphisms in $\mathcal{M}$ is a {\bf category of fibrant objects} if the following axioms are satisfied
\begin{enumerate}
\item $\mathcal{M}$ has finite products (in particular, a terminal object). 
\item $\mathcal{W}$ satisfies 2-out-of-3 and contains the isomorphisms. 
\item $\Fib$ is closed under composition, contains the isomorphisms, and the morphisms to the terminal object (``all objects are fibrant''). 
\item $\Fib$ is stable under pull-back, i.e.\@ the pullback of a fibration along an arbitrary morphism exists and is again a fibration.
\item $\Fib \cap \mathcal{W}$ is stable under pull-back.
\item (Existence of path objects) The diagonal $X \rightarrow X \times X$ can be factored as
\[ \xymatrix{ X \ar[r]^-{\in \mathcal{W}} &  X^I \ar[r]^-{\in \Fib} &  X \times X. } \]
We assume that this can be made functorial, i.e.\@ there is a functor $X \mapsto X^I$, such that the above sequence is natural in $X$. 
\end{enumerate}
We call $(\mathcal{M}, \Fib, \mathcal{W})$ a {\bf simplicial category of fibrant objects} if axioms 1--5 are satisfied and instead of axiom 6 the following holds
\begin{enumerate}
\item[6'.] There is a functor
\[ \Hom(\mathcal{SET}^{\Delta^{\op}}_{fin}, \mathcal{M}) \rightarrow \mathcal{M} \]
such that for a cofibration $f: X \rightarrow Y$ of finite simplicial sets, and, a fibration $g: X' \rightarrow Y'$  the morphism
\[  \Hom(Y,X') \overset{\boxdot \Hom (f, g)}{\longrightarrow} \lim \left( \vcenter{ \xymatrix{ & \Hom(Y,Y') \ar[d] \\ \Hom(X, X') \ar[r] & \Hom(X, Y') } } \right) \]
exists and is a fibration. It is a weak equivalence if $f$ or $g$ are. 
\end{enumerate}
Axiom 6' is a convenient generalization of axiom 6 (including functoriality). This additional axiom is satisfied for many categories of fibrant objects, in particular for those coming from simplicial model categories. 
This allows for the constructions in Section~\ref{SECTMC} without having to choose (Reedy) fibrant replacements of diagrams. 

In a category of fibrant objects, there is a notion of homotopy of morphisms and the homotopy categories $\mathcal{C}[\mathcal{W}^{-1}]$ can be constructed
using a calculus of fractions after identifying homotopic morphisms. Furthermore, pull-backs along fibrations are homotopy pull-backs. 
\end{PAR}

\begin{PAR}\label{PARHSF}
Let  $(\mathcal{M}, \Fib, \mathcal{W})$ be a category with fibrant objects. We say that a full subcategory $\mathcal{M}'$ is {\bf homotopy strictly full} if with an object $X \in \mathcal{M}'$ also every weakly equivalent object $X'$ is in $\mathcal{M}'$. If $\mathcal{M}'$ is furthermore closed under homotopy pull-back $(\mathcal{M}', \Fib', \mathcal{W}')$ is again a category with fibrant objects, where $\Fib'$ and $\mathcal{W}'$ are the intersections of the corresponding classes with $\mathcal{M}'$. Furthermore $\mathcal{M}'[(\mathcal{W}')^{-1}]$ is a strictly full subcategory of $\mathcal{M}[\mathcal{W}^{-1}]$. 
\end{PAR}

\begin{PAR}\label{PARHFP}
Recall that in a category with fibrant objects for every solid diagram in $\mathcal{M}[\mathcal{W}^{-1}]$
\[ \xymatrix{ Y \widetilde{\times}_W Z \ar@{.>}[r] \ar@{.>}[d] & Y \ar[d] \\
Z \ar[r]  & W  \\
} \]
there is a {\bf homotopy fiber product} extending the given diagram, well-defined up to (non-unique!) isomorphism. It can be computed (up to isomorphism in the homotopy category) as the honest homotopy fiber product of any
diagram of shape $\righthalfcup$ in $\mathcal{M}$ that represents the given one. 
\end{PAR}

\begin{DEF}\label{DEFCARTMORPH}
Let $I$ be a diagram.
A morphism $g: X \rightarrow Y$  in $\mathcal{M}^{I}$ is called a  {\bf homotopy Cartesian} morphism, if all squares
\[ \xymatrix{ X(i) \ar[r]^{g(i)} \ar[d] & Y(i) \ar[d] \\
X(j) \ar[r]_{g(j)} & Y(j)  \\
} \]
for morphisms $i \rightarrow j$ are homotopy Cartesian. 
\end{DEF}

\begin{PAR}\label{PARFPSOBJ}
Let $(\mathcal{M}, \Fib, \mathcal{W})$ be a simplicial category with fibrant objects.
Let $X \in \mathcal{M}$ be an object. 
 We define an (essentially constant) simplicial object $\delta(X)_\bullet \in \mathcal{M}^{\Delta^{\op}}$
depending functorially on $X$  by $n \mapsto \delta(X)_n:=\Hom(\Delta_n, X)$. It comes equipped with a point-wise weak equivalence $\iota: X \rightarrow \delta(X)_\bullet$ from the constant simplicial object with value $X$. 
\end{PAR}

\begin{DEF}\label{DEFCECH}
Let $(\mathcal{M}, \Fib, \mathcal{W})$ be a simplicial category with fibrant objects. 
Let $f: U \rightarrow X$ be a morphism in $\mathcal{M}$. We define a simplicial object, the \v{C}ech nerve
\[ \Cech(f) \in \mathcal{M}^{\Delta^{\op}} \]
\[ \Cech(f)_n := \lim \CechDia(f)_n \]
where  $\CechDia(f)_n \in \Cat^{\op}(\mathcal{M})$ is defined by
\[ \CechDia(f)_n := \nabla \left( \vcenter{ \xymatrix{  & \coprod_{0..n} (\cdot, U)   \\
(\cdot, \Hom(\Delta_n, X)) & \ar[l] \coprod_{0..n} (\cdot, X) \ar[u] }} \right)  \cdot   \]
Note that the limit exists because $\Hom(\Delta_n, X) \rightarrow X^n$ is a fibration by assumption. 
\end{DEF}

\begin{LEMMA}\label{LEMMAPROPCECH}
\begin{enumerate}
\item Definition~\ref{DEFCECH} yields a well-defined functor
\[ \Cech:  \mathcal{M}^{\rightarrow} \rightarrow \mathcal{M}^{\Delta^{\op}}  \]
which maps weak equivalences to weak equivalences and hence descends to a functor of homotopy categories.
\item There is a morphism of simplicial objects
\[ \Cech(f) \rightarrow \delta(X). \]
\item 
There is an isomorphism in the homotopy category
\[ U \widetilde{\times}_X  \cdots  \widetilde{\times}_X  U \cong \Cech(f)_n.  \]
\item For a homotopy Cartesian diagram
\[ \xymatrix{ U' \ar[d]_{F} \ar[r] & U \ar[d]^f \\
Y \ar[r] & X  } \]
there is a homotopy Cartesian square in the homotopy category of $\mathcal{M}^{\Delta^{\op}}$
\[ \xymatrix{ \Cech(F) \ar[r] \ar[d] & \Cech(f)  \ar[d] \\ 
Y \ar[r] & X  } \]
in which  $Y$ and $X$ are considered constant. 
\end{enumerate}
\end{LEMMA}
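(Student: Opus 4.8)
The plan is to treat the four assertions in order, exploiting that in a simplicial category of fibrant objects all the relevant objects exist strictly (no fibrant replacements needed, cf.\@ axiom 6' in \ref{PARCWFO}) and that pullbacks along fibrations are homotopy pullbacks. For (1), I would first check that $\CechDia(f)_n$ is well-defined as an object of $\Cat^{\op}(\mathcal{M})$ functorially in the arrow $f\colon U\to X$; this is immediate since it is built only from the finite coproducts $\coprod_{0..n}(\cdot,U)$, $\coprod_{0..n}(\cdot,X)$, the structural map $f$, and the fibration $\Hom(\Delta_n,X)\to X^n$ coming from axiom 6' applied to the cofibration $\partial\Delta_n\hookleftarrow\{0,\dots,n\}\hookrightarrow\Delta_n$ (really the inclusion of the vertices). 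One must then observe that as $n$ varies these assemble, via the cosimplicial structure of $\Delta_\bullet$ and the functoriality of $\delta$ from \ref{PARFPSOBJ}, into a functor $\mathcal{M}^{\to}\to\mathcal{M}^{\Delta^{\op}}$. The limit defining $\Cech(f)_n$ exists because $\Hom(\Delta_n,X)\to X^{n}$ is a fibration and fibrations are stable under pullback (axioms 3--4 of \ref{PARCWFO}); it is a (finite) limit of a diagram in which the maps into $\coprod_{0..n}(\cdot,X)$ are, after unwinding, built from fibrations, so that the limit is in fact a homotopy limit and hence sends point-wise weak equivalences in the $U\to X$ variable to weak equivalences. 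Passing to homotopy categories is then formal.

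For (2), the natural transformation $\coprod_{0..n}(\cdot,X)\to(\cdot,\Hom(\Delta_n,X))$ visible in the defining diagram, together with the projection in $\nabla$, induces on limits a map $\Cech(f)_n\to\Hom(\Delta_n,X)=\delta(X)_n$, and one checks compatibility with the simplicial structure maps; this is a diagram chase using only the functoriality already established in (1). For (3), the key point is that the limit over the diagram $\CechDia(f)_n$ computes exactly the iterated fiber product: after applying $\nabla=\op\circ\int$, the shape of the diagram is a zig-zag whose limit in $\mathcal{M}$ is $\underbrace{U\times_{\Hom(\Delta_n,X)}\cdots\times_{\Hom(\Delta_n,X)}U}_{n+1}$, and since $\Hom(\Delta_n,X)\to X$ (any vertex) is a weak equivalence and a fibration, this is weakly equivalent to the $(n+1)$-fold fiber product over $X$; because all the maps involved are fibrations, each such fiber product is a homotopy fiber product, giving the asserted isomorphism $U\widetilde{\times}_X\cdots\widetilde{\times}_X U\cong\Cech(f)_n$ in the homotopy category, in the sense of \ref{PARHFP}.

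For (4), given the homotopy Cartesian square with $U'\to U$ and $Y\to X$, I would first replace $f$ by a fibration (or use that it may be assumed one, all objects being fibrant and axiom 6' providing mapping path factorizations), so that $U'\simeq Y\times_X U$ strictly; then $\Cech(F)_n\simeq U'\times_Y\cdots\times_Y U'\simeq (Y\times_X U)\times_Y\cdots\times_Y(Y\times_X U)\simeq Y\times_X(U\times_X\cdots\times_X U)\simeq Y\times_X\Cech(f)_n$ by (3) and associativity/compatibility of fiber products, with each step a homotopy fiber product because everything is pulled back from fibrations. Assembling these levelwise equivalences into the claimed homotopy Cartesian square in the homotopy category of $\mathcal{M}^{\Delta^{\op}}$ — with $Y$, $X$ constant — requires knowing that a levelwise homotopy Cartesian square of simplicial objects is homotopy Cartesian in the projective (or injective) model structure, which here one gets from Definition~\ref{DEFCARTMORPH} together with the fact that homotopy pullbacks of diagrams are computed levelwise. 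The main obstacle is the bookkeeping in this last step: making the functoriality in $n$ and the coherence of the zig-zag-of-fibrations description compatible, so that the isomorphisms of (3) and the square of (4) are genuinely natural in the simplicial direction rather than merely levelwise; but since every object is fibrant and axiom 6' supplies functorial path/mapping-space constructions, one never has to choose Reedy-fibrant replacements, and the naturality follows from the functoriality of $\int^\amalg$, $\nabla$, and $\delta$ recorded in \ref{PARINTNABLA} and \ref{PARFPSOBJ}.
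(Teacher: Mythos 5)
Your proof is correct and, for parts (1)--(3), follows the same route as the paper: the limit exists and is a homotopy limit because $\Hom(\Delta_n,X)\to X^{n+1}$ is a fibration, the map to $\delta(X)$ is the projection, and (3) identifies the limit with the iterated homotopy fiber product.

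For part (4) you compute levelwise $\Cech(F)_n\simeq Y\widetilde{\times}_X\Cech(f)_n$ and then, as you yourself flag, face the ``main obstacle'' of assembling these isomorphisms coherently in the simplicial direction. The paper sidesteps exactly this bookkeeping by a slightly slicker organization: regard the given homotopy Cartesian square as a commutative square $(F,f,\id_Y,\id_X)$ of objects of $\mathcal{M}^{\to}$, apply the functor $\Cech$ from part (1) to obtain directly a commutative square of simplicial objects with corners $\Cech(F)$, $\Cech(f)$, $\Cech(\id_Y)$, $\Cech(\id_X)$ --- naturality in $n$ is then automatic since $\Cech$ is a functor --- check that this square is pointwise homotopy Cartesian using part (3) and stability of homotopy pullbacks, and finally observe that $\Cech(\id_Y)$ and $\Cech(\id_X)$ are levelwise weakly equivalent to the constant simplicial objects $Y$ and $X$. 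The two arguments rest on the same computation from (3); the paper's framing just makes the coherence you were worried about a formal consequence of functoriality, so you might want to adopt it to close that gap cleanly rather than trying to assemble the levelwise equivalences by hand.
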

\begin{proof}1.\@ is clear. 

2.\@ The morphism is given point-wise by the projection 
\[ \lim \CechDia(f)_n \rightarrow \Hom(\Delta_n, X). \]

3.\@ $\Hom(\Delta_n, X) \rightarrow X^n$ is a fibration hence $\lim \CechDia(f)_n \cong \holim \CechDia(f)_n \cong U \widetilde{\times}_X U \widetilde{\times}_X \cdots  \widetilde{\times}_X  U$.

4.\@ The given homotopy Cartesian square in $\mathcal{M}$ yields 
by 1.\@  a square
\[ \xymatrix{ \Cech(F) \ar[r] \ar[d] & \Cech(f)  \ar[d] \\ 
\Cech(\id_Y) \ar[r] & \Cech(\id_X)  } \]
which is point-wise homotopy Cartesian by 3.\@ and standard properties. But $\Cech(\id_Y) \cong Y$ and $\Cech(\id_X) \cong X$ in the homotopy category of $\mathcal{M}^{\Delta^{\op}}$. 
\end{proof}

\begin{DEF}\label{DEFFPS}
Let $(\mathcal{M}, \Fib, \mathcal{W})$ be a simplicial category with fibrant objects.
Let $X \rightarrow Z  \leftarrow Y$ be a diagram in $\mathcal{M}$. We define a specific homotopy fiber product by the following diagram with Cartesian squares
\[ \xymatrix{  X \widetilde{\times}_{Z} Y \ar[r] \ar[d] & \Box \ar[r] \ar[d] & Y \ar[d]  \\
 \Box \ar[r] \ar[d] &  \Hom(\Delta_1, Z) \ar[r]^-{\Fib} \ar[d]^-{\Fib}  & Z \\ 
 X \ar[r] & Z  } \]
(This is the usual fiber product construction in categories with fibrant objects if $\Hom(\Delta_1, Z)$ is understood to be the path object $Z^I$.)
\end{DEF}
It induces a well-defined commutative square 
\[ \xymatrix{ X \widetilde{\times}_Z Y  \ar[r]  \ar[d]  & Y \ar[d] \\
X \ar[r] & Z } \]
in the homotopy category of $\mathcal{M}$ which is homotopy Cartesian.

\begin{DEF}\label{DEFFPSOBJ}
Let $(\mathcal{M}, \Fib, \mathcal{W})$ be a simplicial category with fibrant objects.
Let $X_\bullet, Y_\bullet$ be objects in $\mathcal{M}^{\Delta^{\op}}$ and $Z \in \mathcal{M}$ equipped with morphisms (in $\mathcal{M}^{\Delta^{\op}}$) $X_\bullet \rightarrow \delta(Z)_\bullet$ and $Y_\bullet \rightarrow \delta(Z)_\bullet$.

For $m,n \in \N_0$ define an object $(X \widetilde{\times}_{Z} Y)_{n,m} \in \mathcal{M}$ by the following commutative diagram with Cartesian squares
\[ \xymatrix{  (X \widetilde{\times}_{Z} Y)_{n,m} \ar[r] \ar[d] & \Box \ar[r] \ar[d] & Y_m \ar[d]  \\
 \Box \ar[r] \ar[d] &  \Hom(\Delta_n \star \Delta_m, Z) \ar[r]^-{\Fib} \ar[d]^-{\Fib}  & \Hom(\Delta_m, Z) \\ 
 X_n \ar[r] & \Hom(\Delta_n, Z)  } \]
 \end{DEF}
 Be aware that not only $X_n$ and $Y_m$ depend on the indices $m$ and $n$ but also 
 the fiber product construction. 
  
 \begin{LEMMA}\label{LEMMAHOFP}
 The construction in Definition~\ref{DEFFPSOBJ} defines a bi-simplicial set $(X \widetilde{\times}_Z Y)_{\bullet,\bullet}: (\Delta^{\op})^2 \rightarrow \mathcal{M}$ such that 
 \begin{enumerate} 
 \item If $X$ and $Y$ are constant, there is a (point-wise) weak equivalence of bisimplicial objects
 \[ \gamma: X \widetilde{\times}_{Z} Y  \rightarrow (\delta(X) \widetilde{\times}_{Z} \delta(Y))_{n,m}  \]
  where the left hand side is considered constant. 
\item There are  well-defined homotopy Cartesian squares in the homotopy category of $\mathcal{M}^{\Delta^{\op}}$
\[ \xymatrix{ (X \widetilde{\times}_Z Y)_{n,\bullet}  \ar[r]  \ar[d]  & Y_{\bullet} \ar[d] \\
X_n \ar[r] & Z } \quad \xymatrix{ (X \widetilde{\times}_Z Y)_{\bullet,m}  \ar[r]  \ar[d]  & Y_m \ar[d] \\
X_\bullet \ar[r] & Z } \]
in which $Z, X_n,$ and $Y_m$ are considered constant.  
 \item The association $X, Y, Z \mapsto (X \widetilde{\times}_Z Y)_{\bullet,\bullet}$ maps (point-wise) homotopy Cartesian squares in each variable to point-wise homotopy Cartesian squares. 
  \item
 A square of the form
\[ \xymatrix{ 
 (X \widetilde{\times}_Y Z)_{n,m} \ar[r] \ar[d] &  (X \widetilde{\times}_Y Z)_{n,m'} \ar[d]  \\
 (X \widetilde{\times}_Y Z)_{n',m} \ar[r] &  (X \widetilde{\times}_Y Z)_{n',m'} } \]
is homotopy Cartesian. 
 \end{enumerate} 
 \end{LEMMA}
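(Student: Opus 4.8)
The plan is to establish the four assertions one after another; the unifying mechanism is that every object in Definition~\ref{DEFFPSOBJ} is built by iterated pull-back along fibrations produced by axiom~6' of \ref{PARCWFO}, applied to the finite simplicial sets $\Delta_n$, $\Delta_m$, $\Delta_n\star\Delta_m$ and the inclusions between them. For the bisimplicial structure, note first that the ordinal sum makes $([n],[m])\mapsto\Delta_n\star\Delta_m\cong\Delta_{n+m+1}$ into a functor $\Delta\times\Delta\to\mathcal{SET}^{\Delta^{\op}}_{fin}$, compatibly with the face-like inclusions $\Delta_n\hookrightarrow\Delta_n\star\Delta_m\hookleftarrow\Delta_m$ and with $\Delta_n\amalg\Delta_m\hookrightarrow\Delta_n\star\Delta_m$. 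Axiom~6', applied with $g$ the (fibrant) map $Z\to\cdot$, then shows that $\Hom(\Delta_n\star\Delta_m,Z)\to\Hom(\Delta_n,Z)$ and $\Hom(\Delta_n\star\Delta_m,Z)\to\Hom(\Delta_m,Z)$ are fibrations --- in fact trivial fibrations, since the inclusions are trivial cofibrations of simplicial sets --- and that $\Hom(\Delta_n\star\Delta_m,Z)\to\Hom(\Delta_n,Z)\times\Hom(\Delta_m,Z)$ is a fibration. Hence each pull-back in the defining diagram is a pull-back along a fibration, so it exists and is functorial in $X_\bullet$, $Y_\bullet$, $Z$, $n$ and $m$; this is the functor $(\Delta^{\op})^2\to\mathcal{M}$.

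For (1) and (2), the previous paragraph shows that the span $\Hom(\Delta_n,Z)\leftarrow\Hom(\Delta_n\star\Delta_m,Z)\to\Hom(\Delta_m,Z)$ plays exactly the role of $Z\leftarrow\Hom(\Delta_1,Z)\to Z$ in Definition~\ref{DEFFPS}: indeed $\Delta_1=\Delta_0\star\Delta_0\hookrightarrow\Delta_n\star\Delta_m$ is a weak equivalence, and $Z\xrightarrow{\iota_k}\Hom(\Delta_k,Z)$ is one as well (\ref{PARFPSOBJ}). Thus the diagram defining $(X\widetilde{\times}_Z Y)_{n,m}$ is obtained from the one in Definition~\ref{DEFFPS} by replacing $X,Y,Z$ by the weakly equivalent objects $X_n$, $Y_m$, $\Hom(\Delta_n,Z)$ and $\Hom(\Delta_m,Z)$, and $\Hom(\Delta_1,Z)$ by $\Hom(\Delta_n\star\Delta_m,Z)$, all compatibly with the $\iota_k$ and with the collapse $\Delta_n\star\Delta_m\to\Delta_1$; likewise $(\delta(X)\widetilde{\times}_Z\delta(Y))_{n,m}$ arises by replacing $X,Y$ by $\Hom(\Delta_n,X)$ and $\Hom(\Delta_m,Y)$. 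Since a homotopy fiber product is invariant under such replacements and under pull-back along fibrations (\ref{PARHFP}), both are models of the homotopy fiber product of the appropriate cospan; this gives both squares of (2) at once (homotopy Cartesianity in $\mathcal{M}^{\Delta^{\op}}[\mathcal{W}^{-1}]$, where the leg to the constant object $Z$ passes through $\delta(Z)_\bullet$, being checked point-wise as in Lemma~\ref{LEMMAPROPCECH}.4), and for (1) the map $\gamma$ --- the canonical comparison of two such models, compatible with the projections to $X$ and $Y$ --- is a weak equivalence by \ref{PARHFP}.

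For (3), fix all but one of $X_\bullet$, $Y_\bullet$, $Z$; in the remaining variable the assignment $(X\widetilde{\times}_Z Y)_{n,m}$ is a finite composite of operations each of which preserves point-wise homotopy Cartesian squares --- taking the product with a fixed object, pulling back along a fixed fibration (here one uses the pasting law for homotopy pull-backs, valid in a category of fibrant objects), and, in the $Z$-variable, the functors $\Hom(\Delta_k,-)$, which preserve homotopy Cartesian squares by axiom~6'. For (4), applying (2) in its $(\bullet,m)$-form and evaluating at $n$ and $n'$ identifies the four corners $(X\widetilde{\times}_Z Y)_{k,l}$, naturally, with homotopy fiber products $X_k\widetilde{\times}_Z Y_l$ and the maps between them with those induced by $X_n\to X_{n'}$ and $Y_m\to Y_{m'}$; two uses of the pasting law (first $X_n\widetilde{\times}_Z Y_{m'}\simeq X_n\widetilde{\times}_{X_{n'}}(X_{n'}\widetilde{\times}_Z Y_{m'})$, then cancelling $X_{n'}\widetilde{\times}_Z Y_{m'}$ against $X_{n'}\widetilde{\times}_Z Y_m$, then re-expanding) show that the homotopy pull-back of $X_n\widetilde{\times}_Z Y_{m'}$, $X_{n'}\widetilde{\times}_Z Y_{m'}$ and $X_{n'}\widetilde{\times}_Z Y_m$ is canonically weakly equivalent to $X_n\widetilde{\times}_Z Y_m\simeq(X\widetilde{\times}_Z Y)_{n,m}$, which is exactly the claim. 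The main obstacle is bookkeeping rather than new mathematics: one has to make the several models of the various homotopy fiber products match up coherently, so that the canonical comparison maps are genuinely the weak equivalences one wants and so that the pasting arguments apply on the nose; in particular the interaction of the fixed object $Z\in\mathcal{M}$ with the simplicial objects $X_\bullet,Y_\bullet$ and with $\delta(Z)_\bullet$ has to be tracked with some care.
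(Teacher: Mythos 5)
Your proof is correct and follows essentially the paper's approach. The paper exhibits the morphism for (1) explicitly as induced by a map of the defining cospans (using $X\to\delta(X)$, $Y\to\delta(Y)$, $\Hom(\Delta_1,Z)\to\Hom(\Delta_n\star\Delta_m,Z)$) and dispenses with (2)--(4) as "clear"; your write-up is the same argument, merely spelling out the point-wise checks, the pasting law for homotopy fiber products, and the use of axiom~6' to obtain the requisite fibrations, which is precisely what "clear" was pointing at.
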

\begin{proof}
 1. The morphism is induced by the obvious morphism
 {\footnotesize
\[ \left(\vcenter{  \xymatrix{   & & Y \ar[d]  \\
  &  \Hom(\Delta_1, Z) \ar[r]^-{\Fib} \ar[d]^-{\Fib}  & Z \\ 
 X \ar[r] & Z  } } \right) \rightarrow 
 \left(\vcenter{ \xymatrix{   & & \Hom(\Delta_n, Y) \ar[d]  \\
 &  \Hom(\Delta_m \star \Delta_n, Z) \ar[r]^-{\Fib} \ar[d]^-{\Fib}  & \Hom(\Delta_n, Z) \\ 
\Hom(\Delta_m, X) \ar[r] & \Hom(\Delta_m, Z)  } }  \right) \]
 }
 
 2.--4. are clear. 
\end{proof}

\section{Fibered multiderivators over 2-categorical bases}\label{SECTFIBDER}

In this section, we recall from \cite{Hor16} the notions of 2-pre-multiderivator and fibered multiderivator (over 2-categorical bases). We will not use 
the more general notions of lax or oplax 2-pre-multiderivator.
Let $\Dia$ be a diagram category (cf.\@ Definition~\ref{DEFDIAGRAMCAT}).

\begin{DEF}[{\cite[Definition~2.1]{Hor16}}]\label{DEF2PREMULTIDER}
A {\bf 2-pre-multiderivator} with domain $\Dia$ is a functor $\SSS: \Dia^{1-\op} \rightarrow \text{2-$\mathcal{MCAT}$}$ which is strict in 1-morphisms (functors) and pseudo-functorial in 2-morphisms (natural transformations). 
More precisely, it associates with a diagram $I$ a 2-multicategory $\SSS(I)$, with a functor $\alpha: I \rightarrow J$ a strict functor
\[ \SSS(\alpha):  \SSS(J) \rightarrow \SSS(I) \] 
denoted also $\alpha^*$ if $\SSS$ is understood, and with a natural transformation $\mu: \alpha \Rightarrow \alpha'$ a pseudo-natural transformation
\[ \SSS(\eta): \alpha^* \Rightarrow (\alpha')^* \]
such that the following holds:
\begin{enumerate}
\item The association
\[ \Fun(I, J) \rightarrow \Fun^{\mathrm{strict}}(\SSS(J), \SSS(I)) \]
given by $\alpha \mapsto \alpha^*$, resp.\@ $\mu \mapsto \SSS(\mu)$, is
a pseudo-functor (this involves, of course, the choice of further data). Here $\Fun^{\mathrm{strict}}(\SSS(J), \SSS(I))$ is the 2-category of strict 2-functors, pseudo-natural transformations, and modifications. 
\item (Strict functoriality w.r.t.\@ compositons of 1-morphisms) For functors $\alpha: I \rightarrow J$ and $\beta: J \rightarrow K$, we have 
an {\em equality} of pseudo-functors $\Fun(I, J) \rightarrow \Fun^{\mathrm{strict}}(\SSS(I), \SSS(K))$
\[ \beta^* \circ \SSS(-) = \SSS(\beta \circ -).   \]
\end{enumerate}

A {\bf symmetric, resp.\@ braided 2-pre-multiderivator} is given by the structure of strictly symmetric (resp.\@ braided) 2-multicategory on $\SSS(I)$ such that
the strict functors $\alpha^*$ are equivariant w.r.t.\@ the action of the symmetric groups (resp.\@ braid groups). 
\end{DEF}

\begin{DEF}[{\cite[Definition~2.2]{Hor16}}]\label{DEF2PREMULTIDERSTRICTMOR}
A strict morphism $p: \DD \rightarrow \SSS$ of 2-pre-multiderivators  is given by a collection of strict 2-functors
\[ p(I): \DD(I) \rightarrow \SSS(I) \]
for each $I \in \Dia$ such that we have $\SSS(\alpha) \circ p(J) = p(I) \circ \DD(\alpha)$ and $\SSS(\mu) \ast p(J) = p(I) \ast \DD(\mu)$ 
 for all functors $\alpha: I \rightarrow J$, $\alpha': I \rightarrow J$ and natural transformations $\mu: \alpha \Rightarrow \alpha'$ as illustrated by the following diagram:
\[ \xymatrix{
\DD(J) \ar[rr]^{p(J)} \ar@/_15pt/[dd]_{\DD(\alpha)}^{\phantom{x}\overset{\DD(\mu)}{\Rightarrow}} \ar@/^15pt/[dd]^{\DD(\alpha')} && \SSS(J) \ar@/_15pt/[dd]_{\SSS(\alpha)}^{\phantom{x}\overset{\SSS(\mu)}{\Rightarrow}} \ar@/^15pt/[dd]^{\SSS(\alpha')} \\
\\
\DD(I) \ar[rr]^{p(I)} && \SSS(I)
} \]
\end{DEF}

\begin{PAR}\label{PARDER12}
As with usual pre-multiderivators we consider the following axioms: 

\begin{itemize}
\item[(Der1)] For $I, J \in \Dia$, the natural functor $\DD(I \coprod J) \rightarrow \DD(I) \times \DD(J)$ is an equivalence of 2-multicategories. Moreover $\DD(\emptyset)$ is not empty.
\item[(Der2)]
For $I \in \Dia$ the `underlying diagram' functor
\[ \dia: \DD(I) \rightarrow \Fun(I, \DD(\cdot)) \]
is 2-conservative (this means that it is conservative on 2-morphisms and that a 1-morphism $\alpha$ is an equivalence if $\dia(\alpha)$ is an equivalence).
\end{itemize}

A 2-pre-multiderivator with domain $\Dia$ is called {\bf infinite} if $\Dia$ is infinite (i.e.\@ closed under arbitrary coproducts)  and we have
\begin{itemize}
\item[(Der1${}^\infty$)] For $\{I_i\}_{i \in \mathcal{I}}$ a (possibly infinite) family with $I_i \in \Dia$, the natural functor $\DD(\coprod_{i} I_i) \rightarrow \prod_{i} \DD(I_i)$ is an equivalence of 2-multicategories. 
\end{itemize}
\end{PAR}

\begin{PAR}
Consider the following axioms on a strict morphism $p: \DD \rightarrow \SSS$ of 2-pre-multiderivators (where (FDer0 left) is assumed for (FDer3--5 left) and similarly for the right case):
\begin{itemize}
\item[(FDer0 left)]
For each $I$ in $\Dia$ the morphism $p$ specializes to an 1-opfibered 2-bifibered 2-multicategory with 1-categorical fibers. 
Moreover any {\em fibration} $\alpha: I \rightarrow J$ in $\Dia$ induces a  diagram
\[ \xymatrix{
\DD(J) \ar[r]^{\alpha^*} \ar[d] & \DD(I) \ar[d]\\
\SSS(J) \ar[r]^{\alpha^*} & \SSS(I) 
}\]
of 1-opfibered and 2-bifibered 2-multicategories, i.e.\@ the top horizontal functor maps coCartesian 1-morphisms to coCartesian 1-morphisms and (co)Cartesian 2-morphisms to (co)Cartesian 2-morphisms.

We assume that corresponding push-forward functors between the fibers have been chosen and those will be denoted by $(-)_\bullet$.

\item[(FDer3 left)]
For each functor $\alpha: I \rightarrow J$ in $\Dia$ and $S \in \SSS(J)$ the functor
$\alpha^*$ between fibers (which are 1-categories by (FDer0 left))
\[ \DD(J)_{S} \rightarrow \DD(I)_{\alpha^*S} \]
has a left adjoint $\alpha_!^{(S)}$.

\item[(FDer4 left)]
For each functor $\alpha: I \rightarrow J$ in $\Dia$, and for any object $j \in J$, and for the 2-commutative square
\[ \xymatrix{  I \times_{/J} j \ar[r]^-\iota \ar[d]_{\alpha_j} \ar@{}[dr]|{\Swarrow^\mu} & I \ar[d]^\alpha \\
\{j\} \ar@{^{(}->}[r]^j & J \\
} \]
 the induced natural transformation of functors \[ \alpha_{j,!}^{(j^*S)} \SSS(\mu)(S)_\bullet \iota^* \rightarrow j^* {\alpha}_!^{(S)} \] is an isomorphism for all $S \in \SSS(J)$.
\item[(FDer5 left)] For any {\em opfibration} $\alpha: I \rightarrow J $ in $\Dia$, and for any 1-morphism $\xi \in \Hom(S_1, \dots, S_n; T)$ in $\SSS(J)$ for some $n\ge 1$, the natural transformations of functors
\[ \alpha_! (\alpha^*\xi)_\bullet (\alpha^*-, \cdots, \alpha^*-,\ \underbrace{-}_{\text{at }i}\ , \alpha^*-, \cdots, \alpha^*-) \cong  \xi_\bullet (-, \cdots, -,\ \underbrace{\alpha_!-}_{\text{at }i}\ , -, \cdots, -) \]
are isomorphisms for all $i=1, \dots,  n$.
\end{itemize}
\end{PAR}

and dually: 

\begin{enumerate}
\item[(FDer0 right)]
For each $I$ in $\Dia$ the morphism $p$ specializes to a 1-fibered 2-bifibered 2-multicategory with 1-categorical fibers.
Furthermore, any {\em opfibration} $\alpha: I \rightarrow J$
in $\Dia$  induces a diagram
\[ \xymatrix{
\DD(J) \ar[r]^{\alpha^*} \ar[d] & \DD(I) \ar[d]\\
\SSS(J) \ar[r]^{\alpha^*} & \SSS(I) 
}\]
of 1-fibered and 2-bifibered multicategories, i.e.\@ the top horizontal functor maps Cartesian 1-morphisms w.r.t.\@ the $i$-th slot to Cartesian 1-morphisms w.r.t.\@ the $i$-th slot for any $i$ and maps (co)Cartesian 2-morphisms to (co)Cartesian 2-morphisms.

We assume that corresponding pull-back functors between the fibers have been chosen and those will be denoted by $(-)^{\bullet,i}$.

\item[(FDer3 right)]
For each functor $\alpha: I \rightarrow J$ in $\Dia$ and $S \in \SSS(J)$ the functor
$\alpha^*$ between fibers (which are 1-categories by (FDer0 right))
\[ \DD(J)_{S} \rightarrow \DD(I)_{\alpha^*S} \]
has a right adjoint $\alpha_*^{(S)}$.
\item[(FDer4 right)]
For each morphism $\alpha: I \rightarrow J$ in $\Dia$, and for any object $j \in J$, and for the 2-commutative square
\[ \xymatrix{  j \times_{/J} I \ar[r]^-\iota \ar[d]_{\alpha_j} \ar@{}[dr]|{\Nearrow^\mu} & I \ar[d]^\alpha \\
\{j\} \ar@{^{(}->}[r]^j & J \\
} \]
 the induced natural transformation of functors \[  j^* \alpha_*^{(S)} \rightarrow \alpha_{j,*}^{(j^*S)} \SSS(\mu)(S)^\bullet \iota^* \] is an isomorphism for all $S \in \SSS(J)$.

\item[(FDer5 right)] For any {\em fibration} $\alpha: I \rightarrow J$ in $\Dia$, and for any 1-morphism $\xi \in \Hom(S_1, \dots, S_n; T)$ in $\SSS(J)$ for some $n\ge 1$, the natural transformations of functors
\[ \alpha_* (\alpha^*\xi)^{\bullet,i} (\alpha^*-, \overset{\widehat{i}}{\cdots}, \alpha^*-\ ;\ -) \cong  \xi^{\bullet,i} (-, \overset{\widehat{i}}{\cdots}, -\ ;\ \alpha_*-) \]
are isomorphisms for all $i= 1, \dots, n$.
\end{enumerate}

\begin{DEF}\label{DEFFIBDER}
A strict morphism $p: \DD \rightarrow \SSS$ of 2-pre-multiderivators is called a  {\bf  left (resp.\@ right) fibered multiderivator}
if $\DD$ and $\SSS$ both satisfy (Der1) and (Der2) if (FDer0 left/right) and (FDer3--5 left/right) hold true. We say ``fibered'' for ``left and right fibered''.
\end{DEF}

\begin{BEM}
One can show that the axioms imply that the second part of (FDer0 left) and (FDer5 right) --- which are adjoint to each other ---  hold true for any functor $\alpha: I \rightarrow J$. 
Similarly {\em for 1-ary morphisms} (but not necessarily for $n$-ary morphisms) the second part of (FDer0 right) and (FDer5 left) hold true for any functor $\alpha: I \rightarrow J$. 
\end{BEM}

\begin{DEF}\label{DEFSTABLE}For a fibered multiderivator $p: \DD \rightarrow \SSS$ over a 2-pre-multiderivator and an object $S \in \SSS(I)$ the association
\[ \DD_{I,S}: J \mapsto \DD(I \times J)_{\pr_1^*S}  \]
is a usual derivator, called its {\bf fiber} above $(I, S)$. We say that $p$ has {\bf stable fibers} if $\DD_{I,S}$ is stable for all $I$ and for all $S \in \SSS(I)$. In fact, it suffices to require this for $I=\{\cdot\}$. 
\end{DEF}

We include strongness into the definition of stable. Strongness can be more generally defined for 2-pre-multiderivators:

\begin{DEF}
A 2-pre-multiderivator is called {\bf strong} if the partial underlying diagram functors
\[ \dia_{\Delta_1}: \SSS(\Delta_1 \times I) \rightarrow \Fun(\Delta_1, \SSS(I)) \]
are essentially surjective (in the 2-categorial sense), and the induced functor on morphism categories is essentially surjective and full. 
\end{DEF}

\begin{PAR}
In \cite[Definition~4.1]{Hor16} an alternative definition of a fibered multiderivator over $\SSS$ was given: a morphism of 2-pre-multiderivators 
$\SSS \rightarrow \DD$ such that both satisfy (Der1) and (Der2) and such that
\[ \Dia^{\cor}(\DD) \rightarrow \Dia^{\cor}(\SSS) \]
is a 1-bifibration and 2-fibration of 2-multicategories with 1-categorical fibers.  Above we gave the equivalent patchwork definition because the axioms are anyway the ones to be checked. 
However, we will nevertheless define the 2-multicategory $\Dia^{\cor}(\SSS)$ and also a refinement $\widehat{\Dia}^{\cor}(\SSS)$ which are convenient to understand duality questions. 
The discussion only plays a minor role for this article and can be skipped on a first reading. 
\end{PAR}

\begin{PAR}\label{PARCORDIA}
Let $I_1, \dots, I_n, J \in \Dia$. 
Recall from e.g.\@ \cite[Section 4]{Hor15b} the 2-category $\Cor(I_1, \dots, I_n; J)$ whose objects are multicorrespondences
\begin{equation}\label{eqcor}
 \xi := \left( \vcenter{ \xymatrix{ & & & A \ar[llld]_{\alpha_1} \ar[ld]^{\alpha_n} \ar[rd]^{\beta} \\
I_1  & \cdots & I_n  &  & J  }    } \right)
\end{equation}
with $A \in \Dia$, and whose 1-morphisms are diagrams
\begin{equation}\label{eqcormor}
\vcenter{ \xymatrix{ & & & A \ar[llld]_{\alpha_1} \ar[dd]^{\gamma} \ar[ld]^{\alpha_n} \ar[rd]^\beta \\
I_1  & \cdots & I_n  &  & J  \\
& & & A' \ar[lllu]^{\alpha_1'} \ar[lu]_{\alpha_n'} \ar[ru]_{\beta'}
} }
\end{equation}
together with natural transformations
\[ \nu_i: \alpha_i \Rightarrow  \alpha_i' \gamma \quad 
 \mu:  \beta' \gamma  \Rightarrow \beta. \]
 2-Morphisms are natural transformations $\gamma \Rightarrow \gamma'$ compatible with the $\nu$'s and $\mu$'s.
\end{PAR}

\begin{PAR}\label{PARCORFIB}
Let $\SSS$ be a 2-pre-multiderivator satisfying (Der1) and (Der2). There is an analogue for the construction in \ref{PARCORDIA} for fibered situations over $\SSS$. 
Let $S_i \in \SSS(I_i)$ and $T \in \SSS(T)$.  The 2-category
 \[ \Cor_{\SSS}((I_1, S_1), \dots, (I_n, S_n); (J, T)) \] 
 is obtained by applying the Grothendieck construction to the pseudo-functor
\begin{equation}\label{eqcorfunctor} \Cor(I_1, \dots, I_n; J)  \rightarrow \mathcal{CAT} 
\end{equation}
mapping a multicorrespondence (\ref{eqcor}) to the category
\[ \Hom_{\SSS(A)}(\alpha_1^*I_1,  \dots, \alpha_n^* I_n; \beta^* T). \]

\begin{PAR}
In \cite[Definition~3.8]{Hor16} we defined a 2-multicategory 
\[ \Dia^{\cor}(\SSS) \]
whose objects are pairs $(I,S)$ with $I \in \Dia$ and $S \in \SSS(I)$ and whose morphism categories
are (equivalent to)
\[ \tau_1(\Cor_{\SSS}((I_1, S_1), \dots, (I_n, S_n); (J, T))) \]
and it is shown in \cite[Theorem~4.2]{Hor16} that a morphism $\DD \rightarrow \SSS$ of 2-pre-multiderivators satisfying (Der1) and (Der2)
 is a left (resp.\@ right) fibered multiderivator if and only if 
\[ \Dia^{\cor}(\DD) \rightarrow \Dia^{\cor}(\SSS)\]
is a 1-opfibration (resp.\@ 1-fibration) and 2-fibration with 1-categorical fibers. 
\end{PAR}
Equivalently, a left fibered multiderivator induces a pseudo-functor
\[ \Dia^{\cor}(\SSS) \rightarrow \mathcal{CAT} \]
mapping $(I, S)$ to $\DD(I)_S$ and mapping a multi-correspondence $\xi$ as in  (\ref{eqcor}) together with 
\[ f \in \Hom_{\SSS(A)}(\alpha_1^*I_1,  \dots, \alpha_n^* I_n; \beta^* T) \] to the multivalued functor
\[ (\xi, f)_\bullet := \beta_!^{(T)} f_\bullet(\alpha^*_1 -, \dots, \alpha_n^* -). \]
The brute force truncation $\tau_1$ is only a first approximation useful to encode and characterize the axioms (FDer3--5) but insensitive to (Der1) and (Der2). 
The resulting 2-multicategory is better behaved, if we take instead the localization
\[ \Cor_{\SSS}((I_1, S_1), \dots, (I_n, S_n); (J, T))[\mathcal{W}^{-1}] \]
(forgetting the 2-morphisms) where $\mathcal{W}$ is defined as follows.
\end{PAR}

\begin{DEF}
Let $\mathcal{W}$ be the class of morphisms $(\xi, f) \rightarrow (\xi', f')$ in $\Cor_{\SSS}((I_1, S_1), \dots, (I_n, S_n); (J, T))$ which have the property that 
$(\xi, f)_\bullet \rightarrow (\xi', f')_\bullet$  is an isomorphism 
for all fibered multiderivators $\DD \rightarrow \SSS$.
\end{DEF}

\begin{LEMMA}
If $\SSS$ is strong\footnote{Actually, it suffices that the functor (induced by $\dia_{\Delta_1}$) on morphism categories is full. In particular, if $\SSS$ is a usual 1-pre-multiderivator, no condition is required.} then 
we have an induced functor 
\[ \tau_1(\Cor_{\SSS}((I_1, S_1), \dots, (I_n, S_n); (J, T))) \rightarrow \Cor_{\SSS}((I_1, S_1), \dots, (I_n, S_n); (J, T))[\mathcal{W}^{-1}] \]
in which, on the right hand side, $\Cor_{\SSS}((I_1, S_1), \dots, (I_n, S_n); (J, T))$ is considered as a 1-category forgetting the 2-morphisms. 
\end{LEMMA}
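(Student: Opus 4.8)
First I would observe that the localization $q\colon \mathcal{C}\to\mathcal{C}[\mathcal{W}^{-1}]$ of the underlying $1$-category $\mathcal{C}:=\Cor_\SSS((I_1,S_1),\dots,(I_n,S_n);(J,T))$ (with $2$-morphisms forgotten) exists by the general theory of localization of categories. Since $\tau_1$ of a $2$-category is obtained from its underlying $1$-category by identifying two parallel $1$-morphisms whenever they lie in the same connected component of a $\Hom$-category, the task is precisely to show that $q$ coequalizes $2$-morphisms; by an evident induction on the length of a zig-zag of $2$-morphisms it suffices to treat a single $2$-morphism $a\colon\Phi\Rightarrow\Phi'$ between $1$-morphisms $\Phi,\Phi'\colon(\xi,f)\to(\xi',f')$, i.e.\ to show $q(\Phi)=q(\Phi')$. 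The plan is to factor $\Phi$ and $\Phi'$ through one common \emph{cylinder} object.

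Write $\xi$ as in (\ref{eqcor}) with vertex $A$ and legs $\alpha_i,\beta$, and $\xi'$ with vertex $A'$ and legs $\alpha_i',\beta'$; let $(\gamma,\nu_i,\mu)$ resp.\ $(\gamma',\nu_i',\mu')$ be the underlying $1$-morphisms of $\Cor$, together with the coherence data of $\Phi$ resp.\ $\Phi'$ in $\SSS(A)$, and let $a\colon\gamma\Rightarrow\gamma'$ be the natural transformation underlying the $2$-morphism, classified by a functor $\hat a\colon A\times\Delta_1\to A'$ restricting to $\gamma$ over $A\times\{0\}$ and to $\gamma'$ over $A\times\{1\}$. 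Let $\pr\colon A\times\Delta_1\to A$ and $p_0,p_1\colon A\to A\times\Delta_1$ be the projection and the two inclusions, let $\widehat\xi$ be the multicorrespondence with vertex $A\times\Delta_1$ and legs $\alpha_i\pr,\beta\pr$, and equip it with $\pr^*f$. Using $\pr p_t=\id_A$ one has tautological $1$-morphisms $\rho_t\colon(\xi,f)\to(\widehat\xi,\pr^*f)$ over $p_t$ and $\sigma\colon(\widehat\xi,\pr^*f)\to(\xi,f)$ over $\pr$ (with identity natural transformations and identity coherence data), and $\sigma\rho_0=\sigma\rho_1=\id_{(\xi,f)}$. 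By the compatibilities of $a$ with the $\nu_i,\mu$ and with the coherence data, the transformation $a$ organizes $\nu_i$ and $\nu_i'$ resp.\ $\mu$ and $\mu'$ into natural transformations $\widehat\nu_i\colon\alpha_i\pr\Rightarrow\alpha_i'\hat a$ resp.\ $\widehat\mu\colon\beta'\hat a\Rightarrow\beta\pr$ whose restrictions along $p_0$ are $\nu_i,\mu$ and along $p_1$ are $\nu_i',\mu'$. Together with $\hat a$ and a coherence datum over $A\times\Delta_1$ still to be supplied, these will form a $1$-morphism $\widehat\Phi\colon(\widehat\xi,\pr^*f)\to(\xi',f')$ with $\widehat\Phi\rho_0=\Phi$ and $\widehat\Phi\rho_1=\Phi'$.

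Two things then remain to check. First, the coherence datum of $\widehat\Phi$: the coherence data of $\Phi$ and of $\Phi'$ are morphisms in $\Hom$-categories of $\SSS(A)$ that, under restriction along $\pr p_t=\id_A$, are the values at $t\in\{0,1\}$ of a morphism in $\Fun(\Delta_1,\SSS(A))$ between $\dia_{\Delta_1}(\pr^*f)$ and $\dia_{\Delta_1}$ of the appropriate pullback of $f'$ (the naturality square being precisely the compatibility of $a$); applying fullness of $\dia_{\Delta_1}$ on $\Hom$-categories --- the exact part of strongness of $\SSS$ that the footnote points to --- lifts this to a morphism over $A\times\Delta_1$, yielding the datum (if $\SSS$ is a usual $1$-pre-multiderivator the coherence is a property and there is nothing to lift). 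Second, $\sigma\in\mathcal{W}$: by (FDer0~left) the coCartesian push-forward $(\pr^*f)_\bullet$ commutes with $\pr^*$; since $\Delta_1$ has a terminal object one has $\pr_!\cong p_1^*$, hence $\pr_!\pr^*\cong\id$; combining this with the functoriality of $(-)_!$ identifies $(\widehat\xi,\pr^*f)_\bullet=(\beta\pr)_!(\pr^*f)_\bullet(\pr^*\alpha_1^*-,\dots)$ naturally with $\beta_!f_\bullet(\alpha_1^*-,\dots)=(\xi,f)_\bullet$, and this identification is $\sigma_\bullet$; so $\sigma_\bullet$ is an isomorphism for every fibered multiderivator over $\SSS$, i.e.\ $\sigma\in\mathcal{W}$. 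Then $q(\sigma)$ is invertible, so $q(\sigma)q(\rho_0)=q(\sigma)q(\rho_1)=\id$ forces $q(\rho_0)=q(\rho_1)$, whence $q(\Phi)=q(\widehat\Phi)q(\rho_0)=q(\widehat\Phi)q(\rho_1)=q(\Phi')$. I expect the only genuine obstacle to be the lifting of the coherence datum over the cylinder in the first point, and that is exactly where strongness of $\SSS$ is used; everything else is formal bookkeeping with (co)Cartesian push-forwards together with the standard derivator fact that a homotopy colimit over a category with a terminal object is evaluation at that object.
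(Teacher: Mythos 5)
Your proposal is essentially the same argument as the paper's: encode the $2$-morphism by a cylinder multicorrespondence with vertex $A\times\Delta_1$, produce the interpolating $\widehat\nu_i,\widehat\mu$, lift the coherence datum along $\dia_{\Delta_1}$ using strongness, and observe that the projection $\sigma$ down to the original correspondence lies in $\mathcal{W}$, from which the two restrictions are identified in the localization. The only difference is presentational: you spell out the retraction bookkeeping $\sigma\rho_0=\sigma\rho_1=\id$ and the derivator computation showing $\sigma\in\mathcal{W}$, which the paper leaves to the reader as ``one checks''.
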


\begin{proof}
2-morphisms $(\gamma, \nu_1, \dots, \nu_n, \mu, \Xi) \rightarrow (\gamma', \nu_1', \dots, \nu_n', \mu', \Xi')$ in $\Hom((\alpha_1, \dots, \alpha_n, \beta, f), (\alpha_1', \dots, \alpha_n', \beta', f'))$ 
 given by $\delta: \gamma \Rightarrow \gamma'$ can be encoded by a diagram
\[ \xymatrix{ & & & A \times \Delta_1 \ar[llld]_{\alpha_1 \pr_1} \ar[dd]^{\delta} \ar[ld]^{\alpha_n \pr_1} \ar[rd]^{\beta \pr_1} \\
I_1  & \cdots & I_n  &  & J  \\
& & & A' \ar[lllu]^{\alpha_1'}  \ar[lu]_{\alpha_n'} \ar[ru]_{\beta'}
}  \]
and appropriate
\[ \widetilde{\nu}_i:  \alpha_i  \pr_1 \Rightarrow  \alpha_i' \delta   \quad 
 \widetilde{\mu}:    \beta'  \delta \Rightarrow  \beta  \pr_1
 \]
point-wise in $\Delta_1$ equal to $\nu_i$, $\nu_i'$, resp.\@ to $\mu$, $\mu'$, 
and a 2-morphism $\widetilde{\Xi}$ in
\[ \xymatrix{  \delta^* (\alpha_1')^* S_1 ,\dots, \delta^* (\alpha_n')^* S_n  \ar@{<-}[d] \ar[r]^-{\delta^* f'} &  \delta^* (\beta')^* T \ar[d]  \\
\pr_1^* \alpha_1^* S_1, \dots, \pr_1^*  \alpha_n^* S_n \ar[r]^-{\pr_1^* f} \ar@{}[ru]|{\Uparrow^{\widetilde{\Xi}}} & \pr_1^*  \beta^*  T
 }   \]
 point-wise in $\Delta_1$ equal to $\Xi$ and $\Xi'$. The 2-morphism $\widetilde{\Xi}$ is constructed in the following way: Let $F$ and $F'$ be the two compositions 
 $\pr_1^* \alpha_1^* S_1, \dots, \pr_1^*  \alpha_n^* S_n \rightarrow \pr_1^*  \beta^*  T$. 
 $\Xi'$ and $\Xi$ together with the compatibility of $\delta$ with them \cite[3.5.3]{Hor16} implies that we have 
 a 2-morphism in $\Fun(\Delta_1, \SSS(A))$
 \[ \dia_{\Delta_1}(F) \Rightarrow \dia_{\Delta_1}(F'). \]
 By strongness, it lifts to a morphism
 \[ \widetilde{\Xi}: F \Rightarrow F'. \]

 Furthermore, one checks that the projection (where the two multicorrespondences are equipped with $\pr_1^* f$, and $f$, respectively)
\[ \xymatrix{ & & & A \times \Delta_1 \ar[llld]_{\alpha_1 \pr_1} \ar[dd]^{\pr_1} \ar[ld]^{\alpha_n \pr_1} \ar[rd]^{\beta \pr_1} \\
I_1  & \cdots & I_n  &  & J  \\
& & & A \ar[lllu]_{\alpha_1}  \ar[lu]_{\alpha_n} \ar[ru]_{\beta}
}  \]
is in $\mathcal{W}$. This shows $(\gamma, \Xi) = (\gamma', \Xi')$ in $\Cor_{\SSS}((I_1, S_1), \dots, (I_n, S_n); (J, T))[\mathcal{W}^{-1}]$. 
\end{proof}

\begin{PAR}\label{PARDIACORHAT}
We denote by $\widehat{\Dia}^{\cor}(\SSS)$ the corresponding 2-multicategory\footnote{strictified as in \cite[Definition~3.8]{Hor16} using only multicorrespondences in which $\alpha_1 \times \dots \times \alpha_n$ is a fibration and $\beta$ is an opfibration and a choice of associative fiber products, cf.\@ also \cite[Remark~5.3]{Hor16}} whose morphism categories are (equivalent to) $\Cor_{\SSS}((I_1, S_1), \dots, (I_n, S_n); (J, T))[\mathcal{W}^{-1}]$. Tautologically, the pseudo-functor induced by a left fibered multiderivator $\DD \rightarrow \SSS$ factors 
\[ \Dia^{\cor}(\SSS) \rightarrow \widehat{\Dia}^{\cor}(\SSS) \rightarrow \mathcal{CAT}. \]

Recall that we have strict functors
\[ \xymatrix{  \Fun(I_1^{\op} \times \cdots \times I_n^{\op} \times J, \Dia) \ar@<3pt>[rr]^-\Xi && \ar@<3pt>[ll]^-\Pi \Cor(I_1, \dots, I_n; J)  } \]
equipped with strict natural transformations
\[ \kappa: \Xi \circ  \Pi \Rightarrow  \id_{\Cor(I_1, \dots, I_n; J)} \quad r:  \Pi \circ  \Xi  \Rightarrow \id_{\Fun(I_1^{\op} \times \cdots \times I_n^{\op} \times J, \Dia)}.  \]

The functor $\Xi$ maps a functor $F$ to $\int \nabla F$ which is equipped with a fibration to $I_1 \times \cdots \times I_n$ and an opfibration to $J$. 
The functor $\Pi$ maps a correspondence $\xi$ as in (\ref{eqcor}) to the functor $i_1, \dots, i_n, j \mapsto (i_1, \dots, i_n) \times_{/I_1 \times \cdots \times I_n} A \times_{/J} j$ contravariant in $I_1, \dots, I_n$ and covariant in $J$.
The 2-morphisms $r$ and $\kappa$ will be recalled later. 

We will show below that the first is in $\mathcal{W}$ and the second in $\Xi^{-1}(\mathcal{W})$. 
We have thus an equivalence (using that $\mathcal{W}$ satisfies 2-out-of-3 by definition)
\[  \Fun(I_1^{\op} \times \cdots \times I_n^{\op} \times J, \Dia)[\Xi^{-1}(\mathcal{W})^{-1}] \cong \Cor(I_1, \dots, I_n; J)[\mathcal{W}^{-1}].  \] 
In \cite[Proposition~4.6]{Hor15b} it was erroneously claimed that this is already an equivalence of the level of $\tau_1$-categories. 
We proceed to prove the (corrected) statement more generally for fibered situations. 

For this purpose define  $\Fun_{\SSS; S_1, \dots, S_n; T}(I_1^{\op} \times \cdots \times I_n^{\op} \times J, \Dia)$ as the (strict) pull-back of 
\[ \Cor_{\SSS}((I_1, S_1), \dots, (I_n, S_n); (J, T)) \] along 
$\Xi$. Equivalently, it is the 2-category  obtained by the Grothendieck construction from the composition
\[ \xymatrix{ \Fun(I_1^{\op} \times \cdots \times I_n^{\op} \times J, \Dia) \ar[r]^-{\Xi} & \Cor(I_1, \dots, I_n; J) \ar[r]^-{\text{(\ref{eqcorfunctor})}} & \mathcal{CAT} } \]
 \end{PAR}

\begin{PROP}\label{PROPALTCOR}
There are strict functors
\begin{equation}\label{eqaltcor} \xymatrix{  \Fun_{\SSS; S_1, \dots, S_n; T}(I_1^{\op} \times \cdots \times I_n^{\op} \times J, \Dia) \ar@<3pt>[rr]^-{\widetilde{\Xi}} && \ar@<3pt>[ll]^-{\widetilde{\Pi}} \Cor_{\SSS}((I_1, S_1), \dots, (I_n, S_n); (J, T))  } \end{equation}
equipped with strict natural transformations  
\[  \widetilde{r}: \widetilde{\Pi} \circ  \widetilde{\Xi} \Rightarrow \id  \qquad \widetilde{\kappa}: \widetilde{\Xi} \circ  \widetilde{\Pi} \Rightarrow \id .     \]
Both morphisms are point-wise in $\mathcal{W}$, resp.\@ in $\widetilde{\Xi}^{-1}(\mathcal{W})$. 
We thus get an equivalence of categories with weak equivalences
\[  (\Fun_{\SSS; S_1, \dots, S_n; T}(I_1^{\op} \times \cdots \times I_n^{\op} \times J, \Dia), \widetilde{\Xi}^{-1}(\mathcal{W})) \cong (\Cor_{\SSS}((I_1, S_1), \dots, (I_n, S_n); (J, T)), \mathcal{W}). \]
Furthermore, $\widetilde{\Xi}^{-1}(\mathcal{W})$ consists of those (1-)morphisms $(\mu, \Xi): (F, f_F) \Rightarrow (G, f_G)$ defined by 
 $\mu: F \Rightarrow G$ and
 \[ \xymatrix{  (\int \nabla \gamma)^* \pi_{1,G}^* S_1 ,\dots, (\int \nabla \gamma)^* \pi_{n,G}^* S_n  \ar@{=}[d] \ar[rr]^-{(\int \nabla \gamma)^* f_G}  && (\int \nabla \gamma)^* \pi_{J,G}^* T \ar@{=}[d]  \\
\pi_{1,F}^* S_1, \dots,  \pi_{n,F}^* S_n \ar[rr]_-{f_F} \ar@{}[rru]|{\Uparrow^{\Xi}} && \pi_{J,F}^*  T
 }   \]
 where $F, G:  I_1^{\op} \times \cdots \times I_n^{\op} \times J \to \Dia$ are functors equipped with $f_F: \pi_{1,F}^* S_1, \dots \pi_{n,F}^* S_n \rightarrow \pi_{J,F}^* T$, and  $f_G: \pi_{1,G}^* S_1, \dots \pi_{n,G}^* S_n \rightarrow \pi_{J,G}^* T$, respectively, such that for all $i_1, \dots, i_n, j$ the functor $\mu_f: F_f \rightarrow G_f$  (value of $\mu$ at $i_1, \dots, i_n, j$) together with $\Xi|_{F_f}$
 induce an isomorphism
\[ p_{F_f,!} f_{F_f,\bullet}(p_{F_f}^* - , \dots, p_{F_f}^* -) \rightarrow  
p_{G_f,!} f_{G_f,\bullet}(p_{G_f}^* - , \dots, p_{G_f}^* -)  \]
for every fibered multiderivator over $\SSS$. 
\end{PROP}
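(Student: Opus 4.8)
The plan is to obtain $\widetilde\Xi$, $\widetilde\Pi$, $\widetilde r$, $\widetilde\kappa$ by lifting the non-fibered functors $\Xi,\Pi$ and transformations $\kappa,r$ of \ref{PARDIACORHAT} to the Grothendieck constructions over the pseudo-functor (\ref{eqcorfunctor}). Since $\Fun_{\SSS;S_1,\dots,S_n;T}(I_1^{\op}\times\cdots\times I_n^{\op}\times J,\Dia)$ is by definition the strict pull-back of $\Cor_{\SSS}((I_1,S_1),\dots,(I_n,S_n);(J,T))$ along $\Xi$, the functor $\widetilde\Xi$ is simply the tautological projection: on an object $(F,f_F)$, with $f_F\in\Hom_{\SSS(\int\nabla F)}(\pi_{1,F}^*S_1,\dots;\pi_{J,F}^*T)$, it returns the correspondence $\Xi(F)=\int\nabla F$ equipped with the same $\SSS$-datum $f_F$, and on morphisms it applies $\Xi$ to the diagram part and the identity to the $\SSS$-part. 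For $\widetilde\Pi$, given a multicorrespondence $\xi$ as in (\ref{eqcor}) together with $f\in\Hom_{\SSS(A)}(\alpha_1^*S_1,\dots;\beta^*T)$, I take the diagram part to be $\Pi(\xi)$ and the $\SSS$-part to be $\kappa_\xi^*f$, the pull-back of $f$ along the structure morphism $\kappa_\xi\colon\Xi\Pi(\xi)=\int\nabla\Pi(\xi)\to A$ underlying $\kappa$; compatibility with the legs is exactly the statement that $\kappa$ is a strict natural transformation $\Xi\circ\Pi\Rightarrow\id$, and strictness of $\Pi$ and $\kappa$ yields functoriality of $\widetilde\Pi$.

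The transformations are then essentially free. By construction $\widetilde\Xi\widetilde\Pi(\xi,f)=(\int\nabla\Pi(\xi),\kappa_\xi^*f)$, and $\kappa_\xi$ together with the identity 2-morphism on the $\SSS$-part is a morphism in $\Cor_{\SSS}$ to $(\xi,f)$; these assemble into $\widetilde\kappa\colon\widetilde\Xi\circ\widetilde\Pi\Rightarrow\id$. Dually, $\widetilde\Pi\widetilde\Xi(F,f_F)=(\Pi\Xi(F),\kappa_{\Xi(F)}^*f_F)$, and one checks, using the explicit construction of $r$ and $\kappa$ in the non-fibered case, that $r_F\colon\Pi\Xi(F)\Rightarrow F$ transports $\kappa_{\Xi(F)}^*f_F$ to $f_F$ (i.e.\@ that $\int\nabla r_F$ agrees with $\kappa_{\Xi(F)}$ as a morphism over the legs), so that $r_F$ lifts to a morphism in $\Fun_{\SSS;\dots}$; these give $\widetilde r\colon\widetilde\Pi\circ\widetilde\Xi\Rightarrow\id$. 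Once we know $\widetilde\kappa$ is point-wise in $\mathcal{W}$ and $\widetilde r$ point-wise in $\widetilde\Xi^{-1}(\mathcal{W})$, the asserted equivalence of categories-with-weak-equivalences is formal from the fact that $\mathcal{W}$ satisfies 2-out-of-3.

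The heart of the matter, and the step I expect to be the main obstacle, is showing that $\widetilde\kappa$ lies point-wise in $\mathcal{W}$: for every fibered multiderivator $\DD\to\SSS$ one must prove that the induced transformation
\[ (\beta\kappa_\xi)_!^{(T)}(\kappa_\xi^*f)_\bullet((\alpha_1\kappa_\xi)^*-,\dots,(\alpha_n\kappa_\xi)^*-)\longrightarrow\beta_!^{(T)}f_\bullet(\alpha_1^*-,\dots,\alpha_n^*-) \]
is an isomorphism. Writing $(\beta\kappa_\xi)_!=\beta_!(\kappa_\xi)_!$ and commuting pull-backs past the multi-valued functor $f_\bullet$ via (FDer0)/(FDer5), this reduces to the statement that the counit $(\kappa_\xi)_!\kappa_\xi^*\Rightarrow\id$ is an isomorphism on the objects at hand. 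Here one exploits the explicit shape $\Pi(\xi)(i_1,\dots,i_n,j)=(i_1,\dots,i_n)\times_{/I_1\times\cdots\times I_n}A\times_{/J}j$: the projections from the resulting comma-type diagrams are (op)fibrations in the sense of \ref{PARTW2}, so the homotopy Kan extensions entering $(\kappa_\xi)_!$ are computed point-wise over $A$ by (FDer4 left/right), and the relevant fibers carry initial, resp.\@ terminal objects, whence these point-wise Kan extensions collapse to evaluation and leave the identity. This is precisely the fibered generalization of the (known but, as recalled in \ref{PARDIACORHAT}, over-claimed in \cite[Proposition~4.6]{Hor15b}) fact that $\kappa$ and $r$ are ``weak equivalences''; the only genuine novelty is bookkeeping the several contravariant/covariant legs and their interaction with $f_\bullet$. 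Finally, the explicit description of $\widetilde\Xi^{-1}(\mathcal{W})$ drops out by unwinding: a morphism $(\mu,\Xi)\colon(F,f_F)\Rightarrow(G,f_G)$ is in $\widetilde\Xi^{-1}(\mathcal{W})$ iff $\int\nabla\mu$ with the transported $\SSS$-data is in $\mathcal{W}$; restricting over each $(i_1,\dots,i_n,j)$, $\int\nabla\mu$ becomes the comma-category map $\mu_f\colon F_f\to G_f$, whose induced Kan-extension transformation is exactly the displayed map $p_{F_f,!}f_{F_f,\bullet}(p_{F_f}^*-,\dots)\to p_{G_f,!}f_{G_f,\bullet}(p_{G_f}^*-,\dots)$, and (Der2)-conservativity of the fibers reduces the global isomorphism condition to this point-wise one.
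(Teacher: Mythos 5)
The architecture of your argument (lift $\Xi,\Pi,\kappa,r$ to the Grothendieck constructions, conclude via point-wise membership in $\mathcal W$ resp.\ $\widetilde\Xi^{-1}(\mathcal W)$, then unwind the point-wise criterion using (Der2)) is the right shape, and $\widetilde\Xi,\widetilde\Pi$ are set up correctly. But you have misidentified where the real difficulty lies, and the step you wave through is exactly the one the paper flags as non-trivial.

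You declare that the ``heart of the matter'' is $\widetilde\kappa\in\mathcal W$ and propose to verify it by showing the counit $(\kappa_\xi)_!\kappa_\xi^*\Rightarrow\id$ is an isomorphism because ``the relevant fibers carry initial, resp.\ terminal objects.'' That is not quite right: the fiber of $\kappa_\xi\colon\Xi\Pi(\xi)\to A$ over $a$ is of the form $(I_1\times\cdots\times I_n\times_{/\prod I_k}(\alpha_1 a,\dots,\alpha_n a))\times((\beta a)\times_{/J}J)$, i.e.\ a product of a category with a terminal object and a category with an initial object; it has \emph{neither} an initial nor a terminal object, so $(\kappa_\xi)_!\kappa_\xi^*\Rightarrow\id$ cannot be read off from (FDer4 left/right) collapsing to evaluation. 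The paper avoids this direct computation entirely: alongside $\kappa$ it also has the strict natural transformation $\tau\colon\id\Rightarrow\Xi\circ\Pi$ (the diagonal section), and both lift to $\widetilde\kappa,\widetilde\tau$ with $\widetilde\kappa\circ\widetilde\tau=\id$ and $\widetilde\tau\circ\widetilde\kappa$ connected to $\id$ by a chain of $2$-morphisms; membership in $\mathcal W$ then follows purely formally, since $2$-morphisms are sent to isomorphisms.

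The genuine gap is $\widetilde r$. You write that ``these give $\widetilde r\colon\widetilde\Pi\circ\widetilde\Xi\Rightarrow\id$'' and then assume its point-wise membership in $\widetilde\Xi^{-1}(\mathcal W)$ will follow from ``the same kind of analysis.'' It will not: unlike $\kappa$, there is no natural transformation $\id\Rightarrow\Pi\circ\Xi$ (this is precisely the correction to \cite[Proposition~4.6]{Hor15b} that the paper highlights), so neither a $\tau$-style formal argument nor a counit computation is available. The paper instead constructs an explicit zig-zag of $2$-morphisms
\[ \widetilde{\Xi}\ast\widetilde{r}\Rightarrow\widetilde{\rho}\Leftarrow\widetilde{\kappa}\ast\widetilde{\Xi}, \]
realized on the level of diagrams by three commuting maps $\Xi\Pi\Xi(F)\to\Xi(F)$ which agree up to the specified natural transformations. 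Since $\widetilde\kappa$ is already known to be in $\mathcal W$, this forces $\widetilde\Xi\ast\widetilde r\in\mathcal W$, i.e.\ $\widetilde r\in\widetilde\Xi^{-1}(\mathcal W)$. Without some such comparison your argument does not close. Finally, in the characterization of $\widetilde\Xi^{-1}(\mathcal W)$, your appeal to ``(Der2)-conservativity'' suppresses a nontrivial reduction: the paper evaluates at $j$ \emph{and} restricts the inputs to objects of the form $i_{k,!}\mathcal E_k$ (using the right adjoints and (Der2) a second time), and then identifies $j^*(\xi,f)_\bullet(i_{1,!}\mathcal E_1,\dots)$ as push-forward along an explicit composite correspondence given by the fiber of $\int\nabla F$ over $(i_1,\dots,i_n,j)$; restriction of $\int\nabla\mu$ does not simply produce $\mu_f$ without this extra bookkeeping.
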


\begin{proof}
Since the l.h.s.\@ of (\ref{eqaltcor}) is defined as pull-back we have a commutative diagram
\[ \xymatrix{
\Fun_{\SSS; S_1, \dots, S_n; T}(I_1^{\op} \times \cdots \times I_n^{\op} \times J, \Dia) \ar[r]^{\widetilde{\Xi}} \ar[d] &  \Cor_{\SSS}((I_1, S_1), \dots, (I_n, S_n); (J, T)) \ar[d] \\
\Fun(I_1^{\op} \times \cdots \times I_n^{\op} \times J, \Dia) \ar[r]^{\Xi} & \Cor(I_1, \dots, I_n; J)
}\]
The vertical functors are actually in each case a 1-fibration and 2-opfibration by definition. 
Recall the strict functor $\Pi$ between the categories in the bottom line. It sends a correspondence
\begin{equation} \label{eqcor2}
\vcenter{ \xymatrix{ & & & A \ar[llld]_{\pi_1} \ar[ld]^{\pi_n} \ar[rd]^{\pi_J} \\
I_1  & \cdots & I_n  &  & J  } }
\end{equation}
to the functor
\[ i_1, \dots, i_n, j \mapsto (i_1, \dots, i_n) \times_{/I_1 \times \cdots \times I_n} A \times_{/J} j. \]
We have strict natural transformations
\[ \xymatrix{ \Xi \circ \Pi \ar@<2pt>[r]^-\kappa & \ar@<2pt>[l]^-\tau \id } \]
given by the obvious 2-commutative diagram
\[ \xymatrix{ & & & I_1 \times \cdots \times I_n \times_{/I_1 \times \cdots \times I_n} A \times_{/J} J \ar[llld]_{p_1} \ar[ld]^{p_n} \ar@/^10pt/[dd]^{\kappa} \ar@{<-}@/_10pt/[dd]_{\tau} \ar[rd]^{p_J} \\
I_1  & \cdots & I_n  &  & J   \\
 & & & A  \ar[lllu]^{\pi_1} \ar[lu]_{\pi_n} \ar[ru]_{\pi_J} 
}  \]
We can thus define a pseudo-functor $\widetilde{\Pi}$ on $\Cor_{\SSS}((I_1, S_1), \dots, (I_n, S_n); (J, T))$ mapping a correspondence $\xi$ as in (\ref{eqcor2}) together with $f \in \Hom(\pi_1^*S_1, \dots, \pi_n^*S_n; \pi_J^* T)$ 
to the functor
\[ i_1, \dots, i_n, j \mapsto (i_1, \dots, i_n) \times_{/I_1 \times \cdots \times I_n} A \times_{/J} j \]
together with the morphism (actually the pull-back along $\kappa$)
\[ p_1^* S_1, \dots, p_n^* S_n \rightarrow \kappa^* \pi_1^* S_1, \dots, \kappa^* \pi_n^* S_n \rightarrow \kappa^* \pi_J^* T \rightarrow p_J^* T.  \]
and it follows from the axioms of a fibered  multiderivator that this is in $\mathcal{W}$. 

$\kappa$ and $\tau$ induce strict 1-morphisms
\[ \xymatrix{ \widetilde{\Xi} \circ \widetilde{\Pi} \ar@<2pt>[r]^-{\widetilde{\kappa}} & \ar@<2pt>[l]^-{\widetilde{\tau}} \id }.  \]
We have $\widetilde{\kappa} \circ \widetilde{\tau} = \id$ and $\widetilde{\tau} \circ \widetilde{\kappa}$ is related to $\id$ by a chain of 2-morphisms. Thus 
$\widetilde{\kappa}$ and $\widetilde{\tau}$ are point-wise in $\mathcal{W}$. 

The composition $\widetilde{\Pi} \circ \widetilde{\Xi}$ applied to a functor $F$ together with $\pi_1^*S_1, \dots, \pi_n^*S_n \rightarrow \pi_J^* T$ 
is the following functor
\[ i_1, \dots, i_n, j \mapsto (i_1, \dots, i_n) \times_{/I_1 \times \cdots \times I_n} \left( \int \nabla F \right) \times_{/J} j \] 
together with the morphism
\[ p_1^* S_1, \dots, p_n^* S_n \rightarrow (\kappa \ast \Xi)^* \pi_1^* S_1, \dots, (\kappa \ast \Xi)^* \pi_n^* S_n \rightarrow (\kappa \ast \Xi)^* \pi_J^* T \rightarrow p_J^* T.  \]

We have a morphism $r:  \Pi \circ \Xi \to \id$ mapping $o \in F(i_1', \dots, i_n', j')$ together with $\alpha_i: i_1 \rightarrow i_1'$ and $\beta: j' \rightarrow j$ to
$F(\alpha_1, \dots, \alpha_n, \beta)\sigma \in F(i_1, \dots, i_n, j)$. 
Contrary to the false claim in \cite[Proposition~4.6]{Hor15b} there is no (strict or non-strict) natural transformation in the other direction in general.
We claim the existence of a chain of 2-morphisms
\[ \widetilde{\Xi} \ast \widetilde{r} \Rightarrow \widetilde{\rho} \Leftarrow \widetilde{\kappa} \ast \widetilde{\Xi}  \]
thus showing that $r$ is point-wise in $\widetilde{\Xi}^{-1}(\mathcal{W})$ because $\widetilde{\kappa}$ is point-wise in $\mathcal{W}$. 

On the level of diagrams the three 1-morphisms
\[ \xymatrix{ 
 & & & I_1 \times \cdots \times I_n \times_{/I_1 \times \cdots \times I_n} \int \nabla F \times_{/J} J  \ar[llld]^{p_1} \ar[ld]^{p_n} \ar[rd]^{p_J} 
 \ar@/_15pt/[dd]_{\Xi \ast r}  \ar[dd]^\rho  \ar@/^15pt/[dd]^{\kappa \ast \Xi} 
 \\
I_1  & \cdots & I_n  &  & J   \\
& & & \int \nabla F \ar[lllu]^{\pi_1} \ar[lu]_{\pi_n}\ar[ru]^{\pi_J} 
}  \]
 map a tupel $(\{ \alpha_k: i_k \rightarrow i_k'\}_k, \beta: j' \rightarrow j, o \in F(i_1', \dots, i_n', j'))$ respectively to
\[ (i_1, \dots, i_n, j, F(\alpha_1, \dots, \alpha_n, \beta)o)  \quad (i_1', \dots, i_n', j, F(\beta)o) \quad  (i_1', \dots, i_n', j', o)   \]
and noting that the relevant squares (cf.\@ \cite[3.5.2.]{Hor16}) are commutative on the nose already.

It remains to show the point-wise characterization of $\widetilde{\Xi}^{-1} \mathcal{W}$. By (Der2) the morphism
$(\xi, f)_\bullet \rightarrow (\xi', f')_\bullet$ is an isomorphism if it is point-wise the case and, using the right adjoints for each slot and (Der2) again, if it is an isomorphism on tupels of objects of the form 
$i_{1,!}\mathcal{E}_1, \dots,  i_{n,!}\mathcal{E}_n$. I.e.\@ it is an isomorphism if 
\[ j^*(\xi, f)_\bullet(i_{1,!}^{(S_1)}\mathcal{E}_1, \dots, i_{n,!}^{(S_n)}\mathcal{E}_n) \rightarrow j^*(\xi', f')_\bullet(i_{1,!}^{(S_1)}\mathcal{E}_1, \dots, i_{n,!}^{(S_n)}\mathcal{E}_n)   \]
is an isomorphism for all $i_1, \dots, i_n, j$. 
The functors $j^*$ and $i_{k,!}^{(S_k)}$ are given as push-forward along the correspondences
\[ [i_k^{(S_{k,i_k})}]: \vcenter{ \xymatrix{ & i_k \times_{/I} I  \ar[ld]_{p} \ar[rd]^{\iota} \\
i_1   & ; & I 
} } \quad
 [j^{(T_{j})}]':  \vcenter{ \xymatrix{ & J \times_{/J} j  \ar[ld]_{\iota} \ar[rd]^{p} \\
J   & ; & j
} } \]
equipped with the obvious 1-morphisms
\[  p^*S_{k,i_k} \rightarrow \iota^*S_k \quad \iota^*S  \rightarrow p^*T_{j}.   \]
The multicorrespondence  
\[ [(j)^{(T_{j})}]' \circ (\xi, f) \circ( [(i_1)^{(S_{j})}], \dots, [(i_n)^{(S_{i_n})}])    \]
is nothing else but
\[ \xymatrix{ & & & i_1 \times \cdots \times i_n \times_{/I_1 \times \cdots \times I_n} \int \nabla F \times_{/J} j \ar[llld]^{} \ar[ld]^{}\ar[rd]^{} \\
i_1  & \cdots & i_n  &  & j   
}  \]
together with
\[ p^*S_{1,i_1}, \dots, p^*S_{n,i_n} \rightarrow (\kappa \ast \Xi)^* \pi^*_1S_1, \dots, (\kappa \ast \Xi)^*  \pi_n^* S_n \rightarrow (\kappa \ast \Xi)^* \pi_J^* T  \rightarrow p^*T_j \]
 and we have 1-morphisms 
\[ \xymatrix{ & & & i_1 \times \cdots \times i_n \times_{/I_1 \times \cdots \times I_n} \int \nabla F \times_{/J} j \ar[llld]^{} \ar[ld]^{}\ar[rd]^{} \ar@/^15pt/[dd]^R \ar@{<-}@/_15pt/[dd]_L   \\
i_1  & \cdots & i_n  &  & j   \\
 & & & F(i_1, \dots, i_n,j)  \ar[lllu]^{} \ar[lu]^{} \ar[ru]^{}  
}  \]
where $R$ is defined similar to $r$ above.
One checks that $R$ and $L$ promote to morphisms $\widetilde{R}$ and $\widetilde{L}$ where in the bottom multicorrespondence the morphism
\[ p^*S_1, \dots, p^*S_n \rightarrow p^*T \]
is chosen given by the restriction of $f$ to the fiber above $i_1, \dots, i_n, j$.
One checks also that we have
$\widetilde{R}\widetilde{L} = \id$ and $\widetilde{L}\widetilde{R} \sim \id$ up to a chain of 2-morphisms. 
The claim follows.  
\end{proof}

\begin{KOR}
For $\SSS=\{\cdot\}$ the final pre-multiderivator, and $\Dia=\Cat$, the class $\Xi^{-1}(\mathcal{W})$ consists of those morphisms $\mu: F \Rightarrow G$ in $\Fun(I_1^{\op} \times \cdots \times I_n^{\op} \times J, \Cat)$ such that $\mu$ evaluated at $i_1, \dots, i_n, j$, 
denoted $\mu_f: F_f \rightarrow G_f$, induces a weak equivalence
 $N(F_f) \rightarrow N(G_f)$ for all $i_1, \dots, i_n$ and $j$. Thus we have an equivalence 
 \[ \Fun(I_1^{\op} \times \cdots \times I_n^{\op} \times J, \Dia)[ (\Xi^{-1}(\mathcal{W}))^{-1}]  \cong \HH(I_1^{\op} \times \cdots \times I_n^{\op} \times J)   \]
 where $\HH$ is the derivator of spaces.
\end{KOR}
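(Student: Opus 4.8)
The plan is to specialize the point-wise description of $\widetilde{\Xi}^{-1}(\mathcal{W})$ from Proposition~\ref{PROPALTCOR} to $\SSS = \{\cdot\}$, to simplify the resulting condition using that fibered multiderivators over $\{\cdot\}$ are closed monoidal derivators, and to conclude with Thomason's comparison between the homotopy theory of $\Cat$ and that of spaces. Write $K := I_1^{\op} \times \cdots \times I_n^{\op} \times J$. For $\SSS = \{\cdot\}$ one has $\widetilde{\Xi} = \Xi$ and $\Fun_{\SSS; S_1,\dots,S_n; T}(K, \Dia) = \Fun(K, \Cat)$, the $f$-datum of Proposition~\ref{PROPALTCOR} is trivial, and the multivalued functor attached to a correspondence $\{\cdot\}, \dots, \{\cdot\} \leftarrow C \to \{\cdot\}$ is $\mathcal{E}_1, \dots, \mathcal{E}_n \mapsto p_{C,!}(p_C^* \mathcal{E}_1 \otimes \cdots \otimes p_C^* \mathcal{E}_n)$, where $p_C : C \to \{\cdot\}$ and $\otimes$ is the $n$-ary tensor of the fiber $\DD(C)$. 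Thus Proposition~\ref{PROPALTCOR} says: $\mu : F \Rightarrow G$ lies in $\Xi^{-1}(\mathcal{W})$ iff, for every object $f = (i_1, \dots, i_n, j)$ of $K$, the functor $\mu_f : F_f \to G_f$ (writing $F_f := F(f)$, $G_f := G(f)$) induces an isomorphism $p_{F_f,!}(p_{F_f}^* - \otimes \cdots \otimes p_{F_f}^* -) \to p_{G_f,!}(p_{G_f}^* - \otimes \cdots \otimes p_{G_f}^* -)$ for every fibered multiderivator $\DD \to \{\cdot\}$.

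Next I would reduce this to constant diagrams. The fibers of such a $\DD$ are closed monoidal categories, $p_C^*$ is monoidal (so $p_C^* 1 = 1_C$), and, $p_C : C \to \{\cdot\}$ being (trivially) an opfibration, the projection formula $p_{C,!}(X \otimes p_C^* Y) \cong (p_{C,!} X) \otimes Y$ holds by (FDer5 left) (it is the isomorphism $(\otimes, !)$ of (\ref{eqiso})). Hence, with $\mathcal{E} := \mathcal{E}_1 \otimes \cdots \otimes \mathcal{E}_n$, one has $p_C^* \mathcal{E}_1 \otimes \cdots \otimes p_C^* \mathcal{E}_n = p_C^* \mathcal{E}$ and $p_{C,!}(p_C^* \mathcal{E}) = p_{C,!}(1_C \otimes p_C^* \mathcal{E}) \cong (p_{C,!} 1_C) \otimes \mathcal{E}$, naturally in $C$. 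Therefore the condition above is equivalent to: $p_{F_f,!} 1_{F_f} \to p_{G_f,!} 1_{G_f}$ is an isomorphism in $\DD(\cdot)$ for every fibered multiderivator $\DD \to \{\cdot\}$.

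To identify this class I would argue in both directions. Taking $\DD = \HH$, the derivator of spaces with its Cartesian symmetric monoidal structure (unit the point), one has $p_{C,!} 1_C \simeq \hocolim_C(\ast) \simeq N(C)$, so the condition forces $N(\mu_f) : N(F_f) \to N(G_f)$ to be a weak equivalence. Conversely, for \emph{every} fibered multiderivator $\DD \to \{\cdot\}$ the object $p_{C,!} 1_C$ depends, compatibly with functors $C \to C'$, only on the weak-homotopy type of $N(C)$: by the Bousfield-Kan formula $p_{C,!} 1_C$ is the realization of the simplicial object $[m] \mapsto \coprod_{N(C)_m} 1$ of $\DD(\cdot)$, i.e.\@ the copower of the unit by the simplicial set $N(C)$ --- equivalently, the image of $N(C)$ under the essentially unique colimit-preserving morphism of derivators from $\HH$ to the underlying derivator of $\DD$ sending the point to $1$, $\HH$ being the free cocomplete derivator on the point. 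Hence $N(\mu_f)$ a weak equivalence for all $f$ implies the isomorphism of the previous paragraph for all $\DD$. Combining the two implications, $\Xi^{-1}(\mathcal{W})$ is exactly the class of point-wise weak equivalences in $\Fun(K, \Cat)$.

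It then remains to identify $\Fun(K, \Cat)[W^{-1}]$ ($W$ = point-wise weak equivalences) with $\HH(K)$. By Thomason's theorem (here $\Dia = \Cat$ is used) the Thomason model structure on $\Cat$ is combinatorial and Quillen equivalent to $\mathcal{SET}^{\Delta^{\op}}$; hence the projective model structure on $\Fun(K, \Cat)$ exists and is Quillen equivalent to $\Fun(K, \mathcal{SET}^{\Delta^{\op}})$ with point-wise weak equivalences, whose homotopy category is by definition $\HH(K)$; passing to homotopy categories gives $\Fun(K, \Cat)[W^{-1}] \cong \HH(K)$. The main obstacle is the converse half of the third paragraph --- that $p_{C,!} 1_C$ is a weak-homotopy invariant of $C$ for \emph{every} fibered multiderivator over $\{\cdot\}$; this is the only genuinely non-formal step, resting on the Bousfield-Kan presentation of homotopy colimits of constant diagrams (equivalently, on the universal property of $\HH$). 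The remaining ingredients --- Proposition~\ref{PROPALTCOR}, the projection-formula reduction, the case $\DD = \HH$, and Thomason's theorem --- are routine.
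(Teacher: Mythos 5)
Your proof is correct and reproduces the paper's reduction: specialize Proposition~\ref{PROPALTCOR} to $\SSS=\{\cdot\}$, use (FDer0 left)/monoidality of $p^*$ (you go one step further with the projection formula (FDer5 left) to extract $p_{C,!}1$) to see the condition depends only on the underlying derivator, and note that the direction ``$\Xi^{-1}(\mathcal{W})\subseteq$ nerve weak equivalences'' follows by testing against $\HH$. The genuine divergence is the converse inclusion. The paper disposes of it by citing Cisinski's theorem \cite{Cis04} that the minimal basic localizer on $\Cat$ is the class of nerve weak equivalences, so that for \emph{every} derivator $p_{C,!}p_C^*$ only sees the nerve weak equivalence class of $C$. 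You instead route this through the Bousfield--Kan formula for homotopy colimits of constant diagrams, equivalently the universal property of $\HH$ as the free cocomplete derivator on a point. These are legitimate substitutes --- they are theorems of comparable depth, essentially equivalent in content to the minimal-localizer theorem --- so the overall argument is sound; but be aware that the Bousfield--Kan version as you phrase it somewhat begs the question: the realization presentation $p_{C,!}1 \cong \bigl|[m]\mapsto\coprod_{N(C)_m}1\bigr|$ shows dependence on $N(C)$ as a simplicial set, while the required fact is that this assignment sends weak equivalences of simplicial sets to isomorphisms, which is exactly the nontrivial content and not a formal consequence of the formula. The parallel argument via the universal property of $\HH$ (a cocontinuous morphism $\HH\to\DD$ carrying $N(C)$ to $p_{C,!}1$) does carry that content and is the version you should lean on. Your explicit invocation of Thomason's theorem to identify the localization of $\Fun(K,\Cat)$ at point-wise nerve weak equivalences with $\HH(K)$ fills in what the paper leaves implicit.
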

\begin{proof}
A fibered multiderivator over $\SSS=\{\cdot\}$ is just a closed monoidal derivator and the morphism 
\[ p_{F(i_1, \dots, i_n, j),!} (p_{F(i_1, \dots, i_n, j)}^* \mathcal{E}_1 \otimes \cdots \otimes p_{F(i_1, \dots, i_n, j)}^*\mathcal{E}_n) \rightarrow  
p_{G(i_1, \dots, i_n, j),!} (p_{G(i_1, \dots, i_n, j)}^* \mathcal{E}_1\otimes \cdots \otimes p_{G(i_1, \dots, i_n, j)}^*\mathcal{E}_n)  \]
is, using (FDer0 left), isomorphic to 
\[ p_{F(i_1, \dots, i_n, j),!} p_{F(i_1, \dots, i_n, j)}^* (\mathcal{E}_1 \otimes \cdots \otimes \mathcal{E}_n) \rightarrow  
p_{G(i_1, \dots, i_n, j),!} p_{G(i_1, \dots, i_n, j)}^* (\mathcal{E}_1\otimes \cdots \otimes \mathcal{E}_n)  \]
Hence the condition is independent of the monoidal structure and the statement follows from Cisinski's theorem \cite{Cis04}.
\end{proof}

The description of the morphism categories in $\widehat{\Dia}^{\cor}$ as $\HH(I_1^{\op} \times \cdots \times I_n^{\op} \times J)$ is very convenient for duality questions. 
We immediately get that $\widehat{\Dia}^{\cor}$ is 1-bifibered over the final multicategory $\{ \cdot \}$ or, in other words, it is closed monoidal with the 
product given by $I \boxtimes J = I \times J$ and internal hom given by $\mathbf{HOM}(I, J) = I^{\op} \times J$. Indeed, we have canonical equivalences
\[  \Hom_{\widehat{\Dia}^{\cor}}(I, J; K) \cong \HH(I^{\op} \times J^{\op} \times K) \cong  \Hom_{\widehat{\Dia}^{\cor}}(I; J^{\op} \times K) \cong  \Hom_{\widehat{\Dia}^{\cor}}(I \times J; K)  \]  
and one has to verify the following
\begin{LEMMA}
The above isomorphisms are induced by composition with a coCartesian morphism\footnote{in fact the preimage of the identity under $\Hom_{\widehat{\Dia}^{\cor}}(I, J; I \times J) \cong \Hom_{\widehat{\Dia}^{\cor}}(I \times J; I \times J)$} in 
\[  \Hom_{\widehat{\Dia}^{\cor}}(I, J; I \times J) \]
resp.\@ by composition with a Cartesian (w.r.t.\@ the first slot) morphism\footnote{in fact the preimage of the identity under $\Hom_{\widehat{\Dia}^{\cor}}(I \times J^{\op}, J;  I) \cong \Hom_{\widehat{\Dia}^{\cor}}(I \times J; I \times J)$} in
\[ \Hom_{\widehat{\Dia}^{\cor}}(I \times  J^{\op}, J^{\op}, I). \]
\end{LEMMA}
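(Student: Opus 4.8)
The content of the lemma is the \emph{existence} of the distinguished (co)Cartesian morphisms; that composition with them induces the isomorphisms in the displayed chain is then a short computation in the strict model of \ref{PARDIACORHAT}, which we now outline.

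\emph{The tensor (coCartesian) case.} Let $u\in\Hom_{\widehat{\Dia}^{\cor}}(I,J;I\times J)$ be represented by the multicorrespondence with apex $I\times J$ and legs $\pr_I$, $\pr_J$, $\id_{I\times J}$; this is a legitimate strict representative since $\pr_I\times\pr_J=\id$ is a fibration and $\id$ is an opfibration. Applying the functor $\Pi$ of \ref{PARDIACORHAT} one gets $\Pi(u)(i,j,(i',j'))=(i,j)\times_{/I\times J}(I\times J)\times_{/I\times J}(i',j')$, the category of factorisations of $(i,j)\to(i',j')$ in $I\times J$, whose nerve is weakly equivalent to $\Hom_{I\times J}((i,j),(i',j'))$; hence under the equivalence $\Hom_{\widehat{\Dia}^{\cor}}(I,J;I\times J)\cong\HH(I^{\op}\times J^{\op}\times I\times J)$ the morphism $u$ is the image of $\id_{I\times J}$, i.e.\@ the morphism of the first footnote. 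Now take $\phi\in\Hom_{\widehat{\Dia}^{\cor}}(I\times J;K)$ with strict representative a span $I\times J\overset{\gamma}{\leftarrow}B\overset{\delta}{\rightarrow}K$, $\gamma$ a fibration, $\delta$ an opfibration. The composite $\phi\circ u$ is formed by pulling back the right leg $\id_{I\times J}$ of $u$ along $\gamma$, which reproduces $B$ (up to the canonical identification, for a suitable choice of the strictifying fibre products); so $\phi\circ u$ is represented by $I\overset{\pr_I\gamma}{\leftarrow}B$, $J\overset{\pr_J\gamma}{\leftarrow}B\overset{\delta}{\rightarrow}K$. Since the comma category $(i_1,\dots,i_n)\times_{/I_1\times\cdots\times I_n}A$ in the formula for $\Pi$ depends only on the product $I_1\times\cdots\times I_n$, the point $(i_1,\dots,i_n)$ and the leg $A\to I_1\times\cdots\times I_n$ --- not on how the product is decomposed --- we obtain the \emph{literal} equality of functors $\Pi(\phi\circ u)=\Pi(\phi)$ on $I^{\op}\times J^{\op}\times K=(I\times J)^{\op}\times K$. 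Transporting along the equivalences induced by $\Pi$ and the nerve, $(-\circ u)\colon\Hom_{\widehat{\Dia}^{\cor}}(I\times J;K)\to\Hom_{\widehat{\Dia}^{\cor}}(I,J;K)$ is the identity of $\HH(I^{\op}\times J^{\op}\times K)$; in particular it is an equivalence for all $K$, so $u$ is coCartesian over $\{\cdot\}$ and the isomorphism $\Hom_{\widehat{\Dia}^{\cor}}(I,J;K)\cong\Hom_{\widehat{\Dia}^{\cor}}(I\times J;K)$ of the chain is $(-\circ u)^{-1}$.

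\emph{The internal hom (Cartesian) case.} Dually, let $\mathrm{ev}$ be the morphism of the second footnote, i.e.\@ the preimage of $\id_{I\times J}$ under the reindexing equivalence $\Hom_{\widehat{\Dia}^{\cor}}(I\times J^{\op},J;I)\cong\HH(I^{\op}\times J\times J^{\op}\times I)\cong\HH((I\times J)^{\op}\times(I\times J))\cong\Hom_{\widehat{\Dia}^{\cor}}(I\times J;I\times J)$. Concretely it is represented by the multicorrespondence with apex $\tw{J}\times I$, the legs to $I\times J^{\op}$ and to $J$ built from the source projection $\tw{J}\to J^{\op}$ and the target projection $\tw{J}\to J$ (up to the opposite-category conventions one may need $\tw{J}^{\op}$), and the leg to $I$ the projection. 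The source projection $\tw{J}\to J^{\op}$ has contractible comma fibres (each a comma category with an initial object), hence is a weak equivalence, so the leg $\tw{J}\times I\to I\times J^{\op}$ is a weak equivalence which (after the usual replacement, if necessary) may be taken to be a fibration. Consequently, for $\psi\in\Hom_{\widehat{\Dia}^{\cor}}(A;I\times J^{\op})$ the first-slot composite $\mathrm{ev}\circ_1\psi$ is a homotopy pullback of the span of $\psi$ along a weak equivalence, so $\Pi(\mathrm{ev}\circ_1\psi)$ agrees with $\Pi(\psi)$ after the evident reindexing $\HH(A^{\op}\times I\times J^{\op})\cong\HH(A^{\op}\times J^{\op}\times I)$. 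Thus $\mathrm{ev}\circ_1(-)\colon\Hom_{\widehat{\Dia}^{\cor}}(A;I\times J^{\op})\to\Hom_{\widehat{\Dia}^{\cor}}(A,J;I)$ is an equivalence for all $A$, i.e.\@ $\mathrm{ev}$ is Cartesian with respect to the first slot, and taking the Cartesian lift along $\mathrm{ev}$ (combined with the tensor step so as to match codomains) realises the isomorphism $\Hom_{\widehat{\Dia}^{\cor}}(I,J;K)\cong\Hom_{\widehat{\Dia}^{\cor}}(I;J^{\op}\times K)$ of the chain. The symmetric variants follow by permuting slots.

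\emph{Main obstacle.} The tensor half is essentially formal once $u$ is written down, because composing with $u$ in the strict model is a pullback along an identity, which does not change $\Pi$ at all. The real work is the internal hom half: composition in a single slot of a multimorphism involves an honest fibre product, so one must exhibit the evaluation correspondence explicitly and verify that the relevant leg is a weak equivalence (via contractibility of the comma fibres of $\tw{J}\to J^{\op}$), keeping track of the opposite-category conventions throughout --- or, alternatively, check directly on the $\HH$-model that $\mathrm{ev}\circ_1(-)$ is the reindexing equivalence, which needs the coend description of composition in $\widehat{\Dia}^{\cor}$. Everything else is bookkeeping.
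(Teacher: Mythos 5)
The paper does not prove this lemma directly: it remarks "this will be part of the more general Theorem~\ref{SATZDUALITY}" and establishes it there in the fibered setting over an arbitrary $\HH^{\cor}(\mathcal{M})$, via the functors $\Theta$, $\Upsilon$ and the explicit multicorrespondence $\mathbf{CART}^1$. Your proof is a genuinely different, more elementary route: you work directly with strict representatives and the formula for $\Pi$, specialising to the case $\SSS=\{\cdot\}$. This is a legitimate alternative in principle, and your coCartesian half is essentially correct and clean — composing a $1$-ary morphism out of $I\times J$ with $u$ leaves the apex and legs unchanged, and $\Pi$ literally does not see the regrouping of $I\times J$ as a product, so $(-\circ u)$ is the identity on $\HH(I^{\op}\times J^{\op}\times K)$.

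The Cartesian half, however, has a real gap. You argue that because $\tw J\to J^{\op}$ (hence the leg $\tw J\times I\to I\times J^{\op}$) is a nerve equivalence, the composite $\mathrm{ev}\circ_1\psi$ is "a homotopy pullback of the span of $\psi$ along a weak equivalence, so $\Pi(\mathrm{ev}\circ_1\psi)$ agrees with $\Pi(\psi)$ after reindexing." But this conflates two different things: replacing the apex up to nerve-equivalence (which indeed preserves the $\HH$-class of $\Pi$) and replacing a leg. After the pullback the apex carries a \emph{new} leg to $J$, extracted from the $\tw J$-factor, which is not the $J^{\op}$-component of the old apex; the comma category $\Pi(\mathrm{ev}\circ_1\psi)(a,j,i)$ carries an extra factorisation datum $j\to j_1\to\gamma_{J^{\op}}(b)$ that $\Pi(\psi)(a,i,j)$ does not. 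That the two comma categories are nerve-equivalent does not follow from "pullback along a weak equivalence"; one must exhibit the comparison functor (forget the factorisation) and show it has an adjoint (insert the trivial factorisation $j=j$). This is precisely the content of the paper's step in the proof of Theorem~\ref{SATZDUALITY} involving the natural transformation $\rho$ and the observation that $\rho_{i,j,k}$ has a left adjoint $\delta$ with $\delta_!\cong\rho^*$; your argument omits this adjunction, which is the crux of the Cartesian case. Your "main obstacle" paragraph hints at the right issue but the proposed fix ("the leg is a weak equivalence") does not by itself close it.
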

Actually, this will be part of the more general Theorem~\ref{SATZDUALITY} involving derivator six-functor-formalisms.

We will later need the following

\begin{LEMMA}\label{LEMMAKAN3}
Let $\SSS$ be a 2-pre-derivator and
let $\DD \rightarrow \SSS$ be a fibered derivator, $\alpha: I \rightarrow J$ a fibration or opfibration whose fibers have a final or initial object.
Then $\alpha^*$ is an equivalence 
\[ \xymatrix{ \DD(J)_{S} \ar[r]^-{\alpha^*} & \DD(I)_{\alpha^* S}^{\alpha-\cart} = \DD(I)_{\alpha^* S}^{\alpha-\cocart}   } \]
Both $\alpha_!^{(S)}$ and $\alpha_*^{(S)}$ restricted to $\alpha$-Cartesian objects are quasi-inverse functors to this equivalence. 
\end{LEMMA}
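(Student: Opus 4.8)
The plan is to verify, via the axioms (FDer0)--(FDer5), that $\alpha^*$ lands inside the $\alpha$-Cartesian (equivalently, $\alpha$-coCartesian) objects and that the adjoints $\alpha_!^{(S)}$, $\alpha_*^{(S)}$, restricted to those, invert it. First I would treat the case of a fibration $\alpha: I \to J$ whose fibers $\alpha^{-1}(j)$ each have a \emph{final} object; the opfibration-with-initial-objects case is dual (apply the argument to $\alpha^{\op}$ and use that $\DD^{\op} \to \SSS^{\op}$, obtained by passing to opposite diagrams, is again a fibered multiderivator, or simply dualise every step). So assume $\alpha$ is a fibration with final objects in the fibers. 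The underlying-diagram functor $\dia$ and (Der2) reduce all claims to statements that can be checked after applying $j^*$ for $j \in J$, together with the various $i^*$ for $i \in I$.

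The key steps are as follows. First, $\alpha^* S$ is $\alpha$-Cartesian: for a morphism $\phi: i \to i'$ in $I$ lying over an identity of $J$ (i.e.\ a morphism in a fibre), $\phi^* \alpha^* S = S$-pullback is an identity up to the coherence isomorphisms of the pseudo-functor $\SSS$, so $\alpha^* S$ is constant along fibres, hence (as $\alpha$ is a fibration) $\alpha$-Cartesian; the same object viewed through the opfibration structure — which exists by (FDer0 left), (FDer0 right) on the base — is $\alpha$-coCartesian, and one checks these two full subcategories of $\DD(I)_{\alpha^*S}$ agree because a morphism in a fibre of a fibration which is an isomorphism is also (trivially) coCartesian, and vice versa. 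Second, I would show $\alpha^*: \DD(J)_S \to \DD(I)_{\alpha^*S}^{\alpha\text{-}\cart}$ is essentially surjective and fully faithful. For essential surjectivity: given $\mathcal{E} \in \DD(I)_{\alpha^*S}^{\alpha\text{-}\cart}$, form $\alpha_*^{(S)}\mathcal{E}$; by (FDer4 right), $j^* \alpha_*^{(S)}\mathcal{E} \cong \alpha_{j,*}^{(j^*S)}\SSS(\mu)(S)^\bullet \iota^*\mathcal{E}$ where $\alpha_j: j \times_{/J} I \to \{j\}$ is the structure map of the (co)slice; since $\alpha$ is a fibration, $j \times_{/J} I$ has a final object (pair of the fibre's final object with the identity of $j$), and a right Kan extension along a functor to the point whose source has a final object is evaluation at that final object — this is the standard derivator fact (pointwise formula for $\alpha_{j,*}$). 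Thus $j^* \alpha_*^{(S)}\mathcal{E}$ is the value of $\mathcal{E}$ at the final object of the fibre over $j$, and the $\alpha$-Cartesianity of $\mathcal{E}$ identifies this with any $i^*\mathcal{E}$ over that $j$; consequently the counit $\alpha^* \alpha_*^{(S)} \mathcal{E} \to \mathcal{E}$ is a pointwise isomorphism, hence an isomorphism by (Der2). Third, for $S' \in \DD(J)_S$ one checks the unit $S' \to \alpha_*^{(S)} \alpha^* S'$ is an isomorphism by the same pointwise computation: $j^*\alpha_*^{(S)}\alpha^* S'$ is the value of $\alpha^* S'$ at the final object of the slice, which is $j^* S'$. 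This gives $\alpha^* \dashv \alpha_*^{(S)}$ with unit and counit isomorphisms on the $\alpha$-Cartesian subcategory, so $\alpha^*$ is an equivalence with quasi-inverse the restriction of $\alpha_*^{(S)}$. Finally, the dual argument using (FDer4 left) and the \emph{final} object of the opposite slice $I \times_{/J} j$ — which also has a final object when $\alpha$ is a fibration, namely again the pair (fibre's final object, $\id_j$) — shows $\alpha_!^{(S)}$ restricted to $\alpha$-Cartesian objects is likewise quasi-inverse (the left Kan extension to the point over a category with a final object is again evaluation at the final object, because the comma category $(I\times_{/J} j) \times_{/\{\cdot\}} \cdot$ is the whole category and has a final object, so the pointwise colimit formula collapses). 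Hence $\alpha_!^{(S)}$ and $\alpha_*^{(S)}$ agree on $\alpha$-Cartesian objects up to canonical isomorphism.

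I expect the main obstacle to be the bookkeeping identifying $\DD(I)_{\alpha^*S}^{\alpha\text{-}\cart}$ with $\DD(I)_{\alpha^*S}^{\alpha\text{-}\cocart}$ and checking that the adjunction unit/counit, which a priori live on all of $\DD(J)_S$ and $\DD(I)_{\alpha^*S}$, really restrict to the stated subcategories and are genuinely isomorphisms there — this requires carefully invoking (FDer4 left/right) for the specific slices $j \times_{/J} I$ and $I \times_{/J} j$, and recognising that for a fibration (resp.\ opfibration) these slices have final objects precisely under the hypothesis that the fibres do, so that the pointwise Kan extension formulas of \cite{Hor16} collapse to evaluation at a terminal/initial object. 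Once that collapse is in hand, everything else is the usual two-out-of-three / (Der2)-style verification and is routine.
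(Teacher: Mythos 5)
Your argument has two concrete mathematical errors, both located exactly where you flag uncertainty in your final paragraph, and both fatal for the route you chose.

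\textbf{The claimed final object does not exist.} You assert that for a fibration $\alpha: I\to J$ whose fibre $I_j$ has a final object $i_j^\infty$, the comma category $j\times_{/J} I$ has final object $(i_j^\infty, \id_j)$. This is false. Take $J=\Delta_1=\{0\to 1\}$, $I=J$, $\alpha=\id$, $j=0$: then $I_0=\{0\}$ and $(0,\id_0)$ is the \emph{initial} object of $0\times_{/J}J$ while $(1,\,0\to 1)$ is the final one. What is true, and what the paper actually exploits, is that the fibre inclusion $\iota_j: I_j\hookrightarrow j\times_{/J} I$ admits a \emph{right} adjoint $(i,\varphi)\mapsto \varphi^*i$ (Cartesian pull-back), so $\iota_j$ is (homotopy) \emph{initial}; hence $j^*\alpha_*^{(S)}\cong p_{I_j,*}^{(j^*S)}\iota_j^*$ and the Kan extension computation can be pushed down to the fibre. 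The slice itself need not have any convenient terminal or initial object. Your parallel claim about $I\times_{/J}j$ has the same problem.

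\textbf{Wrong collapse of the Kan extension.} Even where a final object does exist, "a right Kan extension to the point over a category with a final object is evaluation at that final object'' is backwards: a limit collapses to evaluation at an \emph{initial} object, not a final one. Concretely, with $J=\cdot$ and $I$ the cospan $a\to c\leftarrow b$ (which has final object $c$), $\alpha_*\mathcal{E}$ is the homotopy pullback, not $c^*\mathcal{E}$. The statement you want — that on $\alpha$-Cartesian objects $p_{I_j,*}$ is nevertheless evaluation at any $i\in I_j$ — is true, but it is a consequence of the Cartesianity condition (the diagram is essentially constant along the contractible fibre) plus the paper's opening observation that once \emph{one} of $\alpha_!^{(S)}$, $\alpha_*^{(S)}$ is shown quasi-inverse to $\alpha^*$ on Cartesian objects, the other automatically is too. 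That preliminary reduction is precisely what lets the paper treat the "skewed'' case (fibration $+$ fibrewise final object), where $\alpha_!$ collapses directly (colimit over a category with a final object) but $\alpha_*$ does not; you chose the skewed case and need this trick, but your argument omits it and substitutes the two false claims above.

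To repair the proof along the paper's lines: (i) reduce, as in the paper's opening paragraph, to showing that one of the unit or counit becomes a pointwise isomorphism on $\alpha$-Cartesian objects; (ii) for $J=\{\cdot\}$, observe that whichever of $p_!$ or $p_*$ genuinely collapses (depending on initial vs.\ final object) is evaluation, and that on Cartesian objects evaluation at one $i$ agrees with evaluation at any other, so both adjoints are "evaluation'' on Cartesian objects; (iii) for general $J$ and $\alpha$ a fibration, use $j^*\alpha_*^{(S)}\cong p_{I_j,*}^{(j^*S)}\iota_j^*$ (the fibre inclusion is initial by the adjoint argument above), dually for opfibrations via $\alpha_!$.
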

Note that the statement is also true in the seemingly skewed cases: ``fibration + fiberwise final object'' and ``opfibration + fiberwise initial object''\footnote{Using (co)homological descent (Section~\ref{SECTIONNAIVE}), under additional assumptions (existence of (co)Cartesian projectors) this can be generalized to {\em contractible} fibers. We prefer to give an elementary proof for this special case.}.

\begin{proof}It suffices to see that on $\alpha$-Cartesian objects the natural morphism (induced by the unit)
\[ j^* \alpha_!^{(S)} = i^* \alpha^* \alpha_!^{(S)} \leftarrow i^*  \]
or the natural morphism (induced by the counit)
\[ j^* \alpha_*^{(S)} = i^* \alpha^* \alpha_*^{(S)} \to i^*   \]
is an isomorphism for any $i \in I, j \in J$, with $j=\alpha(i)$, because this easily implies that $\alpha_!^{(S)}$, resp.\@ $\alpha_*^{(S)}$ is quasi-inverse to $\alpha^*$ on $\alpha$-Cartesian objects. If this is the case then also the other one is a quasi-inverse to $\alpha^*$ on $\alpha$-Cartesian objects (in an equivalence both functors are left and right adjoint). 

{\bf Step 1:} Assume first that $J=\{ j \}$. If $I$ has an initial object then 
\[ j^* \alpha_*^{(S)} \cong (i')^*.  \]
If $I$ has a final object then 
\[ j^* \alpha_!^{(S)} \cong (i')^*.  \]
In both cases, on Cartesian objects, the same holds with $i'$ replaced by any $i$.

{\bf Step 2:} Assume now, that $\alpha$ is a fibration (the other case is dual). Then we have
\[  j^* \alpha_*^{(S)} \cong p_{I_j,*}^{(j^*S)} \iota_{j}^*  \]
where 
\[ \iota_j: I_j \rightarrow I \]
is the inclusion of the fiber. By Step 1 $p_{I_j,*}$ is (isomorphic to) evaluation at $i$. The statement follows. 
\end{proof}

\section{Multicategories of homotopy multicorrespondences}

\begin{PAR}\label{PARDER6FU}
Let $\mathcal{S}$ be a category with finite limits. 
Recall from e.g.\@ \cite[Section 5]{Hor16} the construction of the 2-multicategory $\mathcal{S}^{\cor}$ of multicorrespondences in $\mathcal{S}$. Its objects are the objects of $\mathcal{S}$, its 1-multimorphisms are multicorrespondences 
\[ \xymatrix{ & & & A \ar[llld] \ar[ld] \ar[rd] \\
X_{1} & \dots & X_{n} & ; & Y } \]
in $\mathcal{S}$ which are composed forming fiber products, and the 2-morphisms are isomorphisms of such multicorrespondences.
This 2-multicategory was used in \cite[Definition~5.8]{Hor16} to define
a derivator six-functor-formalism as a fibered multiderivator over $\SSS^{\cor}$, the 2-pre-multiderivator represented  by $\mathcal{S}^{\cor}$, i.e.\@ defined by 
$\SSS^{\cor}(I):= \Fun(I, \mathcal{S}^{\cor})$. In this article, derivator six-functor-formalisms over more general (homotopy) bases, which are itself modeled by a category with weak equivalences $(\mathcal{M}, \mathcal{W})$, will be discussed. In this section, the definition of a 2-pre-multiderivator $\HH^{\cor}(\mathcal{M})$ of {\bf homotopy multicorrespondences}  is given, which is {\em not} represented by a 2-multicategory unless $\mathcal{W}$ is the class of isomorphisms. If this is the case, we reobtain the previously defined 2-pre-multiderivator, i.e.\@ we have 
$\HH^{\cor}(\mathcal{M}) = \mathbb{M}^{\cor}$ if $\mathcal{W}$ is the class of isomorphisms. To dispose of a nice construction of homotopy fiber products we have to assume that the datum is part of a {\em category with fibrant objects} $(\mathcal{M}, \Fib, \mathcal{W})$ as in \ref{PARCWFO}. 
\end{PAR}

\begin{DEF}[Informal]
Let $(\mathcal{M}, \Fib, \mathcal{W})$ be a category of fibrant objects.  

We define the 2-category $\HH^{\cor}(\mathcal{M})$ of homotopy multicorrespondences in $\mathcal{M}$ as the following 2-multicategory:
\begin{enumerate}
\item Objects are the objects of $\mathcal{M}$.
\item 1-multimorphisms are the multicorrespondences 
\[ \xymatrix{ & & & A \ar[llld] \ar[ld] \ar[rd] \\
X_{1} & \dots & X_{n} & ; & Y } \]
in $\mathcal{M}$.
\item 2-morphisms are the isomorphisms in the homotopy category of such multicorrespondences.
\end{enumerate}
The composition is given by homotopy fiber products. 
\end{DEF}
The rest of this section is devoted to making this definition precise 
 and enhance it to a 2-pre-multiderivator using a device for constructing 2-multicategories from \cite[Appendix~A]{Hor17}, cf.\@ also the construction of $\SSS^{\cor}$ in \cite[Section~6]{Hor17}.

\begin{PAR}\label{PARCATFIBOBJ}
Let $(\mathcal{M}, \Fib, \mathcal{W})$ be a category of fibrant objects with functorial path object. This implies that, in particular, all $\mathcal{M}^I := \Fun(I, \mathcal{M})$ are also categories of fibrant objects defining fibrations and weak equivalences point-wise. 
We consider $\mathcal{M}$ as a symmetric {\em opmulticategory} setting: 
\begin{equation}\label{eqopmult} \Hom(X; Y_1, \dots, Y_n):= \Hom(X, Y_1) \times \cdots \times  \Hom(X, Y_n)  \end{equation}
with the obvious action of the symmetric group. 
\end{PAR}

\begin{DEF}\label{DEFADMISSIBLE}
Let $I$ be a multidiagram (i.e.\@ a small multicategory). Recall the construction \ref{PARTWMULTI}.
 We denote $\mathcal{M}^{\tw I} := \Fun(\tw I, \mathcal{M})$ (functors of opmulticategories).
We say that a diagram $X \in \mathcal{M}^{\tw I}$ is {\bf admissible} if in every square in the opmulticategory $\tw I$
\[ \xymatrix{ i \ar[r] \ar[d] & j_1, \dots, j_n \ar[d] \\ i' \ar[r] & j_1', \dots, j_n'  }\]
in which the horizontal morphisms are of type 2 and the vertical morphisms are of type 1 
is mapped by $X$ to a homotopy Cartesian diagram. More concretely, this means that the diagram
\[ \xymatrix{ X(i) \ar[r] \ar[d] & \prod_{i=1}^n X(j_i) \ar[d] \\ X(i') \ar[r] & \prod_{i=1}^n X(j_i')  }\]
is homotopy Cartesian. Here $\prod$ is the usual product, which exists, and is a homotopy product because $\mathcal{M}$ is a category with fibrant objects. 
We call a multimorphism $X \Rightarrow Y_1, \dots, Y_n$ with $X, Y_1, \dots, Y_n \in \mathcal{M}^{\tw I}$ {\bf type-1-admissible}, if
\[ \xymatrix{ X(i) \ar[r] \ar[d] & \prod_{k=1}^n Y_k(i) \ar[d] \\ X(i') \ar[r] & \prod_{k=1}^n Y_k(i')  }\]
is homotopy Cartesian for any type-1 morphism $i \rightarrow i'$. 
We call a morphism $X \Rightarrow Y$ with $X, Y \in \mathcal{M}^{\tw I}$ {\bf type-2-admissible}, if
\[ \xymatrix{ X(i) \ar[r] \ar[d] & X(i') \ar[d] \\ Y(i) \ar[r] &  Y(i')  }\]
is homotopy Cartesian for any type-2 morphism $i \rightarrow i'$. 
\end{DEF}

\begin{PAR}\label{PARCIRC}
For the {\em particular} opmulticategory structure (\ref{eqopmult}) on $\mathcal{M}$, we can also describe the functors of opmulticategories $\mathcal{M}^M := \Fun(M, \mathcal{M})$ for a multidiagram $M$ as follows:  For an opmulticategory $M$ define a usual category $M^\circ$ replacing all multimorphisms in $\Hom_M(j; i_1, \dots, i_n)$ by a set of 1-ary morphisms $j \rightarrow i_1$, \dots,  $j \rightarrow i_n$ \footnote{that means, in particular, forgetting all $0$-ary morphisms}. Then
a functor of opmulticategories $M \rightarrow \mathcal{M}$ is the same as a functor between usual categories $M^\circ \rightarrow \mathcal{M}$.
\end{PAR}

\begin{DEF}\label{DEFCOR}
Let $\tau$ be a tree with symmetrization $\tau^S$, $I$ a (multi)diagram. 
We define a category with weak equivalences 
\[ \Cor_I(\tau^S) \]
whose objects are admissible objects $X \in \mathcal{M}^{\tw (\tau \times I)}$ (which are functors of opmulticategories) and the morphisms are the point-wise weak equivalences as are the weak equivalences (i.e.\@ all morphisms are weak equivalences). 
This defines a strict functor
\[ \Cor_I: \Delta_S \rightarrow \mathcal{CATW} \]
from symmetric trees to categories with weak equivalences. Note that any automorphism of $\tau^S$ acts on $\mathcal{M}^{\tw (\tau \times I)}$ because $\mathcal{M}$ is symmetric. Equivalently, observe
that functors $\tw (\tau \times I) \rightarrow \mathcal{M}$ are the same as functors of usual categories $(\tw (\tau \times I))^\circ \rightarrow \mathcal{M}$ (cf.\@ \ref{PARCIRC}) and that 
every functor $\tau^S \rightarrow (\tau')^S$ induces an obvious functor $(\tw (\tau \times I))^\circ \rightarrow (\tw (\tau' \times I))^\circ$.
\end{DEF}

\begin{BEISPIEL}\label{BEISPIELMULTICOR}
The category $\Cor_I(\Delta_{1,n}^S)$ is just the category of multicorrespondences
\[ \xymatrix{ & & & A \ar[llld]_{g_1} \ar[ld]^{g_n} \ar[rd]^f \\
X_{1} & \dots & X_{n} & ; & Y } \]
where all objects $X_1, \dots, X_n, A, Y$ are admissible objects in $\mathcal{M}^{\tw I}$ and
where the multimorphism $(g_1, \dots, g_n)$ is type-1-admissible and $g$ is type-2-admissible. 
Weak equivalences are point-wise weak equivalences between such multicorrespondences. The action of $S_n$ on $\Delta_{1,n}^S$ permutes the morphisms $A \rightarrow X_i$.
\end{BEISPIEL}

\begin{LEMMA}\label{LEMMAPROPCONSTRSYMMULTI1}
The strict functor $\tau \mapsto \Cor_I(\tau)$ satisfies the axioms 1 and 2 of \cite[Proposition~A.3]{Hor17}.
\end{LEMMA}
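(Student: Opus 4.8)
The plan is to verify axioms~1 and~2 of \cite[Proposition~A.3]{Hor17} by reducing each to an elementary combinatorial statement about the multidiagrams $\tw{(\tau \times I)}$ (cf.\@ \ref{PARTWMULTI}), together with the standard stability properties of homotopy Cartesian squares in a category of fibrant objects. Recall (\ref{DEFADMISSIBLE}) that $\Cor_I(\tau)$ is the category of admissible functors of opmulticategories $X \in \mathcal{M}^{\tw{(\tau\times I)}}$ with the point-wise weak equivalences, and (\ref{BEISPIELMULTICOR}) that for $\tau = \Delta_{1,n}$ this is the category of multicorrespondences with the type-$1$ and type-$2$ admissibility conditions.

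\textbf{Axiom~1 (compatibility with concatenation).} Given a concatenation $\tau = \tau_2 \circ_i \tau_1$ along a source object $i$ of $\tau_2$, I would first describe $\tw{(\tau\times I)}$: the tree $\tau$ is the union of the sub-trees $\tau_1$ and $\tau_2$, which meet exactly in the object $i$, and this persists after forming $\tw{(-\times I)}$ --- every object and every type-$1$ or type-$2$ multimorphism of $\tw{(\tau\times I)}$ lies in (the sub-opmulticategory spanned by) the image of $\tw{(\tau_1\times I)}$ or of $\tw{(\tau_2\times I)}$, and lies in both exactly when it lies over $i$, i.e.\@ in the common piece $\tw{(i\times I)}$. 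Grafting at a single object introduces no new multimorphisms straddling the two pieces. Consequently a functor $X\colon \tw{(\tau\times I)} \to \mathcal{M}$ is the same datum as a pair of functors on $\tw{(\tau_1\times I)}$ and $\tw{(\tau_2\times I)}$ together with an identification of their restrictions over $i$; and the admissibility squares of $\tau$ are precisely those of $\tau_1$ together with those of $\tau_2$. This produces a strict isomorphism of categories with weak equivalences
\[ \Cor_I(\tau) \;\cong\; \Cor_I(\tau_1) \times_{\mathcal{C}_i} \Cor_I(\tau_2), \]
where $\mathcal{C}_i$ is the category of admissible objects of $\mathcal{M}^{\tw I}$; passing to the symmetrised variants yields the fiber-product decomposition required by axiom~1. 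That this strict fiber product also computes the homotopy fiber product is automatic, since every morphism in all of these categories is a weak equivalence.

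\textbf{Axiom~2 (behaviour on the basic trees / units).} Here I would specialise to $\tau = \Delta_{1,1}$ (and $\Delta_{1,0}$); $\Cor_I(\Delta_{1,1})$ is the category of spans $X_1 \leftarrow A \to Y$ in $\mathcal{M}^{\tw I}$ with $A \to X_1$ type-$1$-admissible and $A \to Y$ type-$2$-admissible, the latter being a homotopy Cartesian morphism in the sense of \ref{DEFCARTMORPH}. Axiom~2 asks that the tree-morphisms encoding units --- the inclusions of endpoints, resp.\@ the identity section --- are sent by $\Cor_I$ to weak equivalences of categories with weak equivalences. For this I would use the functorial path object available in $\mathcal{M}$ (and, point-wise, in $\mathcal{M}^{\tw I}$): it gives a functorial factorisation of any morphism $A \to Y$ through a fibration, hence a functorial replacement of a span by an equivalent one in ``standard form'', and the resulting retraction is a point-wise weak equivalence. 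The general case (arbitrary $I$, symmetrised trees) then follows formally, using that the $\tw{(-)}$-construction commutes with products.

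\textbf{Main obstacle.} The homotopy-theoretic input is minimal: one needs only that honest finite products exist in $\mathcal{M}$ and are homotopy products, that homotopy Cartesian squares are stable under pasting, and the functorial path object --- all already available (\ref{PARCWFO}, \ref{LEMMAHOFP}). The substantive step, and the one most prone to arrow-direction errors, is the combinatorial claim underlying axiom~1: one must check carefully that $\tw{(\tau\times I)}$ really is the pushout of $\tw{(\tau_1\times I)}$ and $\tw{(\tau_2\times I)}$ over the piece lying above $i$, at the level of objects, of type-$1$ and type-$2$ multimorphisms, \emph{and} of the squares defining admissibility, with no spurious straddling multimorphisms created by the grafting. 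Once that bookkeeping is settled, the verification of both axioms is routine.
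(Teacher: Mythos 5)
Your proposal misidentifies both axioms of \cite[Proposition~A.3]{Hor17}, and as a result the substantive part of the argument is missing. Judging from how the paper uses these axioms (cf.\@ the proof of Lemma~\ref{LEMMADER6FU3}, which says ``the surjectivity on objects (Axiom~1) is clear''), Axiom~1 is about surjectivity of the restriction to vertices on objects, and indeed the paper dispatches it with ``holds by construction.'' Axiom~2 is the genuine content: for a \emph{fixed} family of admissible endpoint objects $\{X_o\}_{o\in\tau}$, the restriction functor
\[ \Cor_{I, \{X_o\}_{o \in \tau}}(\tau)[\mathcal{W}^{-1}] \longrightarrow \prod_m  \Cor_{I, \{X_o\}_{o \in m}}(\Delta_{1,n_m}^S)[\mathcal{W}^{-1}] \]
(product over the generating morphisms $m$ of $\tau$) must be an equivalence. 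This is a statement about localizations with fixed endpoints, not the concatenation/fiber-product statement you formulate, and not anything about identity trees or units.

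The gap this creates is exactly the place where you wave your hands. You assert that your strict fiber product $\Cor_I(\tau)\cong \Cor_I(\tau_1)\times_{\mathcal{C}_i}\Cor_I(\tau_2)$ ``automatically computes the homotopy fiber product since every morphism is a weak equivalence.'' But Axiom~2 is not a strict decomposition: after localizing, one must show that objects given independently on each generating edge, compatible only up to point-wise weak equivalence over the shared vertices, can be coherently glued to an object of $\Cor_I(\tau)$, essentially uniquely. That requires actual work. The paper constructs a right Kan extension along $\bigcup_m \tw(m\times I)\hookrightarrow \tw(\tau\times I)$, observes that it is only defined when the type-$2$ legs are fibrations, and \emph{derives} it by a replacement functor $\xi_m\mapsto R(\xi_m)$ built from the path object $Y^I_m$, then checks that unit and counit of the resulting adjunction are point-wise weak equivalences on admissible diagrams. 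Your path-object remark lands in roughly the right neighbourhood, but you attach it to the wrong axiom (your ``units'' claim) rather than using it to derive the gluing Kan extension, and the ``routine bookkeeping'' conclusion underestimates what is needed. You would need to (a) set up the right Kan extension along the inclusion of the union of edges, (b) show it exists after fibrant replacement of the type-$2$ legs, (c) show this derived functor is adjoint to restriction and preserves admissibility, and (d) verify the unit and counit are weak equivalences.
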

\begin{proof}
Axiom 1 holds by construction. 
Axiom 2: We have to show that for $\{X_o\}_{o \in \tau}$ a family of admissible $X_o \in \mathcal{M}^{\tw I}$  the functor
\begin{equation}\label{eqaxiom2} \Cor_{I, \{X_o\}_{o \in \tau}}(\tau)[\mathcal{W}^{-1}_{\{X_o\}}] \rightarrow \prod_m  \Cor_{I, \{X_o\}_{o \in m}}(\Delta_{1,n_m}^S)[\mathcal{W}^{-1}_{\{X_o\}}]  
\end{equation}
is an equivalence, where the product runs over the generating morphisms of $\tau$. 
A right adjoint to the restriction
\[ \Cor_{I, \{X_o\}_{o \in \tau}}(\tau) \rightarrow \prod_m  \Cor_{I, \{X_o\}_{o \in m}}(\Delta_{1,n_m}^S) \]
exists on those tupels of objects in $\prod_m  \Cor_{I, \{X_o\}_{o \in m}}(\Delta_{1,n_m}^S)$ where in each entry 
\[ \xi_m = \left( \vcenter{ \xymatrix{ & & & A_m \ar[llld] \ar[ld] \ar[rd] \\
X_{m,1} & \dots & X_{m,n_m} & ; & Y_m } } \right) \]
the morphism $A_m \rightarrow Y_m$ is a point-wise fibration and is just given by right Kan extension along 
\[ \bigcup_m \tw(m \times I) \hookrightarrow \tw(\tau \times I) \]
which point-wise computes an iterated fiber product along these fibrations. 
The right adjoint can thus be derived using the following replacement functor
\[ \id \Rightarrow R\]
with $R$ defined by
\[ R(\xi_m) := \left( \vcenter{\xymatrix{ & & & Y_m^I \times_{Y_m} A_m \ar[llld] \ar[ld] \ar[rd] \\
X_{m,1} & \dots & X_{m,n_m} & ; & Y_m } } \right) \]
in which the morphism to the right is the composition $Y_m^I \times_{Y_m} A_m \rightarrow Y_m^I \rightarrow Y_m$ and is a fibration, and the morphisms to the $X_{m,j}$ factor through the projection to $A_m$. 
On checks that the composition of this replacement functor with the right Kan extension preserves weak equivalences (point-wise the composition just computes the usual derived fiber product using the path objects in categories with fibrant objects) and is adjoint to the restriction. 

The derived left and right adjoints preserve the categories of admissible diagrams. 
On checks that unit and counit are point-wise weak equivalences on the full subcategories of admissible objects, hence (\ref{eqaxiom2})
is an equivalence.  
\end{proof}

\begin{DEF}\label{DEFHCOR1}
Let $I$ be a (multi)diagram. We define $\HH^{\cor}(\mathcal{M})(I)$ to be the symmetric 2-multicategory of \cite[Proposition~A.3]{Hor17} constructed from the functor $C_I$.
\end{DEF}

We can read off the construction that 
\begin{itemize}
\item the objects of $\HH^{\cor}(\mathcal{M})(I)$ are the admissible objects $X \in \mathcal{M}^{\tw I}$,
\item every 1-multimorphism $X_1, \dots, X_n \rightarrow Y$ is 2-isomorphic to one of the form 
\[ \xymatrix{ & & & A \ar[llld] \ar[ld] \ar[rd] \\
X_{1} & \dots & X_{n} & ; & Y. }  \]
as in \ref{BEISPIELMULTICOR}.
\item 2-morphisms between the previous multimorphisms are given by 
isomorphisms in the homotopy category of such multicorrespondences (with fixed $X_i$ and $Y$),
\item composition is given (up to unique 2-isomorphism) by forming homotopy fiber products,
\item the action of the symmetric group is given by permuting the morphisms $A \rightarrow X_i$.
\end{itemize}

\begin{LEMMA}\label{LEMMAPF}
Let $I$ and $J$ be (multi)diagrams. 
\begin{enumerate}
\item Any diagram $D \in \HH^{\cor}(\mathcal{M})(J \times I)$ gives rise to a canonical pseudo-functor of 2-multicategories
\[ \Dia(D): J \rightarrow \HH^{\cor}(\mathcal{M})(I) \]
defined on objects by $j \mapsto D|_{\tw I \times (\tw j)}$. 
\item If $\mathcal{W}$ consists of isomorphisms the association $D \mapsto \Dia(D)$ yields an equivalence of 2-multicategories. 
\end{enumerate}
\end{LEMMA}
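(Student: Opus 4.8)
The plan is to prove the two parts in turn: Part~(1) is a matter of carefully unwinding the construction of $\HH^{\cor}(\mathcal{M})(-)$ from Definition~\ref{DEFHCOR1}, while Part~(2) is essentially formal once one feeds in the identification $\HH^{\cor}(\mathcal{M}) = \mathbb{M}^{\cor}$ valid when $\mathcal{W}$ is the class of isomorphisms. The substance of the whole statement is the pseudo-functoriality in Part~(1), and I expect that to be the main obstacle.

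For Part~(1), the value on an object $j \in J$ is forced by the statement, $\Dia(D)(j) := D|_{\tw I \times \tw j}$; since $\tw j$ is the terminal one-object category this is an object of $\mathcal{M}^{\tw I}$, and I would check it is admissible (Definition~\ref{DEFADMISSIBLE}) by observing that every admissibility square of $\tw I$, placed over the object of $\tw j$, is one of the type-$1$/type-$2$ squares of $\tw(J \times I)$ which $D$ sends to a homotopy Cartesian square. For a $1$-multimorphism $\phi \in \Hom_J(j_1, \dots, j_n; j)$ I would use that $\phi$ is the same datum as a functor of multicategories $\varphi_\phi \colon \Delta_{1,n} \to J$ (cf.\@ \ref{DEFTREE}); the induced functor $\tw(\Delta_{1,n}\times I) \to \tw(J\times I)$, which preserves the type of morphisms, pulls $D$ back to an admissible object of $\mathcal{M}^{\tw(\Delta_{1,n}\times I)}$, i.e.\@ by Example~\ref{BEISPIELMULTICOR} to an object of $\Cor_I(\Delta_{1,n}^S)$. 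This is a multicorrespondence of admissible objects of $\mathcal{M}^{\tw I}$ with source objects $\Dia(D)(j_k)$ and target $\Dia(D)(j)$, the leg-tuple type-$1$-admissible and the remaining leg type-$2$-admissible; after the path-object replacement of that last leg used in the proof of Lemma~\ref{LEMMAPROPCONSTRSYMMULTI1} it is a genuine $1$-multimorphism $\Dia(D)(\phi)$ of $\HH^{\cor}(\mathcal{M})(I)$, well-defined up to canonical $2$-isomorphism. Since $J$ carries only identity $2$-cells, there is nothing further to define.

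The heart of Part~(1) is the pseudo-functoriality. For composable multimorphisms the compositor $2$-isomorphism must compare $\varphi_{\psi \circ_i \phi}^* D$, computed on $\tw(\Delta_{1,n+m-1}\times I)$ as an honest iterated fibre product, with the composite correspondence $\varphi_\psi^* D \circ_i \varphi_\phi^* D$, which in $\HH^{\cor}(\mathcal{M})(I)$ is by construction the \emph{homotopy} fibre product of the relevant legs. Admissibility of $D$ is precisely what makes this work: the square whose honest limit is being taken is homotopy Cartesian, so the honest limit represents the homotopy limit, and the two sides are canonically isomorphic in the homotopy category of multicorrespondences. The pentagon and unit coherences then reduce, via the same mechanism and the concatenation calculus of the trees $\Delta_{1,n}$, to associativity and unitality of iterated homotopy fibre products --- properties already incorporated into $\HH^{\cor}(\mathcal{M})(I)$ through \cite[Proposition~A.3]{Hor17} and Lemma~\ref{LEMMAPROPCONSTRSYMMULTI1}. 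The bookkeeping of the multi-ary case is where I expect the technical weight to lie, but it is purely mechanical given that machinery.

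For Part~(2), assume $\mathcal{W}$ is the class of isomorphisms. Then, as recalled in~\ref{PARDER6FU}, $\HH^{\cor}(\mathcal{M}) = \mathbb{M}^{\cor}$, i.e.\@ $\HH^{\cor}(\mathcal{M})(K) \simeq \Fun(K, \mathcal{M}^{\cor})$ naturally in the (multi)diagram $K$ --- concretely, an admissible $X \in \mathcal{M}^{\tw K}$, where now admissibility means that all squares of Definition~\ref{DEFADMISSIBLE} are genuine pullbacks, corresponds to the functor $k \mapsto X(\id_k)$ of multicategories $K \to \mathcal{M}^{\cor}$, this being exactly the identification underlying the construction of $\mathcal{S}^{\cor}$ in \cite[Section~6]{Hor17}. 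Under this identification $\Dia(D)$ becomes the currying of $D \colon J \times I \to \mathcal{M}^{\cor}$ into $J \to \Fun(I, \mathcal{M}^{\cor})$, and the exponential law $\Fun(J \times I, \mathcal{M}^{\cor}) \simeq \Fun(J, \Fun(I, \mathcal{M}^{\cor}))$ for pseudo-functors of $2$-multicategories is an equivalence (in fact close to an isomorphism, up to strictification), which gives the claim. One only has to observe that the fibre-product composition of $\mathcal{M}^{\cor}$ is transported slot-wise under currying, which is immediate because finite limits in $\Fun(I, \mathcal{M}^{\cor})$ are computed pointwise.
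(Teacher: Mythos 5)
Your approach to Part~(1) matches the paper's --- pull $D$ back along the inclusions of (twisted) trees, and extract the compositor from admissibility of $D$ --- and your explicit engagement with the multi-ary case is a genuine addition, since the paper only sketches the $1$-ary case (using $\tw\Delta_1$, $\tw\Delta_2$, $\tw\Delta_3$ in place of your trees) and leaves the rest to the reader. Two imprecisions deserve attention, though neither breaks the argument. First, $\varphi_{\psi\circ_i\phi}^*D$ is \emph{not} ``computed\dots as an honest iterated fibre product'': its apex is simply $D$ evaluated at the composed multimorphism $\psi\circ_i\phi$. What is true, and what you correctly identify as the crux, is that admissibility of $D$ forces the pull-back along the full concatenated tree $\tau=\Delta_{1,m}\circ_i\Delta_{1,n}$ to land in $\Cor_I(\tau^S)$; the compositor then comes for free from the machinery of \cite[Proposition~A.3]{Hor17} (via the equivalence of Lemma~\ref{LEMMAPROPCONSTRSYMMULTI1}), not from a manual comparison of honest and homotopy limits. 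Relatedly, $\varphi_\phi^*D$ already \emph{is} an object of $\Cor_I(\Delta_{1,n}^S)$ and hence a generating $1$-multimorphism; the path-object replacement of Lemma~\ref{LEMMAPROPCONSTRSYMMULTI1} is used only inside the composition machinery, not to produce the $1$-morphism itself.

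Your Part~(2) is where you should be careful. The identification $\HH^{\cor}(\mathcal{M})=\mathbb{M}^{\cor}$ for $\mathcal{W}$ the isomorphisms is, in this paper, treated as a \emph{consequence} of Lemma~\ref{LEMMAPF},~(2) (see the Remark immediately following the Lemma), not as a prior fact: the statement in~\ref{PARDER6FU} is a forward-looking remark that this Lemma formalises. The special case $I=\cdot$ of Part~(2) is exactly the assertion that $X\mapsto(k\mapsto X(\id_k))$ gives an equivalence $\HH^{\cor}(\mathcal{M})(K)\cong\Fun(K,\mathcal{M}^{\cor})$, so citing that identification to prove Part~(2) risks circularity. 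Your secondary appeal to \cite[Section~6]{Hor17} may carry the load if the analogous statement is proved there for $\SSS^{\cor}$; otherwise the safe route is to prove the $I=\cdot$ case directly (unwinding admissibility, with $\mathcal{W}$ the isomorphisms, into iterated ordinary pullbacks so that an admissible diagram on $\tw K$ is determined by its restriction to the spine) and then curry as you propose.
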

\begin{proof}
1.\@ We sketch the non-multi variant here, and leave the multi-case to the reader. 
Each morphism $\alpha: j \rightarrow j'$ in $J$ gives rise to a functor (pull-back of $D$)
\[ \alpha': \tw \Delta_{1} \rightarrow \mathcal{M}^{\tw I }\]
with values in admissible objects, more precisely, to a diagram of the form considered in \ref{BEISPIELMULTICOR}, i.e.\@ to a 1-morphism in $\HH^{\cor}(\mathcal{M})(I)$.
A composition $\beta \circ \alpha$ in $J$ gives  rise to  a functor
\[ \tw \Delta_2  \rightarrow\mathcal{M}^{\tw I } \]
and $(\beta \circ  \alpha)'$ is the pullback along 
\[ e_{02}: \tw \Delta_1 \rightarrow \tw \Delta_2.  \]
By construction, we get a 2-isomorphism 
\[ \beta' \circ \alpha' \Rightarrow (\beta \circ \alpha)'. \]
The analogous reasoning with a composition of three morphisms shows that this construction yields a pseudo-functor. 

2.\@ is left to the reader. 
\end{proof}

\begin{DEF}\label{DEFHCOR2}
We define a symmetric 2-pre-multiderivator $\HH^{\cor}(\mathcal{M})$. Let $I$ be a diagram\footnote{not a multidiagram, because in our definition of (pre-)multiderivator we still take only usual diagrams as input which is, of course, an omission.}. The 2-multicategory $\HH^{\cor}(\mathcal{M})(I)$ has been defined in Definiton~\ref{DEFHCOR1}.
Those 2-multicategories are equipped with strict pull-back functors
\[ \alpha^*: \HH^{\cor}(\mathcal{M})(J) \rightarrow \HH^{\cor}(\mathcal{M})(I). \]
for each functor $\alpha: I \rightarrow J$. For a natural transformation $\mu: \alpha \Rightarrow \beta$ we get a pseudo-natural transformation 
\[ \alpha^* \Rightarrow \beta^* \]
as follows: $\mu$ might be seen as a functor $I \times \Delta_1 \rightarrow J$ and so each admissible $X \in \mathcal{M}^{\tw J}$ gives rise to an admissible
 $\mu^* X \in \mathcal{M} ^{\tw (\Delta_1 \times I)}$ which, by definition, constitutes a 1-morphism from $\alpha^* X$ to $\beta^* X$. For each (1-ary) 1-morphism $\xi: X \rightarrow Y$ the 
 square
 \[ \xymatrix{ \alpha^*X \ar[r]^{\alpha^* \xi} \ar[d]_{\mu^* X} & \alpha^*Y \ar[d]^{\mu^*Y} \\
 \beta^*X \ar[r]_{\beta^* \xi} & \beta^*Y } 
  \]
  commutes up to a uniquely determined 2-isomorphism using the pseudo-functor
  \[ \Delta_1^2 \rightarrow \HH^{\cor}(\mathcal{M})(I) \] 
  obtained from $\mu^* \xi \in \mathcal{M} ^{\tw (\Delta_1^2 \times I)}$ via Lemma~\ref{LEMMAPF}, 1. 
Similarly, the pseudo-naturality for $n$-ary morphisms is constructed. One also checks that this defines a pseudo-functor:
\[ \Fun(I, J) \rightarrow \Fun^{\mathrm{strict}}(\HH^{\cor}(\mathcal{M})(J), \HH^{\cor}(\mathcal{M})(I))\]
where $\Fun^{\mathrm{strict}}$ is the 2-category of strict 2-functors, pseudo-natural transformations, and modifications.
\end{DEF}

\begin{BEM}If $\mathcal{W}$ consists of isomorphisms then it follows from Lemma~\ref{LEMMAPF}, 2.\@ that $\HH^{\cor}(\mathcal{M})$ is equivalent to the 2-pre-multiderivator represented by the 2-multicategory $\mathcal{M}^{\cor}$.
\end{BEM}

\begin{LEMMA}
$\HH^{\cor}(\mathcal{M})$ satisfies the axioms (Der1${}^\infty$) and (Der2) of \ref{PARDER12}.
\end{LEMMA}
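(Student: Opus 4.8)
The plan is to verify the two axioms separately, using the already-established description of the objects and morphisms of $\HH^{\cor}(\mathcal{M})(I)$ from the list following Definition~\ref{DEFHCOR1}. Recall that objects of $\HH^{\cor}(\mathcal{M})(I)$ are admissible diagrams $X \in \mathcal{M}^{\tw I}$, $1$-multimorphisms are (up to $2$-isomorphism) multicorrespondences of admissible objects with type-$1$-admissible left legs and type-$2$-admissible right leg, $2$-morphisms are isomorphisms in the homotopy category of such multicorrespondences, and composition is by homotopy fiber product.

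For (Der1$^\infty$), let $\{I_i\}_{i \in \mathcal{I}}$ be a family in $\Dia$ (which we must assume is infinite, so that $\coprod_i I_i \in \Dia$) and consider the natural functor $\HH^{\cor}(\mathcal{M})(\coprod_i I_i) \rightarrow \prod_i \HH^{\cor}(\mathcal{M})(I_i)$. The key point is that $\tw(-)$ and, for the particular opmulticategory structure (\ref{eqopmult}), the passage to $(-)^\circ$ both commute with coproducts, and $\mathcal{M}^{\tw(\coprod_i I_i)} = \prod_i \mathcal{M}^{\tw I_i}$ since $\Fun(-, \mathcal{M})$ sends coproducts of (multi)diagrams to products of categories. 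Admissibility is a condition checked on squares of types $1$ and $2$ in each $\tw I_i$ separately, hence an object of $\mathcal{M}^{\tw(\coprod_i I_i)}$ is admissible iff each of its components is; similarly type-$1$- and type-$2$-admissibility of a multimorphism is componentwise, and the homotopy category of multicorrespondences with fixed legs splits as a product. Therefore the functor is fully faithful and essentially surjective on objects, $1$-morphisms, and $2$-morphisms, i.e.\@ an equivalence of $2$-multicategories. One should also note $\HH^{\cor}(\mathcal{M})(\emptyset)$ is not empty (it contains the empty diagram), so the second half of (Der1), which is subsumed here, holds.

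For (Der2), we must show that $\dia: \HH^{\cor}(\mathcal{M})(I) \to \Fun(I, \HH^{\cor}(\mathcal{M})(\cdot))$ is $2$-conservative, where $\HH^{\cor}(\mathcal{M})(\cdot)$ is (by the structural description, or by Lemma~\ref{LEMMAPF} with $J = \{\cdot\}$) the homotopy category of correspondences in $\mathcal{M}$, and $\dia$ is the functor sending a diagram $D$ to $i \mapsto D|_{\tw i}$, which is exactly the pointwise restriction. Conservativity on $2$-morphisms: a $2$-morphism in $\HH^{\cor}(\mathcal{M})(I)$ between multicorrespondences is an isomorphism in a homotopy category, and its image under $\dia$ is the collection of its restrictions to the objects $i \in I$; since a morphism in $\mathcal{M}^{\tw I}$ (hence a class in the relevant homotopy category) that is a pointwise weak equivalence is an isomorphism there (by Lemma~\ref{LEMMAPROPCECH}-style arguments, or directly from the definition of the morphisms as pointwise weak equivalences in Definition~\ref{DEFHCOR1} via $C_I$), if $\dia$ of a $2$-morphism is an isomorphism then the $2$-morphism was already a pointwise weak equivalence, hence an iso. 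The statement for $1$-morphisms — a $1$-morphism $\xi\colon X \to Y$ is an equivalence if $\dia(\xi)$ is — follows because an equivalence in $\HH^{\cor}(\mathcal{M})(\cdot)$ between objects of $\mathcal{M}$ is represented by a correspondence $X \leftarrow A \to Y$ both of whose legs are weak equivalences, this being detectable pointwise, and then the usual span calculus shows $\xi$ is an equivalence in $\HH^{\cor}(\mathcal{M})(I)$.

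I expect the main obstacle to be bookkeeping rather than mathematical depth: precisely unwinding what ``$\dia$'' and its action on $2$-morphisms are in terms of the construction via \cite[Proposition~A.3]{Hor17}, and checking that ``$2$-isomorphism of multicorrespondences'' restricts objectwise in the expected way, including for $n$-ary morphisms where one must track the homotopy-coherence data. The cleanest route for the $1$-morphism part is probably to reduce to the $1$-ary case and invoke Lemma~\ref{LEMMAPF} together with the fact that homotopy-Cartesianness and weak equivalence are pointwise notions; everything else is formal.
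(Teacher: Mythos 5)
Your overall strategy matches the paper's: (Der1$^\infty$) is a routine componentwise verification, and (Der2) is reduced to the characterization of equivalences in $\HH^{\cor}(\mathcal{M})(I)$ as exactly those $1$-ary correspondences whose two legs are (pointwise) weak equivalences. The gap is that you assert the nontrivial half of this characterization without proof: you write that ``an equivalence in $\HH^{\cor}(\mathcal{M})(\cdot)$ \ldots\ is represented by a correspondence $X \leftarrow A \to Y$ both of whose legs are weak equivalences,'' as if it were manifest. The converse (both legs weak equivalences $\Rightarrow$ equivalence) is what the usual span calculus gives. But the direction you actually need to detect equivalences pointwise is the forward one, and it is genuinely not automatic: given a quasi-inverse $Y \leftarrow B \to X$ and $2$-isomorphisms for unit and counit, one only learns that certain \emph{composites} of the legs of $\xi$ are weak equivalences; to conclude that the legs themselves are, one must invoke that $\mathcal{W}$ is saturated, hence satisfies $2$-out-of-$6$. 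That is the single fact the paper's one-line proof flags, and without it the pointwise reduction does not close.

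A minor remark: conservativity on $2$-morphisms is vacuous here, since by construction the morphism categories of $\HH^{\cor}(\mathcal{M})(I)$ are groupoids (the $2$-morphisms are \emph{isomorphisms} in a homotopy category), so your discussion of ``pointwise weak equivalences'' for that part is unnecessary though not incorrect.
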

\begin{proof}
(Der1${}^\infty$) is clear. For (Der2) use the fact that equivalences in $\HH^{\cor}(\mathcal{M})(I)$ are precisely those correspondences (cf.\@ \ref{BEISPIELMULTICOR}) for $n=1$ in which both $f$ and $g$ are weak equivalences (by an easy application of the fact that $\mathcal{W}$ is saturated and thus satisfies 2-out-of-6).  
\end{proof}

The following Lemma follows from the definitions in a straightforward way:

\begin{LEMMA}\label{LEMMAFUNCTHOCOR}
Let $(\mathcal{M}_i, \Fib_i, \mathcal{W}_i)$, $i=1,2$, be two categories with fibrant objects. 
Every functor $F: (\mathcal{M}_1, \mathcal{W}_1) \rightarrow (\mathcal{M}_2, \mathcal{W}_2)$ of categories with weak equivalences, which preserves homotopy fiber products, induces a morphism of 2-pre-multiderivators
\[  \HH^{\cor}(F): \HH^{\cor}(\mathcal{M}_1) \rightarrow \HH^{\cor}(\mathcal{M}_2).  \]
If $F$ is an equivalence of categories with weak equivalences, in the sense that there is a functor $G: (\mathcal{M}_2, \mathcal{W}_2) \rightarrow (\mathcal{M}_1, \mathcal{W}_1)$ of categories with weak equivalences such that $F \circ G$ and $G \circ F$ are weakly equivalent to the identity, then  $\HH^{\cor}(F)$ is an equivalence of 2-pre-multiderivators.
\end{LEMMA}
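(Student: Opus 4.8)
The plan is to define $\HH^{\cor}(F)$ level-wise by post-composition with $F$, and to extract all of its structure from the functoriality, in the input datum, of the $2$-multicategory construction of \cite[Proposition~A.3]{Hor17} underlying Definitions~\ref{DEFHCOR1}--\ref{DEFHCOR2}. The first point to check is that post-composition with $F$ preserves admissibility: since $F$ sends weak equivalences to weak equivalences and homotopy fibre products to homotopy fibre products (hence homotopy Cartesian squares to homotopy Cartesian squares, and the finitely many products $\prod_k X(j_k)$ occurring in Definition~\ref{DEFADMISSIBLE} to products up to weak equivalence), the diagram $F \circ X$ is admissible whenever $X \in \mathcal{M}_1^{\tw I}$ is, and $F$ carries type-$1$- and type-$2$-admissible morphisms, as well as the admissible multicorrespondences of Example~\ref{BEISPIELMULTICOR}, to ones of the same kind. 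Thus post-composition with $F$ defines, for every multidiagram $I$, a natural transformation $\Cor_I^{(1)} \Rightarrow \Cor_I^{(2)}$ of functors $\Delta_S \to \mathcal{CATW}$ (notation of \ref{DEFCOR}), and this transformation is compatible with the derived right Kan extensions and the replacement functor $R$ used in the proof of Lemma~\ref{LEMMAPROPCONSTRSYMMULTI1}, because these only involve homotopy fibre products formed with path objects. Feeding this transformation into \cite[Proposition~A.3]{Hor17} yields strict symmetric $2$-functors $\HH^{\cor}(F)(I) \colon \HH^{\cor}(\mathcal{M}_1)(I) \to \HH^{\cor}(\mathcal{M}_2)(I)$ which send $X$ to $F \circ X$, a multicorrespondence to its vertex-wise $F$-image, and a $2$-morphism (an isomorphism in the homotopy category of multicorrespondences) to its $F$-image.

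Next I would verify the axioms of Definition~\ref{DEF2PREMULTIDERSTRICTMOR}. The pull-back functors $\alpha^*$ on both sides are restriction along $\tw \alpha$, i.e.\@ pre-composition, and hence commute strictly with post-composition by $F$; so $\HH^{\cor}(F)$ is strict in $1$-morphisms. For a natural transformation $\mu \colon \alpha \Rightarrow \beta$, the pseudo-natural transformation $\HH^{\cor}(\mathcal{M})(\mu)$ of \ref{DEFHCOR2} is assembled from the diagrams $\mu^* X \in \mathcal{M}^{\tw{(\Delta_1 \times I)}}$ and from the pseudo-functors supplied by Lemma~\ref{LEMMAPF} out of $\mu^* \xi$; since $F \circ \mu^*(-) = \mu^*(F \circ -)$ and $F$ preserves the homotopy fibre products governing those pseudo-functors, all the structure $2$-isomorphisms match up under $F$. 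Hence $\HH^{\cor}(F)$ is a (strict) morphism of $2$-pre-multiderivators.

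Finally, for the equivalence statement, suppose $F$ is an equivalence of categories with weak equivalences with quasi-inverse $G$. Then $G$ also preserves homotopy fibre products: they are characterised, up to weak equivalence, by a universal property in the homotopy category, $F$ induces an equivalence of homotopy categories preserving homotopy-Cartesian squares, and therefore so does $G$; so $\HH^{\cor}(G)$ is defined as well. A point-wise weak equivalence $w \colon X \to Y$ of admissible diagrams is an equivalence in $\HH^{\cor}(\mathcal{M})(I)$: it is represented by the span $X \xleftarrow{\id} X \xrightarrow{w} Y$, whose two legs are weak equivalences, so it is an equivalence by the description of equivalences in $\HH^{\cor}(\mathcal{M})(I)$ used above (a quasi-inverse being $Y \xleftarrow{w} X \xrightarrow{\id} X$, the two composites being homotopy fibre products along weak equivalences, hence $2$-isomorphic to identity spans). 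Consequently the natural weak equivalences witnessing $F G \simeq \id$ and $G F \simeq \id$ assemble, level-wise in $I$, into pseudo-natural equivalences between $\HH^{\cor}(F)(I) \circ \HH^{\cor}(G)(I)$, $\HH^{\cor}(G)(I) \circ \HH^{\cor}(F)(I)$ and the respective identities, compatibly with the pre-multiderivator structure; so each $\HH^{\cor}(F)(I)$ is an equivalence of $2$-multicategories and $\HH^{\cor}(F)$ is an equivalence of $2$-pre-multiderivators.

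The one step that is more than bookkeeping is the claim in the first paragraph that the hypothesis ``$F$ preserves homotopy fibre products'' suffices to intertwine $F$ with the \emph{specific} path-object fibre-product constructions hard-wired into $\HH^{\cor}(\mathcal{M})$ and into its structure morphisms (and, incidentally, with finite products). Once the functoriality of the construction of \cite[Proposition~A.3]{Hor17} in its input datum is granted, this is exactly what the admissibility conditions of Definition~\ref{DEFADMISSIBLE} are designed to guarantee, and the remaining verifications are routine.
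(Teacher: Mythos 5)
The paper offers no proof of this Lemma, stating only that it ``follows from the definitions in a straightforward way,'' so your detailed unpacking is precisely what one would expect, and the strategy --- post-compose with $F$, check preservation of admissibility, exploit the functoriality of the construction of \cite[Proposition~A.3]{Hor17}, and describe equivalences in $\HH^{\cor}(\mathcal{M})(I)$ as spans with both legs weak equivalences --- is the right one. One point deserves to be made explicit rather than slipped into a parenthetical: the hypothesis ``$F$ preserves homotopy fiber products'' does not, on its face, imply that $F$ preserves the finite products $\prod_k Y_k(i)$ appearing in the (type-$1$-)admissibility conditions of Definition~\ref{DEFADMISSIBLE}, since a finite product is a homotopy pullback over the terminal object and one additionally needs $F(*_1) \to *_2$ to be a weak equivalence. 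This is surely the intended reading of the hypothesis (``preserves finite homotopy limits''), and once granted, the rest of your argument --- including that the structure-preservation only needs to hold up to coherent point-wise weak equivalence, and that $G$ inherits preservation of homotopy fiber products because $\mathrm{Ho}(F)$ is an equivalence preserving homotopy Cartesian squares --- is correct.
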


\begin{PAR} \label{EMBEDDINGM}
If $\mathbb{M}$ is the   (usual 1-pre-derivator) associated with  $\mathcal{M}$ then we have obvious (non-strict) morphisms
\[ \mathbb{M} \rightarrow \HH^{\cor}(\mathcal{M})  \]
and
\[ \mathbb{M}^{\op} \rightarrow \HH^{\cor}(\mathcal{M}).  \]
The latter is even a morphism of symmetric 2-pre-{\em multi}\,derivators if on $\mathcal{M}^{\op}$ one chooses the symmetric multicategory structure
\[ \Hom_{\mathcal{M}^{\op}}(X_1, \dots, X_n; Y) := \Hom_\mathcal{M}(Y, X_1) \times \cdots \times \Hom_\mathcal{M}(Y, X_n)\]
i.e.\@ the opposite of the symmetric opmulticategory structure (\ref{eqopmult}).  
\end{PAR}

Generalizing the case of a usual base category (cf.\@ \cite[Definition~5.8]{Hor16}) we define:

\begin{DEF}\label{DEF6FU} Let $(\mathcal{M}, \Fib, \mathcal{W})$ be a category with fibrant objects as in \ref{PARCATFIBOBJ}. 
A (symmetric) {\bf derivator six-functor-formalism} over $(\mathcal{M}, \Fib, \mathcal{W})$ is
a (symmetric) fibered multiderivator 
\[ \DD \rightarrow \HH^{\cor}(\mathcal{M}). \]
\end{DEF}

In this article we abstain from discussing any generalization of {\em proper}, or {\em etale}, six-functor-formalisms in the sense of \cite[Definition~5.8]{Hor16}.

\begin{PAR}\label{PAREXTR6FU}
Given a derivator six-functor-formalism as in Definition~\ref{DEF6FU},
a morphism $f: X \rightarrow Y$ in $\mathcal{M}$ yields adjunctions
\[ \xymatrix{ \DD(\cdot)_X \ar@/^10pt/[r]^{f_*} & \ar@/^10pt/[l]^{f^*}  \DD(\cdot)_Y } \quad \xymatrix{ \DD(\cdot)_Y \ar@/^10pt/[r]^{f^!} & \ar@/^10pt/[l]^{f_!}  \DD(\cdot)_X }  \]
as pull-back and push-forward along the correspondences in $\HH^{\cor}(\mathcal{M})(\cdot)$:
\[ \vcenter{ \xymatrix{  & X \ar[ld]_f \ar@{=}[rd]  \\
Y & & X } } \quad \text{and} \quad \vcenter{ \xymatrix{  & X \ar@{=}[ld] \ar[rd]^f  \\
X & & Y. } } \]
We get a tensor product
\[ \otimes: \DD(\cdot)_X \times \DD(\cdot)_X \rightarrow \DD(\cdot)_X \]
with right adjoints w.r.t.\@ both variables as push-forward and pull-backs along the multicorrespondence
\[ \xymatrix{ &&X \ar@{=}[lld] \ar@{=}[ld] \ar@{=}[rd]  \\
X & X & ; & X } \]
\end{PAR}
\begin{PROP}\label{PROPCONSEQUENCES}
Given a symmetric (for simplicity) derivator six-functor-formalism as in Definition~\ref{DEF6FU}, there exist canonical isomorphisms
for the functors extracted in \ref{PAREXTR6FU}:

\begin{center}
\begin{tabular}{rlll}
& left adjoints & right adjoints \\
\hline
$(*,*)$ & $(fg)^* \iso g^* f^*$ & $f_* g_* \iso (fg)_*$ &\\
$(!,!)$ & $(fg)_! \iso f_! g_!$ & $ g^! f^!\iso (fg)^!$ &\\ 
$(!,*)$ 
& $g^* f_! \iso F_! G^*$ & $G_* F^! \iso f^! g_*$ & \\
$(\otimes,*)$ & $f^*(- \otimes -) \iso f^*- \otimes f^* -$ & $f_* \mathcal{HOM}(f^*-, -) \iso \mathcal{HOM}(-, f_*-)$  & \\
$(\otimes,!)$ & $f_!(- \otimes f^* -) \iso  (f_! -) \otimes -$ & $\mathcal{HOM}(f_! -, -) \iso f_* \mathcal{HOM}(-, f^!-) $ & \\ 
& & $f^!\mathcal{HOM}(-, -) \iso \mathcal{HOM}(f^* -, f^!-)$ & \\
$(\otimes, \otimes)$ &  $(- \otimes -) \otimes - \iso - \otimes (- \otimes -)$ &  $\mathcal{HOM}(-, \mathcal{HOM}(-, -)) \iso \mathcal{HOM}(- \otimes -, -)$ & 
\end{tabular}
\end{center}
Here $f, g, F, G$ are morphisms in $\mathcal{M}$ which, in the $(!,*)$-row, are related by a {\em homotopy Cartesian} diagram
\[ \xymatrix{  \ar[r]^G \ar[d]_F  &  \ar[d]^f \\  \ar[r]_g &  } \]

Furthermore, if $f \in \mathcal{W}$, there is a canonical isomorphism
\[ f^! \cong f^* \]
and both functors are equivalences. 
\end{PROP}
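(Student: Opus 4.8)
The plan is to deduce everything from the structure of $\HH^{\cor}(\mathcal{M})(\cdot)$ as a symmetric $2$-multicategory. Restricting the fibered multiderivator $\DD \to \HH^{\cor}(\mathcal{M})$ over the one-point diagram, (FDer0 left) and (FDer0 right) say that $\DD(\cdot) \to \HH^{\cor}(\mathcal{M})(\cdot)$ is a $1$-bifibration and $2$-bifibration of symmetric $2$-multicategories with $1$-categorical fibers, equivalently --- exactly as for $\SSS^{\cor}$, cf.\ \cite[Section~5]{Hor16} --- a pseudo-functor of symmetric $2$-multicategories $\HH^{\cor}(\mathcal{M})(\cdot) \to \mathcal{CAT}$, $S \mapsto \DD(\cdot)_S$, all of whose multivalued functors admit right adjoints with respect to every slot. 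By \ref{PAREXTR6FU} the functors $f^*$, $f_!$ and $\otimes$ are the images under this pseudo-functor of the explicit $1$- and $2$-ary correspondences recorded there, and $f_*$, $f^!$, $\mathcal{HOM}$ are by construction their right adjoints. First, each isomorphism in the left-hand column of the table is the image, under this pseudo-functor, of a canonical $2$-isomorphism between two composite (multi)correspondences; since composition in $\HH^{\cor}(\mathcal{M})$ is given by homotopy fibre products, the only point is to identify the fibre product occurring. For $(*,*)$ and $(!,!)$ one of the two legs being glued is an identity, so the homotopy fibre product is canonically the other vertex and the composite is again the correspondence of $(fg)^*$, resp.\ $(fg)_!$. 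For $(!,*)$ one glues the correspondence of $f_!$ to that of $g^*$ along $f$ and $g$; the homotopy fibre product over the common target is, by the homotopy Cartesianness hypothesis, the top-left vertex of the given square, and both $g^*f_!$ and $F_!G^*$ are the push-forward along the resulting correspondence --- this is the single place where the hypothesis enters. For $(\otimes,*)$, $(\otimes,!)$ and $(\otimes,\otimes)$ every leg being glued is an identity (the multicorrespondence defining $\otimes$ has all legs identities), so on both sides the composite is literally the same multicorrespondence. Since a fibered multiderivator is in particular pseudo-functorial, each such $2$-isomorphism of correspondences is sent to the asserted canonical isomorphism of functors.

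The right-hand column then follows purely formally by passing to mates: for a pseudo-functor into $\mathcal{CAT}$ whose multivalued functors all admit right adjoints with respect to every slot, any isomorphism between two composites of such functors induces a canonical isomorphism between the composites of the corresponding right adjoints, taken in reverse order and slot by slot. Applying this to the six isomorphisms just obtained, and recalling that $f_*$, $f^!$ and $\mathcal{HOM}(-,-)$ (in either of its variables) are the relevant right adjoints, yields precisely the right-hand column: $f_*g_*\cong(fg)_*$ is the mate of $(fg)^*\cong g^*f^*$, the $(\otimes,*)$-entry is the mate of $f^*(-\otimes-)\cong f^*-\otimes f^*-$, the $(\otimes,!)$-entries and the $(\otimes,\otimes)$-entry are the slot-by-slot mates of the projection formula and of associativity, and so on. No further input is needed here.

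Finally, suppose $f\in\mathcal{W}$. The correspondence $c_f$ defining $(f^*,f_*)$, namely $Y\xleftarrow{f}X\xrightarrow{\id}X$, and the correspondence $d_f$ defining $(f_!,f^!)$, namely $X\xleftarrow{\id}X\xrightarrow{f}Y$, both have both legs in $\mathcal{W}$, hence are equivalences in $\HH^{\cor}(\mathcal{M})(\cdot)$ by the characterization of equivalences established when verifying (Der2) (a one-object correspondence is an equivalence iff both legs are weak equivalences). A direct computation of the composites gives that $d_f\circ c_f$ is $Y\xleftarrow{f}X\xrightarrow{f}Y$ --- which is $2$-isomorphic to $\id_Y$ via the morphism of correspondences given by $f$ itself, an isomorphism in the homotopy category precisely because $f\in\mathcal{W}$ --- while $c_f\circ d_f$ has apex $X\widetilde{\times}_Y X$ with the two projections as legs, which is $2$-isomorphic to $\id_X$ via the diagonal (which lies in $\mathcal{W}$ since $f$ does). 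Hence $f^*=(c_f)_\bullet$ and $f_!=(d_f)_\bullet$ are mutually quasi-inverse equivalences between $\DD(\cdot)_Y$ and $\DD(\cdot)_X$. In particular $f_!$ is both a left (by definition) and a right (as quasi-inverse of the equivalence $f^*$) adjoint of $f^*$, so $f_*\cong f_!$ by uniqueness of adjoints; dually $f^!$ is a quasi-inverse of $f_!$ and hence isomorphic to $f^*$, and all four functors are equivalences.

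I expect the main obstacle to be bookkeeping rather than conceptual: matching every classical six-functor isomorphism to the correct $2$-isomorphism of (multi)correspondences, keeping track of sources, targets and slot orders, and --- most importantly --- checking in the $(!,*)$-row that composition in $\HH^{\cor}(\mathcal{M})$ genuinely yields the homotopy-theoretic corner $X\widetilde{\times}_Y Y'$, so that the homotopy Cartesianness hypothesis (rather than strict Cartesianness) is exactly what is required. Beyond this, nothing differs in substance from the analogous statement for $\SSS^{\cor}$ proved in \cite{Hor16}.
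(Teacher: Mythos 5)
Your proposal is correct and follows essentially the same route as the paper: for the table, one reads off the canonical $2$-isomorphisms of composite multicorrespondences in $\HH^{\cor}(\mathcal{M})(\cdot)$ and applies the pseudo-functor, and for the final statement one shows the two correspondences $Y \xleftarrow{f} X = X$ and $X = X \xrightarrow{f} Y$ compose up to $2$-isomorphism to identities so that $f^*$ and $f_!$ are quasi-inverse equivalences, hence $f^!\cong f^*$. One small slip: the parenthetical ``$f_!$ is a left adjoint of $f^*$ by definition'' is false (by definition $f_!\dashv f^!$ and $f^*\dashv f_*$), but it is unused in the rest of your argument; the paper instead records explicit unit and counit making the pair an \emph{adjoint} equivalence in $\HH^{\cor}(\mathcal{M})(\cdot)$, which gives $f_!\dashv f^*$ directly and then $f^*\cong f^!$ by uniqueness of right adjoints --- a slightly more canonical packaging of the same idea.
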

\begin{proof}See \cite[Lemma~A.2.19]{Hor15} or \cite[Proposition~3.9]{Hor15b}  for the analogous statement over usual categories. The proof is the same. 
 For the additional statement we proceed analogously to \cite[Proposition~8.3]{Hor15b}:
 Consider the correspondences
 \[ \xymatrix{  & X \ar[ld]_f \ar@{=}[rd]  \\
Y & & X } \quad \xymatrix{  & X \ar@{=}[ld] \ar[rd]^f  \\
X & & Y } \]
Those are adjoint equivalences in the 2-category $\HH^{\cor}(\mathcal{M})(\cdot)$ by means of the following unit and couint:
 \[ \xymatrix{  & X \ar[ld]_f \ar[rd]^f \ar[dd]^f  \\
Y & & Y  \\
& Y  \ar@{=}[lu] \ar@{=}[ru] } \quad \xymatrix{  & X \ar@{=}[ld] \ar@{=}[rd] \ar[dd]^{\Delta_f}  \\
X  & & X \\
 & X \widetilde{\times}_Y X \ar[lu] \ar[ru] } \]
where $X \widetilde{\times}_Y X$ denotes the homotopy fiber product chosen in the definition of $\HH^{\cor}(\mathcal{M})$. By definition both 2-morphisms are invertible. 
Hence the push-forward functors $f_!$ and $f^*$ are adjoint equivalences as well, whence a canonical isomorphism
$f^! \cong f^*$.
\end{proof}

\begin{PAR}
The following is only needed for a certain construction in Proposition~\ref{PROPSTRONGLYLOCAL} and can be skipped on a first reading. 
It is, however, of independent interest. As explained in \ref{PARDIACORHAT}, a derivator six-functor-formalism on $\mathcal{M}$ gives rise to a pseudo-functor of 2-multicategories
\[ \widehat{\Dia}^{\cor}(\HH^{\cor}(\mathcal{M})) \rightarrow \mathcal{CAT} \]
such that all functors in the image have right adjoint w.r.t.\@ all slots. 
\end{PAR}
\begin{PAR}\label{PARNOTATIONDERCORHCOR}
Note that a 1-morphism in $\widehat{\Dia}^{\cor}(\HH^{\cor}(\mathcal{M}))$ can be (up to 2-isomorphism) presented by a diagram of the form
\[ \xymatrix{ & & & (A, U) \ar[llld]_-{(\alpha_1,g_1)} \ar[ld]^-{(\alpha_n,g_n)} \ar[rd]^-{(\beta, f)} \\
(I_1,S_1) & \cdots & (I_n, S_n) & ; & (J, T) }  \]
where $S_i: \tw I \rightarrow \mathcal{M}$, $U: \tw A \rightarrow \mathcal{M}$, and $T: \tw J \rightarrow \mathcal{M}$ are admissible diagrams, 
$(g_1, \dots, g_n): U \rightarrow (\tw \alpha_1)^*S_1, \dots, (\tw \alpha_n)^*S_n$ is type-1-admissible and $f: U \rightarrow (\tw \beta)^*T$ is type-2-admissibile.
The image of this 1-morphism under the above pseudo-functor is given by the following functor in $n$-variables:
\[ \beta_!^{(T)} f_!( (g_1^* \alpha_1^* -) \otimes \cdots \otimes (g_n^* \alpha_n^* -)  ) \]
in which $f_!$ is a push-forward in the six-functor-formalism and $\beta_!^{(T)}$ is the relative homotopy Kan extension of the fibered derivator. 
As promised, we have the following statement about the 2-multicategory $\widehat{\Dia}^{\cor}(\HH^{\cor}(\mathcal{M}))$:
\end{PAR}

\begin{SATZ}\label{SATZDUALITY}
Let $\mathcal{M}$ be a category with fibrant objects and functorial path object. 
The functor of symmetric 2-multicategories
\[ \widehat{\Dia}^{\cor}(\HH^{\cor}(\mathcal{M})) \rightarrow  \{ \cdot\} \]
is 1-bifibered (in particular $\widehat{\Dia}^{\cor}(\HH^{\cor}(\mathcal{M}))$ is closed monoidal) with the monoidal product given by
\[ (I, S) \boxtimes (J, T) = (I \times J, S \times T) \]
(product of $S$ and $T$ formed in $\mathcal{M}$), unit given by $(\cdot, \cdot)$,
and internal hom given by
\[ \mathbf{HOM}((J, T), (I, S)) = (J^{\op} \times I, T^{\op} \times S) \]
where $T^{\op} \in \HH^{\cor}(\mathcal{M})(I^{\op})$ is obtained from $T \in \HH^{\cor}(\mathcal{M})(I)$ by flipping all correspondences\footnote{For the precise definition of $T^{\op}$ observe that we have a canonical isomorphism $\tw I \cong \tw (I^{\op})$ which interchanges type-1 and type-2 morphisms. }.  
\end{SATZ}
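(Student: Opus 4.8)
The plan is to make the morphism categories of $\widehat{\Dia}^{\cor}(\SSS)$, for $\SSS := \HH^{\cor}(\mathcal{M})$, explicit by means of Proposition~\ref{PROPALTCOR}, and then to write down the (co)Cartesian $1$-morphisms over $\{\cdot\}$ that realize the monoidal product and the internal hom. By Proposition~\ref{PROPALTCOR} we have an equivalence of categories with weak equivalences
\[ \Hom_{\widehat{\Dia}^{\cor}(\SSS)}((I_1, S_1), \dots, (I_n, S_n); (J, T)) \cong \Fun_{\SSS; S_1, \dots, S_n; T}(I_1^{\op} \times \cdots \times I_n^{\op} \times J, \Dia)[(\widetilde{\Xi}^{-1}(\mathcal{W}))^{-1}], \]
and for $\SSS = \HH^{\cor}(\mathcal{M})$ the right-hand side unwinds as follows: an object is a functor $F\colon I_1^{\op} \times \cdots \times I_n^{\op} \times J \to \Dia$ together with, on the associated diagram $A := \int \nabla F$ (which carries the canonical fibration $(\pi_1, \dots, \pi_n)\colon A \to I_1 \times \cdots \times I_n$ and opfibration $\pi_J\colon A \to J$), an admissible object $U \in \mathcal{M}^{\tw A}$ (Definition~\ref{DEFADMISSIBLE}), a type-$1$-admissible multimorphism $U \to \pi_1^* S_1, \dots, \pi_n^* S_n$ and a type-$2$-admissible morphism $U \to \pi_J^* T$, with $\widetilde{\Xi}^{-1}(\mathcal{W})$ and the weak equivalences both being the point-wise weak equivalences.

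The elementary observation driving everything is that the opmulticategory structure (\ref{eqopmult}) on $\mathcal{M}$ is \emph{representable}: since $\mathcal{M}$ has finite products which are homotopy products, a collection of morphisms $U \to X_1, \dots, U \to X_n$ in $\mathcal{M}^{\tw A}$ is the same as a single morphism $U \to X_1 \times \cdots \times X_n$, products of homotopy Cartesian squares are homotopy Cartesian, and pullbacks and finite products of admissible diagrams are admissible. Since, by Definition~\ref{DEFADMISSIBLE}, the type-$1$-admissibility of a multimorphism $U \to \pi_1^* S_1, \dots, \pi_n^* S_n$ is \emph{verbatim} a homotopy-Cartesianness of squares mapping into $\prod_k \pi_k^* S_k = (\pi_1, \dots, \pi_n)^*(S_1 \times \cdots \times S_n)$, that datum is literally the datum of a type-$1$-admissible $1$-ary morphism $U \to (\pi_1, \dots, \pi_n)^*(S_1 \times \cdots \times S_n)$. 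This yields the $1$-opfiberedness: let $\eta_\boxtimes$ be the $1$-morphism $(I_1, S_1), \dots, (I_n, S_n) \to (I_1 \times \cdots \times I_n, S_1 \times \cdots \times S_n)$ given by the multicorrespondence with apex $I_1 \times \cdots \times I_n$, legs the projections together with $\id$, decorated on its apex by the (routinely admissible) object $S_1 \times \cdots \times S_n \in \mathcal{M}^{\tw(I_1 \times \cdots \times I_n)}$, the tautological multimorphism to the $\pr_i^* S_i$ given by the projections, and the identity to $S_1 \times \cdots \times S_n$. As one leg of $\eta_\boxtimes$ is an identity, composing an arbitrary $1$-morphism out of $(I_1 \times \cdots \times I_n, S_1 \times \cdots \times S_n)$ with $\eta_\boxtimes$ leaves its underlying $\Dia$-valued functor and the associated diagram $\int \nabla F$ (with its fibration to $I_1 \times \cdots \times I_n$) unchanged, and merely converts the single $(S_1 \times \cdots \times S_n)$-leg of its decoration into the corresponding multimorphism to the $S_i$; by the representability observation this is exactly the identification of the two relevant instances of Proposition~\ref{PROPALTCOR}, so $- \circ \eta_\boxtimes$ is an equivalence, i.e.\ $\eta_\boxtimes$ is coCartesian over $\{\cdot\}$, and the very same bookkeeping with additional slots present (the extra arguments come along for the ride) gives the full $2$-multicategorical universal property. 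The construction is symmetric-group equivariant, so the monoidal structure is symmetric; the unit is $(\cdot, \cdot)$ since $\cdot \times I = I$ and, $\mathcal{M}$ having finite products, its terminal object is a $\boxtimes$-unit, which also matches the $n = 0$ case. This establishes that $\widehat{\Dia}^{\cor}(\SSS) \to \{\cdot\}$ is $1$-opfibered, i.e.\ symmetric monoidal, with $(I, S) \boxtimes (J, T) = (I \times J, S \times T)$.

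For closedness I would exhibit, for all $(J, T)$ and $(K, R)$, the evaluation $1$-morphism
\[ \mathrm{ev}\colon (J^{\op} \times K,\ T^{\op} \times R),\ (J, T)\ \longrightarrow\ (K, R) \]
and show that $- \circ \mathrm{ev}$ is an equivalence
\[ \Hom_{\widehat{\Dia}^{\cor}(\SSS)}((I, S); (J^{\op} \times K,\ T^{\op} \times R))\ \xrightarrow{\ \sim\ }\ \Hom_{\widehat{\Dia}^{\cor}(\SSS)}((I, S),\ (J, T);\ (K, R)), \]
i.e.\ that $\mathrm{ev}$ is Cartesian with respect to its first slot (symmetry and the monoidal product then give the internal hom for general $n$-ary morphism objects and arbitrary slots). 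On the $\Fun_\SSS$-side both Hom-categories have the same underlying functor category $I^{\op} \times J^{\op} \times K \to \Dia$; the content is that a correspondence carrying a source-leg to $J$ and the admissible object $T \in \mathcal{M}^{\tw J}$ can be flipped into one carrying a destination-leg to $J^{\op}$ and the object $T^{\op} \in \mathcal{M}^{\tw(J^{\op})}$, using the canonical isomorphism $\tw J \cong \tw(J^{\op})$ which interchanges type-$1$ and type-$2$ morphisms (cf.\ the footnote to the theorem); under it $T$ corresponds to $T^{\op}$, admissibility is preserved, and the fibration in the $J$-coordinate of $\int \nabla F$ becomes an opfibration in the $J^{\op}$-coordinate. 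Here the localization at $\mathcal{W}$ is essential: the flip does not respect the strict comma-category models of $\int \nabla F$ used in the strictification of \ref{PARDIACORHAT} on the nose, but the comparison morphisms lie point-wise in $\mathcal{W}$, exactly as in the proof of Proposition~\ref{PROPALTCOR}. Combining the monoidal product with this flip and reading off $\mathbf{HOM}((J, T), (K, R)) = (J^{\op} \times K, T^{\op} \times R)$ gives the asserted $1$-bifiberedness; the special case $\mathcal{M} = \{\cdot\}$ recovers the unfibered statement (the Lemma preceding the theorem).

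I expect the internal-hom part to be the main obstacle: precisely matching the admissibility and type conditions and, above all, the fibration/opfibration bookkeeping on $\int \nabla F$ under the $J \leftrightarrow J^{\op}$ flip, all while working only up to $\mathcal{W}$ rather than strictly, and verifying the genuine $2$-multicategorical universal property of $\mathrm{ev}$ (on $2$-morphisms and in the presence of further slots) rather than a mere bijection on isomorphism classes. The monoidal-product part, by contrast, reduces to the representability observation plus the identity-leg bookkeeping, and its compatibility with the strictification of \ref{PARDIACORHAT} is routine.
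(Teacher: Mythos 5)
Your high-level strategy matches the paper's: use Proposition~\ref{PROPALTCOR} to replace the Hom-categories of $\widehat{\Dia}^{\cor}(\HH^{\cor}(\mathcal{M}))$ by localized Grothendieck constructions of the form $\Fun_{\SSS;\dots}(\cdots,\Dia)$, exhibit an explicit coCartesian morphism for the product and a Cartesian one for the hom, and check the universal properties there. The coCartesian/product part --- resting on the representability of the opmulticategory structure on $\mathcal{M}$ (a type-$1$-admissible multimorphism to $\pi_1^*S_1,\dots,\pi_n^*S_n$ being the same as a single type-$1$-admissible morphism to $(\pi_1,\dots,\pi_n)^*(S_1\times\cdots\times S_n)$) and the identity-leg observation --- is correct and is exactly the case the paper declares "simpler and left to the reader."

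However, you yourself flag the internal-hom part as "the main obstacle" and then do not carry it out, and this is precisely where the real content of the theorem lies. The issue with your intended "flip" is more serious than the localization hand-wave suggests: before the flip a $1$-morphism $(I,S),(J,T)\to(K,U)$ is modeled by $A=\int\nabla F$ with a \emph{fibration} $A\to I\times J$ and opfibration $A\to K$, whereas after the flip one needs $A'=\int\nabla' F$ with a fibration $A'\to I$ and an \emph{opfibration} $A'\to J^{\op}\times K$. A fibration $A\to J$ does not become an opfibration $A\to J^{\op}$ by relabeling (that requires passing to $A^{\op}$, which destroys the rest of the data), so the two comma-category models over the same underlying functor $F\colon I^{\op}\times J^{\op}\times K\to\Dia$ are genuinely different categories, not related by a $\mathcal{W}$-equivalence of the obvious kind. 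The paper's proof acknowledges exactly this (footnote: "it seems not easily possible to transfer the correspondence $A$" directly) and replaces the naive flip by an explicit pair of functors $\Theta,\Upsilon$ --- where $\Theta(F)(i,j,k)=\{(j\to\widetilde j,o)\mid \widetilde j\in J,\ o\in F(i,\widetilde j,k)\}$ --- together with the decoration transport $\Theta(A)$, followed by a nontrivial verification that $\Theta\Upsilon\to\id$ and $\Upsilon\Theta\to\id$ lie in $\widetilde\Xi^{-1}(\mathcal{W})$ (using a point-wise adjunction argument with $\rho_{i,j,k}$ and $\delta$) and, separately, that the resulting equivalence is realized by composition with the specific morphism $\mathbf{CART}^1$. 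None of this machinery appears in your proposal; the remark that "the comparison morphisms lie point-wise in $\mathcal{W}$, exactly as in the proof of Proposition~\ref{PROPALTCOR}" is an assertion in need of the paper's $\Theta/\Upsilon$ construction to be meaningful, not a reduction to it.
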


\begin{PAR}\label{PAREXTHOM}
This has the following useful consequence for a symmetric derivator six-functor-formalism (which is a generalization of a well-known behaviour for closed monoidal derivators, a.k.a.\@ fibered multiderivators over $\{\cdot\}$). 
For objects $\mathcal{E} \in \DD(I)_S$ and  $\mathcal{F} \in \DD(J)_T$ there is an absolute Hom object
\[ \mathbf{HOM}(\mathcal{F}; \mathcal{E}) \quad  \text{ on }\quad  (I \times J^{\op}, S \times T^{\op})  \]
defined as the pull-back  $\xi^{\bullet,1}(\mathcal{F}$; $\mathcal{E})$ along 
the Cartesian (w.r.t.\@ the first slot) morphism
\[ \xi: (I \times J^{\op},  S \times T^{\op}), (J, T) \to (I, S).  \]
Furthermore  this is ``computed point-wise'', i.e.\@ for $(i,j) \in I \times J^{\op}$ the natural exchange morphism
\[ (i,j)^* \mathbf{HOM}(\mathcal{F}; \mathcal{E}) \rightarrow \mathbf{HOM}(j^*\mathcal{F}, i^*\mathcal{E}) = \mathcal{HOM}_{S_i \times T_j}(\pr_{T_j}^* \mathcal{F}_j, \pr_{S_i}^! \mathcal{E}_i )  \]
is an isomorphism. 
\end{PAR}

\begin{proof}[Proof of the point-wise computation of $\mathbf{HOM}$.]
One checks that composition of  
\[ \xymatrix{ & (\cdot, S(i) \times T(j)) \ar[ld] \ar[rd] \\
(\cdot,S(i)\times T(j)) & ; & (I \times J^{\op}, S \times T^{\op}) }  \]
with the coCartesian morphism (see proof) is isomorphic to:
\[ \xymatrix{ & & (\cdot, S(i) \times T(j)) \ar[lld] \ar[ld] \ar[rd] \\
(\cdot,S(i) \times T(j)) & (J, T) & ; & (I, S) }  \]
\end{proof}

This also implies that the 2-multicategory  $\widehat{\Dia}^{\cor}(\HH^{\cor}(\mathcal{M}))$ is self-dual, i.e.\@ the functor
\[ (I, S) \mapsto \mathbf{HOM}((I, S); (\cdot, \cdot)) \cong (I^{\op}, S^{\op})   \]
is an equivalence $\widehat{\Dia}^{\cor}(\HH^{\cor}(\mathcal{M})) \cong \widehat{\Dia}^{\cor}(\HH^{\cor}(\mathcal{M}))^{1-\op}$ (as 2-categories). 
\begin{proof}[Proof of Theorem~\ref{SATZDUALITY}.]
We will use the description (cf.\@ Proposition~\ref{PROPALTCOR}) of the homomorphism categories in $\widehat{\Dia}^{\cor}(\HH^{\cor}(\mathcal{M}))$ as localizations of
\[  \Fun_{\HH^{\cor}(\mathcal{M}); S, T; U}(I^{\op} \times  J^{\op} \times K, \Dia)  \]
considered as usual 1-category. 
We will establish an equivalence of its localization with the localization of the following categories
\[  \Fun_{\HH^{\cor}(\mathcal{M}); S \times T; U}((I \times J)^{\op} \times  J^{\op} \times K, \Dia)  \]
\[  \Fun_{\HH^{\cor}(\mathcal{M}); S; T^{\op} \times U}(I^{\op} \times  (J^{\op} \times K), \Dia).  \]
The three categories are obtained by applying the Grothendieck construction to the functors (cf.\@ \ref{PARDIACORHAT})
\begin{eqnarray*}
1.&\xymatrix{  \Fun(I^{\op} \times  J^{\op} \times K, \Dia) \ar[rr]^-{\Xi} &&  \Cor(I, J; K) \ar[r] & \mathcal{CAT}  } &\quad \text{ induced by } (S, T; U) \\
2.&\xymatrix{  \Fun(I^{\op} \times  J^{\op} \times K, \Dia) \ar[rr]^-{\Xi'} &&  \Cor(I \times J; K) \ar[r] & \mathcal{CAT}  } &\quad \text{ induced by } (S \times T; U) \\
3.&\xymatrix{  \Fun(I^{\op} \times  J^{\op} \times K, \Dia) \ar[rr]^-{\Xi''} &&  \Cor(I; K \times J^{\op}) \ar[r] & \mathcal{CAT}  } &\quad \text{ induced by } (S, U  \times T^{\op})
\end{eqnarray*}

Actually, it suffices to consider the left hand side categories as usual 1-categories, forgetting the 2-morphisms, as they are taken care of by the localization. 
We proceed to construct an equivalence between the localizations of the Grothendieck constructions of 1.\@ and 3. The other case is simpler and left to the reader. 
Define a functor
\[  \Theta:  \Fun_{\HH^{\cor}(\mathcal{M}); S, T; U}(I^{\op} \times  J^{\op} \times K, \Dia)  \rightarrow \Fun_{\HH^{\cor}(\mathcal{M}); S; T^{\op} \times U}(I^{\op} \times  (J^{\op} \times K), \Dia)  \]
as follows: 
An object on the l.h.s. is given by a pair $(F, A)$ with 
\[ F: I^{\op} \times J^{\op} \times K \rightarrow \Dia  \quad  A \in \mathcal{M}^{\tw (\int \nabla F)} \]
equipped with type-1 (resp.\@ type-2) admissible morphisms
\[ A \rightarrow \pi_I^*S \times \pi_J^*T \qquad A \rightarrow \pi_K^* U. \]
We send it to the functor\footnote{Actually we have $\Theta(F) \cong F$ in $\HH(I^{\op} \times J^{\op} \times K) = \Fun(I^{\op} \times J^{\op} \times K, \Dia)[(\Xi^{-1}(\mathcal{W}))^{-1}] $ but it seems not easily possible to transfer the correspondence $A$.}
\begin{eqnarray*} \Theta(F):  I^{\op} \times J^{\op} \times K &\rightarrow& \Dia \\
 i, j, k& \mapsto& \{ (j \rightarrow \widetilde{j}, o) \ \mid\ \widetilde{j} \in J, o \in F(i, \widetilde{j}, k) \}
\end{eqnarray*}
together with 
\[ \Theta(A) \in \mathcal{M}^{\tw (\int \nabla' \Theta(F))} \]
given by
\[  \Theta(A) \left( \vcenter{ \xymatrix{
i \ar[d]^\alpha & j \ar[r]^\kappa  \ar@{<-}[d]^{\beta_2} & \widetilde{j} \ar[d]^{\beta_3} & k \ar[d]^\gamma & o \in F(i,\widetilde{j},k) \ar@{.>}[d] \\
i'  & j' \ar[r]_{\kappa'}  & \widetilde{j}' & k & o' \in F(i',\widetilde{j}',k') 
}} \right)   := A(i \to i', j' \to \widetilde{j}', k \to k', F(\beta_2) F(\kappa) o \rightarrow o').     \]
One checks that $\Theta(A)$ is admissible and comes equipped with type-1 (resp.\@ type-2) admissible morphisms
\[ \Theta(A) \rightarrow \pi_I^* S \qquad \Theta(A) \rightarrow \pi_{J^{\op} \times K}^*(T^{\op} \times U). \]

Similarly, we define a functor 
\[  \Upsilon:  \Fun_{\HH^{\cor}(\mathcal{M}); S; T^{\op} \times U}(I^{\op} \times  (J^{\op} \times K), \Dia) \rightarrow  \Fun_{\HH^{\cor}(\mathcal{M}); S, T; U}(I^{\op} \times  J^{\op} \times K, \Dia).  \]
An object on the l.h.s. is given by a pair $(G, B)$ with 
\[ G: I^{\op} \times J^{\op} \times K \rightarrow \Dia  \quad  B \in \mathcal{M}^{\tw (\int \nabla' G)} \]
equipped with type 1- (resp.\@ type-2) admissible morphisms
\[ B \rightarrow  \pi_I^* S \qquad B \rightarrow \pi_{J^{\op}\times K}^* (T^{\op} \times U).  \]
We send it to the functor
\begin{eqnarray*} \Upsilon(G):  I^{\op} \times J^{\op} \times K &\rightarrow& \Dia \\
 i, j, k& \mapsto& \{ (j \rightarrow \widetilde{j}, o) \ \mid\ \widetilde{j} \in J^{\op}, o \in G(i, \widetilde{j}, k) \}
\end{eqnarray*}
together with 
\[ \Upsilon(B) \in \mathcal{M}^{\tw (\int \nabla \widetilde{F})} \]
given by
\[  \Upsilon(B) \left( \vcenter{ \xymatrix{
i \ar[d]^\alpha & j \ar[r]^\tau  \ar[d]^{\beta_1} & \widetilde{j} \ar@{<-}[d]^{\beta_2} & k \ar[d]^\gamma & o \in G(i,\widetilde{j},k) \ar@{.>}[d] \\
i'  & j' \ar[r]_{\tau'}  & \widetilde{j}' & k & o' \in G(i',\widetilde{j}',k') 
}} \right)   := B(i \to i', \widetilde{j} \to j, k \to k',  o \rightarrow F(\beta_1) F(\tau') o').     \]
One checks that $\Upsilon(B)$ is admissible and comes equipped with type-1 (resp.\@ type-2) admissible morphisms
\[ \Upsilon(B) \rightarrow \pi_{I \times J}^*(S \times T) \qquad \Upsilon(B) \rightarrow \pi_J^* U. \]

We claim that there are isomorphisms
\[  \Upsilon \Theta \rightarrow \id \quad \Theta \Upsilon \rightarrow \id \]
in the localization modulo $\widetilde{\Xi}^{-1}(\mathcal{W})$ and $(\widetilde{\Xi}')^{-1}(\mathcal{W})$, respectively, and therefore descend to equivalences of the corresponding localizations. 
We will describe the second morphism (the first is similar):
Let $(G, B)$ be in $\Fun_{\HH^{\cor}(\mathcal{M}); S; T^{\op} \times U}(I^{\op} \times  (J^{\op} \times K), \Dia)$. Inserting the definitions we have
\[ \Theta \Upsilon(G): i, j, k \mapsto \{ (j \to \widetilde{j} \to \widetilde{\widetilde{j}}, o) \ \mid \ \widetilde{j} \in J, \widetilde{\widetilde{j}} \in J^{\op}, o \in F(i, \widetilde{\widetilde{j}}, k) \} \]
and
\[ \Theta \Upsilon(B) = \epsilon^*B \] where $\epsilon$ is the functor
\begin{eqnarray*} \epsilon: \tw \int \nabla' ( \Theta \Upsilon(G)) &\rightarrow& \tw \int \nabla' G \\
 \left( \vcenter{ \xymatrix{
i \ar[d]^\alpha & j \ar[r]^\kappa  \ar@{<-}[d]^{\beta_1} &\widetilde{j} \ar[r]^\tau  \ar[d]^{\beta_2} & \widetilde{\widetilde{j}} \ar@{<-}[d]^{\beta_3} & k \ar[d]^\gamma & o \in G(i,\widetilde{\widetilde{j}},k) \ar@{.>}[d] \\
i'  & j' \ar[r]_{\kappa'}  & \widetilde{j}' \ar[r]_{\tau'}  & \widetilde{\widetilde{j}}' & k & o' \in G(i',\widetilde{\widetilde{j}}',k') 
}} \right) &\mapsto& (i \to i', \widetilde{\widetilde{j}} \to j', k \to k',  o \rightarrow  F(\kappa') F(\tau') o')
\end{eqnarray*}

We have also a natural transformation 
\begin{eqnarray*}
 \rho: \Theta \Upsilon (G)& \rightarrow& G \\
 (\xymatrix{ j \ar[r]^{\kappa} & \widetilde{j} \ar[r]^\tau & \widetilde{\widetilde{j}} }, o)   & \mapsto &  G(\kappa) G(\tau) o 
\end{eqnarray*}
yielding 
\begin{eqnarray*}  \tw(\int \nabla \rho): \tw \int \nabla' ( \Theta \Upsilon(G)) &\rightarrow& \tw \int \nabla' G \\
\footnotesize \left( \vcenter{ \xymatrix{
i \ar[d]^\alpha & j \ar[r]^\kappa  \ar@{<-}[d]^{\beta_1} &\widetilde{j} \ar[r]^\tau  \ar[d]^{\beta_2} & \widetilde{\widetilde{j}} \ar@{<-}[d]^{\beta_3} & k \ar[d]^\gamma & o \in G(i,\widetilde{\widetilde{j}},k) \ar@{.>}[d] \\
i'  & j' \ar[r]_{\kappa'}  & \widetilde{j}' \ar[r]_{\tau'}  & \widetilde{\widetilde{j}}' & k & o' \in G(i',\widetilde{\widetilde{j}}',k') 
}} \right) &\mapsto& (i \to i', j \to j', k \to k',  F(\tau \kappa) o \rightarrow  F(\tau' \kappa') o')
\end{eqnarray*}
There is an obvious natural transformation $\alpha:  \epsilon \Rightarrow \tw (\int \nabla' \rho)$ giving 
a diagram
\[. \xymatrix{ & \Theta \Upsilon(B) = \epsilon^* B \ar[dd]^{\alpha(B)} \ar[ld]  \ar[rd] \\
\pi_I^* S & & \pi_{J^{\op} \times K}^*(T^{\op} \times U)   \\
& (\tw (\int \nabla' \rho)))^* B \ar[lu] \ar[ru]
} \]
in which $\alpha(B)$ is a point-wise weak equivalence because the morphism $B \rightarrow \pi_I^* S$ is type-1 admissible.

We can factor for any right fibered multiderivator
\[ \Xi'(\Theta \Upsilon(G), \epsilon^*B)^\bullet \leftarrow \Xi'(\Theta \Upsilon(G), \tw (\int \nabla' \rho)^* B)^\bullet \leftarrow \Xi'(G, B)^\bullet,   \]
or for any left fibered multiderivator
\[ \Xi'(\Theta \Upsilon(G), \epsilon^*B)_\bullet \rightarrow \Xi'(\Theta \Upsilon(G), \tw (\int \nabla' \rho)^* B)_\bullet \rightarrow \Xi'(G, B)_\bullet.   \]
The first natural transformation is an isomorphism because $\alpha(B)$ is a weak equivalence 
 and the second is also an isomorphism. Indeed this can be checked point-wise
 in $i, j, k$ (cf.\@ Proposition~\ref{PROPALTCOR}). It suffices to show that for the functor
 \[ \rho_{i,j,k}: \{ j \rightarrow \widetilde{j} \to \widetilde{\widetilde{j}} \ \mid o \in G(i, \widetilde{\widetilde{j}}, k)\} \rightarrow G(i,j,k)  \]
 we have that $\id \rightarrow \rho_{i,j,k,*}^{(X)} \rho^*_{i,j,k}$ (resp.\@ $\rho_{i,j,k,!}^{(X)} \rho^*_{i,j,k} \rightarrow \id$) is an isomorphism for any $X$. But $\rho_{i,j,k}$ has a left adjoint 
 \[ \delta: G(i,j,k)  \rightarrow \{ (j \rightarrow \widetilde{j} \to \widetilde{\widetilde{j}}, o) \ \mid o \in G(i, \widetilde{\widetilde{j}}, k)\}  \]
 mapping $o \in G(i,j,k)$ to $(j = j = j, o)$. 
Hence $\delta^*$ is right adjoint to $\rho_{i,j,k}^*$  and thus for a right fibered multiderivator  $\rho_{i,j,k,*}^{(X)} \cong \delta^*$. Thus $\rho_{i,j,k,*}^{(X)} \rho^*_{i,j,k} \cong \delta^* \rho^*_{i,j,k} \cong \id$
and for a left fibered multiderivator $\delta_!^{(\rho_{i,j,k}^*X)} \cong \rho_{i,j,k}^*$ and thus $\id = \rho_{i,j,k,!}^{(X)} \delta_!^{(\rho_{i,j,k}^*X)} =   \rho_{i,j,k,!}^{(X)} \rho_{i,j,k,!}^*$. 

We have to show that the transformation $\Upsilon$ is induced by composition with a multicorrespondence (which is thus (weakly) Cartesian w.r.t.\@ the first slot), in fact with the following multicorrespondence
\[ \mathbf{CART}^1 =  \left( \vcenter{ \xymatrix{ & & ((\tw J) \times K, C)  \ar[lld] \ar[ld] \ar[rd] \\
(J^{\op} \times K, T^{\op} \times U) & (J,T) & ; & (K,U) } } \right)  \]
where $C$ maps an object in $\tw((\tw J) \times I)$
\[ \xymatrix{ j_2 \ar[d] \ar[r] & j_1 \ar@{<-}[d] & k \ar[d]   \\
j_2'  \ar[r]  & j_1' & k' 
}\] 
to $U(k \to k') \times T(j_2 \to j_1)$. 
Consider a composition
\[ \xymatrix{ 
&& D  \ar[ld] \ar[rrd] \\
& \pr_1^*B  \ar[ld] \ar[rd] & & & \pr_2^*C \ar[lld] \ar[ld] \ar[rd] \\
\pi_{I}^*S & & \pi_{J^{\op} \times K}^*(T^{\op} \times U) &  \pi_J^*T & ; & \pi_K^*U }  \]
on $(\int \nabla' F) \times_{J^{\op}} \tw J$. We see that $D$ evaluated at an object 
\[ \xymatrix{ i \ar[d]^\alpha & j_2  \ar[d]_{\beta_1} \ar[r]^{\tau} & j_1  \ar@{<-}[d]^{\beta_2} & k \ar[d]^\gamma & l \in F(i, j_1, k) \ar@{.>}[d]^{\rho}  \\
i'  & j'_2 \ar[r]_{\tau'} & j_1' & k'  & l' \in F(i', j_1', k') 
}\] 
in $\tw((\int \nabla' F) \times_{J^{\op}} \tw J)$ sits in a homotopy Cartesian square
\[ \xymatrix{  D(i \to i', j_2 \to j'_2, j_1 \rightarrow j_1',  k \to k', l \to l') \ar[r]  \ar[d] & T(j_2 \to j_1) \times U(k \to k')   \ar[d] \\
B(i \to i' ,j_1 \to j_1',k \to k', l \to l' )  \ar[r] & T(j_1' \to j_1) \times U(k \to k').   
  }\]
Because of the type-2 admissibility of the morphism $B \rightarrow \pi_{J^{\op} \times K}^*(T^{\op} \times U)$ we have an isomorphism in the homotopy category
\[ D(i \to i', j_2 \to j'_2, j_1 \rightarrow j_1',  k \to k', l \to l') \cong B(i \to i' ,j_1 \to j_2,k \to k', l \to F(\beta_1)F(\tau')l' ).    \]
Thus $\Xi'(\Upsilon(G), \Upsilon(B)) \cong ((\int \nabla' F) \times_{J^{\op}} \tw J, D)$ and
the equivalence established in the first part of the proof, proves that the multicorrespondence is (weakly) coCartesian w.r.t.\@ the first slot. 
One can then either show that the composition of weakly coCartesian morphisms is weakly coCartesian or, more easily, show that it is actually coCartesian by
introducing  additional arguments. 
\end{proof}

\section{Naive extensions and (co)homological descent}\label{SECTIONNAIVE}

\begin{PAR}\label{PARSETTINGGENERAL}
Let $\mathcal{S}$ be a small category with finite limits and Grothendieck pre-topology
and choose a category with fibrant objects $(\mathcal{M}, \Fib, \mathcal{W})$ with functorial path object representing the \v{C}ech localization of simplicial pre-sheaves on $\mathcal{S}$ such that coproducts of representables are in $\mathcal{M}$. In other words, such that there is an
embedding $\mathcal{S}^{\amalg} \hookrightarrow \mathcal{M}$. 
\end{PAR}

\begin{DEF}\label{DEFSHCOR}
In the situation of \ref{PARSETTINGGENERAL}, we denote
\[ \SSS^{\hcor} := \HH^{\cor}(\mathcal{M}) \]
the symmetric 2-pre-multiderivator of homotopy multicorrespondences in $\mathcal{S}$ (cf.\@ Definition~\ref{DEFHCOR2}) with domain $\Cat$.
\end{DEF}

We get a strict morphism $\iota: \SSS^{\amalg, \cor} \rightarrow \SSS^{\hcor}$.

\begin{PAR}
The goal of the article is to extend a derivator six-functor-formalism on $\mathcal{S}$, i.e.\@ a fibered multiderivator over $\SSS^{\cor}$, to
a fibered multiderivator over the sub-2-pre-multiderivator $\SSS^{\hcor, (\infty)}$ of $\SSS^{\hcor}$ coming from higher geometric stacks (cf.\@ Section~\ref{SECTHIGHERGEOM}).  

In this section, we recall that there is always a (naive) extension of the restriction $\DD^{!}$ of $\DD$ to
$\SSS$ using homological descent, and an extension of the restriction $\DD^{*}$ of $\DD$ to
$\SSS^{\op}$ using cohomological descent. In other word, a naive extension of the formalism of the $!$-functors and of the formalism of the $*$-functors (the latter together with $\otimes, \mathcal{HOM}$ if desired). It is not clear a priori that the two extensions are even equivalent on objects. 
\end{PAR}

\begin{PAR}\label{PARLOCAL}Let $\Dia$ be a diagram category (Definition~\ref{DEFDIAGRAMCAT}).
Recall that a fibered derivator with domain $\Dia$
\[ \DD^! \rightarrow \SSS  \]
is called {\bf local} for the pretopology on $\mathcal{S}$, if  (writing, as usual, $f^!, f_!$ for $f^\bullet, f_\bullet$):
\begin{enumerate}
\item[(H1'')] For all $I \in \Dia$ and each morphism $f: S \rightarrow T$ in $\mathcal{S}^I$ such that $S_i \rightarrow T_i$ is part of a covering, 
the morphism of derivators $f^!: \DD^!_{(I,T)} \rightarrow \DD^!_{(I,S)}$ has a right adjoint.
\item[(H2)] For each covering $\{f_i: U_i \rightarrow X\}$, and for each $i$, $f_i^!$ satisfies base change, i.e.\@ for each Cartesian square in $\mathcal{S}$
\[ \xymatrix{  \ar[r]^{F_i} \ar[d]_G &  \ar[d]^g \\ 
 \ar[r]_{f_i} & 
} \] 
 the natural exchange $G_! F_i^! \rightarrow f^!_i g_!$ is an isomorphism. 
\item[(H3)] For each covering $\{f_i: U_i \rightarrow X\}$, the $f_i^!$ are jointly conservative. 
\end{enumerate}
Also consider the following variants of (H1''). For (H1') assume that $\DD^! \rightarrow \SSS$ has stable fibers. 
\begin{enumerate}
\item[(H1)] For each covering $\{f_i: U_i \rightarrow X\}$, and for each $i$, $f_i^!$, as morphism of derivators, commutes with homotopy colimits. 
\item[(H1')] For each covering $\{f_i: U_i \rightarrow X\}$, and for each $i$, $f_i^!$ is exact and commutes with arbitrary (homotopy) coproducts. 
\end{enumerate}
We obviously always have $(H1'') \Rightarrow (H1) \Rightarrow (H1')$. 
If we assume that $\DD^! \rightarrow \SSS$ is infinite (i.e.\@ satisfies (Der1${}^\infty$)) then 
homotopy coproducts in the derivator (fiber) $\DD^!_X$ are the same as coproducts in the underlying category $\DD^!(\cdot)_X$. 
If $\DD^! \rightarrow \SSS$ has stable fibers, then we have always $(H1') \Rightarrow (H1)$, and if $\DD^!$ is moreover infinite, and has perfectly generated fibers we have
$(H1) \Rightarrow (H1'')$ using Brown representability \cite[Theorem~4.2.1]{Hor15}. In case all assumptions are met  (infinite + stable and perfectly generated fibers) all three properties (H1''), (H2), (H3) and hence the locality of $\DD^!$ can thus be checked
in terms of the {\em triangulated} categories $\DD^!(\cdot)_X$ alone. 
\end{PAR}

\begin{PAR}
Recall that the cofibrant replacement functor $Q$ in the projective model category structure on $\mathcal{SET}^{\mathcal{S}^{\op} \times \Delta^{\op}}$ has values in the essential image of
$\mathcal{S}^{\amalg, \Delta^{\op}}$ (coproducts of representables, using that $\mathcal{S}$ has finite limits and is thus idempotent complete). This yields a functor
\[ \int^{\amalg} Q:  \mathcal{SET}^{\mathcal{S}^{\op} \times \Delta^{\op}} \rightarrow \Cat(\mathcal{S}). \]
and also 
\[ \nabla^{\amalg} Q:  \mathcal{SET}^{\mathcal{S}^{\op} \times \Delta^{\op}} \rightarrow \Cat^{\op}(\mathcal{S}^{\op}). \]
For the notation $\int^{\amalg}$ and $\nabla^{\amalg}$ see \ref{PARINTNABLA}.
For a diagram $D=(I, S) \in \Cat(\mathcal{S})$ and a fibered derivator $\DD^! \rightarrow \SSS$ we use the notation $\DD^!(D):= \DD^!(I)_S$ for the fiber above $S$ of the specialization $\DD^!(I) \rightarrow \SSS(I)=\mathcal{S}^I$.
\end{PAR}

\begin{SATZ}[Homological descent]\label{SATZHODESC}
Let $\DD^! \rightarrow \SSS$ be an infinite local fibered derivator with stable, well-generated fibers with domain $\Cat$.
If $f: X \rightarrow Y$ is a \v{C}ech weak equivalence of simplicial pre-sheaves in $\mathcal{SET}^{\mathcal{S}^{\op} \times \Delta^{\op}}$ then 
\[ f^!: \DD^!(\int^{\amalg} Q Y)^{\cart} \rightarrow \DD^!(\int^{\amalg} Q X)^{\cart}  \]
is an equivalence.  
\end{SATZ}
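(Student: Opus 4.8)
The plan is to reduce, by formal manipulations, to two generating cases and then to invoke (resp.\@ re-prove in this setting) the abstract homological descent statements of \cite[Theorems~3.5.4--3.5.5]{Hor15}. Write $\mathcal{W}'$ for the class of morphisms $f$ in $\mathcal{SET}^{\mathcal{S}^{\op}\times\Delta^{\op}}$ for which $f^!\colon\DD^!(\int^{\amalg}QY)^{\cart}\to\DD^!(\int^{\amalg}QX)^{\cart}$ is an equivalence; by functoriality of $f\mapsto f^!$ this class contains the isomorphisms, satisfies $2$-out-of-$3$, and is closed under retracts. Now recall that the \v{C}ech weak equivalences are generated from the objectwise weak equivalences together with the augmentations $U_\bullet\to X$ of \v{C}ech nerves of coverings $\{U_i\to X\}$ by $2$-out-of-$3$, retracts, and homotopy colimits. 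Since $\int^{\amalg}Q$ is homotopy-colimit preserving and the assignment $(I,S)\mapsto\DD^!(I)_S^{\cart}$ turns homotopy colimits of diagrams over $\mathcal{S}$ into homotopy limits of categories (a Cartesian object of a glued diagram being exactly a compatible family of Cartesian objects on the pieces --- this is the descent mechanism of \cite{Hor15}), the class $\mathcal{W}'$ inherits closure under homotopy colimits. It thus suffices to treat the two generating families.

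For an objectwise weak equivalence $f$, the induced functor $\int^{\amalg}Qf$ of diagrams over $\mathcal{S}$ has, after discarding the $\mathcal{S}$-labels, contractible comma-fibres; it is therefore homotopy-initial relative to $\mathcal{S}$, and pullback along it is an equivalence on Cartesian objects by the Kan-extension/cofinality argument underlying Lemma~\ref{LEMMAKAN3}. (Stability and well-generatedness of the fibres guarantee that the relevant homotopy (co)limits behave.)

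The substantial case is the augmentation $\epsilon\colon U_\bullet\to X$ of a covering $\{U_i\to X\}$, where $U_n$ is the $(n{+}1)$-fold fibre power of $\coprod_i U_i$ over $X$ and $X$ is regarded as the constant discrete diagram. The functor $\epsilon^!$ lands in $\DD^!(\int^{\amalg}U_\bullet)^{\cart}$, and by (H1'') together with the fibered-derivator axioms it has a left adjoint $\epsilon_!$, computed as the bar construction $\hocolim_{[n]\in\Delta^{\op}}(\epsilon_n)_!$; no Cartesian projector intervenes here since the target diagram is discrete. One then checks two isomorphisms. For the counit $\epsilon_!\epsilon^!\mathcal{F}\to\mathcal{F}$: by conservativity (H3) it may be tested after each $f_i^!$; base change (H2) moves $f_i^!$ past the $(\epsilon_n)_!$, (H1) moves it past the homotopy colimit, and the pulled-back covering of $U_i$ admits a section, so its \v{C}ech nerve is split and, via Lemma~\ref{LEMMAKAN3} applied to the extra degeneracy, contributes only its $0$-th term, returning $f_i^!\mathcal{F}$. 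For the unit $\mathcal{E}\to\epsilon^!\epsilon_!\mathcal{E}$ with $\mathcal{E}$ Cartesian: evaluated at level $n$ it reads $\mathcal{E}_n\to\epsilon_n^!\hocolim_{[m]}(\epsilon_m)_!\mathcal{E}_m$, and applying (H1)/(H2) once more rewrites the right-hand side as a bar construction over the split \v{C}ech nerve $U_n\times_X U_\bullet$, which by Cartesianity of $\mathcal{E}$ and Lemma~\ref{LEMMAKAN3} computes $\mathcal{E}_n$.

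The main obstacle is keeping all these homotopy colimits coherent inside the derivator formalism: one must justify that $\epsilon_!$ genuinely equals the bar construction, that the base-change isomorphisms of (H2) assemble compatibly over all of $\Delta^{\op}$, and that ``the \v{C}ech nerve of a split epimorphism is homotopy-constant'' survives transport through $\DD^!$ --- all resting on stability of the fibres together with the elementary computation of Lemma~\ref{LEMMAKAN3}. The reduction step for arbitrary \v{C}ech weak equivalences is similarly delicate precisely in pinning down the correct closure properties of the class of \v{C}ech weak equivalences, which is exactly where one needs the full machinery of \cite{Hor15}.
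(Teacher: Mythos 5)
Your proposal attempts a direct (re-)proof of facts that the paper simply cites, and the key hinge steps are precisely where the gaps lie. The paper's proof is three sentences long: (i) by \cite[Main Theorem 3.5.4]{Hor15} the strong $\DD^!$-equivalences in $\Cat(\mathcal{S})$ form a localizer; (ii) by \cite[Theorem 6.9]{Hor21c} the smallest such localizer is exactly the class of morphisms whose nerve is a \v{C}ech weak equivalence; (iii) by \cite[Theorem 6.4]{Hor21c} the nerve of $\int^{\amalg}QY$ is (globally) weakly equivalent to $Y$, so a \v{C}ech weak equivalence $f$ gets sent to a morphism in this localizer. You have effectively tried to unfold (i) and (ii) from scratch, and your sketch does capture the underlying ideas, but two of your steps are not formal consequences and are, in fact, the content of the cited theorems.

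First, your assertion that the class $\mathcal{W}'$ is ``closed under homotopy colimits'' because $(I,S)\mapsto\DD^!(I)_S^{\cart}$ ``turns homotopy colimits of diagrams over $\mathcal{S}$ into homotopy limits of categories'' is precisely the non-trivial localizer property that \cite[Main Theorem 3.5.4]{Hor15} establishes; it is not a formal exchange of limits. Second, your claim that the \v{C}ech weak equivalences are generated from objectwise weak equivalences and \v{C}ech-nerve augmentations by $2$-out-of-$3$, retracts, and homotopy colimits is exactly \cite[Theorem 6.9]{Hor21c}, whose proof is what makes the reduction legitimate. You flag the need for ``the full machinery of \cite{Hor15}'' only in your last paragraph, but it actually underlies both bullets of the reduction, not just the last step. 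Third, your treatment of the objectwise case overreaches: Lemma~\ref{LEMMAKAN3} requires a fibration or opfibration whose fibers have an initial or final object, and the map $\int^{\amalg}Qf$ induced by an objectwise weak equivalence is in general neither, so the appeal to that Lemma does not apply as stated --- this is where \cite[Theorem 6.4]{Hor21c} is doing real work. Your bar-construction argument for the \v{C}ech augmentation case (conservativity plus base change plus the splitting of the pulled-back nerve) is the right idea and is morally the engine behind \cite[Main Theorem 3.5.4]{Hor15}, but to turn it into a proof you would need to carry out exactly the coherence bookkeeping you identify as the ``main obstacle'' --- which is why the paper cites rather than re-proves.
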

\begin{proof}
In \cite[Main Theorem 3.5.4]{Hor15} it is shown that the strong $\DD^!$-equivalences, i.e.\@ the class of morphisms $\alpha: D_1 \rightarrow D_2$ in $\Cat(\mathcal{S})$  such that 
\[ \alpha^!: \DD^!(D_2)^{\cart} \rightarrow \DD^!(D_1)^{\cart} \]
 is an equivalence, form a localizer. In \cite[Theorem 6.9]{Hor21c} it is shown that the smallest such localizer
is the class of morphisms in $\Cat(\mathcal{S})$ such that the nerve is a \v{C}ech weak equivalence. Furthermore, for $Y \in \mathcal{M}$ the simplicial object $N( \int^{\amalg} Q Y)$ is (even globally) weakly equivalent to $Y$ (cf.\@ \cite[Theorem 6.4]{Hor21c}).
\end{proof}

\begin{PAR}\label{PARCOLOCAL}
There is a dual version: 
A fibered derivator with domain $\Dia$
\[ \DD^* \rightarrow \SSS^{\op}  \]
is called {\bf colocal} for the pretopology on $\mathcal{S}$ if for each covering $\{f_i: U_i \rightarrow X\}$, we have that (writing, as usual, $f^*, f_*$ for $(f^{\op})_\bullet, (f^{\op})^\bullet$):
\begin{enumerate}
\item[(C1'')] For all $I \in \Dia$ and for each morphism $f: S \rightarrow T$ in $\mathcal{S}^{I^{\op}}$ such that $S_i \rightarrow T_i$ is part of a covering for all $i$, 
the morphism of derivators $f^*: \DD^*_{(I,T^{\op})} \rightarrow \DD^*_{(I,S^{\op})}$ has a left adjoint.
\item[(C2)] For each covering $\{f_i: U_i \rightarrow X\}$, and for each $i$, $f_i^*$ satisfies base change, i.e.\@ for each Cartesian square in $\mathcal{S}$
\[ \xymatrix{  \ar[r]^{F_i} \ar[d]_G & \ar[d]^g \\ 
 \ar[r]_{f_i} & 
} \] 
 the natural exchange $f_i^* g_* \rightarrow G_* F_i^*$ is an isomorphism. 
\item[(C3)] For each covering $\{f_i: U_i \rightarrow X\}$, the $f_i^*$ are jointly conservative. 
\end{enumerate}
Also consider the following variants of (C1). For $(C1')$ assume that $\DD^* \rightarrow \SSS^{\op}$ has stable fibers. 
\begin{enumerate}
\item[(C1)] For each covering $\{f_i: U_i \rightarrow X\}$, and for each $i$, $f_i^*$, as morphism of derivators, commutes with homotopy limits. 
\item[(C1')] For each covering $\{f_i: U_i \rightarrow X\}$, and for each $i$, $f_i^*$ is exact and commutes with infinite (homotopy) products. 
\end{enumerate}
The analogous statements about the relation of (C1), (C1'), and (C1'') hold true except that for the implication 
$(C1) \Rightarrow (C1'')$, the fibered derivator $\DD^* \rightarrow \SSS^{\op}$ has to have {\em compactly generated} fibers (using, this time,  Brown representability for the dual \cite[Theorem~4.2.2]{Hor15}). However, we will never assume compactly generated fibers.
In \cite[Definition~2.5.6]{Hor15} ``colocal'' has been defined using axiom (C1) instead of (C1''). However, the above definition has the advantage that the theorem of cohomological descent is valid without assuming compactly generated fibers:
\end{PAR}

\begin{SATZ}[Cohomological descent]\label{SATZCOHODESC}
Let $\DD^* \rightarrow \SSS^{\op}$ be an infinite colocal fibered derivator with stable, well-generated generated fibers  with domain $\Cat$.
If $f: X \rightarrow Y$ is a \v{C}ech weak equivalence of simplicial pre-sheaves in $\mathcal{SET}^{\mathcal{S}^{\op} \times \Delta^{\op}}$ then 
\[ f^*: \DD^*(\nabla^{\amalg} Q Y)^{\cocart} \rightarrow \DD^*(\nabla^{\amalg} Q X)^{\cocart}  \]
is an equivalence. 
\end{SATZ}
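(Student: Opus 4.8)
The plan is to run the exact dual of the proof of Theorem~\ref{SATZHODESC}, keeping track of the few points where the dualisation needs care. First I would invoke the cohomological analogue \cite[Main Theorem~3.5.5]{Hor15} of \cite[Main Theorem~3.5.4]{Hor15}: under the hypotheses (C1''), (C2), (C3) together with stability and well-generatedness of the fibers, the class of \emph{strong $\DD^*$-equivalences} --- the morphisms $\alpha: D_1 \to D_2$ in $\Cat^{\op}(\mathcal{S}^{\op})$ for which
\[ \alpha^*: \DD^*(D_1)^{\cocart} \longrightarrow \DD^*(D_2)^{\cocart} \]
is an equivalence --- forms a localizer. It is essential here that (C1'') is part of the definition of colocality and not merely (C1): as noted in \ref{PARCOLOCAL}, deducing (C1'') from (C1) would require compactly generated fibers via the dual Brown representability \cite[Theorem~4.2.2]{Hor15}, which we do not want to assume; with (C1'') in hand the argument of \cite{Hor15} carries over on the dual side without change.

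Next I would transport everything along the $\op$-equivalence $\Cat(\mathcal{S})^{2-\op} \cong \Cat^{\op}(\mathcal{S}^{\op})$, $(I,S) \mapsto (I^{\op},S^{\op})$, using that $\nabla^{\amalg} Q = \op \circ \int^{\amalg} Q$. A morphism $f: X \to Y$ in $\mathcal{SET}^{\mathcal{S}^{\op} \times \Delta^{\op}}$ then yields a strong $\DD^*$-equivalence $\nabla^{\amalg} Q f$ exactly when $\int^{\amalg} Q f$ lies in the correspondingly transported localizer on $\Cat(\mathcal{S})$. By \cite[Theorem~6.9]{Hor21c}, the smallest localizer containing the maps arising in this way is the class of those $\alpha$ whose nerve $N(\alpha)$ is a \v{C}ech weak equivalence, and by \cite[Theorem~6.4]{Hor21c}, $N(\int^{\amalg} Q Y)$ is (even globally) weakly equivalent to $Y$; since $N(\nabla^{\amalg} Q Y) = N(\int^{\amalg} Q Y)^{\op}$ and a simplicial set is weakly equivalent to its opposite, the same holds for $N(\nabla^{\amalg} Q Y)$.

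Combining these observations: if $f$ is a \v{C}ech weak equivalence, then $N(\int^{\amalg} Q f)$ is a \v{C}ech weak equivalence of simplicial sets, so $\int^{\amalg} Q f$ lies in the smallest localizer, hence in the localizer of strong $\DD^*$-equivalences; dualising back, $\nabla^{\amalg} Q f$ is a strong $\DD^*$-equivalence, i.e.\@ $f^* : \DD^*(\nabla^{\amalg} Q Y)^{\cocart} \to \DD^*(\nabla^{\amalg} Q X)^{\cocart}$ is an equivalence, as asserted.

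The main obstacle I anticipate lies entirely in the first step: one must make sure that the localizer statement \cite[Main Theorem~3.5.5]{Hor15} is genuinely available for well-generated (not merely compactly generated) fibers once (C1'') is assumed, and that every exchange and adjunction ingredient used there dualises cleanly. This should cause no real trouble, since only the $(*,*)$-part of the formalism enters and the tensor product plays no role --- but it is the place where the argument could go wrong if one were careless about which Brown representability is being invoked. The $\op$-bookkeeping in the second step is routine given the definitions in \ref{PARINTNABLA}.
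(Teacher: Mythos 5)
Your proposal follows the same route as the paper: invoke \cite[Main Theorem~3.5.5]{Hor15} for the localizer of strong $\DD^*$-equivalences, transport through the $\op$-duality of \ref{PARINTNABLA}, and then apply \cite[Theorems 6.4 and 6.9]{Hor21c} exactly as in the proof of Theorem~\ref{SATZHODESC}. You also correctly flag the one real subtlety — that compactly generated fibers cannot be assumed and that (C1'') must do the work instead; the paper resolves this by pinpointing that compact generation in \cite{Hor15} enters only through \cite[Lemma~3.5.10]{Hor15}, which is applied only to morphisms point-wise part of a covering and hence needs no assumption once (C1'') is built into the definition of colocality, but your identification of the issue is accurate and your argument is otherwise the same.
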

\begin{proof}
In \cite[Main Theorem 3.5.5]{Hor15} it is shown that the class of morphisms $\alpha: D_1 \rightarrow D_2$ in $\Cat(\mathcal{S})$  such that 
\[ (\alpha^{\op})^*: \DD^*(D_2^{\op})^{\cocart} \rightarrow \DD^*(D_1^{\op})^{\cocart} \]
 is an equivalence (i.e.\@ the strong $\DD^*$-equivalences) is a localizer. Then proceed as in the proof of Theorem~\ref{SATZHODESC}.
 In \cite[Main Theorem 3.5.5]{Hor15} compactly generated fibers are assumed. Observe that the only place where compactly generated fibers are used in the proof of \cite[Main Theorem 3.5.5]{Hor15} is in \cite[Lemma 3.5.10]{Hor15}.
 If (C1'') holds for the morphism $U \rightarrow S$ in the statement of the Lemma --- as we do in the definition of colocal in this article --- no assumptions are needed. The Lemma is applied in the proof of \cite[Main Theorem 3.5.5]{Hor15} only for morphisms  that are (point-wise) part of a covering. 
\end{proof}

\begin{SATZ}\label{SATZEXTNAIVELEFT}
Let $\DD^! \rightarrow \SSS^{\op}$  be an infinite fibered derivator with stable, well-generated fibers  with domain $\Cat$.
Then $\DD^!$  has a {\bf naive extension}, a fibered derivator
\[ \DD^{!,h} \to \mathbb{M} \]
with domain $\Cat$, with stable, well-generated fibers, such that for $S: I \rightarrow \mathcal{M}$ 
\[ \DD^{!,h}(I)_S = \DD^!(\int_I \int^{\amalg} QS)^{\pi-\cart} \]
where 
\[ \pi: \int_{I \times \Delta^{\op}} (QS)_0 \rightarrow I  \]
is the projection\footnote{and $X_0$ for an object in $X: I \rightarrow \mathcal{S}^{\amalg, \Delta^{\op}}$ denotes the underlying $I \times \Delta^{\op} \rightarrow \mathcal{SET}$}. 
\end{SATZ}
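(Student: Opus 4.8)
The plan is to build $\DD^{!,h}$ by transporting the structure of $\DD^!$ along the construction $(I,S)\mapsto\int_I\int^{\amalg}QS$ and then restricting to $\pi$-Cartesian objects. First I would observe that, since the cofibrant replacement $Q$, the functor $\int^{\amalg}$ and the relative Grothendieck construction $\int_I$ are all strictly functorial, the assignment $\Phi\colon (I,S)\mapsto\int_I\int^{\amalg}QS$ extends to a functor $\Cat(\mathcal{M})\to\Cat(\mathcal{S})$; writing $\Phi(I,S)=(J_S,T_S)$ with structural projection $\pi\colon J_S\to I$, this functor is moreover strict over $\Cat$, in the sense that for $\alpha\colon I\to J$ and $S\in\mathcal{M}^J$ the diagram $\Phi(I,\alpha^*S)$ is canonically the honest pull-back of $\Phi(J,S)$ along $\alpha$, namely $I\times_J J_S$ equipped with the projection to $I$ and the restriction of $T_S$ (using that $Q(\alpha^*S)=\alpha^*Q(S)$ objectwise). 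Setting $\DD^{!,h}(I)_S:=\DD^!(\Phi(I,S))^{\pi-\cart}$ and letting $\DD^{!,h}(I)$ be the Grothendieck construction over $\mathbb{M}(I)=\mathcal{M}^I$ of $S\mapsto\DD^{!,h}(I)_S$, one obtains a strict morphism $\DD^{!,h}\to\mathbb{M}$ of pre-derivators in which the restriction functors $\alpha^*$, and the fibrewise pull-backs $f^*$ along a morphism $f\colon S\to T$ of $\mathcal{M}^I$, are induced by restriction in $\DD^!$ along the canonical projection $\Phi(I,\alpha^*S)\to\Phi(J,S)$, resp.\ along $\Phi(\id_I,f)$; these restrictions preserve $\pi$-Cartesian objects because they send $\pi$-fibres to $\pi$-fibres.

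Next I would verify the (pre-)derivator axioms and the properties of the fibres. Axiom (Der1${}^\infty$) is immediate, as $Q$, $\int^{\amalg}$, $\int$ and the passage to $\pi$-Cartesian objects all commute with arbitrary disjoint unions of the index diagram and $\DD^!$ satisfies (Der1${}^\infty$). For (Der2): a morphism of $\DD^{!,h}(I)$ lies over a morphism of $\mathcal{M}^I$, which is invertible iff it is pointwise invertible, so it suffices to treat morphisms inside a single fibre $\DD^!(\Phi(I,S))^{\pi-\cart}$; since the $\pi$-fibre over $i\in I$ is exactly $\int^{\amalg}Q(S_i)$, the underlying-diagram functor $\dia$ for $\DD^{!,h}$ factors through $\dia$ for $\DD^!$ on $J_S$ followed by restriction to these fibres, and 2-conservativity follows from (Der2) for $\DD^!$ on $J_S$ and on each fibre. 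Stability and well-generatedness of the fibre derivators $\DD^{!,h}_{I,S}$ hold because they are the derivators of Cartesian objects inside the stable, well-generated fibre derivators $\DD^!_{J_S,T_S}$ of $\DD^!$, and such sub-derivators are again stable and well-generated; this is also where the theory of well-generated triangulated categories enters, producing a \emph{Cartesian projector} $P_S$ left adjoint to the inclusion $\DD^!(\Phi(I,S))^{\pi-\cart}\hookrightarrow\DD^!(\Phi(I,S))$ and, dually, a coprojector $P'_S$ right adjoint to it. (Consistency of $\DD^{!,h}$ with $\DD^!$ under $\SSS^{\amalg}\hookrightarrow\mathbb{M}$ and homotopy invariance of $\DD^{!,h}(I)_S$ in $S$ can then be read off from the homological descent theorem~\ref{SATZHODESC} applied fibrewise over $I$, although these are not needed for the fibered-derivator axioms as such.)

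For (FDer0 left) and (FDer3 left): the push-forward $f_!$ along a morphism $f\colon S\to T$ in $\mathcal{M}^I$ is defined as $P_T$ composed with the coCartesian push-forward of $\DD^!$ along $\Phi(\id_I,f)$, and the relative left Kan extension $\alpha_!^{(S)}$ of (FDer3 left) as $P_S$ composed with the relative left Kan extension of $\DD^!$ along the projection $I\times_J J_S\to J_S$ — the latter exists by (FDer3 left) for $\DD^!$ applied to the (possibly infinite) diagram $J_S$; dually one uses $P'$ and the right adjoints of $\DD^!$ to define $f_*$ and $\alpha_*^{(S)}$. Each required adjunction is then formal, from $P_S\dashv\iota$ (resp.\ $\iota\dashv P'_S$) together with the adjunctions in $\DD^!$. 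The 2-categorical part of (FDer0) is vacuous over the 1-categorical bases $\mathcal{M}^I$, and preservation of coCartesian 1-morphisms by $\alpha^*$ for a fibration $\alpha$ reduces to the same statement for $\DD^!$ because $\Phi$ sends such an $\alpha$ to a pull-back square.

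The remaining axioms (FDer4 left) and (FDer5 left) (and symmetrically their right-hand counterparts) are the heart of the matter, and I expect the \textbf{main obstacle} to be showing that the non-constructive (co)projectors $P_S,P'_S$ are compatible with the structure functors: concretely, that $j^*P_S\cong P_{S_j}j^*$ for the restriction to a $\pi$-fibre $j\in I$, and that $P_T$ commutes with the relative Kan extensions of $\DD^!$ occurring in the comma/fibration data of (FDer4) and (FDer5). Granting this, (FDer4 left) and (FDer5 left) for $\DD^{!,h}$ follow by substituting, into the corresponding base-change isomorphisms for $\DD^!$ on $J_S$, the descriptions of $\alpha_!^{(S)}$ as $P_S$ followed by the $\DD^!$-Kan extension and of $j^*$ as restriction to the $\pi$-fibre; here one uses crucially that $\pi\colon J_S\to I$ is a split Grothendieck (op)fibration, being a Grothendieck construction, so that the comma categories $I\times_{/J}j$ together with their defining $2$-cells $\mu$ lift to suitable subdiagrams of $J_S$ and the base-change squares of $\DD^!$ match up. The projector-compatibility itself I would deduce from the locality of $\pi$-Cartesianity together with the localizer machinery of \cite[Main~Theorem~3.5.4]{Hor15}: since $\pi$-Cartesianity is a fibrewise condition in $I$, the (co)reflection onto Cartesian objects is already computed fibrewise up to strong $\DD^!$-equivalences, which is exactly what the compatibility asserts; alternatively one can characterise $P_S$ by a universal property manifestly stable under $j^*$ and under the relevant Kan extensions. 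This projector-compatibility step is the one I expect to cost the most work.
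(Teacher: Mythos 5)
The paper's own proof is essentially a citation: the extension is first constructed as a fibered derivator over $\mathbb{CAT}(\mathcal{S})$ in \cite[Proposition~8.10]{Hor21c}, and then pulled back along $\int^{\amalg}Q\colon\mathbb{M}\to\mathbb{CAT}(\mathcal{S})$, with well-generation of the fibers coming from \cite[Lemma~8.12]{Hor21c}. You try to reconstruct this directly, and the overall architecture (define the fibers by the stated formula, use (co)Cartesian projectors supplied by well-generation to build $\alpha_!$, $f_!$ and their right-hand counterparts, then check (FDer0,3--5)) is the right one. However, there are two places where the argument as written does not close.

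First, you assert that the full subcategory of $\pi$-Cartesian objects inside the well-generated fiber $\DD^!(\Phi(I,S))$ is itself well-generated. This is precisely the nontrivial input the paper outsources to \cite[Lemma~8.12]{Hor21c}; it is \emph{not} a formal consequence of well-generation that a localizing subcategory is again well-generated. One needs to exhibit it either as the image of a Bousfield (co)localization or to produce a generating set, and this is where the structure of $\pi$ as a Grothendieck construction is actually used. As written the claim is a genuine gap, although it is the same gap the paper acknowledges and delegates.

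Second, and more importantly, your treatment of (FDer4/5) leans on a ``projector compatibility'' $j^*P_S\cong P_{S_j}j^*$ and compatibility of $P_T$ with relative Kan extensions, and you propose to deduce this from the localizer machinery of \cite[Main~Theorem~3.5.4]{Hor15} or from an unspecified universal property. Neither of these is the right tool: the localizer theorem is a descent statement about weak equivalences and does not by itself yield a fibrewise computation of the projector, and the universal property of a left adjoint is not ``manifestly stable'' under restriction to a fiber without an argument. The correct mechanism, which is the one used repeatedly in the paper for analogous statements, is the pointwise computation of the (co)Cartesian projector along a Grothendieck (op)fibration --- see the use of \cite[Lemma~3.5.11]{Hor15} in Lemma~\ref{LEMMACOCARTPROJ56}. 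Concretely, because $\pi\colon \Phi(I,S)\to I$ is a split opfibration and because $\pi$-Cartesianity is a fiberwise condition, \cite[Lemma~3.5.11]{Hor15} gives exactly that the left Cartesian projector commutes with restriction to $\pi$-fibers, and a short adjunction argument then propagates this through the Kan-extension squares needed for (FDer4/5). Without naming this lemma (or an equivalent one) the crucial step remains unjustified.

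So: right plan, matching the spirit of \cite[Proposition~8.10]{Hor21c}, but the two pivotal technical facts --- well-generation of the Cartesian fiber and fiberwise computation of the Cartesian projector --- are either asserted or mis-attributed rather than proved, and those are precisely the places where the real work lies.
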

\begin{proof}
This is \cite[Proposition 8.10]{Hor21c}. There the extension is constructed on $\mathbb{CAT}(\mathcal{S})$, the associated pre-derivator of the category $\Cat(\mathcal{S})$ (neglecting 2-morphisms). The pull-back along 
\[ \int^{\amalg} Q: \mathbb{M} \rightarrow \mathbb{CAT}(\mathcal{S})  \] 
yields the naive extension $\DD^{!,h}$ in the Theorem. It has well-generated fibers by the argument in the proof of \cite[Theorem~8.1]{Hor21c}, i.e.\@ by \cite[Lemma~8.12]{Hor21c}.
\end{proof}
There is also a dual variant, which also includes a monoidal structure if desired.  For this note that $\SSS^{\op}$ is a symmetric pre-multiderivator represented by $\mathcal{S}^{\op}$, considered as a symmetric multicategory setting
\[ \Hom_{\mathcal{S}^{\op}}(X_1, \dots, X_n; Y) := \Hom(Y, X_1) \times \cdots \times \Hom(Y, X_n)  \]
with the obvious action of the symmetric groups. 
\begin{SATZ}\label{SATZEXTNAIVERIGHT}
Let  $\DD^* \rightarrow \SSS^{\op}$ be an infinite fibered (symmetric) (multi)derivator with stable, well-generated fibers  with domain $\Cat$.
Then $\DD^*$  has a {\bf naive extension}, a fibered (symmetric) (multi)derivator 
\[ \DD^{*,h} \to \mathbb{M}^{\op} \]
with domain $\Cat$, with stable, well-generated fibers, such that for $S: I \rightarrow \mathcal{M}^{\op}$ 
\[  \DD^{*,h}(I)_S = \DD^*(\nabla^{\amalg} Q S^{\op})^{\pi-\cocart} \]
where 
\[  \pi:  \nabla (Q S^{\op})_0 \rightarrow I    \]
is the projection\footnote{and $X_0$ denotes the underlying $I^{\op} \times \Delta^{\op} \rightarrow \mathcal{SET}$}. 
\end{SATZ}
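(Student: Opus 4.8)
The plan is to mirror the proof of Theorem~\ref{SATZEXTNAIVELEFT} in the dual (and monoidal) setting. First I would invoke the dual of \cite[Proposition~8.10]{Hor21c}: applied to $\DD^* \to \SSS^{\op}$ it produces a fibered (symmetric) (multi)derivator $\widetilde{\DD}^{*}$ over the pre-(multi)derivator associated with the category $\Cat^{\op}(\mathcal{S}^{\op})$, whose value over a diagram $(I, S)$ is the full subcategory $\DD^*(I)_S^{\pi-\cocart}$ of $\pi$-coCartesian coherent diagrams, where $\pi$ is the projection onto $I$ from the (dual Grothendieck construction of the) diagram underlying $S$. As in the left-fibered case, the structural functors $\alpha^*$, $\alpha_!$, $\alpha_*$ are obtained from the corresponding functors of $\DD^*$ post-composed with a coCartesian projector, whose existence is guaranteed by well-generatedness of the fibers; the tensor product and internal hom on $\DD^*(I)_S^{\pi-\cocart}$ are likewise the restrictions of those on $\DD^*(I)_S$ after applying the same projector, and the associativity, symmetry and projection-formula constraints are inherited. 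Compatibility with the pullbacks $\alpha^*$ is automatic, since $\alpha^*$ preserves coCartesian objects by (FDer0 right).

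Second, I would pull back $\widetilde{\DD}^*$ along the strict morphism of pre-(multi)derivators $\nabla^{\amalg} Q\colon \mathbb{M}^{\op} \to \mathbb{CAT}^{\op}(\mathcal{S}^{\op})$; since the pull-back of a fibered (multi)derivator along a strict morphism of pre-(multi)derivators is again one, this yields $\DD^{*,h} \to \mathbb{M}^{\op}$ with the stated value $\DD^{*,h}(I)_S = \DD^*(\nabla^{\amalg} Q S^{\op})^{\pi-\cocart}$. Here one uses, as in \cite[Theorem~8.1]{Hor21c}, that the cofibrant replacement $Q$ in the projective model structure on $\mathcal{SET}^{\mathcal{S}^{\op} \times \Delta^{\op}}$ takes values in coproducts of representables, so that $\nabla^{\amalg} Q S^{\op}$ is an honest object of $\Cat^{\op}(\mathcal{S}^{\op})$. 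The fibers of $\DD^{*,h}$ are stable (being full triangulated subcategories closed under the relevant (co)limits) and well-generated by the argument in the proof of \cite[Theorem~8.1]{Hor21c}, i.e.\@ by the dual of \cite[Lemma~8.12]{Hor21c}; the axioms (Der1${}^\infty$) and (Der2) for $\DD^{*,h}$ follow from those of $\DD^*$, using that $\nabla^{\amalg} Q$ carries coproducts to coproducts up to weak equivalence.

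The step I expect to be the main obstacle is the monoidal (symmetric-multicategory) part: one must verify that the coCartesian projector interacts well with the tensor product --- concretely, that after projecting, the tensor of two $\pi$-coCartesian objects is again $\pi$-coCartesian and that the induced constraints on the subcategory are strong rather than merely lax --- together with the multi-variable versions (FDer5 right) of these compatibilities. The $*,!$-part without the tensor is formal and entirely parallel to Theorem~\ref{SATZEXTNAIVELEFT}; it is really the interplay of the projectors with the multicategory structure, which again rests on well-generatedness, that requires care. Note that no locality or colocality hypothesis enters at this point: that is only relevant, via Theorem~\ref{SATZCOHODESC}, once one wants $\DD^{*,h}(I)_S$ to depend solely on the weak-equivalence class of $S$.
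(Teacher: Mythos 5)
Your proposal follows essentially the same route as the paper: the paper's proof of this theorem is a one-line remark that the statement is dual to Theorem~\ref{SATZEXTNAIVELEFT} (construct the coCartesian naive extension over the associated pre-derivator of $\Cat^{\op}(\mathcal{S}^{\op})$ via coCartesian projectors, then pull back along $\nabla^{\amalg}Q$), with the multi-aspect handled by the same reasoning as in \cite[Proposition~8.10]{Hor21c} -- precisely the two steps you outline and the issue you flag at the end. Your worry about the interaction of the projector with the tensor product is legitimate but not an actual gap: the paper delegates it to the (multi)derivator version of the argument in \cite{Hor21c}, where it is addressed.
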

\begin{proof}
This is dual to Proposition~\ref{SATZEXTNAIVELEFT} except for the multi-aspect. However, the same reasoning as in \cite[Proposition 8.10]{Hor21c} works. 
\end{proof}

Note that these Theorems do not use (co)homological descent --- only (co)Cartesian projectors are needed. Indeed, the adjective ``naive'' indicates that the homotopical structure  of $\mathbb{M}$, i.e.\@ the weak equivalences, has not yet been exploited. If the conditions for (co)homological descent are satisfied we have, however, the statement that if $w: S \rightarrow S'$ is a point-wise weak equivalence in $\mathcal{M}^I$ (resp.\@ in $\mathcal{M}^{I^{\op}}$). then 
\[ w^!:=w^\bullet: \DD^{!,h}(I)_{S'} \rightarrow \DD^{!,h}(I)_{S} \quad  \text{ resp. }\quad w^*:=(w^{\op})_\bullet: \DD^{*,h}(I)_{(S')^{\op}} \rightarrow \DD^{*,h}(I)_{S^{\op}} \]
are equivalences. This does {\em not} imply, however, that $\DD^{!,h}$ is a fibered derivator over the usual homotopy 1-pre-derivator $I \mapsto \mathcal{M}^I[\mathcal{W}^{-1}_I]$.
As in \cite[Proposition 8.11]{Hor21c}, one can instead construct a fibered derivator $\DD^{!,h} \rightarrow \HH^2(\mathcal{M})$, where $\HH^2(\mathcal{M})$ is the homotopy 2-pre-derivator associated with $\mathbb{M}$. We will not use this because a similar extension can also be extracted from an extended $*,!$-formalism.

(Co)homological descent implies that the restriction of $\DD^{!,h}$ along $\SSS \rightarrow \mathbb{M}$ is equivalent to $\DD^!$
and that the restriction of $\DD^{*,h}$ along $\SSS^{\op} \rightarrow \mathbb{M}^{\op}$ is equivalent to $\DD^*$.
 
More generally, homological descent takes the following neat form: 
\begin{PROP}[Homological descent in the extension] \label{PROPHODESC} With the assumptions of Theorem~\ref{SATZHODESC},
the maps $\alpha: D_1 \rightarrow D_2$ in $\Dia(\mathcal{M})$ 
which induce an isomorphism
\[ \hocolim D_1 \rightarrow \hocolim D_2 \]
in the homotopy category of $\mathcal{M}$ are strong $\DD^{!,h}$-equivalences, i.e.\@
\[ \alpha^!: \DD^{!,h}(D_2)^{\cart} \rightarrow \DD^{!,h}(D_1)^{\cart} \]
is an equivalence. 
\end{PROP}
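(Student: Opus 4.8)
The plan is to reduce, via the explicit formula for the naive extension in Theorem~\ref{SATZEXTNAIVELEFT}, to the already available statement about strong $\DD^!$-equivalences in $\Cat(\mathcal{S})$ from \cite[Main~Theorem~3.5.4]{Hor15} together with \cite[Theorem~6.9]{Hor21c}. Throughout, for $D = (I,S) \in \Dia(\mathcal{M})$ --- so $I$ a diagram and $S\colon I \to \mathcal{M}$ --- I write $G(D) := \int_I \int^{\amalg} QS \in \Cat(\mathcal{S})$; this is functorial in $D$, and $\alpha\colon D_1 \to D_2$ induces $G(\alpha)\colon G(D_1) \to G(D_2)$.

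First I would identify $\DD^{!,h}(D)^{\cart}$ with $\DD^!(G(D))^{\cart}$, naturally in $D$. By Theorem~\ref{SATZEXTNAIVELEFT} the fiber $\DD^{!,h}(I)_S$ is $\DD^!(G(D))^{\pi-\cart}$, where $\pi\colon G(D) \to I$ is the projection; that is, one imposes Cartesianity only along those morphisms of $G(D)$ that lie over identities of $I$ (the ``$\Delta^{\op}$- and $\amalg$-directions''). Passing to the Cartesian objects of the derivator $\DD^{!,h}$ at the index shape $I$ imposes Cartesianity along the opcartesian lifts of morphisms of $I$ as well; since in the Grothendieck construction $G(D) = \int_I(\int^{\amalg} QS)$ every morphism factors as an opcartesian lift followed by a $\pi$-vertical morphism, the two conditions together amount to Cartesianity over all of $G(D)$. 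Hence $\DD^{!,h}(D)^{\cart} \cong \DD^!(G(D))^{\cart}$, and under this identification $\alpha^!$ becomes $G(\alpha)^!$. So it suffices to show that if $\alpha$ induces an isomorphism on $\hocolim$ then $G(\alpha)$ is a strong $\DD^!$-equivalence.

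Next I would compute the homotopy colimit. Thomason's homotopy colimit theorem gives $N(\int_I F) \simeq \hocolim_I N(F)$ for $F\colon I \to \Cat(\mathcal{S})$, and by \cite[Theorem~6.4]{Hor21c} the natural comparison between $X$ and $N(\int^{\amalg} QX)$ is a (global) weak equivalence in $\mathcal{M}$; applying the latter pointwise to $S$ and using naturality, one obtains
\[ N(G(D)) \;\simeq\; \hocolim_I N(\int^{\amalg} QS) \;\simeq\; \hocolim_I S \;=\; \hocolim D \]
in the homotopy category of $\mathcal{M}$, naturally in $D$. Thus, if $\alpha$ induces an isomorphism on $\hocolim$, then $N(G(\alpha))$ is a \v{C}ech weak equivalence. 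Now \cite[Main~Theorem~3.5.4]{Hor15} says the strong $\DD^!$-equivalences form a localizer, while \cite[Theorem~6.9]{Hor21c} identifies the smallest localizer with the morphisms of $\Cat(\mathcal{S})$ whose nerve is a \v{C}ech weak equivalence; hence $G(\alpha)$ is a strong $\DD^!$-equivalence, and by the previous step so is $\alpha$, proving the proposition.

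The hard part will be the first step: one has to match the two descriptions of Cartesianity precisely --- tracking the fibration/opfibration variances so that ``$\pi$-Cartesian together with $I$-Cartesian'' really is the same as ``Cartesian over $\int_I\int^{\amalg}QS$'' --- and check that the identification is natural in a morphism $\alpha = (\phi,f)$ that may change the index diagram. The homotopy-colimit computation in the second step is routine once one notes that the $\mathcal{S}$-labels carried along by $\int^{\amalg}$ do not affect the Thomason-type comparison.
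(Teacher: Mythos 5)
Your proposal is correct and follows essentially the same route as the paper: reduce via Theorem~\ref{SATZEXTNAIVELEFT} to $\DD^!(\int_I\int^{\amalg}QS)^{\cart}$, show that the induced map of Grothendieck constructions lies in the minimal localizer $\mathcal{W}_\infty$, and conclude by homological descent (Theorem~\ref{SATZHODESC}). The paper treats your first step (matching $\pi$-Cartesian plus $I$-Cartesian with full Cartesianity over the Grothendieck construction) as definitional and invokes \cite[Proposition~A.6]{Hor21c} in place of your Thomason-type comparison, but these are the same computation.
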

\begin{proof}
By definition we have
\[ \DD^{!,h}(I)_S^{\cart} = \DD( \int_I \int^\amalg QS )^{\cart}.  \]
Let $\mathcal{W}_{\infty} \subset \Mor(\Cat(\mathcal{S}))$ be the smallest localizer w.r.t.\@ the Grothendieck pre-topology on $\mathcal{S}$, and let $\mathcal{W}_{\loc} \subset \Mor(\mathcal{SET}^{\mathcal{S}^{\op} \times \Delta^{\op}})$
be the class of \v{C}ech weak equivalences. By \cite[Theorem 6.9]{Hor21c} the functor
\[ \int^{\amalg} Q: (\mathcal{SET}^{\mathcal{S}^{\op} \times \Delta^{\op}}, \mathcal{W}_{\loc}) \rightarrow  (\Dia(\mathcal{S}), \mathcal{W}_{\infty})   \]
is an equivalence of categories with weak equivalences with quasi-inverse given by the nerve, 
and by e.g.\@ \cite[Proposition A.6]{Hor21c} the homotopy colimit for the pair $(\Dia(\mathcal{S}), \mathcal{W}_{\infty})$ is given by the Grothendieck construction. 
For a morphism $\alpha: (I, S) \rightarrow (J,T)$ in $\Dia(\mathcal{M})$ with the property that  
\[ \hocolim_I S \rightarrow \hocolim_J T \]
is a weak equivalence, it implies that 
\[ \int_I \int^{\amalg} Q S  \rightarrow \int_J \int^{\amalg} Q T  \]
is in $\mathcal{W}_{\infty}$. 
By homological descent, Theorem~\ref{SATZHODESC}, it being in $\mathcal{W}_{\infty}$, implies that 
\[ \alpha^!: \DD^!(\int_J \int^{\amalg} Q T)^{\cart} \rightarrow \DD^!(\int_I \int^{\amalg} Q S)^{\cart} \]
is an equivalence. 
\end{proof}
There is a corresponding dual statement for $\DD^{*}$:
\begin{PROP}[Cohomological descent in the extension]\label{PROPCOHODESC}With the assumptions of Theorem~\ref{SATZCOHODESC},
the maps $\alpha: D_1 \rightarrow D_2$ in $\Dia^{\op}(\mathcal{M}^{\op})$ which induce an isomorphism
\[ \hocolim D_1^{\op} \rightarrow \hocolim D_2^{\op} \]
in the homotopy category of $\mathcal{M}$ are strong $\DD^{*,h}$-equivalences, i.e.\@
\[ \alpha^*: \DD^{*,h}(D_2)^{\cocart} \rightarrow \DD^{*,h}(D_1)^{\cocart} \]
is an equivalence. 
\end{PROP}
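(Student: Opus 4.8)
The plan is to mirror the proof of Proposition~\ref{PROPHODESC} verbatim, replacing all constructions by their opposites and invoking the cohomological (rather than homological) descent theorem. First I would unwind the definition of $\DD^{*,h}$ from Theorem~\ref{SATZEXTNAIVERIGHT}: for $S: I \to \mathcal{M}^{\op}$ one has
\[ \DD^{*,h}(I)_S^{\cocart} = \DD^*(\nabla^{\amalg} Q S^{\op})^{\cocart}, \]
so that a morphism $\alpha: (I,S) \to (J,T)$ in $\Dia^{\op}(\mathcal{M}^{\op})$ induces
\[ \alpha^*: \DD^*(\nabla^{\amalg} Q T^{\op})^{\cocart} \to \DD^*(\nabla^{\amalg} Q S^{\op})^{\cocart}, \]
where on the level of bases we are applying $\nabla^{\amalg} Q$ to the underlying morphism of simplicial pre-sheaves (bundled over $J^{\op}$, resp.\@ $I^{\op}$).

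Next I would recall, as in the proof of Proposition~\ref{PROPHODESC}, that by \cite[Theorem 6.9]{Hor21c} the functor $\int^{\amalg} Q$ is an equivalence of categories with weak equivalences between $(\mathcal{SET}^{\mathcal{S}^{\op}\times\Delta^{\op}}, \mathcal{W}_{\loc})$ and $(\Dia(\mathcal{S}), \mathcal{W}_{\infty})$, with quasi-inverse the nerve, and that by \cite[Proposition A.6]{Hor21c} the homotopy colimit for $(\Dia(\mathcal{S}), \mathcal{W}_{\infty})$ is computed by the Grothendieck construction $\int$. Dualizing via the equivalence $\op: \Cat(\mathcal{S})^{2\text{-}\op} \to \Cat^{\op}(\mathcal{S}^{\op})$ (and $\nabla^{\amalg} = \op \circ \int^{\amalg}$), the hypothesis that
\[ \hocolim D_1^{\op} \to \hocolim D_2^{\op} \]
is an isomorphism in the homotopy category of $\mathcal{M}$ translates, after applying $\nabla^{\amalg} Q$ to the underlying simplicial pre-sheaves, into the statement that the induced morphism of diagrams in $\Cat^{\op}(\mathcal{S}^{\op})$ lies in (the opposite of) the localizer $\mathcal{W}_{\infty}$ — i.e.\@ its nerve is a \v{C}ech weak equivalence. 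Then cohomological descent, Theorem~\ref{SATZCOHODESC} (which here needs only well-generated, not compactly generated, fibers thanks to the choice of axiom (C1'') in the definition of colocal), gives that
\[ \alpha^*: \DD^*(\nabla^{\amalg} Q T^{\op})^{\cocart} \to \DD^*(\nabla^{\amalg} Q S^{\op})^{\cocart} \]
is an equivalence, which is exactly the desired conclusion.

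I expect the only genuinely delicate point to be bookkeeping the opposites: one must check that ``$\hocolim$ in $\mathcal{M}$ of the opposite diagrams'' corresponds, under $\nabla^{\amalg}Q = \op\circ\int^{\amalg}Q$, to membership in $\mathcal{W}_{\infty}$ in the correct variance, i.e.\@ that the equivalence of Theorem~6.9 of \cite{Hor21c} together with the identification of homotopy colimits with Grothendieck constructions survives passage through $\op$. This is routine given the equivalence $\op: \Cat(\mathcal{S})^{2\text{-}\op} \to \Cat^{\op}(\mathcal{S}^{\op})$ already recorded in the preliminaries, but it is where one has to be careful, as it was in the homological case. Everything else is a line-by-line transcription of the proof of Proposition~\ref{PROPHODESC}.
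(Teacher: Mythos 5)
Your proposal is correct and takes exactly the approach the paper intends: the paper states Proposition~\ref{PROPCOHODESC} without a written proof, introducing it with ``There is a corresponding dual statement for $\DD^*$,'' so the intended argument is precisely your line-by-line dualization of the proof of Proposition~\ref{PROPHODESC}, replacing Theorem~\ref{SATZHODESC} by Theorem~\ref{SATZCOHODESC} (equivalently \cite[Main Theorem 3.5.5]{Hor15}) and routing the diagrams through $\nabla^{\amalg} = \op \circ \int^{\amalg}$. Your flagged concern about the $\op$-bookkeeping is legitimate but resolves in the expected way, since $\op: \Cat(\mathcal{S})^{2\text{-}\op}\to\Cat^{\op}(\mathcal{S}^{\op})$ is an equivalence and the strong $\DD^*$-equivalences in \cite[Main Theorem 3.5.5]{Hor15} are already phrased in terms of $(\alpha^{\op})^*$, matching the $D_i^{\op}$ appearing in the hypothesis.
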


\begin{PAR}\label{PARHOMOTOPYCAT}
It also follows that if $f = g$ in the homotopy category, there is a (non-canonical) isomorphism $f^! \cong g^!$ (resp.\@ $f^* \cong g^*$).
\end{PAR}

We will later need the following 
\begin{LEMMA}\label{LEMMAEXACTCOLIMS}
In the category with weak equivalences $(\mathcal{SET}^{\mathcal{S}^{\op} \times \Delta^{\op}}, \mathcal{W}_{\loc})$ homotopy colimits commute with homotopy fiber products. 
\end{LEMMA}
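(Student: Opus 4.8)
The statement is that in $(\mathcal{SET}^{\mathcal{S}^{\op} \times \Delta^{\op}}, \mathcal{W}_{\loc})$ homotopy colimits commute with homotopy fiber products. First I would recall that $\mathcal{SET}^{\mathcal{S}^{\op} \times \Delta^{\op}}$ carries the injective (or projective) model structure, and $\mathcal{M}_{\loc}$ is its left Bousfield localization at the \v{C}ech hypercovers; in either presentation $\mathcal{M}_{\loc}$ is a model topos (it is a left exact localization of the presheaf topos of spaces on $\mathcal{S}$), and the assertion is precisely the statement that in a model topos, or in an $\infty$-topos, colimits are universal (stable under base change). So the cleanest route is to reduce to that general fact. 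Concretely, I would first observe that weak equivalences in $\mathcal{M}_{\loc}$ are the local weak equivalences, that homotopy fiber products in $\mathcal{M}_{\loc}$ agree with those in the ambient projective/injective model structure on simplicial presheaves (since the localization is left exact), and that homotopy colimits in $\mathcal{M}_{\loc}$ are computed by taking the ambient homotopy colimit followed by local replacement.

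**Key steps.** (1) Fix a homotopy colimit diagram $X_\bullet : I \to \mathcal{M}_{\loc}$ with homotopy colimit $X = \hocolim_I X_\bullet$, and a morphism $Z \to X$ together with $Y \to X$; I want $\hocolim_I (Z \widetilde{\times}_X X_i) \widetilde{\times}_{?}$ — more precisely, I want to show that for $Y \to X$ the natural map $\hocolim_I (X_i \widetilde{\times}_X Y) \to (\hocolim_I X_i) \widetilde{\times}_X Y \cong Y$ is an equivalence, and more generally that pulling back the colimit cocone along any $Y \to X$ yields again a colimit cocone. (2) Reduce to the case of the ambient $\infty$-topos $\mathrm{PSh}_\infty(\mathcal{S})$ of space-valued presheaves, where universality of colimits is classical (Lurie, HTT 6.1.3.14, or the fact that an $\infty$-topos is locally Cartesian closed). (3) Transport along the left-exact localization $\mathrm{PSh}_\infty(\mathcal{S}) \to \mathcal{M}_{\loc}[\mathcal{W}_{\loc}^{-1}]$: since the localization functor $L$ preserves finite limits and all colimits, and since homotopy fiber products in $\mathcal{M}_{\loc}$ are computed by $L$ applied to the ambient ones while homotopy colimits are $L$ applied to ambient homotopy colimits, the universality in $\mathrm{PSh}_\infty(\mathcal{S})$ descends to $\mathcal{M}_{\loc}$. (4) Finally, translate this $\infty$-categorical statement back into the language of the category with fibrant objects $(\mathcal{M}, \Fib, \mathcal{W})$: homotopy fiber products are the $\widetilde{\times}$ of \ref{PARHFP}, and homotopy colimits over $I$ are computed (up to the equivalence $\int^{\amalg} Q$ of \cite[Theorem 6.9]{Hor21c}, cf.\ Proposition~\ref{PROPHODESC}) as Grothendieck constructions; the commutation then reads off from the corresponding fact about $\Cat(\mathcal{S})$.

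**Alternative, more hands-on route.** If one prefers to avoid invoking $\infty$-topos theory wholesale, one can argue directly: a homotopy colimit over $I$ can be built from coproducts and homotopy pushouts (or, for the Grothendieck-construction model, glued along the cells of $I$), so it suffices to check that (i) homotopy fiber products commute with arbitrary coproducts and (ii) homotopy fiber products commute with homotopy pushouts in $\mathcal{M}_{\loc}$. Claim (i) is immediate since coproducts in a presheaf category are computed sectionwise and sectionwise-sectionwise disjoint unions distribute over pullback in $\mathcal{SET}$. Claim (ii) is the descent/Mayer--Vietoris axiom for the model topos $\mathcal{M}_{\loc}$: given a homotopy pushout square and a map from a fourth object into the pushout, the induced square of pullbacks is again a homotopy pushout. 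This is exactly the Brown--Joyal--Jardine descent statement for simplicial presheaves with the local model structure, which is part of what makes it a model \emph{topos}; references are Rezk's ``Fibrations and homotopy colimits of simplicial sheaves'' or TV's \emph{HAG I}.

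**Main obstacle.** The content is entirely in the universality/descent step; everything else is bookkeeping about how $\hocolim$ and $\widetilde{\times}$ are modeled in a category with fibrant objects. The one point that needs genuine care is checking that homotopy fiber products in the \emph{localized} structure $\mathcal{M}_{\loc}$ coincide with the ones in the ambient projective simplicial-presheaf structure (equivalently, that the local model structure is right proper and its localization is left exact) — if that fails, the reduction to the unlocalized situation collapses. This holds because the \v{C}ech (hypercover) localization of simplicial presheaves on a site is a left-exact localization — it is the model-categorical incarnation of sheafification/hypersheafification — so finite homotopy limits are preserved; I would cite this (Dugger--Hollander--Isaksen, or TV \emph{HAG I} Thm.~3.4.1) rather than reprove it. Once that is in place, the commutation of $\hocolim$ with $\widetilde{\times}$ is just universality of colimits in the resulting $\infty$-topos, and the proof is short.
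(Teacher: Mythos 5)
Your main argument is essentially the paper's: the paper also first observes the statement for unlocalized simplicial presheaves (via the Bousfield--Kan formula) and then transports it across the \v{C}ech localization using that this localization preserves homotopy colimits and is exact, with the same concluding remark that this holds in any $\infty$-topos. Your alternative coproducts-plus-pushouts route is extra and not what the paper does, but the primary reduction is the same.
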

\begin{proof}
This is clear for simplicial pre-sheaves looking, for instance, at the Bousfield-Kan formula for homotopy colimits. But the left Bousfield localization at the \v{C}ech weak equivalences preserves homotopy colimits and is exact (i.e.\@ preserves homotopy fiber products) hence the property holds also
true for the localization. More generally, this property holds true in any $\infty$-topos. 
\end{proof}

We can, in particular, apply this to the restrictions $\DD^!$ to $\SSS$, and $\DD^*$ to $\SSS^{\op}$, of a (symmetric) six-functor-formalism (or just of a $*,!$-formalism) $\DD \rightarrow \SSS^{\cor}$. If $\DD \rightarrow \SSS^{\cor}$ is a (symmetric) six-functor-formalism, $\DD^*$ is even a (symmetric) fibered {\em multi}\,derivator. To construct these extensions the interplay of the $*$ and $!$ functors did not play any role. 
The objective is to extend {\em the whole formalism} $\DD \rightarrow \SSS^{\cor}$  to $\HH^{\cor}(\mathcal{M})$ such that the pullbacks along (cf.\@ \ref{EMBEDDINGM})
\[ \mathbb{M} \rightarrow \HH^{\cor}(\mathcal{M}) \qquad  \mathbb{M}^{\op} \rightarrow \HH^{\cor}(\mathcal{M}) \]
are equivalent to the naive extensions. This would imply the following:
\begin{enumerate}
\item There is a canonical equivalence
\[ \DD^!(\cdot)_S \cong \DD^*(\cdot)_S \]
for $S \in \mathcal{M}$, in other words the fibers of the extension constructed by means of the $!$-functors are canonically equivalent to the fibers of the extension constructed by means of the $*$-functors. 
\item The formulas of a six-functor-formalism  (cf.\@ Proposition~\ref{PROPCONSEQUENCES}) continue to hold for the extension to $\mathcal{M}$ (with base change for {\em homotopy} Cartesian squares). 
\item We can make sense of categories of {\em coherent} diagrams involving the $*,!$-functors also for any diagram of correspondences in $\mathcal{M}$.
\end{enumerate}

It is probably not reasonable to expect that such an extension to the whole $\SSS^{\hcor}$ (i.e.\@ to {\em arbitrary} fibrant simplicial pre-sheaves) exists.
The objective of this article is to construct such an extension to {\em higher geometric  stacks} in the sense of To\"en-Vezzosi \cite{TV08}. 
For this we need to assume that the six-functor-formalism (or just $*,!$-formalism) $\DD \rightarrow \SSS^{\cor}$ is {\em local} w.r.t.\@ the Grothendieck pre-topology in the following sense: 

\begin{DEF}\label{DEFLOCAL6FU}
A fibered (multi)derivator
\[ \DD \rightarrow \SSS^{\cor} \]
is called {\bf local} w.r.t.\@ the Grothendieck pretopology, if $\DD^!$ is local (\ref{PARLOCAL}), i.e.\@ if (H1''), (H2) and (H3) hold, $\DD^*$ is colocal (\ref{PARCOLOCAL}), i.e.\@ if (C1''), (C2) and (C3) hold, and the following axiom holds true:
\begin{enumerate}
\item[(CH)]  For each Cartesian square
\[ \xymatrix{ X \ar[r]^{F} \ar[d]_G & X' \ar[d]^g \\ Y_1, \dots, Y_k \ar[r]_{(f_i)} & Y'_1, \dots, Y_k'  }\]
in which the $f_i: Y_i \rightarrow Y_i'$ are part of a covering of $Y_i'$ for all $i$, 
the exchange $G^* (f_1^!-, \dots, f_k^!-) \rightarrow F^! g^*(-, \dots, -)$ and for $k=1$ the exchange $F_i^*g^! \rightarrow G^! f_i^*$  (of the respective base-change)   are isomorphisms.
\end{enumerate}
If $\DD \rightarrow \SSS^{\cor}$ is just a fibered derivator (not multiderivator, i.e.\@ a $*,!$-formalism) then axiom (CH) is required for $k=1$ only. 
\end{DEF}

\section{A simple criterion for locality}

Let $\mathcal{S}$ be a small category with finite limits and Grothendieck pre-topology. 
Recall Definition~\ref{DEFLOCAL6FU} of {\em locality} for a symmetric derivator six-functor-formalism $\DD \rightarrow \SSS^{\cor}$ w.r.t.\@ the Grothendieck pre-topology. 
In this section, we will define a simpler but stronger notion, which involves the tensor product and is actually the one
that is usually checked in examples. We abstain from trying to state a similar definition for non-symmetric derivator six-functor-formalisms.

\begin{DEF}\label{DEFSTRONGLYLOCAL}
A  symmetric derivator six-functor-formalism $\DD \rightarrow \SSS^{\cor}$ is called {\bf strongly local} w.r.t.\@ the Grothendieck pre-topology on $\mathcal{S}$, 
if for each covering $\{ f_i: X_i \rightarrow S \}$ in $\mathcal{S}$, 
 \begin{enumerate}
\item[(O1)] $f_i^! 1_S$ is tensor-invertible\footnote{i.e.\@ the functor $\mathcal{E} \mapsto \mathcal{E} \otimes (f_i^! 1_S)$ is an equivalence.} for all $i$;
\item[(O2)] the natural morphism (exchange of the projection formula)
\[ f_i^!1_S \otimes (f_i^* -) \rightarrow  f_i^! -  \]
is an isomorphism for all $i$;
 \item[(O3)] The $f_i^*$ or, equivalently\footnote{assuming (O1) and (O2)}, the $f^!_i$ are jointly conservative; 
 \item[(O4)] for each $i$ and a Cartesian diagram with $g$ arbitrary
 \[ \xymatrix{
X' \ar[r]^{F_i} \ar[d]_G & S' \ar[d]^g \\
X \ar[r]_{f_i} & S
} \]
 the natural morphism (exchange of the base-change) 
\[  G^*f_i^! 1_S \rightarrow F_i^!g^* 1_S = F_i^!1_{S'}  \]
is an isomorphism. 
 \end{enumerate}
\end{DEF}
Note that all four conditions only concern the underlying classical six-functor-formalism $\DD(\cdot) \rightarrow \mathcal{S}^{\cor}$ and 
are thus independent of the derivator enhancement. They involve, however, in contrast to ``local'', the monoidal (i.e.\@ multi-categorical) structure. 

In the following, $\DD \rightarrow \SSS^{\cor}$ will thus denote a symmetric derivator six-functor-formalism.
The goal of this section (cf.\@ Proposition~\ref{PROPSTRONGLYLOCAL}) is to show that ``strongly local'' implies ``local''.
We begin to draw some immediate consequences of the axioms (O1--O4).

\begin{LEMMA}\label{LEMMAO24}
The combination of (O2) and (O4) is equivalent to the following axiom (the first half of axiom (CH)):
\begin{enumerate}
\item[(CH left)]  For each Cartesian square
\[ \xymatrix{ X \ar[r]^{F} \ar[d]_G & X' \ar[d]^g \\ Y_1, \dots, Y_k \ar[r]_{(f_i)} & Y'_1, \dots, Y_k'  }\]
in which the $f_i: Y_i \rightarrow Y_i'$ are part of a covering of $Y_i'$ for all $i$, 
the exchange 
\[ G^* (f_1^!-, \dots, f_k^!-) \rightarrow F^! g^*(-, \dots, -) \] 
is an isomorphism.
\end{enumerate}
\end{LEMMA}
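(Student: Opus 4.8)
The plan is to show both implications by manipulating the canonical exchange morphisms associated to the base-change and the projection formula, using (O1) to reduce everything to statements about a single tensor-invertible object. First I would treat the case $k=1$, since the general case follows from it by iterating slot by slot (the $n$-ary exchange $G^*(f_1^!-,\dots,f_k^!-)\to F^!g^*(-,\dots,-)$ factors, via the coherence isomorphisms of Proposition~\ref{PROPCONSEQUENCES}, through a sequence of Cartesian squares in which one changes one factor $Y_i$ at a time, each step being an instance of the $k=1$ assertion combined with the projection-formula compatibilities $(\otimes,!)$ and $(\otimes,*)$). So the heart of the matter is: given a Cartesian square with $f\colon Y\to Y'$ part of a covering and $g$ arbitrary, the exchange $G^*f^!-\to F^!g^*-$ is an isomorphism if and only if (O2) and (O4) hold.

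For the direction ``(O2)+(O4) $\Rightarrow$ (CH left)'' I would argue as follows. By (O2) applied upstairs and downstairs we may rewrite $f^!-$ as $f^!1_{Y'}\otimes f^*-$ and $F^!-$ as $F^!1_{X'}\otimes F^*-$. Under these identifications the exchange $G^*f^!-\to F^!g^*-$ becomes the composite of: the comparison $G^*(f^!1_{Y'}\otimes f^*-)\to G^*f^!1_{Y'}\otimes G^*f^*-$ coming from the monoidality $(\otimes,*)$ of $G^*$ (an isomorphism); the isomorphism $G^*f^*-\cong F^*g^*-$ from $(*,*)$ applied to the two ways around the square; and the map $G^*f^!1_{Y'}\otimes F^*g^*-\to F^!1_{X'}\otimes F^*g^*-$ induced by tensoring the base-change exchange $G^*f^!1_{Y'}\to F^!g^*1_{Y'}=F^!1_{X'}$ with $\mathrm{id}$. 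The first two are isomorphisms by the coherence of the six-functor-formalism; the third is an isomorphism because its tensor factor $G^*f^!1_{Y'}\to F^!1_{X'}$ is an isomorphism by (O4), and tensoring by any object preserves isomorphisms. Some care is needed to check that this decomposition of the exchange morphism is the correct one, i.e.\ that the triangle of canonical 2-cells actually commutes; this is a diagram-chase using the compatibilities packaged in Proposition~\ref{PROPCONSEQUENCES} (the $(!,*)$, $(\otimes,!)$ and $(\otimes,*)$ coherences), and is the sort of check that the 2-multicategorical formulation of a six-functor-formalism is designed to make automatic.

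For the converse ``(CH left) $\Rightarrow$ (O2)+(O4)'' I would specialize the (already known) base-change isomorphism to two particular squares. To recover (O4), apply the $k=1$ half of (CH) directly with the given Cartesian square and evaluate the exchange at the object $1_{Y'}$: this is literally the statement $G^*f^!1_{Y'}\to F^!1_{Y''}$ being an isomorphism, which is (O4). To recover (O2), take the Cartesian square
\[ \xymatrix{ X \ar[r]^{F} \ar[d]_{f} & Y \ar[d]^{\id} \\ Y \ar[r]_{f} & Y } \]
hmm, this is degenerate; instead I would use the square with bottom row $f\colon Y\to Y'$ and right-hand map $g=f$ itself, so that the top row $F$ is the diagonal $Y\to Y\times_{Y'}Y$ composed appropriately — more precisely, consider the Cartesian square obtained by pulling $f$ back along a general morphism and then specialize. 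The cleanest route: given any $\mathcal{E}$ on $Y'$, the projection-formula exchange $f^!1_{Y'}\otimes f^*\mathcal{E}\to f^!\mathcal{E}$ can itself be identified, via the coherence isomorphisms relating $\otimes$, $f_!$ and $f^!$, with an instance of the base-change exchange for the square expressing $\mathcal{E}$ as a pullback; applying (CH left) to that square shows it is an isomorphism. The main obstacle is exactly this last identification — exhibiting the projection-formula exchange as a base-change exchange in a genuinely Cartesian (not just homotopy-formal) square — and making sure the square one uses has its left vertical map ``part of a covering'' so that (CH left) applies; I would expect to handle this by the standard trick of factoring through the square
\[ \xymatrix{ X\times_{Y'} Y' \ar[r] \ar[d] & X \ar[d]^{g} \\ Y' \ar[r]_{f} & Y' } \]
and using that $f$ is part of a covering by hypothesis. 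Once the $k=1$ equivalence is established, the general $k$ case of the lemma is just the slot-by-slot induction sketched above, with no new input.
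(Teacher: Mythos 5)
Your forward direction, (O2)$+$(O4) $\Rightarrow$ (CH left), matches the paper's argument in the $k=1$ case: rewrite $f^!-$ and $F^!-$ via (O2) as $f^!1 \otimes f^*-$ and $F^!1 \otimes F^*-$, use monoidality of $G^*$ and the $(*,*)$ coherence, and finish with (O4). The reduction to $k=1$ is less careful than the paper's, which explicitly isolates a second essential case — the external-product exchange $- \boxtimes f^!- \to (\id\times f)^!(-\boxtimes-)$ — and proves it by a parallel diagram chase using (O2) for $(\id,f)^!1$ and associativity. Your slot-by-slot iteration is plausible but leaves this ``boxtimes'' step implicit, which is where the real content of the reduction lies.

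The genuine gap is in the converse direction for (O2), and you are right to flag it as ``the main obstacle.'' You look for a Cartesian square with a \emph{single} bottom morphism whose base-change exchange is the projection-formula exchange, and that cannot work: the $k=1$ instance of (CH left) never produces a tensor product on the nose, so it cannot recover an axiom about $f^!1 \otimes f^*-$. The multi-structure is essential. The paper recovers (O2) from a $k=2$ Cartesian square,
\[ \xymatrix{ X  \ar[r]^{f} \ar[d]_{(f, \id)} & Y   \ar[d]^{(\id, \id)} \\ Y,X \ar[r]_{(\id,f)} & Y, Y,  }\]
in which the bottom multimorphism has components $(\id,f)$ (both ``part of a covering''), the right vertical is the diagonal multimorphism $Y \to Y,Y$, and the top horizontal is $f$ itself. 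The (CH left) exchange $G^*(f_1^!-,f_2^!-) \to F^!g^*(-,-)$ evaluated at $(\mathcal{E},1_Y)$ becomes exactly $f^*\mathcal{E} \otimes f^!1_Y \to f^!\mathcal{E}$, i.e.\ the (O2) morphism. The square you proposed, $X\times_{Y'}Y' \to X$ over $Y' \to Y'$, is degenerate and doesn't encode a tensor product at all. The takeaway: (O4) is a $k=1$ specialization of (CH left), but (O2) is intrinsically a $k=2$ specialization, which is also why the paper remarks that (CH) makes no sense for a plain $*,!$-formalism beyond $k=1$.
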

\begin{proof}
First we assume (O2) and (O4) and proceed to show (CH left). It suffices to show this for $k=1$ and to show that the exchange  
\begin{equation}\label{eqboxtimesfshriek} - \boxtimes f^! - \rightarrow (\id \times f)^! (- \boxtimes -) \end{equation}
is an isomorphism for morphisms $f: Y_2 \rightarrow Y_2'$ which are part of a covering, which is the statement  (CH left) for 
\[ \xymatrix{ Y_1 \times Y_2 \ar[r]^{\id \times f} \ar[d] & Y_1 \times Y_2'   \ar[d] \\ Y_1, Y_2 \ar[r]_{(\id, f)} & Y_1, Y_2'  }\]

For $k=1$ consider a Cartesian diagram as in the statement. From (O2) and (O4), we get a commutative diagram (we leave the check of commutativity to the reader)
\[ \xymatrix{
(G^* f^* -) \otimes F^! 1_X  \ar[d]^-{p.fun.}_-\sim \ar[r]^-\sim &  (G^* f^* -) \otimes (G^* f^! 1)  \ar[r]^-\sim & G^* ((f^* -) \otimes f^! 1) \ar[r]^-\sim &  G^* f^! -\ar[d]^{\text{exch.}}   \\
(F^* g^* - ) \otimes F^! 1_X \ar[rrr]^-\sim  & & &  F^! g^* -
} \]
in which all morphisms except possibly the right vertical one are isomorphisms. Therefore also the right vertical one is an isomorphism. 
Furthermore, we get a commutative diagram (we leave the check of commutativity to the reader)
\[ \tiny \xymatrix{
\pr_1^* -  \otimes (\pr_2^* -  \otimes   (\id,f)^! 1)  \ar[d]^-{ass.}_-\sim \ar[r]^-\sim &   \pr_1^* -   \otimes (( (\id,f)^*  \pr_2^* -)  \otimes   (\id,f)^! 1) \ar[r]^-\sim & \pr_1^* -  \otimes  ((\id,f)^! \pr_2^* - ) \ar[r]^-\sim   &     \pr_1^*  -  \otimes \pr_2^*f^! - \ar[d]^{\text{exch.}}   \\
(\pr_1^* - \otimes   \pr_2^* - ) \otimes  (\id,f)^! 1  \ar[rrr]^-\sim  & & &  (\id, f)^! (\pr_1^* - \otimes \pr_2^* - ) 
} \]
in which all morphisms except possibly the right vertical one are isomorphisms. Therefore also the right vertical one (which is \ref{eqboxtimesfshriek}) is an isomorphism.

Conversely, (O2) is (CH left) for the Cartesian diagram
\[ \xymatrix{ X  \ar[r]^{f} \ar[d]_{(f, \id)} & Y   \ar[d]^{(\id, \id)} \\ Y,X \ar[r]_{(\id,f)} & Y, Y  }\]
specialized to $(1_Y, \mathcal{E}) \in \DD(\cdot)_Y \times \DD(\cdot)_Y$ and (O4) is (CH left) for the Cartesian diagrams 
 \[ \xymatrix{
X' \ar[r]^{F_i} \ar[d]_G & S' \ar[d]^g \\
X \ar[r]_{f_i} & S
} \]
specialized to $1_S \in \DD(\cdot)_S$.
\end{proof}

\begin{LEMMA}\label{LEMMATENSORINV}
Let $\mathcal{E}$, $\mathcal{F}$ be objects in $\DD(\cdot)_S$. If either
$\mathcal{E}$ or $\mathcal{F}$ is tensor invertible then 
 the  exchange morphism
\[ \mathcal{HOM}(\mathcal{F}; -) \otimes \mathcal{E} \cong \mathcal{HOM}(\mathcal{F}; - \otimes \mathcal{E}).   \]
is an isomorphism.
\end{LEMMA}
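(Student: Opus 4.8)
The statement is a general fact about closed symmetric monoidal categories (the fiber $\DD(\cdot)_S$ with its $\otimes$ and $\mathcal{HOM}$ from Proposition~\ref{PROPCONSEQUENCES}), so the plan is to prove it using only the adjunction $\Hom(\mathcal{A} \otimes \mathcal{B}, \mathcal{C}) \cong \Hom(\mathcal{A}, \mathcal{HOM}(\mathcal{B},\mathcal{C}))$ together with the associativity/symmetry constraints. By symmetry of the hypothesis (the exchange morphism is symmetric in which factor we tensor with, after using $\mathcal{E} \otimes \mathcal{F} \cong \mathcal{F} \otimes \mathcal{E}$ inside $\mathcal{HOM}$), it suffices to treat the case where $\mathcal{E}$ is tensor-invertible; write $\mathcal{E}^{-1}$ for a tensor-inverse, so that $-\otimes \mathcal{E}$ and $- \otimes \mathcal{E}^{-1}$ are mutually quasi-inverse equivalences of $\DD(\cdot)_S$.

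First I would recall (or exhibit) the canonical exchange morphism itself: by the Yoneda lemma it corresponds to the natural map $\Hom(\mathcal{A}, \mathcal{HOM}(\mathcal{F};-)\otimes \mathcal{E}) \to \Hom(\mathcal{A}, \mathcal{HOM}(\mathcal{F}; - \otimes \mathcal{E}))$, which one builds from the evaluation $\mathcal{HOM}(\mathcal{F};-)\otimes \mathcal{F} \to -$ by tensoring with $\mathcal{E}$ and reassociating: $(\mathcal{HOM}(\mathcal{F};-)\otimes \mathcal{E})\otimes \mathcal{F} \cong (\mathcal{HOM}(\mathcal{F};-)\otimes \mathcal{F})\otimes \mathcal{E} \to -\otimes \mathcal{E}$, then adjointing back over $\mathcal{F}$. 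The main step is then to produce an inverse. Since $-\otimes\mathcal{E}$ is an equivalence with inverse $-\otimes\mathcal{E}^{-1}$, apply the exchange morphism with $\mathcal{E}$ replaced by $\mathcal{E}^{-1}$ and with $-$ replaced by $-\otimes\mathcal{E}$:
\[ \mathcal{HOM}(\mathcal{F}; -\otimes\mathcal{E}) \otimes \mathcal{E}^{-1} \longrightarrow \mathcal{HOM}(\mathcal{F}; -\otimes\mathcal{E}\otimes\mathcal{E}^{-1}) \cong \mathcal{HOM}(\mathcal{F};-). \]
Tensoring this with $\mathcal{E}$ and reassociating gives a map $\mathcal{HOM}(\mathcal{F};-\otimes\mathcal{E}) \to \mathcal{HOM}(\mathcal{F};-)\otimes\mathcal{E}$, which is the candidate inverse.

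Finally I would check that the two composites are the respective identities. This is a diagram chase: unwind both composites to maps between $\mathcal{HOM}$-objects, adjoint them over $\mathcal{F}$ to maps $(\dots)\otimes\mathcal{F} \to (\dots)$, and reduce everything to the triangle identity for the evaluation/coevaluation of the adjunction $(-\otimes\mathcal{F}) \dashv \mathcal{HOM}(\mathcal{F};-)$ plus the coherence (pentagon/hexagon) for $\otimes$; the cancellations $\mathcal{E}\otimes\mathcal{E}^{-1}\cong 1 \cong \mathcal{E}^{-1}\otimes\mathcal{E}$ and their coherence with the unit constraint do the rest. The one point that requires a little care — and is the closest thing to an obstacle — is bookkeeping the associativity and symmetry isomorphisms so that the "$\mathcal{E}$" and "$\mathcal{E}^{-1}$" really cancel on the nose after adjunction; this is purely formal and is exactly the kind of coherence already invoked throughout Proposition~\ref{PROPCONSEQUENCES}, so no new input is needed. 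Alternatively, one can bypass the explicit inverse entirely: observe that $-\otimes\mathcal{E}\colon \DD(\cdot)_S \to \DD(\cdot)_S$ is an equivalence, and that both sides of the claimed isomorphism, as functors of the variable hidden in $-$, are computed by applying the $\mathcal{HOM}$-adjunction before vs.\ after this equivalence; since an equivalence commutes with the relevant adjoints up to canonical isomorphism, the exchange morphism is forced to be that canonical isomorphism. I would present the explicit-inverse version as the main argument and mention the second viewpoint as a remark.
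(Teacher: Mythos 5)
Your ``remark'' viewpoint is in fact the paper's entire proof: the exchange morphism is the mate of the evident 2-commutative square
\[ \xymatrix{ \DD(\cdot)_S \ar[r]^{\mathcal{F}\otimes-} \ar[d]_{-\otimes\mathcal{E}} & \DD(\cdot)_S \ar[d]^{-\otimes\mathcal{E}} \\ \DD(\cdot)_S \ar[r]_{\mathcal{F}\otimes-} & \DD(\cdot)_S } \]
taken with respect to the right adjoints $\mathcal{HOM}(\mathcal{F};-)$ of the horizontal functors, and one invokes the standard fact that the mate across a 2-commutative square is an isomorphism as soon as \emph{either} the horizontal \emph{or} the vertical functors are equivalences. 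This handles both hypotheses uniformly. Your explicit-inverse construction for the $\mathcal{E}$-invertible case is a fine, more hands-on version of the same idea.

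However, there is a real gap in your opening reduction. You claim that ``by symmetry'' it suffices to treat the case where $\mathcal{E}$ is tensor-invertible. That is not correct: $\mathcal{E}$ and $\mathcal{F}$ play genuinely asymmetric roles in the exchange morphism
\[ \mathcal{HOM}(\mathcal{F};-)\otimes\mathcal{E} \longrightarrow \mathcal{HOM}(\mathcal{F};-\otimes\mathcal{E}), \]
with $\mathcal{E}$ appearing as a tensor factor and $\mathcal{F}$ as the source of the internal Hom. The symmetry isomorphism $\mathcal{E}\otimes\mathcal{F}\cong\mathcal{F}\otimes\mathcal{E}$ does not transform the $\mathcal{F}$-invertible statement into the $\mathcal{E}$-invertible one: swapping $\mathcal{E}$ and $\mathcal{F}$ produces the different morphism $\mathcal{HOM}(\mathcal{E};-)\otimes\mathcal{F}\to\mathcal{HOM}(\mathcal{E};-\otimes\mathcal{F})$, not a relabelled version of the one you want. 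Consequently your main (explicit-inverse) argument and your alternative (``$-\otimes\mathcal{E}$ is an equivalence'') argument both only cover the case where $\mathcal{E}$ is tensor-invertible, and the $\mathcal{F}$-invertible case is unaddressed.

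The fix is easy, and in fact the $\mathcal{F}$-invertible case is the simpler one: if $\mathcal{F}$ is tensor-invertible then $\mathcal{F}\otimes-$ is an equivalence, so its right adjoint $\mathcal{HOM}(\mathcal{F};-)$ is also an equivalence (canonically $\cong \mathcal{F}^{-1}\otimes-$), and the exchange reduces, up to coherence isomorphisms, to associativity $(\mathcal{F}^{-1}\otimes-)\otimes\mathcal{E}\cong\mathcal{F}^{-1}\otimes(-\otimes\mathcal{E})$. Better still, just state the mate lemma in the two-sided form the paper uses (``horizontal \emph{or} vertical functors are equivalences'') and apply it once, avoiding the case split entirely.
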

\begin{proof}
The exchange in question is the exchange of the following 2-commutative diagram
\[ \xymatrix{ \DD(\cdot)_X \ar[r]^{\mathcal{F} \otimes -}  \ar[d]_{- \otimes \mathcal{E}} & \DD(\cdot)_X  \ar[d]^{- \otimes \mathcal{E}} \\
\DD(\cdot)_X \ar[r]^{\mathcal{F} \otimes -} & \DD(\cdot)_X    } \]
in which either the vertical or horizontal functors are equivalences by assumption. 
\end{proof}

\begin{LEMMA}If (O1--O2) holds, and $f$ is part of a covering then 
 the natural morphism
\[ f^* \mathcal{HOM}(-; -) \rightarrow \mathcal{HOM}(f^* -; f^*-)   \]
is an isomorphism. 
In particular, if (O3) holds in addition and $\{ f_i: U_i \rightarrow X\}$ is a covering, an object $\mathcal{E}$ is tensor invertible if and only if $f_i^*\mathcal{E}$ are tensor invertible for all $i$. 
\end{LEMMA}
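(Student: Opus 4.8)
The plan is to prove the exchange morphism $f^*\mathcal{HOM}(-;-) \to \mathcal{HOM}(f^*-;f^*-)$ is an isomorphism under (O1)--(O2), and then deduce the tensor-invertibility detection statement. First I would recall that, for any morphism $f\colon U \to X$, the exchange in question is adjoint to a composite built from the unit and counit of the adjunctions $(f^*,f_*)$ together with the evaluation $\mathcal{HOM}(\mathcal F;-)\otimes \mathcal F \to -$. The key observation is that, because $f$ is part of a covering, axioms (O1)--(O2) give us a very concrete description of $f^!$: namely $f^! - \cong (f^!1_X)\otimes f^* -$ with $f^!1_X$ tensor-invertible. The strategy is therefore to route the computation of $f^*\mathcal{HOM}$ through the $(!,\otimes)$-adjunction $\mathcal{HOM}(f_!-,-)\cong f_*\mathcal{HOM}(-,f^!-)$ and the $(\otimes,!)$-projection formula, both available from Proposition~\ref{PROPCONSEQUENCES}.

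In more detail, the steps I would carry out are: (1) Start from $f^!\mathcal{HOM}(-,-)\cong \mathcal{HOM}(f^*-,f^!-)$ (the last isomorphism in the $(\otimes,!)$-row of Proposition~\ref{PROPCONSEQUENCES}), which holds in any six-functor-formalism. (2) Substitute $f^! = (f^!1_X)\otimes f^*(-)$ using (O2) on the left side, and $f^!(-) = (f^!1_X)\otimes f^*(-)$ inside the $\mathcal{HOM}$ on the right side. (3) On the left this gives $(f^!1_X)\otimes f^*\mathcal{HOM}(-,-)$; on the right, $\mathcal{HOM}(f^*-, (f^!1_X)\otimes f^*-)$, which by Lemma~\ref{LEMMATENSORINV} (applied with the tensor-invertible object $\mathcal E := f^!1_X$ guaranteed by (O1)) is isomorphic to $(f^!1_X)\otimes\mathcal{HOM}(f^*-,f^*-)$. (4) Cancelling the tensor-invertible factor $f^!1_X$ on both sides — legitimate precisely because tensoring with it is an equivalence — yields $f^*\mathcal{HOM}(-,-)\cong\mathcal{HOM}(f^*-,f^*-)$. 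The only delicate point here is to check that this chain of isomorphisms is actually compatible with the \emph{natural} exchange morphism, i.e. that the composite equals the canonical map rather than merely abstractly existing; this is a diagram-chase using the compatibilities among the structural isomorphisms (the same kind of verification left to the reader in Lemma~\ref{LEMMAO24}), and I expect it to be the main obstacle, though a routine one.

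For the ``in particular'' clause, suppose $\{f_i\colon U_i\to X\}$ is a covering and (O1)--(O3) hold. If $\mathcal E\in\DD(\cdot)_X$ is tensor-invertible, then $f_i^*$ is monoidal (the $(\otimes,*)$-isomorphism $f_i^*(-\otimes-)\cong f_i^*-\otimes f_i^*-$), so $f_i^*\mathcal E$ is tensor-invertible with inverse $f_i^*(\mathcal E^{-1})$. Conversely, suppose each $f_i^*\mathcal E$ is tensor-invertible. Set $\mathcal F := \mathcal{HOM}(\mathcal E; 1_X)$, the candidate inverse, and consider the evaluation morphism $\mathcal E\otimes\mathcal F \to 1_X$. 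Applying $f_i^*$ and using the just-proved commutation $f_i^*\mathcal{HOM}(\mathcal E;1_X)\cong\mathcal{HOM}(f_i^*\mathcal E; f_i^*1_X)=\mathcal{HOM}(f_i^*\mathcal E;1_{U_i})$ together with $f_i^*$ monoidal, this becomes the evaluation $f_i^*\mathcal E\otimes\mathcal{HOM}(f_i^*\mathcal E;1_{U_i})\to 1_{U_i}$, which is an isomorphism because $f_i^*\mathcal E$ is tensor-invertible. Thus $\mathcal E\otimes\mathcal F\to 1_X$ becomes an isomorphism after applying every $f_i^*$, and by joint conservativity (O3) it is an isomorphism; hence $\mathcal E$ is tensor-invertible. (One checks symmetrically, or by a standard argument, that $\mathcal F\otimes\mathcal E\to 1_X$ is likewise an isomorphism, which in a symmetric monoidal setting is automatic.)
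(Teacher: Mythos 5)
Your proof is correct and follows essentially the same route as the paper: both establish the chain of isomorphisms $(f^!1)\otimes f^*\mathcal{HOM}(-,-)\cong f^!\mathcal{HOM}(-,-)\cong\mathcal{HOM}(f^*-,f^!-)\cong\mathcal{HOM}(f^*-,(f^!1)\otimes f^*-)$ via (O2) and the $(\otimes,!)$-adjoint isomorphism from Proposition~\ref{PROPCONSEQUENCES}, then use Lemma~\ref{LEMMATENSORINV} together with (O1) to cancel the tensor-invertible factor (defering the check that this composition agrees with the natural exchange to a routine diagram chase), and both deduce the ``in particular'' clause by pulling back the evaluation $\mathcal{E}\otimes\mathcal{HOM}(\mathcal{E};1)\to 1$ and applying joint conservativity (O3).
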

\begin{proof}
We have
\[ \begin{array}{rcll}
 \mathcal{HOM}(f^* -; f^!1 \otimes f^* -) &\cong& \mathcal{HOM}(f^* -; f^!-)  &  \text{using (O2)} \\
 &\cong& f^! \mathcal{HOM}(-; -) &  \text{adjoint of the projection formula} \\ 
 & \cong & (f^! 1) \otimes (f^* \mathcal{HOM}(-; -))  &  \text{using (O2)} 
\end{array} \]
One checks that this composition is the exchange considered in Lemma~\ref{LEMMATENSORINV}. Thus
the first statement follows from that Lemma using (O1).
$X$ is tensor invertible, if and only if the counit 
\[ X \otimes \mathcal{HOM}(X; 1) \rightarrow 1\]
is an isomorphism. Taking $f^*_i$ of this equation yields (using the previous part)
\[ f^*_iX \otimes \mathcal{HOM}(f_i^*X; 1) \rightarrow 1. \]
The statement follows thus from the joint conservativity of the $f^*_i$, axiom (O3).
\end{proof}

\begin{LEMMA}\label{LEMMAO2DUAL}
If (O1--O2) hold, and $f$ is part of a covering, the natural morphism
\[ f^* \to \mathcal{HOM}(f^! 1; f^! -)  \]
is an isomorphism. 
\end{LEMMA}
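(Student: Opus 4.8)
The plan is to identify the natural morphism $f^* \to \mathcal{HOM}(f^!1; f^!-)$ with the mate of the projection-formula exchange under the tensor–hom adjunction, and then to use that $f^!1$ is $\otimes$-invertible. Recall first that, for any $\mathcal{E}$, the natural morphism $\phi_\mathcal{E}\colon f^*\mathcal{E}\to\mathcal{HOM}(f^!1;f^!\mathcal{E})$ is by construction the composite $\mathcal{HOM}(f^!1;\psi_\mathcal{E})\circ\eta_{f^*\mathcal{E}}$, where $\eta$ is the unit of the adjunction $-\otimes f^!1\dashv\mathcal{HOM}(f^!1;-)$ and $\psi_\mathcal{E}\colon (f^*\mathcal{E})\otimes f^!1\to f^!\mathcal{E}$ is the projection-formula exchange, i.e.\ the same morphism that occurs in axiom (O2). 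Equivalently, $\phi_\mathcal{E}$ is the morphism corresponding to $\psi_\mathcal{E}$ under the adjunction bijection $\Hom(A\otimes f^!1,B)\cong\Hom(A,\mathcal{HOM}(f^!1;B))$.

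Since $f$ is part of a covering, axiom (O1) says that $f^!1$ is tensor-invertible, i.e.\ that $-\otimes f^!1$ is an equivalence; hence $-\otimes f^!1\dashv\mathcal{HOM}(f^!1;-)$ is an adjoint equivalence, so its unit $\eta$ is an isomorphism and the right adjoint $\mathcal{HOM}(f^!1;-)$ preserves all isomorphisms. By axiom (O2) the exchange $\psi_\mathcal{E}$ is an isomorphism. Therefore $\phi_\mathcal{E}=\mathcal{HOM}(f^!1;\psi_\mathcal{E})\circ\eta_{f^*\mathcal{E}}$ is an isomorphism, naturally in $\mathcal{E}$, which is the assertion.

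A more computational variant, in the style of the two preceding lemmas, is to write down the chain of isomorphisms $f^*\mathcal{E}\cong\mathcal{HOM}(f^!1;1)\otimes\bigl(f^!1\otimes f^*\mathcal{E}\bigr)\cong\mathcal{HOM}(f^!1;1)\otimes f^!\mathcal{E}\cong\mathcal{HOM}(f^!1;f^!\mathcal{E})$, where the first isomorphism uses that $\mathcal{HOM}(f^!1;1)$ is the $\otimes$-inverse of $f^!1$ (a consequence of (O1)) together with associativity, the second uses (O2), and the third is an instance of Lemma~\ref{LEMMATENSORINV} (applicable because $f^!1$ is tensor-invertible); one then checks that this composite equals $\phi_\mathcal{E}$.

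The only genuine content — hence the main obstacle — is the bookkeeping that $\phi_\mathcal{E}$ really is the mate of the projection-formula morphism $\psi_\mathcal{E}$ (respectively, that the explicit chain composes to $\phi_\mathcal{E}$). This is a diagram chase with the units and counits of the tensor–hom adjunction and the definition of the projection-formula morphism, entirely analogous to the "one checks" steps in the proofs of the two preceding lemmas; once it is in place, everything else is formal.
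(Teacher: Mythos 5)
Your first paragraph is exactly the paper's proof: the morphism is (by construction) the composite of the unit $f^*X \to \mathcal{HOM}(f^!1; f^!1 \otimes f^*X)$, which is an isomorphism by (O1), with $\mathcal{HOM}(f^!1; \psi_X)$, which is an isomorphism by (O2). The worry you raise at the end about identifying $\phi_\mathcal{E}$ with the mate of $\psi_\mathcal{E}$ is moot here, since the paper takes that composite as the definition of the natural morphism in question.
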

\begin{proof}
The morphism in question is the composition
\[ f^* X \to  \mathcal{HOM}(f^! 1; f^! 1 \otimes f^* X) \to  \mathcal{HOM}(f^! 1; f^! X)   \]
in which the two morphisms are isomorphisms by (O1), and  (O2), respectively. 
\end{proof}
\begin{BEM} The morphism may also be described as the composition
\[ f^*- \cong f^*\mathcal{HOM}(1;  -)  \to f^* \mathcal{HOM}(f_!f^! 1;  -) \cong f^* f_* \mathcal{HOM}(f^! 1;  f^! -)  \to \mathcal{HOM}(f^! 1;  f^! -) \]
involving the counits of the $f^*,f_*$ and the $f_!,f^!$-adjunction. 
\end{BEM}

\begin{LEMMA}\label{LEMMAADJ}
If (O1--O2) holds, and $f$ is part of a covering, the functor 
$f^!$ has the right adjoint
\[ f_? = f_*  \mathcal{HOM}(f^! 1;  -)  \]
and the functor $f^*$ has the left adjoint
\[ f_\# = f_!(f^! 1 \otimes -).  \]
\end{LEMMA}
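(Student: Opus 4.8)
The statement is purely about adjunctions of endofunctors on the fiber $\DD(\cdot)_X$ (resp.\@ $\DD(\cdot)_S$), so the plan is to produce the unit/counit of each proposed adjunction and verify the triangle identities, using only the already-established isomorphisms from (O1), (O2), and the basic six-functor compatibilities of Proposition~\ref{PROPCONSEQUENCES}. First I would treat $f^!$. The natural candidate for the right adjoint is obtained by rewriting $f^!$ in a form that makes its adjoint transparent: by (O2) we have $f^!\mathcal{E}\cong f^!1\otimes f^*\mathcal{E}$. Since $f^!1$ is tensor-invertible by (O1), the functor $\mathcal{E}\mapsto f^!1\otimes\mathcal{E}$ is an equivalence with quasi-inverse $\mathcal{HOM}(f^!1;-)$ (this is standard, and its compatibilities are exactly Lemma~\ref{LEMMATENSORINV}); and $f^*$ already has the right adjoint $f_*$ from the six-functor-formalism. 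Therefore $f^!\cong (f^!1\otimes -)\circ f^*$ is a composite of a functor admitting a right adjoint ($f^*$, with right adjoint $f_*$) and an equivalence ($f^!1\otimes -$, with inverse $\mathcal{HOM}(f^!1;-)$), hence has the right adjoint $f_* \circ \mathcal{HOM}(f^!1;-) = f_*\mathcal{HOM}(f^!1;-) = f_?$. Dually, for $f^*$: by (O2) again, $f^*\mathcal{E}\cong \mathcal{HOM}(f^!1;-)\big(f^!1\otimes f^*\mathcal{E}\big)\cong \mathcal{HOM}(f^!1;f^!\mathcal{E})$, which is Lemma~\ref{LEMMAO2DUAL}; so $f^*\cong \mathcal{HOM}(f^!1;-)\circ f^!$, a composite of $f^!$ (which has the left adjoint $f_!$ from the formalism) with the equivalence $\mathcal{HOM}(f^!1;-)$ (inverse $f^!1\otimes -$), and hence has the left adjoint $f_!(f^!1\otimes -)=f_\#$.

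The steps, in order: (i) invoke (O1) to record that $f^!1\otimes-$ and $\mathcal{HOM}(f^!1;-)$ are mutually inverse equivalences of $\DD(\cdot)_S$, with the coherence needed to treat them as an adjoint equivalence (this is where Lemma~\ref{LEMMATENSORINV} is genuinely used, to make sure the $\mathcal{HOM}$ appearing is the ``internal'' one and that the natural transformations behave); (ii) invoke (O2) to get the natural isomorphisms $f^!\cong (f^!1\otimes-)\circ f^*$ and $f^*\cong \mathcal{HOM}(f^!1;-)\circ f^!$, the latter being precisely Lemma~\ref{LEMMAO2DUAL}; (iii) apply the elementary fact that a composite of functors each admitting a right (resp.\@ left) adjoint admits a right (resp.\@ left) adjoint given by the composite of the adjoints in the reverse order, together with the fact that a functor naturally isomorphic to one with a right (resp.\@ left) adjoint also has one; (iv) read off $f_?=f_*\mathcal{HOM}(f^!1;-)$ and $f_\#=f_!(f^!1\otimes-)$ as the claimed formulas.

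\textbf{Main obstacle.} The conceptual content is light; the only thing to be careful about is coherence of the natural isomorphisms used in step (ii). One must check that the isomorphism $f^!\cong (f^!1\otimes-)\circ f^*$ coming from (O2) is natural in the argument (which it is, being the exchange morphism of the projection formula $(\otimes,!)$, asserted to be an isomorphism by (O2)), and that composing adjunctions along a natural isomorphism of functors does not require any extra compatibility — it does not, but one should state this cleanly rather than wave at it. I expect this to be the step the author dispatches quickly, essentially by the formula manipulations already displayed in the proofs of Lemma~\ref{LEMMAO2DUAL} and the $\mathcal{HOM}$-lemma above; no genuinely hard point arises.
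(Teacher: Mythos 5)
Your decomposition $f^!\cong(f^!1\otimes-)\circ f^*$ via (O2) and $f^*\cong\mathcal{HOM}(f^!1;-)\circ f^!$ via Lemma~\ref{LEMMAO2DUAL}, followed by composing adjoints in reverse order, is exactly the argument the paper compresses into one line. One small remark: you do not actually need (O1) or Lemma~\ref{LEMMATENSORINV} to know that $\mathcal{HOM}(f^!1;-)$ is right adjoint to $f^!1\otimes-$ — this is the tensor–hom adjunction, which is part of the closed monoidal structure of the fiber regardless of whether $f^!1$ is invertible — so invoking (O1) to upgrade this to an adjoint equivalence is harmless but superfluous for the adjoint-composition step (though of course (O1) is still used inside Lemma~\ref{LEMMAO2DUAL}).
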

\begin{proof}
This follows directly from (O2), and from the statement of Lemma~\ref{LEMMAO2DUAL}, respectively.
\end{proof}

\begin{PAR}
There is a diagrammatic version of Lemma~\ref{LEMMAADJ} as well, which is more complicated because  --- except in very special situations --- $f_\#$ will not be computed point-wise, i.e.\@
for a morphism $f: S \rightarrow T$ in $\mathcal{S}^{I^{\op}}$ such that the functor $f^*$ has a left adjoint $f_\#$, and for $i \in I$ the functor $f_i^*$ has a left adjoint $f_{i,\#}$, the exchange
\[  f_{i,\#} i^* \rightarrow  i^* f_\# \]
 will in general not be an isomorphism. We will give a construction which assumes also (O4). 
Let $I$ be a small category and $f: S \rightarrow T$ be a morphism in $\mathcal{S}^{I^{\op}}$.
We denote by $f^{\op}, S^{\op}, T^{\op}$ also their images under $\SSS^{\op} \rightarrow \SSS^{\cor}$. 
We proceed to define two multicorrespondences $F: (I, T^{\op}), ({}^{\uparrow \downarrow} I, \Box) \rightarrow (I, S^{\op})$ and
$ \xi:  ({}^{\uparrow \downarrow} I, \Box) \rightarrow (I, S^{\op}) $
as follows (using the notation \ref{PARNOTATIONDERCORHCOR}):
\end{PAR}

We define an object  $\Box \in \mathcal{S}^{\cor}({}^{\uparrow \downarrow} I)$  by the functor $\tw  ({}^{\uparrow \downarrow} I) \rightarrow \mathcal{S}$ given by
\[ \Box \left( \vcenter{ \xymatrix{ i_2 \ar[r] \ar@{<-}[d]  &  i_3 \ar[d]  \\
 i_2' \ar[r] & i_3' } } \right) := T(i_3') \times_{T(i_2)} S(i_2) \]
 (note the different indexing)
 which one checks to be admissible, 
and  define a correspondence 
 \[ \xi:= \left(\vcenter{\xymatrix{ & ({}^{\uparrow \downarrow} I, \Box) \ar@{=}[ld] \ar[rd]^{} \\
 ({}^{\uparrow \downarrow} I, \Box) & & ({}^{\uparrow \downarrow} I, \pi_3^*T^{\op})   }} \right) \]
 in $\Dia^{\cor}(\SSS^{\cor})$ given point-wise by a correspondence completely of $!$-type:
 \[  \Box(i_2 \rightarrow i_3') =  \Box(i_2 \rightarrow i_3')  \rightarrow  T(i_3').    \]
By definition of $\Box$ the morphism to $\pi_3^* T^{\op}$ is type-2-admissible. 

We define a multicorrespondence 
\[ \Xi:= \left( \vcenter{ \xymatrix{ & & ({}^{\downarrow \uparrow \downarrow \downarrow} I, \pi_{24}^*\Box) \ar[lld]_{\pi_1} \ar[ld]^{\pi_{23}} \ar[rd]^{\pi_4} \\
(I,S^{\op}) & ({}^{\uparrow \downarrow} I, \Box) & ; & (I,T^{\op})
} } \right)\]
in $\Dia^{\cor}(\SSS^{\cor})$
 mapping 
\[ \xymatrix{ i_1 \ar[r] \ar[d]  & i_2 \ar[r] \ar@{<-}[d]  & i_3 \ar[r] \ar[d]  &  i_4 \ar[d]  \\
 i_1' \ar[r] &i_2' \ar[r] & i_3' \ar[r] &i_4'  } \]
 to
 \[ S(i_1') \times \Box(i_2 \rightarrow i_3') \leftarrow \Box(i_2 \rightarrow i_4') \rightarrow T(i_4') .   \]
 One checks that the morphism to the left is type-1-admissible and the one to the right type-2-admissible. 
 In the description of Proposition~\ref{PROPALTCOR} the multicorrespondence $\Xi$ (on the level of diagrams) is given by the functor
 \begin{eqnarray*} 
 F: I^{\op} \times ({}^{\uparrow \downarrow} I)^{\op} \times I &\rightarrow& \mathcal{SET} \\
  i_1, (i_2 \to i_3), i_4 &\mapsto& \Hom(i_1, i_2) \times \Hom(i_3, i_4) 
\end{eqnarray*}
 because $\int \nabla F = {}^{\downarrow \uparrow \downarrow \downarrow} I$.
 
\begin{LEMMA}\label{LEMMAADJDIA}
We have an isomorphism
\[ (f^* -)  \cong \Xi^{\bullet,1}(\xi^\bullet 1; -). \]
In particular, $f^*$ has the left adjoint
\[ \Xi_{\bullet}(-, \xi^\bullet 1). \]
\end{LEMMA}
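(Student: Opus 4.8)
The plan is to reduce the statement to the isomorphism $f^*(-)\cong\Xi^{\bullet,1}(\xi^\bullet 1;-)$, from which the adjunction claim is formal: since $\DD\to\SSS^{\cor}$ is a right fibered multiderivator, the two-variable push-forward $\Xi_\bullet\colon\DD(I)_{S^{\op}}\times\DD({}^{\uparrow\downarrow}I)_\Box\to\DD(I)_{T^{\op}}$ induced by the multicorrespondence $\Xi$ has a right adjoint in its first slot, and that right adjoint is $\Xi^{\bullet,1}$. Fixing the second argument to be $\xi^\bullet 1$ then exhibits $\Xi_\bullet(-,\xi^\bullet 1)$ as left adjoint to $\Xi^{\bullet,1}(\xi^\bullet 1;-)\cong f^*$.

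To prove the displayed isomorphism I would argue point-wise. First, $\xi$ is, evaluated at an object of ${}^{\uparrow\downarrow}I$, the correspondence $\Box(i_2\to i_3')=\Box(i_2\to i_3')\to T(i_3')$, completely of $!$-type; combined with $\Box(i\to i)\cong S(i)$ and the fact that the map $\Box(i\to i)\to T(i)$ is $f_i$, this identifies the restriction of $\xi^\bullet 1$ over the object $(i\to i)$ with $f_i^!1_{T(i)}$. Next, by (Der2) --- using the point-wise description of the morphism categories of $\widehat\Dia^{\cor}(\SSS^{\cor})$ from Proposition~\ref{PROPALTCOR} --- it suffices to check the isomorphism $f^*\cong\Xi^{\bullet,1}(\xi^\bullet 1;-)$ after restriction to each $i\in I$. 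Since on the level of diagrams $\Xi$ is the functor $i_1,(i_2\to i_3),i_4\mapsto\Hom(i_1,i_2)\times\Hom(i_3,i_4)$ with $\int\nabla F={}^{\downarrow\uparrow\downarrow\downarrow}I$, the fibre of the projection $\pi_1$ over $i$, and the fibre of $\pi_{23}$ over the relevant object, carry an initial, resp.\ final, object; hence Lemma~\ref{LEMMAKAN3} collapses the relative homotopy Kan extensions occurring in $\Xi^{\bullet,1}$ to plain evaluation. The composite correspondence restricted over $i$ then reduces to a point-wise correspondence in $\mathcal{S}$, and unwinding it gives the functor $\mathcal{HOM}(f_i^!1;f_i^!-)$, which is $f_i^*$ by Lemma~\ref{LEMMAO2DUAL}. (Alternatively: after the collapse one is left precisely with the correspondence that, dualising the computation of Lemma~\ref{LEMMAADJ}, computes $f_{i,\#}=f_{i,!}(f_i^!1\otimes-)$ in the first slot, so that its first-slot pull-back of the unit is $f_i^*$.)

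Finally, one must verify that these point-wise isomorphisms glue, i.e.\ that they are natural with respect to the restriction functors along morphisms $i\to i'$ in $I$; this is where the admissibility of $\Box$ and the type-$1$/type-$2$-admissibility of the legs of $\Xi$ and $\xi$ enter, guaranteeing that the squares measuring the failure of base change are Cartesian. I expect the main obstacle to be exactly this point-wise reduction: identifying the fibres of the various projections out of ${}^{\downarrow\uparrow\downarrow\downarrow}I$, checking they possess the initial/final objects needed to invoke Lemma~\ref{LEMMAKAN3}, and confirming that the multicorrespondence which survives the collapse is on the nose the one whose first-slot pull-back of the unit is $f_i^*$ --- in other words, that the shapes ${}^{\downarrow\uparrow\downarrow\downarrow}I$, ${}^{\uparrow\downarrow}I$ and the object $\Box$ have been rigged so that everything matches up without further correction.
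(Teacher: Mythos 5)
Your high-level strategy — reduce to a point-wise check using \textbf{(Der2)}, identify the fibre of the base correspondence above $(i\to i)$ with $f_i^!1$, and then hope that initial/final objects in the relevant fibres collapse the relative Kan extensions to evaluation via Lemma~\ref{LEMMAKAN3} — does end up close to where the paper lands, but a genuine step is missing, and it is precisely the step you flag as ``the main obstacle.'' The fibre of $\pi_1$ over $i_1$ (after dualising, the diagram $\twc{(i_1\times_{/I}I)}$; before dualising something analogous) does \emph{not} have an initial or a final object, so Lemma~\ref{LEMMAKAN3} does not apply to it as a whole. The paper instead computes the limit over this fibre in three successive stages, and only the first and last are of the easy ``fibration/opfibration with the matching fiberwise object'' kind that collapse unconditionally. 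The middle stage is the opfibration $\pi_4\colon\twc{(i_1\times_{/I}I)}\to i_1\times_{/I}I$ whose fibres have \emph{initial} objects — the ``skewed'' case of Lemma~\ref{LEMMAKAN3}, which only collapses to evaluation on objects that are already Cartesian in the $i_3$-direction. Establishing that Cartesianity is the entire content of the long commutative-diagram verification in the paper (the diagram with the circled numbers $\numcirc{1}$–$\numcirc{5}$), which uses (O2), (O4) and Lemma~\ref{LEMMAO2DUAL} at specific nodes. Your proposal invokes Lemma~\ref{LEMMAKAN3} before having done this, so the collapse it relies on is not available.

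Two smaller points. First, you never actually construct the natural transformation $f^*\to\Xi^{\bullet,1}(\xi^\bullet 1;-)$ whose bijectivity is being asserted; a point-wise comparison of values is not enough, one needs a map to compare along. The paper builds this map explicitly as the adjoint of a morphism $c\colon\Xi_\bullet(\varphi_\bullet-,\xi^\bullet 1)\to\id$ obtained from the auxiliary correspondence $\Pi$, and it is that specific map that gets analysed point-wise. Second, your appeal to Proposition~\ref{PROPALTCOR} and $\widehat{\Dia}^{\cor}$ is not where the duality enters: the duality that actually does work in the paper is Theorem~\ref{SATZDUALITY}, used to replace $\Xi$ and $\Pi$ by their $\mathbf{CART}^1$-transposes so that the pull-back in question becomes the external $\mathbf{HOM}$, which \emph{is} computed point-wise by \ref{PAREXTHOM}. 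That substitution is what makes the subsequent evaluation-by-evaluation analysis possible; without it the right-hand side of your comparison is still a genuine relative right Kan extension that cannot simply be ``unwound.''
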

There is a corresponding dual construction for the $!$-functor which we leave to the reader. It will not be needed in this article because for the construction of
right adjoints we can and will always  use the theory of well-generated triangulated categories. 
\begin{proof}[Proof (Sketch): ]
$f^*$ may be described as $\varphi_\bullet$ for the obvious correspondence
\[ \varphi:= \left( \vcenter{ \xymatrix{ & (I, S^{\op})  \ar[ld] \ar@{=}[rd] \\
(I,T^{\op}) & ; & (I,S^{\op})
} } \right)\]
in $\Dia^{\cor}(\SSS^{\cor})$. 

We proceed to describe a morphism $c: \Xi_\bullet(\varphi_\bullet -, \xi^\bullet 1) \rightarrow \id$ which, by adjunction, yields a morphism 
\[ \varphi_\bullet \rightarrow \Xi^{\bullet,1}(\xi^\bullet 1; -) \]
which will be shown to be an isomorphism. 
One calculates that 
\[ \Xi_\bullet(\varphi_\bullet -,  -) \cong \Pi_\bullet( -, \xi_\bullet -) \]
for
\[ \Pi:= \left( \vcenter{ \xymatrix{ & & ({}^{\downarrow \uparrow \downarrow \downarrow} I, \pi_{4}^*T^{\op}) \ar[lld]_{\pi_1} \ar[ld]^{\pi_{23}} \ar[rd]^{\pi_4} \\
(I,T^{\op}) & ({}^{\uparrow \downarrow} I, \pi_3^* T^{\op}) & ; & (I,T^{\op})
} } \right).\]
Note that
\[\Pi(-, 1_{({}^{\uparrow \downarrow} I, \pi_3^* T^{\op})}) = \left( \vcenter{ \xymatrix{ &  ({}^{\downarrow \uparrow \downarrow \downarrow} I, \pi_{4}^*T^{\op}) \ar[ld]_{\pi_1} \ar[rd]^{\pi_4} \\
 (I, T^{\op}) & ; & (I,T^{\op})
} } \right)\]
and one checks that this correspondence is isomorphic to $\id_{(I,T^{\op})}$ in the localization 
\[ \Cor((I,T^{\op}); (I, T^{\op}))[\mathcal{W}^{-1}] \]
(i.e.\@ in in $\widehat{\Dia}^{\cor}(\SSS^{\cor})$), in particular $\Pi_\bullet(-,  1) \cong \id$ for all derivator six-functor-formalisms. 
 This yields the required morphism: 
 \[  c: \Xi_\bullet(F_\bullet -, \xi^\bullet 1) \cong  \Pi_\bullet(-,  \xi_\bullet \xi^\bullet 1) \rightarrow \Pi_\bullet(-,  1) \cong \id. \]

The multicorrespondence $\Xi$ corresponds, by duality (cf.\@ Theorem~\ref{SATZDUALITY}), to the following correspondence 
\[ \Xi' = \left( \vcenter{ \xymatrix{ &  ({}^{\downarrow  \downarrow \uparrow \downarrow} I, \Box') \ar[ld]_{\pi_1} \ar[rd]^{\pi_{234}} \\
(I,S^{\op}) & ; & (\tw I \times I ,\Box^{\op} \times T^{\op})
}} \right) \]
(obviously $\int \nabla' F = {}^{\downarrow  \downarrow \uparrow \downarrow} I$ and the transition to $\Theta(F)$, cf.\@ proof of Theorem~\ref{SATZDUALITY}, is not necessary in this case) defined point-wise at

\[ \xymatrix{ i_1 \ar[r] \ar[d]  & i_2 \ar[r] \ar[d]  & i_3 \ar[r] \ar@{<-}[d]  &  i_4 \ar[d]  \\
 i_1' \ar[r] &i_2' \ar[r] & i_3' \ar[r] &i_4'  } \]
 by 
  \[ \xymatrix{ S(i_1') & \ar[l]_-{p_1} \Box(i_2' \rightarrow i_4') \ar[r]^-{p_2} & \Box(i_2' \rightarrow i_3)  \times T(i_4'). }   \]
 
 Similarly, the multicorrespondence $\Pi$ corresponds, by duality, to the following correspondence
\[ \Pi' = \left( \vcenter{ \xymatrix{ &  ({}^{\downarrow  \downarrow \uparrow \downarrow} I, \pi_4^* T^{\op}) \ar[ld]_{\pi_1} \ar[rd]^{\pi_{234}} \\
(I,T^{\op}) & ; & (\tw I \times I , \pi_2^*T \times T^{\op})
}} \right) \]
defined point-wise at
\[ \xymatrix{ i_1 \ar[r] \ar[d]  & i_2 \ar[r] \ar[d]  & i_3 \ar[r] \ar@{<-}[d]  &  i_4 \ar[d]  \\
 i_1' \ar[r] &i_2' \ar[r] & i_3' \ar[r] &i_4'  } \]
 by 
  \[ \xymatrix{ T(i_1') & \ar[l]_-{q_1} T(i_4') \ar[r]^-{q_2} & T(i_3)  \times T(i_4'). }   \]

$c$ induces the composition
\[ \Xi^{\bullet,1}(\xi^\bullet 1; -) \leftarrow \varphi_\bullet \varphi^\bullet \Xi^{\bullet, 1}(\xi^\bullet 1; -) \cong \varphi_\bullet \Pi^{\bullet, 1} (\xi_\bullet \xi^\bullet 1; -) \leftarrow \varphi_\bullet \Pi^{\bullet, 1}(1; -) \cong \varphi_\bullet    \]
which inserting the Cartesian morphism (cf.\@ Theorem~\ref{SATZDUALITY}) w.r.t.\@ the first slot
\[ \mathbf{CART}^{1}: (\tw I \times I, \Box^{\op} \times T^{\op}),  ({}^{\uparrow \downarrow} I, \Box )  \rightarrow  (I, T^{\op})  \]
 is the same as
\begin{gather}\label{theeq} (\Xi')^{\bullet} \mathbf{CART}^{1,\bullet} (\xi^\bullet 1; -) \leftarrow \varphi_\bullet \varphi^\bullet (\Xi')^{\bullet}\mathbf{CART}^{1,\bullet}(\xi^\bullet 1; -) \\
\cong \varphi_\bullet (\Pi')^\bullet \mathbf{CART}^{1,\bullet} (\xi_\bullet \xi^\bullet 1; -) \leftarrow \varphi_\bullet (\Pi')^{\bullet} \mathbf{CART}^{1,\bullet} (1; -) \cong \varphi_\bullet  \nonumber
\end{gather}

The advantage is that the pull-back along $\mathbf{CART}^{1}$ is the external $\mathbf{HOM}$ which is computed point-wise (cf.\@ \ref{PAREXTHOM}).
We see that (\ref{theeq}) is $\pi_{1,*}$ applied to 
\begin{gather}\label{theeq2}
p_{1,*} p_2^! \pi_{234}^* \mathbf{HOM}( \xi^!1; -) \leftarrow 
f^* f_* p_{1,*} p_2^! \pi_{234}^* \mathbf{HOM}( \xi^!1; -) \\
\nonumber
\leftarrow  f^* q_{1,*} q_2^! \pi_{234}^* \mathbf{HOM}( \xi_! \xi^! 1; -) 
\leftarrow   f^* q_{1,*} q_2^! \pi_{234}^* \mathbf{HOM}( 1; -) \cong   f^* q_{1,*} \pi_{4}^*
\end{gather}

We will investigate this morphism at an object $(\xymatrix{i_1 \ar[r]^{\alpha} & i_2  \ar[r]^{\beta} & i_3  \ar[r]^{\gamma} & i_4})$ denoting
 \[ \xymatrix{  \Box(i_2 \rightarrow i_4) \ar[r]^{\xi_4} \ar[d]^{\rho}  \ar@/_30pt/[ddd]_{P} & T(i_4) \ar[d]^{T(\gamma)} \\
 \Box(i_2 \rightarrow i_3)   \ar[d]  \ar[r]^{\xi_3} & T(i_3) \ar[d]^{T(\beta)} \\
 S(i_2) \ar[d]  \ar[r] & T(i_2) \ar[d]^{T(\alpha)}   \\
 S(i_1) \ar[r]^{f_1} & T(i_1).     }
 \]
 Note that, in this diagram, the upper two squares are Cartesian. 
Consider the following diagram in which the lower horizontal composition is (\ref{theeq2}) evaluated at $(i_1 \rightarrow i_2  \rightarrow i_3  \rightarrow i_4)$. 
{ \footnotesize \[ \xymatrix{ 
P_* \xi_4^* T(\gamma)_* -  &  
f_1^* \underbrace{f_{1,*} P_*}_{T(\gamma \beta \alpha)_* \xi_{4,*} } \xi_4^*  - \ar[l]_-{\numcirc{1}}  -   
\\
P_* \mathcal{HOM}(  \xi_4^! 1; \xi_4^!  - ) \ar[u]^-{\numcirc{3}}_-\sim  &  
f_1^* \underbrace{f_{1,*} P_*}_{T(\gamma \beta \alpha)_* \xi_{4,*} } \mathcal{HOM}( \xi_4^! 1; \xi_4^!  -) \ar[l]_-{\numcirc{1}}   \ar[u]^-{\numcirc{3}}_-\sim  &
f_1^* T(\gamma \beta \alpha)_* \mathcal{HOM}( \xi_{4,!} \xi_4^! 1; -) \ar[l]_-\sim &
f_1^* T(\gamma \beta \alpha)_*  -  \ar[l]^-{\numcirc{5}} \ar[llu]_-{\numcirc{4}} \\
 P_* \mathcal{HOM}( \rho^* \xi_3^! 1; \xi_4^! - ) \ar[u]_-{\sim}^-{\numcirc{2}}  &  
f_1^* \underbrace{f_{1,*} P_*}_{T(\gamma \beta \alpha)_* \xi_{4,*} }  \mathcal{HOM}( \rho^* \xi_3^! 1; \xi_4^!  -) \ar[l]_-{\numcirc{1}}  \ar[u]_-{\sim}^-{\numcirc{2}}  &  
f_1^* T( \beta \alpha)_* \mathcal{HOM}( \xi_{3,!} \xi_3^! 1; T(\gamma)_* -) \ar[l]_-\sim &
f_1^* T(\gamma \beta \alpha)_*  -  \ar[l]^-{\numcirc{5}} \ar@{=}[u]
 } \] }
Here the arrows
\begin{itemize}
\item[$\numcirc{1}$] are induced by the counit $f_1^* f_{1,*} \to \id$;
\item[$\numcirc{2}$] are induced by the exchange $\rho^* \xi_3^! 1 \to  \xi_4^! T(\gamma)^* 1$ which is an isomorphism by (O4). Note that $\xi_3$ is also part of a covering as base-change of such;
\item[$\numcirc{3}$] are induced by the isomorphisms $\mathcal{HOM}(\xi_4^!1; \xi_4^!-) \cong \xi_4^*$ established in Lemma~\ref{LEMMAO2DUAL}. Note that $\xi_4$ is also part of a covering as base-change of such;
\item[$\numcirc{4}$] is induced by the unit $\id \to \xi_{4,*} \xi_4^*$;
\item[$\numcirc{5}$] are induced by the counits $\xi_{3,!} \xi_3^! \to \id$, and $\xi_{4,!} \xi_4^! \to \id$, respectively. 
\end{itemize}
 
A tedious check shows that the diagram is commutative. 
This shows that (up to isomorphism) the underlying diagram 
of {\em all entries} in the sequence (\ref{theeq2}) is independent of $i_3$, i.e.\@ the corresponding objects are Cartesian in the direction of $i_3$. Therefore we claim that the
limit over the fiber ${}^{ \downarrow \uparrow \downarrow }(i_1 \times_{/I} I)$ is isomorphic evaluation at $i_1=i_1=i_1=i_1$. This implies that (\ref{theeq}) is an isomorphism because at $i_1=i_1=i_1=i_1$
the top row is just the unit/counit equation for $f_{1}^*, f_{1,*}$ and thus the identity. 

Let us elaborate a bit on the claim. Consider the functor
\[ \xymatrix{ {}^{ \downarrow \uparrow \downarrow }(i_1 \times_{/I} I) \ar[r]^-{\pi_{2}} & i_1 \times_{/I} I }  \]
It is also a fibration. Hence we can first compute the limit over its fibers. Since $i_1 \times_{/I} I$ has an initial object, the limit (over the base) is the evaluation at $i_1=i_1$. 
This shows that $\pi_{1,*}$ is the homotopy limit over ${}^{ \uparrow \downarrow }(i_1 \times_{/I} I)$ (interpreted as $i_1=i_1 \to i_3 \to i_4$). Now 
\[ \xymatrix{ {}^{ \uparrow \downarrow }(i_1 \times_{/I} I) \ar[r]^-{\pi_{4}} & i_1 \times_{/I} I  } \]
is an opfibration whose fiber has an initial object. Since the objects in question are Cartesian in the direction of $i_3$,  
an application of Lemma~\ref{LEMMAKAN3} shows that $\pi_{4,*}$ is nevertheless point-wise in $i_4$ isomorphic to the evaluation at $i_1=i_1 \to i_4 = i_4$. 
The limit over $i_1 \times_{/I} I$ (4th entry) is again evaluation at $i_1=i_1$ because this is an initial object.
\end{proof}

\begin{PROP}\label{PROPSTRONGLYLOCAL}
Strongly local implies local. 
\end{PROP}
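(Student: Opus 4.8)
The plan is to verify, one at a time, each of the conditions that together constitute \emph{locality} in Definition~\ref{DEFLOCAL6FU} --- the axioms (H1''), (H2), (H3) for $\DD^!$, the axioms (C1''), (C2), (C3) for $\DD^*$, and axiom (CH) --- by reducing everything to (O1)--(O4) and to the consequences already extracted from them earlier in this section. Throughout one uses that a base change of a morphism which is part of a covering is again part of a covering, so that (O1)--(O4) apply to all the ``small'' legs ($F$, $G$, $F_i$, the $\xi_i$) appearing in the various Cartesian squares. Two axioms are immediate: (H3) and (C3) are literally (O3), and (C1'') --- existence of a left adjoint to $f^*$ for $f\colon S\to T$ in $\mathcal{S}^{I^{\op}}$ with $S_i\to T_i$ part of a covering --- is exactly the content of Lemma~\ref{LEMMAADJDIA}, which exhibits $\Xi_\bullet(-,\xi^\bullet 1)$ as such a left adjoint. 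For (H1'') I would first check (H1'): for $f_i$ part of a covering, (O2) gives $f_i^!(-)\cong (f_i^!1)\otimes f_i^*(-)$; the functor $(f_i^!1)\otimes(-)$ is an equivalence by (O1), while $f_i^*$ preserves (homotopy) coproducts because it has the right adjoint $f_{i,*}$, and it is exact since the fibers are stable; hence $f_i^!$ is exact and preserves coproducts. As recalled in \ref{PARLOCAL}, for an infinite fibered derivator with stable, well-generated (hence perfectly generated) fibers one has (H1') $\Rightarrow$ (H1) $\Rightarrow$ (H1'') by Brown representability.

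Next come the two base-change axioms (H2) and (C2), each obtained by chaining known isomorphisms. For (H2), consider a Cartesian square with bottom leg $f_i$ (part of a covering) and right leg $g$ arbitrary, corner maps $F_i$ and $G$. Using (O2) for $F_i$, then (O4) to rewrite $F_i^!1\cong G^*(f_i^!1)$, then the projection formula $(\otimes,!)$ of Proposition~\ref{PROPCONSEQUENCES}, then the $(!,*)$ base change for left adjoints applied to the transposed square (so $G_!F_i^*(-)\cong f_i^*g_!(-)$), and finally (O2) for $f_i$, one gets
\[ G_!F_i^!(-)\ \cong\ G_!\bigl(F_i^*(-)\otimes G^*(f_i^!1)\bigr)\ \cong\ \bigl(G_!F_i^*(-)\bigr)\otimes f_i^!1\ \cong\ \bigl(f_i^*g_!(-)\bigr)\otimes f_i^!1\ \cong\ f_i^!g_!(-). \]
For (C2), with the analogous square one uses Lemma~\ref{LEMMAO2DUAL} for $f_i$ (namely $f_i^*\cong\mathcal{HOM}(f_i^!1;f_i^!(-))$), the $(!,*)$ base change for right adjoints ($f_i^!g_*\cong G_*F_i^!$, Proposition~\ref{PROPCONSEQUENCES}), the internal-hom identity $\mathcal{HOM}(-;G_*-)\cong G_*\mathcal{HOM}(G^*-;-)$, (O4) in the form $G^*(f_i^!1)\cong F_i^!1$, and Lemma~\ref{LEMMAO2DUAL} for $F_i$, to obtain
\[ f_i^*g_*(-)\ \cong\ \mathcal{HOM}\bigl(f_i^!1;G_*F_i^!(-)\bigr)\ \cong\ G_*\mathcal{HOM}\bigl(G^*(f_i^!1);F_i^!(-)\bigr)\ \cong\ G_*\mathcal{HOM}\bigl(F_i^!1;F_i^!(-)\bigr)\ \cong\ G_*F_i^*(-). \]

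For axiom (CH), the half (CH left) is exactly the equivalence ``(O2)$\wedge$(O4) $\Leftrightarrow$ (CH left)'' of Lemma~\ref{LEMMAO24}. It remains to treat, for $k=1$, the exchange $F^*g^!\to G^!f^*$ attached to a Cartesian square with bottom leg $f$ part of a covering and right leg $g$ arbitrary. Here I would invert $f^!1$ and $F^!1$ using (O1) and rewrite, via (O2), $f^*\cong (f^!1)^{-1}\otimes f^!(-)$ and $F^*\cong (F^!1)^{-1}\otimes F^!(-)$; then, using $(gF)^!=G^!f^!$ together with the ``twisted projection formula'' $G^!(\mathcal{L}\otimes-)\cong G^*\mathcal{L}\otimes G^!(-)$ for tensor-invertible $\mathcal{L}$ --- which itself follows from $\mathcal{L}\otimes(-)\cong\mathcal{HOM}(\mathcal{L}^{-1};-)$ (Lemma~\ref{LEMMATENSORINV}), the identity $G^!\mathcal{HOM}(-;-)\cong\mathcal{HOM}(G^*-;G^!-)$, and monoidality of $G^*$ --- and finally (O4) in the form $G^*(f^!1)\cong F^!1$, one arrives at $F^*g^!\cong G^!f^*$.

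The chief obstacle is not producing these isomorphisms but \emph{identifying} each of the resulting chains of canonical isomorphisms with the specific base-change, resp.\ exchange, morphism whose invertibility is asserted; this is a sequence of compatibility diagram chases of exactly the kind performed (and partly ``left to the reader'') in the proofs of Lemmas~\ref{LEMMAO24} and \ref{LEMMAADJDIA}. A secondary point is to promote the fiberwise arguments for (H1')/(H1), (H2), (C2) and (CH) to statements about morphisms of derivators, which is routine using (Der2), and to invoke Brown representability for the passage (H1) $\Rightarrow$ (H1''), where the hypotheses that $\DD$ be \emph{infinite} with \emph{stable, well-generated} fibers enter essentially.
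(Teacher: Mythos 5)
Your proposal is correct and takes essentially the same approach as the paper's proof: each of the seven axioms is reduced to (O1)--(O4), with Lemma~\ref{LEMMAO24} supplying (CH left), Lemma~\ref{LEMMAADJDIA} supplying (C1''), and Brown representability bridging (H1)$\Rightarrow$(H1''); the chains of isomorphisms you give for (H2) and (C2) coincide with the paper's commutative diagrams. The only cosmetic differences are that the paper obtains (H1) from the explicit right adjoint $f_? = f_*\mathcal{HOM}(f^!1;-)$ of Lemma~\ref{LEMMAADJ} rather than your direct coproduct-preservation argument (both rest on (O1)--(O2)), and that for the $F^*g^!\to G^!f^*$ half of (CH) the paper works throughout with $\mathcal{HOM}(f^!1;-)$ via Lemma~\ref{LEMMAO2DUAL} while you use the equivalent tensor form $(f^!1)^{-1}\otimes(-)$.
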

\begin{proof}
We first show the locality of the restriction $\DD^!$ of  $\DD$ to $\SSS$. 

(H3) follows immediately from (O3).

(H2) Let 
\[ \xymatrix{ \ar[r]^F \ar[d]_G & \ar[d]^g \\ 
 \ar[r]_f & } \]
 be a Cartesian diagram in $\mathcal{S}$ in which $f$ is part of a covering.  From (O1--O4) we get a commutative diagram (we leave the check of commutativity to the reader)
\[ \xymatrix{
(G_! F^* -) \otimes f^! 1_S \ar[d]^-{b.c.}_-\sim \ar[r]^-\sim &  G_! (F^* - \otimes G^* f^! 1_S) \ar[r]^-\sim &  G_! ((F^* -) \otimes F^! 1_{S'}) \ar[r]^-\sim &  G_! F^! -  \ar[d]^-{\text{exch.}}   \\
(f^* g_! - ) \otimes f^! 1_S \ar[rrr]^-\sim  & & &  f^! g_! -
} \]
in which all morphisms except possibly the right vertical one are isomorphisms. Therefore also the right vertical morphism is an isomorphism. 

(H1'') By a ``dual version'' of Lemma~\ref{LEMMAADJDIA} the functor $f^!$ has a right adjoint for $f: S \rightarrow T$ in $\mathcal{S}^{I}$. 
Actually, if $\DD \rightarrow \SSS^{\cor}$ has stable, well-generated fibers, (H1) and (H1'') are equivalent and thus Lemma~\ref{LEMMAADJ} is sufficient, because
the functor in the statement of the Lemma promotes clearly to a morphism of derivators for the fiber over $S$, resp.\@ $T$, and thus (H1) follows. 
 

Next, we show the colocality of the restriction $\DD^*$ of  $\DD$ to $\SSS^{\op}$. 

(C3) follows immediately from (O3).

(C2) Consider again a Cartesian diagram as in the proof of (H2) above. From (O1--O4), using Lemma~\ref{LEMMAO2DUAL}, we get a commutative diagram (we leave the check of commutativity to the reader)
\[ \xymatrix{
 f^* g_* -  \ar[d]^{exch.}  \ar[rrr]^-\sim  & & &  \mathcal{HOM}(f^! 1,  f^! g_* -)   \ar[d]^{\text{b.c.}}_-\sim  \\
G_* F^* - \ar[r]^-\sim &  G_* \mathcal{HOM}(F^! 1,  F^! -)  \ar[r]^-\sim &  G_* \mathcal{HOM}(G^*f^! 1 , F^!-)\ar[r]^-\sim     & \mathcal{HOM}(f^! 1 , G_*F^!-)  
} \]
in which all morphisms except possibly the left vertical one are isomorphisms. Therefore also the left vertical one is an isomorphism. 

(C1'') is Lemma~\ref{LEMMAADJDIA}.

It remains to show axiom (CH): The statement (CH left) was already shown in Lemma~\ref{LEMMAO24}. 
Using Lemma~\ref{LEMMAO2DUAL} we get a commutative diagram (we leave the check of commutativity to the reader)
\[ \xymatrix{
F^* g^! -  \ar[d]^{exch.} \ar[rrr]^-\sim & & &   \mathcal{HOM}(F^! 1, F^! g^!-   )  \ar[d]_-\sim^{\text{p.fun.}}   \\
G^! f^* - \ar[r]^-\sim  &  G^! \mathcal{HOM}(f^! 1,  f^! -) \ar[r]^-\sim &  \mathcal{HOM}(G^* f^! 1, G^! f^! - )  \ar[r]^-\sim &  \mathcal{HOM}(F^! 1,  G^! f^! -  ) 
} \]
in which all morphisms except possibly the left vertical one are isomorphisms. Therefore also the left vertical one is an isomorphism. 
\end{proof}

\section{Higher geometric stacks}\label{SECTHIGHERGEOM}

\begin{PAR}\label{PARSETTINGTV}
Let $\mathcal{S}$ be a small category with finite limits and Grothendieck pre-topology. 
Let $(\mathcal{M}, \Fib, \mathcal{W})$ with $\mathcal{M} \subset \mathcal{SET}^{\mathcal{S}^{\op} \times \Delta^{\op}}$ be any category of fibrant objects which models the \v{C}ech localization of simplicial pre-sheaves on $\mathcal{S}$ containing the constant ones in $\mathcal{S}^{\amalg}$ (coproducts of representables).
Roughly following To\"en-Vezzosi \cite[Definition 1.3.3.1]{TV08} we give the following recursive definition of {\bf $k$-geometric stacks} w.r.t.\@ the chosen pre-topology: 
Those are objects of homotopy strictly full (\ref{PARHSF}) subcategories $\mathcal{M}^{(k)} \subset \mathcal{M}$ such that the objects of $\mathcal{M}^{(0)}$ (i.e.\@ 0-geometric stacks) are those weakly equivalent to objects in $\mathcal{S}^{\amalg}$.  We need to make the technical assumption  that the site $\mathcal{S}$ is {\em geometric}, see Appendix~\ref{APPENDIXGEOMETRIC}.
\end{PAR}

\begin{DEF}\label{DEFC}
Define the class $C$  consisting of those
morphisms
\[ f: (X, F) \rightarrow (Y, G) \]
in $\mathcal{S}^{\amalg}$  such that  for any $y \in Y$ the family
\[ \{ F(x) \rightarrow G(y) \}_{x \in f^{-1}(y)} \]
is a covering. 

By abuse of notation, we also denote by $C$ the class of morphisms in $\mathcal{M}^{(0)}$ that are weakly equivalent to a morphism in $\mathcal{S}^{\amalg}$ which lies in $C$.
\end{DEF}

By Lemma~\ref{LEMMAWEM0}, 1., for a geometric  site, every morphism in $\mathcal{M}^{(0)}$ is weakly equivalent to a morphism in $\mathcal{S}^{\amalg}$ and $\mathcal{M}^{(0)}$ is closed under homotopy pull-back.
Note that the inclusion of $\mathcal{S}^{\amalg}$ into $\mathcal{SET}^{\mathcal{S}^{\op} \times \Delta^{\op}}$ (and thus the inclusion into $\mathcal{M}$) sends fiber products to homotopy fiber products. 
Therefore $C$  is stable under homotopy pull-back along morphisms in $\mathcal{M}^{(0)}$. 
\begin{DEF}\label{DEFKGEOMETRIC}
\begin{enumerate}
\item A morphism $X \rightarrow Y$ in $\mathcal{M}$ is {\bf $k$-representable}, if for any $S \in \mathcal{M}^{(0)}$ and morphism $S \rightarrow Y$ the homotopy pullback $X \widetilde{\times}_Y S$ is in $\mathcal{M}^{(k)}$. 
\item A morphism $X \rightarrow Y$ in $\mathcal{M}$ is {\bf in $0-C$} if it is 0-representable and for any $S \in \mathcal{M}^{(0)}$ and morphism $S \rightarrow Y$ the morphism $X \widetilde{\times}_Y S \rightarrow S$ is  in $C$.
\item For $k \ge 1$, a morphism $U \rightarrow X$ in $(k-1)-C$ with $U \in \mathcal{M}^{(0)}$ is called a {\bf $k$-atlas}.
\item For $k \ge 1$, an object $X$ is in $\mathcal{M}^{(k)}$, i.e.\@ {\bf $k$-geometric}, if the diagonal $X \rightarrow X \times X$ is $(k-1)$-representable and there is a $k$-atlas $U \rightarrow X$. 
\item For $k \ge 1$, a morphism $X \rightarrow Y$ is {\bf in $k-C$} if it is $k$-representable and if for any $S \in \mathcal{M}^{(0)}$ and morphism $S \rightarrow Y$ there is a $k$-atlas $U \rightarrow X \widetilde{\times}_Y S$ such that $U \rightarrow S$ is in $C$. 
\end{enumerate}
\end{DEF}

We have the following elementary properties:
\begin{PROP}\label{PROPGEOMETRIC}
\begin{enumerate}
\item $k$-representable morphisms, and $k-C$ morphisms, are stable under homotopy pull-backs, composition, and isomorphism (in the homotopy category). 
\item $(k-1)-C$ implies $k-C$, and $(k-1)$-representable implies $k$-representable.
\item For a homotopy Cartesian diagram
\[ \xymatrix{ 
W \ar[r] \ar[d] & Z \ar[d] \\
X \ar[r] & Y   } \]
if $Y$ is $k$-geometric and $Z$ and $X$ are $(k-1)$-geometric then $W$ is $(k-1)$-geometric. 
\end{enumerate}
\end{PROP}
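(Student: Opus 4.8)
The plan is to prove all three assertions simultaneously by induction on $k$, after strengthening the statement to include three auxiliary claims that the induction needs: ($\star_1$) the product of two $j$-geometric stacks is $j$-geometric; ($\star_2$) if $f\colon A\to B$ is $j$-representable and $B$ is $j$-geometric, then $A$ is $j$-geometric; and ($\star_3$) the inclusions $\mathcal{M}^{(k-1)}\subseteq\mathcal{M}^{(k)}$, $\{(k-1)\text{-representable}\}\subseteq\{k\text{-representable}\}$ and $(k-1)-C\subseteq k-C$. The base case $k=0$ is immediate from Definitions~\ref{DEFC} and~\ref{DEFKGEOMETRIC} and Lemma~\ref{LEMMAWEM0}: $\mathcal{M}^{(0)}$ is homotopy strictly full (\ref{PARHSF}) and closed under homotopy pull-back, $C$ is stable under homotopy pull-back along morphisms of $\mathcal{M}^{(0)}$ and is closed under composition (being induced by the pre-topology), and homotopy pull-backs paste (\ref{PARHFP}).

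For the inductive step I would assume everything through level $k-1$ and proceed in the following order. First, base-change stability of $k$-representable and of $k-C$ morphisms, which is immediate from the definitions: given such an $X\to Y$ and any $Y'\to Y$, for $S\in\mathcal{M}^{(0)}$ over $Y'$ one has $(X\widetilde{\times}_Y Y')\widetilde{\times}_{Y'}S\cong X\widetilde{\times}_Y S$ by pasting homotopy Cartesian squares, and $\mathcal{M}^{(k)}$ is homotopy strictly full, while in the $k-C$ case the atlas over $S$ is supplied directly by the definition applied to $S\to Y'\to Y$. Second, the inclusions ($\star_3$), hence the second assertion: these reduce to the lower-level statements together with the remarks that $\id_Z$ is a $1$-atlas for any $Z\in\mathcal{M}^{(0)}$ (identities lie in $0-C$) and that a $(k-1)$-atlas is a $k$-atlas because $(k-2)-C\subseteq(k-1)-C$. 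Third, the key lemma ($\star_2$) at level $k$: choosing a $k$-atlas $V\to B$, the pull-back $A\widetilde{\times}_B V$ lies in $\mathcal{M}^{(k)}$, its $k$-atlas $U\to A\widetilde{\times}_B V$ composed with $A\widetilde{\times}_B V\to A$ is a $k$-atlas of $A$ (using composition of $(k-1)-C$ morphisms and base-change stability at level $k-1$), and the diagonal $\Delta_A$ is handled via the factorization $A\xrightarrow{\Delta_{A/B}}A\widetilde{\times}_B A\to A\times A$, in which the second arrow is a base change of $\Delta_B$ (hence $(k-1)$-representable) and the relative diagonal $\Delta_{A/B}$ is $(k-1)$-representable because, after base change to any $S\in\mathcal{M}^{(0)}$ over $A\widetilde{\times}_B A$, it is governed by the diagonal of the honestly $k$-geometric stack $A\widetilde{\times}_B S$.

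With ($\star_2$) in hand, composition of $k$-representable morphisms $X\to Y\to Z$ follows from the pasting $X\widetilde{\times}_Z S\cong X\widetilde{\times}_Y(Y\widetilde{\times}_Z S)$ and ($\star_2$) applied to the first morphism base-changed over the $k$-geometric stack $Y\widetilde{\times}_Z S$; composition of $k-C$ morphisms is then the same argument while also chasing the atlases down and invoking closure of $C$ under composition. Stability under isomorphism in the homotopy category is automatic, since homotopy pull-backs are defined only up to (non-unique) isomorphism and all the classes involved are homotopy strictly full; this finishes the first assertion. Then ($\star_1$) at level $k$ follows from $\Delta_{X\times Z}=(\Delta_X\times\id)\circ(\id\times\Delta_Z)$ together with base-change stability, composition, and closure of $\mathcal{M}^{(0)}$ under finite products. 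Finally, the third assertion: in the homotopy Cartesian square there is the standard identification $W\cong(X\times Z)\widetilde{\times}_{Y\times Y}Y$ with $Y\to Y\times Y$ the diagonal; since $Y$ is $k$-geometric, $\Delta_Y$ is $(k-1)$-representable, and since $X,Z$ are $(k-1)$-geometric, $X\times Z$ is $(k-1)$-geometric by ($\star_1$) at level $k-1$, so $W\to X\times Z$ is $(k-1)$-representable by base change at level $k-1$, and ($\star_2$) at level $k-1$ gives that $W$ is $(k-1)$-geometric.

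The hard part will be ($\star_2$), specifically the claim that relative diagonals of representable morphisms drop one level of representability: this sub-argument must itself be run inside the induction, so one has to be scrupulous that every appeal to ``composition'' or ``base change'' is to a level strictly below $k$. Everything else is routine diagram-chasing with homotopy fiber products, once one observes that the geometricity hypothesis on the site is used precisely to make $\mathcal{M}^{(0)}$ closed under homotopy pull-back (Lemma~\ref{LEMMAWEM0}), cf.\ Appendix~\ref{APPENDIXGEOMETRIC}.
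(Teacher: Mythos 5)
Your argument for part 3 is exactly the paper's: both identify $W$ with $(X\times Z)\widetilde{\times}_{Y\times Y}Y$, use that $\Delta_Y$ is $(k-1)$-representable, and conclude via closure of $(k-1)$-geometricity under $(k-1)$-representable morphisms with $(k-1)$-geometric target. For parts 1 and 2 the paper does not give a proof but simply cites To\"en--Vezzosi \cite[Proposition~1.3.3.3]{TV08}; your simultaneous induction, with $(\star_1)$--$(\star_3)$ as auxiliary claims and $(\star_2)$ playing the role of their Lemma~1.3.3.2, is essentially a self-contained reproduction of that result. What your version gives is a proof written against the present paper's indexing (the hierarchy starts at $\mathcal{M}^{(0)}$ rather than at ``$(-1)$-representable'' as in \cite{TV08}); what the citation buys is brevity, and the paper also needs the cited result implicitly in part 3 (products of $(k-1)$-geometric stacks, and $(\star_2)$ at level $k-1$). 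One step you should make explicit if you spell this out: the claim that the relative diagonal $\Delta_{A/B}$ is $(k-1)$-representable, after pulling back along $S\to A\widetilde{\times}_B A$ with $S\in\mathcal{M}^{(0)}$ and setting $T:=A\widetilde{\times}_B S$, rests on the identification $T\widetilde{\times}_{T\widetilde{\times}_S T}S \cong T\widetilde{\times}_{T\times T}S$. This is not a formal consequence of pasting homotopy Cartesian squares --- the two sides differ by a factor $S\widetilde{\times}_{S\times S}S$ --- and holds here only because objects of $\mathcal{M}^{(0)}$ (coproducts of representables of a $1$-category) are $0$-truncated, so that $S\widetilde{\times}_{S\times S}S\simeq S$. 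With that caveat noted, your plan is sound.
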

\begin{proof}
For 1.\@ and 2.\@ see \cite[Proposition~1.3.3.3]{TV08}. 

3. 
We have the following homotopy Cartesian diagram: 
\[ \xymatrix{ 
W \ar[r] \ar[d] & Z \times X \ar[d] \\
Y \ar[r]_-{\Delta} & Y \times Y   } \]
Since the diagonal $\Delta$ is $(k-1)$-respresentable by assumption, the morphism $W \rightarrow Z \times X$ is $(k-1)$-representable. 
Since $Z \times X$ is $(k-1)$-geometric also $W$ is $(k-1)$-geometric. 
\end{proof}

\begin{BEM}
The notions of $k$-representable, $k-C$, and $k$-geometric, depend only on the isomorphism class of the morphism (resp.\@ object) in the homotopy category. In particular, the subcategory $\mathcal{M}^{(k)} \subset \mathcal{M}$ of $k$-geometric stacks is homotopy strictly full (\ref{PARHSF}). Using \ref{PARHFP} one could even phrase the definition entirely in terms of morphisms and objects in the homotopy category. 
\end{BEM}

\begin{BEM}\label{BEMTV}
The setting here is more restrictive that the setting in \cite{TV08} in two ways: 

Firstly, the base category $\mathcal{S}$ is just a category and not a model category. The constructions in 
 this article could be easily adopted to the more general setting. The author abstained from doing so in order to keep the exposition as simple as possible, in view that 
the only available {\em construction} of derivator six-functor-formalisms \cite{Hor17} is in this setting so far.

Secondly, it is tacitly assumed that the information about the class $\mathbf{P}$ of \cite[1.3.2]{TV08} can be incorporated into the choice of pre-topology.
This is a slight restriction. According to \cite[Assumption~1.3.2.11]{TV08} the class  $\mathbf{P}$ contains the morphisms in the coverings of some chosen pre-topology $\tau$. Hence one can define a pre-topology $\tau'$ adding all families $\{ U_i \rightarrow X \}$ consisting of morphisms in $\mathbf{P}$ which have the property that there is a covering $\{U_j' \rightarrow X\}$ in $\tau$ refining $\{ U_i \rightarrow X \}$. This might give a weaker notion of $n$-geometric stacks, though.
Nevertheless, it is still important to choose {\em some} pre-topology! The notion of $k$-geometric stack as stated above is not a property of the topology generated by $\tau$. 
\end{BEM}

\begin{BEISPIEL}The index $k$ reflects the {\em construction steps} for the stack in question and not directly the vanishing or non-vanishing of the associated sheaves of homotopy groups. 
For example, if $\mathcal{S}$ is affine schemes  with the Zariski pre-topology, then schemes are 1-geometric stacks, or if $\mathcal{S}$ is the category of affine schemes with the \'etale pre-topology then algebraic spaces are 2-geometric stacks and usual (1-)Deligne-Mumford stacks are thus only 3-geometric in general. Similarly, if $\mathcal{S}$ is the category affine schemes with the smooth pre-topology then usual (1-)Artin stacks are 3-geometric in general.
Cf.\@ the distinction between $n$-geometric stack and geometric $n$-stack \cite[Section~2.1.1]{TV08}.
\end{BEISPIEL}

\begin{LEMMA}\label{LEMMALOCALEPI}For any $k \in \N_0$,
the class $k-C$ consists of homotopy local epimorphisms, i.e.\@ for a morphism $f: X \rightarrow Y$ in $k-C$ and a
morphism $U \rightarrow Y$ in the homotopy category with $U \in \mathcal{S}$ there is a covering $\{U_i \rightarrow U\}_i$ and lifts
\[ \xymatrix{ & U \widetilde{\times}_Y X \ar[r] \ar[d] & X \ar[d]^f \\
 U_i \ar[r] \ar@{.>}[ru] & U \ar[r] & Y   } \]
 in the homotopy category. 
\end{LEMMA}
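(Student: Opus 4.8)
\textbf{Proof plan for Lemma~\ref{LEMMALOCALEPI}.}
The plan is to proceed by induction on $k$. For $k=0$, a morphism $f: X \rightarrow Y$ in $0-C$ is, after choosing a representative in $\mathcal{S}^{\amalg}$ and observing that morphisms $U \rightarrow Y$ from $U \in \mathcal{S}$ in the homotopy category are (using \ref{PARHFP} together with the fact that $\mathcal{S}^\amalg \hookrightarrow \mathcal{M}$ preserves homotopy fiber products and Lemma~\ref{LEMMAWEM0}) represented by honest morphisms in $\mathcal{S}^\amalg$, literally a morphism in the class $C$ of Definition~\ref{DEFC} after pull-back; so $U \widetilde{\times}_Y X$ is weakly equivalent to an object $(U', X')$ of $\mathcal{S}^\amalg$ with $\{X'(x) \to U\}_{x}$ a covering. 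The required lifts $U_i \to U\widetilde\times_Y X$ are then simply the components $X'(x) \to U' = U$ of this covering (composed with the weak equivalence), which give a covering $\{X'(x) \to U\}$ of $U$ and lift the identity of each member through $U\widetilde\times_Y X \to U$; the outer square in the diagram commutes in the homotopy category since $X'(x) \to U\widetilde\times_Y X \to X$ factors through $U\widetilde\times_Y X \to U \to Y$ by construction of the fiber product.

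For the inductive step $k \geq 1$: given $f: X \to Y$ in $k-C$ and a morphism $U \to Y$ with $U \in \mathcal{S}$, note first that $U$ lies in $\mathcal{M}^{(0)}$, so by Definition~\ref{DEFKGEOMETRIC}(5) there is a $k$-atlas $V \to U \widetilde{\times}_Y X$ with $V \to U$ in $C$. Since $V \to U$ is in $C$ and $U \in \mathcal{S}$, after passing to a weakly equivalent representative in $\mathcal{S}^\amalg$ the object $V$ is a coproduct $\coprod_\alpha V_\alpha$ with $\{V_\alpha \to U\}_\alpha$ a covering. For each $\alpha$, the composite $V_\alpha \to V \to U\widetilde{\times}_Y X \to X$ together with $V_\alpha \to U$ provides the desired lift over $V_\alpha \to U$; these assemble into the sought covering $\{V_\alpha \to U\}$ with lifts through $U\widetilde\times_Y X \to U$ in the homotopy category. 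One checks that the relevant triangle and square commute in the homotopy category, which follows from the definition of the homotopy fiber product and the fact that $V \to U\widetilde\times_Y X \to U$ agrees (in the homotopy category) with the structure morphism $V \to U$ used to define the $k$-atlas.

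The main technical point to be careful about is not the inductive bookkeeping but the passage between the homotopy category and honest models: one must know that a morphism $U \to Y$ in the homotopy category with $U \in \mathcal{S}$, followed by pull-back of a $k-C$ morphism, can be computed on the level of a diagram of shape $\righthalfcup$ in $\mathcal{M}$ (as in \ref{PARHFP}), and that for $U \in \mathcal{M}^{(0)}$ a $k$-atlas $V \to U\widetilde\times_Y X$ whose target maps to $S = U$ by a morphism in $C$ really does descend, after choosing weakly equivalent representatives in $\mathcal{S}^\amalg$, to a genuine covering family of $U$. This is where the hypothesis that the site $\mathcal{S}$ is \emph{geometric} (Appendix~\ref{APPENDIXGEOMETRIC}, via Lemma~\ref{LEMMAWEM0}) enters, guaranteeing that every morphism in $\mathcal{M}^{(0)}$ is represented by one in $\mathcal{S}^\amalg$ and that $\mathcal{M}^{(0)}$ is closed under homotopy pull-back, so that all the fiber products in sight stay inside $\mathcal{M}^{(0)}$ and the notion of ``$C$'' makes sense for them. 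Once this identification is in place the rest is a routine unwinding of Definition~\ref{DEFKGEOMETRIC}.
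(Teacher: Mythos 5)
Your proposal is correct and follows the same strategy as the paper's proof: the $k=0$ case reads the covering off directly from the $0$-representability of $U\widetilde\times_Y X$, and for $k\geq 1$ one invokes a $k$-atlas $V \to U\widetilde\times_Y X$ with $V \to U$ in $C$ from Definition~\ref{DEFKGEOMETRIC}(5). Your extra remarks about passing to representatives in $\mathcal{S}^\amalg$ via Lemma~\ref{LEMMAWEM0} and geometricity are accurate elaborations of the paper's terse ``w.l.o.g.'' steps; note also that (as in the paper's proof) the announced ``induction on $k$'' is really just a two-case split, since the $k\geq 1$ argument never invokes the statement for $k-1$.
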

\begin{proof}
Let $f: X \rightarrow Y$ a morphism in $k-C$. 
By induction on $k$. For $k=0$ and a morphism $U \rightarrow Y$ with $U \in \mathcal{S}$, the homotopy fiber product $U \widetilde{\times}_Y X$ is in 
$\mathcal{M}^{(0)}$ hence w.l.o.g.\@ in $\mathcal{S}^{\amalg}$ and the morphism to $U$ is in $C$. Hence a covering and lift can be read off by definition. 
For $k=1$ look at the following diagram in the homotopy category
\[ \xymatrix{ 
& U' \ar[d] &  \\
& U \widetilde{\times}_Y X \ar[r] \ar[d] & X \ar[d]^f \\
& U \ar[r] & Y   } \]
where $U' \rightarrow  U \widetilde{\times}_Y X$ is the $k$-atlas existing by definition and 
the morphism $U' \rightarrow  U$ is in $C$. W.l.o.g.\@ $U' \in \mathcal{S}^{\amalg}$ and thus a covering and the lift can be read off.
\end{proof}

\begin{DEF}\label{DEFSHCOR}
Denote by $(\mathcal{M}^{(k)}, \Fib^{(k)}, \mathcal{W}^{(k)})$ the  category with fibrant objects whose objects are the $k$-geometric stacks in $\mathcal{M}$ and whose fibrations snd weak equivalences are the same as in $\mathcal{M}$.
Denote 
\[ \SSS^{\hcor, (k)} := \HH^{\cor}(\mathcal{M}^{(k)}) \]
the symmetric 2-pre-multiderivator of multicorrespondences of $k$-geometric stacks in $\mathcal{S}$.
We also denote by
\[ \SSS^{\hcor, (\infty)}  \]
the union of the $\SSS^{\hcor, (k)}$. Be aware that for infinite diagrams $I$ the objects in $\SSS^{\hcor, (\infty)}(I)$ do lie in some $\SSS^{\hcor, (k)}(I)$, i.e.\@ the ``degree'' of the stacks is globally bounded. 
\end{DEF}

\begin{PAR}
Pridham \cite{Pri13} shows  that $k$-geometric stacks according to this definition are essentially the same as those representable by $(k,C)$-hypergroupoids (see \cite[Definition 3.1]{Pri13}).
In a previous version of the article the author tried to use these for the constructions. However, he did not understand the precise relation of
 the category of $(k,C)$-hypergroupoids as a category with fibrant objects with $(\mathcal{M}^{(k)}, \Fib^{(k)}, \mathcal{W}^{(k)})$ constructed above. For example, it is not clear whether they
 are equivalent as categories with weak equivalences. 
\end{PAR}

\begin{PAR}
The main objective of this article is to extend a derivator six-functor-formalism (or $*,!$-formalism) with domain $\Cat$
\[ \DD \rightarrow \SSS^{\cor} \]
to higher geometric stacks, i.e.\@ to a fibered (multi)derivator 
\[ \DD^{(\infty)} \rightarrow \SSS^{\hcor, (\infty)}. \]
\end{PAR}

\begin{PAR}\label{PAREXTAMALG}
The first, and rather trivial, step is to extend $\DD$ to $\SSS^{\amalg, \cor}$, the 2-pre-multiderivator of multicorrespondences in the free coproduct completion $\mathcal{S}^{\amalg}$, as follows:
Let $I$ be a diagram, and let $S_0: \tw I \rightarrow \mathcal{SET}$ be an admissible diagram in the sense of Definition~\ref{DEFADMISSIBLE}. We define a diagram
\[ \widetilde{\int_I} S_0  \]
whose objects are pairs $(i, x)$ with $i \in I$ and $x \in S_0(i = i)$, and whose morphisms $(i, x) \rightarrow (i', x')$ are morphisms $i \rightarrow i'$ in $I$ together with $y \in S_0(i \rightarrow i')$ such that  the morphisms
\[ \xymatrix{ & S(i \rightarrow i') \ar[ld] \ar[rd] & \\
S(i = i) & & S(i' = i') } \]
map $y$  to $x$, and to $x'$, respectively. The composition of two such morphisms $(i, x) \rightarrow (i', x')$ given by $y$, and $(i', x') \rightarrow (i'', x'')$ given by $y'$, is $(i \to i'', y'')$ where 
$y'' \in S(i \to i'')$ is the unique element mapping to $y$, and $y'$, respectively. For the uniqueness observe that the square in the diagram
\[ \xymatrix{ & & S( i \to i'') \ar[ld] \ar[rd]\\
& S(i \rightarrow i') \ar[ld] \ar[rd] & & S(i' \rightarrow i'') \ar[ld] \ar[rd] & \\
S(i = i) & & S(i' = i')& & S(i'' = i'') } \]
is Cartesian. This comes equipped with a functor $\widetilde{\pi}: \widetilde{\int_I} S_0 \rightarrow I$ which is neither a fibration nor an opfibration in general. There is an isomorphism of categories
\begin{equation} \label{eqisoint} \tw (\widetilde{\int_I} S_0)  \rightarrow \int_{\tw I} S_0  \end{equation}
mapping $(i \to i', y \in S(i \to i'))$ to $(i \to i', y)$.
This construction allows to define  a functor of 2-multicategories
\[ \Cat^{\cor}((\mathcal{S}^\amalg)^{\cor}) \rightarrow  \Cat^{\cor}(\mathcal{S}^{\cor})   \]
given on objects by mapping $(I, S)$ given by $S_0: \tw I \rightarrow \mathcal{SET}$ and $S' \in \mathcal{S}^{\int S_0}$ 
 to $(\widetilde{\int} S_0, S'')$, 
 In which $S'': \tw (\widetilde{\int} S_0) \rightarrow \int S_0 \rightarrow \mathcal{S}$ is the composition of $S'$ with the isomorphism (\ref{eqisoint}). 
 A  multicorrespondence (i.e.\@ a generating 1-morphism) in the notation \ref{PARNOTATIONDERCORHCOR}
 \[ \xymatrix{ & & & (A, U) \ar[llld]_-{} \ar[ld]^-{} \ar[rd]^-{} \\
(I_1,S_1) & \cdots & (I_n, S_n) & ; & (J, T) }  \]
is mapped to 
\[ \xymatrix{ & & & (\widetilde{\int} U_0, U'') \ar[llld]_-{} \ar[ld]^-{} \ar[rd]^-{} \\
(\widetilde{\int} S_{1,0},S_1'') & \cdots & (\widetilde{\int} S_{n,0}, S_n'') & ; & (\widetilde{\int} T_0, T'') }  \]

A derivator six-functor-formalism (or $*,!$-formalism) with domain $\Cat$
\[ \DD \rightarrow \SSS^{\cor} \]
gives rise to a strict functor of 2-(multi)categories
\[ \Cat^{\cor}(\DD)  \rightarrow  \Cat^{\cor}(\SSS^{\cor})  \]
which is a 1-bifibration and 2-bifibration with 1-categorical fibers.
We can construct the strict pull-back 
\[ \xymatrix{ \Box \ar[r] \ar[d]&  \Cat^{\cor}(\DD) \ar[d] \\
\Cat^{\cor}((\SSS^{\amalg})^{\cor}) \ar[r] & \Cat^{\cor}(\SSS^{\cor})
 }  \]
 In which both vertical functors are strict. From the left vertical morphism a 2-pre-multiderivator $\DD^{\amalg}$  can be extracted (defining $\DD^{\amalg}(I)$ as the fiber of $I$ under the composition $\Box \rightarrow \Cat^{\cor}((\SSS^{\amalg})^{\cor})  \rightarrow \Cat^{\cor}$)  in such a way that $\Cat^{\cor}(\DD^{\amalg}) = \Box$. We leave the details to the reader. 
\end{PAR}

\begin{DEF}\label{DEFADMEXT}
Let $(\mathcal{M}, \Fib, \mathcal{W})$ be a category of fibrant objects with $\mathcal{M} \subset \mathcal{SET}^{\mathcal{S}^{\op} \times \Delta^{\op}}$ containing $\mathcal{S}^{\amalg}$ and whose
weak equivalences are \v{C}ech weak equivalences. 
Let $\DD \rightarrow \SSS^{\cor}$ be a fibered (multi)derivator with domain $\Cat$. We say that a fibered (multi)derivator $\DD' \rightarrow \HH^{\cor}(\mathcal{M})$ with domain $\Cat$ is an {\bf admissible extension} of 
$\DD$ if
\begin{enumerate}
\item The pull-backs of $\DD'$ via
\[ \mathbb{M} \rightarrow \HH^{\cor}(\mathcal{M}) \quad \text{resp.\@} \quad \mathbb{M}^{\op} \rightarrow \HH^{\cor}(\mathcal{M}) \]
are equivalent to the naive extensions (Theorem~\ref{SATZEXTNAIVELEFT}) of $\DD^!$, and $\DD^*$, respectively. In particular, we have equivalences
\[ \DD_S' \cong \DD_S^{!,h} \cong \DD_S^{*,h}  \]
of fibers for every $S \in \mathcal{M}$. 
\item The pull-back of $\DD$ via
\[ \SSS^{\amalg, \cor} \rightarrow \HH^{\cor}(\mathcal{M}) \]
is equivalent to $\DD^{\amalg}$ (defined in \ref{PAREXTAMALG}). 
\end{enumerate}
\end{DEF}

The reader may check that the restrictions of the naive extensions to $\SSS^{\amalg}$ and $(\SSS^{\amalg})^{\op}$ are equivalent to the restrictions of
$\DD^{\amalg}$ (constructed in \ref{PAREXTAMALG}) to $\SSS^{\amalg}$, and $(\SSS^{\amalg})^{\op}$, respectively. Thus 1.\@ and 2.\@ are compatible a priori.

The main result of the article is the following

\vspace{0.3cm}
{\bf Main Theorem~\ref{HAUPTSATZ}.} {\em Let $\mathcal{S}$ be a small category with finite limits and Grothendieck pre-topology. Assume that $\mathcal{S}$ is geometric (cf.\@ Definition~\ref{DEFGEOMETRIC}).

Let $\DD \rightarrow \SSS^{\cor}$ be an infinite (symmetric) fibered  (multi)derivator with domain $\Cat$, with stable, well-generated fibers which is local w.r.t.\@ the Grothendieck pre-topology (cf.\@ Definition~\ref{DEFLOCAL6FU}).

There exists a (symmetric) fibered (multi)derivator 
\[ \DD^{(\infty)} \rightarrow \SSS^{\hcor, (\infty)}  \]
with domain $\Cat$ with stable, well-generated fibers which is an {\em admissible extension} in the sense of Definition~\ref{DEFADMEXT}.
}

\section{Locality in extensions}

Let $\mathcal{S}$ be a small category with finite limits and Grothendieck pre-topology which is geometric (Definition~\ref{DEFGEOMETRIC}).
Choose a simplicial category with fibrant objects $(\mathcal{M}, \Fib, \mathcal{W})$ (cf.\@ \ref{PARCWFO}) which is a full subcategory (not homotopy strictly full) of the \v{C}ech localization of simplicial pre-sheaves on $\mathcal{S}$ containing the constant ones in $\mathcal{S}^{\amalg}$ (i.e.\@ the coproducts of representables).

In this section, we {\em assume} the existence of an appropriate extension of a given fibered derivator over $\SSS$, $\SSS^{\op}$, or $\SSS^{\cor}$, to $\mathbb{M}, \mathbb{M}^{\op}$, and $\HH^{\cor}(\mathcal{M})$, respectively, and show that the axioms involved in the notion of being (co)local (\ref{PARLOCAL}, \ref{PARCOLOCAL}, \ref{DEFLOCAL6FU}) are inherited by morphisms that are $k-C$ in the sense of Definition~\ref{DEFKGEOMETRIC}. 

\begin{PAR}\label{PARRESOLUTION}
Let $X \in \mathcal{M}$. 
In $\mathcal{SET}^{\mathcal{S}^{\op} \times \Delta^{\op}}$ there is a morphism $\widetilde{X} \rightarrow X$ (cofibrant replacement in the projective model structure) with $\widetilde{X} \in \mathcal{S}^{\amalg, \Delta^{\op}}$.
This yields a morphism\footnote{Where $\widetilde{X}_\bullet$ denotes the corresponding simplicial object of constant objects and $\delta(X)_n = \Hom(\Delta_n, X)$.} $(\Delta^{\op}, \widetilde{X}_\bullet) \rightarrow (\Delta^{\op}, \delta(\widetilde{X})) \rightarrow (\Delta^{\op}, \delta(X))$. Even if $\widetilde{X}$ is not in $\mathcal{M}$, the composition
\[ (\Delta^{\op}, \widetilde{X}_\bullet) \rightarrow (\Delta^{\op}, \delta(X)) \]
is in $\Cat(\mathcal{M})$ because the $\widetilde{X}_i$ are by assumption.
\end{PAR}

\begin{PAR}\label{PARSETTINGLOCALITYLEFT}
Let $\DD^! \rightarrow \SSS$ be an infinite local fibered derivator with domain $\Cat$ (for instance, the restriction of a $*,!$-formalism or six-functor-formalism $\DD \rightarrow \SSS^{\cor}$, but not necessarily) with stable and well-generated fibers, and consider the following statements w.r.t.\@ the naive extension (cf.\@ Theorem~\ref{SATZEXTNAIVELEFT})
$\DD^{!,h} \rightarrow \mathbb{M}$ and (denoting, as usual, $f^! := f^\bullet$)
\begin{enumerate}
\item[(H1$(f)$)] for  a morphism $f$ in $\mathcal{M}$: $f^!$, as morphism of derivators, commutes with homotopy colimits. 
\item[(H1''$(f)$)] for a morphism $f: (I, S) \rightarrow (I, T)$ in $\Cat(\mathcal{M})$: $f^!$ has a right adjoint (as morphism of derivators\footnote{equivalently: the adjoints are computed point-wise on {\em constant} diagrams})
\item[(H2$(f)$)] for  a morphism $f$ in $\mathcal{M}$: $f^!$ satisfies base change, i.e.\@ for each homotopy Cartesian square in $\mathcal{M}$
\[ \xymatrix{  \ar[r]^F \ar[d]_G &  \ar[d]^g \\ 
 \ar[r]_f & 
} \] 
 the natural transformation $G_! F^! \rightarrow f^! g_!$ is an isomorphism. 
\item[(H3$(f)$)] for  a morphism $f$ in $\mathcal{M}$: $f^!$ is conservative. 
\end{enumerate}
Note that (H1''$(f)$) implies (H1$(f)$) for a morphism $f$ in $\mathcal{M}$ and, under the assumption, for a morphism $f: (I, S) \rightarrow (I, T)$ of diagrams in $\mathcal{M}$, 
(H1$(f_i)$) for all $i \in I$ implies (H1''$(f)$) by Brown representability. 
\end{PAR}

\begin{PAR}\label{PARSETTINGLOCALITYRIGHT}
Dually, let $\DD^* \rightarrow \SSS^{\op}$ be an infinite colocal fibered derivator with domain $\Cat$ (for instance, the restriction of a $*,!$-formalism or six-functor-formalism $\DD \rightarrow \SSS^{\cor}$, but not necessarily) with stable and well-generated fibers, and consider the following statements w.r.t.\@ the naive extension (cf.\@ Theorem~\ref{SATZEXTNAIVERIGHT})
$\DD^{*,h} \rightarrow \mathbb{M}$ (denoting, as usual, $f^* := (f^{\op})_\bullet$):
\begin{enumerate}
\item[(C1$(f)$)] for a morphism $f$ in $\mathcal{M}$: $f^*$, as morphism of derivators, commutes with homotopy limits.
\item[(C1''$(f)$)] for a morphism $f: (I, S) \rightarrow (I, T)$  in $\Cat^{\op}(\mathcal{M}^{\op})$: $f^*$ has a left adjoint (as morphism of derivators) 
\item[(C2$(f)$)] for a morphism $f$ in $\mathcal{M}$: $f^*$ satisfies base change, i.e.\@ for each homotopy Cartesian square in $\mathcal{M}$
\[ \xymatrix{  \ar[r]^F \ar[d]_G &  \ar[d]^g \\ 
 \ar[r]_f & 
} \] 
 the natural transformation $ f^* g_* \rightarrow G_* F^*$ is an isomorphism. 
\item[(C3$(f)$)] for a morphism $f$ in $\mathcal{M}$: $f^*$ is conservative. 
\end{enumerate}
Note that (C1''$(f)$) implies (C1$(f)$) for a morphism $f$ in $\mathcal{M}$ and, if $\DD^*$ would have compactly generated fibers, then for a morphism $f: (I, S) \rightarrow (I, T)$ of diagrams in $\mathcal{M}$, 
(C1$(f_i)$) for all $i \in I$ would imply (C1''$(f)$) by Brown representability for the dual. 
\end{PAR}

\begin{PAR}\label{PARSETTINGLOCALITY2}
Let $\DD \rightarrow \SSS^{\cor}$ now be an infinite local (as in Definition~\ref{DEFLOCAL6FU}) fibered (multi)derivator with domain $\Cat$ (i.e.\@ $*,!$-formalism or six-functor-formalism) with stable and well-generated fibers, and let $\DD' \rightarrow \HH^{\cor}(\mathcal{M})$ be an {\em admissible extension} of $\DD \rightarrow \SSS^{\cor}$ in the sense of Definition~\ref{DEFADMEXT}. Consider the following statement w.r.t.\@ $\DD'$:
\begin{enumerate}
\item[(CH$(f_1,\dots,f_l)$)] for morphisms $f_1, \dots, f_l$ in $\mathcal{M}$: for each homotopy Cartesian square in $\mathcal{M}$
\[ \xymatrix{ X \ar[r]^F \ar[d]_G & X' \ar[d]^g \\ 
Y_1, \dots, Y_l \ar[r]_{(f_i)} &  Y_1', \dots, Y_l'
} \] 
the natural exchange $G^* (f_1^!-, \dots, f_l^!-) \rightarrow F^! g^*(-, \dots, -)$ and for $l=1$, the natural exchange $F^* g^! \rightarrow G^! f^*$ are isomorphisms. 
\end{enumerate}
\end{PAR}

\begin{LEMMA}\label{LEMMAAXIOMS}
The validity of the axioms in \ref{PARSETTINGLOCALITYLEFT}--\ref{PARSETTINGLOCALITY2} depends only on the isomorphism class of $f$ in the homotopy category of $\mathcal{M}^{\rightarrow}$ (resp.\@ of $\mathcal{M}^{I \times \rightarrow}$ for (H1'') and (C1'')).
The statements of (H2), (C2) and (CH) depend only on the isomorphism class of the square in the homotopy category of $\mathcal{M}^{\Box}$ (resp.\@ in $\mathcal{M}^{\Delta_1 \times \Delta_{1,k}^{\op}}$).
\end{LEMMA}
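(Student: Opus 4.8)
The plan is to reduce all the statements to a single observation: each axiom in \ref{PARSETTINGLOCALITYLEFT}--\ref{PARSETTINGLOCALITY2} is a statement about the behaviour of the functors $f^!$, $f^*$ (and their exchange morphisms for squares) associated to a morphism, and these functors were shown to be invariant under weak equivalence — this is precisely the content of \ref{PARHOMOTOPYCAT}, which says that if $f = g$ in the homotopy category there is a (non-canonical) isomorphism $f^! \cong g^!$ (resp.\ $f^* \cong g^*$). So first I would spell out that an isomorphism class of a morphism in the homotopy category of $\mathcal{M}^{\rightarrow}$ is represented by a morphism in $\mathcal{M}$, and that any two such representatives are connected by a commutative square in $\mathcal{M}$ whose vertical arrows are weak equivalences; applying the naive extension's functoriality and the invariance of $w^!$ (resp.\ $w^*$) under point-wise weak equivalences $w$ (the statement made just before Proposition~\ref{PROPHODESC}) gives a compatible natural isomorphism between the two resulting functors $f^!$ and $g^!$. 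Since all the axioms (H1), (H1''), (H3), (C1), (C1''), (C3) are assertions that such a functor commutes with (co)limits / admits an adjoint / is conservative — properties stable under natural isomorphism — the first assertion of the Lemma follows. For (H1'') and (C1'') the same argument runs in $\mathcal{M}^{I \times \rightarrow}$ using the diagram-version of the weak-equivalence invariance.

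Next I would treat the square-type axioms (H2), (C2), (CH). Here one observes that a homotopy Cartesian square in $\mathcal{M}$ — i.e.\ an object of the homotopy category of $\mathcal{M}^{\Box}$ (resp.\ $\mathcal{M}^{\Delta_1 \times \Delta_{1,k}^{\op}}$) whose underlying square is homotopy Cartesian — is again represented by an honest diagram in $\mathcal{M}$, unique up to a zig-zag of point-wise weak equivalences of such diagrams. The exchange morphisms $G_! F^! \to f^! g_!$, $f^* g_* \to G_* F^*$, and $G^*(f_1^!-,\dots) \to F^! g^*(-,\dots)$ are constructed naturally from the square, hence by the same functoriality/invariance argument a point-wise weak equivalence of squares induces a commutative square of exchange morphisms whose other three sides are isomorphisms; so one exchange morphism is an isomorphism iff the other is. This yields the second assertion. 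I would also remark that for the statement of (CH) in the $l$-ary case one needs the admissible extension $\DD'$ (the multi-structure), whose defining equivalences with the naive extensions are again compatible with weak equivalences by Definition~\ref{DEFADMEXT}, so the same reduction applies.

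The only genuinely delicate point is bookkeeping rather than mathematics: one must check that the isomorphisms $f^! \cong g^!$ produced from a zig-zag of weak equivalences are \emph{compatible} with the extra structure appearing in each axiom — e.g.\ with the homotopy-colimit comparison maps in (H1), with the unit/counit of an adjunction in (H1''), and, most intricately, with the exchange 2-morphisms attached to a square in (H2), (C2), (CH). The cleanest way to organize this is to phrase everything in terms of the pseudo-functor $\widehat{\Dia}^{\cor}(\HH^{\cor}(\mathcal{M})) \to \mathcal{CAT}$ (\ref{PARNOTATIONDERCORHCOR}) — or, for the purely $!$/$*$ statements, the naive extensions as fibered derivators over $\mathbb{M}$, $\mathbb{M}^{\op}$ — where weakly equivalent morphisms and squares literally become \emph{equal} 1-morphisms (resp.\ 2-isomorphic), so that the comparison isomorphisms and all the attached structure maps are produced simultaneously and functorially. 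I expect this structural reformulation to be the main obstacle to a fully rigorous write-up; once it is in place each individual axiom follows by inspection, and this is presumably why the paper states the Lemma as one that ``follows from the definitions in a straightforward way'' and defers the details.
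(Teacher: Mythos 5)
Your proposal is correct and takes essentially the same approach as the paper: reduce everything to the fact that $w^!$ and $w^*$ are equivalences for weak equivalences $w$ (the paper cites this in the same form, as a consequence of (co)homological descent in the naive extensions), and then transport each axiom's assertion along the resulting equivalences, treating the single-morphism axioms as properties stable under conjugation by equivalences and the square axioms via a cube of functors in which the corners are equivalences. The one cosmetic difference is that the paper packages the square case in a single explicit ``cube lemma'' about exchange morphisms (which it then also applies to (H1) and (C1) by phrasing commutation with homotopy colimits as an exchange-isomorphism condition), whereas you split into cases and gesture at the verification; your closing suggestion to reformulate via $\widehat{\Dia}^{\cor}(\HH^{\cor}(\mathcal{M}))$ is more machinery than the paper invokes and is not needed.
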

\begin{proof}
The reader may verify the following statement (and its dual):
For a 2-commutative cube of categories and functors
\[ \xymatrix{  & \mathcal{A} \ar[ld]_{F_A^*}  \ar[rr]^{F^*} \ar[dd]^(.7){G^*} & & \mathcal{B} \ar[dd]^{g*} \ar[ld]^{F_B^*} \\
 \mathcal{A}' \ar[rr]_(.3){(F')^*} \ar[dd]^{(G')^*}  & & \mathcal{B}' \ar[dd]^(.3){(g')^*} \\
  & \mathcal{C} \ar[rr]^(.3){f^*}  \ar[ld]^-{F_C^*} & & \mathcal{D} \ar[ld]^-{F_D^*} \\
  \mathcal{C}' \ar[rr]_{(f')^*} & & \mathcal{D}'  
} \]
such that all functors $(\cdots)^*$ have right adjoints $(\cdots)_*$ and such that $F_A^*, F_B^*, F_C^*, F_D^*$ are equivalences, then 
the exchange 
\[ g^* f_* \rightarrow F_* G^* \]
is an isomorphism if and only if the exchange
\[ (g')^* f'_* \rightarrow F'_* (G')^* \]
is an isomorphism. 
Noting that the pull-backs $f^!$ and $f^*$ along a morphism $f$ which 
is a (point-wise) weak equivalence are equivalences, and
using the above statement for appropriate cubes the assertion follows for (H1), (H2), (C1), (C2), and (CH). 
For (H1''), (C1''), (H3), and (C3) the statement is clear.  
\end{proof}

\begin{LEMMA}\label{LEMMAAXIOMSDIRINV}
Axiom {\em (H1''$(f)$)}  for all $f \in \mathcal{M}^I$ point-wise in $k-C$ and $I \in \Invlf$ with finite matching diagrams implies Axiom {\em (H1''$(f)$)} for all $I$ and $f$ point-wise in $k-C$. 
Axiom {\em (C1''$(f^{\op})$)} for all $f \in \mathcal{M}^{\op,I}$ point-wise in $k-C$ and $I \in \Dirlf$ with finite latching diagrams implies Axiom {\em (C1''$(f)$)} for all $I$ and $f$ point-wise in $k-C$. 
\end{LEMMA}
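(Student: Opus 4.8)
The plan is to reduce the general statement to the special case of inverse (resp.\ directed) diagrams by a standard ``resolution'' argument for fibered derivators, using the fact that any morphism $f\colon(I,S)\to(I,T)$ in $\mathcal M^I$ can be refined, up to point-wise weak equivalence, by a morphism indexed over a locally finite inverse diagram with finite matching objects, and dually in the directed case. First I would recall that by Lemma~\ref{LEMMAAXIOMS}, axiom (H1''$(f)$) depends only on the isomorphism class of $f$ in the homotopy category of $\mathcal M^{I\times\rightarrow}$; thus we are free to replace $f$ by any point-wise weakly equivalent morphism. Concretely, given $f\colon S\to T$ in $\mathcal M^I$ with all $f_i$ in $k-C$, I would pass to the twisted-arrow-type replacement: consider the projection $\pi\colon{}^{\downarrow\downarrow}I\to I$ (the comma category $I\times_{/I}I$, cf.\@ \ref{PARTW}), which is an inverse-type fibration with fibers $i\times_{/I}I$ having an initial object, and note $\pi$ is in $\Invlf$ with finite matching diagrams when $I$ is arbitrary — more precisely one uses the canonical diagram ${}^{\downarrow\downarrow}I \to I$ together with the fact that $\pi^*$ is fully faithful on the relevant fibers by Lemma~\ref{LEMMAKAN3}. (One may instead use the Reedy-type cofinal diagram supplied by the geometricity hypothesis on $\mathcal S$; the key point is only the existence of some $I'\in\Invlf$ with finite matching objects and a functor $I'\to I$ along which pullback induces an equivalence on fibers of $\DD^{!,h}$.)

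The second step is to transport the adjoint. Suppose $f\colon(I,S)\to(I,T)$ with all $f_i\in k-C$, and let $u\colon I'\to I$ with $I'\in\Invlf$, finite matching diagrams, be the chosen resolution so that $u^*$ is an equivalence $\DD^{!,h}(I)_T\iso\DD^{!,h}(I')_{u^*T}^{u\text{-}\cart}$ and likewise for $S$. The morphism $u^*f\colon u^*S\to u^*T$ is still point-wise in $k-C$ (because $k-C$ is stable under isomorphism in the homotopy category and $u$ only reindexes), so by hypothesis $(u^*f)^!$ has a right adjoint $(u^*f)_?$ as a morphism of derivators. One then checks that under the equivalences $u^*$, conjugating $(u^*f)_?$ back yields a right adjoint to $f^!$. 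The only subtlety is compatibility with the $\cart$-condition: $(u^*f)_?$ a priori does not preserve $u$-Cartesian objects, so one must post-compose with the Cartesian projector (which exists because the fibers are well-generated, exactly as in the discussion around Theorem~\ref{SATZEXTNAIVELEFT} and \cite[Proposition~8.10]{Hor21c}). Since $f^!=u^*\circ(u^*f)^!\circ(u^*)^{-1}$ on the relevant subcategory and right adjoints compose, $f^!$ acquires the desired right adjoint. The directed case (C1''$(f^{\op})$) is entirely dual, replacing ${}^{\downarrow\downarrow}I$ by ${}^{\uparrow\uparrow}I$ (or the appropriate opfibration with fiberwise final objects), inverse by directed, matching by latching, and right adjoint by left adjoint, using Theorem~\ref{SATZEXTNAIVERIGHT} and the dual Brown representability \cite[Theorem~4.2.2]{Hor15}.

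I expect the main obstacle to be verifying that the resolution $u\colon I'\to I$ can simultaneously be chosen (i) with $I'$ locally finite inverse and with \emph{finite} matching diagrams, and (ii) such that $u^*$ is an equivalence onto Cartesian objects and $u^*f$ is still point-wise in $k-C$. Condition (ii) is handled by Lemma~\ref{LEMMAKAN3} (for the fiberwise-(co)final projections) together with the point-wise characterization of $k-C$ morphisms and Lemma~\ref{LEMMAAXIOMS}; condition (i) is where the finiteness is delicate — one cannot literally take ${}^{\downarrow\downarrow}I$ for infinite $I$ without checking the matching diagrams stay finite, which is why the hypothesis is phrased with ``$I\in\Invlf$ with finite matching diagrams'' as the base case. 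The cleanest route is to first treat $I$ finite inverse by the hypothesis directly, then handle general $I$ by writing it as a filtered colimit (resp.\@ using (Der1${}^\infty$)) of its finite inverse subdiagrams and invoking that $\DD^{!,h}$ is infinite, so adjoints assemble; I would spell this out only to the extent of noting that right adjoints on a derivator are detected on the underlying categories over each $i\in I$, which reduces everything to the pieces already controlled. A secondary, purely bookkeeping, obstacle is the passage between ${}^{\downarrow\downarrow}I$ and the twisted-arrow category used in the definition of $\HH^{\cor}$, but this is exactly the isomorphism \eqref{eqisoint}-type identification already in the paper's toolkit and requires no new input.
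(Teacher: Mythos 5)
Your high-level strategy is correct and matches the paper's: choose a resolution $\pi\colon N(I)\to I$ with $N(I)\in\Invlf$, finite matching diagrams, such that $\pi^*$ is an equivalence $\DD^{!,h}(I)_S\cong\DD^{!,h}(N(I))_{\pi^*S}^{\pi\text{-}\cart}$, then conjugate the right adjoint of $(\pi^*f)^!$ back. The crux, which you explicitly flag as ``where the finiteness is delicate,'' is exactly where your argument is incomplete. Neither of your candidate constructions works: ${}^{\downarrow\downarrow}I=I\times_{/I}I$ is not inverse for general $I$ (a commuting square between non-identity arrows need not decrease degree), and passing to filtered colimits of finite inverse subdiagrams is a different technique that would require showing the adjoints assemble, which you do not do. The paper instead invokes a specific result, \cite[Proposition~3.6]{Hor17b}, providing a nerve-type functor $N\colon\Cat\to\Invlf$ with finite matching diagrams together with the equivalence onto Cartesian objects; this is precisely the construction you are missing, and it is the real content of the lemma. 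The geometricity hypothesis on $\mathcal S$ plays no role here---this is a pure derivator-theoretic reduction.

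A secondary inaccuracy: you do not need a Cartesian projector in the conjugation step. Once $\pi^*\colon\DD^{!,h}(I)_S\to\DD^{!,h}(N(I))_{\pi^*S}$ is fully faithful (its image being the Cartesian subcategory) and $(\pi^*f)^!\dashv(\pi^*f)_?$ on the full fibers, the chain
\[
\Hom(f^!Y,X)\cong\Hom(\pi^*f^!Y,\pi^*X)\cong\Hom((\pi^*f)^!\pi^*Y,\pi^*X)\cong\Hom(\pi^*Y,(\pi^*f)_?\pi^*X)\cong\Hom(Y,\pi_*^{(T)}(\pi^*f)_?\pi^*X)
\]
already exhibits $\pi_*^{(T)}(\pi^*f)_?\pi^*$ as the right adjoint of $f^!$, with no projector inserted. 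The projector technique is used elsewhere in the paper but is a red herring in this proof.
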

\begin{proof}
This is an easy consequence of the fact \cite{Hor17b} that the theory of left (resp.\@ right) fibered derivators with domain $\Cat$ is in a certain sense equivalent to its restriction to $\Dirlf$ (resp.\@ to $\Invlf$) with finite latching (resp.\@ matching) diagrams. 
More precisely (restricting to (H1''), the case of (C1'') being dual): 
Let $I \in \Cat$ be a diagram, and $f: S \rightarrow T$ a morphism in $\SSS(I)$, point-wise in $k-C$.  By \cite[Proposition 3.6]{Hor17b} (applying indeed the {\em right} case), there is a functor $\pi: N(I) \rightarrow I$ with $N(I) \in \Invlf$ with finite matching diagrams such that $\pi^*$ induces an equivalence
\[ \DD^{!,h}(I)_S \cong \DD^{!,h}(N(I))_{\pi^*S}^{\pi-\cart}.  \]
for any $S \in \SSS(I)$. However, by assumption $(\pi^* f)^!$ has a right adjoint $(\pi^* f)_{?}$. Therefore $\pi^{(T)}_* (\pi^* f)_{?} \pi^*$ is a right adjoint of $f^!$.
Since also $\pi^{(T)}_*$ and $\pi^*$ promote to a morphism of derivators (fiber) this is the case also for the composition. 
\end{proof}

\begin{LEMMA}\label{LEMMAAXIOMSM0}
\begin{enumerate}
\item 
Every morphism in $\mathcal{M}^{(0)}$ which is in $C$ (cf.\@ Definition~\ref{DEFC})
satisfies (in the appropriate situation) {\em (H1$(f)$--H3$(f)$), (C1$(f)$--C3$(f)$)}, and {\em (CH$(f)$)} for the restrictions\footnote{i.e.\@ Axioms (C2$(f)$), (H2$(f)$), (CH$(f)$) hold for homotopy Cartesian diagrams lying completely in $\mathcal{M}^{(0)}$} of $\DD^{!,h}$,  $\DD^{*,h}$, and $\DD'$,  to $\mathbb{M}^{(0)}$, $\mathbb{M}^{(0), \op}$, and $\HH^{\cor}(\mathcal{M}^{(0)})$, respectively. 

\item 
Every morphism
$(I, S) \rightarrow (I, T)$ in $\Cat(\mathcal{M}^{(0)})$ that is point-wise in $C$, satisfies {\em (H1''$(f)$)} and   
every morphism $(I, S) \rightarrow (I, T)$ in $\Cat^{\op}(\mathcal{M}^{(0), \op})$ that is point-wise in $C$, satisfies {\em (C1''$(f)$)}. 
\end{enumerate}
\end{LEMMA}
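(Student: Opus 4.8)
The plan is to reduce the statement about $\mathcal{M}^{(0)}$ to the already-established locality of $\DD \to \SSS^{\cor}$ over the site $\mathcal{S}$ itself. The key observation is that, by Lemma~\ref{LEMMAWEM0} (for a geometric site) every object of $\mathcal{M}^{(0)}$ is weakly equivalent to an object of $\mathcal{S}^{\amalg}$, and every morphism in $C$ is weakly equivalent to one in $\mathcal{S}^{\amalg}$ coming from a (coproduct of) covering(s) in $\mathcal{S}$. By Lemma~\ref{LEMMAAXIOMS} all the axioms in question depend only on the isomorphism class of the morphism (resp.\@ square) in the homotopy category, so it suffices to verify them for morphisms and squares lying entirely in $\mathcal{S}^{\amalg}$. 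Here the restrictions of $\DD^{!,h}$, $\DD^{*,h}$, and $\DD'$ to $\SSS^{\amalg}$, $(\SSS^{\amalg})^{\op}$, and $\SSS^{\amalg, \cor}$ agree with $\DD^{\amalg}$ constructed in \ref{PAREXTAMALG} (this is precisely condition 2.\@ of admissibility, Definition~\ref{DEFADMEXT}, together with the remark following it for the $!,*$-halves).

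For part 1, I would first treat a single object of $\mathcal{S}^{\amalg}$, i.e.\@ a pair $(S_0, S)$ with $S_0$ a set. Using (Der1${}^\infty$) everything decomposes as a product indexed by $S_0$, so one reduces to $S \in \mathcal{S}$ and a covering $\{f_i : U_i \to S\}$; then (H1$(f)$), (H2$(f)$), (H3$(f)$) for $f = \coprod_i f_i$ follow directly from the locality axioms (H1), (H2), (H3) of $\DD^!$ (Definition/\ref{PARLOCAL}), since $f^! = \prod_i f_i^!$ and joint conservativity of the $f_i^!$ is exactly conservativity of this product. Dually, (C1$(f)$)--(C3$(f)$) follow from (C1)--(C3) of $\DD^*$. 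For (CH$(f)$), a homotopy Cartesian square in $\mathcal{M}^{(0)}$ is, up to weak equivalence, an honest Cartesian square in $\mathcal{S}^{\amalg}$ (the inclusion $\mathcal{S}^{\amalg} \hookrightarrow \mathcal{M}$ preserves fiber products, as noted before Definition~\ref{DEFKGEOMETRIC}); decomposing again over the index sets and using axiom (CH) of Definition~\ref{DEFLOCAL6FU} (together with $\DD' = \DD^{\amalg}$ on the relevant correspondences) gives the exchange isomorphisms. One has to be slightly careful that the decomposition over $S_0$ on the target side induces a compatible decomposition of the source of the correspondence; this is automatic from the description of $\HH^{\cor}(\mathcal{M}^{(0)})$ via admissible diagrams.

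For part 2, given $(I,S) \to (I,T)$ in $\Cat(\mathcal{M}^{(0)})$ point-wise in $C$, by Lemma~\ref{LEMMAAXIOMSDIRINV} it is enough to produce a point-wise right adjoint of $f^!$ when $I \in \Invlf$ has finite matching diagrams. But this is exactly the content of Axiom (H1''): for $I \in \Invlf$ with finite matching diagrams and $f$ point-wise part of a covering (which, up to weak equivalence over $\mathcal{S}^{\amalg}$, is what "point-wise in $C$" means), (H1'') for $\DD^!$ over $\SSS$ gives a right adjoint as a morphism of derivators, and this transports to $\DD^{!,h}$ over $\mathbb{M}^{(0)}$ along the equivalence of fibers with values of $\DD^!$. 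The dual argument with $\Dirlf$ and latching diagrams, using (C1'') for $\DD^*$, gives (C1''$(f)$).

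The main obstacle I expect is bookkeeping rather than conceptual: matching the combinatorial description of objects and correspondences in $\HH^{\cor}(\mathcal{M}^{(0)})$ (admissible functors $\tw I \to \mathcal{M}$ landing in $\mathcal{S}^{\amalg}$) with the honest Cartesian squares and coverings in $\mathcal{S}^{\amalg, \cor}$, and checking that the weak equivalences used to replace objects of $\mathcal{M}^{(0)}$ by objects of $\mathcal{S}^{\amalg}$ can be chosen compatibly across a (homotopy Cartesian) square — i.e.\@ that one can represent the whole square, not just its individual vertices, inside $\mathcal{S}^{\amalg}$. This uses that $\mathcal{M}^{(0)}$ is closed under homotopy pull-back and that in $\mathcal{S}^{\amalg}$ honest pull-backs compute homotopy pull-backs, so one can lift the diagram of shape $\righthalfcup$ and then take its honest limit; Lemma~\ref{LEMMAAXIOMS} then makes the choice irrelevant.
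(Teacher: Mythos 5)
Your part 1 is essentially the paper's argument, just fleshed out: reduce via Lemma~\ref{LEMMAWEM0} and Lemma~\ref{LEMMAAXIOMS} to morphisms and squares in $\mathcal{S}^{\amalg}$, then decompose over the index sets using (Der1${}^\infty$) and invoke the locality axioms of $\DD$; the paper simply calls this ``unraveling the definitions using (Der1--2)'' and you have made the unraveling explicit, which is fine.

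Part 2 contains a genuine gap. After reducing to $I \in \Invlf$ via Lemma~\ref{LEMMAAXIOMSDIRINV} and lifting to $(\mathcal{S}^{\amalg})^I$, you assert that ``(H1$''$) for $\DD^!$ over $\SSS$ gives a right adjoint as a morphism of derivators, and this transports to $\DD^{!,h}$.'' But (H1$''$) is stated for morphisms $f: S \to T$ in $\mathcal{S}^I$ — that is, point-wise in $\mathcal{S}$, not in $\mathcal{S}^{\amalg}$. A morphism $(U_0, U) \to (S_0, S)$ in $C$ lands in the free coproduct completion, and its components are not objects of $\mathcal{S}$ but coproducts of such, so (H1$''$) does not apply directly and the claimed transport is not explained. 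What is missing is the Grothendieck construction step the paper uses: write $f_0 : S_0 \Rightarrow T_0 : I \to \mathcal{SET}$ for the underlying sets, factor the induced morphism of diagrams as
\[ (\textstyle\int S_0, S') \xrightarrow{\widetilde{f}} (\textstyle\int S_0, (\textstyle\int f_0)^* T') \xrightarrow{\int f_0} (\textstyle\int T_0, T'), \]
and identify (using the definition of $\DD^{!,h}$ and homological descent) the fibers $\DD^{!,h}(I)_T \cong \DD(\int T_0)_{T'}$ and $\DD^{!,h}(I)_S \cong \DD(\int S_0)_{S'}$, so that $f^!$ is identified with $\widetilde{f}^! \circ (\int f_0)^*$. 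Now $\widetilde{f}$ is a morphism in $\mathcal{S}^{\int S_0}$ point-wise part of a covering, so $\widetilde{f}^!$ has a right adjoint (promoting to a morphism of derivators) by (H1$''$), while $(\int f_0)^*$ has a right adjoint by (FDer3--4~right). Without this factorization, the argument does not go through; with it, your outline becomes correct.
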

\begin{proof}
1.\@ for morphisms in $\mathcal{S}^{\amalg}$ is just an unraveling of the definitions
 using (Der1--2).  For $\mathcal{M}^{(0)}$ observe that every morphism, resp.\@ every homotopy Cartesian square, in $\mathcal{M}^{(0)}$ is weakly equivalent to one in $\mathcal{S}^{\amalg}$ by Lemma~\ref{LEMMAWEM0}.
 It follows then from Lemma~\ref{LEMMAAXIOMS} using that $f^!$ and $f^*$ are equivalences for weak equivalences $f$. 

 2.\@ 
 We show (H1''($f$)) for a morphism $f: S \rightarrow T$ between diagrams $S$, $T: I \rightarrow \mathcal{S}^\amalg$, the other case is dual. 
  Let $f_0: S_0 \Rightarrow T_0: I \rightarrow \mathcal{SET}$ be the corresponding natural transformation of the functors of underlying sets. 
The Grothendieck construction yields a morphism of diagrams which may be factored as follows
\[ \xymatrix{ (\int S_0, S') \ar[r]^-{\widetilde{f}} & (\int S_0, (\int f_0)^* T')  \ar[r]^-{\int f_0} &  (\int T_0, T').  } \]
We have a 2-commutative diagram (by Definition of $\DD^{!,h}$ and homological descent) in which the horizontal functors are equivalences
\[ \xymatrix{ \DD^{!,h}(I)_T \ar[r]^-\sim \ar[dd]^{f^!} &  \DD(\int T_0)_{T'} \ar[d]^{(\int f_0)^*} \ar[d]  \\
&  \DD(\int S_0)_{(\int f_0)^* T'} \ar[d]^{\widetilde{f}^!} \\ 
 \DD^{!,h}(I)_S \ar[r]^-\sim & \DD(\int S_0)_{S'}  } \]  
 Since the functors on the right hand side have right adjoints (which promote to a morphism of derivators (fiber)) by axiom (H1''), and (FDer3--4 right), respectively, the statement follows. 
For $\mathcal{M}^{(0)}$ the statement follows from Lemma~\ref{LEMMAWEM0}, 2.\@ and Lemma~\ref{LEMMAAXIOMS} again, if $I$ is in $\Invlf$ with finite matching diagrams, or in $\Dirlf$ with finite latching diagrams, respectively.
The general case follows from Lemma~\ref{LEMMAAXIOMSDIRINV}.
\end{proof}

\begin{LEMMA}\label{LEMMAWEIRD}
Consider a 2-commutative diagram of categories of functors
\[ \xymatrix{ \mathcal{C} \ar[r]^{F} \ar@{}[rd]|{\Nearrow^\mu} \ar@{^{(}->}[d]_g &  \mathcal{D} \ar@{^{(}->}[d]^G  \\
\overline{\mathcal{C}} \ar[r]_{\overline{F}} & \overline{\mathcal{D}} 
 }\]
with 2-isomorphism $\mu$. 
 Assume that $\overline{F}$ has a right adjoint $\overline{X}$ and that $G$ and $g$ are fully-faithful and have left adjoints $H$, and $h$, respectively. 
 Assume that the exchange of $\mu$
 \[   H\overline{F} \rightarrow Fh \]
 is an isomorphism. 
 Then $X:=h \overline{X} G$ is a right adjoint of $F$.  
\end{LEMMA}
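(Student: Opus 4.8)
The claim is purely formal: we are given a square of functors commuting up to $\mu$, with $\overline{F}$ admitting a right adjoint $\overline{X}$, with $G$ and $g$ fully faithful left-adjoints (with left adjoints $H$ resp.\ $h$), and with the Beck--Chevalley morphism $H\overline{F}\to Fh$ an isomorphism. The plan is to exhibit unit and counit for the candidate adjunction $(F, X)$ with $X = h\overline{X}G$ and verify the triangle identities, using throughout that $G$ and $g$ being fully faithful is equivalent to saying that the counits $hg\to\id$ and $HG\to\id$ are isomorphisms.

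First I would assemble the natural transformations. The candidate unit $\eta\colon \id_{\mathcal{C}}\to XF = h\overline{X}GF$ is built as follows: the 2-isomorphism $\mu$ gives $GF \cong \overline{F}g$ (on the nose up to the given $\mu$), so $h\overline{X}GF \cong h\overline{X}\,\overline{F}g$; now apply $h(-)g$ to the unit $\id_{\overline{\mathcal{C}}}\to\overline{X}\,\overline{F}$ of the $(\overline{F},\overline{X})$-adjunction, landing in $h\overline{X}\,\overline{F}g$, and precompose with the unit $\id_{\mathcal{C}}\to hg$ of the $(g, \text{incl})$-adjunction (which is an iso in the other direction, but the unit $\id\to \text{incl}\circ g$? — one must be careful which adjunction: $h$ is \emph{left} adjoint to $g$, so the relevant unit is $\id_{\overline{\mathcal{C}}}\to gh$ and the counit $hg\to\id_{\mathcal{C}}$ is an iso since $g$ is fully faithful; dually $H\dashv G$, counit $HG\to\id$ iso). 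So more precisely: $\eta$ is $\id_{\mathcal{C}} \xrightarrow{\sim} hg \xrightarrow{h(\text{unit})g} h\overline{X}\,\overline{F}g \xrightarrow{\sim} h\overline{X}GF = XF$, where the last iso uses $\mu$. The counit $\varepsilon\colon FX = Fh\overline{X}G\to\id_{\mathcal{D}}$ is built dually: $Fh\overline{X}G \xrightarrow{\sim} H\overline{F}\,\overline{X}G$ via the \emph{inverse} of the exchange isomorphism $H\overline{F}\xrightarrow{\sim}Fh$, then $\xrightarrow{H(\text{counit})G} H G \xrightarrow{\sim}\id_{\mathcal{D}}$ using the counit of $(\overline{F},\overline{X})$ and then the iso $HG\to\id$.

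Next I would check the two triangle identities $\varepsilon F\circ F\eta = \id_F$ and $X\varepsilon\circ\eta X = \id_X$. These are diagram chases: one writes out the composite, inserts the definitions of $\eta$ and $\varepsilon$, and uses (i) the triangle identities for the three given adjunctions $\overline{F}\dashv\overline{X}$, $h\dashv g$, $H\dashv G$; (ii) naturality of $\mu$ and of the exchange morphism; (iii) the compatibility of the exchange morphism with the various units/counits — precisely, that the exchange $H\overline{F}\to Fh$ is by construction $H\overline{F}\xrightarrow{H\overline{F}(\text{unit }h\dashv g)} H\overline{F}gh \xrightarrow{\sim} HGFh \xrightarrow{(\text{counit }H\dashv G)Fh} Fh$ (using $\overline{F}g\cong GF$), so its inverse interacts predictably with those (co)units. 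The only subtlety is bookkeeping of which counit is an isomorphism and in which direction the 2-cell $\mu$ points; there is no genuine mathematical content beyond the formal yoga of adjoint functors and mates.

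The main obstacle — such as it is — is purely notational: keeping straight the six (co)units involved and making sure the exchange-is-an-isomorphism hypothesis is used exactly where needed (it is what lets the counit-of-$H$ step ``pass through'' $\overline{F}$ to become a counit-of-$\overline{F}$ step, i.e.\ it is what makes the two triangle identities close up). I would present the argument as: define $X = h\overline{X}G$, define $\eta$ and $\varepsilon$ as above, then verify the triangle identities by a mate/diagram computation, remarking that each square commutes by naturality or by one of the input triangle identities. Since the paper's style elsewhere routinely says ``we leave the check of commutativity to the reader'' for exactly this kind of 2-categorical bookkeeping, I would do the same here after laying out the two 2-cells explicitly.
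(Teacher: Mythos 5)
Your construction of the unit and counit (via the $hg\to\id$ and $HG\to\id$ isomorphisms, the $(\overline{F},\overline{X})$-(co)unit, $\mu$, and the inverse of the exchange isomorphism) is exactly the one used in the paper, and the paper likewise verifies the two triangle identities by a diagram chase using naturality and the description of the exchange morphism as $H\overline{F}\to H\overline{F}gh\to HGFh\to Fh$. Your proposal is correct and matches the paper's proof.
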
 

Of course, there is a corresponding dual statement, which we leave to the reader to formulate. 
The assumption implies a posteriori that the adjoint 
 \[ gX \rightarrow  \overline{X} G   \]
 of the exchange above  is also an isomorphism and thus that $\overline{X}$ preserves the strictly full subcategories given by the images of $g$, and $G$, respectively.

\begin{proof}
We construct the unit and counits
\[ \xymatrix{
\id & \ar[l]_-{\sim} hg \ar[r] & h \overline{X} \overline{F} g \ar[r]^-\mu & h \overline{X} G F  = XF
}\]

\[ \xymatrix{
FX = Fh \overline{X} G & \ar[l]^-{\sim}_-{\text{ex.}} H\overline{F}\overline{X} G \ar[r]  & HG \ar[r] & \id 
}\]
in which unnamed transformations are (co)units for the various involved adjunctions. 

 We have to show the (co)unit equations. 
The first equation concerns the first line in the following commutative diagram
\[ \xymatrix{
F & \ar[l]^{\sim} Fhg \ar[r]  \ar@{<-}[rd]_-{\text{ex.}}^-\sim & Fh \overline{X} \overline{F} g \ar[r]^-\mu \ar@{<-}[rd]_-{\text{ex.}}^-\sim & Fh \overline{X} G F & \ar[l]_-{\text{ex.}}^-\sim \ar@{<-}[ld]^-\mu H \overline{F} \overline{X} G F  \ar[r]  & H G F \ar@{<-}[ld]^-\mu  \ar[r] & F \\
 && H \overline{F} g  \ar[r] &  H \overline{F} \overline{X} \overline{F} g \ar[r] & H \overline{F} g 
}\]
Since the second line composes to the identity the statement follows from the commutativity of the left hand side diagram in
\[ \xymatrix{ 
H\overline{F}g \ar[r]_-\sim^-{\text{ex.}} \ar[d]_\sim^\mu & Fhg \ar[d]^\sim \\
HGF \ar[r] & F 
} \quad \xymatrix{ 
\overline{F} \ar[r] \ar[d] & GH\overline{F} \ar[d]_-\sim^-{\text{ex.}} \\
\overline{F}gh \ar[r]^-\sim_-\mu & GFh 
} \]
The second equation concerns the first line in the following commutative diagram (where the middle square is induced by the right hand side diagram above)
\[ \xymatrix{
h\overline{X}G & \ar[l]^{\sim} hgh \overline{X} G \ar[r] & h \overline{X} \overline{F} gh \overline{X} G \ar[r]^-\mu_-\sim & h \overline{X} GFh \overline{X} G \ar@{<-}[r]^-{\text{ex.}}_-\sim & h \overline{X} GH \overline{F} \overline{X} G \ar[r] & h \overline{X} GHG \ar[r]^\sim & h \overline{X} G \\
& \ar@{=}[lu]  \ar[u] h \overline{X} G \ar[r] & h \overline{X} \overline{F} \overline{X} \ar[u] \ar@{=}[rr] & &  h \overline{X} \overline{F} \overline{X}\ar[u]  \ar[r] & h \overline{X} G \ar[u] \ar@{=}[ru]
}\]
Its composition is therefore also the identity. 
\end{proof}

\begin{LEMMA}\label{LEMMACARTPROJ}
\begin{enumerate}
\item 
Let $g_\bullet: (\Delta^{\op}, X_\bullet) \rightarrow (\Delta^{\op}, Y_\bullet)$ in $\Cat(\mathcal{M})$ be a homotopy Cartesian (cf.\@ Defintion~\ref{DEFCARTMORPH}) morphism of simplicial objects in $\mathcal{M}$.
Assume that {\em (H1''$(g_\bullet)$)} holds and {\em (H1''$(g_n)$)} and {\em (H2$(g_n)$)} hold for all $n$. 
Then $g_\bullet^!$ intertwines the left Cartesian projectors \cite[4.3]{Hor15}
\[ \Box_!:  \DD^{!,h}(\Delta^{\op})_{X_\bullet} \rightarrow \DD^{!,h}(\Delta^{\op})_{X_\bullet}^{\cart} \quad \text{ and } \quad \Box_!:  \DD^{!,h}(\Delta^{\op})_{Y_\bullet} \rightarrow \DD^{!,h}(\Delta^{\op})_{Y_\bullet}^{\cart}.      \]

\item 
Let $g_\bullet: (\Delta, X_\bullet) \rightarrow (\Delta, Y_\bullet)$ in $\Cat^{\op}(\mathcal{M}^{\op})$ be a homotopy Cartesian morphism of cosimplicial objects in $\mathcal{M}^{\op}$.
Assume that {\em (C1''$(g_\bullet)$)} holds and {\em (C1''$(g_n)$)} and {\em (C2$(g_n)$)} hold for all $n$. 
Then $g_\bullet^*$ intertwines the right coCartesian projectors  \cite[4.3]{Hor15}
\[ \Box_*: \DD^{*,h}(\Delta)_{X_\bullet} \rightarrow \DD^{*,h}(\Delta)_{X_\bullet}^{\cocart} \quad \text{ and } \quad   \Box_*: \DD^{*,h}(\Delta)_{Y_\bullet} \rightarrow \DD^{*,h}(\Delta)_{Y_\bullet}^{\cocart}.  \]
\end{enumerate}
\end{LEMMA}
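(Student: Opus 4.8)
The plan is to prove the first assertion in detail and deduce the second by passing to $\mathcal{M}^{\op}$ and dualizing throughout ($!\leftrightarrow *$, left adjoints $\leftrightarrow$ right adjoints, homotopy colimits $\leftrightarrow$ homotopy limits, $\Box_!\leftrightarrow\Box_*$, (H1''),(H2) $\leftrightarrow$ (C1''),(C2)). Write $g_\bullet^!$ for the pull‑back functor, regarded as a morphism of derivators between the fibers $\DD^{!,h}(\Delta^{\op})_{Y_\bullet}$ and $\DD^{!,h}(\Delta^{\op})_{X_\bullet}$ (in the sense of Definition~\ref{DEFSTABLE}); by (H1''$(g_\bullet)$) it has a right adjoint as a morphism of derivators, hence commutes with all homotopy colimits.

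First I would check that $g_\bullet^!$ preserves Cartesian objects. For $\alpha\colon[n]\to[m]$ in $\Delta$ one has $g_n\circ X(\alpha)=Y(\alpha)\circ g_m$ in $\mathcal{M}$, which gives a canonical isomorphism $X(\alpha)^!g_n^!\cong g_m^!Y(\alpha)^!$ coming from the pseudofunctoriality of the $!$‑operation; unwinding the fibered‑multiderivator definitions this is compatible with the transition morphisms of coherent diagrams over $\Delta^{\op}$, so $g_\bullet^!$ sends the transition isomorphisms of a Cartesian object to isomorphisms. Thus $g_\bullet^!$ restricts to a functor on $\cart$‑objects, i.e.\@ commutes with the inclusions $\iota\colon\DD^{!,h}(\Delta^{\op})^{\cart}\hookrightarrow\DD^{!,h}(\Delta^{\op})$, and the comparison transformation
\[ \phi\colon\ \Box_!^{X}\, g_\bullet^!\ \Rightarrow\ g_\bullet^!\,\Box_!^{Y}, \]
obtained from $g_\bullet^!(\text{unit of }\Box_!^{Y})$ via the universal property of $\Box_!^{X}$ as left adjoint to $\iota$, is well defined; the goal is to show $\phi$ is an isomorphism.

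Next I would recall the explicit description of the left Cartesian projector from \cite[4.3]{Hor15}: $\Box_!$ is the reflector onto the $\cart$‑objects and is computed by pulling a coherent diagram back along a suitable (op)fibration over $\Delta^{\op}$ assembled from comma categories of $\Delta$ and then taking a homotopy colimit built from iterated relative homotopy left Kan extensions along the face and degeneracy maps. Since $g_\bullet^!$ is cocontinuous it commutes with the outer homotopy colimit, so it remains to see that $g_\bullet^!$ intertwines these relative Kan extensions; by (Der2) this can be verified after evaluation at each object of the auxiliary diagram, which by (FDer4 left) turns each relative Kan extension into a homotopy colimit over a comma category. This is where the hypotheses enter: because $g_\bullet$ is a homotopy Cartesian morphism of simplicial objects (Definition~\ref{DEFCARTMORPH}), each square
\[ \xymatrix{ X_m \ar[r] \ar[d]_{g_m} & X_n \ar[d]^{g_n} \\ Y_m \ar[r] & Y_n } \]
attached to $\alpha\colon[n]\to[m]$ is homotopy Cartesian, as are the comma squares built from them; (H2$(g_n)$) then identifies $g_n^!$ of such a relative Kan extension with the relative Kan extension of $g_\bullet^!$, and (H1''$(g_n)$) lets $g_n^!$ be pushed past the homotopy colimit computing it pointwise (exactly as in Lemma~\ref{LEMMAKAN3} and in the proofs of the (co)homological descent theorems \cite[3.5]{Hor15}). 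Patching these levelwise isomorphisms along the construction of $\Box_!$ gives the identification, and one checks it coincides with $\phi$. (An alternative route, which I would mention, is to show instead that the right adjoint $g_{\bullet,?}$ of $g_\bullet^!$ preserves Cartesian objects, from which $\phi$ being an isomorphism follows formally by comparing the right adjoints of the two composites.)

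The hard part will be the coherence bookkeeping in the last two steps: reconciling the globally defined projector $\Box_!$ with its pointwise description and keeping track of the canonical $2$‑isomorphisms so that the levelwise base‑change isomorphisms glue to $\phi$ itself rather than to some other natural transformation. This is entirely parallel to — indeed essentially a "fibered over $\Delta^{\op}$" version of — the base‑change computations in \cite[Sections~3.5 and~4.3]{Hor15}, so I would only indicate the necessary modifications rather than reproduce the calculation in full.
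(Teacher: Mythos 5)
Your parenthetical ``alternative route'' is in fact the paper's proof, and it is the route you should develop; the main line you sketch has a gap.

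The main approach rests on an explicit Kan-extension formula for $\Box_!$ which you attribute to \cite[4.3]{Hor15}. That reference does not give such a formula: the left Cartesian projector there is obtained abstractly, as a reflector whose existence is established via Brown representability for well-generated triangulated categories. There is no general ``pull back along an (op)fibration built from comma categories of $\Delta$ and take a homotopy colimit'' recipe for $\Box_!$ over a simplicial base, and without one, the steps where you ``push $g_\bullet^!$ past the outer homotopy colimit'' and ``patch the levelwise isomorphisms along the construction of $\Box_!$'' have nothing concrete to act on. This is not merely the ``coherence bookkeeping'' you defer to the end --- it is the argument itself. (Explicit formulas for (co)Cartesian projectors do exist in a few very special shapes, e.g.\@ in Lemma~\ref{LEMMAKAN4} of this paper, but that construction uses a final/initial object in fibers and does not apply to $\Delta^{\op}$.)

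The correct route is the one you mention in passing, and it avoids any formula for $\Box_!$ entirely. By (H1''$(g_\bullet)$) and (H1''$(g_n)$), $g_\bullet^!$ and each $g_n^!$ have right adjoints $g_{\bullet,?}$ and $g_{n,?}$. The real technical step --- which your sketch does not carry out --- is to show that $g_{\bullet,?}$ is \emph{computed pointwise}, i.e.\@ that $e_n^*\,g_{\bullet,?}\to g_{n,?}\,e_n^*$ is an isomorphism for all $n$, where $e_n:\{\Delta_n\}\hookrightarrow\Delta^{\op}$. By adjunction this is the same as showing that $g_\bullet^!\,e_{n,!}^{(Y_\bullet)}\to e_{n,!}^{(X_\bullet)}\,g_n^!$ is an isomorphism, which one checks after $e_m^*$: by (FDer4 left), $e_m^*e_{n,!}$ is a coproduct over $\alpha:\Delta_m\to\Delta_n$ of the transition push-forwards, and each term
\[
g_n^!\,Y(\alpha^{\op})_!\ \longrightarrow\ X(\alpha^{\op})_!\,g_m^!
\]
is an isomorphism by (H2$(g_n)$) applied to the homotopy Cartesian square supplied by Definition~\ref{DEFCARTMORPH}. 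Combining this with the fact that the adjoint exchange $Y(\alpha^{\op})^!\,g_{n,?}\to g_{m,?}\,X(\alpha^{\op})^!$ is then also an isomorphism, one concludes that $g_{\bullet,?}$ preserves Cartesian objects, so the square
\[
\xymatrix{
\DD^{!,h}(\Delta^{\op})_{X_\bullet}^{\cart} \ar@{^{(}->}[r]\ar[d]_{g_{\bullet,?}} & \DD^{!,h}(\Delta^{\op})_{X_\bullet}\ar[d]^{g_{\bullet,?}}\\
\DD^{!,h}(\Delta^{\op})_{Y_\bullet}^{\cart} \ar@{^{(}->}[r] & \DD^{!,h}(\Delta^{\op})_{Y_\bullet}
}
\]
2-commutes, and passing to left adjoints gives the 2-commutativity of $\Box_!$ with $g_\bullet^!$. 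You have the right hypotheses in play; the missing piece is the pointwise-computation lemma for $g_{\bullet,?}$, which is the whole substance of the proof.
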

\begin{proof}We show the first statement, the second is dual. 

This is an argument similar to \cite[Lemma~3.5.9]{Hor15}. By assumption
$g_n^!$ and $g_\bullet^!$ have right adjoints $g_{n,?}$, and $g_{\bullet, ?}$, respectively.
We claim that for all $n \in \N_0$ the natural transformation
\[ e_n^* g_{\bullet, ?} \rightarrow g_{n,?} e_n^*  \]
where $e_n: \{\Delta_n\} \hookrightarrow \Delta^{\op}$ is the inclusion, is an isomorphism. In other words, $g_{\bullet, ?}$ is computed point-wise. 
Equivalently, we have to show that 
\[  g_{\bullet}^! e_{n,!}^{(Y_\bullet)} \rightarrow  e_{n,!}^{(X_\bullet)} g_{n}^!  \]
is an isomorphism. This can be shown point-wise:
\[  e_m^* g_{\bullet}^! e_{n,!}^{(Y_\bullet)} = g_{m}^! e_m^* e_{n,!}^{(Y_\bullet)}   \rightarrow e_m^*  e_{n,!}^{(X_\bullet)} g_{n}^!  \]
has to be an isomorphism. By (FDer4 left) this a coproduct over the morphisms $\alpha: \Delta_m \rightarrow \Delta_n$ of
\[   g_{n}^! Y(\alpha^{\op})_!  \rightarrow X(\alpha^{\op})_! g_{m}^!  \]
which are isomorphisms by (H2$(g_n)$) and the fact that $g_\bullet$ is homotopy Cartesian. Also the adjoint
\[   Y(\alpha^{\op})^! g_{n,?}   \rightarrow  g_{m,?} X(\alpha^{\op})^!  \]
is an isomorphism which shows, together which what was just obtained, that $g_{\bullet,?}$ preserves Cartesian objects,
i.e.\@ we have a 2-commutative square
\[ \xymatrix{
\DD^{!,h}(\Delta^{\op})_{X_{\bullet}}^{\cart}  \ar@{^{(}->}[r] \ar[d]^{g_{\bullet,?}} & \DD^{!,h}(\Delta^{\op})_{X_\bullet} \ar[d]^{g_{\bullet,?}}  \\ 
\DD^{!,h}(\Delta^{\op})_{Y_{\bullet}}^{\cart}  \ar@{^{(}->}[r]  & \DD^{!,h}(\Delta^{\op})_{Y_\bullet} \\ 
}  \]
Its left adjoint is
\[ \xymatrix{
\DD^{!,h}(\Delta^{\op})_{X_{\bullet}}^{\cart}  \ar@{<-}[r]^-{\Box_!} \ar@{<-}[d]^{g_{\bullet}^!} & \DD^{!,h}(\Delta^{\op})_{X_\bullet} \ar@{<-}[d]^{g_{\bullet}^!}  \\ 
\DD^{!,h}(\Delta^{\op})_{Y_{\bullet}}^{\cart}  \ar@{<-}[r]^-{\Box_!}  & \DD^{!,h}(\Delta^{\op})_{Y_\bullet} \\ 
}  \]
and thus also 2-commutative.
\end{proof}

Recall Definition~\ref{DEFKGEOMETRIC} of $k-C$ morphism in $\mathcal{M}$ with $k \in \N_0$.

\begin{PROP}\label{PROPHEREDITYLOC1}
\begin{enumerate}
\item In the situation of \ref{PARSETTINGLOCALITYLEFT}, every morphism $(I, S) \rightarrow (I, T)$ in $\Cat(\mathcal{M})$ point-wise in $k-C$ (with $k$ independent of $i \in I$) and such that $T \in (\mathcal{M}^{(0)})^I$
 satisfies {\em (H1''$(f)$)} w.r.t.\@ $\DD^{!, h}$.

\item In the situation of \ref{PARSETTINGLOCALITYRIGHT}, every morphism $(I, S) \rightarrow (I, T)$ in $\Cat^{\op}(\mathcal{M}^{\op})$ point-wise in $k-C$ (with $k$ independent of $i \in I$) and such that $T(i) \in \mathcal{M}^{(0)}$
 satisfies {\em (C1''$(f)$)} w.r.t.\@ $\DD^{*, h}$.
\end{enumerate}
\end{PROP}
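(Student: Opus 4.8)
The plan is to reduce the statement to the base case already established in Lemma~\ref{LEMMAAXIOMSM0} by induction on $k$, peeling off one geometric degree at a time using an atlas. I will only write out the proof of part 1 (the $\DD^{!,h}$-case), since part 2 is dual in the sense of \ref{PARSETTINGLOCALITYRIGHT} versus \ref{PARSETTINGLOCALITYLEFT}.

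\textbf{Set-up and reduction.} By Lemma~\ref{LEMMAAXIOMSDIRINV} it suffices to prove (H1''$(f)$) for $I \in \Invlf$ with finite matching diagrams; and by Lemma~\ref{LEMMAAXIOMS} the validity only depends on the isomorphism class of $f$ in the homotopy category. So fix $f: (I,S) \to (I,T)$ point-wise in $k-C$ with $T(i) \in \mathcal{M}^{(0)}$ for all $i$. For $k=0$ this is exactly Lemma~\ref{LEMMAAXIOMSM0}, part 2 (every $0-C$ morphism over $\mathcal{M}^{(0)}$ is, point-wise up to weak equivalence, in $C$; more precisely, by Definition~\ref{DEFKGEOMETRIC}(2) and Lemma~\ref{LEMMAWEM0} the homotopy pullback along any $S(i)\to T(i)$ from an object of $\mathcal{M}^{(0)}$ lands in $C$, and since $T(i)$ itself lies in $\mathcal{M}^{(0)}$ one may take $\mathrm{id}_{T(i)}$). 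Suppose now $k \ge 1$ and the statement holds for all smaller geometric degrees.

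\textbf{The induction step via an atlas and the \v{C}ech nerve.} For each $i\in I$, since $S(i)\to T(i)$ is $k-C$ with $T(i)\in\mathcal{M}^{(0)}$, Definition~\ref{DEFKGEOMETRIC}(5) produces a $k$-atlas $u(i): U(i) \to S(i)$ with $u(i) \in (k-1)-C$ and with $U(i) \to T(i)$ in $C$; by the functoriality inherent in the geometricity of $\mathcal{S}$ (Appendix~\ref{APPENDIXGEOMETRIC}) these can be chosen compatibly, giving $u: (I,U) \to (I,S)$ with $U \in (\mathcal{M}^{(0)})^I$ point-wise in $(k-1)-C$, so that $u\circ f^{-1}$... more precisely $f\circ$ nothing --- the composite $v := f\circ u : (I,U)\to(I,T)$ is point-wise in $C$ over $\mathcal{M}^{(0)}$. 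Form the (relative, point-wise) \v{C}ech nerve $\Cech(u)_\bullet$ using Definition~\ref{DEFCECH}: this is a simplicial diagram $(I\times\Delta^{\op}, \Cech(u)_\bullet) \to (I,S)$ whose $n$-th term is the $(n{+}1)$-fold homotopy fiber product $U\widetilde{\times}_S\cdots\widetilde{\times}_S U$, hence point-wise in $(k-1)-C$ over an object of $\mathcal{M}^{(0)}$ (by Proposition~\ref{PROPGEOMETRIC}(1), stability of $(k-1)-C$ under homotopy pullback and composition), and whose homotopy colimit recovers $S$ up to \v{C}ech weak equivalence (because $u$ is, point-wise, a homotopy local epimorphism by Lemma~\ref{LEMMALOCALEPI}, using Lemma~\ref{LEMMAEXACTCOLIMS} for the descent). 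By homological descent in the extension (Proposition~\ref{PROPHODESC}) the pullback $S^*: \DD^{!,h}(I)_S \to \DD^{!,h}(I\times\Delta^{\op})^{\cart}_{\Cech(u)_\bullet}$ along the resolution is an equivalence, and likewise $\DD^{!,h}(I)_T \simeq \DD^{!,h}(I\times\Delta^{\op})^{\cart}_{\delta(T)_\bullet}$ trivially. Under these equivalences $f^!$ becomes the pullback along the induced Cartesian morphism $\Cech(u)_\bullet \to \delta(T)_\bullet$ of simplicial diagrams, post-composed with the Cartesian projector. Level $n$ of this morphism is point-wise in $(k-1)-C$ with target in $\mathcal{M}^{(0)}$, so by the induction hypothesis it satisfies (H1''$(g_n)$); and (H2$(g_n)$) holds because $f$ and hence each level is, by construction, suitably base-change compatible --- here one invokes Lemma~\ref{LEMMAAXIOMSM0}(1) for the bottom level combined with an inductive (H2) that should be recorded alongside (H1'') (or, more cleanly, one proves (H2$(f)$) in $k-C$ by the same \v{C}ech argument in parallel). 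Then Lemma~\ref{LEMMACARTPROJ}(1) says the level-wise right adjoints assemble to a right adjoint on the full subcategories of Cartesian objects, and Lemma~\ref{LEMMAWEIRD} (applied to the square of fully-faithful inclusions $\DD^{!,h}(I\times\Delta^{\op})^{\cart} \hookrightarrow \DD^{!,h}(I\times\Delta^{\op})$ with its left-adjoint Cartesian projectors) upgrades this to the desired right adjoint of $f^!$ on $\DD^{!,h}(I)_S \simeq \DD^{!,h}(I\times\Delta^{\op})^{\cart}_{\Cech(u)_\bullet}$, as a morphism of derivators.

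\textbf{Main obstacle.} The technical heart is the compatibility bookkeeping in the induction step: one needs not just (H1'') but also (H2) (and really a little of (CH), if the six-functor version is tracked) to survive the descent, because Lemma~\ref{LEMMACARTPROJ} requires (H2$(g_n)$) at every level and (H2) for $k-C$ morphisms is itself proved by the same \v{C}ech-nerve induction --- so the clean way is to run a simultaneous induction on $k$ establishing (H1''), (H2), (H3) (resp.\ their dual/mixed counterparts) for all $k-C$ morphisms over $\mathcal{M}^{(0)}$ at once. The other delicate point is ensuring the atlas $u$ and the \v{C}ech nerve can be chosen \emph{functorially in $i\in I$} so that $\Cech(u)_\bullet$ is a genuine object of $\Cat(\mathcal{M})^{\Delta^{\op}}$ and not merely a point-wise family; this is where the geometricity hypothesis on $\mathcal{S}$ and Lemma~\ref{LEMMAHOFP} (functoriality of the homotopy fiber product construction) do the work, and one must be careful that $u$ being point-wise in $(k-1)-C$ with target in $\mathcal{M}^{(0)}$ persists after taking iterated homotopy fiber products --- which is precisely Proposition~\ref{PROPGEOMETRIC}(1).
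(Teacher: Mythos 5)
Your proof has the right skeleton (induction on $k$, atlas, \v{C}ech nerve, homological descent to pass from $(I,S)$ to a simplicial resolution), but the last step introduces a dependency that is not available at this point in the paper, and the paper's own proof is designed precisely to avoid it.

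You route the conclusion of the induction step through Lemma~\ref{LEMMACARTPROJ} and Lemma~\ref{LEMMAWEIRD}. Lemma~\ref{LEMMACARTPROJ} requires {\em (H2$(g_n)$)} for the level morphisms of the \v{C}ech nerve; this is a base-change statement for $(k-1)-C$ morphisms and is simply not part of what Proposition~\ref{PROPHEREDITYLOC1} is proving by induction. You recognize this and propose a simultaneous induction establishing (H1''), (H2), (H3) together, but that is a much larger program than the one-line statement at hand, and you do not carry it out. It would also need to be set up carefully to avoid circularity, because in the paper Theorem~\ref{SATZHEREDITYLOC2} (which establishes (H2) for $k-C$ morphisms) itself invokes Proposition~\ref{PROPHEREDITYLOC1} in its Step~2. (A consistent simultaneous induction is possible if one only ever appeals to Step~1 of that theorem, but this is a nontrivial restructuring that your sketch does not spell out.)

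The paper's proof sidesteps all of this. After the descent diagram, both vertical comparison functors $\gamma^!$ and the analogous one for $\delta(Y)$ are {\em equivalences} onto the Cartesian subcategories, not merely fully faithful embeddings. One then applies the induction hypothesis directly to the simplicial morphism $F_\bullet: (I\times\Delta^{\op},\Cech(g)) \to (I\times\Delta^{\op},\delta(Y))$ (which is point-wise $(k-1)-C$ over $\mathcal{M}^{(0)}$) to obtain a right adjoint $F_{\bullet,?}$ on the {\em unrestricted} categories $\DD^{!,h}(I\times\Delta^{\op})_{\delta(Y)} \leftarrow \DD^{!,h}(I\times\Delta^{\op})_{\Cech(g)}$. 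The right adjoint of $f^!$ is then simply $\pr_{1,*}\,F_{\bullet,?}\,\gamma^!$ (composed with the appropriate equivalences); because $F_\bullet^!$ and $\pr_1^*$ both land in the full subcategory of Cartesian objects, the adjunction identity
\[ \Hom(\pr_1^*\mathcal{F},\, F_{\bullet,?}\gamma^!\mathcal{E}) \;\cong\; \Hom(F_\bullet^!\pr_1^*\mathcal{F},\, \gamma^!\mathcal{E}) \]
is already a Hom in the Cartesian subcategory on both sides, and no Cartesian projector needs to be intertwined. Thus neither Lemma~\ref{LEMMACARTPROJ} nor any version of (H2) is invoked. Lemma~\ref{LEMMAWEIRD} and the Cartesian-projector interplay are reserved for the later Theorem~\ref{SATZHEREDITYLOC3}, where the comparison functors are only fully faithful, not equivalences.

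A secondary issue: the phrase ``so that $u\circ f^{-1}$\ldots more precisely $f\circ$ nothing'' is garbled, and the reduction from a point-wise family of atlases to a genuine morphism $(I,U)\to(I,S)$ in $\Cat(\mathcal{M})$ is where Propositions~\ref{PROPLIMDIRECTED}--\ref{PROPCOVERDIRECTED} must be invoked with a specific choice of category $(\mathcal{S},\mathcal{S}_0,\overline{C})$; ``the functoriality inherent in the geometricity of $\mathcal{S}$'' is too vague to carry that weight.
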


\begin{proof}
By induction on $k$. If $k=0$ also 
$X \in \mathcal{M}^{(0),I}$ and the morphism is point-wise in $C$ and thus the statement follows from Lemma~\ref{LEMMAAXIOMSM0}.

Let $k \ge 1$. By Lemma~\ref{LEMMAAXIOMSDIRINV},  we may assume w.l.o.g.\@ that $I \in \Invlf$ with finite matching diagrams. Then apply 
 Propositions~\ref{PROPLIMDIRECTED} and \ref{PROPCOVERDIRECTED}  to the triple $(\mathcal{S}, \mathcal{S}_0, \overline{C})$ in which 
 the full subcategory $\mathcal{S} \subset \mathcal{M}^\rightarrow$ contains the objects of the form $X \rightarrow Y$ in $k-C$ with $Y \in \mathcal{M}^{(0)}$ (in particular $X \in \mathcal{M}^{(k)}$), $\mathcal{S}_0$ is the full subcategory of objects of the form $U \rightarrow Y$ in $\mathcal{M}^{(0), \rightarrow}$ lying in $C$, and $\overline{C}$ is the class of morphisms of the form $(U \to Y) \to (X \to Y)$ in which $U \rightarrow X$ is a fibration in $(k-1)-C$ and the second morphism is the identity. The class $\overline{C}$ is closed under pull-back because $(k-1)-C$ is closed under homotopy pull-back. 
 By definition of $k-C$ (and fibrant replacement), for each object $X \rightarrow Y$ in $\mathcal{S}$ there 
 is a morphism $(U \to Y) \to (X \to Y)$ in $\overline{C}$ such that $(U \to Y) \in \mathcal{S}_0$.
Therefore we get a morphism $g: (I,U) \rightarrow (I,X)$ point-wise in $(k-1)-C$ with $U$ point-wise in $\mathcal{M}^{(0)}$, such that the composition  
\[ F: (I,U) \rightarrow (I,X)  \rightarrow (I,Y) \]
is point-wise in $C$.

Now consider $F_\bullet: (I \times \Delta^{\op}, \Cech(g)) \rightarrow (I \times \Delta^{\op}, \delta(Y))$. 
We have the 2-commutative diagram
\[ \xymatrix{  \DD^{!,h}(I \times \Delta^{\op})_{\Cech(g)}^{2-\cart}  \ar@{<-}[d]^-{\gamma^!}_-\sim \ar@{<-}[r]^-{F_\bullet^!} & \DD^{!,h}(I \times \Delta^{\op})_{\delta(Y)}^{2-\cart}  \ar[dd]^{\sim}  \\
 \DD^{!,h}(I \times \Delta^{\op})_{\delta(X)} \ar[d]^\sim  \ar@{<-}[ur]^-{\delta(f)^!}  &     \\
 \DD^{!,h}(I)_{X} \ar@{<-}[r]_{f^!} &  \DD^{*,h}(I)_{Y}     } \]
 in which the vertical functors are equivalences by homological descent. 
By induction $F_\bullet^!$ ($F_\bullet$ being point-wise $(k-1)-C$ with target in $\mathcal{M}^{(0)}$) has a right adjoint $F_{\bullet,?}$ (neglecting the coCartesianity condition). Hence also $f^!$ has the right adoint $\pr_{1,*} F_{\bullet,?} \gamma^!$ (omitting some equivalences). The constructed adjoints promote to a morphism of derivators (fiber). 
\end{proof}

\begin{SATZ}\label{SATZHEREDITYLOC2}
\begin{enumerate}
\item In the situation of \ref{PARSETTINGLOCALITYLEFT}, every morphism $f \in \mathcal{M}$ in $k-C$ satisfies {\em (H1$(f)$--H3$(f)$)}, w.r.t.\@ $\DD^{!, h}$. 

\item In the situation of \ref{PARSETTINGLOCALITYRIGHT}, every morphism $f \in \mathcal{M}$ in $k-C$ satisfies {\em (C1$(f)$--C3$(f)$)},  w.r.t.\@ $\DD^{*, h}$. 
\end{enumerate}
\end{SATZ}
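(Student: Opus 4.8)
The strategy is to reduce the statement for a morphism $f\colon X\to Y$ in $k-C$ (with $X,Y$ arbitrary objects of $\mathcal{M}$, not just in $\mathcal{M}^{(0)}$) to the already-established Proposition~\ref{PROPHEREDITYLOC1}, which handles the case where the \emph{target} lies in $\mathcal{M}^{(0)}$, together with \v Cech-nerve resolutions, homological (resp.\ cohomological) descent, and the intertwining of (co)Cartesian projectors in Lemma~\ref{LEMMACARTPROJ}. I treat the left case (H1--H3); the right case (C1--C3) is dual. By Lemma~\ref{LEMMAAXIOMS} all axioms in question only depend on the isomorphism class of $f$ in the homotopy category, so I may freely replace $f$ by a weakly equivalent morphism and, in particular, arrange that $f$ be represented by a fibration.

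\textbf{Step 1: resolve the target.} Choose, following \ref{PARRESOLUTION}, a cofibrant replacement $\widetilde{Y}_\bullet\in\mathcal{S}^{\amalg,\Delta^{\op}}$ of $Y$, yielding a morphism $(\Delta^{\op},\widetilde{Y}_\bullet)\to(\Delta^{\op},\delta(Y))$ which is a \v Cech weak equivalence and lies in $\Cat(\mathcal{M})$. Form the homotopy fiber product $\widetilde{X}_\bullet := (\widetilde{Y}\,\widetilde{\times}_Y X)_\bullet$ using the construction of Definition~\ref{DEFFPSOBJ}/Lemma~\ref{LEMMAHOFP}, so that $f$ is recovered as the homotopy colimit (geometric realization) of a homotopy Cartesian morphism of simplicial objects $f_\bullet\colon(\Delta^{\op},\widetilde{X}_\bullet)\to(\Delta^{\op},\widetilde{Y}_\bullet)$, where for each $n$ the morphism $\widetilde{X}_n\to\widetilde{Y}_n$ is (up to weak equivalence) a homotopy pullback of $f$ along $\widetilde{Y}_n\to Y$ with $\widetilde{Y}_n\in\mathcal{S}^{\amalg}\subset\mathcal{M}^{(0)}$; since $f$ is $k$-representable and in $k-C$, each $f_n$ is then a morphism in $k-C$ with target in $\mathcal{M}^{(0)}$. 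Now Proposition~\ref{PROPHEREDITYLOC1} applies to $f_\bullet$ (the diagram being $(\Delta^{\op},\widetilde{Y}_\bullet)$ with target point-wise in $\mathcal{M}^{(0)}$) and gives (H1''$(f_\bullet)$), hence (H1$(f_\bullet)$) and, point-wise, (H1''$(f_n)$); and Lemma~\ref{LEMMAAXIOMSM0} together with the inductive structure of Proposition~\ref{PROPHEREDITYLOC1} (or a separate base-change argument analogous to the proof of (H2) in Proposition~\ref{PROPSTRONGLYLOCAL}) gives (H2$(f_n)$) and (H3$(f_n)$) for every $n$.

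\textbf{Step 2: descend along the \v Cech resolution.} By the naive extension (Theorem~\ref{SATZEXTNAIVELEFT}) and homological descent in the extension (Proposition~\ref{PROPHODESC}), the equivalences $\DD^{!,h}(\cdot)_Y\cong\DD^{!,h}(\Delta^{\op})_{\widetilde{Y}_\bullet}^{\cart}$ and $\DD^{!,h}(\cdot)_X\cong\DD^{!,h}(\Delta^{\op})_{\widetilde{X}_\bullet}^{\cart}$ hold, and $f^!$ is identified (via these equivalences) with the restriction of $f_\bullet^!$ to Cartesian objects. By Lemma~\ref{LEMMACARTPROJ} (whose hypotheses are exactly (H1''$(f_\bullet)$), (H1''$(f_n)$), (H2$(f_n)$) secured in Step~1), $f_\bullet^!$ intertwines the left Cartesian projectors $\Box_!$, so the right adjoint of $f_\bullet^!$ descends to a right adjoint of $f^!$ on the Cartesian subcategories — this is (H1''$(f)$), hence (H1$(f)$), since the composite of a morphism of derivators with $\Box_!$ and the inclusion remains a morphism of derivators and commutes with homotopy colimits. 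For (H2$(f)$): given a homotopy Cartesian square in $\mathcal{M}$ with bottom edge $f$, pull it back along the \v Cech resolution of its lower-right corner to obtain a point-wise homotopy Cartesian family of squares whose bottom edges are the $f_n$ (using Lemma~\ref{LEMMAHOFP}, part~3, that the fiber-product construction preserves point-wise homotopy Cartesian squares), apply (H2$(f_n)$) point-wise, and descend using Proposition~\ref{PROPHODESC} and Lemma~\ref{LEMMACARTPROJ} as above. For (H3$(f)$): conservativity of $f^!$ follows from conservativity of $f_\bullet^!$ on Cartesian objects, which follows from the point-wise (H3$(f_n)$) together with (Der2) (a morphism of coherent Cartesian diagrams that becomes an isomorphism point-wise after applying the jointly conservative $f_n^!$ is already an isomorphism).

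\textbf{Main obstacle.} The delicate point is Step~2: verifying that the right adjoint one builds genuinely \emph{promotes to a morphism of derivators} on the \emph{fiber} (and not merely a functor on underlying categories), so that (H1$(f)$) and not just the existence of an underlying adjoint is obtained. This requires knowing that $f_{\bullet,?}$ is computed point-wise on constant diagrams — which is exactly the content established inside Lemma~\ref{LEMMACARTPROJ} via (FDer4 left) and (H2$(f_n)$) — and that $\pr_{1,*}$ and the descent equivalences are themselves morphisms of derivators (fibers). A secondary subtlety is bookkeeping the homotopy-coherence of the resolution $f_\bullet$: one must use the \emph{functorial} homotopy fiber product of Definition~\ref{DEFFPSOBJ} (not an ad hoc point-wise choice) so that $f_\bullet$ is a genuine morphism in $\Cat(\mathcal{M})$ and $f$ is its honest homotopy colimit, allowing Proposition~\ref{PROPHODESC} to be invoked. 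Once these coherence issues are handled, the descent is formal, and the induction on $k$ is already absorbed into the citation of Proposition~\ref{PROPHEREDITYLOC1}.
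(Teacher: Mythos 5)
There is a genuine gap in your Step~1, and it is the heart of the argument. You reduce to the case where the target of $f_n$ lies in $\mathcal{M}^{(0)}$, and then claim that (H2$(f_n)$) and (H3$(f_n)$) follow from Lemma~\ref{LEMMAAXIOMSM0} ``together with the inductive structure of Proposition~\ref{PROPHEREDITYLOC1} (or a separate base-change argument analogous to the proof of (H2) in Proposition~\ref{PROPSTRONGLYLOCAL})''. None of these cited results actually deliver this. Lemma~\ref{LEMMAAXIOMSM0} only treats morphisms which are in $C$ and lie entirely in $\mathcal{M}^{(0)}$ — that is the $k=0$ case. Proposition~\ref{PROPHEREDITYLOC1} only asserts (H1'')/(C1''); its induction does not touch (H2) or (H3). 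And the proof of (H2) inside Proposition~\ref{PROPSTRONGLYLOCAL} works only under the ``strongly local'' axioms (O1--O4), which you do not have here. So there is no route in your proposal to the statement ``(H2$(f_n)$) and (H3$(f_n)$) hold for a $k-C$ morphism $f_n$ with target in $\mathcal{M}^{(0)}$''.

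This is exactly what the paper's Step~1 supplies, and you have skipped it. The paper runs a separate induction on $k$: for $k\ge 1$ one picks a $k$-atlas $f'\colon U\to X$ (a fibration in $(k-1)-C$ with $U\in\mathcal{M}^{(0)}$) such that $F = f\circ f'\colon U\to Y$ is in $C$; conservativity (H3$(f)$) follows because $(f')^!$ and $F^!$ are conservative by induction; (H1$(f)$) follows by checking after applying $(f')^!$; and (H2$(f)$) follows by checking the base-change morphism after applying $(f')^!$ and invoking (H2$(f')$) and (H2$(F)$). Even the $k=0$ case of (H2) requires a dedicated \v{C}ech argument to handle the arbitrary top-right corner $Z$ of the homotopy Cartesian square, since $Z$ need not lie in $\mathcal{M}^{(0)}$; this is more than Lemma~\ref{LEMMAAXIOMSM0} gives. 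Your Step~2 — the descent along the \v{C}ech resolution of $Y$ via Lemma~\ref{LEMMACARTPROJ}, Proposition~\ref{PROPHODESC}, and the functorial homotopy fiber product of Definition~\ref{DEFFPSOBJ} — is essentially the paper's Step~2 and is fine, but it is vacuous without the inputs (H2$(f_n)$) and (H3$(f_n)$) that Step~1 must provide. You should insert the paper's atlas induction as your Step~1, and only then the rest of your argument goes through.
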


\begin{proof}
Recall Definitions~\ref{DEFFPS}--\ref{DEFFPSOBJ}.
We will  prove 1.\@ the other statement being dual.

{\bf Step 1:}
Consider a $k-C$ morphism $f: X \rightarrow Y$ in $\mathcal{M}$ with $Y \in \mathcal{M}^{(0)}$. We will prove (H1$(f)$--H3$(f)$) by induction on $k$. 

If $k=0$ also 
$X \in \mathcal{M}^{(0)}$ and the morphism is in $C$.
If $k \ge 1$ there is a $k$-atlas (in particular in $(k-1)-C$) $f': U \rightarrow X$, w.l.o.g.\@ a fibration, such that the composition 
\[ F: U \rightarrow X  \rightarrow Y \]
is in $C$.

{\em Conservativity} (H3$(f)$):
If $k=0$ the statement follows from Lemma~\ref{LEMMAAXIOMSM0}.
If $k\ge 1$ the functors $(f')^!$ and $F^!$ are conservative by the induction hypothesis. Thus also $f^!$ is conservative. 

{\em Commutation with homotopy colimits} (H1$(f)$):
If $k=0$ the statement follows from Lemma~\ref{LEMMAAXIOMSM0}.
If $k\ge 1$ we are left to show that the upper square in the following diagram
 \[ \xymatrix{
\DD^{!,h}(I)_{p^*Y}^{} \ar[r]^-{p_!} \ar[d]_{(p^*f)^!} &   \DD^{!,h}(\cdot)_{Y}^{} \ar[d]^{f^!} \\
\DD^{!,h}(I)_{p^*X}^{} \ar[r]_-{p_!} \ar[d]_{(p^*f')^!}& \DD^{!,h}(\cdot)_{X}^{} \ar[d]^{(f')^!}  \\
\DD^{!,h}(I)_{p^*U}^{} \ar[r]_-{p_!}  &   \DD^{!,h}(\cdot)_{U}^{} 
} \] 
is 2-commutative. The fact that $(f')^!$ is conservative, and the fact that $(f')^!$ and the composition $F^!$ commute with homotopy colimits, establishes the fact. 

{\em Base change} (H2$(f)$):
Consider a homotopy Cartesian diagram:
\[  \xymatrix{ W \ar[r] \ar[d]  &  Z \ar[d]^g  \\
X \ar[r]_{f} &    Y  }
 \]
 If $k=0$ the morphism $f: X \rightarrow Y$ is in $\mathcal{M}^{(0)}$ and in $C$, but $g$ is arbitrary. Choose a replacement $\widetilde{Z} \in \mathcal{S}^{\amalg, \Delta^{\op}}$ of $Z$ with morphism 
 $(\Delta^{\op}, \widetilde{Z}) \rightarrow (\Delta^{\op}, \delta(Z))$ as in \ref{PARRESOLUTION}.
 By homological descent it suffices to show that the composite square is 2-commutative (via the natural exchanges): 
 \[ \xymatrix{ \DD^{!,h}(\Delta^{\op})_{(\delta(X) \widetilde{\times}_Y  \widetilde{Z})_{0,\bullet}}^{\cart} \ar@{<-}[r]^{F^!} \ar[d]_{\widetilde{G}_!} &   \DD^{!,h}(\Delta^{\op})_{(\delta(Y) \widetilde{\times}_Y  \widetilde{Z})_{0,\bullet}}^{\cart} \ar[d]^{\widetilde{g}_!} \\
\DD^{!,h}(\Delta^{\op})_{p^*X} \ar@{<-}[r]^{(p^*f)^!} \ar[d]_{p_!} &   \DD^{!,h}(\Delta^{\op})_{p^*Y}  \ar[d]^{p_!} \\
\DD^{!,h}(\cdot)_{X} \ar@{<-}[r]^{f^!} &   \DD^{!,h}(\cdot)_{Y} } \]

However, the lower square and upper square are 2-commutative because $f$ is in $C$.

Assume now that $k \ge 1$ and thus $f$ is in $k-C$ with $Y \in \mathcal{M}^{(0)}$.
Consider the diagram with homotopy Cartesian squares
\[  \xymatrix{ U \times_X  W \ar[r] \ar[d] & W \ar[r]^-{F} \ar[d]_G  & Z \ar[d]^g  \\
U \ar[r]_{f'} &  X \ar[r]_-{f}   & Y  }
 \] 
After applying $(f')^!$ which is conservative (induction hypothesis) the base-change morphism becomes
\[    (f')^! G_! F^! \rightarrow (f')^! f^! g_!   \]
and using base change for $f'$ (which is $(k-1)-C$, induction hypothesis) and for $f' f$ which is even in $C$ (Lemma~\ref{LEMMAAXIOMSM0}), we see that this is an isomorphism.

{\bf Step 2:}
Consider a general $k-C$ morphism $f: X \rightarrow Y$ in $\mathcal{M}$. 
Choose a replacement $\widetilde{Y} \in \mathcal{S}^{\amalg, \Delta^{\op}}$ of $Y$ with morphism 
 $(\Delta^{\op}, \widetilde{Y}) \rightarrow (\Delta^{\op}, \delta(Y))$ as in \ref{PARRESOLUTION}.
Consider the commutative square  in $\Cat(\mathcal{M})$ 
\[ \xymatrix{ (\cdot, X) \ar[r]^-f \ar@{<-}[d]^{G} & (\cdot, Y) \ar@{<-}[d]^{g}  \\
 (\Delta^{\op}, (\widetilde{Y} \widetilde{\times}_Y \delta(X))_{\bullet,0}) \ar[r]_-{f_n} & (\Delta^{\op}, (\widetilde{Y} \widetilde{\times}_Y \delta(Y))_{\bullet,0}) 
 }\]
 in which $(\widetilde{Y} \widetilde{\times}_Y \delta(Y))_{n,0}$ is weakly equivalent to $\widetilde{Y}_n$ and thus is in $\mathcal{M}^{(0)}$. 
In the resulting 2-commutative diagram 
\[ \xymatrix{ \DD^{!,h}(\cdot)_{X} \ar@{<-}[r]^-{f^!} \ar[d]_-{G^!} & \DD^{!,h}(\cdot)_{Y} \ar[d]^-{g^!}  \\
 \DD^{!,h}(\Delta^{\op})_{(\widetilde{Y} \widetilde{\times}_Y \delta(X))_{\bullet,0}}^{\cart} \ar@{<-}[r]_{f_\bullet^!} & \DD^{!,h}(\Delta^{\op})_{(\widetilde{Y} \widetilde{\times}_Y \delta(Y))_{\bullet,0}}^{\cart} 
 }\]
the functors $g^!$, and $G^!$ are equivalences by homological descent (Proposition~\ref{PROPHODESC}). 

{\em Conservativity} (H3$(f)$): The axioms (H3$(f_n)$) which hold by Step 1 imply that $f_\bullet^!$ is conservative. Thus also $f^!$ is conservative. 

{\em Commutation with homotopy colimits} (H1$(f)$): 
Consider the diagram
\[ \xymatrix{ \DD^{!,h}(I)_{p^*Y} \ar@<5pt>[r]^{p_!} \ar[d]^{(\pr_2^*g)^!} & \ar@<5pt>[l]^{p^*} \DD^{!,h}(\cdot)_{Y} \ar[d]^{g^!} \\
    \DD^{!,h}(I \times \Delta^{\op})_{\pr_2^*(\widetilde{Y}  \widetilde{\times}_Y \delta(Y))_{\bullet,0}}^{\pr_1-\cart} \ar@<5pt>[r]^-{\Box_! \pr_{2,!}} & \ar@<5pt>[l]^-{\pr_2^*} \DD^{!,h}(\Delta^{\op})_{(\widetilde{Y} \widetilde{\times}_Y \delta(Y))_{\bullet,0}}^{\cart}  } \]
    in which the square with the functors pointing to the left is 2-commutative. 
    Since $g^!$ and $(\pr_2^*g)^!$ are equivalences, we have that the exchange
   \begin{equation}\label{eqh1} \Box_! \pr_{2,!} (\pr_2^*g)^! \cong g^! p_!  \end{equation}
   is an isomorphism and similarly for $G$. 
   
By (H1$(f_n)$) established in Step 1, we have
\[  \pr_{2,!} (\pr_2^*f_\bullet)^! \cong (f_\bullet)^! \pr_{2,!}  \]
and thus   
\[  \Box_! \pr_{2,!} (\pr_2^*f_\bullet)^! (\pr_2^*g)^! \cong  \Box_! (f_\bullet)^! \pr_{2,!} (\pr_2^*g)^!   \]
and using the commutation of $f_\bullet^!$ with left Cartesian projectors (Lemma~\ref{LEMMACARTPROJ} applied using (H1''$(f_\bullet)$) and (H1''($f_n$)) which hold by Proposition~\ref{PROPHEREDITYLOC1}, and (H2$(f_n)$) which holds by Step 1) we get 
\[  \Box_! \pr_{2,!} (\pr_2^*G)^! (p^*f)^! \cong   (f_\bullet)^!  \Box_! \pr_{2,!} (\pr_2^*g)^!.   \]
Using the exchange isomorphisms (\ref{eqh1}), we get
\[ G^! p_! (p^*f)^! \cong   (f_\bullet)^!  g^! p_!   \]
and thus
\[ G^! p_! (p^*f)^! \cong   G^! f^! p_!   \]
and using that $G^!$ is an equivalence, we get (H1$(f)$):
\[ p_! (p^*f)^! \cong  f^! p_!.   \]

{\em Base change} (H2$(f)$):
Let 
\[ \xymatrix{ W \ar[r]^F \ar[d]_G & Z  \ar[d]^g \\ 
X \ar[r]_f & Y
} \] 
be a homotopy Cartesian square in $\mathcal{M}$.
By homological descent it suffices to see that the square 
\[  \xymatrix{ \DD^{!,h}(\Delta^{\op})^{\cart}_{(\widetilde{Y} \widetilde{\times}_Y \delta(W))_{\bullet,0}} \ar@{<-}[r]^-{F_\bullet^!} \ar[d]_{\Box_!G_{\bullet,!}}  & \DD^{!,h}(\Delta^{\op})^{\cart}_{(\widetilde{Y} \widetilde{\times}_Y \delta(Z))_{\bullet,0}} \ar[d]^{\Box_!g_{\bullet,!}}  \\
\DD^{!,h}(\Delta^{\op})^{\cart}_{(\widetilde{Y} \widetilde{\times}_Y \delta(X))_{\bullet,0}} \ar@{<-}[r]_-{f_\bullet^!}   & \DD^{!,h}(\Delta^{\op})^{\cart}_{(\widetilde{Y} \widetilde{\times}_Y \delta(Y))_{\bullet,0}}  }
 \]
where, $F_\bullet, f_\bullet, G_\bullet, g_\bullet$ denote the pull-backs, is 2-commutative via the natural exchange morphism.
By (H2$(f_n)$) established in Step 1, the natural exchange morphism
 \[   G_{\bullet,!} F_\bullet^! \rightarrow f_\bullet^! g_{\bullet, !}   \]
 is an isomorphism and hence
 \[  \Box_! G_{\bullet, !} F^!_\bullet \rightarrow  \Box_! f^!_\bullet g_{\bullet,!}   \]
 is an isomorphism. 
 Since $\Box_!$ commutes with $f^!_\bullet$ (Lemma~\ref{LEMMACARTPROJ} applied using (H1''$(f_\bullet)$) and (H1''($f_n$)) which hold by Proposition~\ref{PROPHEREDITYLOC1}, and (H2$(f_n)$) which holds by Step 1) we get that
 \[  \Box_! G_{\bullet, !} F^!_\bullet \rightarrow   f^!_\bullet \Box_! g_{\bullet,!}   \]
 is an isomorphism. 
\end{proof}

\begin{SATZ}\label{SATZHEREDITYLOC3}
\begin{enumerate}
\item In the situation of \ref{PARSETTINGLOCALITYLEFT}, every morphism $f: (I, S) \rightarrow (I, T)$ in $\Cat(\mathcal{M})$ point-wise in $k-C$ (with $k$ independent of $i \in I$)  satisfies {\em (H1''$(f)$)} w.r.t.\@ $\DD^{!, h}$.

\item In the situation of \ref{PARSETTINGLOCALITYRIGHT}, every morphism $f: (I, S) \rightarrow (I, T)$ in $\Cat^{\op}(\mathcal{M}^{\op})$ point-wise in $k-C$ (with $k$ independent of $i \in I$) satisfies {\em (C1''$(f)$)} w.r.t.\@ $\DD^{*, h}$.
\end{enumerate}
\end{SATZ}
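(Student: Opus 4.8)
The plan is to reduce the statement about arbitrary diagrams $I$ to the case of inverse (resp.\ directed) locally finite diagrams with finite matching (resp.\ latching) diagrams, via Lemma~\ref{LEMMAAXIOMSDIRINV}, and then to bootstrap from Proposition~\ref{PROPHEREDITYLOC1}, which already handles the case where the \emph{target} $T$ is point-wise in $\mathcal{M}^{(0)}$, using the pointwise statement of Theorem~\ref{SATZHEREDITYLOC2}. I will only treat 1., since 2. is dual. So let $f: (I,S) \rightarrow (I,T)$ in $\Cat(\mathcal{M})$ be point-wise in $k-C$; by Lemma~\ref{LEMMAAXIOMSDIRINV} we may assume $I \in \Invlf$ with finite matching diagrams.

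The key device is to resolve $T$ itself by a simplicial object with values in $0$-geometric stacks, exactly as in Step~2 of the proof of Theorem~\ref{SATZHEREDITYLOC2} but now diagram-wise. Choose for each $i$ a replacement $\widetilde{T(i)} \in \mathcal{S}^{\amalg, \Delta^{\op}}$ of $T(i)$, functorially in $i$ (using the functorial path object / functorial cofibrant replacement), and form the bisimplicial-type object $(\widetilde{T} \widetilde{\times}_T \delta(S))_{\bullet,0}$ and $(\widetilde{T} \widetilde{\times}_T \delta(T))_{\bullet,0}$ over $I \times \Delta^{\op}$, using the construction of Definition~\ref{DEFFPSOBJ} (or rather its $I$-relative variant). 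This produces a commutative square in $\Cat(\mathcal{M})$
\[ \xymatrix{ (I, S) \ar[r]^-f \ar@{<-}[d]^{G} & (I, T) \ar@{<-}[d]^{g}  \\
 (I \times \Delta^{\op}, (\widetilde{T} \widetilde{\times}_T \delta(S))_{\bullet,0}) \ar[r]_-{f_\bullet} & (I \times \Delta^{\op}, (\widetilde{T} \widetilde{\times}_T \delta(T))_{\bullet,0})
 }\]
in which the lower-left and lower-right objects have $T$-component point-wise in $\mathcal{M}^{(0)}$ (since $(\widetilde{T} \widetilde{\times}_T \delta(T))_{n,0}$ is weakly equivalent to $\widetilde{T(i)}_n$), the vertical morphisms $g$ and $G$ are point-wise \v{C}ech weak equivalences after applying $\hocolim$ over $\Delta^{\op}$, hence $g^!$ and $G^!$ are equivalences onto the Cartesian parts by homological descent (Proposition~\ref{PROPHODESC}), and $f_\bullet$ is again point-wise in $k-C$ (base change of a $k-C$ morphism along an arbitrary morphism, using Proposition~\ref{PROPGEOMETRIC}, 1.). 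By Proposition~\ref{PROPHEREDITYLOC1}, applied to $f_\bullet$ (whose $T$-component is now point-wise in $\mathcal{M}^{(0)}$), the functor $f_\bullet^!$ has a right adjoint $f_{\bullet,?}$ as a morphism of derivators (neglecting Cartesianity). The candidate right adjoint for $f^!$ is then $G_?^{-1} \circ \Box_? \circ f_{\bullet,?} \circ g^!$ (up to the equivalences $g^!$, $G^!$), where $\Box_?$ is the right Cartesian projector; concretely one uses Lemma~\ref{LEMMACARTPROJ} (which applies because (H1''$(f_\bullet)$) and (H1''$(f_n)$) hold by Proposition~\ref{PROPHEREDITYLOC1}, and (H2$(f_n)$) holds by Theorem~\ref{SATZHEREDITYLOC2}, Step~1) to see that $f_{\bullet,?}$ preserves the Cartesian subcategories and that $f_\bullet^!$ intertwines the Cartesian projectors, so the composition lands in, and is adjoint on, the Cartesian part.

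The main obstacle is the compatibility bookkeeping: one must verify that the composite functor $\pr_{1,*}\, \Box_?\, f_{\bullet,?}\, g^!$ (after transporting along the equivalence $G^!$) is genuinely \emph{right adjoint} to $f^!$ in the $2$-categorical sense of a morphism of fibered derivators, i.e.\ that the unit/counit can be built point-wise on constant diagrams and that all the intervening squares ($g^! \leftrightarrow G^!$, the Cartesian-projector square of Lemma~\ref{LEMMACARTPROJ}, and the adjunction square of Proposition~\ref{PROPHEREDITYLOC1}) $2$-commute coherently. This is precisely the situation abstracted in Lemma~\ref{LEMMAWEIRD}: $\overline{F} = f_\bullet^!$ on the big categories $\DD^{!,h}(I \times \Delta^{\op})$ has right adjoint $\overline{X} = f_{\bullet,?}$; the inclusions of the Cartesian subcategories play the role of $G$, $g$ with left adjoints the Cartesian projectors $\Box_!$; the exchange hypothesis of Lemma~\ref{LEMMAWEIRD} is exactly the statement ``$f_\bullet^!$ intertwines the left Cartesian projectors'' supplied by Lemma~\ref{LEMMACARTPROJ}; hence $X := \Box_!\, f_{\bullet,?}\, (\text{incl})$ is a right adjoint of $f_\bullet^!$ on Cartesian objects, and then conjugating by the equivalences $G^!$, $g^!$ and pushing forward along $\pr_1$ (whose right adjoint $\pr_{1,*}$ restricted to Cartesian objects is the required one by the relative version of Lemma~\ref{LEMMAKAN3}) gives the right adjoint of $f^!$. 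One last point to check, routine but necessary, is that all these adjoints are compatible with further base change in $I$, i.e.\ promote to morphisms of the fiber derivators $\DD^{!,h}_{(I,T)}$; this follows because every functor in sight ($\pr_{1,*}$, the Cartesian projector, $f_{\bullet,?}$ by Proposition~\ref{PROPHEREDITYLOC1}, and the homological-descent equivalences) already promotes to a morphism of derivators, and morphisms of derivators are closed under composition.
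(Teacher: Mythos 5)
Your argument reaches the right conclusion and shares the paper's outer strategy: resolve the target by $(\widetilde T\,\widetilde{\times}_T\,\delta(-))_{\bullet,0}$ so that the base becomes point-wise $0$-geometric, apply Proposition~\ref{PROPHEREDITYLOC1} to the resolved morphism $f_\bullet$, and transfer the adjoint back via Lemma~\ref{LEMMAWEIRD}. Where you genuinely diverge is in \emph{how} Lemma~\ref{LEMMAWEIRD} is applied. The paper takes the vertical embeddings $g$, $G$ in that lemma to be the descent functors $g^!$, $G^!$ themselves (fully-faithful by Proposition~\ref{PROPHODESC}, with left adjoints $g_!$, $G_!$), and checks the required exchange
\[ G_!\, f_\bullet^! \;\longrightarrow\; f^!\, g_! \]
directly from (H1$(f)$) and (H2$(f)$), which are already available by Theorem~\ref{SATZHEREDITYLOC2}. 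You instead take $g$, $G$ to be the inclusions of the Cartesian subcategories with the left Cartesian projectors $\Box_!$ as their left adjoints, verify the exchange as the commutation of $f_\bullet^!$ with $\Box_!$ supplied by Lemma~\ref{LEMMACARTPROJ}, and then conjugate the resulting adjoint back through the equivalences $g^!$, $G^!$ and $\pr_{1,*}$. This is correct, but it makes the bookkeeping heavier in two ways: you need the $I$-relative variant of Lemma~\ref{LEMMACARTPROJ} (stated in the paper only over $\cdot$), and Lemma~\ref{LEMMACARTPROJ} internally consumes exactly the ingredients --- (H1'') and (H2) for the layers $f_n$ --- that the paper feeds directly into the exchange hypothesis; indeed, the proof of Lemma~\ref{LEMMACARTPROJ} already shows $f_{\bullet,?}$ preserves Cartesian objects, so your invocation of Lemma~\ref{LEMMAWEIRD} in the Cartesian setting is essentially redundant once that is granted. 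Two smaller points: the opening reduction to $I\in\Invlf$ via Lemma~\ref{LEMMAAXIOMSDIRINV} is harmless but unnecessary here (Proposition~\ref{PROPHEREDITYLOC1} already handles arbitrary $I$, and nothing downstream in your argument needs the restriction); and the phrase ``are point-wise \v Cech weak equivalences after applying $\hocolim$'' should read ``induce an isomorphism on $\hocolim$'', which is the actual hypothesis of Proposition~\ref{PROPHODESC}.
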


\begin{proof}We prove 1.\@ the other case being dual. 
Recall Definitions~\ref{DEFFPS}--\ref{DEFFPSOBJ}.
Consider a $k-C$ morphism $f: X \rightarrow Y$ in $\mathcal{M}$. Choose a replacement $\widetilde{Y} \in \mathcal{S}^{\amalg, \Delta^{\op}}$ of $Y$ with morphism 
 $(\Delta^{\op}, \widetilde{Y}) \rightarrow (\Delta^{\op}, \delta(Y))$ as in \ref{PARRESOLUTION}.
Consider the commutative square  in $\Cat(\mathcal{M})$ 
\[ \xymatrix{ (I, X) \ar[r]^-f \ar@{<-}[d]_{G} & (I, Y) \ar@{<-}[d]^{g}  \\
 (I \times \Delta^{\op}, (\widetilde{Y} \widetilde{\times}_Y \delta(X))_{\bullet,0}) \ar[r]_-{f_{\bullet}} & (I \times \Delta^{\op}, (\widetilde{Y} \widetilde{\times}_Y \delta(Y))_{\bullet,0}) 
 }\]
 in which $(\widetilde{Y} \widetilde{\times}_Y \delta(Y))_{n,0}$ is weakly equivalent to $\widetilde{Y}_n$ and thus is in $(\mathcal{M}^{(0)})^I$. 
In the resulting 2-commutative diagram 
\[ \xymatrix{ \DD^{!,h}(I)_{X} \ar@{<-}[r]^-{f^!} \ar[d]^-{G^!} & \DD^{!,h}(I)_{Y} \ar[d]^-{g^!}  \\
 \DD^{!,h}(I \times \Delta^{\op})_{(\widetilde{Y} \widetilde{\times}_Y \delta(X))_{\bullet,0}} \ar@{<-}[r]_{f_\bullet^!} & \DD^{!,h}(I \times \Delta^{\op})_{(\widetilde{Y} \widetilde{\times}_Y \delta(Y))_{\bullet,0}} 
 }\]
the functors $g^!$, and $G^!$ are fully-faithful by homological descent (Proposition~\ref{PROPHODESC}) and have a left adjoint. 
By Proposition~\ref{PROPHEREDITYLOC1} (H1''$(f_\bullet)$) holds true. Thus (H1''$(f)$) holds by Lemma~\ref{LEMMAWEIRD} 
using that 
\[ G_! f_\bullet^! \rightarrow f^! g_! \]
is an isomorphism by (H1$(f)$) and (H2$(f)$) which hold by Theorem~\ref{SATZHEREDITYLOC2}. 
\end{proof}

\begin{LEMMA}\label{LEMMACART}
In the situation of \ref{PARSETTINGLOCALITY2},
let $X_{i,\bullet}$, $i=1, \dots, l$ be simplicial objects in $\mathcal{M}$ with morphisms $X_{i,\bullet} \rightarrow \pi_{\Delta^{\op}}^*X_i$ inducing an isomorphism $\hocolim X_{i,\bullet} \cong X_i$ in the homotopy category of $\mathcal{M}$, and let $g: Y \rightarrow X_1, \dots, X_l$ be a collection of fibrations.
Denote $X_\bullet := \prod_i X_{i, \bullet}$, $X := \prod_i X_{i}$,  and $g_\bullet: X_\bullet \times_X Y \rightarrow X_{1,\bullet}, \dots, X_{l,\bullet}$ the pull-back. 
Then the diagram
\[ \xymatrix{ \DD'(\Delta^{\op})_{X_{1,\bullet}}^{\cart} \times \cdots \times \DD'(\Delta^{\op})_{X_{l,\bullet}}^{\cart} \ar[r]^-{\Box_! g^*_\bullet}  \ar@{<-}[d]^{(f_i^!)}_\sim & \DD'(\Delta^{\op})_{X_\bullet \times_X Y}^{\cart}  \ar@{<-}[d]^{f^!}_\sim  \\
 \DD'(\cdot)_{X_1} \times \cdots \times \DD'(\cdot)_{X_l} \ar[r]_-{g^*} &  \DD'(\cdot)_{Y}    }  \]
 is 2-commutative via the natural exchange morphism. 
If $\DD' \rightarrow \HH^{\cor}(\mathcal{M})$ is a fibered derivator (not a multiderivator) then $l=1$ is understood. 
\end{LEMMA}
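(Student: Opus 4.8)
The plan is to combine (co)homological descent in the extension (Proposition~\ref{PROPHODESC}) with a level-wise analysis of the tensor product, following the pattern of Lemma~\ref{LEMMACARTPROJ} and Step~1 of Theorem~\ref{SATZHEREDITYLOC2}.

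\textbf{The vertical functors are equivalences.} For $f_i\colon (\Delta^{\op},X_{i,\bullet})\to(\cdot,X_i)$ this is immediate from Proposition~\ref{PROPHODESC} and the hypothesis $\hocolim X_{i,\bullet}\cong X_i$. For $f\colon(\Delta^{\op},X_\bullet\times_X Y)\to(\cdot,Y)$ one first checks $\hocolim(X_\bullet\times_X Y)\cong Y$: since the $g_i$ are fibrations, $X_\bullet\times_X Y$ is a homotopy fibre product, so by Lemma~\ref{LEMMAEXACTCOLIMS} one has $\hocolim(X_\bullet\times_X Y)\cong(\hocolim X_\bullet)\times_X Y$, and applying Lemma~\ref{LEMMAEXACTCOLIMS} again to the (iterated) product, viewed as a homotopy fibre product over the terminal object, over which the $\Delta^{\op}$-colimit is the identity, gives $\hocolim X_\bullet=\hocolim\prod_i X_{i,\bullet}\cong\prod_i\hocolim X_{i,\bullet}\cong\prod_i X_i=X$; hence $\hocolim(X_\bullet\times_X Y)\cong X\times_X Y=Y$ and Proposition~\ref{PROPHODESC} applies, so $f^!$ is an equivalence. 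Consequently 2-commutativity of the square is equivalent to the statement that the canonical exchange $\Box_!\, g_\bullet^*(f_1^!-,\dots,f_l^!-)\to f^!\, g^*(-,\dots,-)$ is an isomorphism.

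\textbf{Level-wise description.} Both the target $f^!g^*(\mathcal{E}_1,\dots,\mathcal{E}_l)$ and the object $g_\bullet^*(f_1^!\mathcal{E}_1,\dots,f_l^!\mathcal{E}_l)$ (before $\Box_!$) are computed level-wise: at $[n]$ the first is $f_n^!(g_1^*\mathcal{E}_1\otimes\cdots\otimes g_l^*\mathcal{E}_l)$, with $f_n\colon X_n\times_X Y\to Y$, and this is a Cartesian object because $f_n$ composed with each structure map of $X_\bullet\times_X Y$ is again of the form $f_m$; the second is $\bigotimes_i g_{i,n}^*q_{i,n}^!\mathcal{E}_i$, where $q_{i,n}\colon X_{i,n}\to X_i$ and $g_{i,n}\colon X_n\times_X Y\to X_{i,n}$ are the projections, and this is in general \emph{not} Cartesian --- which is exactly why the Cartesian projector $\Box_!$ occurs in the statement, the point being that $(-)^!$ is not monoidal and that the commuting squares with corners $X_n\times_X Y$, $Y$, $\prod_i X_{i,n}$, $\prod_i X_i$ need not be individually Cartesian. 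The exchange morphism above is assembled level-wise from the exchanges attached to these squares together with the lax monoidality of $f_n^!$.

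\textbf{Absorbing $\Box_!$.} To identify $\Box_!\, g_\bullet^*(f_i^!-)_i$ with $f^!g^*(-)_i$ I would apply the quasi-inverse of $f^!$, which by homological descent is $f_!$ restricted to Cartesian objects, and use that $f_!$ is insensitive to the left Cartesian projector (i.e.\ $f_!\cong f_!\circ\iota\Box_!$, since $f_!$ sends the unit $\mathcal{G}\to\iota\Box_!\mathcal{G}$ to an isomorphism, a standard feature of $\Box_!$ in the descent formalism). This reduces the claim to the identity $f_!\, g_\bullet^*(f_i^!-)_i\cong g^*(-)_i$ in $\DD'(\cdot)_Y$, which follows from the functoriality of the fibered multiderivator $\DD'$: the composite multicorrespondence computing $f_!\, g_\bullet^*(f_i^!-)_i$ is, since $X_\bullet\times_X Y$ is the relevant fibre product and the two maps $X_n\times_X Y\to X_i$ factoring either through $X_{i,n}$ or through $Y$ agree, isomorphic to the one computing $g^*(-)_i$; one then uses the descent equivalences $f_{i,!}f_i^!\cong\id$ to conclude. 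The hard part will be precisely this last step: making precise that $f_!$ absorbs $\Box_!$ in the \emph{multivalued} setting, so that the level-wise exchanges of the previous paragraph become isomorphisms after Cartesianization. For $l=1$ this is essentially contained in Lemma~\ref{LEMMACARTPROJ}; in general it requires combining that argument with the compatibility of the Cartesian projector with the tensor product, which is where the technical care is needed.
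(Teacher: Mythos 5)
Your proof is correct and follows essentially the same route as the paper's: establish that the $f_!,(f_{i,!})$-square (the left-adjoint companion of the one in the statement) is 2-commutative by functoriality of $\DD'$, observe $f_!\cong f_!\circ\iota\Box_!$ because $f^!$ lands in Cartesian objects (an adjunction argument, not merely a ``standard feature''), and deduce the claim by applying the descent equivalences $f_i^!,f^!$. The level-wise analysis in your middle paragraph and the justification of the vertical equivalences are not needed for the argument and the paper omits them.
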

\begin{proof}

The diagram 
\[ \xymatrix{ \DD'(\Delta^{\op})_{X_{1,\bullet}}^{} \times \cdots \times \DD'(\Delta^{\op})_{X_{l,\bullet}}^{} \ar[r]^-{g^*_\bullet}  \ar[d]^{(f_{i,!})} & \DD'(\Delta^{\op})_{X_\bullet \times_X Y}^{}  \ar[d]^{f_!}  \\
 \DD'(\cdot)_{X_1} \times \cdots \times \DD'(\cdot)_{X_l} \ar[r]_-{g^*} &  \DD'(\cdot)_{Y}    }  \]
 is 2-commutative, i.e.\@
 \[ f_! g^*_\bullet(-, \dots, -) \to  g^*(f_{i,!}-, \dots, f_{l,!}-) \]
 is an isomorphism.  We have $f_! \Box_! \cong f_!$ because $f^!$ has values in Cartesian objects, therefore also  
 \begin{equation}\label{eqbcbc} f_! \Box_! g^*_\bullet(-, \dots, -) \to  g^*(f_{i,!}-, \dots, f_{l,!}-) \end{equation}
 is an isomorphism.  Furthermore the $f_{i,!}$ and $f_!$ are equivalences when restricted to Cartesian objects because these restrictions are adjoint to the equivalences $f^!_i$, and $f^!$, respectively.
 Therefore also the exchange of (\ref{eqbcbc}) is an isomorphism. 
 \end{proof}

\begin{SATZ}\label{SATZLOCALBC}
In the situation of \ref{PARSETTINGLOCALITY2},
every tupel\footnote{If $\DD' \rightarrow \HH^{\cor}(\mathcal{M})$ is a fibered derivator (not a multiderivator) then $l=1$ is understood. 
} of morphisms $f_1, \dots, f_l \in \mathcal{M}$ in $k-C$ satisfies {\em (CH$(f_1, \dots, f_l)$)}. 
\end{SATZ}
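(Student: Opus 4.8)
The plan is to reduce the statement (CH$(f_1,\dots,f_l)$) for a tuple of $k-C$ morphisms to the corresponding statement about morphisms in $C$ via homological/cohomological descent, exactly in the spirit of the proofs of Theorems~\ref{SATZHEREDITYLOC2} and \ref{SATZHEREDITYLOC3}, but now carrying along the interplay between the $*$- and $!$-functors. First I would treat the two halves of (CH) separately. For the $(\otimes,!)$-exchange $G^*(f_1^!-,\dots,f_l^!-) \to F^! g^*(-,\dots,-)$ attached to a homotopy Cartesian square
\[ \xymatrix{ X \ar[r]^F \ar[d]_G & X' \ar[d]^g \\ Y_1,\dots,Y_l \ar[r]_{(f_i)} & Y_1',\dots,Y_l' } \]
I would, for each $i$, choose a cofibrant replacement $\widetilde{Y_i} \in \mathcal{S}^{\amalg,\Delta^{\op}}$ of $Y_i$ with $(\Delta^{\op},\widetilde{Y_i}) \to (\Delta^{\op},\delta(Y_i))$ as in \ref{PARRESOLUTION}, set $X_{i,\bullet} := \widetilde{Y_i} \widetilde\times_{Y_i} \delta(X_i)_{\bullet,0}$ (so that each $X_{i,\bullet,n}$ is weakly equivalent to $\widetilde{Y_i}_n$, hence in $\mathcal{M}^{(0)}$), and form the pulled-back simplicial objects so that the whole square is resolved by a square of simplicial objects point-wise in $C$ (using Lemma~\ref{LEMMAEXACTCOLIMS} that homotopy colimits commute with homotopy fiber products, so the resolution of a homotopy Cartesian square is again homotopy Cartesian). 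By homological descent (Proposition~\ref{PROPHODESC}) the vertical comparison functors $\DD'(\cdot)_{?} \to \DD'(\Delta^{\op})^{\cart}_{?_\bullet}$ are equivalences, so it suffices to check the exchange for the resolved square.

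The heart of the argument is then to commute the exchange with the Cartesian projectors $\Box_!$. Here I would invoke Lemma~\ref{LEMMACART}: its $2$-commutative diagram is precisely the assertion that $\Box_! g_\bullet^*(f_{1,\bullet}^!-,\dots)$ matches $f_\bullet^! g^*(-,\dots)$ up to the equivalences supplied by the $f_i^!$ and $f^!$ on Cartesian objects; combined with the point-wise validity of (CH$(f_{i,n})$) from Step~1 type reasoning (morphisms in $C$ with base in $\mathcal{M}^{(0)}$, handled by Lemma~\ref{LEMMAAXIOMSM0}), plus the commutation of $f_\bullet^!$ with left Cartesian projectors (Lemma~\ref{LEMMACARTPROJ}, which applies since (H1''$(f_\bullet)$), (H1''$(f_n)$) hold by Theorem~\ref{SATZHEREDITYLOC3} and Proposition~\ref{PROPHEREDITYLOC1}, and (H2$(f_n)$) by Theorem~\ref{SATZHEREDITYLOC2}), this will force the exchange for the resolved square, hence for the original one. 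The case of general diagrams/morphisms in $k-C$ (not just with base in $\mathcal{M}^{(0)}$) is reduced to the $\mathcal{M}^{(0)}$-base case by the same resolution, exactly as in Step~2 of the proof of Theorem~\ref{SATZHEREDITYLOC2}. For the $k-C$ base morphisms with $k\ge 1$, I would further induct on $k$: choose a $k$-atlas $U \to X$ (in $(k-1)-C$, w.l.o.g.\ a fibration) with $U \to Y'$ in $C$, apply $(\text{atlas})^!$ (conservative by Theorem~\ref{SATZHEREDITYLOC2}) and use base change together with the inductive hypothesis for $(k-1)-C$ and the $C$-case.

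For the second half, the $k=l=1$ exchange $F^* g^! \to G^! f^*$, I would run the dual resolution: pick a replacement $\widetilde{Y}$ of the target of $f$, resolve the square by cosimplicial/simplicial objects so that the relevant morphisms become point-wise in $C$, use cohomological descent (Proposition~\ref{PROPCOHODESC}) together with homological descent for the two different legs, and then commute the exchange past the (co)Cartesian projectors using Lemma~\ref{LEMMACARTPROJ} in both its left and right forms. The point-wise base case reduces, via Lemma~\ref{LEMMAO24} and the $C$-level statement in Lemma~\ref{LEMMAAXIOMSM0}, to axiom (CH) for morphisms in $C$ which is part of locality of $\DD \to \SSS^{\cor}$ by hypothesis.

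The main obstacle I anticipate is the bookkeeping of the mixed $*$/$!$ situation when passing through the projectors: unlike the pure $!$-case of Theorem~\ref{SATZHEREDITYLOC2}, here one must simultaneously know that $f_\bullet^!$ commutes with the \emph{left} Cartesian projector on the $!$-side while $g_\bullet^*$ behaves compatibly, and verifying that the two exchange $2$-morphisms (projection-formula exchange and base-change exchange) are genuinely intertwined by these projectors — rather than merely related up to a non-canonical isomorphism — requires carefully tracking that all the auxiliary comparison functors are the adjoints on Cartesian objects of the equivalences $f_i^!$, $f^!$. This is precisely what Lemma~\ref{LEMMACART} is designed to handle, so modulo a diagram chase checking the compatibility of the exchange with the coCartesian structure (as in the final paragraph of the proof of Theorem~\ref{SATZDUALITY}), the argument should go through without essentially new ideas.
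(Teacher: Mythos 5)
Your outline follows essentially the same architecture as the paper's proof — resolve the base objects by simplicial replacements, pass the exchange through the Cartesian projectors using Lemma~\ref{LEMMACART} and Lemma~\ref{LEMMACARTPROJ} (with the inputs supplied by Theorem~\ref{SATZHEREDITYLOC2}, Theorem~\ref{SATZHEREDITYLOC3}, and Proposition~\ref{PROPHEREDITYLOC1}), and induct on $k$ via atlases. But there is a genuine gap at the bottom of the reduction. You assert that the base case (``morphisms in $C$ with base in $\mathcal{M}^{(0)}$'') is ``handled by Lemma~\ref{LEMMAAXIOMSM0}.'' This is not so: Lemma~\ref{LEMMAAXIOMSM0} only gives (CH$(f)$) for homotopy Cartesian squares lying \emph{entirely} in $\mathcal{M}^{(0)}$, i.e.\@ all four corners, including the $g$-leg and $Z$. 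After you resolve the $Y_i$ by $\widetilde{Y}_i$ and run the atlas induction down to $k=0$, you are left with $f_{i,n}\colon X_{i,n}\to Y_{i,n}$ in $C$ with $X_{i,n},Y_{i,n}\in\mathcal{M}^{(0)}$, but the other corner $Z$ (resp.\@ its pullback) is still an arbitrary stack, so Lemma~\ref{LEMMAAXIOMSM0} does not apply. The paper handles exactly this residual case in its Step~1: $Z$ must be resolved as well, by a cofibrant replacement $\widetilde{Z}\in\mathcal{S}^{\amalg,\Delta^{\op}}$, after which one compares the categories $\DD'(\Delta)_{\delta(X)\times_{\delta(Y)}\widetilde{Z}^{\op}}^{\cocart}$ and $\DD'(\Delta)_{\delta(X\times_Y Z)^{\op}}^{\cocart}$ via cohomological descent, and verifies that $F_\bullet^!$ preserves coCartesian objects (which does follow from Lemma~\ref{LEMMAAXIOMSM0} applied point-wise, since each $F_n$ \emph{does} now lie entirely in $\mathcal{M}^{(0)}$). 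Without this extra resolution your induction has no sound base.

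Two smaller remarks. First, your appeal to Lemma~\ref{LEMMAEXACTCOLIMS} is not what is actually needed to conclude that the resolved square stays point-wise homotopy Cartesian; that follows from the pasting behaviour of the $\widetilde\times$ construction (Lemma~\ref{LEMMAHOFP}, 2.--3.), which is what the paper's Step~3 uses when it forms $\widetilde{Y}\widetilde\times_{\delta(Y)}\delta(W)$ etc. Second, your treatment of the $F^*g^!\to G^!f^*$ half by a ``dual resolution'' is fine and is in fact what the paper does, which simply observes that the other half of (CH) is formally dual (for $l=1$).
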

 
\begin{proof}
Let 
\[ \xymatrix{  W \ar[r]^F \ar[d]_G & Z \ar[d]^g \\ 
 X_1, \dots, X_l \ar[r]_{(f_i)} & Y_1, \dots, Y_l
} \] 
be a homotopy Cartesian diagram in which the $f_i$ are in $k-C$. We will show that  
\[ G^*( f_1^!-, \dots, f_k^!-) \rightarrow F^! g^*(-, \dots, -) \] 
is an isomorphism. The other statement is dual (for $l=1$ only). 

{\bf Step 1.} We first prove the special case in which the $f_i: X_i \rightarrow Y_i$ are morphisms in $\mathcal{M}^{(0)}$ which lies in $C$. 
Choose a replacement $\widetilde{Z} \in \mathcal{S}^{\amalg, \Delta^{\op}}$ of $Z$ with morphism 
 $(\Delta^{\op}, \widetilde{Z}) \rightarrow (\Delta^{\op}, \delta(Z))$ as in \ref{PARRESOLUTION}.
 
We may assume (cf.\@ Lemma~\ref{LEMMAAXIOMS}) that the above square is of the form  (setting $X:=\prod_i X_i$ and $Y:= \prod_i Y_i$)
\[  \xymatrix{ X \times_{Y} Z \ar[r]^-F \ar[d]_-G  & Z  \ar[d]^g  \\
X_1, \dots, X_l \ar[r]_{(f_i)} &    Y_1, \dots, Y_l  }
 \]
with  $f_i$ fibrations. 
Consider the diagram
 \[ \xymatrix{ 
 \DD'(\Delta)_{\delta(X) \times_{\delta(Y)}  \widetilde{Z}^{\op}}^{\cocart} \ar@{<-}[r]^-{R^*}_-\sim \ar@{<-}[d]^{F_\bullet^!} &  \ar@{<-}[d]^{\delta(F)^!} \DD'(\Delta)_{\delta(X \times_Y Z)^{\op}}^{\cocart} \ar@{<-}[r]^-{\delta(G)^*} &   \DD'(\Delta)_{\delta(X_1)^{\op}}^{\cocart} \times \cdots \times \DD'(\Delta)_{\delta(X_l)^{\op}}^{\cocart}   \ar@{<-}[d]^{(\delta(f_i)^!)}  \\
 \DD'(\Delta)_{\widetilde{Z}^{\op}}^{\cocart}  \ar@{<-}[r]_-{r^*}^-\sim  &   \DD'(\Delta)_{\delta(Z)^{\op}}^{\cocart}  \ar@{<-}[r]_-{\delta(g)^*} &  \DD'(\Delta)_{\delta(Y_1)^{\op}}^{\cocart} \times \cdots \times \DD'(\Delta)_{\delta(Y_l)^{\op}}^{\cocart} } \]
 in which the right hand side square is clearly equivalent to the square whose 2-commutativity is in question. 
 $F_\bullet^!$ preserves coCartesian objects by Lemma~\ref{LEMMAAXIOMSM0}, because the $F_n$ are morphisms in $\mathcal{M}^{(0)}$ and in $C$. 
 Since $R^*$ is an equivalence by cohomological descent it suffices to show that 
 \begin{equation} \label{eqbcleft} R^* \delta(G)^* \delta(f)^! \rightarrow R^* \delta(F)^! \delta(g)^* \end{equation}
 is an isomorphism. 
  Now the  left hand side square is 2-commutative because $R^*$ and $r^*$ are equivalences. 
  And the composite square is 2-commutative by applying Lemma~\ref{LEMMAAXIOMSM0} point-wise. Hence (\ref{eqbcleft}) is an isomorphism. 

{\bf Step 2.}  We continue proving the special case in which the $Y_i$ are in $\mathcal{M}^{(0)}$ but the $X_i$ are arbitrary. If $k=0$ also the $X_i$ are in $\mathcal{M}^{(0)}$ and we are in the special case considered in Step 1.
If $k \ge 1$ consider $k$-atlases $f'_i: U_i \rightarrow X_i$ (w.l.o.g.\@ a fibration) such $U_i \rightarrow Y_i$ is in $C$ and the diagram (setting $X:=\prod_i X_i$ and $U:=\prod_i U_i$): 
\[ \xymatrix{ 
W \times_X U \ar[r]^-{G'} \ar[d]_{F'} & U_1, \dots, U_l \ar[d]^{(f'_i)} \\
W \ar[r]^-G \ar[d]_F & X_1, \dots, Y_l \ar[d]^{(f_i)} \\
Z \ar[r]_-g  & Y_1, \dots, Y_l \\
 }\]
  Since $(F')^!$ is conservative (Theorem~\ref{SATZHEREDITYLOC2}) it suffices to show that 
 \[ (F')^! G^* (f_1^!-, \dots, f_l^!-) \rightarrow (F')^! F^! g^* (-, \dots, -) \]
 is an isomorphism. 
 Using the induction hypothesis this means that
 \[ (G')^* ((f'_1)^! f_1^! -, \dots, (f'_l)^! f_l^! -)  \rightarrow (F')^! F^! g^* (-, \dots, -) \]
 has to be an isomorphism, which is true by Step 1. 

{\bf Step 3.}  It remains to see the general case. W.l.o.g.\@ the $f_i$ are fibrations in $k-C$. Choose a replacement $\widetilde{Y}_i \in \mathcal{S}^{\amalg, \Delta^{\op}}$ of $Y_i$ with morphism 
 $(\Delta^{\op}, \widetilde{Y}_i) \rightarrow (\Delta^{\op}, \delta(Y_i))$ as in \ref{PARRESOLUTION}.
 
Consider the (point-wise homotopy Cartesian) square in $\mathcal{M}^{\Delta^{\op}}$ setting $\widetilde{Y} := \prod \widetilde{Y}_i$ and $Y := \prod Y_i$: 
\[  \xymatrix{ \widetilde{Y}\times_{\delta(Y)} \delta(W) \ar[r]^-{\widetilde{F}} \ar[d]_{\widetilde{G}}  & \widetilde{Y} \times_{\delta(Y)} \delta(Z)  \ar[d]^{\widetilde{g}}  \\
 \widetilde{Y}_1 \times_{\delta(Y_1)} \delta(X_1), \dots,  \widetilde{Y}_l \times_{\delta(Y_l)} \delta(X_l)  \ar[r]_-{(\widetilde{f}_i)}   & \widetilde{Y}_1, \dots, \widetilde{Y}_l  }
 \]
  Using Lemma~\ref{LEMMACART} and homological descent it suffices to show that the following square is 2-commutative 
 \[ \xymatrix{ \DD'(\Delta^{\op})_{ \widetilde{Y}\times_{\delta(Y)} \delta(W)}^{\cart} \ar@{<-}[d]_-{\Box_! \widetilde{G}^*} \ar@{<-}[r]^-{\widetilde{F}^!} &  
  \DD'(\Delta^{\op})_{\widetilde{Y} \times_{\delta(Y)} \delta(X)}^{\cart}  \ar@{<-}[d]^-{\Box_! \widetilde{g}^*}  \\
 \DD'(\Delta^{\op})_{\widetilde{Y}_1 \times_{\delta(Y_1)} \delta(Z)}^{\cart} \times \cdots \times  \DD'(\Delta^{\op})_{\widetilde{Y}_l \times_{\delta(Y_l)} \delta(Z)}^{\cart}
\ar@{<-}[r]_-{(\widetilde{f}_i^!)_i}   &   \DD'(\Delta^{\op})_{\widetilde{Y}_1}^{\cart}  \times \cdots \times  \DD'(\Delta^{\op})_{\widetilde{Y}_l}^{\cart}  } \]
 via the natural exchange, i.e.\@ that
  \[ \Box_! \widetilde{G}^* ( \widetilde{f}^!_1-, \dots, \widetilde{f}^!_l- )  \rightarrow  \widetilde{F}^!   \Box_! \widetilde{g}^* (-, \dots, -) \]
  is an isomorphism. 
 Since $\widetilde{F}^!$  commutes with the left Cartesian projector (Lemma~\ref{LEMMACARTPROJ} using Theorems~\ref{SATZHEREDITYLOC2}--\ref{SATZHEREDITYLOC3}) this follows from the fact that point-wise 
  \[ \widetilde{G}_n^* (\widetilde{f}_{1,n}^!-, \dots, \widetilde{f}_{l,n}^!-) \rightarrow \widetilde{F}_n^!  \widetilde{g}_{n}^*(-, \dots, -) \]
  is an isomorphism by Step 2. 
\end{proof}

\section{Coherent atlases }\label{SECTCOHATLAS}

\begin{PAR}\label{PARSETTINGEXT}
Let $(\mathcal{M}, \Fib, \mathcal{W})$ be a simplicial category with fibrant objects (\ref{PARCWFO}). 
For the construction of extensions of $*,!$-formalisms, or of six-functor-formalisms, we consider the following setting
\begin{itemize}
\item $\mathcal{M}^{(\old)} \subset \mathcal{M}$  a homotopy strictly full subcategory, closed under homotopy pull-backs;
\item $\mathcal{M}^{(\new)} \subset \mathcal{M}$  a homotopy strictly full subcategory, closed under homotopy pull-backs;
\item a class $C$ of morphisms in $h(\mathcal{M})$.
\end{itemize}
such that 
\begin{enumerate}
\item[(A1)] $C$ contains the isomorphisms, is stable under composition and homotopy pull-back;
\item[(A2)]
For a homotopy Cartesian diagram in $\mathcal{M}$
\[ \xymatrix{ W \ar[r]  \ar[d] & Y \ar[d] \\
Z \ar[r] & K }\]
 with $K \in \mathcal{M}^{(\new)}$ and $Y, Z \in \mathcal{M}^{(\mathrm{old})}$ also $W \in \mathcal{M}^{(\old)}$;
\item[(A3)] For any $X \in \mathcal{M}^{(\new)}$ there is a morphism $U \rightarrow X$ in $\mathcal{M}$, called {\bf atlas}, in $C$ with $U \in \mathcal{M}^{(\old)}$.
\end{enumerate}
\end{PAR}

Note that it is {\em not} assumed that $\mathcal{M}^{(\old)} \subset \mathcal{M}^{(\new)}$ and, in fact, the construction will be also applied in a case in which this is not the case (cf.\@ Example~\ref{MAINEXEXT2}).
By abuse of notation, we also denote by $C$ the preimage of $C$ in $\mathcal{M}$.  The importance the class $C$ of will not become clear until \ref{PARAXIOMSEXT}.
The construction will be applied exclusively in the following two settings: 

\begin{BEISPIEL}\label{MAINEXEXT1}
Let $(\mathcal{M}, \Fib, \mathcal{W})$ be as in \ref{PARSETTINGTV} and let $k \ge 1$ be an integer. 

Take $\mathcal{M}^{(\old)} := \mathcal{M}^{(k-1)}$ $((k-1)$-geometric stacks$)$ and $\mathcal{M}^{(\new)} := \mathcal{M}^{(k)}$ $(k$-geometric stacks$)$ and $C$ is
the class $(k-1)-C$.

(A1) is Proposition~\ref{PROPGEOMETRIC}, 1.

(A2) follows from Proposition~\ref{PROPGEOMETRIC}, 3.

(A3) is true (even with $U \in \mathcal{M}^{(\mathrm{0})}$) by definition of $k$-geometric. 
\end{BEISPIEL}

\begin{BEISPIEL}\label{MAINEXEXT2}With the notation as in Appendix~\ref{APPENDIXGEOMETRIC}, 
take $\mathcal{M} := \mathcal{SET}^{\mathcal{S}^{\op}}$ be the category of  ordinary pre-sheaves (with {\em trivial} structure as simplicial category of fibrant objects),
$\mathcal{M}^{(\old)} := h(\mathcal{S}^{\amalg})$ and $\mathcal{M}^{(\new)} := \iota(\mathcal{SH}^{(0)})$ and $C$ is
the class of local isomorphisms.

(A1) is clear.

(A2) By the geometricity of the site (cf.\@ Definition~\ref{DEFGEOMETRIC}) for each $Y, Z \in \mathcal{S}$ and $X \in \mathcal{SH}^{(0)}$  the fiber product $(h(Y \times Z)) \times_{X \times X} X \cong h(Y) \times_X  h(Z)$ is representable. 
Since in pre-sheaves fiber products commute with arbitrary coproducts the statement follows. 

(A3) By definition each object in $\iota(\mathcal{SH}^{(0)})$ is of the form $\iota L h(X)$ for some $X \in  \mathcal{S}^{\amalg}$. Thus we may take the unit $h(X) \rightarrow \iota L h(X)$.
\end{BEISPIEL}

Let $\overline{C} := C \cap \Fib$. This class is closed under (ordinary) pullback in the sense that the pullback of the morphisms in $\overline{C}$ along arbitrary morphisms exist and are in $\overline{C}$ again.
Furthermore, combining (A3) with fibrant replacement, for each $X \in \mathcal{M}^{(\new)}$ there is morphism $f: Y \rightarrow X$ in $\overline{C}$ with $Y \in \mathcal{M}^{(\old)}$. 

\begin{DEF}\label{DEFATLAS}Let $I$ be an inverse diagram with finite matching diagrams. 
Let $X \in (\mathcal{M}^{(\new)})^I$.
We call a morphism $U \Rightarrow X$ in $\mathcal{M}^{I}$ an {\bf atlas}, if $U \in (\mathcal{M}^{(\old)})^{I}$ and it is a $\overline{C}$-morphism in the sense of Definition~\ref{DEFCMORPH}. 
\end{DEF}

\begin{PAR}Let $I$ be as in Definition~\ref{DEFATLAS}.
Let $X \in (\mathcal{M}^{(\new)})^I$.
We will show that exists an  atlas of $X$ by
 applying the construction of Proposition~\ref{PROPLIMDIRECTED} to the class $\overline{C}$. 
\end{PAR}

\begin{PROP}\label{PROPCOV}
Let the situation be as in \ref{PARSETTINGEXT}. Let $I$ be an inverse diagram with finite matching diagrams then 
\begin{enumerate}
\item for every $X \in  (\mathcal{M}^{(\new)})^{I}$ there exists an atlas in the sense of Definition~\ref{DEFATLAS}.  It is point-wise  in $\overline{C}$. 
\item an atlas $U' \Rightarrow \iota^* X$ on any final subdiagram $\iota: I' \hookrightarrow I$ can be extended to an altas $U \Rightarrow X$;
\item any two atlases $U \Rightarrow X$, $U' \Rightarrow X$ can be dominated by a third.
\end{enumerate}
\end{PROP}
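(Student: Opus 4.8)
The plan is to prove all three parts by induction over the (finite or transfinite, but locally finite) structure of the inverse diagram $I$, using the machinery of Proposition~\ref{PROPLIMDIRECTED} applied to the class $\overline{C} := C \cap \Fib$. Since $\overline{C}$ is stable under ordinary pullback (as noted just before Definition~\ref{DEFATLAS}) and contains the fibrations that are in $C$, it is precisely the kind of class to which that Proposition applies. The key geometric inputs are (A3) together with fibrant replacement, which guarantees that for any single object $X \in \mathcal{M}^{(\new)}$ there is a morphism in $\overline{C}$ from an object of $\mathcal{M}^{(\old)}$, and (A2), which ensures that the matching-object constructions stay inside $\mathcal{M}^{(\old)}$.

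First I would set up the induction. For $I$ inverse with finite matching diagrams, one builds $U \Rightarrow X$ by transfinite recursion on the canonical filtration of $I$ (say by the degree function witnessing that $I^{\op}$ is directed). At a given object $i \in I$ of degree $d$, suppose $U$ has been defined on the subdiagram $I_{<d}$ of objects of lower degree, together with the structure map to $\iota^*X$. The matching object $M_i U$ is computed from this partial data, and by (A2) (applied iteratively to the finite matching diagram, whose shape is a finite inverse category with $K$-values $X(j)$ for $j$ below $i$) it lies in $\mathcal{M}^{(\old)}$; similarly $M_i X \in \mathcal{M}^{(\new)}$. One then needs to choose $U(i) \in \mathcal{M}^{(\old)}$ together with a map $U(i) \to X(i) \times_{M_i X} M_i U$ that is in $\overline{C}$ — this is exactly (A3) plus a fibrant replacement applied to the (homotopy) fiber product $X(i) \widetilde{\times}_{M_i X} M_i U$, which is in $\mathcal{M}^{(\new)}$ by the closure of $\mathcal{M}^{(\new)}$ under homotopy pullbacks. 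Because the composite $U(i) \to X(i) \times_{M_i X} M_i U \to X(i)$ is then in $\overline{C}$ (composition closure, (A1)), the resulting $U \Rightarrow X$ is a $\overline{C}$-morphism in the sense of Definition~\ref{DEFCMORPH}, i.e.\@ an atlas; point-wise membership in $\overline{C}$ follows from the same composite. This proves~1.

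For part~2, one runs the identical recursion but initialized with the given atlas $U' \Rightarrow \iota^*X$ on the final subdiagram $I' \hookrightarrow I$: since $I'$ is final (a cosieve), every object of $I'$ has all of its matching data inside $I'$, so the already-chosen values of $U'$ are compatible with the recursion, and one only needs to extend over the objects of $I \setminus I'$ exactly as in part~1. For part~3, given two atlases $U \Rightarrow X$ and $U' \Rightarrow X$, form the point-wise homotopy fiber product $V := U \widetilde{\times}_X U'$ (then fibrantly replaced so that its maps to $U$, $U'$ and $X$ are fibrations); since $U, U' \in (\mathcal{M}^{(\old)})^I$ and $\mathcal{M}^{(\old)}$ is closed under homotopy pullback, $V \in (\mathcal{M}^{(\old)})^I$, and $V \Rightarrow U$, $V \Rightarrow U'$ are in $\overline{C}$ as base changes of the $\overline{C}$-morphisms $U' \Rightarrow X$, $U \Rightarrow X$; hence $V \Rightarrow X$ is again an atlas dominating both. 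One must check that the homotopy-fiber-product construction, applied point-wise, actually yields a well-defined diagram in $\mathcal{M}^I$ — this is where one invokes the functorial path-object/functorial homotopy-fiber-product assumptions built into the simplicial-category-of-fibrant-objects hypothesis (\ref{PARCWFO}, \ref{PARFPSOBJ}, Definition~\ref{DEFFPSOBJ}).

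The main obstacle I expect is the bookkeeping in the inductive step: verifying that the matching objects $M_i U$ and $M_i X$ are really computed within $\mathcal{M}^{(\old)}$ and $\mathcal{M}^{(\new)}$ respectively, rather than merely in $\mathcal{M}$, and that the chosen $\overline{C}$-morphisms assemble into a genuine natural transformation of diagrams (not just a level-wise collection of maps). The subtlety is that a $\overline{C}$-morphism of diagrams in the sense of Definition~\ref{DEFCMORPH} is not simply a point-wise $\overline{C}$-morphism; one needs the relative matching maps $U(i) \to X(i) \times_{M_i X} M_i U$ to be in $\overline{C}$, and (A2) is exactly what is needed to keep these relative matching objects in the right subcategory so that (A3) can be applied. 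Once the interplay of (A1)–(A3) with the Reedy-style matching construction is pinned down, parts~2 and~3 are formal consequences of the same pattern, and the only remaining care is the functoriality of homotopy fiber products guaranteed by the simplicial structure.
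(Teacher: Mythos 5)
Your argument is correct and is essentially the paper's own proof unrolled: the paper disposes of all three parts in one line by applying Propositions~\ref{PROPLIMDIRECTED} and \ref{PROPCOVERDIRECTED} to the triple $(\mathcal{M}^{(\new)}, \mathcal{M}^{(\old)}, \overline{C})$, and your inductive construction over degrees, the use of relative matching objects to apply (A3) at each stage, the cosieve initialization for part~2, and the pullback construction for part~3 are exactly the content of the proof of Proposition~\ref{PROPCOVERDIRECTED} re-derived in place. Two small points: what (A2) is actually buying you is that the hypotheses of Proposition~\ref{PROPLIMDIRECTED} hold (namely that $\overline{C}$-pullbacks exist inside $\mathcal{M}^{(\new)}$ and stay in $\overline{C}$, via closure of $\mathcal{M}^{(\new)}$ under homotopy pullbacks together with (A1)), not directly that the absolute matching objects land in $\mathcal{M}^{(\old)}$; and in part~3 the fibrant replacement is unnecessary, since the strict level-wise pullback $U \times_X U'$ is already well-defined as a diagram (the maps to $X$ are level-wise fibrations) and its maps to $U$, $U'$ are automatically $\overline{C}$-morphisms as base changes.
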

\begin{proof}
Apply Propositions~\ref{PROPLIMDIRECTED} and \ref{PROPCOVERDIRECTED} to the categories $\mathcal{S}:=\mathcal{M}^{(\new)}$, $\mathcal{S}_0:=\mathcal{M}^{(\old)}$ and the class $\overline{C}$. 
\end{proof}

\begin{PAR}
We will define a 2-pre-multiderivator $\HH^{\cor,\cov}(\mathcal{M}^{(\new)})$ of homotopy multicorrespondences, as before, equipped with a fixed atlas in the sense of Definition~\ref{DEFATLAS}.
This is almost literally the same construction as the construction of $\SSS^{\cor, \comp}$ in \cite[Section~6]{Hor17}, where multicorrespondences are equipped with compactifications instead of atlases. 

More precisely: 
\end{PAR}

\begin{DEF}\label{DEFCCOV}
Let $\tau$ be a tree with symmetrization $\tau^S$, $I$ a (usual) diagram. 
We define a category with weak equivalences 
\[ \Cor_I^{\cov}(\tau^S) \]
whose objects are atlases $U \rightarrow X$ in $\mathcal{M}^{\tw (\tau \times I)}$, with $X \in (\mathcal{M}^{(\new)})^{\tw (\tau \times I)}$ admissible 
and with $U \in (\mathcal{M}^{(\old)})^{\tw (\tau \times I)}$ ($U$ does not have to be admissible!)
and the morphisms are diagrams
\begin{equation}\label{eqmorphatlas}  \vcenter{ \xymatrix{ U \ar[r] \ar[d] & X \ar[d] \\
U' \ar[r]  & X'  } }
\end{equation}
in which $X \rightarrow X'$ is a point-wise weak equivalence and $U \rightarrow U'$ is {\em arbitrary}. All morphisms are weak equivalences.

This defines a functor
\[ \Cor_I^{\cov}: \Delta_S \rightarrow \mathcal{CATW} \]
from symmetric trees to categories with weak equivalences. 
Note that any automorphism of $\tau^S$ acts on $\mathcal{M}^{\tw (\tau \times I)}$ because $\mathcal{M}$ is symmetric. Equivalently, observe
that functors $\tw (\tau \times I) \rightarrow \mathcal{M}$ are the same as functors of usual categories $(\tw (\tau \times I))^\circ \rightarrow \mathcal{M}$ (cf.\@ \ref{PARCIRC}) and that 
every functor $\tau^S \rightarrow (\tau')^S$ induces an obvious functor $(\tw (\tau \times I))^\circ \rightarrow (\tw (\tau' \times I))^\circ$.
\end{DEF}

We will need the following   
\begin{LEMMA}\label{LEMMACATLF}
\begin{enumerate}
\item
If $I$ is in $\Catlf$ (locally finite diagrams, cf.\@ Definition~\ref{DEFDIAGRAMCAT}) then $\tw I$ is inverse and locally finite and has finite matching diagrams. 
\item
If $I$ is in $\Dirlf$ then also ${}^{\uparrow \uparrow \downarrow}I$ and $\twwc{I}$ are in $\Dirlf$.
\end{enumerate}
\end{LEMMA}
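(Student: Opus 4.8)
The plan is to verify each of the two assertions by unwinding the definitions of the constructions $\tw I$, ${}^{\uparrow \uparrow \downarrow} I$, $\twwc I$ from \ref{PARTW} and the diagram categories $\Catlf$, $\Dirlf$, $\Inv$ from Definition~\ref{DEFDIAGRAMCAT} and Example~\ref{EXDIA}. The key point throughout is that a morphism in ${}^\Xi I$ decomposes (up to the ordering imposed by $\Xi$) into finitely many morphisms ``of type $j$'', each of which is essentially a single morphism of $I$ placed in the $j$-th slot; this reduces all finiteness/factorization properties of ${}^\Xi I$ to the corresponding properties of $I$ together with a bookkeeping of the fixed finite length of $\Xi$.

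For part 1, I would argue as follows. First, $\tw I$ is inverse: one exhibits a functor $\tw I \to \N_0$ by sending an object $i_1 \to i_2$ to (say) the number obtained from a chosen directedness function on $I$ applied to $i_1$ plus that applied to $i_2$ — more precisely, since $I$ locally finite need not be directed, I would instead use that $\tw I$ has no nonidentity endomorphisms and that morphisms strictly decrease a suitable combinatorial ``size'' invariant built from the (finite) factorization data of the two components, which is exactly the characterization of an inverse diagram once one checks $(\tw I)^{\op}$ is directed by this invariant. Second, local finiteness of $\tw I$: a morphism in $\tw I$ from $(i_1 \to i_2)$ to $(i_1' \to i_2')$ is a commutative square with a type-1 part $i_1 \to i_1'$ (direction $\downarrow$) and a type-2 part $i_2 \to i_2'$ (direction $\uparrow$, i.e.\@ $i_2' \to i_2$); any factorization of this square into nonidentities induces compatible factorizations of $i_1 \to i_1'$ and $i_2' \to i_2$ in $I$, and since $I \in \Catlf$ each of these factors in only finitely many ways, so the square factors in finitely many ways. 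Third, finite matching diagrams: the matching diagram of $\tw I$ at an object is built from the nonidentity morphisms \emph{out of} that object in $(\tw I)^{\op}$, equivalently the nonidentity morphisms \emph{into} it in $\tw I$; by the same factorization analysis these correspond to pairs of morphisms into $i_1$ (from type 1) and out of $i_2$ (from type 2) in $I$, modulo the commutativity constraint, and local finiteness of $I$ bounds these — here one has to be slightly careful, and the cleanest route is to invoke that the matching category of $\tw I$ at $(i_1 \to i_2)$ admits a finite cofinal subcategory, using the fixed word length $2$ of the arrow-direction sequence. I expect this finite-matching-diagram check to be the main obstacle, because ``finite matching diagrams'' is a condition on cofinality/finiteness of a comma-type category rather than a plain counting statement, and one must genuinely use the structure of $\tw{}$ (not just local finiteness of $I$) to pin it down; I would look for the argument already implicit in \cite[7.3]{Hor16} or \cite{Hor15} and cite it if available.

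For part 2, I would argue that the relevant closure properties of $\Dirlf$ are: stability under the ${}^\Xi(-)$ operations when $\Xi$ consists only of $\uparrow$'s and $\downarrow$'s in a pattern that keeps directedness. Recall ${}^{\uparrow} I = I^{\op}$, so for an inverse-flavoured word this would break directedness, but here the words ${\uparrow\uparrow\downarrow}$ and ${\downarrow\uparrow\uparrow\downarrow}$ (the latter being $\twwc{(-)}$ in the paper's notation $\tw{}\tw{}$ read off the macro $\twwc$) are chosen precisely so that the ``net effect'' preserves directedness. Concretely: if $I \in \Dirlf$ then $I^{\op} \in \Invlf$, and one checks directly that ${}^{\uparrow\uparrow\downarrow} I$ and $\twwc I$ admit functors to $\N_0$ whose preimages of identities consist of identities — this functor is assembled by summing the chosen directedness functions of $I$ (resp.\@ of $I^{\op}$) over the slots with matching variance, and the ``mixed'' words above are exactly those for which these sums are compatible with the morphisms of ${}^\Xi I$. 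Local finiteness then follows as in part 1: morphisms of ${}^\Xi I$ decompose into finitely many single-slot morphisms of $I$, each factoring finitely since $I \in \Catlf \supseteq \Dirlf$. I would carry this out slot by slot, checking the sign/variance bookkeeping once and for all; the only delicate point is making sure the specific words ${\uparrow\uparrow\downarrow}$ and ${\downarrow\uparrow\uparrow\downarrow}$ really do give the variance pattern claimed, which is a short direct verification against the definition of morphisms in \ref{PARTW}, and is where a sign error would most easily creep in — so I would double-check it against the examples ${}^{\downarrow\downarrow} I = I \times_{/I} I$ and $\tw I$ given after \ref{PARTW}.
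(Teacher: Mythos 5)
The paper does not prove this lemma; the body of its proof is a citation to \cite[Lemma~6.1]{Hor17}, so the only question is whether your argument is sound. It has two genuine gaps. The first concerns ``finite matching diagrams'' in part~1, where you reverse the direction, and here the direction matters. In the paper's convention (cf.\ the definition of $M_i$ in Appendix~\ref{APPENDIXINVERSE} as the subcategory of $i \times_{/I} I$ of non-identity morphisms), $M_\alpha$ for $\alpha = (i_1 \to i_2)$ consists of non-identity morphisms \emph{out of} $\alpha$ in $\tw I$. Such a morphism is a triple $(a\colon i_1 \to i_1',\ c\colon i_1' \to i_2',\ b\colon i_2' \to i_2)$ with $bca = (i_1 \to i_2)$, i.e.\ a length-three factorisation of the \emph{single arrow} $i_1 \to i_2$ allowing identities; local finiteness of $I$ makes $M_\alpha$ literally a finite category, and no cofinality device is needed. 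You instead take non-identity morphisms \emph{into} $\alpha$ (the latching direction), and that category really is infinite in simple examples: for $I = \N_0$ and $\alpha = (0 \to 0)$, every $(0 \to n)$ maps nontrivially to $\alpha$ in $\tw I$, giving infinitely many objects with no finite cofinal subcategory. So as written that step fails; it is not merely ``slightly delicate''.

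The second gap is in part~2: a degree function obtained by ``summing the chosen directedness functions over the slots with matching variance'' does not land in $\N_0$. For $\twwc I$ it amounts to $d(i_1) - d(i_2) - d(i_3) + d(i_4)$, which is negative already at $(0 \to 5 \to 5 \to 6)$ over $I = \N_0$. The resource you have not used is that along a chain $i_1 \to \cdots \to i_n$ one has $d(i_1) \le \cdots \le d(i_n)$; exploiting this by weighting the terminal $\downarrow$-slot fixes the problem. For instance $D := d(i_1) - d(i_2) - d(i_3) + 3\,d(i_4)$ satisfies $D \ge d(i_4) \ge 0$ on $\twwc I$ and strictly increases along non-identity morphisms, and $D := -d(i_1) - d(i_2) + 2\,d(i_3)$ does the job for ${}^{\uparrow\uparrow\downarrow} I$. (More generally, the relevant condition on $\Xi$ is $\Xi_n = \downarrow$, not that the word be ``mixed'' in some informal sense; ${}^{\downarrow\uparrow}I = \tw I$ is \emph{not} directed even for $I = \N_0$, which is why part~1 asserts ``inverse'' rather than ``directed''.) Without such a correction the directedness claim is not established. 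The remaining points — inverse-ness and local finiteness of $\tw I$, local finiteness in part~2 — are conceptually right, but you should name the invariant precisely as $\ell(i_1 \to i_2)$, the maximal length of a factorisation of the arrow $i_1 \to i_2$ into non-identities (not an invariant ``of the two components''), and, when passing from factorisations of a square in $\tw I$ to factorisations of its edges in $I$, account for the identity factors that appear and for the bound on lengths coming from $\ell$.
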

\begin{proof}\cite[Lemma~6.1]{Hor17}.
\end{proof}

\begin{LEMMA}\label{LEMMAPROPCONSTRSYMMULTI2}
 Let $I \in \Catlf$.
\begin{enumerate}
\item The forgetful functor
\[  \Cor_I^{\cov}(\tau^S)_{(U_o \rightarrow X_o)_{o \in \tau}}[\mathcal{W}^{-1}_{(U_o \rightarrow X_o)_{o \in \tau}}] \rightarrow  \Cor_I(\tau^S)_{(X_o)_{o \in \tau}}[ \mathcal{W}^{-1}_{(X_o)_{o \in \tau}}] \]
(cf.\@ Definition~\ref{DEFCOR}) is an equivalence. 
\item The strict functor $\tau^S \mapsto \Cor_I^{\cov}(\tau^S)$ satisfies the axioms 1 and 2 of \cite[Proposition~A.3]{Hor17}.
\end{enumerate}
\end{LEMMA}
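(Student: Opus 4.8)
\textbf{Proof plan for Lemma~\ref{LEMMAPROPCONSTRSYMMULTI2}.}

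The plan is to mirror closely the proof of the analogous statement for compactifications in \cite[Section~6]{Hor17}, with the class $\overline{C}$ of (fibrant) $C$-morphisms playing the role played there by the open immersions / compactification data. For part~1, the key observation is that the forgetful functor $(U \to X) \mapsto X$ admits a ``fibrewise'' section after localization: given an admissible $X \in (\mathcal{M}^{(\new)})^{\tw(\tau \times I)}$ together with the prescribed objects $U_o$ on the source and destination objects $o \in \tau$, one applies Proposition~\ref{PROPCOV} to produce an atlas $U \Rightarrow X$ over the inverse diagram $\tw(\tau \times I)$ --- which is inverse, locally finite, and has finite matching diagrams by Lemma~\ref{LEMMACATLF}, 1., using $I \in \Catlf$ --- and moreover, by Proposition~\ref{PROPCOV}, 2., one can prescribe $U$ on the final subdiagram consisting of the $\tw o$ for $o$ ranging over the objects of $\tau$. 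First I would check that this assignment is functorial up to the weak equivalences defining $\mathcal{W}$ (morphisms~(\ref{eqmorphatlas}) with $X \to X'$ a point-wise weak equivalence and $U \to U'$ arbitrary), which is immediate from the ``arbitrary'' clause: any two atlases of the same $X$ with the same boundary data are dominated by a third (Proposition~\ref{PROPCOV}, 3.), hence become isomorphic after localizing at $\mathcal{W}$. Then the composite with the forgetful functor is, up to natural weak equivalence, the identity in both directions, giving the claimed equivalence of localized categories.

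For part~2, Axiom~1 of \cite[Proposition~A.3]{Hor17} --- essential surjectivity of $\Cor_I^{\cov}(\tau^S) \to \prod_m \Cor_I^{\cov}(\Delta_{1,n_m}^S)$ on objects, plus the compatibility with the tree operations --- holds by construction, exactly as for $\Cor_I(\tau^S)$ in Lemma~\ref{LEMMAPROPCONSTRSYMMULTI1}: an atlas over $\tw(\tau \times I)$ restricts to atlases over the pieces $\tw(m \times I)$, and conversely a compatible family of atlases over the pieces can be glued (using again the extension property Proposition~\ref{PROPCOV}, 2., along the final subdiagrams, together with the fact that $\overline{C}$ is stable under the ordinary pullbacks computing the iterated fibre products). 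For Axiom~2, one has to show that
\[ \Cor^{\cov}_{I, \{U_o \to X_o\}}(\tau^S)[\mathcal{W}^{-1}] \longrightarrow \prod_m \Cor^{\cov}_{I, \{U_o \to X_o\}}(\Delta_{1,n_m}^S)[\mathcal{W}^{-1}] \]
is an equivalence. Here the natural strategy is to combine part~1 of the present Lemma with Lemma~\ref{LEMMAPROPCONSTRSYMMULTI1}: on the level of the underlying admissible diagrams $X$ (forgetting $U$) the corresponding functor is already an equivalence by Lemma~\ref{LEMMAPROPCONSTRSYMMULTI1}, so by part~1 applied both to $\tau$ and to each $\Delta_{1,n_m}$ the functor on the atlas-enhanced categories is an equivalence as well, provided the boundary data $\{U_o \to X_o\}$ match up under restriction --- which they do, since the source/destination objects of the generating morphisms $m$ are precisely the source/destination objects of $\tau$.

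The main obstacle I anticipate is the careful bookkeeping in the extension-and-gluing step for Axiom~1: one must make sure that the atlas produced on $\tw(\tau \times I)$ genuinely restricts to the \emph{given} atlases on the pieces, not merely to weakly equivalent ones, or else arrange the localization so that this distinction is harmless. This is exactly where Proposition~\ref{PROPCOV}, 2.\ (extension of an atlas along a final subdiagram) and Proposition~\ref{PROPCOV}, 3.\ (domination of two atlases by a third) are used in tandem, and where the hypothesis $I \in \Catlf$ enters through Lemma~\ref{LEMMACATLF} to guarantee that $\tw(\tau \times I)$ is of the form required by Proposition~\ref{PROPCOV} (inverse, locally finite, finite matching diagrams). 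The rest --- stability of $\overline{C}$ under the relevant pullbacks, functoriality up to $\mathcal{W}$, and the two-out-of-three manipulations in the localized categories --- is routine and parallel to \cite[Section~6]{Hor17}.
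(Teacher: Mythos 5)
Your proposal correctly identifies the key ingredients --- Proposition~\ref{PROPCOV} (extension of atlases along final subdiagrams and domination of two atlases by a third), Lemma~\ref{LEMMACATLF} (to see that $\tw(\tau \times I)^\circ$ is an inverse diagram with finite matching diagrams when $I \in \Catlf$), and, for part~2, the reduction to Lemma~\ref{LEMMAPROPCONSTRSYMMULTI1} via part~1 --- and your treatment of part~2 coincides with the paper's. However, for part~1 there is a genuine gap in the route you sketch: you propose to build a quasi-inverse ``section'' by choosing atlases, claim it is ``functorial up to weak equivalences'' because any two atlases are dominated by a third, and then conclude that the composites are ``up to natural weak equivalence, the identity.'' None of these three assertions is established. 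Picking an atlas for each object is not a functor, and domination tells you the outputs are isomorphic in the localization but does not by itself give you a coherent functor on the localized category, nor a natural isomorphism with the identity. This is exactly the hard part, and it is not resolved by more careful bookkeeping alone.

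The paper handles this by two moves you are missing. First, it observes that since the weak equivalences on the source are precisely the preimages of weak equivalences on the target, it suffices to prove that $\Cor_I^{\cov}(\tau^S)[(\mathcal{W}')^{-1}] \to \Cor_I(\tau^S)$ is an equivalence, where $\mathcal{W}'$ is the smaller class of morphisms whose underlying $X \to X'$ is an \emph{identity} --- so that the target category is \emph{not} localized at all. Second, it invokes Lemma~\ref{LEMMALOC}: a functor that is surjective on objects and morphisms, and whose class $\mathcal{W}'$ of preimages of identities satisfies the two lifting axioms (i) and (ii), induces an equivalence after localizing at $\mathcal{W}'$. The hypotheses of Lemma~\ref{LEMMALOC} are then verified exactly by the finality/extension arguments you describe, applied to the specific diagram shapes $\Delta_1$, $\Box$, $\righthalfcup$, and the diamond shapes. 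So the finality/extension idea is correct, but Lemma~\ref{LEMMALOC}, together with the $\mathcal{W}'$-reduction that makes its target un-localized, is the mechanism that turns that idea into an equivalence of localized categories; without it, your quasi-inverse argument does not close.
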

\begin{proof}
1.\@ By definition of weak equivalences on the left hand side it suffices to see that we have an equivalence
\[  \Cor_I^{\cov}(\tau^S)_{(\overline{X}_o \rightarrow X_o)_{o \in \tau}}[(\mathcal{W}')^{-1}] \rightarrow  \Cor_I(\tau^S)_{(X_o)_{o \in \tau}} \]
where $\mathcal{W}'$ is the preimage of the identities, i.e.\@ morphisms (\ref{eqmorphatlas}) in which the morphism $X \rightarrow X'$ is the identity. 
We will apply Lemma~\ref{LEMMALOC} below and have to show that the above functor is surjective on objects and morphisms and satisfies the axioms (i) and (ii). 

 The subdiagram \[ \bigcup_{\sigma \in \tau} \tw I \hookrightarrow \tw (\tau \times I)^\circ \] 
 is final (cf.\@ \ref{PARCIRC} for the notation). 
Hence
 any partial atlas on the left hand side can be extended by Proposition~\ref{PROPCOV}. Note that by Lemma~\ref{LEMMACATLF} the diagram $\tw I$ and thus also $(\tw (\tau \times I))^\circ$
 is an inverse diagram with finite matching diagrams for any tree or symmetric tree $\tau$. Thus the functor is surjective on objects. The same argument for
 \[ \bigcup_{o \in \tau} (\tw I) \times \Delta_1 \hookrightarrow (\tw (\tau \times I))^\circ \times \Delta_1 \]
shows that the functor is surjective on morphisms.  
The same argument for 
 \[ (\bigcup_{o \in \tau} (\tw  I) \times \Box) \cup ((\tw (\tau \times I))^\circ \times \righthalfcup) \hookrightarrow (\tw (\tau \times I))^\circ \times \Box \]
 shows axiom (i). 
The same argument for
 \[ (\bigcup_{o \in \tau} (\tw  I)  \times \mathbin{\rotatebox[origin=c]{45}{$\boxtimes$}}) \cup ( (\tw (\tau \times I))^\circ \times \mathbin{\rotatebox[origin=c]{45}{$\Box$}}) \hookrightarrow (\tw (\tau \times I))^\circ \times \mathbin{\rotatebox[origin=c]{45}{$\boxtimes$}} \]
 shows axiom (ii). Here $\mathbin{\rotatebox[origin=c]{45}{$\Box$}}$ and $\mathbin{\rotatebox[origin=c]{45}{$\boxtimes$}}$ denote the diagrams
  \[ \xymatrix{ &  \ar[ld] \ar[rd] \\
   &  &  \\
  &  \ar[lu] \ar[ru] } \qquad 
 \xymatrix{ &  \ar[ld] \ar[rd] \\
   &  \ar[u] \ar[d] \ar[l] \ar[r] &  \\
  &  \ar[lu] \ar[ru] }.\]
  Note that $F(f) = F(f')$ implies that the underlying diagram (without atlas) can be extended from $\mathbin{\rotatebox[origin=c]{45}{$\Box$}}$ to $\mathbin{\rotatebox[origin=c]{45}{$\boxtimes$}}$in a trivial way. 

2.\@ Using the equivalence of 1.\@ and the surjectivity on objects of the forgetful functor just established this follows from Lemma~\ref{LEMMAPROPCONSTRSYMMULTI1}.
\end{proof}

\begin{LEMMA}\label{LEMMALOC}
Let $F: \mathcal{C} \rightarrow \mathcal{D}$ be a functor which is surjective on objects and morphisms. 
Denote by $\mathcal{W}$ the class of morphisms in $\mathcal{C}$ that are mapped to an identity in $\mathcal{D}$. 
Assume that 
\begin{enumerate}
\item[(i)] A solid diagram of the form 
\[ \xymatrix{  \ar@{.>}[r]^{w'} \ar@{.>}[d]_{f'} &  \ar[d]^f \\
  \ar[r]_{w} &  }\]
with $w \in \mathcal{W}$ can always be completed to a commutative diagram as indicated such that $w' \in \mathcal{W}$. 
\item[(ii)] A solid diagram of the form 
 \[ \xymatrix{ &  \ar[ld]_{w} \ar[rd]^f \\
   &  \ar@{.>}[u]|{w''} \ar@{.>}[d]|{w'''} \ar@{.>}[l] \ar@{.>}[r] &  \\
  &  \ar[lu]^{w'} \ar[ru]_{f'} } \]
 in which $w, w' \in \mathcal{W}$ and such that $F(f) = F(f')$ can always be completed to a commutative diagram as indicated.
\end{enumerate}
Then the functor $F$ induces an equivalence $\mathcal{C}[\mathcal{W}^{-1}] \cong \mathcal{D}$. 
\end{LEMMA}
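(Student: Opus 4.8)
\textbf{Proof proposal for Lemma~\ref{LEMMALOC}.}

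The plan is to exhibit a quasi-inverse to the induced functor $\bar{F}: \mathcal{C}[\mathcal{W}^{-1}] \to \mathcal{D}$ by making choices, and then verify that the composites are naturally isomorphic to the identities. Since $F$ is surjective on objects, for each object $d \in \mathcal{D}$ I would choose a lift $G(d) \in \mathcal{C}$ with $F(G(d)) = d$ (for objects in the image of a chosen section one can take $G(d)$ compatibly, but this is not needed). Since $F$ is surjective on morphisms, for each morphism $\phi: d \to d'$ in $\mathcal{D}$ I would choose a lift $G(\phi): G(d) \to G(d')$ with $F(G(\phi)) = \phi$. The first point to check is that $G$ extends to a functor $\mathcal{D} \to \mathcal{C}[\mathcal{W}^{-1}]$: given composable $\phi, \psi$ in $\mathcal{D}$, the morphisms $G(\psi)\circ G(\phi)$ and $G(\psi\circ\phi)$ both lift $\psi\circ\phi$, and I would use axiom (ii) — applied with $w = w' = \id$ — to produce a zigzag of $\mathcal{W}$-morphisms connecting them, hence an equality in $\mathcal{C}[\mathcal{W}^{-1}]$; similarly $G(\id_d)$ lifts $\id_d$, and axiom (ii) with $w=w'=\id$, $f = G(\id_d)$, $f' = \id_{G(d)}$ compares it to $\id_{G(d)}$. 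So $G$ is a functor, and it clearly descends (it is already defined on $\mathcal{D}$, which has no formal inverses to add) to $G: \mathcal{D} \to \mathcal{C}[\mathcal{W}^{-1}]$.

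Next, $\bar{F}\circ G = \id_{\mathcal{D}}$ holds strictly on objects and morphisms by construction, since $F(G(d)) = d$ and $F(G(\phi)) = \phi$; so that composite is literally the identity functor. The remaining and main task is to produce a natural isomorphism $\eta: \id_{\mathcal{C}[\mathcal{W}^{-1}]} \Rightarrow G\circ\bar{F}$. For an object $c \in \mathcal{C}$, both $c$ and $G(F(c))$ lie over $F(c)$; I want a morphism $c \to G(F(c))$ in $\mathcal{C}[\mathcal{W}^{-1}]$, and the natural candidate is a $\mathcal{W}$-morphism (hence an isomorphism in the localization). Here is where I would use surjectivity on morphisms together with axiom (ii) more carefully: choose any lift of $\id_{F(c)}: F(c) \to F(c)$ starting at $c$; it is a morphism $c \to c''$ with $F(c'') = F(c)$, and iterating / combining with the identity lift $\id_c$ via axiom (ii) one pins down, up to $\mathcal{W}$, a canonical isomorphism class of maps $c \to G(F(c))$ in $\mathcal{C}[\mathcal{W}^{-1}]$. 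Concretely I expect $\eta_c$ to be the image in $\mathcal{C}[\mathcal{W}^{-1}]$ of a chosen $\mathcal{W}$-morphism, and the fact that it is well-defined independent of choices is exactly what axiom (ii) with $w = w' = \id$ buys.

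The heart of the argument — and the step I expect to be the main obstacle — is \emph{naturality} of $\eta$, i.e.\ that for $g: c \to c_1$ in $\mathcal{C}$ the square
\[ \xymatrix{ c \ar[r]^-{g} \ar[d]_-{\eta_c} & c_1 \ar[d]^-{\eta_{c_1}} \\ G F(c) \ar[r]_-{GF(g)} & G F(c_1) } \]
commutes in $\mathcal{C}[\mathcal{W}^{-1}]$. To check this I would first reduce, using the calculus afforded by the axioms, to the case where $g$ itself is in $\mathcal{W}$ and to the case where $g$ lies in a chosen section — every morphism of $\mathcal{C}$ being, up to $\mathcal{W}$-zigzags supplied by axioms (i) and (ii), a composite of such. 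For $g \in \mathcal{W}$ one has $F(g) = \id$, so $GF(g)$ is (the localization image of) a lift of an identity and is itself invertible with trivial $\bar F$-image; axiom (i) then lets me fill in the commuting square by completing
\[ \xymatrix{ c \ar@{.>}[r] \ar@{.>}[d]_-{g'} & G F(c) \ar[d] \\ c_1 \ar[r]_-{} & G F(c_1) }\]
with $g' \in \mathcal{W}$, and axiom (ii) identifies the two resulting composites in $\mathcal{C}[\mathcal{W}^{-1}]$. For $g$ a lift of a section morphism, $GF(g)$ differs from $g$ only by the $\mathcal{W}$-corrections $\eta_c, \eta_{c_1}$, and again (ii) closes the diagram. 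Assembling these cases — and checking that the reductions are compatible — is the genuinely fiddly part; everything else is formal manipulation of the localized category. Once naturality is established, $\eta$ is a natural isomorphism (each $\eta_c$ being the image of a $\mathcal{W}$-morphism, hence invertible in $\mathcal{C}[\mathcal{W}^{-1}]$), and together with $\bar F G = \id_{\mathcal{D}}$ this shows $\bar F$ is an equivalence.
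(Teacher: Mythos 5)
The paper does not give its own argument for this lemma --- the proof is outsourced to \cite[Lemma~6.19]{Hor17} --- so I cannot compare your route with the paper's directly. Your overall strategy (build a quasi-inverse $G:\mathcal{D}\to\mathcal{C}[\mathcal{W}^{-1}]$ by choosing lifts, check $\bar F G=\id$ on the nose, and produce $\eta:\id\Rightarrow G\bar F$ from lifts of identities) is the right one, and your use of axiom~(ii) with $w=w'=\id$ to make $G$ functorial is correct. One remark you should make explicit: ``surjective on morphisms'' has to be read as \emph{full} (surjective on each hom-set), not merely $\Mor(\mathcal{C})\twoheadrightarrow\Mor(\mathcal{D})$; otherwise the lemma is false (take $\mathcal{C}$ discrete on two objects, $\mathcal{D}=\{\cdot\}$), and in any case you silently use fullness every time you lift a morphism with a prescribed source and target.

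The place where your proposal has a genuine gap is the naturality of $\eta$. The reduction you propose --- ``every morphism of $\mathcal{C}$ being, up to $\mathcal{W}$-zigzags supplied by axioms~(i) and~(ii), a composite of $\mathcal{W}$-morphisms and section morphisms'' --- is not justified and is in danger of being circular, since the decomposition $g=\eta_{c_1}^{-1}\circ GF(g)\circ\eta_c$ is precisely the naturality you are trying to prove; moreover the two cases you name do not obviously exhaust all morphisms of $\mathcal{C}$. No reduction is needed: for any $g:c\to c_1$ in $\mathcal{C}$, the two composites $GF(g)\circ\eta_c$ and $\eta_{c_1}\circ g$ are honest morphisms of $\mathcal{C}$ from $c$ to $GF(c_1)$ with the same $F$-image $F(g)$, so a single application of axiom~(ii) with $T=B=L=c$, $w=w'=\id_c$, $R=GF(c_1)$ produces $w''\in\mathcal{W}$ with $(GF(g)\circ\eta_c)\circ w''=(\eta_{c_1}\circ g)\circ w''$, and since $w''$ is invertible in $\mathcal{C}[\mathcal{W}^{-1}]$ this gives naturality directly (naturality with respect to formal inverses $Q(w)^{-1}$ then follows formally). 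Note that this streamlined argument never uses axiom~(i) at all; your invocation of it to ``fill in the commuting square'' in the $g\in\mathcal{W}$ case is unclear and unnecessary.
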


 \begin{proof} \cite[Lemma~6.19]{Hor17}.
 \end{proof}

\begin{DEF}\label{DEFHCORCOV}
We define a symmetric 2-pre-multiderivator $\HH^{\cor, \cov}(\mathcal{M}^{(\new)})$ with domain $\Catlf$. Let $I \in \Catlf$ be a locally finite diagram. The 2-multicategory $\HH^{\cor, \cov}(\mathcal{M}^{(\new)})(I)$ is the symmetric 2-multicategory of \cite[Proposition~A.3]{Hor17}  constructed from the functor $C_I^{\cov}$ (cf.\@ Definition~\ref{DEFCCOV}).
Those 2-multicategories are equipped with strict pull-back functors
\[ \alpha^*: \HH^{\cor, \cov}(\mathcal{M}^{(\new)})(J) \rightarrow \HH^{\cor, \cov}(\mathcal{M}^{(\new)})(I). \]
for each functor $\alpha: I \rightarrow J$. For a natural transformation $\mu: \alpha \Rightarrow \beta$ we get a pseudo-natural transformation 
\[ \alpha^* \Rightarrow \beta^* \]
as follows: $\mu$ might be seen as a functor $I \times \Delta_1 \rightarrow J$ and so each admissible object with atlas $U \rightarrow X$ in $\mathcal{M}^{\tw I} $ gives rise to an admissible object with altlas
 $\mu^*U \rightarrow \mu^* X$ in $\mathcal{M} ^{\tw (\Delta_1 \times I)}$ which constitutes a 1-morphism from $\alpha^* X$ to $\beta^* X$. For each 1-morphism $\xi: X \rightarrow Y$ the 
 square
 \[ \xymatrix{ \alpha^*X \ar[r]^{\alpha^* \xi} \ar[d]_{\mu^* X} & \alpha^*Y \ar[d]^{\mu^*Y} \\
 \beta^*X \ar[r]_{\beta^* \xi} & \beta^*Y } 
  \]
  commutes up to a uniquely determined 2-isomorphism using the pseudo-functor
  \[ \Delta_1^2 \rightarrow \HH^{\cor, \cov}(\mathcal{M}^{(\new)})(I) \] 
  obtained from $\mu^* \xi \in \mathcal{M} ^{\tw (\Delta_1^2 \times I)}$ via Lemma~\ref{LEMMAPF}, 1.\@ and the equivalence of categories from Lemma~\ref{LEMMAPROPCONSTRSYMMULTI2}, 1.
One checks that this defines a pseudo-functor:
\[ \Fun(I, J) \rightarrow \Fun^{\mathrm{strict}}(\HH^{\cor, \cov}(\mathcal{M}^{(\new)})(J), \HH^{\cor, \cov}(\mathcal{M}^{(\new)})(I)) \]
where $\Fun^{\mathrm{strict}}$ is the 2-category of strict 2-functors, pseudo-natural transformations, and modifications.
\end{DEF}

We can read off the construction: 
\begin{itemize}
\item The objects of $\HH^{\cor, \cov}(\mathcal{M}^{(\new)})(I)$ are morphisms $U \rightarrow X$ in $\mathcal{M}^{\tw I}$ such that $X \in (\mathcal{M}^{(\new)})^{\tw I}$ is admissible and $U \rightarrow X$ is an atlas in the sense of Definition~\ref{DEFATLAS}.
\item Every 1-multimorphism $(U_1 \rightarrow X_1), \dots, (U_n \rightarrow X_n) \rightarrow (U_Y \rightarrow Y)$ is 2-isomorphic to one of the form 
\[ \xymatrix{ & & & U \ar[dd] \ar[llld] \ar[ld] \ar[rd] \\
U_{1}  \ar[dd] &  & U_{n}  \ar[dd]& & U_Y \ar[dd]   \\
 & & & A \ar[llld] \ar[ld] \ar[rd] \\
X_{1} & \dots & X_{n} & ; & Y. }  \]
where the underlying multicorrespondence is as in \ref{BEISPIELMULTICOR} and $U \rightarrow A$ is an atlas in the sense of Definition~\ref{DEFATLAS}.
\item 2-morphisms between the previous multimorphisms are given by 
commutative squares
\[ \xymatrix{ U \ar[r] \ar[d] & U' \ar[d]  \\
A \ar[r] & A'  }\]
in which $A \rightarrow A'$ is a point-wise weak equivalence and $U \rightarrow U'$ is arbitrary, 
compatible with the morphisms to the $(U_i \rightarrow X_i)$ and $(U_Y \rightarrow Y)$. Finally, all morphisms are formally inverted. 
As category of 1-morphsims this gives back --- up to equivalence --- the corresponding homotopy category of multicorrespondences forgetting all atlases. 
\item the action of the symmetric group is given by permuting the morphisms $A \rightarrow X_i$ together with their atlases. 
\end{itemize}

\begin{PROP}\label{PROPEQUIV}
The forgetful strict functor
\[ \HH^{\cor, \cov}(\mathcal{M}^{(\new)}) \rightarrow \HH^{\cor}(\mathcal{M}^{(\new)}) \]
is an equivalence of 2-pre-multiderivators with domain $\Catlf$. 
\end{PROP}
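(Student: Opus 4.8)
The plan is to prove that the forgetful strict functor
\[ \HH^{\cor,\cov}(\mathcal{M}^{(\new)}) \to \HH^{\cor}(\mathcal{M}^{(\new)}) \]
is an equivalence of 2-pre-multiderivators with domain $\Catlf$ by checking, for each locally finite diagram $I$, that the induced strict 2-functor
\[ \HH^{\cor,\cov}(\mathcal{M}^{(\new)})(I) \to \HH^{\cor}(\mathcal{M}^{(\new)})(I) \]
is an equivalence of symmetric 2-multicategories, and that these equivalences are compatible with the pull-back functors $\alpha^*$ and the pseudo-natural transformations $\SSS(\mu)$. Since both sides are constructed via the device of \cite[Proposition~A.3]{Hor17} from the functors $C_I^{\cov}$ and $C_I$ on symmetric trees, the cleanest route is to reduce to a statement about those functors: by construction the $2$-multicategory attached to a functor $\Delta_S \to \mathcal{CATW}$ depends (up to canonical equivalence) only on the values $C_I(\Delta_{1,n}^S)[\mathcal{W}^{-1}]$ together with the composition data, so it suffices to observe that the forgetful functor induces an equivalence on each such localized value. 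But this is exactly Lemma~\ref{LEMMAPROPCONSTRSYMMULTI2}, 1.: for $I \in \Catlf$ the functor
\[ \Cor_I^{\cov}(\tau^S)_{(U_o \to X_o)_{o \in \tau}}[\mathcal{W}^{-1}] \to \Cor_I(\tau^S)_{(X_o)_{o \in \tau}}[\mathcal{W}^{-1}] \]
is an equivalence. Together with Lemma~\ref{LEMMAPROPCONSTRSYMMULTI2}, 2.\ (verification of axioms 1 and 2 of \cite[Proposition~A.3]{Hor17}), this yields that the two symmetric 2-multicategories constructed from $C_I^{\cov}$ and $C_I$ are equivalent via the forgetful functor, compatibly with the symmetric group actions.

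First I would record precisely how the forgetful functor interacts with the construction of \cite[Proposition~A.3]{Hor17}: the forgetful natural transformation $C_I^{\cov} \Rightarrow C_I$ of functors $\Delta_S \to \mathcal{CATW}$ (well-defined because the morphisms in $C_I^{\cov}$ forget to point-wise weak equivalences) induces, after applying the construction, a strict functor of the resulting 2-multicategories, and this is the functor in question. Because the construction is functorial in the input and sends objectwise-equivalences of the input functor (after localization) to equivalences of the output 2-multicategories — this is built into how objects, 1-morphisms, and 2-morphisms are extracted — Lemma~\ref{LEMMAPROPCONSTRSYMMULTI2} immediately gives the fiberwise equivalence over every $I \in \Catlf$. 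One can also argue by hand using the explicit description read off after Definition~\ref{DEFHCORCOV}: objects on the $\cov$-side are atlases $U \to X$, existence being guaranteed by Proposition~\ref{PROPCOV}, 1.; surjectivity on 1-morphisms and essential surjectivity follow from Proposition~\ref{PROPCOV}, 2.\ (extension of partial atlases along final subdiagrams) applied to the relevant $\tw(\tau \times I)^\circ$, which is inverse with finite matching diagrams by Lemma~\ref{LEMMACATLF}, 1.; and fully-faithfulness on 2-morphisms follows because, after forgetting the atlases and localizing, the 2-morphism categories on both sides become the same homotopy category of multicorrespondences (cf.\ the last bullet after Definition~\ref{DEFHCORCOV}), using Proposition~\ref{PROPCOV}, 3.\ to see that any two atlases over a fixed multicorrespondence are dominated by a third, so the localized atlas-categories are contractible.

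Next I would verify compatibility with the 2-pre-multiderivator structure. For a functor $\alpha: I \to J$ in $\Catlf$ the strict pull-back functors $\alpha^*$ on both sides are defined by precomposing diagrams on $\tw(\tau \times J)$ with $\tw(\tau \times \alpha)$; since the forgetful functor is defined by literally forgetting the $U$-component, the square
\[ \xymatrix{ \HH^{\cor,\cov}(\mathcal{M}^{(\new)})(J) \ar[r]^-{\alpha^*} \ar[d] & \HH^{\cor,\cov}(\mathcal{M}^{(\new)})(I) \ar[d] \\ \HH^{\cor}(\mathcal{M}^{(\new)})(J) \ar[r]^-{\alpha^*} & \HH^{\cor}(\mathcal{M}^{(\new)})(I) } \]
commutes strictly. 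Similarly, for a natural transformation $\mu: \alpha \Rightarrow \beta$ the pseudo-natural transformation $\SSS(\mu)$ on the $\cov$-side is constructed (Definition~\ref{DEFHCORCOV}) via $\mu^*U \to \mu^*X$ and Lemma~\ref{LEMMAPF}, 1.\ together with Lemma~\ref{LEMMAPROPCONSTRSYMMULTI2}, 1.; forgetting atlases intertwines it with the corresponding transformation of Definition~\ref{DEFHCOR2}. Hence the forgetful functor is a strict morphism of 2-pre-multiderivators which is fiberwise an equivalence, i.e.\ an equivalence of 2-pre-multiderivators.

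The main obstacle is the fully-faithfulness on 2-morphisms, equivalently the contractibility (after localization) of the categories of atlases lying over a fixed admissible multicorrespondence with fixed source/target atlases. This is where Lemma~\ref{LEMMALOC} is needed: one must check its axioms (i) and (ii) for the forgetful functor restricted to a fixed underlying multicorrespondence, and both reductions hinge on Proposition~\ref{PROPCOV}, 2.\ applied to the inclusions of final subdiagrams
\[ \bigcup_{o\in\tau}(\tw I)\times \Box \;\cup\; (\tw(\tau\times I))^\circ \times \righthalfcup \;\hookrightarrow\; (\tw(\tau\times I))^\circ \times \Box \]
and the analogous one with $\mathbin{\rotatebox[origin=c]{45}{$\boxtimes$}}$, exactly as in the proof of Lemma~\ref{LEMMAPROPCONSTRSYMMULTI2}. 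The key technical input making these subdiagrams manageable is Lemma~\ref{LEMMACATLF}: for $I \in \Catlf$ the diagram $\tw I$ — and hence $(\tw(\tau\times I))^\circ$ for any (symmetric) tree $\tau$ — is inverse, locally finite, and has finite matching diagrams, which is precisely what is required to run Proposition~\ref{PROPLIMDIRECTED}/\ref{PROPCOVERDIRECTED} via Proposition~\ref{PROPCOV}. Everything else is bookkeeping about the construction of \cite[Proposition~A.3]{Hor17}, which I would only sketch.
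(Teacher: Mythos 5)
Your proof is correct and follows essentially the same route as the paper's: both reduce to Lemma~\ref{LEMMAPROPCONSTRSYMMULTI2}, 1., whose proof gives surjectivity on objects and the equivalence of multimorphism categories fiberwise over each $I \in \Catlf$. The extra bookkeeping you supply on compatibility with the strict pull-backs $\alpha^*$ and the pseudo-natural transformations $\SSS(\mu)$ is left implicit in the paper but is consistent with its construction.
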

\begin{proof}
The proof of Lemma~\ref{LEMMAPROPCONSTRSYMMULTI2}, 1.\@ shows that the functor 
\[ \HH^{\cor, \cov}(\mathcal{M}^{(\new)})(I) \rightarrow \HH^{\cor}(\mathcal{M}^{(\new)})(I) \]
induces an equivalence of the categories of multimorphisms and also that 
the functor is surjective on objects.
\end{proof}

We will later need the following Lemma.  Recall Definition~\ref{DEFCECH}. 

\begin{LEMMA}\label{LEMMAPROPCECHKGEOM}
Let the setting be as in \ref{PARSETTINGEXT}. 
\begin{enumerate}
\item 
Let $f: U \rightarrow X$ be a morphism in $C$ with $X \in \mathcal{M}^{(\new)}$ and $U \in \mathcal{M}^{(\old)}$. 
Then $\Cech(f) \in (\mathcal{M}^{(\old)})^{\Delta^{\op}}$. 
\item 
 If $X$ and $Y$ are in $\mathcal{M}^{(\old), \Delta^{\op}}$ and $Z \in \mathcal{M}^{(\new)}$, we have  $(X \widetilde{\times}_{Z} Y)_{n,m} \in \mathcal{M}^{(\old)}$. 
\end{enumerate}
\end{LEMMA}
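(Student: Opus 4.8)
The statement to prove is Lemma~\ref{LEMMAPROPCECHKGEOM}, which has two parts about the \v{C}ech nerve and the explicit homotopy fiber product construction of Definitions~\ref{DEFCECH} and \ref{DEFFPSOBJ}, asserting that these stay inside $\mathcal{M}^{(\old)}$ under the hypotheses of \ref{PARSETTINGEXT}. The plan is to reduce both parts to iterated homotopy pull-backs of shape $\righthalfcup$ whose corners lie in $\mathcal{M}^{(\old)}$ over a vertex in $\mathcal{M}^{(\new)}$, and then invoke axiom (A2) of \ref{PARSETTINGEXT} (together with the closure of $\mathcal{M}^{(\old)}$ under homotopy pull-back and the fact that $\mathcal{M}^{(\old)}$ is homotopy strictly full, so it suffices to identify the relevant objects up to weak equivalence).

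For part~1, I would use Lemma~\ref{LEMMAPROPCECH}(3), which gives an isomorphism in the homotopy category $\Cech(f)_n \cong U \widetilde{\times}_X \cdots \widetilde{\times}_X U$ ($n+1$ factors). The iterated homotopy fiber product over $X$ is built one step at a time: starting from $U \in \mathcal{M}^{(\old)}$, each step forms a homotopy pull-back of a diagram $U \to X \leftarrow (\text{previous stage})$, where the previous stage is in $\mathcal{M}^{(\old)}$ by induction, $U \in \mathcal{M}^{(\old)}$, and $X \in \mathcal{M}^{(\new)}$. By (A2), the result is again in $\mathcal{M}^{(\old)}$. Since $\mathcal{M}^{(\old)}$ is homotopy strictly full, the actual objects $\Cech(f)_n$ (which are weakly equivalent to these iterated homotopy fiber products) lie in $\mathcal{M}^{(\old)}$, establishing $\Cech(f) \in (\mathcal{M}^{(\old)})^{\Delta^{\op}}$.

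For part~2, I would unwind Definition~\ref{DEFFPSOBJ}: the object $(X \widetilde{\times}_Z Y)_{n,m}$ is assembled as a limit of a diagram with Cartesian squares involving $X_n$, $Y_m$, $\Hom(\Delta_n \star \Delta_m, Z)$, $\Hom(\Delta_n, Z)$, and $\Hom(\Delta_m, Z)$. Since $(\mathcal{M}, \Fib, \mathcal{W})$ is a \emph{simplicial} category with fibrant objects and $Z \in \mathcal{M}^{(\new)}$, the path-type objects $\Hom(K, Z)$ for finite simplicial sets $K$ are weakly equivalent to $Z$ (as $Z \to \Hom(K,Z)$ is a weak equivalence for contractible $K$, and the relevant $\Delta_n$, $\Delta_m$, $\Delta_n \star \Delta_m$ are all contractible); by homotopy strict fullness they lie in $\mathcal{M}^{(\new)}$. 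Moreover, since $\mathcal{M}^{(\new)}$ is closed under homotopy pull-back, $\Hom(\Delta_n \star \Delta_m, Z)$ — being a homotopy fiber product of the other two path objects over $Z$ — stays in $\mathcal{M}^{(\new)}$. Then the two outer Cartesian squares in Definition~\ref{DEFFPSOBJ} are successive homotopy pull-backs: first $\Box = \Hom(\Delta_n\star\Delta_m, Z)\times_{\Hom(\Delta_m,Z)} Y_m$, a homotopy pull-back of $\mathcal{M}^{(\new)}, \mathcal{M}^{(\old)}$ over $\mathcal{M}^{(\new)}$, hence in $\mathcal{M}^{(\old)}$ by (A2); then $(X \widetilde{\times}_Z Y)_{n,m}$ is the homotopy pull-back of $X_n \in \mathcal{M}^{(\old)}$ and $\Box \in \mathcal{M}^{(\old)}$ over $\Hom(\Delta_n, Z) \in \mathcal{M}^{(\new)}$, hence in $\mathcal{M}^{(\old)}$ by (A2) again.

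\textbf{Main obstacle.} The delicate point is bookkeeping: verifying that each intermediate object in the limit diagram of Definition~\ref{DEFFPSOBJ} is genuinely either in $\mathcal{M}^{(\old)}$ or $\mathcal{M}^{(\new)}$ as required by the exact form of axiom (A2) — in particular that the ``base'' vertex of each homotopy pull-back is in $\mathcal{M}^{(\new)}$ and not merely in $\mathcal{M}^{(\old)}$, which would be insufficient since (A2) needs the common target to be $\mathcal{M}^{(\new)}$. This forces one to check carefully that $\Hom(\Delta_n, Z)$, $\Hom(\Delta_m, Z)$, and $\Hom(\Delta_n \star \Delta_m, Z)$ are weakly equivalent to $Z \in \mathcal{M}^{(\new)}$, which uses both that $\mathcal{M}$ is simplicial with fibrant objects (so axiom 6' of \ref{PARCWFO} applies and these $\Hom$-objects behave like path objects) and that $\mathcal{M}^{(\new)}$ is homotopy strictly full. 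Everything else is a routine induction on the number of factors in the iterated fiber product.
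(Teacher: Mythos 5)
Your argument for part 1 is correct and is essentially the same induction the paper does (the paper additionally tracks that $\Cech_n(f)\to X$ stays in $C$, but that is not needed for the conclusion that $\Cech_n(f)\in\mathcal{M}^{(\old)}$).

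For part 2, however, there is a genuine error in your first application of axiom~(A2). You form
$\Box=\Hom(\Delta_n\star\Delta_m,Z)\times_{\Hom(\Delta_m,Z)}Y_m$, observe that $\Hom(\Delta_n\star\Delta_m,Z)\in\mathcal{M}^{(\new)}$ and $Y_m\in\mathcal{M}^{(\old)}$ sit over $\Hom(\Delta_m,Z)\in\mathcal{M}^{(\new)}$, and then conclude $\Box\in\mathcal{M}^{(\old)}$ ``by~(A2).'' But~(A2) requires \emph{both} outer corners of the homotopy Cartesian square to lie in $\mathcal{M}^{(\old)}$ (with only the apex in $\mathcal{M}^{(\new)}$); here one outer corner is only in $\mathcal{M}^{(\new)}$, and since the setting~\ref{PARSETTINGEXT} explicitly does \emph{not} assume $\mathcal{M}^{(\old)}\subset\mathcal{M}^{(\new)}$ or the converse, you cannot transfer it. (Your parenthetical claim that $\Hom(\Delta_n\star\Delta_m,Z)$ is a homotopy fiber product of the other two path objects over $Z$ is also off --- $\Delta_n\sqcup\Delta_m\hookrightarrow\Delta_n\star\Delta_m$ is not a weak equivalence --- but that remark is not load-bearing.)

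There are two clean ways to repair this. One is to notice that $\Delta_m\hookrightarrow\Delta_n\star\Delta_m$ is an acyclic cofibration of (contractible) finite simplicial sets, so by axiom~6$'$ the map $\Hom(\Delta_n\star\Delta_m,Z)\to\Hom(\Delta_m,Z)$ is a trivial fibration, hence its pull-back $\Box\to Y_m$ is a trivial fibration, so $\Box$ is weakly equivalent to $Y_m\in\mathcal{M}^{(\old)}$, hence $\Box\in\mathcal{M}^{(\old)}$ by homotopy strict fullness; then your second use of~(A2) goes through. The other --- and this is what the paper does --- is to bypass the internal anatomy of Definition~\ref{DEFFPSOBJ} entirely and simply invoke Lemma~\ref{LEMMAHOFP}(2), which already exhibits $(X\widetilde{\times}_Z Y)_{n,m}$ as a homotopy pull-back of $X_n\in\mathcal{M}^{(\old)}$ and $Y_m\in\mathcal{M}^{(\old)}$ over $Z\in\mathcal{M}^{(\new)}$; a single application of~(A2) then finishes the proof. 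The latter route avoids the bookkeeping you flagged as the main obstacle, and in particular avoids the step where your argument breaks.
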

\begin{proof}1.\@ We show by induction that $\Cech_{n}(f)$ is in $\mathcal{M}^{(\old)}$ and that the morphism $\Cech_{n}(f) \to X$ is in $C$. 
Since $\Cech_{0}(f) = U$ this is the case for $n=0$ by assumption. Now we have a homotopy Cartesian diagram
\[ \xymatrix{ 
\Cech_{n+1}(f) \ar[r] \ar[d] & \Cech_{n}(f) \ar[d] \\
U \ar[r] & X   } \]
By induction $\Cech_{n}(f) \rightarrow X$ is in $C$ and  $\Cech_{n}(f)$ is in $\mathcal{M}^{(\old)}$. Therefore by the Axioms~\ref{PARSETTINGEXT}
also $\Cech_{n+1}(f)$ is in $\mathcal{M}^{(\old)}$, the morphism $\Cech_{n+1}(f) \rightarrow U$ is in $C$ and hence so is the composition $\Cech_{n+1}(f) \rightarrow U \rightarrow X$.

2.\@ follows from Lemma~\ref{LEMMAHOFP}, 2.\@ and the axioms~\ref{PARSETTINGEXT}.
\end{proof}

\section{Construction of extensions}\label{SECTMC}

\begin{PAR}\label{PARAXIOMSEXT}
Consider the setting as in \ref{PARSETTINGEXT}. 
Let 
\[ \DD^{(\old)} \to \HH^{\cor}(\mathcal{M}^{(\old)}) \] be an infinite (symmetric) fibered (multi)derivator with domain $\Dirlf$, and with stable, well-generated fibers.
The objective is to extend it to a left (symmetric) fibered (multi)derivator with domain $\Dirlf$
\[ \DD^{(\new)} \to \HH^{\cor, \cov}(\mathcal{M}^{(\new)}). \] 

We will make the following assumption: The restrictions
\[ \DD^{(\old),!}  \rightarrow \mathbb{M}^{(\old)} \quad (\text{resp. }\ \DD^{(\old),*} \rightarrow \mathbb{M}^{(\old),\op} ) \]
have (naive) extensions  $\DD^{!, h} \rightarrow \mathbb{M}$ as fibered derivator (resp.\@  $\DD^{*, h} \rightarrow \mathbb{M}^{\op}$ as fibered (multi)derivator) with stable, well-generated fibers, as well, such that
\begin{itemize}
\item[(D1)] for each morphism $f: X \rightarrow Y$ in $\mathcal{W}$ the functors $f^!$ (resp.\@ $f^*$) are equivalences.
\item[(D2)] For each morphism $f: X \rightarrow Y$ in $C$ (cf.\@ \ref{PARSETTINGEXT}) the following functors are equivalences
\begin{eqnarray*}
 \gamma^*: \DD^{*,h}(\Delta)_{\delta(Y)^{\op}}^{\cocart} &\cong& \DD^{*,h}(\Delta)_{\Cech(f)^{\op}}^{\cocart}  \\
\gamma^!: \DD^{!,h}(\Delta^{\op})_{\delta(Y)}^{\cart} &\cong& \DD^{!,h}(\Delta^{\op})_{\Cech(f)}^{\cart} 
\end{eqnarray*}
where $\gamma$ is the morphism of Lemma~\ref{LEMMAPROPCECH}, 2. 
Note that $\DD^{*,h}(\Delta)_{\delta(Y)^{\op}}^{\cocart} \cong \DD^{*,h}(\cdot)_{Y}$ (and $\DD^{!,h}(\Delta^{\op})_{\delta(Y)}^{\cart} \cong \DD^{!,h}(\cdot)_{Y}$) by (D1) and the acyclicity of $\Delta^{\op}$. 

\item[(CH)] Axiom (CH$(f_1, \dots, f_l)$) (cf.\@ \ref{PARSETTINGLOCALITY2}) holds true  for all tupels $(f_1, \dots, f_l)$ with $f_i \in C \cap \Mor(\mathcal{M}^{(\old)})$, i.e.\@
 for each homotopy Cartesian square in $\mathcal{M}^{(\old)}$
\[ \xymatrix{ X \ar[r]^F \ar[d]_G & X' \ar[d]^g \\ 
Y_1, \dots, Y_l \ar[r]_{(f_i)} & Y'_1, \dots, Y_l'
} \] 
with $f_i \in C$ for all $i$ 
the natural transformations $G^* (f^!_1-, \dots, f_l^!-) \rightarrow F^! g^*(-, \dots, -)$ and (for $l=1$) $F^* g^! \rightarrow G^! f^*$  are isomorphisms (w.r.t.\@ $\DD^{(\old)}$). 
In the non-multi-case $l=1$ is understood as otherwise the statement does not make sense.  
\item[(H1)] Axiom (H1$(f)$) (cf.\@ \ref{PARSETTINGLOCALITYLEFT}) holds true for all $f \in C \cap \Mor(\mathcal{M}^{(\old)})$, i.e.\@ $f^!$, as morphism of derivators, commutes with homotopy colimits  (w.r.t.\@ $\DD^{(\old)}$). 
\end{itemize}
\end{PAR}

\begin{PAR}\label{PARIDEA}
The idea behind the construction of the extension $\DD^{(\new)}$, discussed in the sequel, is the following: Let $\widetilde{X} \in (\mathcal{M}^{(\old)})^{\Delta^{\op}}$ be a presentation of a stack $X \in \mathcal{M}^{(\new)}$ (with fixed morphism $\widetilde{X} \rightarrow \delta(X)$). Here we stick to $\widetilde{X} := \Cech(U \rightarrow X)$ for an atlas $U \rightarrow X$, i.e.\@ a morphism in $C$ with $U \in  \mathcal{M}^{(\old)}$, because the axioms in \ref{PARAXIOMSEXT} have been adapted to this setting. 
The existence of an extension $\DD^{(\new)}$ as envisioned in \ref{PARAXIOMSEXT} would imply (using (D1) and (D2))  equivalences
\[ \DD^{(\new)}(\cdot)_{X} \cong \DD^{(\old)}(\Delta^{\op})_{\widetilde{X}}^{\cart} \cong  \DD^{(\old)}(\Delta)_{\widetilde{X}^{\op}}^{\cocart}.  \]

The idea is to define $\DD^{(\new)}(\cdot)_{X}$ as an ``ambidextrous'' version of the two categories on the right. 
Recall Definition~\ref{DEFFPSOBJ}. Property 4.\@ of Lemma~\ref{LEMMAHOFP} implies that one may regard the object $\widetilde{X}':=(\widetilde{X} \widetilde{\times}_{X} \widetilde{X})_{\bullet,\bullet}$ in $\Fun((\Delta^{\op})^2, \mathcal{M}^{(\old)})$ as an object
 
\[ \Flip(\widetilde{X}') \in \HH^{\cor}(\mathcal{M}^{(\old)})(\Delta^{\op} \times \Delta).  \]
More precisely, pull-back the diagram along the functor
\[ P: \tw(\Delta^{\op} \times \Delta) \rightarrow  (\Delta^{\op})^2 \]
given by mapping 
\[ (\Delta_i \leftarrow \Delta_j, \Delta_k \rightarrow \Delta_l) \mapsto (\Delta_i, \Delta_l)   \]
(cf.\@ also \ref{DEFFLIP}).
Property 4.\@ of Lemma~\ref{LEMMAHOFP} implies that the pull-back is an admissible diagram. 
Then there is a correspondence\footnote{This can be seen as a ``duality morphism'' $ (\Delta^{\op}, \widetilde{X})  \rightarrow (\Delta, \widetilde{X}^{\op}) \cong \mathbf{HOM}( (\Delta^{\op}, \widetilde{X}); (\cdot, \cdot))$ in $\widehat{\Cat}^{\cor}(\HH^{\cor}(\mathcal{M}^{(\old)}))$, cf.\@ Theorem~\ref{SATZDUALITY}. } in $\Cat(\HH^{\cor}(\mathcal{M}^{(\old)}))$:
\[ (\Delta^{\op}, \widetilde{X}) \leftarrow (\Delta^{\op} \times \Delta, \Flip(\widetilde{X}')) \rightarrow (\Delta, \widetilde{X}^{\op}) \]
and it is easy to see that the two morphisms induce equivalences
\[ \DD^{(\old)}(\Delta^{\op})_{\widetilde{X}}^{\cart} \cong \DD^{(\old)}(\Delta^{\op} \times \Delta)_{\Flip(\widetilde{X}')}^{1-\cart, 2-\cocart} \cong \DD^{(\old)}(\Delta)_{\widetilde{X}^{\op}}^{\cocart}.  \]
It is therefore reasonable to define
\[ \DD^{(\new)}(\cdot)_{X} := \DD^{(\old)}(\Delta^{\op} \times \Delta)_{\Flip(\widetilde{X}')}^{1-\cart, 2-\cocart}.  \]
The objective is to show that this is independent of the choice of presentation, and to turn this into a full derivator *,!-formalism (or even six-functor-formalism) again. In particular, one needs to enhance the construction to diagrams of correspondences in $\HH^{\cor}(\mathcal{M}^{(\new)})$. The problem is that the object $\Flip(\widetilde{X}')$ is not in an easy way functorial in $X$, not even in $U \rightarrow X$. For example, consider a correspondence
\[ \xymatrix{
& U_A \ar[dd] \ar[ld] \ar[rd] \\
U_X \ar[dd] & & U_Y \ar[dd] \\
& A \ar[ld]_g \ar[rd]^f \\
X & & Y
}\]
of stacks in $\mathcal{M}^{(\new)}$ with  compatible atlases. Setting $\widetilde{X}:=\Cech(U_X \to X)$, $\widetilde{Y}:=\Cech(U_Y \to Y)$, $\widetilde{A}:=\Cech(U_A \to A)$, 
 we have functors
\[ \xymatrix{
 & \DD^{(\old)}(\Delta^{\op} \times \Delta)_{\Flip((\widetilde{A} \widetilde{\times}_{A} \widetilde{A})_{\bullet,\bullet})}^{1-\cart, 2-\cocart} \ar@{<-}[ld]_{\widetilde{\iota}_1^!} \ar@{<-}[rd]^{\widetilde{\iota}_2^*} \\
 \DD^{(\old)}(\Delta^{\op} \times \Delta)_{\Flip((\widetilde{X} \widetilde{\times}_{X} \widetilde{A})_{\bullet,\bullet})}^{1-\cart, 2-\cocart} \ar@{<-}[d]^{\widetilde{g}^*}  & & \DD^{(\old)}(\Delta^{\op} \times \Delta)_{\Flip((\widetilde{A} \widetilde{\times}_{Y} \widetilde{Y})_{\bullet,\bullet})}^{1-\cart, 2-\cocart} \ar@{<-}[d]^{\widetilde{f}^!} \\
\DD^{(\old)}(\Delta^{\op} \times \Delta)_{\Flip((\widetilde{X} \widetilde{\times}_{X} \widetilde{X})_{\bullet,\bullet})}^{1-\cart, 2-\cocart}  &  & \DD^{(\old)}(\Delta^{\op} \times \Delta)_{\Flip((\widetilde{Y} \widetilde{\times}_{Y} \widetilde{Y})_{\bullet,\bullet})}^{1-\cart, 2-\cocart} 
}\]
Axiom (CH) implies that the functors actually preserve the full subcategories of objects that are simultaneously 1-Cartesian and 2-coCartesian, as indicated. Furthermore, we will show that $\widetilde{\iota}_1^!$ and $\widetilde{\iota}_2^*$ are equivalences of categories, and that 
$\widetilde{f}^!$ has a left adjoint $\Box_!^{(1)}\widetilde{f}_!$. Therefore we can define a push-forward along this correspondence as
\[ \Box_!^{(1)} \widetilde{f}_! \ \circ\ (\widetilde{\iota}_2^*)^{-1}\ \circ\ \widetilde{\iota}_1^! \ \circ\ \widetilde{g}^*.  \]
The objective is to make this construction sufficiently coherent to get a left fibered (multi)derivator with domain $\Dirlf$
\[ \DD^{(\new)} \rightarrow \HH^{\cor, \cov}(\mathcal{M}^{(\new)}). \] 
The fact that $\HH^{\cor, \cov}(\mathcal{M}^{(\new)})$ is equivalent as 2-pre-multiderivator (with domain $\Catlf$)  to $\HH^{\cor}(\mathcal{M}^{(\new)})$ shows independence of the atlases.
\end{PAR}

\begin{LEMMA}\label{LEMMAVALIDITY}
Let $\DD \rightarrow \SSS^{\cor}$ be an infinite (symmetric) fibered (multi)derivator with domain $\Dirlf$ which is local w.r.t.\@ the pre-topology on $\mathcal{S}$, with stable, well-generated fibers.

The axioms of (\ref{PARAXIOMSEXT}) are satisfied in Example~\ref{MAINEXEXT1} taking as $\DD^{(\old)}$ an admissible extension of $\DD$ in the sense of Definition~\ref{DEFADMEXT}, 
and in Example~\ref{MAINEXEXT2} taking as $\DD^{(\old)}$ the canonical extension $\DD^{\amalg}$ of $\DD$ to $\SSS^{\amalg, \cor}$ constructed in \ref{PAREXTAMALG}.  
\end{LEMMA}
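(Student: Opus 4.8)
The statement asserts that the abstract axioms (D1), (D2), (CH), (H1) of \ref{PARAXIOMSEXT} hold in the two concrete settings of Examples~\ref{MAINEXEXT1} and \ref{MAINEXEXT2}. The strategy is to treat each axiom in turn, reducing to results already established in the excerpt. The main work lies in Example~\ref{MAINEXEXT1}; Example~\ref{MAINEXEXT2} is formally easier since everything there happens in ordinary (discrete) pre-sheaves with the trivial model structure, so homotopy (co)limits and homotopy fiber products degenerate to ordinary ones.

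\textbf{Example~\ref{MAINEXEXT1}.} Here $\mathcal{M}^{(\old)} = \mathcal{M}^{(k-1)}$, $\mathcal{M}^{(\new)} = \mathcal{M}^{(k)}$, and $C = (k-1)$-$C$, and $\DD^{(\old)}$ is an admissible extension of $\DD$ in the sense of Definition~\ref{DEFADMEXT}. First I would check (D1): by Definition~\ref{DEFADMEXT}, the restrictions $\DD^{(\old),!}$ and $\DD^{(\old),*}$ are equivalent to the naive extensions $\DD^{!,h} \to \mathbb{M}$ and $\DD^{*,h} \to \mathbb{M}^{\op}$, for which (D1) --- i.e.\@ that $f^!$, $f^*$ are equivalences for $f \in \mathcal{W}$ --- is exactly the statement recorded right after Theorem~\ref{SATZEXTNAIVERIGHT} (a consequence of (co)homological descent, using that a point-wise weak equivalence in $\mathcal{M}^I$ induces an isomorphism on homotopy (co)limits and invoking Propositions~\ref{PROPHODESC}--\ref{PROPCOHODESC}). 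For (D2): given $f: U \to X$ in $C = (k-1)$-$C$ with $U \in \mathcal{M}^{(k-1)}$ and $X \in \mathcal{M}^{(k)}$, Lemma~\ref{LEMMAPROPCECH}, 3.\@ identifies $\Cech(f)_n$ with the iterated homotopy fiber product $U \widetilde{\times}_X \cdots \widetilde{\times}_X U$, and Lemma~\ref{LEMMAPROPCECH}, 4.\@ together with Lemma~\ref{LEMMAEXACTCOLIMS} shows $\hocolim \Cech(f) \cong X$ in the homotopy category (the \v Cech nerve of a local epimorphism is a hypercover, and by Lemma~\ref{LEMMALOCALEPI} every $k$-$C$ morphism is a homotopy local epimorphism); hence $\gamma: \Cech(f) \to \delta(X)$ induces an isomorphism on homotopy colimits, and $\gamma^!$, $\gamma^*$ are equivalences on Cartesian (resp.\@ coCartesian) objects by Propositions~\ref{PROPHODESC}--\ref{PROPCOHODESC}. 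For (CH) and (H1), we need these axioms for morphisms $f_i \in C \cap \Mor(\mathcal{M}^{(k-1)}) = (k-1)$-$C$ morphisms lying in $\mathcal{M}^{(k-1)}$: since $\DD \to \SSS^{\cor}$ is local by hypothesis, and $\DD^{(\old)}$ is an admissible extension, Theorems~\ref{SATZHEREDITYLOC2} and \ref{SATZLOCALBC} (applied at level $k-1$ in place of $k$, using that $(k-1)$-$C$ morphisms satisfy (H1$(f)$)--(H3$(f)$) and (CH$(f_1,\dots,f_l)$)) give precisely (H1) and (CH). Thus the axioms of \ref{PARAXIOMSEXT} hold at step $k$, which is the content of the induction that Main Theorem~\ref{HAUPTSATZ} runs on.

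\textbf{Example~\ref{MAINEXEXT2}.} Here $\mathcal{M} = \mathcal{SET}^{\mathcal{S}^{\op}}$ with trivial simplicial category-of-fibrant-objects structure (all morphisms fibrations, weak equivalences only isomorphisms), $\mathcal{M}^{(\old)} = h(\mathcal{S}^{\amalg})$, $\mathcal{M}^{(\new)} = \iota(\mathcal{SH}^{(0)})$, $C$ = local isomorphisms, and $\DD^{(\old)} = \DD^{\amalg}$ is the extension to $\SSS^{\amalg,\cor}$ constructed in \ref{PAREXTAMALG}. Axiom (D1) is vacuous since $\mathcal{W}$ consists of isomorphisms. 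For (D2), note that $\Cech(f)$ is now an ordinary \v Cech nerve of a local epimorphism of pre-sheaves, so the statement that $\gamma^!$, $\gamma^*$ are equivalences on (co)Cartesian objects is (co)homological descent for $\DD^{\amalg}$ over $\SSS^{\amalg}$, which follows from Theorems~\ref{SATZHODESC}--\ref{SATZCOHODESC} applied to the locality of $\DD$ (transported to $\mathcal{S}^{\amalg}$ along the construction of \ref{PAREXTAMALG}; concretely the $!$-functors and $*$-functors on $\DD^{\amalg}$ are computed via $\widetilde{\int}S_0$ and are thus governed by the locality axioms of $\DD$). For (CH) and (H1): a morphism in $C \cap \Mor(\mathcal{M}^{(\old)}) = C \cap \Mor(h(\mathcal{S}^{\amalg}))$ is, up to isomorphism, a morphism in $\mathcal{S}^{\amalg}$ lying in $C$, i.e.\@ point-wise part of a covering; (H1$(f)$) is then the statement that $f^!$ commutes with homotopy colimits, which holds by (H1) of \ref{PARLOCAL} for $\DD$ extended point-wise to $\SSS^{\amalg}$ (here one uses (Der1$^\infty$) to reduce to the constant case); and (CH$(f_1,\dots,f_l)$) for such $f_i$ is axiom (CH) of Definition~\ref{DEFLOCAL6FU} for $\DD$, again extended along coproducts --- one checks directly using (Der1$^\infty$) that the base-change exchange for a Cartesian square in $\mathcal{S}^{\amalg}$ decomposes into the corresponding exchanges over the components, which are the (CH)-isomorphisms for $\DD$.

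\textbf{Main obstacle.} The routine-but-nontrivial point is the verification, in Example~\ref{MAINEXEXT2}, that the $*$- and $!$-functors of $\DD^{\amalg}$ (defined via the pullback square of \ref{PAREXTAMALG} and the construction $\widetilde{\int}S_0$) are genuinely controlled by the locality axioms of $\DD$, so that (D2), (CH), (H1) transfer cleanly; this requires unwinding the definition of $\DD^{\amalg}$ and checking that forming coproducts of objects does not interfere with homotopy colimits, conservativity, or base change, all of which rests on (Der1$^\infty$) and the fact that $\mathcal{S}^{\amalg} \hookrightarrow \mathcal{SET}^{\mathcal{S}^{\op}\times\Delta^{\op}}$ preserves fiber products. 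In Example~\ref{MAINEXEXT1}, the analogous transfer is cleaner because it is exactly the inductive hypothesis packaged by Theorems~\ref{SATZHEREDITYLOC2}--\ref{SATZLOCALBC}; the only care needed is to make sure the hypotheses of those theorems (stable, well-generated fibers; admissibility of the extension; the site being geometric) are in force, which they are by assumption.
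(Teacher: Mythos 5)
Your proof follows essentially the same route as the paper's: (D1) from (co)homological descent, (D2) from the fact that the Čech nerve of a local epimorphism computes the homotopy colimit, (CH) and (H1) in Example~\ref{MAINEXEXT1} from Theorems~\ref{SATZLOCALBC} and \ref{SATZHEREDITYLOC2}, and in Example~\ref{MAINEXEXT2} directly from the locality axioms (CH), (H1) of Definition~\ref{DEFLOCAL6FU} and \ref{PARLOCAL}.

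The one place your argument is imprecise is the justification of $\hocolim_{\Delta^{\op}} \Cech(f) \cong X$ for (D2). You cite Lemma~\ref{LEMMAPROPCECH}, 3--4 and Lemma~\ref{LEMMAEXACTCOLIMS}, but these are about (point-wise) homotopy Cartesian squares and about homotopy colimits commuting with homotopy fiber products --- neither yields the claim that the colimit of the Čech nerve is $X$. The statement you need is exactly your parenthetical ``the Čech nerve of a local epimorphism is a hypercover'', and the paper establishes it by first replacing $f$ with a projectively Čech fibration between projectively Čech fibrant objects (where the Čech nerve becomes the honest iterated fiber product and the map is an \emph{actual} local epimorphism of simplicial pre-sheaves), and then invoking the standard result \cite[Corollary~A.3]{DHI04}. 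Your lemma citations don't substitute for this step; they are true but off-topic. Once that replacement argument is in place, your appeal to Propositions~\ref{PROPHODESC}--\ref{PROPCOHODESC} to conclude (D2) is correct and is the same conclusion the paper draws. The rest of your reasoning --- including the reduction of Example~\ref{MAINEXEXT2} to (Der1$^\infty$) and the component-wise verification over $\mathcal{S}^{\amalg}$ --- is more detailed than the paper's one-liner but sound.
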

\begin{proof}
In both cases the restrictions $\DD^{(\old),*}$ and $\DD^{(\old),!}$ have naive extensions to $\mathcal{M}$ which are just the naive extensions (cf.\@ Theorems~\ref{SATZEXTNAIVELEFT} and \ref{SATZEXTNAIVERIGHT}) of $\DD$ to simplicial pre-sheaves $\mathcal{SET}^{\mathcal{S}^{\op} \times \Delta^{\op}}$ (and its restriction to $\mathcal{SET}^{\mathcal{S}^{\op} }$, respectively).
Property (D1) holds true by homological descent \ref{PROPHODESC} (resp.\@ by cohomological descent \ref{PROPCOHODESC}). Property (D2) follows also from (co)homological descent as follows:
If $f$ is a homotopy local epimorphism (for instance, by Lemma~\ref{LEMMALOCALEPI}, if $f$ is $k-C$ for some $k$) then we claim that the morphism in Lemma~\ref{LEMMAPROPCECH}, 2.\@ induces an isomorphism in the homotopy category  
\[ \hocolim_{\Delta^{\op}} \Cech(f)  \cong X. \]
To see this, replace $f$ by a projectively \v{C}ech fibration $f': X' \rightarrow Y'$  between projectively \v{C}ech fibrant objects. Then there is a weak equivalence
\[ \Cech(f')_n \cong X' \times_{Y'}  \cdots \times_{Y'} X'  \]
and $f'$ is an actual local epimorphism (as follows straightforwardly from the fact that the image of $\mathcal{S}^{\amalg}$ consists of cofibrant objects in the corresponding model category structure).
That the latter simplicial object is \v{C}ech-weakly equivalent to $Y'$ is a standard argument, see e.g.\@ \cite[Corollary~A.3]{DHI04}.

In example \ref{MAINEXEXT1}, Property (CH) follows from Theorem~\ref{SATZLOCALBC} and (H1) follows from Theorem~\ref{SATZHEREDITYLOC2}.
In example \ref{MAINEXEXT2}, Properties (CH) and (H1) follow immediately from Axioms (CH), and (H1), respectively, (cf.\@ \ref{DEFLOCAL6FU} and  \ref{PARLOCAL}) which hold by the locality of $\DD$ and the Definition~\ref{DEFC} of $C$. 
\end{proof}

\begin{DEF}\label{DEFFLIP}
Let $I$ be a (multi)diagram and
let $F': \twtw{I} \times \Delta^{\op} \times \Delta^{\op} \rightarrow \mathcal{M}^{(\old)}$ be a diagram which is $(1,3,5/2,4,6)$-admissible, that is:
 every commutative square in which the vertical morphisms are of type 1, 3 or 5 and the horizontal morphisms are of type 2,4 or 6 is mapped to a homotopy
Cartesian square. Then there is a diagram
\[ \Flip(F') \in \HH^{\cor}(\mathcal{M}^{(\old)})( \twwc{I} \times \Delta^{\op} \times \Delta ) \]
given by the admissible functor
\[ \Flip(F'): \tw{(\twwc{I} \times \Delta^{\op} \times \Delta)} \rightarrow \mathcal{M}^{(\old)} \]
which we define to be the composition of functors
\[ \xymatrix{ \tw{(\twwc{I} \times \Delta^{\op} \times \Delta)} \ar[r]^-P & \twtw{I} \times \Delta^{\op} \times \Delta^{\op} \ar[r]^-{F'} &  \mathcal{M}^{(\old)}  } \]
$P$ maps a triple of morphisms
\begin{equation}\label{eqflip} \vcenter{ \xymatrix{
i_1 \ar[r] \ar@{.>}[rd] \ar[d] & i_2 \ar[r] \ar@{<-}[d] & i_3 \ar@{.>}[rd] \ar[r] \ar@{<-}[d] & i_4 \ar[d]  & \numcirc{$\Delta_i$} \ar[d] & \Delta_{i'} \ar@{<-}[d] \\
j_1 \ar[r] & j_2 \ar@{.>}[ur] \ar[r] & j_3 \ar[r] & j_4 & \Delta_{j} & \numcirc{$\Delta_{j'}$}
} } \end{equation}
to the triple
\[ \xymatrix{ i_1 \ar[r] & j_2 \ar[r] & i_3 \ar[r] & j_4  & \Delta_{i} & \Delta_{j'} }  \]
(as indicated by the dotted arrows and circles). One checks that $\Flip(F')$ is indeed admissible (in the usual sense) if $F'$ has the above property. 

The same construction works if $I$ is a multidiagram by interpreting the entries (except $j_4$ and the $\Delta$'s) in (\ref{eqflip}) as lists in the appropriate way and the morphisms as morphisms of lists as in \ref{MORLIST}. Then $\twtw{I} \times \Delta^{\op} \times \Delta^{\op}$ is an opmultidiagram and
$\twwc{I} \times \Delta^{\op} \times \Delta$ is a multidiagram. Those are symmetric if $I$ is symmetric and the functors $P$ and $F'$ are functors of symmetric multicategories. 
\end{DEF}

\begin{PAR}
We will need the following variant: 
Let $I$ be a (multi)diagram and
let $X': \tm \Delta_1 \times  \twtw{I} \times \Delta^{\op} \times \Delta^{\op} \rightarrow \mathcal{M}^{(\old)}$ be a diagram which is $(1,3,5,7/2,4,6,8)$-admissible. Then there is a diagram
\[ \Flip(X') \in \HH^{\cor}(\mathcal{M}^{(\old)})( \tw{\Delta_1} \times \twwc{I} \times \Delta^{\op} \times \Delta ) \]
given by the admissible functor
\[ \Flip(X'): \tw{(\tw \Delta_1 \times \twwc{I} \times \Delta^{\op} \times \Delta)} \rightarrow \mathcal{M}^{(\old)} \]
which we define to be the composition of functors
\[ \xymatrix{ \tw{(\tw \Delta_1 \times \twwc{I} \times \Delta^{\op} \times \Delta)} \ar[r]^-P & \tm \Delta_1 \times \twtw{I} \times \Delta^{\op} \times \Delta^{\op} \ar[r]^-{F'} &  \mathcal{M}^{(\old)}  } \]
$P$ maps a quadruple of morphisms
\begin{equation*} \xymatrix{
i_1 \ar[r] \ar[d] \ar@{.>}[rd] & i_2 \ar[d] & i_3 \ar[r] \ar@{.>}[rd] \ar[d] & i_4 \ar[r] \ar@{<-}[d] & i_5 \ar@{.>}[rd] \ar[r] \ar@{<-}[d] & i_6 \ar[d]  & \numcirc{$\Delta_i$} \ar[d] & \Delta_{i'} \ar@{<-}[d] \\
j_1 \ar[r] & j_2 & j_3 \ar[r] & j_4 \ar@{.>}[ur] \ar[r] & j_5 \ar[r] & j_6 & \Delta_{j} & \numcirc{$\Delta_{j'}$}
} \end{equation*}
to the quadruple
\[ \xymatrix{ i_1 \ar[r] & j_2 & i_3 \ar[r] & j_4 \ar[r] & i_5 \ar[r] & j_6  & \Delta_{i} & \Delta_{j'} }  \]
(as indicated by the dotted arrows and circles). One checks that $\Flip(X')$ is indeed admissible (in the usual sense) if $X'$ has the above property. 
Actually, this construction can also be obtained by letting $\Flip(X'):=\varepsilon^* \Flip(\pr_{12}^* X')$ 
\[ \varepsilon: \tw \Delta_1 \times \twwc I \times \Delta^{\op} \times \Delta^{\op}  \rightarrow \twwc (\Delta_1  \times I) \times \Delta^{\op} \times \Delta^{\op}   \]
maps $(i \to j) \in \tw \Delta_1$ with $i, j \in \{0,1\}$ to $(i \to j \to 1 \to 1) \in \twwc \Delta_1$  and
\[ \pr_{12}: \twtw (\Delta_1 \times I) \times \Delta^{\op} \times \Delta  \rightarrow \tm (\Delta_1) \times \twtw I \times \Delta^{\op} \times \Delta   \]
is the projection.
\end{PAR}

\begin{MC}\label{DEFMC}
Let $I$ be a (multi)diagram and
let $F: \tw I \rightarrow \mathcal{M}^{(\new)}$ be an admissible diagram 
together with an atlas, i.e.\@ a morphism $f: U \rightarrow F$ with $U$ in $(\mathcal{M}^{(\old)})^{\tw I}$, point-wise in $C$. ($U$ does not need to be admissible.)

We construct a diagram
\[ \Flip(F') \in \HH^{\cor}(\mathcal{M}^{(\old)})( \twwc{I} \times \Delta^{\op} \times \Delta ) \]
in which $F'$ is defined as follows: Consider (in the non-multi-case) the three projections
\[ \pi_{13}, \pi_{23}, \pi_{24}: \twtw I \rightarrow \tw I \]
and define 
\[ F'(\mu, \Delta_n, \Delta_m) := (\Cech(f)(\mu_{13}) \widetilde{\times}_{F(\mu_{23})} \Cech(f)(\mu_{24}))_{n,m}.  \]
Here $(-\widetilde{\times}-)_{m,n}$ is the bisimplicial fiber product constructed in Definition~\ref{DEFFPSOBJ}. 
Note that $\Cech(f)(\mu_{13})$ and $\Cech(f)(\mu_{24})$ come equipped with a morphism to $\delta(F(\mu_{23}))$ by construction. 

In the multi-case, $\mu_{24}(o)$ and  $\mu_{23}(o)$ for $o \in \twtw I$ are morphisms of lists as in \ref{MORLIST}.
Then the formula has to be interpreted in the following way 
\[ F(\mu_{23}) := \prod_i F(\mu_{23}^{(i)}) \] 
for a list of morphisms $\mu_{23} = (\mu_{23}^{(1)}, \dots, \mu_{23}^{(n)})$. Similarly 
\[ \Cech(f)(\mu_{24}) := \prod_i \Cech(f)(\mu_{24}^{(i)}) =  \Cech( \prod_i U(\mu_{24}^{(i)}) \rightarrow \prod_i F(\mu_{24}^{(i)})). \] 

Similarly, for a morphism $\xi: (U_1 \rightarrow F_1) \rightarrow (U_2 \rightarrow F_2)$ in $\mathcal{M}^{\tw I}$, where $f_i: U_i \rightarrow F_i$ are as before, and in which $F_1 \rightarrow F_2$ is a point-wise weak equivalence, define a diagram
\[ \Flip(X') \in \HH^{\cor}(\mathcal{M}^{(\old)})( \tw{\Delta_1} \times \twwc{I} \times \Delta^{\op} \times \Delta ) \]

in which $X'$ is defined as follows: 
\begin{eqnarray*} 
X'(0 \to 0, \mu, \Delta_n, \Delta_m) &:=& (\Cech(f_1)(\mu_{13}) \widetilde{\times}_{F_1(\mu_{23})} \Cech(f_1)(\mu_{24}))_{n,m} \\
X'(0 \to 1, \mu, \Delta_n, \Delta_m) &:=& (\Cech(f_1)(\mu_{13}) \widetilde{\times}_{F_2(\mu_{23})} \Cech(f_2)(\mu_{24}))_{n,m} \\
X'(1 \to 1, \mu, \Delta_n, \Delta_m) &:=& (\Cech(f_2)(\mu_{13}) \widetilde{\times}_{F_2(\mu_{23})} \Cech(f_2)(\mu_{24}))_{n,m} 
\end{eqnarray*} 
\end{MC}

\begin{LEMMA}
The construction in \ref{DEFMC} defines an admissible diagram 
\[ \Flip(F') \in \HH^{\cor}(\mathcal{M}^{(\old)})( \twwc{I} \times \Delta^{\op} \times \Delta ). \]
resp.\@
\[ \Flip(X') \in \HH^{\cor}(\mathcal{M}^{(\old)})( \tw \Delta_1 \times \twwc{I} \times \Delta^{\op} \times \Delta ). \]
\end{LEMMA}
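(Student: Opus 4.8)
The plan is to reduce the statement to the two hypotheses under which the $\Flip$-construction of Definition~\ref{DEFFLIP} (in its two variants) produces a well-defined admissible object of $\HH^{\cor}(\mathcal{M}^{(\old)})$: namely that the diagram $F'$ (resp.\ $X'$) built in \ref{DEFMC} takes values in $\mathcal{M}^{(\old)}$, and that it is $(1,3,5/2,4,6)$-admissible (resp.\ $(1,3,5,7/2,4,6,8)$-admissible). Granting these, Definition~\ref{DEFFLIP} gives at once that $\Flip(F')$, resp.\ $\Flip(X')$, is admissible and lies in the asserted 2-multicategory. I would carry this out for $F'$; the argument for $X'$ is formally identical, the two slots coming from $\tm{\Delta_1}$ producing nothing new (a slot-$1$ morphism only alters the first factor of the homotopy fiber product, a slot-$2$ morphism alters nothing, and no ``slot-$1$ vertical versus slot-$2$ horizontal'' square exists in the comma category $\tm{\Delta_1}$).

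First I would verify that $F'$ lands in $\mathcal{M}^{(\old)}$. By construction $F'(\mu,\Delta_n,\Delta_m)=(\Cech(f)(\mu_{13})\,\widetilde{\times}_{F(\mu_{23})}\,\Cech(f)(\mu_{24}))_{n,m}$, where $f\colon U\to F$ is point-wise in $C$ with $U$ point-wise in $\mathcal{M}^{(\old)}$ and $F$ point-wise in $\mathcal{M}^{(\new)}$. Applying Lemma~\ref{LEMMAPROPCECHKGEOM}, 1.\ at the objects $\mu_{13},\mu_{24}\in\tw I$ shows that $\Cech(f)(\mu_{13})$ and $\Cech(f)(\mu_{24})$ are simplicial objects in $\mathcal{M}^{(\old)}$, while $F(\mu_{23})\in\mathcal{M}^{(\new)}$; Lemma~\ref{LEMMAPROPCECHKGEOM}, 2.\ then yields the claim. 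In the multi-case $F(\mu_{23})=\prod_i F(\mu_{23}^{(i)})$ and $\Cech(f)(\mu_{24})=\Cech(\prod_i U(\mu_{24}^{(i)})\to\prod_i F(\mu_{24}^{(i)}))$; here one uses in addition that a finite product of $C$-morphisms is again in $C$ (by (A1), writing it as a composite of pull-backs of the factors along projections) and that $\mathcal{M}^{(\old)}$ and $\mathcal{M}^{(\new)}$ are closed under finite products (a finite product being a homotopy fiber product over the terminal object, which lies in both subcategories in Examples~\ref{MAINEXEXT1} and \ref{MAINEXEXT2}), so that the same two parts of Lemma~\ref{LEMMAPROPCECHKGEOM} still apply.

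The substance of the proof is the generalized admissibility of $F'$. Since homotopy Cartesian squares are stable under horizontal and vertical pasting, it suffices to treat elementary squares in $\twtw{I}\times\Delta^{\op}\times\Delta^{\op}$ whose vertical morphism is of pure type $j\in\{1,3,5\}$ and whose horizontal morphism is of pure type $k\in\{2,4,6\}$. One then records how the slot-types enter the three inputs: the factor $\Cech(f)(\mu_{13})$ depends on $\mu$ only through $\mu_{13}$ (so it varies under types $1,3$ and not $2,4$), the base $F(\mu_{23})$ only through $\mu_{23}$ (types $2,3$), the factor $\Cech(f)(\mu_{24})$ only through $\mu_{24}$ (types $2,4$), and the simplicial indices $n,m$ are the slots $5,6$. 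In each resulting case the square is shown to be homotopy Cartesian by combining: the admissibility of $F$ (Definition~\ref{DEFADMISSIBLE}), applied to the square in $\tw I$ traced out by $F(\mu_{23})$ and using that the homotopy-Cartesian property is invariant under transposing a square; the fact that $\Cech$ carries homotopy Cartesian diagrams to point-wise homotopy Cartesian squares (Lemma~\ref{LEMMAPROPCECH}, 4.); the fact that $(X,Z,Y)\mapsto(X\,\widetilde{\times}_Z\,Y)_{\bullet,\bullet}$ sends point-wise homotopy Cartesian squares in each of its variables to point-wise homotopy Cartesian ones and fits into the homotopy Cartesian squares of Lemma~\ref{LEMMAHOFP}, 2.; the homotopy Cartesianity of the slot-$5$-versus-slot-$6$ squares of Lemma~\ref{LEMMAHOFP}, 4.; and repeated use of the pasting lemma for homotopy Cartesian squares. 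Then Definition~\ref{DEFFLIP} concludes.

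The main obstacle is precisely this last bookkeeping: there is no deep input, but one must identify exactly, for each pair of slot-types, which of the three vertices of the cospan $\Cech(f)(\mu_{13})\to\delta(F(\mu_{23}))\leftarrow\Cech(f)(\mu_{24})$ move and in which direction, and then splice the elementary homotopy Cartesian squares supplied by Lemmas~\ref{LEMMAPROPCECH} and \ref{LEMMAHOFP} and by the admissibility of $F$ into the square required by Definition~\ref{DEFFLIP}. This is the verification left implicit in the ``one checks'' of that definition, here for the concrete cospan-valued diagram attached to an atlas.
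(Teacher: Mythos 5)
Your overall approach is sound and is in fact the route the paper explicitly acknowledges but then sidesteps: the paper writes ``One could check the condition in Definition~\ref{DEFFLIP}. However, we can also prove this directly,'' and then checks admissibility of $\Flip(F')$ by taking an arbitrary composition of three morphisms in $\twwc I \times \Delta^{\op} \times \Delta$ and verifying the associated square of $F'$-values is homotopy Cartesian in one stroke, via the admissibility of $F$ and Lemma~\ref{LEMMAHOFP}. Your version checks the $(1,3,5/2,4,6)$-admissibility of $F'$ itself and then invokes Definition~\ref{DEFFLIP}; this is equivalent up to the change of variables along $P$, and it uses the same inputs (admissibility of $F$, Lemma~\ref{LEMMAHOFP}, pasting). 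Your verification that $F'$ takes values in $\mathcal{M}^{(\old)}$ via Lemma~\ref{LEMMAPROPCECHKGEOM}, parts 1 and 2, is the paper's argument as well, and your remark on the multi-case (finite products preserve $C$, $\mathcal{M}^{(\old)}$, $\mathcal{M}^{(\new)}$) correctly fills in what the paper leaves implicit.

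There is, however, one genuine error in the disposal of the $\Flip(X')$ case. You assert that in $\tm\Delta_1$ ``a slot-$1$ morphism only alters the first factor of the homotopy fiber product, a slot-$2$ morphism alters nothing.'' The first half is right, but the second is false. An object of $\tm\Delta_1$ is $(a \to b)$, and $X'(a \to b, \mu, -, -)$ depends on $a$ only through the first $\Cech$-factor $\Cech(f_a)(\mu_{13})$ but depends on $b$ through \emph{both} the base $F_b(\mu_{23})$ and the second factor $\Cech(f_b)(\mu_{24})$. The type-$2$ morphism $(0\to 0)\to(0\to1)$ therefore replaces $F_1$ by $F_2$ and $\Cech(f_1)(\mu_{24})$ by $\Cech(f_2)(\mu_{24})$; it is not the identity on the diagram. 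What you do have, and what makes the $X'$ case go through, is that $F_1 \to F_2$ is a \emph{point-wise weak equivalence} by the definition of morphisms in $\Cor_I^{\cov}(\tau^S)$, so the mixed squares (slot-$2$ horizontal against an odd vertical slot $3,5,7$) are homotopy Cartesian because a square of simplicial objects two of whose parallel arrows are weak equivalences is homotopy Cartesian precisely when the other two induce a homotopy Cartesian square, and the properties of $\widetilde{\times}$ in Lemma~\ref{LEMMAHOFP} propagate this. This still needs to be said; the $X'$ case is not ``formally identical'' merely because the slot-$1$/slot-$2$ square in $\tm\Delta_1$ is degenerate, since the slot-$1$/slot-$k$ and slot-$j$/slot-$2$ squares ($k\in\{4,6,8\}$, $j\in\{3,5,7\}$) are genuinely present and require the weak-equivalence argument just indicated.
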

\begin{proof}One could check the condition in Definition~\ref{DEFFLIP}. However, we can also prove this directly: 
Consider a composition of three morphisms in $\twwc{I} \times \Delta^{\op} \times \Delta$:
\[ \xymatrix{
i_1 \ar[r] \ar[d] & i_2 \ar[r] \ar@{<-}[d] & i_3 \ar[r] \ar@{<-}[d] & i_4 \ar[d]  & \Delta_{i} \ar@{<-}[d] & \Delta_{i'} \ar[d] \\
j_1 \ar[r] \ar[d] & j_2 \ar[r] \ar@{<-}[d] & j_3 \ar[r] \ar@{<-}[d] & j_4 \ar[d]  & \Delta_{j} \ar@{<-}[d] & \Delta_{j'} \ar[d] \\
k_1 \ar[r] \ar[d] & k_2 \ar[r] \ar@{<-}[d] & k_3 \ar[r] \ar@{<-}[d] & k_4 \ar[d]  & \Delta_{k} \ar@{<-}[d] & \Delta_{k'} \ar[d] \\
l_1 \ar[r] & l_2 \ar[r] & l_3 \ar[r] & l_4 & \Delta_{l} & \Delta_{l'}
} \]
We have to check that the square
\[ \footnotesize \xymatrix{
(\Cech(f)(i_1\rightarrow i_3) \widetilde{\times}_{F(l_2 \rightarrow i_3)} \Cech(f)(l_2 \rightarrow l_4))_{i,l'}  \ar[r] \ar[d] & (\Cech(f)(j_1\rightarrow j_3) \widetilde{\times}_{F(l_2 \rightarrow j_3)} \Cech(f)(l_2 \rightarrow l_4))_{j,l'}  \ar[d] \\
(\Cech(f)(i_1\rightarrow i_3) \widetilde{\times}_{F(k_2 \rightarrow i_3)} \Cech(f)(k_2 \rightarrow k_4))_{i,k'}  \ar[r] & (\Cech(f)(j_1\rightarrow j_3) \widetilde{\times}_{F(k_2 \rightarrow j_3)} \Cech(f)(k_2 \rightarrow k_4))_{j, k'} 
} \]
is homotopy Cartesian. This follows from the properties of $\widetilde{\times}$ established in Lemma~\ref{LEMMAHOFP} and the fact that 
 the square 
\begin{equation}\label{eqcartsquareF} \vcenter{ \xymatrix{
F(l_2 \rightarrow i_3) \ar[r] \ar[d] & F(l_2 \rightarrow j_3) \ar[d] \\
F(k_2 \rightarrow i_3) \ar[r] & F(k_2 \rightarrow j_3)
} } \end{equation}
is homotopy Cartesian because $F: \tw I \rightarrow \mathcal{M}$
 is {\em admissible} in the sense of Definition~\ref{DEFADMISSIBLE}.

If $I$ is a multidiagram, the square (\ref{eqcartsquareF}) is the product of the squares  
\[ \xymatrix{
F(l_2^{(i)} \rightarrow i_3^{(i)}) \ar[r] \ar[d] & F(l_2^{(i)} \rightarrow j_3^{(i)}) \ar[d] \\
F(k_2^{(i)} \rightarrow i_3^{(i)}) \ar[r] & F(k_2^{(i)} \rightarrow j_3^{(i)})
} \]
where now $l_2^{(i)}$ is a single object and $k_2^{(i)},  i_3^{(i)},  j_3^{(i)}$ are the respective lists of preimages. $k_2^{(i)}$ can still be a list $(k_2^{(i,1)}, \dots, k_2^{(i,n)})$.
Thus the square looks like
\[ \xymatrix{
F(l_2^{(i)} \rightarrow i_3^{(i)}) \ar[r] \ar[d] & F(l_2^{(i)} \rightarrow j_3^{(i)}) \ar[d] \\
\prod_{j} F(k_2^{(i,j)} \rightarrow i_3^{(i,j)}) \ar[r] & \prod_{j}  F(k_2^{(i,j)} \rightarrow j_3^{(i,j)})
} \]
(Still the morphisms in the argument can be multi-morphisms but are now elements in $\tw M$, i.e.\@ where the first list consists of exactly one element.) 
This diagram is again homotopy Cartesian because $F$ is admissible. 
Lemma~\ref{LEMMAPROPCECHKGEOM} together with Lemma~\ref{LEMMAHOFP}, 1.\@ implies that $F'$ has values in $\mathcal{M}^{(\old)}$. 
The case of $\Flip(X')$ is done similarly. 
\end{proof}

\begin{PAR}\label{COMPONENTS}
Let $\tau$ be a tree and
\[ F: \tw (\tau \times I) \rightarrow \mathcal{M}^{(\new)} \]
 an admissible diagram with atlas $U \rightarrow F$. 
The Construction~\ref{DEFMC} associates with it the object
\[ \Flip(F') \in \HH^{\cor}(\mathcal{M}^{(\old)})(\twwc (\tau \times I) \times \Delta^{\op} \times \Delta). \]
Constructing the partial underlying diagram (Lemma~\ref{LEMMAPF}, 1.\@), we get an associated pseudo-functor 
\[ \Flip(F'): \twwc \tau \rightarrow \HH^{\cor}(\mathcal{M}^{(\old)})(\twwc I \times \Delta^{\op} \times \Delta). \]

Let $\alpha$ be a multimorphism in $\twwc \tau$ of type 4 (resp.\@ type 3, resp.\@ type 2, resp.\@ type 1). Denote by
\[ \begin{array}{rrcl} 
 \widetilde{g}: &\widetilde{A} &\rightarrow& \widetilde{S}_1, \dots, \widetilde{S}_k \\
 \widetilde{\iota_1}: &\widetilde{A}' &\rightarrow& \widetilde{A} \\
 \widetilde{\iota_2}: &\widetilde{A}' &\rightarrow& \widetilde{A}'' \\
 \widetilde{f}:  &\widetilde{A}'' &\rightarrow& \widetilde{T} 
\end{array} \]
respectively, their images $\Flip(F')(\alpha)$ in
\[ \HH^{\cor}(\mathcal{M}^{(\old)})(\twwc I \times \Delta^{\op} \times \Delta). \]
\end{PAR}

\begin{BEISPIEL}[$\tau = \Delta_{1,k}$] \label{EXCOMPONENTS}
In this case $U \rightarrow F$ constitutes a diagram in $\Fun(\tw  I, \mathcal{M}^{(\new)})$ of the form

\[ \xymatrix{
& U_A \ar[ld] \ar[dd] \ar[rd]  \ar[rrrd] \\
U_1 \ar[dd]  &  &   U_k \ar[dd]  &   &   U_T \ar[dd] \\
& A \ar[ld]^{g_1} \ar[rd]_{g_k}  \ar[rrrd]^{f} \\
S_1  & \dots &   S_k &  ; &   T
} \]
with $g=(g_1, \dots, g_k)$ type-1-admissible, and $f$ type-2-admissible, and where the vertical morphisms are atlases. 

It induces the following diagram $F'$ of shape $\twtw \Delta_{1,k}$ in $\Fun(\twtw  I \times \Delta^{\op} \times \Delta^{\op}, \mathcal{M}^{(\old)})$: 
{\footnotesize \[ \xymatrix{
  (\Cech(U_A \to A)(\mu_{13}) \widetilde{\times}_{A(\mu_{23})} (\Cech(U_A \to A)(\mu_{24})))_{\bullet,\bullet}   \ar[d]_{\widetilde{\iota}_1}  \ar[rd]^{\widetilde{\iota}_2} &  \\
   (\Cech(U_A \to A)(\mu_{13}) \widetilde{\times}_{\prod_i S_i(\mu_{23})} (\prod_i \Cech(U_{S_i} \to S_i)(\mu_{24})))_{\bullet,\bullet} \ar[d]^{(\widetilde{g}_1, \dots, \widetilde{g}_k)}  & 
     (\Cech(U_T \to T)(\mu_{24}) \widetilde{\times}_{T(\mu_{23})} (\Cech(U_A \to A)(\mu_{13})))_{\bullet,\bullet}   \ar[d]^{\widetilde{f}}  \\
{ \substack{ (\Cech(U_1 \to S_1)(\mu_{13}) \widetilde{\times}_{ S_1(\mu_{23})} (\Cech(U_1 \to S_1)(\mu_{24})))_{\bullet,\bullet}  \\ \vdots \\  (\Cech(U_k \to S_k)(\mu_{13}) \times_{ S_k(\mu_{23})} (\Cech(U_k \to S_k)(\mu_{24})))_{\bullet,\bullet} } } &
  (\Cech(U_T \to T)(\mu_{13}) \widetilde{\times}_{T(\mu_{23})} (\Cech(U_T \to T)(\mu_{24})))_{\bullet,\bullet} 
} \]}%
Here $\mu \in \twtw  I$ is the parameter. 
The goal of the main construction of this section is to construct a fibered (multi)derivator $\DD^{(\new)} \rightarrow \HH^{\cor, \cov}(\mathcal{M}^{(\new)})$ in such a way that objects live in a certain subcategory 
of the fiber of $\DD^{(\old)}(\twwc I \times \Delta^{\op} \times \Delta)$ over  $\Flip(F')$ (evaluated at the corresponding object) and 
 push-forward along $U \rightarrow F$, considered as morphism in  $\HH^{\cor, \cov}(\mathcal{M}^{(\new)})(I)$, is given (up to unique isomorphism) by
\[ \Box_!^{(5)}   \widetilde{f}_!  \ \circ \  (\widetilde{\iota}_2^*)^{-1}  \ \circ \  \widetilde{\iota}_1^!   \ \circ \  (\widetilde{g}_1^* - \otimes \cdots \otimes \widetilde{g}_k^* -) \]
and $\widetilde{\iota}_2^*$ and $\widetilde{\iota}_1^!$ are equivalences (when restricted to these subcategories). 
See Definition/Lemma~\ref{LEMMAEXISTENCE3FUNCTORS} for the discussion of the functors. 
\end{BEISPIEL}

\begin{DEF}\label{DEFCOBJECT}
 A simplicial object $X \in \mathcal{M}^{\Delta^{\op}}$ is called a {\bf $C$-object}, if all 
{\em injective} morphisms in $\Delta$ are mapped to a morphism in $C$ (cf.\@ \ref{PARSETTINGEXT}). 
\end{DEF}

\begin{LEMMA}\label{LEMMAPRESERVE1}
Let $g: X \rightarrow Y_1, \dots, Y_k$  be a homotopy Cartesian (Definition~\ref{DEFCARTMORPH}) multimorphism of $C$-objects (Definition~\ref{DEFCOBJECT}) in $(\mathcal{M}^{(\old)})^{\Delta^{\op}}$ (i.e.\@ such that 
$X \rightarrow \prod_i Y_i$ is homotopy Cartesian). 
In particular, we may consider $g$ as an object in 
\[ \HH^{\cor}(\mathcal{M}^{(\old)})(\Delta_{1,k} \times \Delta^{\op}) \]
such that the (multi)morphisms in $\Delta_{1,k}$-direction go to type 1 and the morphisms in $\Delta^{\op}$-direction go to type 2. 
Then
\[ g^*:  \DD^{(\old)}(\Delta^{\op})_{Y_1} \times \dots \times \DD^{(\old)}(\Delta^{\op})_{Y_k} \rightarrow \DD^{(\old)}(\Delta^{\op})_X \]
preserves Cartesian objects. 
\end{LEMMA}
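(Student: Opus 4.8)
The plan is to reduce the statement to a pointwise check, exploiting that Cartesianity of an object $\mathcal{E} \in \DD^{(\old)}(\Delta^{\op})_X$ is detected by the restrictions along the morphisms $\Delta_n \to \Delta_m$ in $\Delta^{\op}$, i.e.\ by asking that $X(\alpha^{\op})^! e_m^* \mathcal{E} \to e_n^* \mathcal{E}$ be an isomorphism for every injective $\alpha: \Delta_n \hookrightarrow \Delta_m$ (the general case then follows since every morphism in $\Delta$ factors through an injective one, and the coCartesianity/Cartesianity conditions compose). So first I would fix $\mathcal{E}_i \in \DD^{(\old)}(\Delta^{\op})_{Y_i}$ Cartesian for $i = 1, \dots, k$, set $\mathcal{E} := g^*(\mathcal{E}_1, \dots, \mathcal{E}_k)$ (the image of the multivalued pull-back functor, which in the notation of \ref{PARNOTATIONDERCORHCOR} is $g^*(\mathcal{E}_1, \dots, \mathcal{E}_k) = g_1^* \mathcal{E}_1 \otimes \cdots \otimes g_k^* \mathcal{E}_k$ computed levelwise over $\Delta^{\op}$), and compute $e_n^* \mathcal{E}$ and the transition morphisms.

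Next I would evaluate at a level $n$: since $g: X \to \prod_i Y_i$ is a levelwise morphism and the pull-back $g^*$ is computed pointwise over $\Delta^{\op}$ by (FDer0), we have $e_n^* \mathcal{E} = g_n^*(e_n^* \mathcal{E}_1, \dots, e_n^* \mathcal{E}_k)$ where $g_n: X_n \to Y_{1,n}, \dots, Y_{k,n}$ is the $n$-th component. For an injective $\alpha: \Delta_n \hookrightarrow \Delta_m$, the transition morphism $X(\alpha^{\op})^! e_m^* \mathcal{E} \to e_n^* \mathcal{E}$ is the exchange associated to the square of correspondences expressing the compatibility of $g$ with $X(\alpha^{\op})$ and the $Y_i(\alpha^{\op})$. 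Here the key inputs are: (1) the morphisms $X(\alpha^{\op}): X_m \to X_n$ and $Y_i(\alpha^{\op}): Y_{i,m} \to Y_{i,n}$ lie in $C$, because $X$ and $Y_i$ are $C$-objects and $\alpha$ is injective; (2) the square
\[ \xymatrix{ X_m \ar[r] \ar[d]_{X(\alpha^{\op})} & \prod_i Y_{i,m} \ar[d]^{\prod_i Y_i(\alpha^{\op})} \\ X_n \ar[r] & \prod_i Y_{i,n} } \]
is homotopy Cartesian because $g$ is a homotopy Cartesian multimorphism of simplicial objects. Therefore axiom (CH) of \ref{PARAXIOMSEXT} (equivalently, (CH left) combined with the projection-formula part) applies to this Cartesian square with the $Y_i(\alpha^{\op})$ in $C$, and gives that the exchange $G^*(f_1^! -, \dots, f_k^! -) \to F^! g^*(-, \dots, -)$ is an isomorphism; unwinding, this is exactly the statement that $X(\alpha^{\op})^! e_m^* \mathcal{E} \to e_n^* \mathcal{E}$ is an isomorphism, since the $\mathcal{E}_i$ are Cartesian, i.e.\ $Y_i(\alpha^{\op})^! e_m^* \mathcal{E}_i \cong e_n^* \mathcal{E}_i$.

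I would then conclude by invoking (Der2) for $\DD^{(\old)}$: having checked that all transition morphisms along injective $\alpha$ become isomorphisms after applying each $e_n^*$, and noting that the underlying-diagram functor $\dia$ is 2-conservative, we get that $\mathcal{E}$ is Cartesian in $\DD^{(\old)}(\Delta^{\op})_X$. The main obstacle is purely bookkeeping: one must identify the transition morphism of $g^*(\mathcal{E}_1, \dots, \mathcal{E}_k)$ with precisely the exchange morphism governed by (CH), i.e.\ match the square of correspondences in $\HH^{\cor}(\mathcal{M}^{(\old)})(\Delta_{1,k} \times \Delta^{\op})$ against the homotopy Cartesian square in $\mathcal{M}^{(\old)}$ to which (CH) applies — this requires carefully tracking the two directions (type $1$ = the $\Delta_{1,k}$ multislots, type $2$ = the simplicial direction) and using that the multivalued pull-back distributes over the tensor, which is legitimate because the tensor on $\DD^{(\old)}$ is itself a pull-back/push-forward along a diagonal-type correspondence and hence commutes with the $e_n^*$. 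Once that identification is made, everything reduces to a single application of axiom (CH) plus (Der2), with no further computation.
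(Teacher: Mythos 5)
Your proposal follows the same route as the paper: evaluate pointwise over $\Delta^{\op}$, observe that for an \emph{injective} $\alpha$ the morphisms $Y_i(\alpha^{\op})$ lie in $C$ (because the $Y_i$ are $C$-objects) and the relevant level-wise square is homotopy Cartesian (because $g$ is a homotopy Cartesian multimorphism), and then invoke axiom (CH) to conclude that the exchange $X(\alpha^{\op})^! g_m^*(-,\dots,-) \to g_n^*(Y_1(\alpha^{\op})^!-,\dots,Y_k(\alpha^{\op})^!-)$ is an isomorphism, hence that injective morphisms go to Cartesian morphisms in the underlying diagram of $g^*(\mathcal{E}_1,\dots,\mathcal{E}_k)$.

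The one place you are imprecise is the reduction from ``all morphisms in $\Delta$'' to ``injective morphisms'': you write that ``every morphism in $\Delta$ factors through an injective one, and the coCartesianity/Cartesianity conditions compose.'' Composing Cartesian morphisms handles the epi-mono factorization $\alpha = \iota \sigma$ only once one already knows that the surjective piece $\sigma$ goes to a Cartesian morphism, and that is not covered by the $C$-object hypothesis (it only concerns injectives). The paper closes this by choosing an injective section $\beta$ of $\sigma$ (so $\sigma\beta = \id$), noting that $\mathcal{E}(\beta)$ is Cartesian by the injective case and that $\mathcal{E}(\beta)\circ\mathcal{E}(\sigma) = \id$ is trivially Cartesian, and then using the left-cancellation property of Cartesian morphisms to deduce that $\mathcal{E}(\sigma)$ is Cartesian. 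You should make this section-and-cancellation step explicit; with it, your argument coincides with the paper's.
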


For $k=1$ there is an obvious dual statement that we leave to the reader to state. 

\begin{proof}
Let $\alpha: \Delta_m \hookrightarrow \Delta_n$ be injective. We get a diagram
\[ \xymatrix{
\DD^{(\old)}(\cdot)_{Y_{1,m}}  \times \cdots \times \DD^{(\old)}(\cdot)_{Y_{k,m}}  \ar[r]^{(Y_i(\alpha)^!)}  \ar[d]_{g^*_m} &  \DD^{(\old)}(\cdot)_{Y_{1,n}} \times \cdots \times  \DD^{(\old)}(\cdot)_{Y_{k,n}}  \ar[d]^{g^*_n} \\
\DD^{(\old)}(\cdot)_{X_m} \ar[r]_{X(\alpha)^!}  &  \DD^{(\old)}(\cdot)_{X_n}  \\
}\]
The underlying square is homotopy Cartesian and the $Y_i(\alpha)$ are in $C$ by assumption. Thus the exchange
\[ X(\alpha)^! g^*_m (-, \dots, -)  \rightarrow g^*_n ( Y_1(\alpha)^! -, \dots,   Y_k(\alpha)^! -)  \]
is an isomorphism by the Axiom (CH) of \ref{PARAXIOMSEXT}.
Thus if the $\mathcal{E}_i \in \DD^{(\old)}(\Delta^{\op})_{Y_{i}}$ are Cartesian objects then in (the underlying diagram of) $\mathcal{F}:=g^*( \mathcal{E}_1, \dots, \mathcal{E}_k)$ injective morphisms $\alpha: \Delta_m \hookrightarrow \Delta_n$ are mapped to Cartesian morphisms. 
However, let $\mathcal{F} \in \DD^{(\old)}(\Delta^{\op})_{X}$ be a Cartesian object.
If $\alpha: \Delta_m \twoheadrightarrow \Delta_n$ is surjective, we find an injective $\beta: \Delta_n \hookrightarrow \Delta_m$ such that $\alpha \beta = \id$.
If furthermore 
\[  \mathcal{E}(\beta): \mathcal{E}_m \rightarrow  \mathcal{E}_n  \]
is Cartesian then, since
\[ \mathcal{E}(\beta) \circ \mathcal{E}(\alpha) = \id  \]
 also $\mathcal{E}(\alpha)$ is Cartesian (by an easy argument that we omit). Therefore $\mathcal{F}$ is Cartesian. 
\end{proof}

\begin{LEMMA}\label{LEMMA34}
Let $I$ be a diagram and $F: \tw I \rightarrow \mathcal{M}^{(\new)}$ be an admissible diagram with atlas $U \rightarrow F$ and let 
\[ \Flip(F') \in \HH^{\cor}(\mathcal{M}^{(\old)})(\twwc I \times \Delta^{\op} \times \Delta) \] be the Main Construction \ref{DEFMC}.
A morphism $\alpha: i \rightarrow j$ of type 3 in $\twwc I$ induces for each $n \in \N$ a morphism 
\[ \iota_1:=\Flip(F')(\alpha, -, n): A:= \Flip(F')(i, -, n) \rightarrow  B := \Flip(F')(j, -, n)   \]
of simplicial objects which induces an equivalence
\[ \iota_1^!: \DD^{(\old)}(\Delta^{\op})_B^{\cart} \cong \DD^{(\old)}(\Delta^{\op})_A^{\cart} \]
Also every pull-back of $\iota_1$ has this property. 

There is an obvious dual statement for type 4 morphisms. 
\end{LEMMA}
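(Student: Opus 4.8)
\textbf{Proof plan for Lemma~\ref{LEMMA34}.}
The plan is to reduce to the case where $I$ is the trivial diagram, and then to analyze the simplicial object $\iota_1$ explicitly. First I would unwind the Main Construction~\ref{DEFMC}. A type-3 morphism $\alpha: i \to j$ in $\twwc I$ is one in which only the third entry $i_3 \to i_3'$ in the relevant sequence is non-identity (the others being identities); under the Flip functor its value at $(-, \Delta_n)$ has source $A = (\Cech(f)(\mu_{13}) \widetilde{\times}_{F(\mu_{23})} \Cech(f)(\mu_{24}))_{\bullet, n}$ and target $B$ of the same shape but with the third index changed. Concretely $\iota_1$ is induced by a morphism of the diagrams of shape $\righthalfcup$ defining the homotopy fiber product, coming from the structure maps of the \v{C}ech nerve (the $(n+1)$-fold fiber product changing to the $(m+1)$-fold one, where $m$ is the source of $i_3 \to i_3'$). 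After pulling back along the fibration $\twwc I \times \Delta^{\op} \times \Delta \to I$ (via Lemma~\ref{LEMMAPF}, 1.), using (Der1--2) and that everything is determined point-wise in $\mu \in \twtw I$, it suffices to prove the statement for a single correspondence, i.e.\@ for the building block $\Cech(f)(\mathrm{id}) \widetilde{\times}_{F} \Cech(f)(\mu_{24})$ with the third index varying.

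The heart of the argument is then the following. Fixing all indices but the one being changed by the type-3 morphism, the simplicial object $A_\bullet$ (in the $\Delta^{\op}$-direction indexed by $\mu_{13}$) is, up to weak equivalence, $\Cech(f)_\bullet$ fibered over something; more precisely, by Lemma~\ref{LEMMAHOFP}, 2.\@ and Lemma~\ref{LEMMAPROPCECH}, 3.\@ each component of $A_\bullet$ is an iterated homotopy fiber power of the atlas $f: U \to F$, and the map $\iota_1$ corresponds to restriction along a face-type map of \v{C}ech nerves. By Lemma~\ref{LEMMAPROPCECHKGEOM}, 1.\@ and 2.\@, $A$ and $B$ lie in $(\mathcal{M}^{(\old)})^{\Delta^{\op}}$. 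Now I would invoke property (D2) of \ref{PARAXIOMSEXT}: since $f$ is (point-wise) in $C$, the map $\gamma$ of Lemma~\ref{LEMMAPROPCECH}, 2.\@ induces an equivalence $\DD^{!,h}(\Delta^{\op})_{\delta(Y)}^{\cart} \cong \DD^{!,h}(\Delta^{\op})_{\Cech(f)}^{\cart}$. Applying this to both $A$ (as a \v{C}ech nerve relative to the appropriate base) and $B$, and using the compatibility of these equivalences with the structure map $\iota_1$ (which is exactly the map induced on \v{C}ech nerves by a face map, hence corresponds under $\gamma$ to an identity on $\delta(Y)$), one concludes that $\iota_1^!$ restricted to Cartesian objects is an equivalence with quasi-inverse induced by the adjoint $\iota_{1,?}$ (computed point-wise, using Lemma~\ref{LEMMACARTPROJ} or directly (H1''), (H2)). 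The statement about pull-backs of $\iota_1$ is handled identically, since $C$ is stable under homotopy pull-back (Axiom (A1)) and so the pulled-back diagram is again the \v{C}ech nerve of a $C$-morphism.

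The main obstacle I anticipate is bookkeeping: matching the combinatorics of the Flip functor (Definition~\ref{DEFFLIP}) — which index of $\twtw I$ and which of the two $\Delta^{\op}$-factors is being manipulated — with the statement that $\iota_1$ is literally induced by a \v{C}ech-nerve face map so that property (D2) applies cleanly. One must check that in the bisimplicial construction $(X \widetilde{\times}_Z Y)_{n,m}$ of Definition~\ref{DEFFPSOBJ}, varying the $\mu_{13}$-index (the one feeding the first \v{C}ech factor) while freezing the $\mu_{24}$- and $\mu_{23}$-data produces exactly a \v{C}ech-nerve in the first \v{C}ech direction relative to the fixed base $Z_{m} = F(\mu_{23})$, so that Lemma~\ref{LEMMAHOFP}, 2.\@ identifies the relevant homotopy Cartesian square and (D2) transports the Cartesianity. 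Once that identification is made, the rest is a formal consequence of (D2), (D1), and the theorems of (co)homological descent. The dual statement for type-4 morphisms follows by the symmetric argument, swapping $\Cech(f)(\mu_{13})$ and $\Cech(f)(\mu_{24})$ and using the coCartesian/$\Delta$-side versions of (D2) and Lemma~\ref{LEMMAHOFP}.
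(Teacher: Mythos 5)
Your overall plan matches the paper's route: reduce to a statement about the naive extension $\DD^{!,h}$, identify $A$ and $B$ with \v{C}ech nerves, and invoke (D2) twice, compatibly, so that $\iota_1^!$ sits between two equivalences. The appeal to Lemma~\ref{LEMMAPROPCECHKGEOM} for placing $A, B$ in $(\mathcal{M}^{(\old)})^{\Delta^{\op}}$ and the treatment of pull-backs of $\iota_1$ via stability of $C$ under homotopy pull-back are also correct. There is, however, a genuine gap in the middle step.

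The characterization of $\iota_1$ as ``restriction along a face-type map of \v{C}ech nerves'' is wrong. A type-3 morphism $\alpha: i \to j$ changes the third entry $i_3$ of the sequence, which is an \emph{object of $I$} and not a simplicial degree; your phrase ``the $(n+1)$-fold fiber product changing to the $(m+1)$-fold one, where $m$ is the source of $i_3 \to i_3'$'' conflates an object of $I$ with a number in $\Delta$. In Main Construction~\ref{DEFMC}, changing $i_3$ simultaneously alters both the \v{C}ech factor $\Cech(U \to F)(\mu_{13})$ \emph{and the base} $F(\mu_{23})$ of the homotopy fiber product. So $\iota_1$ is a comparison between two genuinely different \v{C}ech-nerve constructions, over different objects $F(i_1 \to i_3)$ versus $F(i_1 \to j_3)$, with the homotopy fiber products taken over different bases $F(i_2 \to i_3)$ versus $F(i_2 \to j_3)$; it is not a structure map of a single simplicial object and does not directly ``correspond under $\gamma$ to an identity on $\delta(Y)$.''

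The step you defer to ``bookkeeping'' is in fact the crux. The paper constructs, after passing to fibrations and fibrant replacements, a diagram of honest Cartesian squares and uses Lemma~\ref{LEMMAPROPCECH}, 4.\ to identify $A$ with $\Cech(H\widetilde{F})$ and $B$ with $\Cech(\widetilde{F})$, where $H\widetilde{F}$ and $\widetilde{F}$ are morphisms to \emph{one and the same} object $W$ (roughly, the homotopy fiber product of $F(i_1\to i_3)'$ with $\Cech(U \to F)(i_2 \to i_4)_n'$ over $F(i_2 \to i_3)'$). Two things are essential here and not automatic: (i) the admissibility of $F$ makes the square of values $F(i_1\to i_3)$, $F(i_2\to i_3)$, $F(i_1\to j_3)$, $F(i_2\to j_3)$ homotopy Cartesian, which is precisely what forces the two a priori different bases to coincide with $W$; (ii) the atlas values at $(i_1\to i_3)$ and $(i_1\to j_3)$ fit into a composable pair giving the factorization through $H$. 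Only after this identification do the two (D2)-equivalences to $\DD^{!,h}(\Delta^{\op})_{\delta(W)}^{\cart}$ sit in a $2$-commutative triangle with $\iota_1^!$, and the conclusion follows. The face-map heuristic misses the change of base entirely and would not produce this identification.
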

\begin{proof}
Since all involved diagrams are in the image of $\Cat(\mathcal{M}^{(\old)})$ this is purely a statement about the naive extension
$\DD^{!,h}$ of $\DD^{(\old),!}$ (cf.\@ \ref{PARAXIOMSEXT}). In other words, we have to show that the morphism
\[ \iota_1: (\Delta^{\op}, A) \rightarrow (\Delta^{\op}, B) \]
is a strong $\DD^{!,h}$-equivalence. 
The morphism $\iota_1$  looks like 
\begin{gather*}
(\Cech(U \to F)(i_1\rightarrow \mathbf{i_3}) \widetilde{\times}_{F(i_2 \rightarrow \mathbf{i_3})} \Cech(U \to F)(i_2 \rightarrow i_4))_{m,n} \\
\longrightarrow (\Cech(U \to  F)(i_1\rightarrow \mathbf{j_3}) \widetilde{\times}_{F(i_2 \rightarrow \mathbf{j_3})} \Cech(U \to F)(i_2 \rightarrow i_4))_{m,n} 
\end{gather*}
with fixed $n$. We can form a diagram in  $\mathcal{M}$ with Cartesian squares in which all morphisms are fibrations and which the $'$ indicate appropriate weakly equivalent objects:

\[ \xymatrix{
\Box  \ar[r]^H \ar[d]  & \Box  \ar[r]^{\widetilde{F}} \ar[d]  & W  \ar[r] \ar[d]  & \ar[d] \Cech(U \to F)(i_2 \rightarrow i_4)_n' \\
U(i_1 \rightarrow i_3)' \ar[r]^h  & \Box \ar[r]^F \ar[d]  & F(i_1 \rightarrow i_3)' \ar[r] \ar[d] & F(i_2 \rightarrow i_3)' \ar[d] \\
& U(i_1 \rightarrow j_3)' \ar[r]^f & F(i_1 \rightarrow j_3)' \ar[r] & F(i_2 \rightarrow j_3)'
} \]
Note that the square involving the objects $F(\cdots)$ is homotopy Cartesian by admissibility of $F$.

Using Lemma~\ref{LEMMAHOFP}, 2., the morphism $\iota_1$ is in the homotopy Category of $\mathcal{M}^{\Delta^{\op}}$ isomorphic to 
\[ \Cech(h F) \times_{F(i_2 \rightarrow i_3)'} \Cech(U \to F)(i_2 \rightarrow i_4)_n'  \rightarrow \Cech(f) \times_{F(i_2 \rightarrow j_3)'} \Cech(U \to F)(i_2 \rightarrow i_4)_n'    \]
which is by Proposition~\ref{LEMMAPROPCECH}, 4.\@  isomorphic to 
\[ \Cech(H \widetilde{F})  \rightarrow \Cech(\widetilde{F}).   \]

By (D1) we are reduced to show that
\[  (\Delta^{\op}, \Cech(H \widetilde{F})) \rightarrow (\Delta^{\op}, \Cech(\widetilde{F})) \]
is a strong $\DD^{!,h}$-equivalence. However, we have a 2-commutative diagram
\[ \xymatrix{ \DD^{!,h}(\Delta^{\op})_{\Cech(\widetilde{F})}^{\cart} \ar@{<-}[rd]_-{\sim} \ar[rr] & & \DD^{!,h}(\Delta^{\op})_{\Cech(H \widetilde{F})}^{\cart} \ar@{<-}[ld]^-{\sim} \\
 &  \DD^{!,h}(\Delta^{\op})_{\delta(W)}^{\cart} } \]
 in which the diagonal functors are equivalences by (D2).  For a pull-back of $\iota_1$ the same argument works.
\end{proof}

\begin{LEMMA}\label{LEMMACOCARTPROJ56}
Let $I$ be a diagram,  let $F: \tw I \rightarrow \mathcal{M}^{(\new)}$ be an admissible diagram with atlas $U \rightarrow F$ and let 
\[ \Flip(F') \in \HH^{\cor}(\mathcal{M}^{(\old)})(\twwc I \times \Delta^{\op} \times \Delta) \] be the Main Construction \ref{DEFMC}.
\begin{enumerate}
\item The inclusion
\[ \DD^{(\old)}(\twwc I \times \Delta^{\op} \times \Delta)^{5-\cart}_{\Flip(F')} \rightarrow \DD^{(\old)}(\twwc I \times \Delta^{\op} \times \Delta)_{\Flip(F')}  \]
has a left adjoint $\Box_!^{(5)}$ which is computed point-wise in $\twwc I \times  \Delta$ and preserves the conditions of being 2, 4, 6-coCartesian. 
\item The inclusion
\[ \DD^{(\old)}(\twwc I \times \Delta^{\op} \times \Delta)^{6-\cocart}_{\Flip(F')} \rightarrow \DD^{(\old)}(\twwc I \times \Delta^{\op} \times \Delta)_{\Flip(F')}  \]
has a right adjoint $\Box_*^{(6)}$ which is computed point-wise in $\twwc I \times  \Delta^{\op}$ and preserves the conditions of being 1,3,5-Cartesian. 
\end{enumerate}
\end{LEMMA}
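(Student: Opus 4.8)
The plan is to establish both statements by the same mechanism: identify the relevant Cartesianity condition with a Cartesianity condition for a suitable \emph{fibration} (or opfibration) of diagrams, so that the (co)Cartesian projectors become ordinary relative Kan extensions which are automatically computed point-wise by (FDer4 left/right), and then check compatibility with the other directions. I will only write out statement~1, since statement~2 is strictly dual (swapping the roles of $\cart$/$\cocart$, left/right, $\Delta^{\op}$/$\Delta$, and invoking (FDer0 right), (FDer3 right), (FDer4 right) in place of their left versions). Throughout, $\Flip(F')$ is a fixed admissible diagram in $\HH^{\cor}(\mathcal{M}^{(\old)})(\twwc I \times \Delta^{\op} \times \Delta)$, and the ambient derivator $\DD^{(\old)}$ has stable, well-generated fibers, so the relevant adjoints exist; by the remark after Definition~\ref{DEFFIBDER}, the second part of (FDer0 left) and (FDer5 right) hold for \emph{any} functor $\alpha$, which I will use freely.

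First I would recall (from \cite[4.3]{Hor15}, as cited in the statement) that for a fibered multiderivator the inclusion of $\beta$-Cartesian objects along an opfibration $\beta: K \to L$ whose fibers have a terminal object admits a left adjoint, and this left adjoint agrees — on the underlying diagrams — with a point-wise construction; more generally, for a general functor $\beta$, the left adjoint to the inclusion of the full subcategory of objects that are Cartesian in the $\beta$-direction is built by (repeatedly) applying relative left Kan extension. The key observation for part~1 is that the projection $\pr_{5}: \twwc I \times \Delta^{\op} \times \Delta \to (\text{everything but the 5th slot})$ — i.e.\ the one forgetting the $\Delta^{\op}$ factor sitting in position~5 after the $\twwc I = {}^{\downarrow\downarrow\uparrow\uparrow}I$ prefix (arrow direction $\downarrow$, hence a fibration, by \ref{PARTW2}) — makes $\DD^{(\old)}(\twwc I \times \Delta^{\op} \times \Delta)$ fibered in a way that the $5$-Cartesian objects are exactly the $\pr_5$-Cartesian ones. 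Hence the left adjoint $\Box_!^{(5)}$ is the left Cartesian projector for this fibration, and by (FDer4 left) applied to $\pr_5$ and to each object of the base it is computed point-wise in $\twwc I \times \Delta$ (the base of $\pr_5$), exactly as asserted.

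Next I would check that $\Box_!^{(5)}$ preserves the conditions of being $2,4,6$-coCartesian. Since $\Box_!^{(5)}$ is computed point-wise in the remaining directions, it suffices — using (Der2), i.e.\ 2-conservativity of the underlying-diagram functor — to verify this point-wise. Fix a morphism $\alpha$ of type $2$ (resp.\ $4$, resp.\ $6$) in the base $\twwc I \times \Delta$; one gets a square
\[
\xymatrix{
\DD^{(\old)}(\Delta^{\op})_{\Flip(F')(\bullet_5, s)} \ar[r]^-{\Box_!} \ar[d]_{\Flip(F')(\alpha)_\bullet} & \DD^{(\old)}(\Delta^{\op})_{\Flip(F')(\bullet_5, s)}^{\cart} \ar[d] \\
\DD^{(\old)}(\Delta^{\op})_{\Flip(F')(\bullet_5, s')} \ar[r]^-{\Box_!} & \DD^{(\old)}(\Delta^{\op})_{\Flip(F')(\bullet_5, s')}^{\cart}
}
\]
and I must show the exchange is an isomorphism; equivalently, that the pull-back functors along these type-$2,4,6$ morphisms intertwine the left Cartesian projectors in the $5$-direction. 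This is precisely the situation of Lemma~\ref{LEMMACARTPROJ}: the transition morphism of $\Flip(F')$ along $\alpha$ is, by the Main Construction~\ref{DEFMC} and Lemma~\ref{LEMMAHOFP}, a homotopy Cartesian morphism of simplicial objects (the admissibility of $F$ and Lemma~\ref{LEMMAHOFP}.3--4 are what give this), so the hypotheses (H1'') and (H2) needed in Lemma~\ref{LEMMACARTPROJ} hold by Proposition~\ref{PROPHEREDITYLOC1} and Theorem~\ref{SATZHEREDITYLOC2} (equivalently, by the assumed axioms (H1), (CH) of \ref{PARAXIOMSEXT} in the abstract setting, via Lemma~\ref{LEMMAPRESERVE1}). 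I expect the bookkeeping of exactly \emph{which} of the many slot-indices are in play — i.e.\ matching up the arrow-direction conventions of $\twwc I = {}^{\downarrow\downarrow\uparrow\uparrow}I$ so that slot $5$ really is a fibration over the base formed by slots $1,2,3,4,6$ and slots $2,4,6$ really do carry the coCartesianity conditions compatibly — to be the main obstacle; the homotopical input is already packaged by the cited lemmas. Part~2 is obtained by dualizing: slot $6$ of $\Flip(F')$ lives over $\Delta$ with arrow direction $\uparrow$, hence $\pr_6$ is an \emph{op}fibration, so the inclusion of $6$-coCartesian objects has a right adjoint $\Box_*^{(6)}$, point-wise in $\twwc I \times \Delta^{\op}$ by (FDer4 right), and the preservation of $1,3,5$-Cartesianity follows from the dual of Lemma~\ref{LEMMACARTPROJ} together with Theorems~\ref{SATZHEREDITYLOC2}--\ref{SATZHEREDITYLOC3}.
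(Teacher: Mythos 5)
Your overall plan mirrors the paper's: reduce to part~1, get existence of $\Box_!^{(5)}$ and point-wise computation from \cite{Hor15}, then verify preservation of the other (co)Cartesianity conditions by an exchange/intertwining argument. However, there are two substantive mismatches.

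\textbf{The preservation step invokes the wrong lemma.} You cite Lemma~\ref{LEMMACARTPROJ} as ``precisely the situation'' needed, but Lemma~\ref{LEMMACARTPROJ} (part~1) is about $g_\bullet^!$ intertwining $\Box_!$, and (part~2) about $g_\bullet^*$ intertwining $\Box_*$. What part~1 of the present lemma requires is that the $*$-type pullback $\Flip(F')(\alpha,-,\rho)^*$ along a type-$2,4,6$ morphism intertwines the \emph{left Cartesian} projector $\Box_!$ in the $5$-direction. That combination ($g^*$ with $\Box_!$) is not covered by either part of Lemma~\ref{LEMMACARTPROJ}, and there is no formal duality that would transport it. The paper's actual argument is both simpler and correct: observe that $\Flip(F')(\alpha,-,\rho)_*$ trivially preserves $5$-Cartesian objects, so the restriction square built from inclusions and $\_*$ commutes strictly; then pass to the mate square of left adjoints $(\Box_!, \_^*)$; this square is well-defined and $2$-commutes \emph{because} $\Flip(F')(\alpha,-,\rho)^*$ preserves $5$-Cartesianity, which is exactly Lemma~\ref{LEMMAPRESERVE1} (and that lemma is where axiom~(CH) of~\ref{PARAXIOMSEXT} enters). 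You mention Lemma~\ref{LEMMAPRESERVE1} only in a parenthetical; it should be the load-bearing citation.

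\textbf{Two smaller issues.} First, appealing to Proposition~\ref{PROPHEREDITYLOC1} and Theorem~\ref{SATZHEREDITYLOC2} to supply hypotheses is circular at this point in the development: those results presuppose an admissible extension $\DD'$, which Section~\ref{SECTMC} is in the middle of constructing. The correct input is the abstract axioms (H1), (CH) of~\ref{PARAXIOMSEXT} (as you note parenthetically). Second, for the point-wise computation of $\Box_!^{(5)}$ the paper cites \cite[Lemma~3.5.11]{Hor15}, which is specifically about the exchange $\iota^*\Box_! \to \Box_!\iota^*$ along an \emph{opfibration}. Your route via (FDer4~left) is not on target: (FDer4~left) governs relative left Kan extensions $\alpha_!$, not Cartesian projectors; and the justification via~\ref{PARTW2} does not apply, since the $\Delta^{\op}$ factor in $\twwc I \times \Delta^{\op} \times \Delta$ is a standalone product factor, not one of the slots of the ${}^\Xi$ construction. (The projection away from a product factor is both a fibration and an opfibration, so your claim that it is a fibration is true but for the wrong reason, and in any case the relevant cited lemma is for opfibrations.)
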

\begin{proof}
The functors $\Box_!^{(5)}$ and $\Box_*^{(6)}$ exist by \cite[Theorem~4.3.4]{Hor15}. 
We will show 1.\@,  the other statement being dual. Consider the projection 
\[ \pi_{12346}: \twwc I \times \Delta^{\op} \times \Delta \rightarrow \twwc I \times \Delta. \] 
Since it is clearly an opfibration, by \cite[Lemma~3.5.11]{Hor15}, the exchange morphism 
\[ \iota^* \Box_!^{(5)} \rightarrow \Box_! \iota^*  \]
is an isomorphism, where $\iota: \{\mu\} \times \Delta^{\op} \times \{\Delta_n\} \rightarrow \twwc I \times \Delta^{\op} \times \Delta$ is the inclusion of a fiber. 
$\Box_!^{(5)}$ is therefore computed point-wise in $\twwc I \times \Delta$. Consider any 
morphism $\alpha: \mu_1 \rightarrow \mu_2$ in $\twwc I$ of type 2 or type 4 and $\rho: \Delta_n \rightarrow \Delta_m$ and consider the 2-commutative diagram
\[ \xymatrix{
\DD^{(\old)}(\Delta^{\op})_{\Flip(F')(\mu_1,-,n)} \ar@{<-^{)}}[r] \ar@{<-}[d]_{\Flip(F')(\alpha,-,\rho)_*} &   \DD^{(\old)}(\Delta^{\op})_{\Flip(F')(\mu_1,-,n)}^{\cart} \ar@{<-}[d]^{\Flip(F')(\alpha,-,\rho)_*} \\
\DD^{(\old)}(\Delta^{\op})_{\Flip(F')(\mu_2,-,m)} \ar@{<-^{)}}[r] & \DD^{(\old)}(\Delta^{\op})_{\Flip(F')(\mu_2,-,m)}^{\cart}
} \] 
(Note that $\Flip(F')(\alpha,-,\rho)_*$ clearly preserves Cartesianity). 
Its left adjoint is the diagram 
\[ \xymatrix{
\DD^{(\old)}(\Delta^{\op})_{\Flip(F')(\mu_1,-,n)} \ar[r]^{\Box_!} \ar[d]_{\Flip(F')(\alpha,-,\rho)^*} &   \DD^{(\old)}(\Delta^{\op})_{\Flip(F')(\mu_1,-,n)}^{\cart} \ar[d]^{\Flip(F')(\alpha,-,\rho)^*} \\
\DD^{(\old)}(\Delta^{\op})_{\Flip(F')(\mu_2,-,m)} \ar[r]_{\Box_!} & \DD^{(\old)}(\Delta^{\op})_{\Flip(F')(\mu_2,-,m)}^{\cart}
} \] 
because $\Flip(F')(\alpha,-,\rho)^*$ preserves Cartesianity by Lemma~\ref{LEMMAPRESERVE1}.
$\Box_!^{(5)}$ thus preserves the condition of being 2,4,6-coCartesian. 
\end{proof}

\begin{LEMMA}\label{LEMMAPRESERVE2}
Let 
\[ \xymatrix{ 
X' \ar[r]^-G \ar[d]_-F & Y'_1, \dots, Y_k' \ar[d]^{f_1, \dots, f_k} \\
X \ar[r]_-g & Y_1, \dots, Y_k
} \]
be a diagram of $C$-objects (cf.\@ Definition~\ref{DEFCOBJECT}) in $(\mathcal{M}^{(\old)})^{\Delta^{\op}}$, point-wise homotopy Cartesian
and such that $g: X \rightarrow Y_1, \dots, Y_k$ (and hence $G: X' \rightarrow Y'_1, \dots, Y'_k$) are homotopy Cartesian morphisms (\ref{DEFCARTMORPH})\footnote{Here this boils down to $X \rightarrow \prod_i Y_i$ being homotopy Cartesian.}. In other words, the square above induces an object in 
\[ \HH^{\cor}(\mathcal{M}^{(\old)})(\Delta_1 \times \Delta_{1,k} \times \Delta^{\op})  \]
such that the morphisms in $\Delta_{1,k}$ go to type 1 and the morphisms in $\Delta_1 \times \Delta^{\op}$ go to type 2. 
Assume that the $f^!_i$ induce equivalences on Cartesian objects and also $\widetilde{F}_i^!$ for arbitrary pull-backs $\widetilde{F}_i$ (and hence also $F^!$ as a composition of such).  

Then we have that the exchange morphism (of the base change)
\[  G^* (f^!_1-, \dots, f_k^!-)  \rightarrow F^! g^*(-, \dots, -)   \]
is an isomorphism on Cartesian objects. 
\end{LEMMA}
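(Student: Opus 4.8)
The statement to prove is Lemma~\ref{LEMMAPRESERVE2}: for a point-wise homotopy Cartesian square of $C$-objects in $(\mathcal{M}^{(\old)})^{\Delta^{\op}}$ in which $g$ (and hence $G$) is a homotopy Cartesian morphism, the base-change exchange $G^*(f_1^!-,\dots,f_k^!-)\to F^!g^*(-,\dots,-)$ is an isomorphism when restricted to Cartesian objects, given that the $f_i^!$ and all pull-backs $\widetilde F_i^!$ are equivalences on Cartesian objects. The plan is to reduce to the point-wise (i.e.\@ simplicial-degree $n$) statement, which holds by Axiom (CH) of \ref{PARAXIOMSEXT} (exactly as in the proof of Lemma~\ref{LEMMAPRESERVE1}), and then propagate the degreewise isomorphism to an isomorphism of coherent objects using (Der2) together with the hypothesis that the relevant functors are equivalences on Cartesian objects.

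\textbf{Step 1: reduce the exchange to a degreewise statement.} First I would note that, since all diagrams in sight lie in the image of $\Cat(\mathcal{M}^{(\old)})$, the morphisms $F$, $G$, $g$, $f_i$ in $\HH^{\cor}(\mathcal{M}^{(\old)})$ are presented by honest diagrams, and $F^!$, $G^*$, $f_i^!$, $g^*$ are the functors of the naive extensions $\DD^{!,h}$, $\DD^{*,h}$ of $\DD^{(\old)}$ restricted to these diagrams. By (Der2) for $\DD^{(\old)}(\Delta^{\op})$, a morphism in $\DD^{(\old)}(\Delta^{\op})_{X'}$ is an isomorphism iff it is so after applying every $e_n^*$ for $e_n:\{\Delta_n\}\hookrightarrow\Delta^{\op}$. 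The exchange $G^*(f_1^!-,\dots)\to F^!g^*(-,\dots)$, evaluated at $\Delta_n$, is (up to the canonical identifications $e_n^* F^! \cong F_n^! e_n^*$, etc., which hold because the $\DD$-functors involved are pulled back along fibrations/morphisms of the shape-$\Delta^{\op}$ diagram and commute with restriction to a fiber) the degree-$n$ exchange $G_n^*(f_{1,n}^!-,\dots,f_{k,n}^!-)\to F_n^! g_n^*(-,\dots,-)$ for the Cartesian square of objects of $\mathcal{M}^{(\old)}$ in degree $n$. That square is homotopy Cartesian (point-wise homotopy Cartesianity) and its bottom horizontal maps $f_{i,n}$ lie in $C$ (since $Y_i$, $X$ are $C$-objects, though one must be slightly careful: the $f_{i,n}$ themselves need not a priori be in $C$ — rather the $C$-object hypothesis gives that the \emph{structure maps within} each simplicial object are in $C$, so what is used is that $g$ being a homotopy Cartesian morphism of $C$-objects forces the degreewise squares to be base-changes of $f_i$, and $f_i$ being the realization of a $k-C$ situation). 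Hence the degree-$n$ exchange is an isomorphism by the Axiom (CH) of \ref{PARAXIOMSEXT}, exactly the argument already carried out in Lemma~\ref{LEMMAPRESERVE1}.

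\textbf{Step 2: conclude for Cartesian objects.} Now fix Cartesian objects $\mathcal{E}_i\in\DD^{(\old)}(\Delta^{\op})_{Y_i}^{\cart}$. By Lemma~\ref{LEMMAPRESERVE1}, $g^*(\mathcal{E}_1,\dots,\mathcal{E}_k)$ is Cartesian, and likewise $G^*(f_1^!\mathcal{E}_1,\dots,f_k^!\mathcal{E}_k)$ is Cartesian since each $f_i^!\mathcal{E}_i$ is Cartesian (the $f_i^!$ preserve Cartesian objects, being equivalences onto the Cartesian subcategory by hypothesis). The source and target of the exchange morphism thus both lie in $\DD^{(\old)}(\Delta^{\op})_{X'}^{\cart}$ (for the target one uses that $F^!$ preserves Cartesian objects — again by the hypothesis that $F^!$ and its pull-backs are equivalences on Cartesian objects, or directly: $F$ is a composition of type-3/type-4-type morphisms handled by Lemma~\ref{LEMMA34}). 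A morphism between Cartesian objects that becomes an isomorphism after each $e_n^*$ is an isomorphism by (Der2); this is precisely the output of Step 1, so the exchange is an isomorphism.

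\textbf{Main obstacle.} The routine part is the diagram-chasing with the canonical exchange-commutes-with-restriction-to-a-fiber identifications; the genuinely delicate point is Step 1's invocation of Axiom (CH): one must verify that the degreewise squares $\xymatrix{X'_n\ar[r]\ar[d]&Y'_{1,n},\dots,Y'_{k,n}\ar[d]\\ X_n\ar[r]&Y_{1,n},\dots,Y_{k,n}}$ genuinely have their bottom maps in the class $C$ for which (CH) is assumed — this uses the $C$-object hypothesis on $X$ and the $Y_i$ together with the fact that $g$ is a homotopy Cartesian morphism, so that each degree-$n$ horizontal map is (up to homotopy) a base-change of the corresponding structure map, hence in $C$ by stability of $C$ under homotopy pull-back (Axiom (A1)); one also needs that $G$ being homotopy Cartesian forces $X'_n\to\prod_i Y'_{i,n}$ to be homotopy Cartesian so that the $k$-ary exchange makes sense and (CH) applies. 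Handling the interplay of the surjective versus injective simplices (as in the end of the proof of Lemma~\ref{LEMMAPRESERVE1}, using a section $\beta$ with $\alpha\beta=\id$) is the other place requiring care, but it is exactly the bookkeeping already done there and needs no new idea.
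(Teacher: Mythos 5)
Your Step 1 contains a genuine gap. You reduce to the degree-$n$ exchange $G_n^*(f_{1,n}^!-,\dots,f_{k,n}^!-)\to F_n^!\,g_n^*(-,\dots,-)$ and claim it is an isomorphism by Axiom (CH). But (CH) only applies when the morphisms along the bottom of the Cartesian square lie in the class $C$, and in the present lemma the $f_i$ are \emph{not} assumed to be pointwise in $C$ (nor $k$-$C$). The hypotheses on the $f_i$ are of a different nature: the functors $f_i^!$ and all their pull-backs $\widetilde F_i^!$ induce \emph{equivalences on the subcategories of Cartesian $\Delta^{\op}$-diagrams}. This is a global property of the simplicial-level functors and says nothing about the degreewise $f_{i,n}^!$. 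The $C$-object hypothesis you invoke concerns the \emph{injective face maps} of each simplicial object, not the horizontal maps $f_{i,n}$ of the square; so your patch ("$g$ being homotopy Cartesian forces the degreewise squares to be base-changes of $f_i$, and $f_i$ being the realization of a $k$-$C$ situation") does not follow from the stated assumptions and in fact confuses two different directions of the square. The degreewise exchange need not be an isomorphism, and indeed the paper never claims it is.

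The paper's argument is fundamentally different: it never passes to the pointwise exchange. Instead, it factors $g^*$ and $G^*$ as $g^*=g^*\circ\Delta^*\circ\boxtimes$, splits the rectangle into three squares, and shows each is $2$-commutative. The middle square commutes because $(\prod f_i)^!$ is computed pointwise. For the bottom square, one takes the \emph{adjoint} square (with $g_*$, $G_*$ on the vertical sides), which commutes tautologically; the exchange of this adjoint square is the bottom square, and because the horizontal functors $(\prod f_i)^!$ and $F^!$ are \emph{equivalences} on Cartesian objects (the actual hypothesis being used), the exchange of a $2$-commutative adjoint square of equivalences is again $2$-commutative. The top square is the hard one and is resolved by showing $\boxtimes$ commutes slot-wise with the left Cartesian projector $\Box_!^{(i)}$, which reduces to $k=2$ and a pointwise internal-hom computation showing the right adjoint $R(\mathcal{G})=\pr_{1,*}\mathcal{HOM}(\pr_2^*\mathcal{E}_n;m^*\mathcal{G})$ preserves Cartesian objects. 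Your proof would need to be replaced by an argument of this kind that genuinely exploits the equivalence hypothesis rather than a pointwise base-change axiom that does not apply.
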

For $k=1$ there is an obvious dual statement of the Lemma with the same proof whose formulation is left to the reader. 
\begin{proof}We can split up the pull-backs $G^*$ and $g^*$ and have to show that all squares in the diagram are 2-commutative (squares $\numcirc{1}$ and $\numcirc{3}$ via the natural exchange morphisms): 
\[ \xymatrix{ 
  \DD^{(\old)}(  \Delta^{\op} )^{\cart}_{Y_1}  \times \dots \times  \DD^{(\old)}(  \Delta^{\op} )^{\cart}_{Y_k} \ar@{}[rd]|{\numcirc{1}}  \ar[r]^{(f_i^!)_i} \ar[d]_{\boxtimes}  &   \DD^{(\old)}(  \Delta^{\op} )^{\cart}_{Y'_1}\times \dots\times  \DD^{(\old)}(  \Delta^{\op} )^{\cart}_{Y'_k} \ar[d]^{\boxtimes} \\
  \DD^{(\old)}(  (\Delta^{\op})^k )^{\cart}_{\prod_i Y_i}   \ar[r]^{(\prod f_i)^!} \ar[d]_{\Delta^*}   \ar@{}[rd]|{\numcirc{2}} &   \DD^{(\old)}( ( \Delta^{\op})^k )^{\cart}_{\prod_i Y_i'} \ar[d]^{\Delta^*} \\
  \DD^{(\old)}(  \Delta^{\op} )^{\cart}_{\prod_i Y_i}   \ar[r]^{(\prod f_i)^!} \ar[d]_{g^*}  \ar@{}[rd]|{\numcirc{3}} &   \DD^{(\old)}( \Delta^{\op} )^{\cart}_{\prod_i Y_i'} \ar[d]^{G^*} \\
  \DD^{(\old)}(  \Delta^{\op} )^{\cart}_{X}  \ar[r]_{F^!}   &   \DD^{(\old)}(  \Delta^{\op} )^{\cart}_{X'}  
} \]
Note that $g^*$ preserve Cartesian objects by Lemma~\ref{LEMMAPRESERVE1}. For $\boxtimes$ a similar argument shows the same. 
For square $\numcirc{2}$ the 2-commutativity is clear because $(\prod f_i)^!$ is computed point-wise. 
Furthermore all horizontal functors are equivalences: For the third row note that  $(\prod f_i)^!$ is a composition of $\widetilde{F}_i^!$ where $\widetilde{F}_i$ is a pull-back of $f_i$. For the second row note that an adjoint of 
 $(\id, \dots, \id, f_i, \id, \dots, \id)^!$ is given by $\Box_!^{(i)} (\id, \dots, \id, f_i, \id, \dots, \id)_!$ and that is computed point-wise in the other $\Delta^{\op}$'s (for $\Box_!^{(i)}$ see the argument in the proof of Lemma~\ref{LEMMACOCARTPROJ56}). Therefore it is an equivalence because it is point-wise in the other $\Delta^{\op}$'s the case. 

2-commutation of square $\numcirc{3}$: We have the 2-commutative diagram: 
\[ \xymatrix{ 
  \DD^{(\old)}(  \Delta^{\op} )^{\cart}_{\prod Y_i}    \ar[r]^{(\prod f_i)^!} \ar@{<-}[d]_{g_*}  &   \DD^{(\old)}(  \Delta^{\op} )^{\cart}_{\prod Y_i'}  \ar@{<-}[d]^{G_*} \\
  \DD^{(\old)}(  \Delta^{\op} )^{\cart}_{X}  \ar[r]_{F^!}   &   \DD^{(\old)}(  \Delta^{\op} )^{\cart}_{X'}  
} \]
in which the horizontal functors are equivalences and in which $g_*$ and $G_*$ clearly preserve Cartesian objects. 
Its exchange is the square $\numcirc{3}$ and thus again 2-commutative.

2-commutation of square $\numcirc{1}$: We have the 2-commutative diagram: 
\[ \xymatrix{ 
  \DD^{(\old)}(  \Delta^{\op} )_{Y_1}  \times \dots \times  \DD^{(\old)}(  \Delta^{\op} )_{Y_k}  \ar@{<-}[r]^{(f_{i,!})_i} \ar[d]_{\boxtimes}  &   \DD^{(\old)}(  \Delta^{\op} )^{\cart}_{Y'_1}\times \dots\times  \DD^{(\old)}(  \Delta^{\op} )_{Y'_k} \ar[d]^{\boxtimes} \\
  \DD^{(\old)}(  (\Delta^{\op})^k )_{\prod_i Y_i}   \ar@{<-}[r]_{(\prod f_i)_!}    &   \DD^{(\old)}( ( \Delta^{\op})^k )^{\cart}_{\prod_i Y_i'} 
 } \]
The diagram
\[ \xymatrix{ 
  \DD^{(\old)}(  \Delta^{\op} )_{Y_1}^{\cart}  \times \dots \times  \DD^{(\old)}(  \Delta^{\op} )_{Y_k}^{\cart}  \ar@{<-}[r]^{(\Box_! f_{i,!})_i} \ar[d]_{\boxtimes}  &   \DD^{(\old)}(  \Delta^{\op} )^{\cart}_{Y'_1}\times \dots\times  \DD^{(\old)}(  \Delta^{\op} )^{\cart}_{Y'_k} \ar[d]^{\boxtimes} \\
  \DD^{(\old)}(  (\Delta^{\op})^k )_{\prod_i Y_i}^{\cart}   \ar@{<-}[r]_{ \Box_! (\prod f_i)_!}    &   \DD^{(\old)}( ( \Delta^{\op})^k )^{\cart}_{\prod_i Y_i'} 
 } \]
noting that $\Box_! = \Box_!^{(1)} \circ \cdots \circ \Box_!^{(n)}$ in the second row, is therefore 2-commutative if $\boxtimes$ commutes with the left Cartesian projector in the $i$-th slot. It suffices to see this for $k=2$ and therefore
to see that 
\[ \xymatrix{ 
  \DD^{(\old)}(  \Delta^{\op} )_{A}^{\cart}  \times  \DD^{(\old)}(  \Delta^{\op} )_{B}^{}  \ar@{<-}[r]^{(\Box_!, \id)} \ar[d]_{\boxtimes}  &   \DD^{(\old)}(  \Delta^{\op} )^{}_{A} \times \DD^{(\old)}(  \Delta^{\op} )^{}_{B} \ar[d]^{\boxtimes} \\
  \DD^{(\old)}(  \Delta^{\op} \times \Delta^{\op})_{A\times B}^{1-\cart}   \ar@{<-}[r]_{ \Box_!^{(1)}}    &   \DD^{(\old)}( \Delta^{\op} \times  \Delta^{\op})_{A \times B} 
 } \]
is 2-commutative. Noting that $\Box_!^{(1)}$ is computed point-wise in the second entry it suffices to see that this holds point-wise. In other words, we have to see that for $\mathcal{E}_n \in \DD^{(\old)}(  \Delta^{\op} )^{}_{B_n}$
\[ \xymatrix{ 
  \DD^{(\old)}(  \Delta^{\op} )_{A}^{\cart}    \ar@{<-}[r]^{\Box_!} \ar[d]_{L:=\boxtimes \mathcal{E}_n}  &   \DD^{(\old)}(  \Delta^{\op} )^{}_{A} \ar[d]^{\boxtimes \mathcal{E}_n} \\
  \DD^{(\old)}(  \Delta^{\op} )_{A\times B_n}^{\cart}   \ar@{<-}[r]_{ \Box_!}    &   \DD^{(\old)}( \Delta^{\op} )_{A \times B_n} 
 } \]
 is 2-commutative. However we have
 \[ m^* L( \mathcal{F}) = \pr_1^* m^* \mathcal{F} \otimes_{A_m \times B_n}  \pr_2^* \mathcal{E}_n .  \]
 $L$ has a right adjoint $R$ which is also computed point-wise with
 \[  m^* R(\mathcal{G}) = \pr_{1,*} \mathcal{HOM}_{A_m \times B_n}( \pr_2^* \mathcal{E}_n; m^* \mathcal{G})   \]
We see that $R$ preserves the subcategory of Cartesian objects and hence $L$ commutes with the left Cartesian projector. 
\end{proof}

\begin{DEFLEMMA}\label{LEMMAEXISTENCE3FUNCTORS}
For the morphisms defined in \ref{COMPONENTS}, depending on a multimorphism $\alpha$ in $\twwc \tau$, we have: 
\begin{enumerate}
\item If $\alpha$ is of type 4,
\[ \Flip(F')(\alpha)_{\#} := \widetilde{g}^*:  \DD^{(\old)}(\twwc I \times \Delta^{\op} \times \Delta)_{\widetilde{S}_1}  
\times  \cdots \times \DD^{(\old)}(\twwc I \times \Delta^{\op} \times \Delta)_{\widetilde{S}_k}  \rightarrow  \DD^{(\old)}(\twwc I \times \Delta^{\op} \times \Delta)_{\widetilde{A}}  \]
where $\widetilde{g}^*$ denotes a chosen multi-push-forward along the multi-morphism $(\widetilde{g}_1, \dots, \widetilde{g}_k)$
preserves the conditions of being simultaneously 3,5-Cartesian and 2,4,6-coCartesian. 
\item If $\alpha$ is of type 3, 
\[ \Flip(F')(\alpha)_{\#} := \widetilde{\iota}_1^!:  \DD^{(\old)}(\twwc I \times \Delta^{\op} \times \Delta)_{\widetilde{A}} \rightarrow  \DD^{(\old)}(\twwc I \times \Delta^{\op} \times \Delta)_{\widetilde{A}'}  \]
preserves the conditions of being simultaneously 3,5-Cartesian and 2,4,6-coCartesian and it induces an equivalence of the subcategories of such objects with inverse given by $\Box_!^{(5)} \widetilde{\iota}_{1,!}$ where $\Box_!^{(5)}$ is the left Cartesian projector from Lemma~\ref{LEMMACOCARTPROJ56}.
\item If $\alpha$ is of type 2, 
\[ \widetilde{\iota}_2^*:  \DD^{(\old)}(\twwc I \times \Delta^{\op} \times \Delta)_{\widetilde{A}''} \rightarrow  \DD^{(\old)}(\twwc I \times \Delta^{\op} \times \Delta)_{\widetilde{A}'}  \]
preserves the conditions of being simultaneously 3,5-Cartesian and 2,4,6-coCartesian and it induces an equivalence of the subcategories of such objects with inverse given by 
\[ \Flip(F')(\alpha)_{\#} := \Box_*^{(6)} \widetilde{\iota}_{2,*} \]
where $\Box_*^{(6)}$ is the right coCartesian projector from Lemma~\ref{LEMMACOCARTPROJ56}.
\item If $\alpha$ is of type 1, 
\[ \Flip(F')(\alpha)_{\#} := \Box_!^{(5)} \widetilde{f}_!:  \DD^{(\old)}(\twwc I \times \Delta^{\op} \times \Delta)_{\widetilde{A}''} \rightarrow  \DD^{(\old)}(\twwc I \times \Delta^{\op} \times \Delta)_{\widetilde{T}}  \]
where $\Box_!^{(5)}$ is the left Cartesian projector from Lemma~\ref{LEMMACOCARTPROJ56}, preserves the conditions of being simultaneously 3,5-Cartesian and 2,4,6-coCartesian.

Furthermore $\Box_!^{(5)}$ is computed point-wise in $\twwc I \times \Delta$. 
\end{enumerate}
\end{DEFLEMMA}

\begin{proof}
1.\@ $\widetilde{g}^*$ clearly preserves any type of coCartesianity.  
That 5-Cartesianity  is preserved follows from Lemma~\ref{LEMMAPRESERVE1}. 
Let $\alpha: o_1 \rightarrow o_2$ be a morphism of type 3 in $\twwc I$ and consider for $n \in \N$ the diagram
\[ \xymatrix{
\DD^{(\old)}(\Delta^{\op})_{\widetilde{S}_1(o_2,-,n)}^{\cart} \times \cdots \times \DD^{(\old)}(\Delta^{\op})_{\widetilde{S}_k(o_2,-,n)}^{\cart} \ar[r]^-{\widetilde{g}^*} \ar[d]_{\widetilde{S}_1(\alpha,-,n)^!, \dots, \widetilde{S}_k(\alpha,-,n)^!} &   \DD^{(\old)}(\Delta^{\op})_{\widetilde{A}(o_2,-,n)}^{\cart} \ar[d]^{\widetilde{A}(\alpha,-,n)^!} \\
\DD^{(\old)}(\Delta^{\op})_{\widetilde{S}_1(o_1,-,n)}^{\cart} \times \cdots \times \DD^{(\old)}(\Delta^{\op})_{\widetilde{S}_k(o_1,-,n)}^{\cart}  \ar[r]_-{\widetilde{g}^*} & \DD^{(\old)}(\Delta^{\op})_{\widetilde{A}(o_1,-,n)}^{\cart}
} \] 
The preservation of Cartesianity for type 3 morphisms follows if this diagram is 2-commutative (via the exchange morphism). This follows
from Lemma~\ref{LEMMAPRESERVE2} because the vertical functors are equivalences (Lemma~\ref{LEMMA34}). 
 
2.\@  $\widetilde{\iota}_1^!$ clearly preserves any type of Cartesianity. That coCartesianity of type 6 morphisms is preserved follows from Lemma~\ref{LEMMAPRESERVE1}.
Let $\alpha: o_1 \rightarrow o_2$ be a morphism in $\twwc I$ of type 4 or type 2 and consider for $n \in \N$ the diagram
\begin{equation*} 
\xymatrix{
\DD^{(\old)}(\Delta^{\op})_{\widetilde{A}(o_1,-,n)}^{\cart} \ar[r]^{\widetilde{\iota}_1^!} \ar[d]_{\widetilde{A}(\alpha,-,n)^*} &   \DD^{(\old)}(\Delta^{\op})_{\widetilde{A}'(o_1,-,n)}^{\cart} \ar[d]^{\widetilde{A}'(\alpha,-,n)^*} \\
\DD^{(\old)}(\Delta^{\op})_{\widetilde{A}(o_2,-,n)}^{\cart} \ar[r]_{\widetilde{\iota}_1^!} & \DD^{(\old)}(\Delta^{\op})_{\widetilde{A}'(o_2,-,n)}^{\cart}
} 
\end{equation*} 
in which $\widetilde{A}'(\alpha,-,n)^*$ preserves Cartesianity by Lemma~\ref{LEMMAPRESERVE1}.
The preservation of coCartesianity for type 4 and type 2 morphisms follows if this diagram is 2-commutative (via the exchange morphism). This follows
from Lemma~\ref{LEMMAPRESERVE2} because the {\em horizontal} functors are equivalences (Lemma~\ref{LEMMA34}).
 
We claim that  $\widetilde{\iota}_1^!$ induces an equivalence
\[  \widetilde{\iota}_1^!:  \DD^{(\old)}(\twwc I \times \Delta^{\op} \times \Delta)_{\widetilde{A}}^{5-\cart} \rightarrow  \DD^{(\old)}(\twwc I \times \Delta^{\op} \times \Delta)_{\widetilde{A}'}^{5-\cart}.    \]
Indeed, we have seen that this is point-wise (in $\twwc I \times \Delta$) the case. An adjoint is given by $\Box_!^{(5)} \widetilde{\iota}_{1,!}$. By Lemma~\ref{LEMMACOCARTPROJ56} it is computed point-wise (in $\twwc I \times \Delta$) as well. Therefore the fact that unit and counit are isomorphisms can be checked point-wise. 

$\Box_!^{(5)} \widetilde{\iota}_{1,!}$ preserves the coCartesianity conditions by Lemma~\ref{LEMMACOCARTPROJ56}. 
We claim that $\Box_!^{(5)} \widetilde{\iota}_{1,!}$ preserves the conditions of being 3,5-Cartesian as well (and also the condition of 1-Cartesian although this is not needed). 5-Cartesian objects are preserved by construction. 
Hence let $\alpha: \mu_1 \rightarrow \mu_2$ be a morphism of type 3 or type 1 and consider the diagram
 \[ \xymatrix{
\DD^{(\old)}(\Delta^{\op})_{\widetilde{A}'(o_1,-,n)}^{\cart} \ar[r]^{\Box_! \widetilde{\iota}_{1,!}} \ar[d]_{\widetilde{A}'(\alpha,-,n)^!} &   \DD^{(\old)}(\Delta^{\op})_{\widetilde{A}(o_1,-,n)}^{\cart} \ar[d]^{\widetilde{A}(\alpha,-,n)^!} \\
\DD^{(\old)}(\Delta^{\op})_{\widetilde{A}'(o_2,-,n)}^{\cart} \ar[r]_{\Box_! \widetilde{\iota}_{1,!}} & \DD^{(\old)}(\Delta^{\op})_{\widetilde{A}(o_2,-,n)}^{\cart}
} \] 
The preservation of Cartesianity for type 3 and type 1 morphisms follows if this diagram is 2-commutative (via the exchange morphism). This is, however, the exchange of the 2-commutative square
 \[ \xymatrix{
\DD^{(\old)}(\Delta^{\op})_{\widetilde{A}'(o_1,-,n)}^{\cart} \ar@{<-}[r]^-{\widetilde{\iota}_{1}^!} \ar[d]_{\widetilde{A}'(\alpha,-,n)^!} &   \DD^{(\old)}(\Delta^{\op})_{\widetilde{A}(o_1,-,n)}^{\cart} \ar[d]^{\widetilde{A}(\alpha,-,n)^!} \\
\DD^{(\old)}(\Delta^{\op})_{\widetilde{A}'(o_2,-,n)}^{\cart} \ar@{<-}[r]_-{\widetilde{\iota}_{1}^!} & \DD^{(\old)}(\Delta^{\op})_{\widetilde{A}(o_2,-,n)}^{\cart}
} \] 
in which the horizontal functors are equivalences (Lemma~\ref{LEMMA34}).

Therefore $\widetilde{\iota}_1^!$ is also an equivalence   
\[  \DD^{(\old)}(\twwc I \times \Delta^{\op} \times \Delta)_{\widetilde{A}}^{3,5-\cart, 2,4,6-\cocart} \rightarrow  \DD^{(\old)}(\twwc I \times \Delta^{\op} \times \Delta)_{\widetilde{A}'}^{3,5-\cart, 2,4,6-\cocart}    \]
with inverse $\Box_!^{(5)} \widetilde{\iota}_{1,!}$.

3.\@ is shown completely analogously to 2.\@

4.\@ From Lemma~\ref{LEMMACOCARTPROJ56} it follows that the projector is
computed point-wise in $\twwc I \times \Delta$. 
$\widetilde{f}_!$ clearly preserves 2,4,6-coCartesianity. $\Box_!^{(5)}$ preserves it by Lemma~\ref{LEMMACOCARTPROJ56}.
It remains to see that $\Box_!^{(5)} \widetilde{f}_!$ preserves the conditions of being 3,5-Cartesian. 5-Cartesian objects are preserved by construction. 
Hence let $\alpha: o_1 \rightarrow o_2$ be a morphism of type 3 and consider the diagram
 \[ \xymatrix{
\DD^{(\old)}(\Delta^{\op})_{\widetilde{A}''(o_1,-,n)}^{\cart} \ar[r]^{\Box_!^{(5)}\widetilde{f}_!} \ar[d]_{\widetilde{T}(\alpha,-,\rho)^!} &   \DD^{(\old)}(\Delta^{\op})_{\widetilde{T}(o_1,-,n)}^{\cart} \ar[d]^{\widetilde{T}(\alpha,-,\rho)^!} \\
\DD^{(\old)}(\Delta^{\op})_{\widetilde{A}''(o_2,-,m)}^{\cart} \ar[r]_{\Box_!^{(5)}\widetilde{f}_!} & \DD^{(\old)}(\Delta^{\op})_{\widetilde{T}(o_2,-,m)}^{\cart}
} \] 
The preservation of Cartesianity for type 3 morphisms follows if this diagram is 2-commutative (via the exchange morphism). This is, however, the exchange of the diagram
 \[ \xymatrix{
\DD^{(\old)}(\Delta^{\op})_{\widetilde{A}''(o_1,-,n)}^{\cart} \ar@{<-}[r]^-{\widetilde{f}^!} \ar[d]_{\widetilde{T}(\alpha,-,\rho)^!} &   \DD^{(\old)}(\Delta^{\op})_{\widetilde{T}(o_1,-,n)}^{\cart} \ar[d]^{\widetilde{T}(\alpha,-,\rho)^!} \\
\DD^{(\old)}(\Delta^{\op})_{\widetilde{A}''(o_2,-,m)}^{\cart} \ar@{<-}[r]_-{\widetilde{f}^!} & \DD^{(\old)}(\Delta^{\op})_{\widetilde{T}(o_2,-,m)}^{\cart}
} \] 
in which the vertical functors are equivalences (Lemma~\ref{LEMMA34}).
\end{proof}

\begin{PROP}\label{PROPPROPERTIESCORCOMPMDIA}
Let $\tau$ be a tree, $I \in \Dirlf$ a diagram, and let $F: \tw (\tau \times I) \rightarrow \mathcal{M}^{(\new)}$ be an admissible diagram with atlas $U \rightarrow F$ and let 
\[ \Flip(F') \in \HH^{\cor}(\mathcal{M}^{(\old)})(\twwc (\tau \times I) \times \Delta^{\op} \times \Delta) \] be the Main Construction~\ref{DEFMC} considered as pseudo-functor
\[ \Flip(F'): \twwc \tau \rightarrow \HH(\mathcal{M}^{(\old)})(\twwc I \times \Delta^{\op} \times \Delta). \]

Then the functors constructed in \ref{LEMMAEXISTENCE3FUNCTORS} commute. More precisely, 
consider a diagram of the form:
\[ \xymatrix{
w_1, \dots, w_n \ar[r]^-d \ar[d]_c & z_1,\dots,z_n \ar[d]^{a} \\
y \ar[r]_-b & x
} \]
in $^{\downarrow\downarrow\downarrow\downarrow} \tau$  [sic!] 
where $a$ and $c$ are of some type 1,2,3 or 4 (1-ary in case of type $\not= 4$) and $b$ and $d$ are of some type 1,2, or 3, respectively.  

Then there is a canonical isomorphism\footnote{In each case an exchange (or its inverse) of a natural commutation given by functoriality of $\DD^{(\old)}$ we we leave to the reader to construct}
\begin{equation} \label{eqexchange} \Flip(F')(a')_{\#} \Flip(F')(d')_{\#} \rightarrow \Flip(F')(b')_{\#} \Flip(F')(c')_{\#}   \end{equation}
where $a' \in \twwc \tau$ denotes the morphism corresponding to $a$, etc.
In case $a$ and $c$ are of type 4 and $n$-ary $\Flip(F')(a')_{\#}$ and $\Flip(F')(c')_{\#}$ are multivalued functors and $\Flip(F')(d')_{\#}$ denotes the corresponding $n$-tupel of functors (of which only one is not an identity). 
\end{PROP}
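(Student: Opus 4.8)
\textbf{Proof strategy for Proposition~\ref{PROPPROPERTIESCORCOMPMDIA}.}

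The plan is to reduce the commutation statement to a case-by-case verification according to the types of the four morphisms $a,b,c,d$ in $^{\downarrow\downarrow\downarrow\downarrow}\tau$, exploiting that the functors $\Flip(F')(-)_\#$ constructed in Definition/Lemma~\ref{LEMMAEXISTENCE3FUNCTORS} are each built from three kinds of operations on the diagram $\Flip(F') \in \HH^{\cor}(\mathcal{M}^{(\old)})(\twwc I \times \Delta^{\op} \times \Delta)$: a multi-pull-back $\widetilde{g}^*$ (type 4), a $!$-pull-back $\widetilde{\iota}_1^!$ (type 3), a $*$-pull-back $\widetilde{\iota}_2^*$ (type 2) or a relative Kan extension $\widetilde{f}_!$ possibly followed by a Cartesian projector $\Box_!^{(5)}$ (type 1), together with the fact that on the subcategory of objects that are $3,5$-Cartesian and $2,4,6$-coCartesian the type-2 and type-3 functors are \emph{equivalences} with explicit quasi-inverses $\Box_*^{(6)}\widetilde{\iota}_{2,*}$ and $\Box_!^{(5)}\widetilde{\iota}_{1,!}$. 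The key observation is that in $\twwc\tau$ (and likewise in $^{\downarrow\downarrow\downarrow\downarrow}\tau$) a square as in the statement, with horizontal morphisms of types $1,2,3$ and vertical morphisms of types $1,2,3,4$, always commutes \emph{on the nose} of the underlying index category, because a type-$i$ morphism only moves the $i$-th slot. Hence $\Flip(F')$ itself sends this square to a strictly (or pseudo-)commutative square of diagrams, and the task is purely to compare the induced 2-cells between the various composite functors.

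First I would set up the bookkeeping: the square in $^{\downarrow\downarrow\downarrow\downarrow}\tau$ projects, under the canonical functor $^{\downarrow\downarrow\downarrow\downarrow}\tau \to \twwc\tau$ (which on the level of $\Flip(F')$ is the identity since the fourth $\downarrow$ is already built in), to a square in $\twwc\tau$; its image under $\Flip(F')$ is a (pseudo-)commutative square in $\HH^{\cor}(\mathcal{M}^{(\old)})(\twwc I \times \Delta^{\op} \times \Delta)$, to which we may apply $\DD^{(\old)}$. Then for each of the (finitely many) type-pairs $(\text{type}(a),\text{type}(b))$ one constructs the exchange transformation (\ref{eqexchange}) by the standard mate construction: whenever one of the four functors is a genuine pull-back (types 2,3,4) the exchange is the mate of the identity 2-cell $p(J)\circ\DD^{(\old)}(\alpha) = \DD^{(\old)}(\alpha')\circ p(I)$ from (FDer0--5), and whenever one is $\widetilde{f}_!$ or $\Box_!^{(5)}\widetilde{f}_!$ the exchange is the base-change 2-cell of the relevant (FDer4 left/right) or (FDer5) axiom; when the ambient functor is only an equivalence after projection (types 2, 3), we additionally precompose/postcompose with the unit or counit of that equivalence and use that $\Box_!^{(5)}$ is computed point-wise (Lemma~\ref{LEMMACOCARTPROJ56}) to commute it past the other point-wise operations. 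That this is an \emph{isomorphism} is then checked point-wise in $\twwc I \times \Delta$ — which is legitimate because every functor in sight is computed point-wise in those directions, by Definition/Lemma~\ref{LEMMAEXISTENCE3FUNCTORS} — reducing the claim to the corresponding statement for a single correspondence $\xi$ with $I=\Delta_0$, i.e.\@ to the compatibilities already packaged in the base axioms (FDer0--5) of $\DD^{(\old)}$, together with Lemma~\ref{LEMMAPRESERVE2} (base change on Cartesian objects) and the base-change axiom (CH) of \ref{PARAXIOMSEXT} when a type-4 multi-pull-back meets a $!$-Kan extension.

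The main obstacle, I expect, is the mixed case where $a$ (or $c$) is of type 4 — so $\widetilde{g}^*$ is a genuinely multivalued functor — and simultaneously one of $b,d$ is of type 1, so that a relative $!$-Kan extension $\widetilde{f}_!$ (with its Cartesian projector $\Box_!^{(5)}$) has to be commuted past the multi-pull-back. Here the clean commutation ultimately rests on the projection formula / (FDer5 left) for $\DD^{(\old)}$ restricted to the Cartesian--coCartesian subcategories, which is exactly where axiom (CH) and Lemma~\ref{LEMMAPRESERVE2} enter; one has to be careful that $\Box_!^{(5)}$ does not interfere, which works because it is point-wise in the $\twwc I \times \Delta$ directions and $\widetilde{g}^*$ is point-wise in the $\Delta^{\op}$-direction, so the two projectors/Kan-extensions act on ``orthogonal'' variables and the exchange in question splits accordingly. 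All remaining type-pairs are routine: either both functors are pull-backs (pure functoriality of $\DD^{(\old)}$), or one is a pull-back and one a Kan extension along orthogonal variables (a (FDer4) base-change that is automatically an isomorphism since the relevant comma square is trivial), and the equivalence-with-inverse clauses of Definition/Lemma~\ref{LEMMAEXISTENCE3FUNCTORS} let us replace any type-2 or type-3 functor by its point-wise-computed quasi-inverse without obstruction.
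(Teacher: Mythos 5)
Your proposal captures the same strategy the paper intends: the paper's proof of this proposition is a single sentence — ``This follows as in the proof of Lemma~\ref{LEMMAEXISTENCE3FUNCTORS}'' — and you have correctly recognized that the argument is a case-by-case check by types, that the exchange morphisms are constructed from the (pseudo-)commutative image of the index square under $\Flip(F')$, and that isomorphy is verified point-wise in $\twwc I\times\Delta$ (resp.\ $\Delta^{\op}$) where all the functors in sight are computed point-wise, by Lemma~\ref{LEMMACOCARTPROJ56}. You also identified the two lemmas that carry the load: Lemma~\ref{LEMMAPRESERVE2} (base change on Cartesian objects, ultimately from Axiom (CH)) and the fact that the type-2/type-3 functors are equivalences with explicit point-wise inverses.

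One point of framing worth flagging: in Lemma~\ref{LEMMAEXISTENCE3FUNCTORS} the repeated trick is not so much a mate-of-(FDer0--5) construction as it is the following: whenever the exchange square you want to show is 2-commutative has the feature that one pair of parallel legs consists of equivalences (guaranteed by Lemma~\ref{LEMMA34} and its analogues), one rewrites the desired exchange as the exchange of an already-known 2-commutative square — i.e.\ replaces an adjoint $\Box_!\widetilde{\iota}_{1,!}$ or $\Box_*\widetilde{\iota}_{2,*}$ by its inverse equivalence $\widetilde{\iota}_1^!$ or $\widetilde{\iota}_2^*$, and then invokes Lemma~\ref{LEMMAPRESERVE2}. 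Your description (``replace any type-2 or type-3 functor by its point-wise-computed quasi-inverse without obstruction'') says the same thing, but your appeal to (FDer5 left)/the projection formula for the type-4 meets type-1 interaction is a slight misattribution; in the paper this case runs entirely through Axiom (CH) via Lemma~\ref{LEMMAPRESERVE2}, with (FDer5) playing no explicit role. Since you also cite (CH) and Lemma~\ref{LEMMAPRESERVE2}, this is a cosmetic rather than substantive gap.
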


\begin{proof}
This follows as in the proof of Lemma~\ref{LEMMAEXISTENCE3FUNCTORS}.
\end{proof}

\begin{PAR}\label{TWGENREL}
Let $\Xi \in \{\uparrow, \downarrow\}^l$ be an $l$-tupel of arrow directions as in \ref{PARTW}. The category ${}^{\Xi} \Delta_n$ can be generated by morphisms (of type $i$) of the form
\[ \xymatrix{ n_1 \ar[r] \ar@{=}[d] & \cdots \ar[r] &  n_i \ar[r]\ar@{<->}[d] & \cdots \ar[r] &   n_l \ar@{=}[d] &   \\
n_1 \ar[r] & \cdots \ar[r] &  n_{i} \pm 1 \ar[r] & \cdots \ar[r] &   n_l &   } \]
with $n_1, \dots, n_l \in \Delta_n$ (such that $n_{i} \pm 1 \in \Delta_n$ as well). 

These generators are subject to the relations requiring that squares
\begin{equation} \label{squaretype} \vcenter{ \xymatrix{
w \ar[r] \ar[d] & z \ar[d] \\
x \ar[r] & y
} } \end{equation}
in which the vertical and horizontal morphisms are generators as above, are commutative. W.l.o.g. the type of the horizontal morphisms is different from the type of the vertical morphisms. 

More generally, let $\tau$ be a tree. The category ${}^{\Xi} \tau$ can be similarly generated by multimorphisms of type $l$
\[ \xymatrix{ [S_{1,1}^{(1)}, \dots, S_{1,n_1}^{(1)}, \dots] \ar[r] \ar@{=}[d] & \cdots \ar[r] &  [S_{i,1}^{(1)}, \dots, S_{i,n_i}^{(1)}, \dots] \ar[r]\ar@{=}[d] & \cdots \ar[r] &   [S_{l}^{(1)}, \dots, S^{(n)}_{l}] \ar@{->}[d] &   \\
 [S_{i,1}^{(1)}, \dots, S_{i,n_i}^{(1)}, \dots] \ar[r] & \cdots \ar[r] &  [S_{i,1}^{(1)}, \dots, S_{i,n_i}^{(1)}, \dots] \ar[r] & \cdots \ar[r] &   [T_{l}] &   } \]
where the morphism $S_{l}^{(1)}, \dots, S^{(n)}_{l} \rightarrow T_l$ is a generating morphism of $\tau$
and morphisms of type $i$
\[ \xymatrix{ [S_{1,1}, \dots, S_{1,n_1}] \ar[r] \ar@{=}[d] & \cdots \ar[r] &  [S_{i,1}, \dots, S_{i,n_i}] \ar[r]\ar@{<->}[d] & \cdots \ar[r] &   [S_{l}] \ar@{=}[d] &   \\
 [S_{1,1}, \dots, S_{1,n_1}] \ar[r] & \cdots \ar[r] &  [T_{i,1}, \dots, T_{i,n_i'}] \ar[r] & \cdots \ar[r] &   [S_{l}] &   } \]
in which the morphism of lists $[S_{i,1}, \dots, S_{i,n_i}] \leftrightarrow [T_{i,1}, \dots, T_{i,n_i'}]$ consists of {\em one} generating morphism of $\tau$ and identities otherwise. 
These generators are subject to the relations requiring that squares
\begin{equation} \label{squaretype} \vcenter{ \xymatrix{
w_1, \dots, w_n \ar[r] \ar[d] & z \ar[d] \\
x_1, \dots, x_n \ar[r] & y,
} } \end{equation}
in which the vertical and horizontal morphisms are generators as above, are commutative. Necessarily also only one of the left vertical morpisms is not an identity. 

In the non-multi-case we do not have any non-trivial relation-squares in which the horizontal and vertical morphisms are of the same type. Otherwise this may happen for type $<l$. 
\end{PAR}

Any multimorphism $\alpha$ in  ${}^{\downarrow\downarrow\downarrow\downarrow} \tau$ can be decomposed {\em canonically} into a sequence 
of functors $\alpha = abcd$ where $a$ is of type 1, $b$ of type 2 etc., and where $a$, $b$ and $c$ are 1-ary. Taking the cubical structure of ${}^{\downarrow\downarrow\downarrow\downarrow} \tau$ and the previous discussion into account, Proposition~\ref{PROPPROPERTIESCORCOMPMDIA} may be restated as follows
\begin{PROP}\label{PROPPF}
The association 
\[ \alpha = abcd \mapsto \Flip(F')(\alpha)_{\#} := \Flip(F')(a')_{\#} \Flip(F')(b')_{\#} \Flip(F')(c')_{\#} \Flip(F')(d')_{\#}, \] 
where $a'$ is the morphism of $\twwc \tau$ corresponding to $a$ in ${}^{\downarrow\downarrow\downarrow\downarrow} \tau$, etc.\@,  
 defines a well-defined pseudo-functor on ${}^{\downarrow\downarrow\downarrow\downarrow} \tau$ such that the isomorphism (\ref{eqexchange}) becomes the one
induced by the pseudo-functoriality.
\end{PROP}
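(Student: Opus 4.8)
The statement of Proposition~\ref{PROPPF} is essentially a packaging result: it claims that the four families of functors built in Definition/Lemma~\ref{LEMMAEXISTENCE3FUNCTORS} (namely $\widetilde g^*$ for type~4, $\widetilde\iota_1^!$ for type~3, $(\widetilde\iota_2^*)^{-1} = \Box_*^{(6)}\widetilde\iota_{2,*}$ for type~2, and $\Box_!^{(5)}\widetilde f_!$ for type~1) assemble into a single pseudo-functor on the diagram ${}^{\downarrow\downarrow\downarrow\downarrow}\tau$. The key input, already established, is Proposition~\ref{PROPPROPERTIESCORCOMPMDIA}, which gives, for every commuting square
\[ \xymatrix{ w_1,\dots,w_n \ar[r]^-d \ar[d]_c & z_1,\dots,z_n \ar[d]^a \\ y \ar[r]_-b & x } \]
in ${}^{\downarrow\downarrow\downarrow\downarrow}\tau$ with $a,c$ of some common type and $b,d$ of some other common type, a canonical isomorphism $\Flip(F')(a')_\#\Flip(F')(d')_\# \Rightarrow \Flip(F')(b')_\#\Flip(F')(c')_\#$. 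The plan is to take the generators-and-relations description of ${}^{\downarrow\downarrow\downarrow\downarrow}\tau$ recalled in \ref{TWGENREL} and assemble these square-isomorphisms into the structure isomorphisms of a pseudo-functor, then verify the coherence (pentagon/hexagon) axioms.

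First I would make precise what ``$\alpha = abcd$ canonically'' means: every multimorphism $\alpha$ in ${}^{\downarrow\downarrow\downarrow\downarrow}\tau$ is a commutative square of underlying diagrams, and by the cubical structure it factors uniquely as the composite of a type-1 morphism $a$, a type-2 morphism $b$, a type-3 morphism $c$, and a type-4 morphism $d$ (with $a,b,c$ necessarily 1-ary); one reads this factorization off the four columns of the defining diagram. Having fixed this, one defines $\Flip(F')(\alpha)_\# := \Flip(F')(a')_\# \Flip(F')(b')_\# \Flip(F')(c')_\# \Flip(F')(d')_\#$ on objects and, on a composition $\beta\circ\alpha$, uses the square-isomorphisms of Proposition~\ref{PROPPROPERTIESCORCOMPMDIA} repeatedly to shuffle the factorization $a_\beta b_\beta c_\beta d_\beta a_\alpha b_\alpha c_\alpha d_\alpha$ into the canonical order $a_{\beta\alpha}b_{\beta\alpha}c_{\beta\alpha}d_{\beta\alpha}$ for the composite. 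Concretely, one commutes $d_\beta$ past $a_\alpha$, then past $b_\alpha$, then past $c_\alpha$ (three applications), then $c_\beta$ past $a_\alpha,b_\alpha$, then $b_\beta$ past $a_\alpha$; each commutation is an instance of \ref{PROPPROPERTIESCORCOMPMDIA} because the relevant sub-square lies in ${}^{\downarrow\downarrow\downarrow\downarrow}\tau$, and the relations in \ref{TWGENREL} guarantee these sub-squares commute on the level of underlying diagrams. This defines the structure 2-isomorphism $\Flip(F')(\beta)_\#\circ\Flip(F')(\alpha)_\# \Rightarrow \Flip(F')(\beta\alpha)_\#$.

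Next I would check that this assignment is well-defined and satisfies the coherence axiom of a pseudo-functor. Well-definedness amounts to: the particular order in which one performs the ``bubble-sort'' of the factors does not matter. This follows from a standard argument — any two such sequences of adjacent transpositions differ by relations of the form (i) two disjoint transpositions commute (trivially compatible since the underlying natural transformations act in disjoint spots), and (ii) the ``braid''/``Yang–Baxter'' relation among three adjacent factors, which here is exactly the statement that the three square-isomorphisms around a $3$-cube of ${}^{\downarrow\downarrow\downarrow\downarrow}\tau$ are compatible; and that cube-compatibility is itself part of the content of Proposition~\ref{PROPPROPERTIESCORCOMPMDIA} (each face being an exchange isomorphism of $\DD^{(\old)}$, the cube commutes by the coherence of exchange isomorphisms in a fibered multiderivator, i.e.\ ultimately by (FDer0)--(FDer5) and (Der1)--(Der2)). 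The associativity pentagon for the structure isomorphisms is then the assertion that, for a triple composite $\gamma\beta\alpha$, the two ways of re-sorting twelve factors into canonical order agree, which again reduces to commutation relations of the two kinds above. Unitality is immediate since the identity factors as $(\id)(\id)(\id)(\id)$ and the square-isomorphisms are identities on identity squares.

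\textbf{Main obstacle.} The conceptual steps are routine once Proposition~\ref{PROPPROPERTIESCORCOMPMDIA} is in hand; the genuinely delicate point is the bookkeeping in the multi-case. When $a$ and $c$ are of type~4 and $n$-ary, $\Flip(F')(a')_\#$ and $\Flip(F')(c')_\#$ are multivalued functors, the ``$n$-tuple'' $\Flip(F')(d')_\#$ has all but one entry an identity, and one must track carefully in which slot each transposition acts and verify that the exchange isomorphisms of Proposition~\ref{PROPPROPERTIESCORCOMPMDIA} are compatible with the relevant slotwise (FDer5)-type isomorphisms. The only non-trivial relation-squares with horizontal and vertical morphisms of the same type occur precisely for type $<l$ in the multi-case (cf.\ the last sentence of \ref{TWGENREL}), so one must also check that commuting two type-$i$ factors ($i<4$) past each other is handled — but this is again an instance of Proposition~\ref{PROPPROPERTIESCORCOMPMDIA} applied to a square with $a,c$ of the same type, and the $\#$-functors for such types ($\widetilde\iota_1^!$, $\Box_*^{(6)}\widetilde\iota_{2,*}$, $\Box_!^{(5)}\widetilde f_!$) are honest (non-multi in the relevant slot) so the compatibility is the usual one for ordinary exchange isomorphisms. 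I expect the proof to consist largely of drawing the cubes carefully and invoking \ref{PROPPROPERTIESCORCOMPMDIA} and the coherence of a fibered multiderivator, exactly as the one-line proof ``This follows as in the proof of Lemma~\ref{LEMMAEXISTENCE3FUNCTORS}'' of Proposition~\ref{PROPPROPERTIESCORCOMPMDIA} suggests the authors intend.
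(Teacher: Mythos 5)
Your proposal is correct and matches the paper's intent: the paper offers no separate proof, merely remarking that Proposition~\ref{PROPPF} is ``a restatement'' of Proposition~\ref{PROPPROPERTIESCORCOMPMDIA} ``taking the cubical structure of ${}^{\downarrow\downarrow\downarrow\downarrow}\tau$ and the previous discussion into account,'' and your argument simply spells out that restatement (generators-and-relations description from \ref{TWGENREL}, structure 2-isomorphisms by sorting the type-ordered factorization via the square-isomorphisms of \ref{PROPPROPERTIESCORCOMPMDIA}, coherence from cube-compatibility of the underlying exchange isomorphisms). This is exactly the reasoning the paper implicitly defers to the reader.
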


\begin{DEF}\label{DEFDER6FU1} Consider the setting in \ref{PARAXIOMSEXT} and recall Definition~\ref{DEFCCOV}.
Let $\tau$ be a tree, and let $I \in \Dirlf$. We define a category with weak equivalences
\[ E_I(\tau) \quad (\text{resp. } E_I(\tau^S))   \]
with objects pairs $(U \rightarrow F, \mathcal{F})$ with $U \rightarrow F$ in $\Cor^{\cov}_I(\tau)$ (cf.\@ Definition~\ref{DEFCCOV}), and an object
\begin{equation} \label{eqprincipal}  \mathcal{F} \in \Fun(\twwc \tau, \DD^{(\old)}(\twwc I \times \Delta^{\op} \times \Delta))^{3,5-\cart, 2,4,6-\cocart}_{ \Flip(F')}   \end{equation}
(functors of multicategories) where 
$\Flip(F') \in \HH^{\cor}(\mathcal{M}^{(\old)})(\twwc (\tau \times I)  \times \Delta^{\op} \times \Delta)$ is the Main Construction~\ref{DEFMC} applied to $U \rightarrow F$.  $\Flip(F')$ denotes also its partial underlying diagram $\twwc \tau \rightarrow \HH^{\cor}(\mathcal{M}^{(\old)})(\twwc I  \times \Delta^{\op} \times \Delta)$. Note that $\twwc I \in \Dirlf$ by Lemma~\ref{LEMMACATLF}. 

Morphisms in $\mathcal{F} \rightarrow \mathcal{F}'$  are  the morphisms $\xi: (U_1 \rightarrow F_1) \Rightarrow (U_2 \rightarrow F_2)$ in $\Cor^{\cov}_I(\tau)$ together with an isomorphism class of diagrams
\[ \mathcal{X} \in \Fun(\tw \Delta_1 \times \twwc \tau, \DD^{(\old)}(\twwc I \times \Delta^{\op} \times \Delta))^{3,5-\cart, 2,4,6-\cocart}_{ \Flip(X')} \]
such that $\mathcal{X}(0 \to 0) = \mathcal{F}$, $\mathcal{X}(1 \to 1) = \mathcal{F}'$ and such that the morphism 
$(0 \to 0) \rightarrow (0 \to 1)$ goes to a coCartesian morphism and $(1 \to 1) \rightarrow (0 \to 1)$ goes to a Cartesian morphism.
Finally, we say that a morphism is a {\em weak equivalence}, if the underlying morphism in $\Cor^{\cov}_I(\tau)$ is a weak equivalence, i.e.\@ if the morphism $F_1 \rightarrow F_2$ is  a weak equivalence. 
\end{DEF}

\begin{BEM}
In (\ref{eqprincipal}) we counted the $\Delta^{\op}$ as 5\textsuperscript{th} entry and $\Delta$ as 6\textsuperscript{th} entry. The condition means that in the underlying diagram 
$\twwc (\tau \times I) \times \Delta^{\op} \times \Delta \rightarrow \DD^{(\old)}(\cdot)$ 
of the objects
\begin{enumerate}
\item each morphism of type 3 in $\twwc( \tau \times I)$ is mapped to a Cartesian morphism;
\item each morphism of type 2 or 4 in $\twwc(\tau \times I)$ to a coCartesian morphism;
\item each morphism in $\Delta^{\op}$ is mapped to a Cartesian morphism;
\item each morphism in $\Delta$ is mapped to a coCartesian morphism. 
\end{enumerate}
Furthermore, the diagram $\Flip(F')$ is constructed in such a way that ``Cartesian'' means
that some morphism
\[ \mathcal{F}_1 \rightarrow f^! \mathcal{F}_2 \]
is an isomorphism and ``coCartesian'' means that some morphism
\[ f^* \mathcal{F}_1 \rightarrow \mathcal{F}_2   \]
is an isomorphism. The adjoints $f_!$ and $f_*$ do not appear. 
\end{BEM}

The following follows immediately from the fact that (co)Cartesian morphisms over identities are just isomorphisms: 
\begin{LEMMA}\label{LEMMASIXFUMOR}
The set of morphisms $\mathcal{F} \to \mathcal{F}'$ in the fiber 
\[ E_I(\tau)_{(U \rightarrow F)}  \] 
over $(U \rightarrow F) \in \Cor^{\cov}_I(\tau)$ is isomorphic to 
\[ \mathrm{Iso}_{\Fun( \twwc \tau, \DD^{(\old)}(\twwc I \times \Delta^{\op} \times \Delta))^{3,5-\cart, 2,4,6-\cocart}_{ \Flip(F')}}(\mathcal{F}, \mathcal{F}'). \]
\end{LEMMA}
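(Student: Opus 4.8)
The statement in Lemma~\ref{LEMMASIXFUMOR} asserts that morphisms in the fiber $E_I(\tau)_{(U \to F)}$ over a fixed object $(U \to F)$ of $\Cor^{\cov}_I(\tau)$ coincide with the isomorphisms of the underlying objects $\mathcal{F}, \mathcal{F}'$ in the category $\Fun(\twwc \tau, \DD^{(\old)}(\twwc I \times \Delta^{\op} \times \Delta))^{3,5-\cart, 2,4,6-\cocart}_{\Flip(F')}$.

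The plan is to unwind Definition~\ref{DEFDER6FU1} in the special case that the underlying morphism $\xi: (U_1 \to F_1) \Rightarrow (U_2 \to F_2)$ is the identity of $(U \to F)$. In that situation $F_1 = F_2 = F$ and $U_1 = U_2 = U$, so the auxiliary diagram $X'$ of the Main Construction~\ref{DEFMC} — whose three values $X'(0\to 0)$, $X'(0\to 1)$, $X'(1\to 1)$ are built from $\Cech(f_1)$, $\Cech(f_2)$, and the fiber products in between — becomes constant in the $\tw\Delta_1$-direction: all three values are literally $\Flip(F')$. Hence $\Flip(X') = \pr^*\Flip(F')$ is the pullback of $\Flip(F')$ along the projection $\pr: \tw\Delta_1 \times \twwc\tau \times \twwc I \times \Delta^{\op} \times \Delta \to \twwc\tau \times \twwc I \times \Delta^{\op} \times \Delta$ that forgets the $\tw\Delta_1$-coordinate. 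First I would make this identification precise, checking against the explicit formulas in \ref{DEFMC} that when $f_1 = f_2 = f$ one indeed gets the constant diagram.

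Next I would analyze the datum $\mathcal{X} \in \Fun(\tw\Delta_1 \times \twwc\tau, \DD^{(\old)}(\twwc I \times \Delta^{\op} \times \Delta))^{3,5-\cart,2,4,6-\cocart}_{\Flip(X')}$ subject to the conditions $\mathcal{X}(0\to 0) = \mathcal{F}$, $\mathcal{X}(1\to 1) = \mathcal{F}'$, that $(0\to 0) \to (0\to 1)$ is sent to a coCartesian morphism, and that $(1\to 1) \to (0\to 1)$ is sent to a Cartesian morphism. Since over the identity morphism of $F$ (in the $\tw\Delta_1$-direction) the transition functors of $\Flip(X')$ are identities (the base diagram is constant there), a morphism is coCartesian, resp.\ Cartesian, exactly when it is an isomorphism in $\DD^{(\old)}$. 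Therefore the morphism $\mathcal{X}(0\to 0) \to \mathcal{X}(0\to 1)$ being coCartesian forces it to be an isomorphism, so $\mathcal{X}(0\to 1) \cong \mathcal{F}$, and similarly $\mathcal{X}(1\to 1) \to \mathcal{X}(0\to 1)$ being Cartesian forces $\mathcal{X}(0\to 1) \cong \mathcal{F}'$ via an isomorphism. Composing gives an isomorphism $\mathcal{F} \iso \mathcal{F}'$ in the category $\Fun(\twwc\tau, \DD^{(\old)}(\twwc I \times \Delta^{\op} \times \Delta))^{3,5-\cart,2,4,6-\cocart}_{\Flip(F')}$; conversely, any such isomorphism $\phi$ yields a diagram $\mathcal{X}$ by setting $\mathcal{X}(0\to 0) = \mathcal{F}$, $\mathcal{X}(1\to 1) = \mathcal{F}'$, $\mathcal{X}(0\to 1) = \mathcal{F}'$, with the leg $(0\to 0)\to(0\to 1)$ given by $\phi$ and the leg $(1\to 1)\to(0\to 1)$ the identity. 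Then I would check that these two assignments are mutually inverse up to the isomorphism-class equivalence relation imposed in Definition~\ref{DEFDER6FU1}.

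The only point that requires a little care — and the place I expect the main (though still routine) obstacle — is verifying that the passage to isomorphism classes of the diagrams $\mathcal{X}$ is exactly what is needed to make the bijection well-defined in both directions: two diagrams $\mathcal{X}$, $\mathcal{X}'$ with the same $\mathcal{F}$, $\mathcal{F}'$ and the same induced isomorphism $\mathcal{F}\iso\mathcal{F}'$ must be isomorphic as objects of $\Fun(\tw\Delta_1 \times \twwc\tau, \ldots)$ over the (constant) base, which follows because the value at $(0\to 1)$ is pinned down up to unique isomorphism by the (co)Cartesianity conditions once the two endpoints and one of the legs are fixed. Everything else is a formal consequence of the remark preceding the Lemma: ``(co)Cartesian morphisms over identities are just isomorphisms.'' I would present the argument in two paragraphs — one constructing the map from morphisms in $E_I(\tau)_{(U\to F)}$ to isomorphisms, one constructing the inverse — and conclude.
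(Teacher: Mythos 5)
Your proposal is correct and elaborates exactly the argument the paper takes for granted: the paper states the lemma follows ``immediately from the fact that (co)Cartesian morphisms over identities are just isomorphisms,'' and your unwinding of Definition~\ref{DEFDER6FU1} in the case $f_1=f_2=f$, showing $\Flip(X')$ is constant in the $\tw\Delta_1$-direction so that the two legs of $\mathcal{X}$ are forced to be isomorphisms, is precisely what makes that remark immediate. The only elaboration beyond the paper's intent is your careful treatment of the quotient by isomorphism classes of $\mathcal{X}$, which is a correct and welcome clarification but not a different route.
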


\begin{KEYLEMMA}
\label{LEMMACOMPSIXFU}
\begin{enumerate}
\item Let $\tau$ be a tree, and consider an object $U \rightarrow F$ in $\Cor^{\cov}_I(\tau)$.
Let $(\mathcal{E}_o)_{o \in \tau}$ be a collection of objects with $\mathcal{E}_o \in \DD^{(\old)}(\twwc I \times \Delta^{\op} \times \Delta)^{3,5-\cart, 2,4,6-\cocart}_{ \Flip(F')_o}$, where
$\Flip(F')_o$ is the value of $\Flip(F')$ at $\twwc o$. 
Then the fiber\footnote{the fiber over $(U \rightarrow F, \{\mathcal{E}_o\}_{o \in \tau})$ for the functor $E_I(\tau) \rightarrow \Cor^{\cov}_I(\tau) \times \prod_o E_I(\cdot)$  }
\[ E_I(\tau)_{(U \rightarrow F)}(\{\mathcal{E}_o\}_{o \in \tau})  \] 
 is equivalent to a set.

\item For a concatenation $\tau = \tau_2 \circ_i \tau_1$, where $i$ is a source object of $\tau_2$ which will be identified with the final object of $\tau_1$, 
the square
\[ \xymatrix{
E_I(\tau)_{(U \rightarrow F)}((\mathcal{E}_o)_{o \in \tau}) \ar[r] \ar[d] & E_I(\tau_1)_{(U_1 \to F_1)}((\mathcal{E}_o)_{o \in \tau_1}) \ar[d] \\
E_I(\tau_2)_{(U_2 \to F_2)}((\mathcal{E}_o)_{o \in \tau_2}) \ar[r] & \cdot
} \]
is 2-Cartesian. Hence, if we consider the $E_I$ as sets, we have
\[ E_I(\tau)_{(U \rightarrow F)}((\mathcal{E}_o)_{o \in \tau}) \cong E_I(\tau_1)_{(U_1 \rightarrow F_1)}((\mathcal{E}_o)_{o \in \tau_{1}}) \times E_I(\tau_2)_{(U_2 \rightarrow F_2)}((\mathcal{E}_o)_{o \in \tau_{2}}). \]

\item For $\tau=\Delta_{1,n}$, there is a canonical isomorphism of sets
\[ E_I(\Delta_{1,n})_{(U \to F)}(\mathcal{E}_1, \dots, \mathcal{E}_n;  \mathcal{E}_{n+1}) \cong \Hom_{\DD^{(\old)}(\twwc I \times \Delta^{\op} \times \Delta)_{\widetilde{T}}}( \Flip(F')({}^{\downarrow\downarrow\downarrow\downarrow} (0 \to 1))_{\#} (\mathcal{E}_1, \dots, \mathcal{E}_n); \mathcal{E}_{n+1}).  \]
Here $\widetilde{T}$ is the restriction of $\Flip(F')$ to $\twwc \{n+1\}$. 
Note that $\Flip(F')({}^{\downarrow\downarrow\downarrow\downarrow} (0 \to 1))_{\#}$ is the composition of functors as in Example~\ref{EXCOMPONENTS}.

An object $\mathcal{F}$ on the left hand side is mapped to an isomorphism if  each morphism of type 1 $\alpha: o_1 \rightarrow o_2$ in $\twwc \tau$  also induces an isomorphism
\[ \Box_!^{(5)} \widetilde{f}_! \mathcal{F}(o_1) \rightarrow \mathcal{F}(o_2).  \]

\item  The forgetful functor
\[ E_I(\tau)(\{\mathcal{E}_o\}_{o \in \tau}) \rightarrow \Cor^{\cov}(\tau)(\{U_o \to F_o\}_{o \in \tau}) \]
is a bifibration with discrete fibers. 
\end{enumerate}
\end{KEYLEMMA}

\begin{proof}

Given an object $i = (i_1 \to i_2 \to i_3 \to i_4)$ in $\twwc \tau$
we have a sequence of morphisms in $\twwc \tau$ writing $i_1 = [i_1^{(1)}, \dots, i_1^{(n)}]$: 

\[\xymatrix{
(i_1^{(1)} \to i_1^{(1)} \to i_1^{(1)} \to i_1^{(1)}) \ar@{-}[rd]& \cdots & (i_1^{(n)} \to i_1^{(n)} \to i_1^{(n)} \to i_1^{(n)}) \ar@{-}[ld]  \\
& \ar[d]^{\alpha_{i,4}} \\
& (i_1 \to i_1 \to i_1 \to i_4) \ar@{<-}[d]^{\alpha_{i,3}} \\
& (i_1 \to i_1 \to i_3 \to i_4) \ar@{<-}[d]^{\alpha_{i,2}} \\
& (i_1 \to i_2 \to i_3 \to i_4) 
}   \]
in which $\alpha_{i,m}$ is of type $m$.  
Note that any of the objects $\twwc i_1^{(k)}$ lies either in $\twwc \tau_1$ or $\twwc \tau_2$. 

1.\@ 
In view of the observation above and Lemma~\ref{LEMMASIXFUMOR}, the statement follows from the uniqueness (up to unique isomorphism) of
the source (resp.\@ target) of a Cartesian (resp.\@ coCartesian) morphism. 

2.\@ Given restrictions of $\mathcal{F}$ to the union of $\twwc \tau_1$ and $\twwc \tau_2$, we construct
an extension to $\twwc (\tau_2 \circ  \tau_1)$ as follows: 
Every morphism $\alpha_{i,m}$ as above has a corresponding morphism $\alpha_{i,m}' \in  {}^{\downarrow\downarrow\downarrow\downarrow} \tau$ and
we define an extension on objects by setting for $i \not\in \twwc \tau_1 \cup \twwc \tau_2$
\[ \mathcal{F}(i) := \Flip(F')(\alpha_{i,2}'\alpha_{i,3}'\alpha_{i,4}')_\#(\mathcal{F}(\twwc i_1^{(1)}), \cdots, \mathcal{F}(\twwc i_1^{(n)}))  \]

Observe that for $i \in \twwc \tau_1$ and $i \in \twwc \tau_2$ there is already a {\em canonical} isomorphism
\begin{equation} \label{eqrewrite} \mathcal{F}(i) \cong \Flip(F')(\alpha_{i,2}'\alpha_{i,3}'\alpha_{i,4}')_\#(\mathcal{F}(\twwc i_1^{(1)}), \cdots, \mathcal{F}(\twwc i_1^{(n)}))  \end{equation}
because of the (co)Cartesianity conditions on $\mathcal{F}|_{\twwc \tau_1}$ and $\mathcal{F}|_{\twwc \tau_2}$. 

For a generating morphism $\beta: i \rightarrow j$ of type 2, 3, or 4 observe that there is an isomorphism coming from the pseudo-functoriality of $\Flip(F')(\cdot)_\#$ (cf.\@ Proposition~\ref{PROPPF})
\[ \Flip(\beta')_\# \Flip(F')(\alpha_{i,2}'\alpha_{i,3}'\alpha_{i,4}')_\# \cong \Flip(F')(\alpha_{j,2}'\alpha_{j,3}'\alpha_{j,4}')_\# \]
and thus  a coCartesian morphism (for type 2 and 4) resp.\@ a Cartesian morphism (for type 3)
\[ \Flip(F')(\alpha_{i,2}'\alpha_{i,3}'\alpha_{i,4}')_\# (\mathcal{F}(\twwc i_{1}^{(1)}), \cdots, \mathcal{F}(\twwc i_{1}^{(n)})) \rightarrow \Flip(F')(\alpha_{j,2}'\alpha_{j,3}'\alpha_{j,4}')_\# (\mathcal{F}(\twwc i_{1}^{(1)}), \cdots, \mathcal{F}(\twwc i_{1}^{(n)}))   \]
over $\Flip(F')(\beta)$ which we define to be $\mathcal{F}(\beta)$. For a {\em generating} morphism $\beta: i \rightarrow j$ of type 1 we get the following diagram writing $j_1 = [j_1^{(1)}, \dots, j_1^{(n)}]$ and in which $i_1^{(1)}, \dots, i_1^{(n')}$ are possibly still  lists of objects: 
\[  \xymatrix{
{ \substack{ (i_1^{(1)} \to j_1^{(1)} \to j_1^{(1)} \to j_1^{(1)})  \\ \cdots \\  (i_1^{(n')} \to j_1^{(n')} \to j_1^{(n')} \to j_1^{(n')}) } } \ar[d]^{\gamma_{4}}  \ar[r]^{\substack{ \beta_1^{(1)}\\  \dots\\  \beta_1^{(n')}}}  & 
{ \substack{ (j_1^{(1)} \to j_1^{(1)} \to j_1^{(1)} \to j_1^{(1)}) \\ \cdots \\ (j_1^{(n')} \to j_1^{(n')} \to j_1^{(n')} \to j_1^{(n')})  }}  \ar[d]^{\alpha_{j,4}}  \\
 (i_1 \to j_1 \to j_1 \to i_4) \ar@{<-}[d]^{\gamma_3}&  (j_1 \to j_1 \to j_1 \to i_4) \ar@{<-}[d]^{\alpha_{j,3}} \\
 (i_1 \to j_1 \to i_3 \to i_4) \ar@{<-}[d]^{\gamma_2}&  (j_1 \to j_1 \to i_3 \to i_4) \ar@{<-}[d]^{\alpha_{j,2}} \\
 (i_1 \to i_2 \to i_3 \to i_4)  \ar[r]^\beta & (j_1 \to i_2 \to i_3 \to i_4)  
}   \]
Because $\beta$ is generating, only one of the $\beta_1^{(k)}$ is not an identity and lies entirely in $\twwc \tau_1$ or $\twwc \tau_2$. 
$\mathcal{F}(\beta_1^{(k)})$ corresponds to a morphism 
\[ \Flip(F')((\beta_1^{(k)})')_{\#} \mathcal{F} (i_1^{(k)} \to j_1^{(k)} \to j_1^{(k)} \to j_1^{(k)})  \rightarrow  \mathcal{F}( \twwc j_1^{(k)} ) . \]
Applying $\Flip(F')(\alpha_{j,2}'\alpha_{j,3}'\alpha_{j,4}')_{\#}$ to it (and the various identities), we get, using the pseudo-functoriality of $\Flip(F')(\cdot)_\#$ (cf.\@ Proposition~\ref{PROPPF}) a morphism
\begin{gather*}
 \Flip(F')(\beta)_{\#} \Flip(F')(\gamma_2'\gamma_3'\gamma_4')_{\#}  \mathcal{F} (i_1^{(k)} \to j_1^{(k)} \to j_1^{(k)} \to j_1^{(k)})    \\ \rightarrow
  \Flip(F')(\alpha_{j,2}'\alpha_{j,3}'\alpha_{j,4}')_{\#} \mathcal{F}( \twwc j_1^{(k)}).
\end{gather*}
Using  (\ref{eqrewrite}), and the pseudo-functoriality again, the left hand side might be rewritten as 
\[ \Flip(F')(\beta)_{\#} \Flip(F')(\alpha_{i,3}'\alpha_{i,2}'\alpha_{i,1}')_{\#}  \mathcal{F} (\twwc i_1^{(k)})  \rightarrow  \Flip(F')(\alpha_{j,2}'\alpha_{j,3}'\alpha_{j,4}')_{\#} \mathcal{F}(\twwc j_1^{(k)}) . \]
This yields a morphism
\[ \Flip(F')(\beta)_{\#} \mathcal{F}(i) \rightarrow \mathcal{F}(j) \]
which corresponds to a morphism 
\[  \mathcal{F}(i) \rightarrow \mathcal{F}(j) \]
over $\Flip(F')(\beta)$ which we define to be $\mathcal{F}(\beta)$. 
To get a valid diagram $\mathcal{F}$ one has to check that the relation squares are mapped to commutative diagrams. This follows from the pseudo-functoriality of $\Flip(F')(\cdot)_\#$ (cf.\@ Proposition~\ref{PROPPF}). 

3.\@ is clear. 

4. Recall that a morphism $\xi:  (U_1 \to F_1) \rightarrow (U_2 \to F_2)$ in $\Cor^{\cov}_I(\tau)$ induces a diagram 
\[ \Flip(X') \in \HH^{\cor}(\mathcal{M}^{(\old)})(\tw \Delta_1 \times \twwc I \times \Delta^{\op} \times \Delta) \] 
(Main Construction \ref{DEFMC}). Denote the morphisms in $\tw \Delta_1$ as follows
\[ \xymatrix{ & (0 \to 1) \ar[ld]_{\iota_1} \ar[rd]^{\iota_2}  \\
(0 \to 0) & & (1 \to 1)
}\]
We first show the following analogue of Lemma~\ref{LEMMA34}: For each $n \in \N$ (resp.\@ $m \in \N$) and object $\mu \in \twwc \Delta_n \times I$, the morphisms
\[ f:=\Flip(X')(\iota_1,\mu, m, -): A:= \Flip(F_{1}')(\mu, m, -) \rightarrow  B := \Flip(X')(0\to 1,\mu, m, -)   \]
\[ g:=\Flip(X')(\iota_2,\mu, -, n): C:= \Flip(X')(0 \to 1,\mu, -, n) \rightarrow  D := \Flip(F_{2}')(\mu, -, n)   \]
of simplicial objects induce equivalences
\[ f^*: \DD^{(\old)}(\Delta)_B^{\cocart} \cong \DD^{(\old)}(\Delta)_A^{\cocart}. \]
\[ g^!: \DD^{(\old)}(\Delta^{\op})_D^{\cart} \cong \DD^{(\old)}(\Delta^{\op})_C^{\cart}. \]
Then one proceeds as in the proof of Lemma~\ref{LEMMAEXISTENCE3FUNCTORS} to check that 
$f^*$ and $g^!$ as well as  
and $\Box_*^{(6)} f_*$, and  $\Box_!^{(5)} g_!$,
 preserve the conditions of being simultaneously 3,5-Cartesian and 2,4,6-coCartesian and are computed point-wise in $\twwc (\tau \times I) \times \Delta^{\op}$ (resp.\@ point-wise in $\twwc (\tau \times I) \times \Delta$). Since they are equivalences point-wise
they are in each of the two cases mutually inverse equivalences. Therefore for each 
\[ \mathcal{F}_1 \in \Fun(\twwc \tau, \DD^{(\old)}(\twwc I \times \Delta^{\op} \times \Delta))^{3,5-\cart, 2,4,6-\cocart}_{ \Flip(F_1')} \]

 there is a unique diagram 
\[ \mathcal{X} \in \Fun(\tw \Delta_1 \times \twwc \tau, \DD^{(\old)}(\twwc I \times \Delta^{\op} \times \Delta))^{3,5-\cart, 2,4,6-\cocart}_{ \Flip(X')}, \]
 up to unique isomorphism, with $\mathcal{X}(\tw 0) = \mathcal{F}_1$ and similarly for a given $\mathcal{F}_2$. In combination with 1., this shows the statement. 
For the used analogue of Lemma~\ref{LEMMA34} note that 
\[ A(m) = (\Cech(U_1 \to F_1)(\mu_{13}) \widetilde{\times}_{F_1(\mu_{23})} \Cech(U_1 \to F_1)(\mu_{24}))_{m,n} \]
\[ B(m) = (\Cech(U_1 \to F_1)(\mu_{13}) \widetilde{\times}_{F_2(\mu_{23})} \Cech(U_2 \to F_2)(\mu_{24}))_{m,n} \]
By the same argument as in the proof of Lemma~\ref{LEMMA34} using (D1) and (D2) we see that $f^*$ is an equivalence. 
Observe that the diagram
\[ \xymatrix{F_1(\mu_{24}) \ar[r] \ar[d]& F_1(\mu_{23}) \ar[d] \\
F_2(\mu_{24}) \ar[r] & F_2(\mu_{23})
   } \]
is homotopy Cartesian because, the vertical morphisms are weak equivalences. 
Similarly, we have
\[ C(n) = (\Cech(U_1 \to F_1)(\mu_{13}) \widetilde{\times}_{F_2(\mu_{23})} \Cech(U_2 \to F_2)(\mu_{24}))_{m,n} \]
\[ D(n) = (\Cech(U_2 \to F_2)(\mu_{13}) \widetilde{\times}_{F_2(\mu_{23})} \Cech(U_2 \to F_2)(\mu_{24}))_{m,n} \]
By the same argument as in the proof of Lemma~\ref{LEMMA34}, using (D1) and (D2), we see that $g^!$ is an equivalence. 
\end{proof}

\begin{LEMMA}\label{LEMMADER6FU3}
The strict 2-functor 
\[ \Delta_n \mapsto E_I(\Delta_n) \quad (\text{resp. } \tau \mapsto E_I(\tau), \quad \text{resp. } \tau^S \mapsto E_I(\tau^S))   \]
satisfies the properties of \cite[Proposition~A.1]{Hor17}, resp.\@ of \cite[Proposition~A.3]{Hor17}.
\end{LEMMA}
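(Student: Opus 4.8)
The statement to be verified is that the strict 2-functor $\tau \mapsto E_I(\tau)$ (and its symmetric and simplicial variants) satisfies the axioms of \cite[Proposition~A.1/A.3]{Hor17}, which are precisely the conditions under which one can cook up a (symmetric) 2-multicategory (here: a fibered piece of a multiderivator) out of such a gadget. The heart of the matter has already been done in the Key Lemma~\ref{LEMMACOMPSIXFU}: that lemma establishes exactly the ``descent in the tree variable'' statements (fibers over the generators $\Delta_{1,n}$ describe the multimorphisms, concatenation of trees corresponds to a 2-Cartesian square of fibers, the forgetful functor to $\Cor^{\cov}_I(\tau)$ is a bifibration with discrete fibers). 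So the plan is essentially bookkeeping: translate the four parts of Key Lemma~\ref{LEMMACOMPSIXFU} into the precise list of axioms of \cite[Proposition~A.1]{Hor17} (and the two extra axioms of \cite[Proposition~A.3]{Hor17} concerning the symmetric group action).

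\emph{First}, I would recall the axioms of \cite[Proposition~A.1]{Hor17}. Axiom~1 asks that for each tree $\tau$ the category with weak equivalences $E_I(\tau)$ comes with a functor to $\prod_{o} E_I(\cdot)$ (the source/destination objects) compatible with the weak equivalences; this is immediate from the definition of $E_I(\tau)$ in Definition~\ref{DEFDER6FU1}, restricting a principal object along $\twwc o \hookrightarrow \twwc \tau$. Axiom~2 is the concatenation axiom: for $\tau = \tau_2 \circ_i \tau_1$ the induced functor $E_I(\tau) \to E_I(\tau_1) \times_{E_I(\cdot)} E_I(\tau_2)$ is an equivalence after localizing at weak equivalences. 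On the level of fibers over fixed source/destination data this is Key Lemma~\ref{LEMMACOMPSIXFU}, parts 1 and 2 (the fiber is a set, and the square of sets is Cartesian), combined with the analogous concatenation property for $\Cor^{\cov}_I(\tau)$ which is Lemma~\ref{LEMMAPROPCONSTRSYMMULTI2}, 2.\@ (axiom~2 of \cite[Proposition~A.3]{Hor17} for the forgetful base). One then glues: the forgetful functor $E_I(\tau) \to \Cor^{\cov}_I(\tau)$ is a bifibration with discrete fibers by Key Lemma~\ref{LEMMACOMPSIXFU}, 4, so an equivalence of base categories with weak equivalences together with a fiberwise equivalence yields an equivalence of total categories with weak equivalences. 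Axiom~3 (the ``disc'' axiom, i.e.\@ $E_I(\Delta_{1,0})$, $E_I(\Delta_{1,1})$ behave correctly and $E_I(\cdot) = \DD^{(\old)}(\twwc I \times \Delta^{\op} \times \Delta)^{3,5\text{-}\cart,2,4,6\text{-}\cocart}$) is read off Definition~\ref{DEFDER6FU1} together with Key Lemma~\ref{LEMMACOMPSIXFU}, 3, which identifies the multimorphism sets with $\Hom$-sets of the composite functor $\Flip(F')({}^{\downarrow\downarrow\downarrow\downarrow}(0\to 1))_{\#}$ of Example~\ref{EXCOMPONENTS}.

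\emph{Second}, for the symmetric variant $\tau^S \mapsto E_I(\tau^S)$ I would check the two additional axioms of \cite[Proposition~A.3]{Hor17}: that the evident $\operatorname{Aut}(\tau^S)$-action on $E_I(\tau^S)$ is well-defined and strictly functorial, and that $E_I(\tau^S)_{(U_o \to F_o)_o}[\mathcal{W}^{-1}] \to \prod_m E_I(\Delta_{1,n_m}^S)_{\dots}[\mathcal{W}^{-1}]$ is an equivalence over the generating morphisms. The first follows because $\mathcal{M}$ is symmetric, so permuting the entries of a list in $\twwc(\tau^S \times I)$ acts on all the ingredients (the $\Cech$ nerves, the bisimplicial fiber products $(-\widetilde\times-)_{n,m}$, the diagram $\Flip(F')$) compatibly — this is verbatim the same observation already made in Definition~\ref{DEFCOR} and Definition~\ref{DEFCCOV}; the $\operatorname{Aut}(\tau^S)$-equivariance of the functors $\Flip(F')(\alpha)_\#$ is built into Proposition~\ref{PROPPROPERTIESCORCOMPMDIA}/\ref{PROPPF}. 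The second is again Key Lemma~\ref{LEMMACOMPSIXFU}, 1 and 2, applied with symmetric trees, using Lemma~\ref{LEMMAPROPCONSTRSYMMULTI2}, 2 for the base $\Cor^{\cov}_I$.

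\emph{Third}, I would dispatch the simplicial refinement $\Delta_n \mapsto E_I(\Delta_n)$ satisfying the axioms of \cite[Proposition~A.1]{Hor17}: since $\Delta_n$ is the tree with one single edge $0 \to 1$ in each degree and $\Delta_n = \Delta_1 \circ \cdots \circ \Delta_1$, this is the special case of the tree statement for linear trees, with the extra simplicial identities being formal consequences of pseudo-functoriality of $\Flip(F')(\cdot)_\#$ (Proposition~\ref{PROPPF}); so no new content is needed.

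\textbf{Main obstacle.} Essentially all the mathematical substance — preservation of (co)Cartesianity under the four elementary functors $\widetilde g^*,\widetilde\iota_1^!,\widetilde\iota_2^*,\Box_!^{(5)}\widetilde f_!$, their being equivalences where claimed, their point-wise computability, and their pseudo-functoriality — has been discharged in Definition/Lemma~\ref{LEMMAEXISTENCE3FUNCTORS}, Proposition~\ref{PROPPROPERTIESCORCOMPMDIA} and Key Lemma~\ref{LEMMACOMPSIXFU}. Hence the proof of this Lemma is genuinely a verification that these statements match the abstract axioms; the one place where a little care is required is checking that the equivalence of categories-with-weak-equivalences $\HH^{\cor,\cov} \simeq \HH^{\cor}$ (Proposition~\ref{PROPEQUIV}, via Lemma~\ref{LEMMAPROPCONSTRSYMMULTI2}) is compatible with the bifibration structure $E_I(\tau) \to \Cor^{\cov}_I(\tau)$, so that the concatenation axiom for $E_I$ follows from the one for $\Cor^{\cov}_I$ rather than having to be re-proved from scratch. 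I expect that to be the only mildly delicate bookkeeping point; everything else is a direct citation of the preceding results.
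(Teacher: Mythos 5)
Your proposal is correct and follows essentially the same route as the paper: the paper's own proof is just two sentences, noting that Axiom~1 (surjectivity on objects) is clear from the construction and that Axiom~2 (the concatenation/descent-to-generators equivalence) follows from Key Lemma~\ref{LEMMACOMPSIXFU}, parts 2 and 4. You arrive at the same conclusion with considerably more bookkeeping detail and an explicit discussion of how the bifibration $E_I(\tau) \to \Cor^{\cov}_I(\tau)$ with discrete fibers (Key Lemma part 4) lets you glue the base equivalence of Lemma~\ref{LEMMAPROPCONSTRSYMMULTI2} with the fiberwise equivalences of Key Lemma parts 1--2. One small caveat: you invoke an ``Axiom~3'' of \cite[Proposition~A.1/A.3]{Hor17}, but from the way this paper repeatedly cites that proposition (e.g.\ in Lemma~\ref{LEMMAPROPCONSTRSYMMULTI1} and ~\ref{LEMMAPROPCONSTRSYMMULTI2}) there are only Axioms~1 and~2; the content you attribute to a putative Axiom~3 is really part of the identification of the resulting multimorphism sets, which you correctly source from Key Lemma~\ref{LEMMACOMPSIXFU}, 3, so this does not affect the soundness of your argument.
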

\begin{proof}
The surjectivity on objects (Axiom 1) is clear and the equivalence in Axiom 2 follows from Lemma~\ref{LEMMACOMPSIXFU}, 2. and 4.\@ 
\end{proof}

\begin{DEF}\label{DEFDER6FU2}With the setting as in \ref{PARAXIOMSEXT},
we construct a morphism of (symmetric) 2-pre-(multi)derivators with domain $\Dirlf$
\begin{eqnarray*}
 \DD^{(\new)} &\rightarrow& \HH^{\cor, \cov}(\mathcal{M}^{(\new)}).
 \end{eqnarray*}
For a diagram $I \in \Dirlf$, let $\DD^{(\new)}(I)$ be the (symmetric) 2-(multi)category obtained by applying \cite[Proposition~A.1]{Hor17} (resp.\@ \cite[Proposition~A.3]{Hor17}) to the strict functor of Definition~\ref{DEFDER6FU1}
\[ \Delta_n \mapsto E_I(\Delta_n) \quad (\text{resp. } \tau \mapsto E_I(\tau), \quad \text{resp. } \tau^S \mapsto E_I(\tau^S)).   \]
It comes equipped with an obvious morphism to $\HH^{\cor, \cov}(\mathcal{M}^{(\new)})(I)$ which was constructed applying the same Proposition to 
$\tau^S \mapsto \Cor_I^{\cov}(\tau^S)$. 
For a functor $\alpha: I \rightarrow J$ in $\Cat$ we define the pullback $\alpha^*=\DD^{(\new)}(\alpha)$
to be $\DD^{(\old)}(\twwc \alpha \times \id \times \id)$. Note that $\alpha$ induces a functor $\twwc \alpha: \twwc I \rightarrow \twwc J$ and that $\DD^{(\old)}(\twwc \alpha \times \id \times \id)$ preserves the relevant (co)Cartesianity conditions. 
The 2-pre-multiderivator $\DD^{(\new)}(I)$ is defined on natural transformations as follows. A natural transformation $\alpha \Rightarrow \beta$ can be seen as a functor $\Delta_1 \times I \rightarrow J$.
The pullback of a diagram in $\mathcal{E} \in \DD^{(\old)}(\twwc J \times \Delta^{\op} \times \Delta)$ and taking partial underlying diagram gives a functor in   $\Fun(\twwc \Delta_1, \DD^{(\old)}(\twwc I \times \Delta^{\op} \times \Delta))$ which satisfies the correct
 (co)Cartesianity conditions. It is, by definition, a morphism \[ \alpha^* \mathcal{E} = e_0^* \mathcal{F} \rightarrow  \beta^* \mathcal{E} = e_1^* \mathcal{F} \]  in $\DD^{(\new)}(I)$ which we 
define to be the natural transformation $\DD^{(\new)}(\mu)$ at $\mathcal{E}$. 
We proceed to describe the pseudo-naturality constraint of $\DD^{(\new)}(\mu)$ and restrict to 1-ary morphisms for simplicity. 
 A 1-ary 1-morphism given by $\xi \in E_I(\Delta_1)$ (note that a general 1-ary 1-morphism in $\DD^{\new}(I)$ is freely generated by those) consists of an object  $U \to F$ in $\Cor_I^{\cov}(\Delta_1)$ plus an object
  \[ \mathcal{F} \in \Fun(\twwc \Delta_1,  \DD^{(\old)}(\twwc I \times \Delta^{\op} \times \Delta))^{3,5-\cart, 2,4,6-\cocart}_{ \Flip(F')}   \]
  It is a morphism from $\mathcal{E}_1 := e_0^*\mathcal{F}$ to $\mathcal{E}_2 := e_1^*\mathcal{F}$.
We get an object $\mu^*\mathcal{F}$ and after taking partial underlying diagrams an object 
 \[ (\mu^* \mathcal{F})' \in  \Fun(\twwc \Delta_1^2,  \DD( \twwc   I))^{4-\cocart, 3-\cocart^*, 2-{\cart}}_{ \pi_{234}^*(\mu^* \widetilde{X}^{\op})}.  \]

The two (non-degenerate) embeddings $\Delta_2 \rightarrow \Delta_1^2$ yield two 2-isomorphisms
\[ (\mu^* \mathcal{E}_1) \circ  (\alpha^* \mathcal{F}) \Rightarrow ( \twwc \delta )^* (\mu^* \mathcal{F})' \Leftarrow  (\beta^* \mathcal{F})  \circ (\mu^* \mathcal{E}_2).      \]
where $\delta: \Delta_1 \rightarrow \Delta_1^2$ denotes the diagonal. This endows $\DD^{\new}(\mu): \alpha^* \Rightarrow \beta^*$ with the structure of pseudo-natural transformation and one checks that this construction yields a pseudo-functor:
\[ \Fun(I, J) \rightarrow \Fun^{\mathrm{strict}}(\DD^{\new}(J), \DD^{\new}(I)). \]
where $\Fun^{\mathrm{strict}}$ is the 2-category of strict 2-functors, pseudo-natural transformations, and modifications.
\end{DEF}

The goal of this construction is to establish that the 
morphism of (symmetric) 2-pre-(multi)derivators
\[ \DD^{(\new)} \rightarrow \HH^{\cor, \cov}(\mathcal{M}^{(\new)})  \]
is a (symmetric) left fibered (multi)derivator with domain $\Dirlf$. The next Lemma shows that it satisfies (FDer0 left). In the next section it will be  established that 
it satisfies (FDer3--4 left).  

\begin{LEMMA}\label{LEMMA6FUOPFIB}
The functor of 2-categories
\begin{eqnarray*}
 \DD^{(\new)}(I) &\rightarrow& \HH^{\cor, \cov}(\mathcal{M}^{(\new)})(I) 
 \end{eqnarray*}
constructed in Definition~\ref{DEFDER6FU2} is a 1-opfibration and 2-bifibration with 1-categorical fibers. 
\end{LEMMA}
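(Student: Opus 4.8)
The plan is to verify the three defining properties of a 1-opfibration and 2-bifibration with 1-categorical fibers for the functor $\DD^{(\new)}(I) \to \HH^{\cor,\cov}(\mathcal{M}^{(\new)})(I)$, exploiting that both sides are built by the machinery of \cite[Proposition~A.1/A.3]{Hor17} from the strict functors $\tau \mapsto E_I(\tau)$ and $\tau \mapsto \Cor_I^{\cov}(\tau)$, and that this machinery translates fiberedness questions into statements about the fibers $E_I(\tau)_{(U\to F)}((\mathcal{E}_o)_{o\in\tau})$ established in Key Lemma~\ref{LEMMACOMPSIXFU}. First, \emph{1-categorical fibers}: the fiber of $\DD^{(\new)}(I)$ over a fixed object $U\to F$ of $\Cor_I^{\cov}(\Delta_0)$ is, by Lemma~\ref{LEMMASIXFUMOR} and Key Lemma~\ref{LEMMACOMPSIXFU}(1), equivalent to the ordinary category
\[ \Fun(\twwc \Delta_0, \DD^{(\old)}(\twwc I \times \Delta^{\op} \times \Delta))^{3,5\text{-}\cart,\,2,4,6\text{-}\cocart}_{\Flip(F')}, \]
so it is a 1-category; on multimorphism categories with fixed sources and target the fibers are sets by Key Lemma~\ref{LEMMACOMPSIXFU}(1) again, which is exactly 2-bifiberedness restricted to each fiber.

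Next, \emph{existence of coCartesian 1-morphisms} (the 1-opfibration property). Given a 1-multimorphism $\xi = (U_1\to F_1),\dots,(U_n\to F_n) \to (U_Y\to Y)$ in $\HH^{\cor,\cov}(\mathcal{M}^{(\new)})(I)$, presented (up to 2-isomorphism) by a multicorrespondence with atlas as in Example~\ref{EXCOMPONENTS}, and given objects $\mathcal{E}_i$ in the fibers over $(U_i\to F_i)$, I would form $\Flip(F')$ via Main Construction~\ref{DEFMC}, read off $\mathcal{E}_i$ as objects of $\DD^{(\old)}(\twwc I\times\Delta^{\op}\times\Delta)^{3,5\text{-}\cart,2,4,6\text{-}\cocart}_{\Flip(F')_{o_i}}$, and define the target of the coCartesian lift to be $\mathcal{E}_{n+1} := \Flip(F')({}^{\downarrow\downarrow\downarrow\downarrow}(0\to1))_{\#}(\mathcal{E}_1,\dots,\mathcal{E}_n)$, i.e.\ the composite of the four functors $\Box_!^{(5)}\widetilde{f}_!\circ(\widetilde{\iota}_2^*)^{-1}\circ\widetilde{\iota}_1^!\circ(\widetilde{g}_1^*-\otimes\cdots\otimes\widetilde{g}_n^*-)$ of Definition/Lemma~\ref{LEMMAEXISTENCE3FUNCTORS}. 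That this object and the canonical morphism to it really are coCartesian is precisely the universal property expressed by the isomorphism of sets in Key Lemma~\ref{LEMMACOMPSIXFU}(3): for any $(U_Y'\to Y')$, morphism of correspondences $(U_Y\to Y)\to(U_Y'\to Y')$, and target object $\mathcal{E}'$,
\[ E_I(\Delta_{1,n})_{(U\to F)}(\mathcal{E}_1,\dots,\mathcal{E}_n;\mathcal{E}') \cong \Hom_{\DD^{(\old)}(\dots)_{\widetilde{T}}}(\Flip(F')({}^{\downarrow\downarrow\downarrow\downarrow}(0\to1))_{\#}(\mathcal{E}_1,\dots,\mathcal{E}_n);\mathcal{E}'), \]
together with Key Lemma~\ref{LEMMACOMPSIXFU}(4), which says the forgetful functor $E_I(\tau)(\{\mathcal{E}_o\})\to\Cor_I^{\cov}(\tau)(\{U_o\to F_o\})$ is a bifibration with discrete fibers; this last point directly packages both the lift of a 1-morphism of correspondences along the forgetful functor and the factorization property characterizing coCartesianity. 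The compatibility with composition of coCartesian 1-morphisms follows from the pseudo-functoriality of $\Flip(F')(\cdot)_\#$ (Proposition~\ref{PROPPF}) combined with Key Lemma~\ref{LEMMACOMPSIXFU}(2), which identifies the fiber over a concatenated tree with the product of the fibers.

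Finally, \emph{existence of Cartesian and coCartesian 2-morphisms} (the 2-bifibration property). A 2-morphism downstairs between two 1-morphisms of correspondences is, by construction of $\HH^{\cor,\cov}$, a square relating two atlases in which the underlying map $A\to A'$ is a point-wise weak equivalence; upstairs a lift over such a 2-morphism is exactly the datum of $\mathcal{X}\in\Fun(\tw\Delta_1\times\twwc\tau,\DD^{(\old)})^{3,5\text{-}\cart,2,4,6\text{-}\cocart}_{\Flip(X')}$ from Definition~\ref{DEFDER6FU1}. The key input is Key Lemma~\ref{LEMMACOMPSIXFU}(4) and the part of its proof establishing that $f^*$, $g^!$, $\Box_*^{(6)}f_*$ and $\Box_!^{(5)}g_!$ are mutually inverse equivalences on the subcategories of objects satisfying the mixed (co)Cartesianity conditions: this shows that for given $\mathcal{F}_1$ (resp.\ $\mathcal{F}_2$) over the endpoints there is a unique $\mathcal{X}$, up to unique isomorphism, extending it, which is precisely the statement that 2-morphisms over a fixed 2-morphism downstairs and with fixed source (resp.\ target) are unique — hence that all 2-morphisms are both Cartesian and coCartesian. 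I expect the main obstacle to be purely bookkeeping: matching the abstract input/output conventions of \cite[Proposition~A.1/A.3]{Hor17} (generation of 1-morphisms by the $E_I(\Delta_{1,n})$, of 2-morphisms by $E_I(\Delta_1\times\Delta_1)$ via the $\tw\Delta_1$-construction) with the concrete $\Flip$-picture, and checking that the coCartesian 1-morphism produced above genuinely satisfies the \emph{universal} property against \emph{arbitrary} targets and not merely the representability implicit in Key Lemma~\ref{LEMMACOMPSIXFU}(3) — this requires unwinding that the Hom-set identification there is natural in $\mathcal{E}'$ and compatible with the forgetful bifibration of part~(4). All the hard analytic content (preservation of the (co)Cartesianity conditions by the four functors, pointwise computation of the projectors, the equivalences of Lemma~\ref{LEMMA34}) has already been done in Definition/Lemma~\ref{LEMMAEXISTENCE3FUNCTORS} and the Key Lemma, so the proof of this Lemma is essentially an assembly argument.
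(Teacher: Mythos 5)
Your proposal assembles the right ingredients (Lemma~\ref{LEMMASIXFUMOR}, Key Lemma~\ref{LEMMACOMPSIXFU} parts 1--4, Proposition~\ref{PROPPF}), and the overall architecture is sound, but the step you flag as ``purely bookkeeping'' is in fact the one genuine gap, and it is not resolved by the naturality argument you sketch.

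Key Lemma~\ref{LEMMACOMPSIXFU}(3) computes the set $E_I(\Delta_{1,n})_{(U\to F)}(\mathcal{E}_1,\dots,\mathcal{E}_n;\mathcal{E}_{n+1})$ for a \emph{fixed} multicorrespondence $(U\to F)$. This only establishes that $\Flip(F')({}^{\downarrow\downarrow\downarrow\downarrow}(0\to 1))_{\#}$ produces \emph{weakly} coCartesian 1-morphisms — morphisms satisfying the lifting property against objects in the fiber over the target $(U_Y\to Y)$, not against arbitrary 1-morphisms out of that target. Passing from ``weakly coCartesian'' to ``coCartesian'' is not a matter of unwinding a naturality; it requires an additional fact. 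The paper supplies it via \cite[Proposition~2.7]{Hor15b}: if every 1-morphism admits a weakly coCartesian lift, and weakly coCartesian morphisms are stable under composition, then the functor is a 1-opfibration (and the weakly coCartesian morphisms are automatically coCartesian). The closure under composition is exactly what Proposition~\ref{PROPPF} delivers — the pseudo-functoriality of $\alpha\mapsto\Flip(F')(\alpha)_{\#}$ shows that the composite of two push-forwards agrees with the push-forward along the composite correspondence, so the composite of weakly coCartesian lifts is again weakly coCartesian. Your plan invokes Proposition~\ref{PROPPF} and Key Lemma~\ref{LEMMACOMPSIXFU}(2) only for ``compatibility with composition of coCartesian 1-morphisms,'' treating coCartesianness as already established — but coCartesianness only \emph{follows from} that compatibility plus the weak universal property.

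A second, minor point: the 1-categorical fibers and 2-bifiberedness are obtained in the paper in one stroke by citing Lemma~\ref{LEMMADER6FU3} (which verifies the axioms of \cite[Proposition~A.1/A.3]{Hor17} for $\tau\mapsto E_I(\tau)$); these properties are then automatic consequences of that machinery. Your re-derivation from Lemma~\ref{LEMMASIXFUMOR} and Key Lemma~\ref{LEMMACOMPSIXFU}(1),(4) is not wrong, but it unnecessarily reproves what the Appendix A machinery of \cite{Hor17} already packages for you. Fixing the gap amounts to rephrasing the coCartesianness step as: weakly coCartesian lifts exist (Key Lemma (3)), they compose to weakly coCartesian (Proposition~\ref{PROPPF}), therefore 1-opfibration by \cite[Proposition~2.7]{Hor15b}.
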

\begin{proof}
The functors have 1-categorical fibers and are 2-bifibered by Lemma~\ref{LEMMADER6FU3}.
By Lemma~\ref{LEMMACOMPSIXFU}, 3.\@, weakly coCartesian morphisms exist and a push-forward functor is given by
\[  \Flip(F')({}^{\downarrow\downarrow\downarrow\downarrow} (0 \to 1))_{\#}. \]
Therefore Proposition~\ref{PROPPF} implies that 
the composition of weakly coCartesian morphisms is weakly coCartesian and hence the functor is a 1-opfibration \cite[Proposition~2.7]{Hor15b}.
\end{proof}

\section{Properties of the main construction}

\begin{LEMMA}[{cf.\@ also \cite[Lemma~11.1]{Hor17}}]\label{LEMMAOPFIB}
Let $\alpha: I \rightarrow J$ be an opfibration, and consider the sequence of functors:
\[ \xymatrix{ \twwc{I} \ar[rrr]^-{q_1=(\twwc{\alpha},\pi_{123})} &&& \twwc{J} \times_{({}^{\downarrow \uparrow \uparrow}{J})} {}^{\downarrow \uparrow \uparrow}{I} \ar[rr]^-{q_2=\id \times \pi_1} && \twwc{J} \times_J I. } \]
\begin{enumerate}
\item The functor $q_1$ is an opfibration. The fiber of $q_1$ over a pair $j_1 \rightarrow j_2 \rightarrow j_3 \rightarrow j_4$ and $i_1 \rightarrow i_2 \rightarrow i_3$ is
\[ i_4 \times_{/I_{j_4}} I_{j_4}\]
where $i_4$ is the target of a coCartesian arrow over $j_3 \rightarrow j_4$ with source $i_3$. 
\item The functor $q_2$ is a fibration. The fiber of $q_2$ over a pair $j_1 \rightarrow j_2 \rightarrow j_3 \rightarrow j_4$ and $i_1$ (lying over $j_1$) is 
\[ (i_2 \times_{/I_{j_2}} I_{j_2} \times_{/I_{j_3}} I_{j_3})^{\op} \]
where $i_2$ is the target of a coCartesian arrow over $j_1 \rightarrow j_2$ with source $i_1$ and the second comma category is 
constructed via the functor $I_{j_2} \rightarrow I_{j_3}$ being the coCartesian push-forward along $j_2 \rightarrow j_3$.  
\end{enumerate}
\end{LEMMA}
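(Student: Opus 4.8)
The plan is to analyze the composite $\twwc{I}\to \twwc J\times_J I$ step by step, showing each of the two intermediate projections is (op)fibered and identifying the fibers explicitly. I would begin by unraveling the definitions. An object of $\twwc{I}$ is a chain $i_1\to i_2\to i_3\to i_4$ in $I$ (three arrows, with the convention that the first three slots point one way and the last, ``$\uparrow\downarrow\downarrow$'' in the notation $\tww{\cdot}$ extended by one more $\downarrow$, gives the decoration $\downarrow\uparrow\uparrow\downarrow$), and similarly in $\twwc J$; the fibre product $\twwc{J}\times_{({}^{\downarrow\uparrow\uparrow}{J})}{}^{\downarrow\uparrow\uparrow}{I}$ consists of a chain $j_1\to j_2\to j_3\to j_4$ in $J$ together with a chain $i_1\to i_2\to i_3$ in $I$ lifting $j_1\to j_2\to j_3$. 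The functor $q_1$ sends $(i_1\to i_2\to i_3\to i_4)$ to $((j_1\to j_2\to j_3\to j_4),(i_1\to i_2\to i_3))$ with $j_k=\alpha(i_k)$, and $q_2$ forgets $i_2\to i_3$, retaining only $i_1$.

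For part (1): to see $q_1$ is an opfibration I would verify that, given an object of $\twwc{I}$ lying over a source of $q_1$ and a morphism downstairs, a coCartesian lift exists. A morphism downstairs that moves only the fourth $J$-slot and leaves $i_1\to i_2\to i_3$ fixed is exactly a morphism in the fibre; a morphism that moves the $I$-data $i_1\to i_2\to i_3$ (or moves $j_1,j_2,j_3$ compatibly) can be lifted by choosing, over each such morphism, coCartesian arrows in $I$ relative to $\alpha$ (which exist since $\alpha$ is an opfibration) and propagating. The key observation is that the decoration on the fourth slot is $\downarrow$ (covariant), so the fibre over a fixed target-data is a comma category built from the ``free'' choice of $i_4$ over $j_4$ extending $i_3$. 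Concretely, once $i_1\to i_2\to i_3$ and $j_1\to j_2\to j_3\to j_4$ are fixed, an object of $\twwc I$ over them is a morphism $i_3\to i_4$ in $I$ with $\alpha(i_3\to i_4)=(j_3\to j_4)$; factoring this through the coCartesian arrow $i_3\to i_4^{\mathrm{cocart}}$ over $j_3\to j_4$ (with $i_4^{\mathrm{cocart}}$ the ``$i_4$'' in the statement) identifies this with the comma category $i_4\times_{/I_{j_4}} I_{j_4}$, since what remains is a morphism in the fibre $I_{j_4}$ out of $i_4$. I would then check functoriality in the base of $q_1$, i.e.\@ that a morphism of base-data induces the expected functor between these comma categories, using the universal property of coCartesian arrows.

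For part (2): $q_2$ is the projection that forgets the data $i_2\to i_3$ (keeping $j_2,j_3$ and $i_1$). The two middle $J$-slots carry decoration $\uparrow$ (contravariant), which is why the fibre comes out as an \emph{opposite} of a comma category and why $q_2$ is a \emph{fibration} rather than an opfibration. Given the base-data $(j_1\to j_2\to j_3\to j_4)$ and $i_1$ over $j_1$, an object of the total space over it is a pair of arrows $i_1\to i_2\to i_3$ in $I$ lifting $j_1\to j_2\to j_3$; factoring $i_1\to i_2$ through the coCartesian arrow $i_1\to i_2^{\mathrm{cocart}}$ over $j_1\to j_2$ and then $i_2^{\mathrm{cocart}}\to i_3$ through a coCartesian arrow over $j_2\to j_3$ via the pushforward functor $I_{j_2}\to I_{j_3}$, one identifies the data with an object of $i_2\times_{/I_{j_2}} I_{j_2}\times_{/I_{j_3}} I_{j_3}$; the reversal of direction in the $\uparrow$-decorated slots then gives the $(-)^{\op}$. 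Cartesian lifts of morphisms downstairs are constructed by the dual of the argument in (1): a morphism moving $i_1$ or the base-data is lifted using coCartesian arrows of $\alpha$, and a morphism purely in the middle slots sits in the fibre. I would conclude by checking the fibration axioms (existence and stability under composition of Cartesian arrows), again reducing everything to the universal property of coCartesian arrows of $\alpha$ together with the compatibility of the three decorations.

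The main obstacle I expect is purely bookkeeping: keeping straight which of the seven/eight slots (the four $I$- or $J$-chain slots plus, in the later applications, the $\Delta^{\op}$ and $\Delta$ slots, though those do not appear in this Lemma) are covariant vs.\@ contravariant, and correctly translating ``factor through the coCartesian arrow'' into the precise comma-category description with the right variance. There is no deep difficulty — the whole statement is a direct consequence of \ref{PARTW2} (projections of $\tww{\cdot}$-type diagrams are (op)fibrations according to the leading arrow direction) combined with the fact that $\alpha$ being an opfibration lets one compute the relevant comma categories fibrewise over $J$ — but writing out the two fibre identifications without sign/variance errors is where the care is needed. A clean way to organize it is to first treat the absolute case $\alpha=\id_J$ (where $q_1$ becomes $\pi_{123}\colon\twwc J\to{}^{\downarrow\uparrow\uparrow}J$ and one reads off the fibres directly from \ref{PARTW2}), and then bootstrap to general opfibered $\alpha$ by the standard fibrewise argument.
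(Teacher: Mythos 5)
Your proof is correct and fills in the ``straightforward'' verification the paper omits: the identification of the two fibres by factoring through coCartesian lifts of $\alpha$, and the bookkeeping of variance forced by the decorations $\downarrow\uparrow\uparrow\downarrow$, are exactly the intended argument. Two small cautions. First, the parenthetical claim that a base morphism moving only $j_4$ while fixing $i_1\to i_2\to i_3$ is ``exactly a morphism in the fibre'' is misstated: a fibre morphism lies over an identity downstairs, hence keeps $j_4$ fixed. What you evidently mean is that the coCartesian lift of such a base morphism is obtained by coCartesianly lifting $j_4\to j_4'$ starting at $i_4$; morphisms of the actual fibre are the morphisms $i_4\to\overline{i_4}$ in $I_{j_4}$, matching the comma category you write down. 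Second, the concluding organizing remark should be dropped: when $\alpha=\id_J$ the fibre product $\twwc J\times_{({}^{\downarrow\uparrow\uparrow}J)}{}^{\downarrow\uparrow\uparrow}J$ collapses to $\twwc J$ itself (the second factor carries no new data), so $q_1$ becomes the identity, not $\pi_{123}$, and the ``absolute case'' is trivial rather than a useful base for bootstrapping. Your argument never actually relies on this remark, so it is harmless, but as written it is incorrect and leads nowhere.
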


\begin{proof}Straightforward. \end{proof}

\begin{LEMMA}\label{LEMMAKAN2}With the setting as in \ref{PARAXIOMSEXT}.  Let $J \in \Dirlf$ be a diagram,
let $F \in (\mathcal{M}^{(\new)})^{\tw J}$ be an admissible diagram with atlas $U \rightarrow F$ in $\mathcal{M}^{\tw J}$, and let $\Flip(F') \in \HH^{\cor}(\mathcal{M}^{(\old)})(\twwc J \times \Delta^{\op} \times \Delta)$ be the Main Construction~\ref{DEFMC} applied to it. 

If $\alpha: I \rightarrow J$ is an opfibration  then
the functors
\[ \xymatrix{
 \DD^{(\old)}(\twwc{J} \times_J I \times \Delta^{\op} \times  \Delta)_{\Flip(F')}^{3,5-\cart, 2,4,6-\cocart} \ar[d]^-{q_2^*} \\
 \DD^{(\old)}(\twwc{J} \times_{({}^{\downarrow \uparrow \uparrow}{J})} {}^{\downarrow \uparrow \uparrow}{I} \times \Delta^{\op} \times  \Delta)^{3,5-\cart, 2,4,6-\cocart}_{\Flip(F')}  \ar[d]^-{q_1^*} \\
   \DD^{(\old)}(\twwc{I} \times \Delta^{\op} \times  \Delta   )^{3,5-\cart, 2,4,6-\cocart}_{\Flip(F')}  } \]
are equivalences.
\end{LEMMA}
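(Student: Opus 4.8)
The plan is to reduce the claim to the Kan-type statement Lemma~\ref{LEMMAKAN3} applied fiberwise over $\Delta^{\op} \times \Delta$ and over $\twwc J$. First I would analyze $q_1$ and $q_2$ separately, using Lemma~\ref{LEMMAOPFIB}: $q_1$ is an opfibration whose fibers are of the form $i_4 \times_{/I_{j_4}} I_{j_4}$, hence have an initial object, and $q_2$ is a fibration whose fibers are of the form $(i_2 \times_{/I_{j_2}} I_{j_2} \times_{/I_{j_3}} I_{j_3})^{\op}$, hence have a final object. In both cases the relevant pullback functor $q^*$ between the fibers of $\DD^{(\old)}$ over the pulled-back diagram $\Flip(F')$ is (by Lemma~\ref{LEMMAKAN3}, applied in the ``opfibration + fiberwise initial object'', resp.\@ ``fibration + fiberwise final object'' skew cases) an equivalence onto the $q$-(co)Cartesian objects, with quasi-inverse $q_!^{(-)}$, resp.\@ $q_*^{(-)}$, restricted to those objects.

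The key point to check is that under $q_1^*$ and $q_2^*$ the additional (co)Cartesianity conditions in directions $2,3,4,5,6$ correspond to each other. Since $q_1 = (\twwc \alpha, \pi_{123})$ and $q_2 = \id \times \pi_1$ only affect the $\twwc{(-)}$-factor (and the $\Delta^{\op}$, $\Delta$ factors are untouched), a morphism of type $2$, $3$, or $4$ in $\twwc I$ maps under $\twwc \alpha$ to a morphism of the same type (or an identity) in $\twwc J$, and every $q$-(co)Cartesian morphism is already captured by the direction-$5$/direction-$6$ conditions only if it lies over an identity. So I would argue: an object of $\DD^{(\old)}(\twwc I \times \Delta^{\op} \times \Delta)_{\Flip(F')}$ satisfying the $3,5$-Cartesian and $2,4,6$-coCartesian conditions is automatically $q_1$-Cartesian (a $q_1$-vertical morphism is, up to the generating analysis of \ref{TWGENREL}, a composite of morphisms that are either of type $3$ — hence sent to Cartesian morphisms — or lie over identities in $\twwc J$, where being a diagram already forces the value on such morphisms to be controlled by the admissibility/(co)Cartesianity data); hence $q_1^*$ restricted to the full subcategories of interest is an equivalence by Lemma~\ref{LEMMAKAN3}. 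The same reasoning (dualized, using that $q_2$ is a fibration with fiberwise final object and its vertical morphisms are of type $2,3,4$ or over identities) handles $q_2^*$. One then has to verify that the quasi-inverses $q_{1,!}$ and $q_{2,*}$ preserve the full subcategories cut out by the five (co)Cartesianity conditions; this is checked point-wise exactly as in the proof of Definition/Lemma~\ref{LEMMAEXISTENCE3FUNCTORS}, using (FDer4 left/right), Lemma~\ref{LEMMAPRESERVE1}, and Lemma~\ref{LEMMAPRESERVE2} to compare the exchange morphisms along a generating morphism of each type.

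I expect the main obstacle to be the bookkeeping of which directions of (co)Cartesianity are respected by $q_1$ and $q_2$ and the verification that $q_1$-verticality (resp.\@ $q_2$-verticality) of a morphism in $\twwc{(\tau \times I)} \times \Delta^{\op} \times \Delta$ is, for the diagrams $\Flip(F')$ under consideration, detected by exactly the conditions we impose — in particular that no ``new'' direction-$1$ Cartesianity is secretly required. This is essentially the content of the fiber descriptions in Lemma~\ref{LEMMAOPFIB} combined with the combinatorics of \ref{TWGENREL}, and once it is set up, the equivalences drop out of Lemma~\ref{LEMMAKAN3}. A final routine step is to note that $q_2 \circ q_1$ being the composite of the two equivalences gives the stated equivalence between the top and bottom categories, and that all these equivalences are compatible with the pseudo-functors $\Flip(F')(\cdot)_{\#}$ of Proposition~\ref{PROPPF}, which is what will be needed downstream to check (FDer3--4 left) for $\DD^{(\new)}$.
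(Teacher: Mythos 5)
Your proposal takes essentially the same route as the paper's own (very terse) proof: apply Lemma~\ref{LEMMAKAN3} using the fiber descriptions of $q_1$ and $q_2$ from Lemma~\ref{LEMMAOPFIB}, and then observe that the resulting equivalences restrict to the subcategories of $3,5$-Cartesian and $2,4,6$-coCartesian objects.

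One bookkeeping correction is needed in your justification for the restriction. By Lemma~\ref{LEMMAOPFIB}(1) the fiber of $q_1$ is $i_4 \times_{/I_{j_4}} I_{j_4}$, so the $q_1$-vertical morphisms vary only the fourth coordinate; they are therefore of \emph{type~4}, not type~3 as you claim. The relevant imposed condition is thus the $4$-coCartesian one (all $q_1$-vertical morphisms are type-$4$ morphisms, hence any $2,4,6$-coCartesian object is automatically $q_1$-coCartesian), and one uses the identity $q_1$-coCartesian $=$ $q_1$-Cartesian supplied by Lemma~\ref{LEMMAKAN3} to conclude. Likewise, the $q_2$-vertical morphisms vary $i_2$ and $i_3$ and so split into type-$2$ and type-$3$ morphisms, covered by the $2$-coCartesian and $3$-Cartesian conditions respectively. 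With this fix, your unpacking of why the equivalences of Lemma~\ref{LEMMAKAN3} ``obviously'' restrict is a reasonable elaboration of the step the paper leaves implicit, and the final paragraph about $q_{1,!}$, $q_{2,*}$ preserving the remaining directions via a point-wise check is the right thing to do.
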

\begin{proof}
For $q_1$, resp.\@ $q_2$ the requirements of Lemma~\ref{LEMMAKAN3} are met by virtue of Lemma~\ref{LEMMAOPFIB}. The resulting equivalences 
obviously restrict to the subcategories of $3,5$-Cartesian and $2,4,6$-coCartesian objects. 
\end{proof}

\begin{LEMMA}\label{LEMMAKAN4}
Let $\SSS$ be a 2-pre-derivator,
let $\DD \rightarrow \SSS$ be a fibered derivator with domain $\Dia$, let $I \in \Dia$ be a diagram (such that also $\twwc I \in \Dia$), and let $S \in \SSS(\tm I)$
then the projection: $\pi_{14}: \twwc I \rightarrow  \tm I$ induces an equivalence 
\[ \xymatrix{ \DD(\tm I)_{S}^{\pi_1-\cocart} \ar[r]^-{\pi_{14}^*} & \DD(\twwc I)_{\pi_{14}^* S}^{\pi_{1}-\cocart}  } \]
\end{LEMMA}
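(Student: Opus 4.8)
The statement concerns the projection $\pi_{14}\colon \twwc I \to \tm I$, which forgets the two middle objects of a sequence $i_1 \to i_2 \to i_3 \to i_4$ and remembers only $i_1 \to i_4$ (with the middle two arrows going in directions $\uparrow\uparrow$). The plan is to factor $\pi_{14}$ into projections that forget one coordinate at a time and show that each intermediate projection induces the desired equivalence on the relevant $\pi_1$-coCartesian subcategories by an application of Lemma~\ref{LEMMAKAN3}. Concretely, I would consider the two intermediate projections
\[ \xymatrix{ \twwc I \ar[r]^-{\pi_{134}} & \twc I \ar[r]^-{\pi_{14}} & \tm I } \]
where $\twc I = {}^{\downarrow\uparrow\downarrow} I$ (forget $i_3$) and then $\tm I$ (forget $i_2$). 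Here one must be careful: both omitted coordinates sit in $\uparrow$-positions, so by \ref{PARTW2} the projection forgetting the last $\uparrow$-coordinate ($\pi_{134}$) is an opfibration, while $\pi_{14}\colon \twc I \to \tm I$ forgets a coordinate whose outgoing arrow (to $i_1$) goes in direction $\downarrow$, making it a fibration (cf.\ \ref{PARTW2}). I would check in each case that the fibers have a final or initial object — for an opfibration $\pi_{134}$ the fiber over $(i_1 \to i_3 \to i_4)$ is the comma-type category of factorizations $i_1 \to i_2 \to i_3$ which has $i_2 = i_1$ as an initial object (the leftmost arrow being an identity), and for the fibration $\pi_{14}\colon \twc I \to \tm I$ the fiber over $(i_1 \to i_4)$ is the category of factorizations $i_1 \to i_2 \to i_4$ through $i_2$, which again has an initial or final object. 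One then applies Lemma~\ref{LEMMAKAN3}, possibly in its ``seemingly skewed'' form (``opfibration + fiberwise initial object'' or ``fibration + fiberwise final object''), to deduce that the pullback along each projection is an equivalence onto the $\alpha$-Cartesian ($=\alpha$-coCartesian) objects for the respective projection.

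The subtlety is that Lemma~\ref{LEMMAKAN3} gives an equivalence $\DD(J')_S \iso \DD(J)_{\alpha^*S}^{\alpha\text{-}\cart}$ for $\alpha\colon J \to J'$, whereas here I want an equivalence of $\pi_1$-coCartesian subcategories. So I would proceed as in Lemma~\ref{LEMMAKAN2}: first observe that the projections $\pi_{134}$ and $\pi_{14}$ commute with the $\pi_1$-projections (each factorization projection is over the identity on the first coordinate), hence the equivalences from Lemma~\ref{LEMMAKAN3} restrict to the full subcategories of objects that are coCartesian with respect to $\pi_1$. That the subcategory of $\pi_1$-coCartesian objects on $\twwc I$ corresponds under $\pi_{14}^*$ exactly to the $\pi_1$-coCartesian objects on $\tm I$ follows because a morphism of type $1$ in $\twwc I$ projects to a morphism of type $1$ in $\tm I$ and conversely, and the extra $\twc$-cartesianity in the target of the Lemma~\ref{LEMMAKAN3} equivalence is automatic here (there are no type-$3$ or type-$2$ conditions imposed; the only condition is $\pi_1$-coCartesianity, which is preserved and reflected).

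The main obstacle I anticipate is bookkeeping the arrow directions and the types correctly — identifying precisely which of $\pi_{134}$ and $\pi_{14}$ is a fibration versus an opfibration, and then matching that with the correct clause of Lemma~\ref{LEMMAKAN3} (the ``skewed'' clauses are exactly what one needs here since the fiber's distinguished object is initial when we want a final one, or vice versa). Once the direction-and-type analysis is pinned down, the argument is a two-fold iteration of the now-standard pattern (cf.\ the proofs of Lemmas~\ref{LEMMAKAN2} and \ref{LEMMAKAN3}): factor the projection, apply the Kan-extension equivalence fiberwise, and restrict to the coCartesian subcategories, which is routine. I would therefore present the proof compactly: state the factorization $\pi_{14} = \pi_{14} \circ \pi_{134}$, verify each factor is a (op)fibration with the appropriate fiberwise terminal/initial object using the explicit fiber descriptions, invoke Lemma~\ref{LEMMAKAN3} (in its skewed form where needed) together with the observation that the equivalences restrict along $\pi_1$, and conclude by composing the two equivalences.
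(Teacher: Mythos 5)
Your factorization idea is a genuinely different route from the paper's, and I don't think it closes as sketched. The key obstruction is that the projections you want to use, $\pi_{134}\colon {}^{\downarrow\uparrow\uparrow\downarrow} I \to {}^{\downarrow\uparrow\downarrow} I$ and $\pi_{14}\colon {}^{\downarrow\uparrow\downarrow} I \to {}^{\downarrow\downarrow} I$, omit an \emph{interior} coordinate of the direction string $\Xi$. The (op)fibration criterion of \ref{PARTW2} covers only the prefix projections $\pi_{1,\dots,n'}$ and the suffix projections $\pi_{n-n'''+1,\dots,n}$; for a projection omitting an interior coordinate one would have to produce (co)Cartesian lifts by forming pullbacks or pushouts inside $I$, which need not exist, and these interior projections are in general neither fibrations nor opfibrations. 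So Lemma~\ref{LEMMAKAN3} does not apply to either of your factors. Even granting the (op)fibration property, there is a second gap: Lemma~\ref{LEMMAKAN3} lands in the $\alpha$-(co)Cartesian subcategory for the projection $\alpha$ in question, whereas the lemma's target carries only the $\pi_1$-coCartesian condition. Restricting your equivalences to $\pi_1$-coCartesian objects (your ``proceed as in Lemma~\ref{LEMMAKAN2}'' step) would at best reach the subcategory of objects that are simultaneously $\pi_1$-coCartesian \emph{and} $\pi_{134}$-Cartesian; you would still need to prove that every $\pi_1$-coCartesian object of $\DD(\twwc I)_{\pi_{14}^*S}$ is automatically $\pi_{134}$-Cartesian, which must be argued, not assumed.

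The paper's proof avoids the factorization entirely. It uses the diagonal section $\Delta\colon I \to \tm I$ together with the natural morphism $f\colon \pi_1^*\Delta^*S \to S$ to exhibit $\Box_* = f_\bullet\,\pi_1^{\tm I,*}\,\Delta^*$ as a right coCartesian projector on $\DD(\tm I)_S$; then $\Box_*\,\pi_{14,*}^{(S)}$ is a candidate right adjoint to $\pi_{14}^*$, and the unit and counit are checked to be isomorphisms by a point-wise computation whose crux is the identity $i^*\pi_{1,*}^{(\Delta^*S)} \cong (\twwc i)^*$ on $\pi_{14}$-coCartesian objects. Lemma~\ref{LEMMAKAN3} does enter, but only for genuine prefix/suffix projections (the fibration $\pi_1$ and, inside a fiber, a suffix opfibration $\pi_4$ with fiberwise final object). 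The subtlety the paper flags explicitly --- that $(\pi_{14}^*f)^\bullet$ does \emph{not} preserve $\pi_1$-coCartesian objects --- is exactly the kind of issue a naive restriction argument misses, so any completed version of your sketch would have to address it head on.
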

\begin{proof}
Denote by $f$ the natural morphism 
\[ f: \pi_1^* \Delta^*  S \rightarrow S.  \]
Consider the diagram
\[ \xymatrix{
 \DD(\tm I)_{S}^{\pi_1-\cocart} \ar[r]^-{\pi_{14}^*}  \ar@/^5pt/[dd]^{\Delta^*}  &  \DD(\twwc I)_{\pi_{14}^* S}^{\pi_{1}-\cocart} \ar[d]^{(\pi_{14}^*f)^\bullet}  \\
 &  \DD(\twwc I)_{\pi_{1}^* \Delta^* S}^{\pi_{14}-\cocart} \ar[ld]_{\pi_{1,*}^{(\Delta^* S)}}   \\
 \DD(I)_{\Delta^* S} \ar[r]^-{\pi_{1}^*}  \ar@/^5pt/[uu]^{f_\bullet \pi_1^{\tm I,*}}  &  \DD(\twwc I)_{\pi_{1}^* \Delta^* S}^{\pi_{1}-\cocart}    \ar@{^{(}->}[u]
} \]
The left vertical functors are equivalences (inverse to each other) and, in fact, the functor
\[ \Box_* := f_\bullet \pi_1^{\tm I,*} \Delta^*: \DD(\tm I)_{S} \rightarrow \DD(\tm I)_{S}^{\pi_1-\cocart}  \]
is a right coCartesian projector. Therefore the functor $\pi_{14}^*$ in the first row has the right adjoint 
$\Box_* \pi_{14,*}^{(S)}$. Note that 
\[ \Box_* \pi_{14,*}^{(S)} = f_\bullet \pi_1^{\tm I,*} \pi_{1,*}^{\tm I,(\Delta^*S)} f^\bullet \pi_{14,*}^{(S)} \cong f_\bullet \pi_1^{\tm I,*} \pi_{1,*}^{(\Delta^*S)} (\pi_{14}^*f)^\bullet.  \]
We have to show that the unit and counit are isomorphisms on $\pi_1$-coCartesian objects. 

It follows from a similar argument as in Lemma~\ref{LEMMAKAN2}  that $\pi_1^*$ in the last row is an equivalence. 
The right adjoint $\pi_{1,*}^{(\Delta^* S)}$ is an inverse equivalence on $\pi_{1}$-coCartesian objects. This cannot be used directly though because  $(\pi_{14}^*f)^\bullet$ does not preserve $\pi_{1}$-coCartesian objects.

We first claim that for $i \in I$:
\[ i^* \pi_{1,*}^{(\Delta^* S)} \cong  (\twwc i)^* \]
{\em on $\pi_{14}$-coCartesian objects}. Since $\pi_1$ is a fibration with fibers ${}^{\uparrow \uparrow \downarrow} (i \times_{/I} I)$ we are left to show that\footnote{We keep the numbering of the projections from the larger diagram $\twwc I$, hence $\pi_1$ is the constant functor to $i$ and $\pi_4$ is the projection to the last entry. }
\[  p_*: \DD({}^{\uparrow \uparrow \downarrow} (i \times_{/I} I))_{ p^* S(\tm i) }^{\pi_{4}-\cocart} \rightarrow  \DD( \cdot )_{ S(\tm i) }      \]
is evaluation at $\twwc i$. We can factor $p_*$:
\[ \xymatrix{  \DD({}^{\uparrow \uparrow \downarrow} (i \times_{/I} I))_{ p^* S(\tm i) }^{\pi_{4}-\cocart} \ar[r]^-{\pi_{4,*}} &
 \DD(i \times_{/I} I)_{ p^* S(\tm i) } \ar[r]^-{\pi_*} &
  \DD( \cdot )_{ S(\tm i) }    }   \]
 By Lemma~\ref{LEMMAKAN3}, the functor $\pi_4^*$ is an equivalence between the first two categories with $\pi_{4,*}$ quasi-inverse, and $\pi_{*}$ is evaluation at $\id_i$ (because this is an initial object). The claim follows. 

Hence for $\mu: i \rightarrow i' \in \tm I$: 
\[ \mu^* f_\bullet \pi_1^{\tm I,*} \pi_{1,*}^{(\Delta^*S)} (\pi_{14}^*f)^\bullet  \cong  f(\mu)_\bullet i^*   \pi_{1,*}^{(\Delta^*S)} (\pi_{14}^*f)^\bullet  \cong f(\mu)_\bullet (\twwc{i})^* (\pi_{14}^*f)^\bullet  \cong   f(\mu)_\bullet \twwc{i}^*.  \]
On $\pi_1$-coCartesian objects this is isomorphic to 
$ \widetilde{\mu}^* $
for any $\widetilde{\mu}$ with $\pi_{14}(\widetilde{\mu}) = \mu$. From this it follows that unit and counit are isomorphisms on $\pi_{1}$-coCartesian objects.  
\end{proof}

\begin{PROP}\label{PROPKAN6}
With the setting as in \ref{PARAXIOMSEXT}. Let $I \in \Dirlf$ be a diagram. 
\begin{enumerate}
\item 
Let $F := \pi^*_1 S: \tw I \rightarrow \mathcal{M}^{(\new)}$ with $S: I \rightarrow \mathcal{M}^{(\new)}$ (i.e.\@ a diagram of stacks instead of a diagram of correspondences of stacks) and let $U \rightarrow S$ be an atlas, and $\Flip(F') \in \HH^{\cor}(\mathcal{M}^{(\old)})(\twwc I \times \Delta^{\op} \times \Delta)$ the Main Construction~\ref{DEFMC} applied to $(\pi^*_1U \rightarrow F)$. 
Then there is an equivalence (functorial in $I, S$, i.e.\@ of fibered derivators):
\[ \DD^{(\old)}(\twwc{I} \times \Delta^{\op} \times  \Delta   )^{3,5-\cart, 2,4,6-\cocart}_{\Flip(F')}   \cong  \DD^{!,h}(I)_{S}  \]
where $\DD^{!,h} \rightarrow \mathbb{M}$ is the naive extension (cf.\@ \ref{PARAXIOMSEXT}) of the restriction of $\DD^{(\old)}$ to $\mathbb{M}^{(\old)}$. 

\item
Let $F := \pi^*_2 S: \tw I \rightarrow \mathcal{M}^{(\new)}$ with $S: I^{\op} \rightarrow \mathcal{M}^{(\new)}$ and let $U \rightarrow S$ be an atlas, and $\Flip(F') \in \HH^{\cor}(\mathcal{M}^{(\old)})(\twwc I \times \Delta^{\op} \times \Delta)$ the Main Construction~\ref{DEFMC} applied to $(\pi^*_2 U \rightarrow F)$. 
Then there is an equivalence (functorial in $I, S$, i.e.\@ of fibered derivators):
\[  \DD^{(\old)}(\twwc{I} \times \Delta^{\op} \times  \Delta   )^{3,5-\cart, 2,4,6-\cocart}_{\Flip(F')}   \cong \DD^{*,h}(I)_{S^{\op}}  \]
where $\DD^{*,h} \rightarrow \mathbb{M}^{\op}$ is the naive extension (cf.\@ \ref{PARAXIOMSEXT}) of the restriction of $\DD^{(\old)}$ to $\mathbb{M}^{(\old), \op}$. 
\end{enumerate}
\end{PROP}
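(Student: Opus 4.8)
\textbf{Proof proposal for Proposition~\ref{PROPKAN6}.}

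The plan is to prove part 1 in detail and obtain part 2 by duality; in fact both follow from a single chain of simplifications of the diagram category $\twwc I \times \Delta^{\op} \times \Delta$ once one exploits that $F$ is pulled back from a diagram of stacks rather than correspondences. First I would observe that when $F = \pi_1^* S$, the Main Construction \ref{DEFMC} degenerates considerably: for a morphism $\mu \in \twtw I$, the three projections $\pi_{13}, \pi_{23}, \pi_{24}$ applied to $\mu$ all land in morphisms of $\tw I$ whose image under $F = \pi_1^*S$ depends only on the first entry; concretely $F(\mu_{23})$ is governed by $\mu_1$, and $\Cech(\pi_1^*U \to F)(\mu_{13})$, $\Cech(\pi_1^*U \to F)(\mu_{24})$ are each \v Cech nerves of atlases of $S(\mu_1)$ that are independent of $\mu_3$, $\mu_4$ in an appropriate sense. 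Using Lemma~\ref{LEMMAHOFP}, parts 2 and 4, together with Lemma~\ref{LEMMAPROPCECH}, part 4, I would identify the bisimplicial fiber product $(\Cech(\cdot)(\mu_{13}) \widetilde{\times}_{F(\mu_{23})} \Cech(\cdot)(\mu_{24}))_{\bullet,\bullet}$ with (a diagram weakly equivalent to) the \v Cech nerve $\Cech(\pi_1^* U \to F)$ in a single simplicial direction, all the extra directions and the second $\Delta$-direction becoming homotopically redundant.

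The technical heart is then a sequence of Kan-type reductions of the indexing $2$-diagram. I would use Lemma~\ref{LEMMAKAN4} (the equivalence $\DD(\tm I)_S^{\pi_1\text{-}\cocart} \cong \DD(\twwc I)_{\pi_{14}^*S}^{\pi_1\text{-}\cocart}$) to collapse the $\downarrow\downarrow\downarrow\downarrow$-structure on $I$ down to the $\downarrow\downarrow$-structure $\tm I$, and then Lemma~\ref{LEMMAKAN3} (fibration/opfibration with fiberwise (co)initial object) to further collapse $\tm I = {}^{\downarrow\downarrow} I$ to $I$ itself, since the relevant projections are (op)fibrations whose fibers are comma categories with initial/final objects. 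The middle Cartesian/coCartesian directions $2$ and $3$ are eliminated by the same mechanism as in Definition/Lemma~\ref{LEMMAEXISTENCE3FUNCTORS}, parts 2 and 3: the inclusion $\Delta_n \hookrightarrow \Delta_1^2$-type arguments there show $\widetilde\iota_1^!$ and $\widetilde\iota_2^*$ are equivalences, and here the analogous maps connect $\Flip(F')$ with the naive object $\Cech(\pi_1^*U \to F)$ placed in the $5$th (the $\Delta^{\op}$) direction, with the $6$th direction and the $\twww$-directions trivialized. After all these reductions the category of $3,5$-Cartesian, $2,4,6$-coCartesian objects over $\Flip(F')$ is identified with $\DD^{(\old)}(I \times \Delta^{\op})^{\pi_1\text{-}\cart}_{\Cech(\pi_1^* U \to F)_\bullet}$, which by the definition of the naive extension in Theorem~\ref{SATZEXTNAIVELEFT} (using that $\Cech(\pi_1^*U \to F)$ is a presentation of $S$ because atlases are homotopy local epimorphisms, cf.\ Lemma~\ref{LEMMALOCALEPI} and the proof of Lemma~\ref{LEMMAVALIDITY}) and homological descent (Proposition~\ref{PROPHODESC}) is equivalent to $\DD^{!,h}(I)_S$.

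Part 2 is dual: with $F = \pi_2^* S$ the \v Cech-nerve directions land in the $\Delta$ (coCartesian, $6$th/$2,4$th) side rather than the $\Delta^{\op}$ side, so the same reductions — now using the dual halves of Lemmas~\ref{LEMMAKAN3}, \ref{LEMMAKAN4} and of Definition/Lemma~\ref{LEMMAEXISTENCE3FUNCTORS} — identify the fiber category with $\DD^{*,h}(I)_{S^{\op}}$ via cohomological descent (Proposition~\ref{PROPCOHODESC}). Functoriality in $(I,S)$, i.e.\ that these equivalences assemble into an equivalence of fibered derivators, is tracked through each reduction step by the naturality of the relevant Kan adjunctions; this is bookkeeping but needs to be carried along so that the statement ``of fibered derivators'' is justified. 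I expect the main obstacle to be precisely the compatibility of the many separate equivalences — verifying that the point-wise identifications of $\Flip(F')$ with $\Cech(\pi_1^*U \to F)$ are compatible with all the structural morphisms of $\twwc I \times \Delta^{\op} \times \Delta$ simultaneously, so that one genuinely gets an equivalence of categories (and then of derivators) rather than just a bijection on isomorphism classes of objects; the arguments of Lemma~\ref{LEMMA34} and Lemma~\ref{LEMMAEXISTENCE3FUNCTORS} provide the template, but assembling them over the full $2$-diagram rather than one morphism at a time is where the care is required.
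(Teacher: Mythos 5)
Your proposal captures the correct overall strategy and the right key ingredients: the degeneration of $F'$ when $F = \pi_1^*S$, the reduction of the bisimplicial fiber product to a single \v{C}ech nerve, the Kan-type collapse of $\twwc I\times\Delta^{\op}\times\Delta$ down to $I\times\Delta^{\op}$, the preservation arguments from Lemma~\ref{LEMMAEXISTENCE3FUNCTORS}, and finally descent. The paper's route differs in one concrete way that is worth noting: rather than sequentially collapsing $\twwc I\to\tm I\to I$ via Lemma~\ref{LEMMAKAN4} and Lemma~\ref{LEMMAKAN3}, it introduces explicit intermediate diagrams $F_2$ (replacing the second \v{C}ech factor by the constant nerve $\delta(S(\mu_2))$) and $G = \Cech(U\to S)(\mu_1)_m$ (which forgets the $\Delta$-direction entirely), shows $F'\to F_2\to G$ induces equivalences on the relevant subcategories using (D1), (D2) and the mechanism of Lemma~\ref{LEMMA34} and Lemma~\ref{LEMMAEXISTENCE3FUNCTORS}, and then performs a single Kan-type collapse along $\pi_{15}:\twwc I\times\Delta^{\op}\times\Delta\to I\times\Delta^{\op}$ in the style of Lemma~\ref{LEMMAKAN2}. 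This is more economical than iterating through $\tm I$ and it handles the $\Delta$-direction simultaneously — your proposal gestures at this by saying the ``$6$th direction becomes trivialized'' but doesn't quite supply the intermediate diagram $G$ that makes the Kan collapse applicable. Your sequential route should also work but requires keeping track of the $\Delta$- and $\Delta^{\op}$-directions across two separate collapses.

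One small misattribution: Proposition~\ref{PROPKAN6} is stated in the abstract setting of \ref{PARAXIOMSEXT}, where $C$ and $\mathcal{M}^{(\old)}$ are abstract and the descent behaviour is encoded axiomatically by (D1) and (D2). At that level you should cite (D1) and (D2) directly, as the paper does, rather than Lemma~\ref{LEMMALOCALEPI} and Proposition~\ref{PROPHODESC}/Proposition~\ref{PROPCOHODESC} — the latter are the statements that \emph{verify} (D1), (D2) in the two specific instances (Lemma~\ref{LEMMAVALIDITY}), not the statements used inside this proof. This doesn't break the argument, but keeping the abstraction level straight is what makes the induction in the Main Theorem run cleanly.
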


\begin{proof}Recall \ref{PARFPSOBJ} and Definitions~\ref{DEFFPS}--\ref{DEFFPSOBJ}.
We show 1.\@ the other assertion being dual. 

By the Main Construction~\ref{DEFMC}:
\[ F'(\mu, \Delta_m, \Delta_n) = (\Cech(U \to S)(\mu_{1}) \widetilde{\times}_{S(\mu_{2})} \Cech(U \to S)(\mu_{2}))_{m,n}.  \]
Define 
\begin{eqnarray*}
 F_2(\mu, \Delta_m, \Delta_n) &:=& (\Cech(U \to S)(\mu_{1}) \widetilde{\times}_{S(\mu_{2})} \delta(S(\mu_{2})))_{m,n}  \\
 G(\mu, \Delta_m) &:=& \Cech(U \to S)(\mu_{1})_m
\end{eqnarray*}

Note that, despite the fact that $\delta(S(\mu_{2}))$ is not in $\mathcal{M}^{(\old)}$, the object in $F_2(\mu, \Delta_m, \Delta_n)$ is in $\mathcal{M}^{(\old)}$ because it is weakly equivalent to $\Cech(U \to S)(\mu_{1})_m$ which
is in $\mathcal{M}^{(\old)}$.  

We have obvious morphisms 
\[ \xymatrix{ F' \ar[r]^-{\alpha} & F_2 \ar[r]^-{\gamma}  & G  }. \]
Here $\gamma$ consist of weak equivalences and $\alpha$ is type-1,3,5-admissible. We claim that $\alpha$ induces an equivalence
\[ \alpha^*: \DD^{(\old)}(\twwc I \times \Delta^{\op} \times \Delta)_{\Flip(F_2)}^{3,5-\cart, 2,4,6-\cocart}  \rightarrow 
\DD^{(\old)}(\twwc I \times \Delta^{\op} \times \Delta)_{\Flip(F')}^{3,5-\cart, 2,4,6-\cocart} \]

Indeed,
point-wise in $\twwc I \times \Delta^{\op}$ the functor $\alpha^*$ is an equivalence of the form 
\[ \DD^{(\old)}(\Delta)_{(\Cech(U_X \to X) \widetilde{\times}_{Y} \Cech(U_Y \to Y))_{m,\bullet}}^{\cocart} \cong \DD^{(\old)}(\Delta)_{(\Cech(U_X \to X) \widetilde{\times}_{Y} \delta(Y))_{m,\bullet}}^{\cocart}  \]
as follows from the axioms (D1) and (D2) as in Lemma~\ref{LEMMA34}.
Therefore, as in Lemma~\ref{LEMMAEXISTENCE3FUNCTORS}, 3.\@, the functors $\alpha^*$ and $\Box_*^{(6)}\alpha_{*}$ preserve the conditions of being simultaneously 3,5-Cartesian and 2,4,6-coCartesian. 
Since they are both computed point-wise in $\twwc I \times \Delta^{\op}$  the claim follows. 

As in Lemma~\ref{LEMMAKAN2} the functor $\pi_{15}^*$ induces an equivalence
\[ \DD^{(\old)}(I \times \Delta^{\op})_{S}^{2-\cart}  \rightarrow \DD^{(\old)}(\twwc I \times \Delta^{\op} \times \Delta)_{\pi_{15}^*S = \Flip(G)}^{3,5-\cart, 2,4,6-\cocart}.      \]

By assumption (cf.\@ \ref{PARAXIOMSEXT}) the restriction of $\DD^{(\old)}$ to $\mathbb{M}^{(\old)}$ has a naive extension $\DD^{!,h}$ to $\mathbb{M}$ and we have by (D1) and (D2):
\begin{eqnarray*}
   \DD^!(I \times \Delta^{\op})_{\Cech(U \to S)}^{2-\cart} &\cong& \DD^{!,h}(I)_{S} .
\end{eqnarray*}
\end{proof}

\begin{LEMMA}~\label{LEMMAKAN5}
With the setting as in \ref{PARAXIOMSEXT}, let $\DD^{(\new)} \rightarrow \HH^{\cor, \cov}(\mathcal{M}^{(\new)})$ be the morphism of (symmetric) 2-pre-(multi)derivators from Definition~\ref{DEFDER6FU2}. 
Let $\alpha: I \rightarrow J$ be an opfibration and $U \rightarrow F$ an element of $\HH^{\cor, \cov}(\mathcal{M}^{(\new)})(J)$. Then $\alpha^*: \DD^{(\new)}(J)_{(U \to F)} \rightarrow \DD^{(\new)}(I)_{\alpha^*(U \to F)} $ has a left adjoint $\alpha_!^{(U \to F)}$.
\end{LEMMA}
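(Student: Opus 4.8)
The plan is to reduce the claim to the existence of a relative homotopy left Kan extension along an opfibration for the fibered multiderivator $\DD^{(\old)}$, which is guaranteed by (FDer3--4 left). First I would unwind the definitions: taking the one-object tree $\tau = \cdot$ in Definition~\ref{DEFDER6FU1} (so that $\twwc{\tau} = \cdot$ and the functor category collapses), one has
\[ \DD^{(\new)}(J)_{(U \to F)} = \DD^{(\old)}(\twwc{J} \times \Delta^{\op} \times \Delta)^{3,5-\cart,\, 2,4,6-\cocart}_{\Flip(F')}, \]
and similarly $\DD^{(\new)}(I)_{\alpha^*(U\to F)} = \DD^{(\old)}(\twwc{I} \times \Delta^{\op} \times \Delta)^{3,5-\cart,\, 2,4,6-\cocart}_{\alpha^*\Flip(F')}$, with $\alpha^*$ the restriction of $\DD^{(\old)}(\twwc{\alpha} \times \id \times \id)$ to these subcategories (Definition~\ref{DEFDER6FU2}). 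Since $\alpha$ is an opfibration, Lemma~\ref{LEMMAOPFIB} factors $\twwc{\alpha}$ as
\[ \twwc{I} \xrightarrow{\;q_1\;} \twwc{J} \times_{({}^{\downarrow \uparrow \uparrow}{J})} {}^{\downarrow \uparrow \uparrow}{I} \xrightarrow{\;q_2\;} \twwc{J} \times_J I \xrightarrow{\;\pr_1\;} \twwc{J}, \]
where $q_1$ is an opfibration with fibres having an initial object and $q_2$ a fibration with fibres having a final object; hence $\alpha^* = q_1^* \circ q_2^* \circ \pr_1^*$ (each functor implicitly crossed with $\id$ on the $\Delta^{\op} \times \Delta$ factors).

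By Lemma~\ref{LEMMAKAN2} the functors $q_1^*$ and $q_2^*$ restrict to equivalences between the subcategories of $3,5$-Cartesian, $2,4,6$-coCartesian objects, with quasi-inverses that are in particular left adjoints, so it remains to construct a left adjoint for $\pr_1^* = (\pr_1 \times \id \times \id)^*$ restricted to these subcategories, where $\pr_1 : \twwc{J} \times_J I \to \twwc{J}$ is the base change of the opfibration $\alpha$ along the structure map $\twwc{J} \to J$, hence itself an opfibration, and $\twwc{J} \times_J I \in \Dirlf$ by Lemma~\ref{LEMMACATLF}. Since $\DD^{(\old)} \to \HH^{\cor}(\mathcal{M}^{(\old)})$ is a fibered multiderivator with domain $\Dirlf$, (FDer3 left) provides a left adjoint $\pr_{1,!}$ of $(\pr_1 \times \id \times \id)^*$ between the full fibres over $\Flip(F')$, computed by (FDer4 left) as a fibrewise homotopy colimit over the fibres $I_{j_1}$ of $\alpha$. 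Granting that $\pr_{1,!}$ preserves the relevant (co)Cartesianity conditions (the crux, discussed below), the restricted adjunction is immediate and
\[ \alpha_!^{(U \to F)} := \pr_{1,!} \circ (q_2^*)^{-1} \circ (q_1^*)^{-1} \]
is the desired left adjoint, its promotion to a morphism of the fibre derivators being formal since each constituent is.

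The step I expect to be the main obstacle is therefore checking that $\pr_{1,!}$ restricts to a functor between the subcategories of $3,5$-Cartesian, $2,4,6$-coCartesian objects. Preservation of the $2,4,6$-coCartesianity conditions is easy, since $\pr_{1,!}$ is a homotopy colimit and hence commutes with the pushforwards defining coCartesianity. For the $3,5$-Cartesianity conditions one argues as in Lemma~\ref{LEMMACOCARTPROJ56} and Definition/Lemma~\ref{LEMMAEXISTENCE3FUNCTORS}: a type-$3$ morphism of $\twwc{J}$ lies over an identity of the structure map to $J$, so $\Flip(F')$ sends it to an $\iota_1$ of Lemma~\ref{LEMMA34}, which acts as an equivalence and hence commutes with $\pr_{1,!}$; and a morphism in the $\Delta^{\op}$-direction is sent to a $C$-morphism (the relevant simplicial objects being $C$-objects, cf.\@ Definition~\ref{DEFCOBJECT} and Lemma~\ref{LEMMAPRESERVE1}), along which the pullback functor commutes with homotopy colimits by axiom (H1) of \ref{PARAXIOMSEXT}. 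Combining these observations with (FDer4 left) and the relevant base-change isomorphisms shows that $\pr_{1,!}$ commutes with $\Flip(F')(\beta)^!$ for every $\beta$ of type $3$ or $5$, which is exactly preservation of those Cartesianity conditions. The delicate part is the bookkeeping — keeping track of which base-change isomorphism is available in which slot, and that the opfibration $\pr_1$ is transverse to all the directions in which the conditions are imposed — but no ingredient beyond the lemmas already established is required.
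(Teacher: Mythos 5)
Your proposal is essentially the paper's proof: the same factorization $\twwc{\alpha} = \pr_1 \circ q_2 \circ q_1$ from Lemma~\ref{LEMMAOPFIB}, the same reduction via Lemma~\ref{LEMMAKAN2} to constructing a left adjoint of $\pr_1^*$, and the same case-by-case verification that $\pr_{1,!}$ preserves the $3,5$-Cartesian and $2,4,6$-coCartesian conditions, using axiom (H1) for type $5$ (via the restriction to injective morphisms in $\Delta$) and Lemma~\ref{LEMMA34} for type $3$. The only place the paper is slightly more precise is the type-$3$ step, where it exhibits the required commutation as the exchange of a $2$-commutative square whose vertical functors are equivalences by Lemma~\ref{LEMMA34}, rather than asserting directly that an equivalence ``commutes with'' $\pr_{1,!}$.
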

\begin{proof}
We have to show that
\[ (\twwc{\alpha})^*: \DD^{(\old)}(\twwc J \times \Delta^{\op} \times \Delta)^{3,5-\cart, 2,4,6-\cocart}_{\Flip((\alpha^*F)')} \rightarrow \DD^{(\old)}(\twwc I \times \Delta^{\op} \times \Delta)^{3,5-\cart, 2,4,6-\cocart}_{\Flip(F')} \]
has a left adjoint. The right hand side category is by Lemma~\ref{LEMMAKAN2} equivalent to  
\[ \DD^{(\old)}((\twwc{J}) \times_J I \times \Delta^{\op} \times \Delta)^{3,5-\cart, 2,4,6-\cocart}_{\Flip(F')}, \]
hence we have to show that
\[ \pr_1^*: \DD^{(\old)}(\twwc{J} \times \Delta^{\op} \times \Delta )^{ 3,5-\cart, 2,4,6-\cocart}_{\Flip(F')} \rightarrow \DD^{(\old)}((\twwc{J}) \times_J I \times \Delta^{\op} \times \Delta)^{3,5-\cart, 2,4,6-\cocart}_{ \alpha^* \Flip(F')} \]
has a left adjoint. By assumption, the functor has  a left adjoint $\pr_{1,!}$ forgetting the coCartesianity conditions. 
It suffices to show that $\pr_{1,!}$ preserves the conditions of being 3,5-Cartesian and 2,4,6-coCartesian. 

{\em Case of type 2,4 and 6:} Let $\kappa: j \rightarrow j'$ be a morphism of some type 2,4, or 6 in $\twwc{J} \times \Delta^{\op} \times \Delta$. 
Denote \[  f := \Flip(F')(\kappa)  \] 
the corresponding morphism in $\mathcal{M}^{(\old)}$.
We have to show that the induced map
\[ f^* j^* \pr_{1,!} \rightarrow (j')^* \pr_{1,!} \]
is an isomorphism on $2,4,6$-coCartesian objects.
Note that $\pr_1$ is an opfibration with fibers over $j$ and $j'$ isomorphic to $I_{j_1}$. Let 
 $e_j: I_{j_1} \hookrightarrow (\twwc{J}) \times_J I \times \Delta^{\op} \times \Delta$ (resp.\@ $e_{j'}: I_{j_1} \hookrightarrow (\twwc{J}) \times_J I \times \Delta^{\op} \times \Delta$) be the inclusions.
It suffices to show that the natural morphism
\[  f^* \hocolim_{I_{j_1}} e_{j}^* \rightarrow \hocolim_{I_{j_1}} e_{j'}^*  \]
is an isomorphism on $2,4,6$-coCartesian objects. 
However, $f^*$ being a left adjoint commutes with homotopy colimits, this is to say that
\[ \hocolim_{I_{j_1}}  f^* e_{j}^* \rightarrow \hocolim_{I_{j_1}} e_{j'}^*  \]
has to be an isomorphism. 
However, the natural morphism
\[ f^*  e_{j}^* \rightarrow e_{j'}^*  \]
is already an isomorphism on $2,4,6$-coCartesian objects by definition. 

{\em Case of type 5:} 
Let $\kappa: j \rightarrow j'$ be a morphism of type 5 in $\twwc{J} \times \Delta^{\op} \times \Delta$. 
Denote \[ f := \Flip(F')(\kappa)  \] 
the corresponding morphism in $\mathcal{M}^{(\old)}$.
We have to show that the induced map
\[ f^! j^* \pr_{1,!} \rightarrow (j')^* \pr_{1,!} \]
is an isomorphism on $3,5$-Cartesian objects.
If $\kappa^{\op}: \Delta_{n'} \rightarrow \Delta_{n}$ is an {\em injective} morphism in $\Delta$ then $f$ is in $C$ (cf.\@ \ref{PARSETTINGEXT}) and thus $f^!$ preserves homotopy colimits (Axiom (H1) of \ref{PARAXIOMSEXT}).
Hence
\[ \hocolim_{I_{j_1}}  f^! e_{j}^* \rightarrow \hocolim_{I_{j_1}} e_{j'}^*  \]
has to be an isomorphism. 
However, the natural morphism
\[ f^! e_{j}^* \rightarrow e_{j'}^*  \]
is already an isomorphism on $3,5$-Cartesian objects by definition. As in the proof of Lemma~\ref{LEMMAPRESERVE1} the case of injective morphisms in $\Delta$ is sufficient. 

{\em Case of type 3:} 
Let $\kappa: j \rightarrow j'$ be a morphism of type 3 in $\twwc{J} \times  \Delta$. 
Denote \[  f := \Flip(F')(\kappa)  \] 
the corresponding morphism in $(\mathcal{M}^{(\old)})^{\Delta^{\op}}$.
We have to show that the induced map
\[ f^! j^* \pr_{1,!} \rightarrow (j')^* \pr_{1,!} \]
is an isomorphism on $3,5$-Cartesian objects.
Consider the diagram: 
 \[ \xymatrix{
\DD^{(\old)}(I_{j_1} \times \Delta^{\op})_{\pr_1^*\Flip(F')(j')}^{5-\cart} \ar[r]^-{\pr_{1,!}} \ar[d]_{f^!} &   \DD^{(\old)}(\Delta^{\op})_{\Flip(F')(j')}^{\cart} \ar[d]^{f^!} \\
\DD^{(\old)}(I_{j_1} \times \Delta^{\op})_{\pr_1^*\Flip(F')(j)}^{5-\cart} \ar[r]_-{\pr_{1,!}} & \DD^{(\old)}(\Delta^{\op})_{\Flip(F')(j)}^{\cart}
} \] 
Preservation of Cartesianity for type 3 morphisms follows if this diagram is 2-commutative (via the exchange morphism). Note that we already showed that $\pr_{1,!}$ preserves 5-Cartesian objects. 
This is, however, the exchange of the diagram
 \[ \xymatrix{
\DD^{(\old)}(I_{j_1} \times \Delta^{\op})_{\pr_1^*\Flip(F')(j')}^{5-\cart} \ar@{<-}[r]^-{\pr_1^*} \ar[d]_{f^!} &   \DD^{(\old)}(\Delta^{\op})_{\Flip(F')(j')}^{\cart} \ar[d]^{f^!} \\
\DD^{(\old)}(I_{j_1} \times \Delta^{\op})_{\pr_1^*\Flip(F')(j)}^{5-\cart} \ar@{<-}[r]_-{\pr_1^*} & \DD^{(\old)}(\Delta^{\op})_{\Flip(F')(j)}^{\cart}
} \] 
in which the vertical functors are equivalences. The first diagram is therefore 2-commutative. 

Hence
\[ \hocolim_{I_{j_1}}  f^! e_{j}^* \rightarrow \hocolim_{I_{j_1}} e_{j'}^*  \]
has to be an isomorphism. 
However, the natural morphism
\[ f^! e_{j}^* \rightarrow e_{j'}^*  \]
is already an isomorphism on $3,5$-Cartesian objects by definition. 
\end{proof}

\begin{BEM}\label{REMKANEXT}
For an opfibration $\alpha$ the proof shows that 
we can actually just take
\[ \alpha_!^{(U \to F)} = (\twwc \alpha)_!^{(\Flip(F'))}\]
where the right hand side is the relative left Kan extension in $\DD^{(\old)}$. 
Indeed by construction (we omit the bases of the rel.\@ Kan extensions for $\DD^{(\old)}$)
\[ \alpha_!^{(U \to F)} = \pr_{1,!} q_{2,*} q_{1,!}. \]
However, applying this to an object in the category
\[  \DD^{(\old)}(\twwc{I} \times \Delta^{\op} \times \Delta)^{3,5-\cart, 2,4,6-\cocart}_{\Flip(F')}  \]
 $q_{2,*}$ receives an object which lies in the essential image of $q_{2}^*$ and we have proven
\[ \id \cong q_{2,*} q_{2}^*. \]
Hence, taking adjoints, we also have
\[ q_{2,!} q_{2}^* \cong \id . \]
Hence on the essential image of $q_{2}^*$, we have an isomorphism $q_{2,!} \cong q_{2,*}$.
\end{BEM}

\section{Conclusion}

In the following discussion the words ``multi'' and ``symmetric'' have been set in parenthesis to emphasize that all statements are true with or without the multi-structure or symmetric multi-structure. 
That is, extensions of a $*,!$-formalism are constructed, for which the multi-structure (e.g.\@ the tensor product in a symmetric six-functor-formalism) does not play any role. 
However, if it exists, then the extension carries the same structure. 

\begin{SATZ}\label{SATZSTEP} Let the setting be as in \ref{PARAXIOMSEXT}.
The morphism of (symmetric) 2-pre-(multi)derivators
\[ \DD^{(\new)} \rightarrow \HH^{\cor, \cov}(\mathcal{M}^{(\new)})  \]
constructed in Definition~\ref{DEFDER6FU2} is an infinite (symmetric) left fibered (multi)derivator with domain $\Dirlf$.
\end{SATZ}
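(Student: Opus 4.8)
The plan is to verify the axioms (Der1${}^\infty$), (Der2), (FDer0 left), and (FDer3--5 left) for the morphism $\DD^{(\new)} \to \HH^{\cor, \cov}(\mathcal{M}^{(\new)})$ one at a time, leaning on the preparatory lemmas already established. First I would dispatch the routine axioms: (Der1${}^\infty$) holds because $\DD^{(\new)}(I)$ is built fiberwise over $\twwc I \times \Delta^{\op} \times \Delta$ from the infinite fibered multiderivator $\DD^{(\old)}$ and coproducts of diagrams distribute through the $\twwc{(-)}$ construction; (Der2) follows from (Der2) for $\DD^{(\old)}$ together with Lemma~\ref{LEMMACOMPSIXFU}, since $\dia$-equivalence of a morphism in $\DD^{(\new)}(I)$ can be tested on the underlying diagram of $\Flip(F')$, where it reduces to $\DD^{(\old)}$. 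Strongness and stability of fibers are inherited from $\DD^{(\old)}$ via the equivalence of Proposition~\ref{PROPKAN6} (which identifies the fibers of $\DD^{(\new)}$ over constant diagrams with the naive extensions $\DD^{!,h}$, resp.\ $\DD^{*,h}$, which have stable, well-generated fibers by Theorems~\ref{SATZEXTNAIVELEFT}--\ref{SATZEXTNAIVERIGHT}).

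Next I would assemble (FDer0 left) from the work already done: Lemma~\ref{LEMMA6FUOPFIB} shows $\DD^{(\new)}(I) \to \HH^{\cor, \cov}(\mathcal{M}^{(\new)})(I)$ is a 1-opfibration and 2-bifibration with 1-categorical fibers for each $I$; the only remaining point is the second part of (FDer0 left), namely that a fibration $\alpha: I \to J$ in $\Dirlf$ induces a diagram of 1-opfibered, 2-bifibered categories, i.e.\ that $\alpha^* = \DD^{(\old)}(\twwc\alpha \times \id \times \id)$ sends coCartesian 1-morphisms to coCartesian 1-morphisms. This I would extract from the explicit description of push-forward in $\DD^{(\new)}$ as $\Flip(F')({}^{\downarrow\downarrow\downarrow\downarrow}(0\to1))_\#$ (Lemma~\ref{LEMMACOMPSIXFU}, 3.\ and Example~\ref{EXCOMPONENTS}) together with the fact that $\twwc\alpha$ commutes appropriately with the $\Flip$-construction; concretely one checks that the functors $\widetilde g^*$, $\widetilde\iota_1^!$, $(\widetilde\iota_2^*)^{-1}$, $\Box_!^{(5)}\widetilde f_!$ commute with pullback along $\twwc\alpha \times \id \times \id$, using Remark~\ref{REMKANEXT} to control the Kan extension $\Box_!^{(5)}$.

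Then (FDer3 left) — existence of a left adjoint $\alpha_!^{(S)}$ to $\alpha^*$ on fibers — is exactly Lemma~\ref{LEMMAKAN5} for opfibrations, and the general case reduces to opfibrations by the standard factorization argument (every $\alpha$ factors, up to passage to a cofinal subdiagram in $\Dirlf$, through an opfibration; here one uses the comma-category construction and that $\DD^{(\old)}$ already has all relative left Kan extensions since it is a fibered multiderivator). For (FDer4 left), the base-change/comma-category isomorphism $\alpha_{j,!}\SSS(\mu)(S)_\bullet \iota^* \to j^*\alpha_!$, I would use Remark~\ref{REMKANEXT} again: $\alpha_!^{(U\to F)} = (\twwc\alpha)_!^{(\Flip(F'))}$, so the desired isomorphism follows from (FDer4 left) for $\DD^{(\old)}$ applied to the opfibration $\twwc\alpha$, combined with Lemma~\ref{LEMMAOPFIB}'s identification of the relevant comma fibers (note $\twwc{(I\times_{/J}j)}$ relates to $(\twwc I)\times_{/\twwc J}(\twwc j)$ compatibly). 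Finally (FDer5 left), the projection-formula-type isomorphism for opfibrations, is the most delicate: here I expect the main obstacle, since it mixes the multi-structure (the slotwise tensor implicit in $\Flip(F')(\alpha)_\#$ for type-4 morphisms) with the relative Kan extension $\Box_!^{(5)}\widetilde f_!$ along an opfibration. The plan is to reduce it, via the point-wise computation of $\Box_!^{(5)}$ (Definition/Lemma~\ref{LEMMAEXISTENCE3FUNCTORS}, 4.) and the commutation statements of Proposition~\ref{PROPPROPERTIESCORCOMPMDIA}, to (FDer5 left) for $\DD^{(\old)}$ together with the fact that $\widetilde g^*$ (the multi-pullback) commutes with homotopy colimits in each slot — which in turn follows because $g^*$ is a left adjoint and the axiom (CH) of \ref{PARAXIOMSEXT} guarantees compatibility of the base change with the Cartesian-projector structure. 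Assembling these, the theorem follows; the bookkeeping of which $\Delta^{\op}$/$\Delta$ factor and which type of morphism plays which role is the part that requires care but not new ideas.
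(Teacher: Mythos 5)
Your verification of (Der1${}^\infty$), (Der2), and (FDer0 left) follows the paper closely: the former two are inherited, the first part of (FDer0 left) is Lemma~\ref{LEMMA6FUOPFIB}, and the point-wise computation of push-forward is Lemma~\ref{LEMMAEXISTENCE3FUNCTORS}. For (FDer3--4 left) your instinct — establish them for opfibrations via Lemma~\ref{LEMMAKAN5} and then deduce the general case — is the correct skeleton. But "the general case reduces to opfibrations by the standard factorization argument" glosses over a real issue: the factorization $I \hookrightarrow I\times_{/J}J \to J$ does not obviously stay inside $\Dirlf$, and packaging the reduction so that it actually yields (FDer3--4 left) in full is not automatic. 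The paper handles this by invoking the general result that a morphism of 2-pre-multiderivators is a left fibered multiderivator iff it satisfies the weakened axioms (FDer3--4 left'), which quantify only over opfibrations (\cite[Theorem 4.2]{Hor16}). You should cite that theorem rather than rederive a cofinality argument; as phrased your step has a gap.

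Your treatment of (FDer5 left) is where the argument genuinely goes wrong. First, you miss the reductions that make the statement tractable: (FDer5 left) needs to be checked only for $n$-ary morphisms with $n\ge 2$ (for $n=1$ it follows from the other axioms), only for the generating multimorphisms (it is stable under composition), and only for $J=\cdot$ (by (FDer4 left) it localizes over the base). After these reductions, one sees from the decomposition $\Box_!^{(5)}\widetilde f_! \circ (\widetilde\iota_2^*)^{-1} \circ \widetilde\iota_1^! \circ \widetilde g^*$ of the push-forward (Lemma~\ref{LEMMAEXISTENCE3FUNCTORS}, Example~\ref{EXCOMPONENTS}) that the only genuinely multi-valued factor is $\widetilde g^*$, which lives entirely in the $*$-part of the formalism. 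The projection-formula isomorphism of (FDer5 left) is therefore precisely an instance of (FDer5 left) for the naive extension $\DD^{*,h} \to \mathbb{M}^{\op}$, which holds because $\DD^{*,h}$ is, by hypothesis in \ref{PARAXIOMSEXT}, a fibered \emph{multi}\,derivator. Your argument instead tries to deduce (FDer5 left) from "$g^*$ is a left adjoint" plus axiom (CH): but commuting with colimits is not the same as the projection formula in the $i$-th slot, and (CH) concerns base change, not compatibility of the multimorphism with $\alpha_!$. That step, as written, does not establish the required isomorphism. The fix is to perform the three reductions above and then discharge the remaining case by the multi-structure of $\DD^{*,h}$, rather than attempting a direct verification via the component functors.
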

\begin{proof}The 2-pre-multiderivator $\DD^{(\new)}$, as defined in Definition~\ref{DEFDER6FU2}, satisfies axioms (Der1${}^\infty$) and (Der2) because $\DD^{(\old)}$ satisfies them. The first part of axiom (FDer0 left) was shown in Lemma~\ref{LEMMA6FUOPFIB} and the second part (point-wise computation of the push-forward) follows from Lemma~\ref{LEMMAEXISTENCE3FUNCTORS}.
Instead of Axioms (FDer3--4 left) it is sufficient to show Axioms (FDer3--4 left') (cf.\@ \cite[Theorem 4.2]{Hor16}). (FDer3 left') is Lemma~\ref{LEMMAKAN5}, and
axiom (FDer4 left') follows from its proof.  
(FDer5 left): This is only needed for $n$-ary morphisms for $n \ge 2$, otherwise it follows from the other axioms. The axiom also holds for a composition as soon as it holds for the individual factors. 
It is also sufficient to show the axiom for $J=\cdot$. This boils it down to a statement about the naive extension $\DD^{*,h}$ as fibered {\em multi}\,derivator (cf.\@ \ref{PARAXIOMSEXT}) in which (FDer5 left) holds by definition. 
\end{proof}

\begin{LEMMA}\label{LEMMAGENERATION}
In the situation of Theorem~\ref{SATZSTEP},
if $\DD^{(\old)} \rightarrow \HH^{\cor}(\mathcal{M}^{(\old)})$ has stable, well-generated (resp.\@ compactly generated) fibers then 
{\em the fibers of} $\DD^{(\new)} \rightarrow  \HH^{\cor, \cov}(\mathcal{M}^{(\new)})$ are right derivators with domain (at least) $\mathrm{Posf}$, stable and well-generated (resp.\@ compactly generated). 
\end{LEMMA}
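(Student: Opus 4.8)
\textbf{Proof plan for Lemma~\ref{LEMMAGENERATION}.}
The key point is that the fiber $\DD^{(\new)}_{(U\to F)}$ over an object $(U\to F)\in\HH^{\cor,\cov}(\mathcal{M}^{(\new)})(\cdot)$ has, by Definition~\ref{DEFDER6FU2} and Theorem~\ref{SATZSTEP}, the explicit description as the full subderivator
\[
 \DD^{(\new)}_{(U\to F),\ J}\ =\ \DD^{(\old)}(\twwc J \times \Delta^{\op} \times \Delta)^{3,5-\cart,\ 2,4,6-\cocart}_{\Flip(F')}
\]
of the fiber of $\DD^{(\old)}$ over $\Flip(F')$ (Main Construction~\ref{DEFMC}), cut out by certain (co)Cartesianity conditions over a fixed diagram. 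So the plan is to deduce stability and well-generation (resp.\ compact generation) of such a subderivator from the corresponding property of a fiber of $\DD^{(\old)}$. First I would recall that the fibers of $\DD^{(\old)}\to\HH^{\cor}(\mathcal{M}^{(\old)})$ are stable, well-generated (resp.\ compactly generated) derivators by hypothesis, and that the fiber of $\DD^{(\old)}$ over a fixed object $\Flip(F')$ is itself such a derivator by Definition~\ref{DEFSTABLE} (the association $J\mapsto \DD^{(\old)}(K\times J)_{\pr_1^*\Flip(F')}$, for $K=\twwc{\cdot}\times\Delta^{\op}\times\Delta$, is a stable, well-generated derivator). It then remains to see that passing to the full subcategory of objects satisfying the stated (co)Cartesianity conditions preserves these properties, and that this subcategory, as a function of $J$, is again a derivator with the requested domain.

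The main technical input is the existence of the relevant (co)Cartesian projectors. The conditions ``3,5-Cartesian'' and ``2,4,6-coCartesian'' are each of the form ``certain morphisms in a fixed shape are sent to (co)Cartesian morphisms''; for stable, well-generated derivators such projectors exist by \cite[Theorem~4.3.4]{Hor15} (this is exactly what was used, e.g., for $\Box_!^{(5)}$ and $\Box_*^{(6)}$ in Lemma~\ref{LEMMACOCARTPROJ56}). Consequently the subderivator in question is a Bousfield (co)localization of a stable, well-generated (resp.\ compactly generated) derivator, hence is itself stable and well-generated (resp.\ compactly generated): stability is inherited because the inclusion is exact and the projector is exact, and the generation property is inherited because the essential image of the projector applied to a generating set is a generating set of the localized category (for the compactly generated case one additionally uses that the projector and the inclusion preserve compact objects in the relevant direction, or invokes the standard fact that a localization of a compactly generated triangulated category at a set of morphisms between compact objects is compactly generated). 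One must also check closure under the relevant (co)products/homotopy (co)limits, which follows from the point-wise computation of the projectors established in Lemma~\ref{LEMMACOCARTPROJ56} and Definition/Lemma~\ref{LEMMAEXISTENCE3FUNCTORS}.

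For the domain, I would note that $\DD^{(\new)}$ was built with domain $\Dirlf$, so its fibers are a priori derivators on $\Dirlf$; restricting to $\Posf\subset\Dirf=\Invf\subset\Dirlf$ (cf.\ Example~\ref{EXDIA}) and checking that the (co)Cartesianity conditions and the associated projectors behave well under the comma-category and coproduct constructions required by the derivator axioms on $\Posf$ gives the claimed statement ``domain (at least) $\mathrm{Posf}$''. Here one uses again that for $J\in\Posf$ the diagram $\twwc J\times\Delta^{\op}\times\Delta$ is still in the domain of $\DD^{(\old)}$ (by the stability properties of diagram categories, cf.\ Lemma~\ref{LEMMACATLF}) and that the projectors commute with the restriction functors $\alpha^*$ along functors in $\Posf$ up to canonical isomorphism, which was essentially verified in the course of proving Lemma~\ref{LEMMACOCARTPROJ56} and Lemma~\ref{LEMMAKAN5}.

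\textbf{Expected main obstacle.} The delicate point is not stability — that is automatic for an exact localization — but bookkeeping for the generation statement: one needs that the (co)Cartesian projectors, which are built slot-by-slot in $\twwc J\times\Delta^{\op}\times\Delta$, genuinely commute with the structure functors of the derivator $J\mapsto(\cdots)_{\Flip(F')}$ and send a (perfect, resp.\ compact) generating family to a generating family of the subcategory. This is morally the same argument as \cite[Lemma~8.12]{Hor21c} (invoked in the proof of Theorem~\ref{SATZEXTNAIVELEFT} for well-generation of the naive extensions), and I would carry it out by reducing, via the point-wise formulas for the projectors, to the already-known generation properties of the fibers of $\DD^{(\old)}$ together with the standard stability of well-generatedness (resp.\ compact generatedness) under Bousfield localization at a set of morphisms.
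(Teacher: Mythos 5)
Your approach diverges from the paper's in the generation part, and I believe there is a genuine gap there. You argue that the subcategory of simultaneously $3,5$-Cartesian and $2,4,6$-coCartesian objects is a ``Bousfield (co)localization'' of the fiber of $\DD^{(\old)}$, and then invoke stability of well-generation (resp.\ compact generation) under Bousfield localization. But the conditions being imposed are of two \emph{opposite} types: the coCartesianity conditions (over type-$2,4,6$ morphisms, involving the left adjoints $g^*$) cut out a reflective localization with projector $\Box_!$, while the Cartesianity conditions (over type-$3,5$ morphisms, involving the right adjoints $f^!$) cut out a \emph{coreflective} colocalization with projector $\Box_*$. The projector onto the intersection is the composite of left and right adjoints, and is neither a left nor a right adjoint to the inclusion, so it is not an exact Bousfield (co)localization in the standard sense; the well-known permanence of well-generation under Bousfield localization does not directly give what you need here. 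Observing (as you correctly do) that the two kinds of projectors commute and are computed point-wise (Lemma~\ref{LEMMACOCARTPROJ56}, Definition/Lemma~\ref{LEMMAEXISTENCE3FUNCTORS}) shows that the composite \emph{is} a projector onto the subcategory, but supplying a well-generating (or compactly generating) set for its image is exactly the point that you never verify, and I do not see how to do it by abstract nonsense. The paper avoids this problem entirely by a formal reduction: since $\DD^{(\new)}$ is already known (Theorem~\ref{SATZSTEP}) to be a \emph{left} fibered derivator with stable fibers, \cite[Lemma~4.1.6]{Hor15} reduces well-generation of all fibers to the fibers over $I=\cdot$; and Proposition~\ref{PROPKAN6} identifies those fibers with the fibers of the naive extension $\DD^{!,h}$ (or $\DD^{*,h}$), whose well-generation is one of the standing assumptions in \ref{PARAXIOMSEXT}. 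You should adopt this reduction rather than the localization argument.

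On the ``right derivator with domain $\Posf$'' part you gesture in the right direction but miss the decisive point. The task is to show that the right Kan extension $(\id\times\alpha)_*$ along $\alpha\colon J_1\to J_2$ in $\Posf$ preserves the $3,5$-Cartesianity and $2,4,6$-coCartesianity conditions. The Cartesianity conditions (being of the form ``$\mathcal{E}_1\to f^!\mathcal{E}_2$ is iso'' with $f^!$ a right adjoint) are preserved by right Kan extensions automatically. For the coCartesianity conditions (``$g^*\mathcal{E}_1\to\mathcal{E}_2$ is iso''), the paper's key observation is that the functors $g^*$ are \emph{exact} — they commute with homotopy colimits by hypothesis, hence, in the stable setting, also with \emph{finite} homotopy limits (and $\Posf$-shaped limits suffice here; cf.\ \cite[Theorem~7.1]{PS14}) — so the exchange morphisms appearing in the point-wise computation of $(\id\times\alpha)_*$ are isomorphisms. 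This exactness step is the content that your appeal to ``projectors commuting with restriction functors'' leaves implicit, and it is precisely what restricts the conclusion to domain $\Posf$ rather than all of $\Cat$.
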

\begin{proof}
By Lemma~\ref{LEMMAKAN2} there is an equivalence (compatible with pull-backs in $J$):
\[ \DD^{(\new)}(I \times J)_{\pr_1^* (U \to F)} \cong \DD^{(\old)}((\twwc I) \times \Delta^{\op} \times \Delta \times J)^{3,5-\cart, 2,4,6-\cocart}_{\pr_1^* \Flip(F')}.  \]
Hence the statement follows if we can show that for $\alpha: J_1 \rightarrow J_2$ in $\mathrm{Posf}$, the right Kan extension functor
\[ (\id \times \alpha)_*: \DD^{(\old)}((\twwc I)  \times \Delta^{\op} \times \Delta \times J_1)_{\pr_1^* \Flip(X')} \rightarrow \DD^{(\old)}((\twwc I) \times \Delta^{\op} \times \Delta \times J_2)_{\pr_1^* \Flip(X')} \]
respects the conditions of being $3,5$-Cartesian and $2,4,6$-coCartesian.
This follows because the commutation with homotopy colimits implies that all functors $g^*$  involved in the 
definitions of coCartesian are {\em exact} and hence commute also with {\em homotopy limits of shape $\mathrm{Posf}$} (actually {\em homotopy finite} is sufficient, cf.\@ \cite[Theorem 7.1]{PS14}). 
By \cite[Lemma~4.1.6]{Hor15}, to see that the left fibered derivator $\DD^{(\new)} \rightarrow \HH^{\cor, \cov}(\mathcal{M}^{(\new)})$ has again well-generated (resp.\@ compactly generated) fibers it suffices to see that 
the fibers for $I=\cdot$ are well-generated (resp.\@ compactly generated). By Proposition~\ref{PROPKAN6} those are equal to the corresponding fiber of the naive extension $\DD^{!,h}$ (or just as well $\DD^{*,h}$). By assumption (cf.\@ \ref{PARAXIOMSEXT}) the naive extension $\DD^{!,h}$ (or just as well $\DD^{*,h}$) has well-generated fibers. 
\end{proof}

\begin{HAUPTSATZ}\label{HAUPTSATZ}Let $\mathcal{S}$ be a small category with finite limits and Grothendieck pre-topology. Assume that $\mathcal{S}$ is geometric (cf.\@ Definition~\ref{DEFGEOMETRIC}).

Let $\DD \rightarrow \SSS^{\cor}$ be an infinite (symmetric) fibered  (multi)derivator (i.e.\@ a derivator *,!-formalism, resp.\@ six-functor-formalism) 
with domain\footnote{With domain $\Dirlf$ would be sufficient.} $\Cat$, with stable, well-generated fibers which is local w.r.t.\@ the Grothendieck pre-topology (cf.\@ Definition~\ref{DEFLOCAL6FU}).
Recall from \ref{DEFSHCOR} the 2-pre-(multi)derivator $\SSS^{\hcor, (\infty)}$ of correspondences of higher geometric stacks.  

There exists an infinite (symmetric) fibered (multi)derivator (i.e.\@ a derivator *,!-formalism, resp.\@ six-functor-formalism)
\[ \DD^{(\infty)} \rightarrow \SSS^{\hcor, (\infty)}  \]
with domain $\Cat$, with stable, well-generated fibers which is an {\em admissible extension}, i.e.\@ which satisfies:

\begin{enumerate}
\item The pull-backs of $\DD^{(\infty)}$ via
\[ \mathbb{M}^{(\infty)} \rightarrow \SSS^{\hcor, (\infty)} \quad \text{resp.\@} \quad (\mathbb{M}^{(\infty)})^{\op} \rightarrow \SSS^{\hcor, (\infty)} \]
are equivalent to the naive extensions (cf.\@ Theorems~\ref{SATZEXTNAIVELEFT}--\ref{SATZEXTNAIVERIGHT}) of $\DD^!$, and $\DD^*$, respectively. In particular, we have equivalences
\[ \DD_X^{(\infty)} \cong \DD_X^{!,h} \cong \DD_X^{*,h}  \]
of fibers for every $k$-geometric stack $X$. 
\item The pull-back of $\DD^{(\infty)}$ via
\[ \SSS^{\cor} \rightarrow \SSS^{\hcor, (\infty)} \]
is equivalent to $\DD$. 
\end{enumerate}
\end{HAUPTSATZ}
\begin{proof}
Let $(\mathcal{M}, \Fib, \mathcal{W})$ be a category of fibrant objects representing the \v{C}ech localization of simplicial pre-sheaves on $\mathcal{S}$ as in \ref{PARSETTINGTV}. 
Recall from Definition~\ref{DEFKGEOMETRIC} the homotopy strictly full subcategories $\mathcal{M}^{(k)} \subset \mathcal{M}$ of $k$-geometric stacks.  

The construction will be done in the following steps:
\begin{enumerate}
\item Extend $\DD$ to a (symmetric) fibered (multi)derivator
\[ \DD^{(-1)} \rightarrow \SSS^{\amalg, \cor}  \]
i.e.\@ to the coproduct completion, with domain $\Cat$.
\item Extend $\DD$ to a (symmetric) {\em left} fibered (multi)derivator
\[ \DD^{(0)} \rightarrow \HH^{\cor}(\mathcal{SH}^{(0)}(\mathcal{S})) \cong \HH^{\cor}(\mathcal{M}^{(0)})  \]
i.e.\@ to ordinary sheaves on $\mathcal{S}$ which are coproducts of representables, with domain $\Dirlf$. 
\item Extend $\DD^{(0)}$ to $\Cat$ using the theory of \cite{Hor17b} and establish that $\DD^{(0)}$ is also a right fibered (multi)derivator using Brown representability. 
\item Verify that $\DD^{(0)}$ is an {\em admissible} extension of $\DD$ (cf.\@ Definition~\ref{DEFADMEXT}).
\end{enumerate}

After these steps we dispose of an infinite (symmetric) fibered (multi)derivator
\[ \DD^{(0)} \rightarrow  \HH^{\cor}(\mathcal{M}^{(0)})  \]
with domain $\Cat$ which is an admissible extension of $\DD$ and we extend it to $\HH^{\cor}(\mathcal{M}^{(k)})$ by induction on $k \ge 1$  using the following analogous steps. 

\begin{enumerate}
\item[5.] Extend $\DD^{(k-1)}$ to an infinite (symmetric) {\em left} fibered (multi)derivator
\[ \DD^{(k)} \rightarrow  \HH^{\cor}(\mathcal{M}^{(k)})  \]
with domain $\Dirlf$. 
\item[6.] Extend $\DD^{(k)}$ to $\Cat$ using the theory of \cite{Hor17b} and establish that $\DD^{(k)}$ is also a right fibered (multi)derivator using Brown representability. 
\item[7.] Verify that $\DD^{(k)}$ is again an {\em admissible} extension of $\DD$ (cf.\@ Definition~\ref{DEFADMEXT}).
\end{enumerate}

We proceed to describe the steps in detail: 

{\bf Step 1.\@} The extension has been constructed in \ref{PAREXTAMALG}. We denote it by $\DD^{(-1)} \rightarrow \SSS^{\amalg, \cor}$.

{\bf Step 2.\@}  We apply Theorem~\ref{SATZSTEP} to the setting of Example~\ref{MAINEXEXT2} (using that $\mathcal{S}$ is geometric by assumption), i.e.\@ we set
$\mathcal{M} := \mathcal{SET}^{\mathcal{S}^{\op}}$  the category of  ordinary pre-sheaves (with {\em trivial} structure as simplicial category of fibrant objects),
$\mathcal{M}^{(\old)} := \mathcal{S}^{\amalg}$ and $\mathcal{M}^{(\new)} := \iota(\mathcal{SH}^{(0)})$ and $C$ is
the class of local isomorphisms. Then, by assumption and Lemma~\ref{LEMMAVALIDITY}, $\DD^{(\old)} := \DD^{(-1)} \rightarrow \HH^{\cor}(\mathcal{M}^{(\old)}) = \SSS^{\amalg, \cor}$ satisfies the axioms of \ref{PARAXIOMSEXT}. 
Theorem~\ref{SATZSTEP} thus gives an infinite (symmetric) left fibered (multi)derivator with domain $\Dirlf$
\[ \DD^{(\new)} \rightarrow \HH^{\cor, \cov}(\mathcal{M}^{(\new)}).   \]
By Proposition~\ref{PROPEQUIV} there is an equivalence of symmetric 2-pre-multiderivators
\[  \HH^{\cor, \cov}(\mathcal{M}^{(\new)})  \cong \HH^{\cor}(\mathcal{M}^{(\new)})    \]
and the latter is, by Lemma~\ref{LEMMAFUNCTHOCOR} in view of Lemma~\ref{LEMMAEQUIVSHEAVES}, equivalent to
$\HH^{\cor}(\mathcal{M}^{(0)})$.
This allows to pull-back $\DD^{(\new)}$ to a (symmetric) left fibered (multi)derivator (still with domain $\Dirlf$) 
\[ \DD^{(0)} \rightarrow \HH^{\cor}(\mathcal{M}^{(0)})  \]
(cf.\@ \cite[Remark~12.2]{Hor17} for a discussion of strictification).

{\bf Step 3.\@}  By \cite[Corollary 1.3]{Hor17b}, $\DD^{(0)} \rightarrow \HH^{\cor}(\mathcal{M}^{(0)})$ extends to $\Cat$. Note that
$\HH^{\cor}(\mathcal{M}^{(0)})$ is already defined on $\Cat$. The reader may check that the techniques of \cite{Hor17b} go through for the case of 2-pre-multiderivators instead of pre-multiderivators, because the formal properties of $\Dia^{\cor}(\SSS)$ used in \cite{Hor17b} for a 2-pre-multiderivator $\SSS$ remain exactly the same by \cite[Section~3]{Hor16}. Also observe that it is sufficient that the input has domain $\Dirlf$, because the functor $N$ of \cite[Proposition~3.7]{Hor17b}
factors $N: \Cat \rightarrow \Dirlf \subset \Dir$ (because factoring a morphism in $N(I)$ into non-identities boils down to factoring the corresponding map between simplicies). 
Lemma~\ref{LEMMAGENERATION} shows that  the fibers of $\DD^{(0)} \rightarrow \HH^{\cor}(\mathcal{M}^{(0)})$ are right derivators (with domain $\Posf$) which are stable and well-generated\footnote{this straightforwardly extends to fibers over diagrams of shape $I \in \Cat$ because the subcategories of Cartesian objects used to define the extensions are preserved by homotopy limits. }. 
Finally, Brown representability (applied as in the proof of \cite[Theorem 6.3]{Hor16}) implies that  $\DD^{(0)} \rightarrow \HH^{\cor}(\mathcal{M}^{(0)})$ is also a right fibered (multi)derivator.

{\bf Step 4.\@}  We construct a morphism of 2-pre-multiderivators
\[ \HH^{\cor}(\mathcal{M}^{(\old)}) \rightarrow \HH^{\cor,\cov}(\mathcal{M}^{(\new)})  \]
mapping an admissible $U \in \mathcal{M}^{(\old), \tw I}$ to $U \rightarrow F:=\iota L X$ (cf.\@ \ref{PARINTROGEOM} for the notation) in $\mathcal{M}^{\tw I}$, which we shall call the canonical atlas. For the first statement of admissibility, it suffices to show that
the pull-back of $\DD^{(\new)}$ along this morphism is equivalent to $\DD^{(-1)}$. 
Let $\Flip(F') \in \HH^{\cor}(\mathcal{M}^{(\old)})(\twwc I \times \Delta \times \Delta^{\op})$ the Main Construction~\ref{DEFMC} applied to the canonical atlas.

Claim: There is an equivalence (compatible with push-forward and pull-back functors)
\[ \xymatrix{
 \DD^{(-1)}(I)_{F} \ar[d]^-{\sim} \\
 \DD^{(-1)}(\twwc{I} \times \Delta^{\op} \times  \Delta   )^{3,5-\cart, 2,4,6-\cocart}_{\Flip(F')}  } \]

We have by the Main Construction~\ref{DEFMC}:
\[ F'(\mu, \Delta_n, \Delta_m) = (\Cech(U \rightarrow F)(\mu_{13}) \widetilde{\times}_{F(\mu_{23})} \Cech(U \rightarrow F)(\mu_{24}))_{n,m} \]
for $\mu \in \twtw I$. 

Recall that on $\mathcal{M}$ the trivial structure of simplicial category of fibrant objects has been chosen, hence $\Cech(U \rightarrow F)_n = U \times_F \cdots \times_F U$. 
Therefore there are morphisms 
\[ U \rightarrow \Cech(U \rightarrow F) \]
where $U$ is considered constant. Define

\begin{eqnarray*}
 F_2(\mu, \Delta_n, \Delta_m) &:=& U(\mu_{13}) \times_{F(\mu_{23})} U(\mu_{24})  \\
 F_3(\mu, \Delta_n, \Delta_m) &:=& U(\mu_{13}) \times_{U(\mu_{23})} U(\mu_{24})  \\
 F_4(\mu) &:=& U(\mu_{14})  
\end{eqnarray*}

We have morphisms 
\[ \xymatrix{ F' & \ar[l]_-{\alpha_1}  F_2 \ar[r]^-{\alpha_1} &  F_3 & F_4 \ar[l]_-{\gamma} } \]
and it is clear that all morphisms are local isomorphisms.  Therefore by (co)homological descent (suffices here in the form of axiom (D1) \ref{PARAXIOMSEXT})
we have
\[ \xymatrix{
 \DD^{(-1)}(\twwc{I} \times \Delta^{\op} \times  \Delta  )^{3,5-\cart, 2,4,6-\cocart}_{\Flip(F_4)=\pi_{14}^*\widetilde{U}} \ar[r]^-{\sim} &
 \DD^{(-1)}(\twwc{I} \times \Delta^{\op} \times  \Delta   )^{3,5-\cart, 2,4,6-\cocart}_{\Flip(F')}  } \]
 where $\widetilde{U}$ is given by the composition 
 \[ \xymatrix{ \tw ({}^{\downarrow  \downarrow} I)  \ar[r]^-{P} & \tw I \ar[r]^-U  & \mathcal{M}  }  \]

From Lemma~\ref{LEMMAKAN4} follows that $\Delta^*$ and $\pi_{14}^*$ induce equivalences
\[ \xymatrix{  \DD^{(-1)}(I)_{U \cong \Delta^* \widetilde{U}}  & \ar[l]_-{\Delta^*}^-\sim   \DD^{(-1)}(\tm I)^{\pi_{1}-\cocart}_{\widetilde{U}} \ar[r]^-{\pi_{14}^*}_-\sim & \DD^{(-1)}(\twwc I)_{\Flip(F_4)=\pi_{14}^*\widetilde{U}}^{\pi_{1}-\cocart}.   }   \]

Furthermore, we have to show that for a diagram in $S \in (\mathbb{M}^{(\new)})^{I}$ (resp $S \in (\mathbb{M}^{(\new), \op})^{I}$) the value
\[ \DD^{(\new)}(I)_S \]
is just the value of the naive extension of $\DD^!$ (resp.\@ of $\DD^*$) in a functorial way.
This is Proposition~\ref{PROPKAN6}.

{\bf Step 5.\@}  We apply Theorem~\ref{SATZSTEP} to the setting of Example~\ref{MAINEXEXT1}, i.e.\@ we set
$(\mathcal{M}, \Fib, \mathcal{W})$ as in \ref{PARSETTINGTV},   
$\mathcal{M}^{(\old)} := \mathcal{M}^{(k-1)}$, $\mathcal{M}^{(\new)} := \mathcal{M}^{(k)}$ and $C$ is
the class $(k-1)-C$. Then by induction and Lemma~\ref{LEMMAVALIDITY} $\DD^{(\old)} := \DD^{(k-1)} \rightarrow \HH^{\cor}(\mathcal{M}^{(\old)}) = \HH^{\cor}(\mathcal{M}^{(k-1)}) $ satisfies the axioms of \ref{PARAXIOMSEXT}.
Theorem~\ref{SATZSTEP} thus gives an infinite (symmetric) left fibered (multi)derivator with domain $\Dirlf$
\[ \DD^{(\new)} \rightarrow \HH^{\cor, \cov}(\mathcal{M}^{(k)}).   \]
By Proposition~\ref{PROPEQUIV} there is an equivalence of symmetric 2-pre-multiderivators
\[  \HH^{\cor, \cov}(\mathcal{M}^{(k)})  \cong \HH^{\cor}(\mathcal{M}^{(k)}).    \]
This allows to pull-back $\DD^{(\new)}$ to a (symmetric) left fibered (multi)derivator with domain $\Dirlf$
\[ \DD^{(k)} \rightarrow \HH^{\cor}(\mathcal{M}^{(k)})  \]
(cf.\@ \cite[Remark~12.2]{Hor17} for a discussion of strictification).

{\bf Step 6.\@}  This is the same as Step 3.\@

{\bf Step 7.\@}  Like in Step 4, we have a morphism of 2-pre-multiderivators
\[ \HH^{\cor}(\mathcal{M}^{(k-1)}) \rightarrow \HH^{\cor,\cov}(\mathcal{M}^{(k)})  \]
mapping a diagram $U \in \mathcal{M}^{(k-1), \tw I}$ to the identity $U=F:=U$  in $\mathcal{M}^{(k),\tw I}$, which we shall call the canonical atlas. For the first statement of admissibility it suffices to show that
the pull-back of $\DD^{(\new)}$ along this morphism is equivalent to $\DD^{(k-1)}$. 
Let $\Flip(F') \in \HH^{\cor}(\mathcal{M}^{(k-1)})(\twwc I \times \Delta \times \Delta^{\op})$ the Main Construction~\ref{DEFMC} applied to the canonical atlas.

Claim: There is an equivalence (compatible with push-forward and pull-back functors)
\[ \xymatrix{
 \DD^{(k-1)}(I )_{U} \ar[d]^-{\sim} \\
 \DD^{(k-1)}(\twwc{I} \times \Delta^{\op} \times  \Delta   )^{3,5-\cart, 2,4,6-\cocart}_{\Flip(F')}  } \]

We have by the Main Construction~\ref{DEFMC}:
\[ F'(\mu, \Delta_n, \Delta_m) = (\Cech(U \rightarrow F)(\mu_{13}) \widetilde{\times}_{F(\mu_{23})} \Cech(U \rightarrow F)(\mu_{24}))_{n,m} \]
for $\mu \in \twtw I$. Define

\begin{eqnarray*} 
 F_2(\mu, \Delta_n, \Delta_m) &:=& (\Cech(U \rightarrow F)(\mu_{13}) \widetilde{\times}_{F(\mu_{23})} \delta(F(\mu_{24})))_{n,m}  \\
 F_3(\mu, \Delta_n, \Delta_m) &:=& (\delta(F(\mu_{13})) \widetilde{\times}_{F(\mu_{23})} \delta(F(\mu_{24})))_{n,m}  \\
 G_1(\mu) &:=& F(\mu_{13}) \widetilde{\times}_{F(\mu_{23})} F(\mu_{24})  \\
 G_2(\mu) &:=& F(\mu_{14}) \widetilde{\times}_{F(\mu_{14})} F(\mu_{14})  \\
 H(\mu) &:=& F(\mu_{14})  
\end{eqnarray*} 

We have morphisms (obviously all point-wise weak equivalences)
\[ \xymatrix{ F' \ar[r]^-{\alpha_1} & F_2 \ar[r]^-{\alpha_1} &  F_3 & G_1 \ar[l]_-{\gamma}  &  G_2   \ar[l]_-{\beta} &  H \ar[l]_-{\delta} }. \]
The morphism $\gamma$ was defined in Lemma~\ref{LEMMAHOFP}, 2.
Therefore by (co)homological descent (suffices here in the form of axiom (D1) \ref{PARAXIOMSEXT})
we have
\[ \xymatrix{
 \DD^{(k-1)}(\twwc{I} \times \Delta^{\op} \times  \Delta  )^{3,5-\cart, 2,4,6-\cocart}_{\Flip(F_4)=\pi_{14}^*\widetilde{U}} \ar[r]^-{\sim} &
 \DD^{(k-1)}(\twwc{I} \times \Delta^{\op} \times  \Delta   )^{3,5-\cart, 2,4,6-\cocart}_{\Flip(F')}  } \]
 where $\widetilde{U}$ is given by the composition 
 \[ \xymatrix{ \tw ({}^{\downarrow  \downarrow} I)  \ar[r]^-{P} & \tw I \ar[r]^-U  & \mathcal{M}.  }  \]

From Lemma~\ref{LEMMAKAN4} follows that $\Delta^*$ and $\pi_{14}^*$ induce equivalences
\[ \xymatrix{  \DD^{(k-1)}(I)_{U \cong \Delta^* \widetilde{U}}  & \ar[l]_-{\Delta^*}^-\sim   \DD^{(k-1)}(\tm I)^{\pi_{1}-\cocart}_{\widetilde{U}} \ar[r]^-{\pi_{14}^*}_-\sim & \DD^{(k-1)}(\twwc I)_{\Flip(F_4)=\pi_{14}^*\widetilde{U}}^{\pi_{1}-\cocart}.   }   \]

Furthermore, we have to show that for a diagram in $X \in (\mathbb{M}^{(k)})^I$ (resp $X \in (\mathbb{M}^{(k), \op})^I$) the value
\[ \DD^{(k)}(I)_X \]
is just the value of the naive extension of $\DD^!$ (resp.\@ of $\DD^*$) in a functorial way. This is Proposition~\ref{PROPKAN6}.
\end{proof}

\section{Example: The stable homotopy categories and categories of motives}\label{SECTIONEXAMPLE}

Let $\mathcal{S}$ be a small category with finite limits and Grothendieck pre-topology. 
As input to Main Theorem~\ref{HAUPTSATZ}
a derivator six-functor-formalism over $\mathcal{S}$ is needed, i.e.\@ is a fibered multiderivator 
\[ \DD \rightarrow \SSS^{\cor}. \]
In this section, we recall from \cite[Section~15]{Hor17} that the algebraic derivators $\mathbb{SH}(...)$ of Ayoub which comprise
the stable homotopy categories and various kinds of Morel-Voevodsky motives give rise to such a six-functor-formalism. 

\begin{PAR}
To discuss the examples of Ayoub, we take $\mathcal{S} := \mathcal{SCH}_S$ the category of quasi-projective schemes over a base scheme $S$.
The setting of Ayoub has been generalized to more general schemes over $S$. Since it suffices for the purpose of this article to construct
the input derivator six-functor-formalisms on affine schemes of finite type over $S$ this is irrelevant here.
\end{PAR}

Recall \cite[Definition~4.4.23]{Ayo07II}:
\begin{DEF}\label{DEFCATCOEFF}
A {\bf category of coefficients} is a model category $\mathcal{M}$ with the following properties
\begin{enumerate}
\item $\mathcal{M}$ is left proper, cofibrantly generated, and stable;
\item the weak equivalences are stable under finite coproducts;
\item there is a subset $\mathcal{E} \subset \mathcal{M}$ of homotopically compact\footnote{\cite[Definition 4.4.22]{Ayo07I}} objects which generate $h(\mathcal{M})$ under arbitrary coproducts. 
\end{enumerate}
\end{DEF}

\begin{PAR}\label{PARSETTINGAYOUB}
Consider a triple $(\tau, \mathcal{M}, T)$ as in Ayoub \cite[Section~4.5]{Ayo07II} in which
\begin{itemize}
\item $\tau$ is either the etale or Nisnevich topology on $\mathcal{SCH}_S$.
\item $\mathcal{M}$ is a category of coefficients in the sense of Definition~\ref{DEFCATCOEFF}.
\item $T$ is a projectively cofibrant object of $\mathrm{PreShv}(\mathrm{Sm}/S, \mathcal{M})$ with the condition in~\cite[4.5.18]{Ayo07II}.
\end{itemize}
\end{PAR}

\begin{SATZ}\label{KORAYOUB}
There is a symmetric derivator six-functor-formalism (i.e.\@ a symmetric fibered multiderivator)
\[ \mathbb{SH}_{\mathcal{M}}^T \rightarrow \SSS^{\cor} \]
with domain $\Cat$ such that 
\begin{enumerate}
\item for a diagram $F: I \rightarrow \mathcal{S}^{\op}$ of quasi-projective schemes over $S$ (embedded via inclusion $\SSS^{\op} \rightarrow \SSS^{\cor}$), we have an equivalence of monoidal categories
\[ \mathbb{SH}_{\mathcal{M}}^T(I)_F \cong \mathbb{SH}_{\mathcal{M}}^T(F^{\op}, I^{\op}) \]
where the right hand side is the ``algebraic derivator'' defined by Ayoub \cite[D\'efinition~4.2.21]{Ayo07II};
\item the push-forward along a multicorrespondence 
\[ \xymatrix{ & & & A  \ar[llld]_{g_1}\ar[ld]^{g_n}\ar[rd]^f \\
S_1 & \cdots & S_n & ; & T 
 }\]
 in $\SSS^{\cor}(\cdot)$ is given up to unique isomorphism by
\[ f_! ( L g^*_1 - \overset{L}{\otimes} \cdots \overset{L}{\otimes} Lg_n^* -) \]
with the functors $f_!$ of \cite[Proposition~1.6.46]{Ayo07I} and the $g_i^*$ as in \cite[Th\'eor\`eme~4.5.23]{Ayo07II}, cf.\@ also \cite[Scholie~1.4.2]{Ayo07I};
\item The fibered multiderivator is infinite (i.e.\@ satisfies (Der1${}^\infty$)) and has stable, well-generated fibers. 
\end{enumerate}
\end{SATZ}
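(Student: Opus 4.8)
Theorem~\ref{KORAYOUB} is a recapitulation of results already worked out in \cite[Section~15]{Hor17}, so the plan is essentially to transcribe that construction and verify that the three listed properties follow from Ayoub's foundational work. First I would recall the construction of the fibered multiderivator $\mathbb{SH}_{\mathcal{M}}^T \to \SSS^{\cor}$. The starting point is Ayoub's \emph{algebraic derivator} $\mathbb{SH}_{\mathcal{M}}^T(\mathord-,\mathord-)$ of \cite[D\'efinition~4.2.21]{Ayo07II}, which assigns to a diagram of schemes $(F^{\op}, I^{\op})$ a stable monoidal category together with $*$-pullbacks and their adjoints $*$-pushforwards, all satisfying cohomological descent, the localization property, homotopy invariance, and stability under Tate twist. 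From this $*,!$-formalism together with the functors $f_!$ of \cite[Proposition~1.6.46]{Ayo07I} (which exist for quasi-projective, hence separated of finite type, morphisms) one packages the data into a pseudo-functor on $\mathcal{S}^{\cor}$ and more generally on $\Dia^{\cor}(\SSS^{\cor})$, using the formalism recalled in the Introduction and in Section~\ref{SECTFIBDER}; concretely one defines the value over a multicorrespondence with apex $A$ to be $f_!(Lg_1^*-\otimes\cdots\otimes Lg_n^*-)$. The coherence of all the structure isomorphisms (\ref{eqiso}) — the pentagon, the compatibilities between base change and the projection formula, etc.\@ — is exactly the content of \cite[Scholie~1.4.2]{Ayo07I} (the ``croisement'' axioms), so that the resulting assignment is a genuine strict functor of $2$-multicategories $\Cat^{\cor}(\mathbb{SH}_{\mathcal{M}}^T) \to \Cat^{\cor}(\SSS^{\cor})$ which is a $1$-bifibration and $2$-bifibration with $1$-categorical fibers. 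By \cite[Theorem~4.2]{Hor16} this is equivalent to being a fibered multiderivator over $\SSS^{\cor}$. This establishes the existence statement together with properties~1 and~2, which are built into the construction.

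For property~3 — infinite, stable, well-generated fibers — I would argue as follows. Stability of the fibers $\mathbb{SH}_{\mathcal{M}}^T(I)_F \cong \mathbb{SH}_{\mathcal{M}}^T(F^{\op},I^{\op})$ over \emph{constant} diagrams is immediate: by hypothesis $\mathcal{M}$ is a stable model category (Definition~\ref{DEFCATCOEFF}, (1)), and the passage to presheaves, the $\mathbb{A}^1$-localization, and the $T$-spectrification all preserve stability, so $h(\mathbb{SH}_{\mathcal{M}}^T(\cdot)_S)$ is a stable (triangulated) category; by Definition~\ref{DEFSTABLE} it then suffices to have stability for $I=\{\cdot\}$, and the derivator of diagrams in a stable derivator is again stable. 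The axiom (Der1${}^\infty$) is the statement that $\mathbb{SH}_{\mathcal{M}}^T(\coprod_i I_i) \cong \prod_i \mathbb{SH}_{\mathcal{M}}^T(I_i)$, which holds because Ayoub's algebraic derivator is defined pointwise in a way compatible with coproducts of diagrams (the underlying model category of presheaves on a coproduct of index categories is the product of the presheaf categories). Well-generatedness of the fibers is where I expect the real work: one needs that the triangulated category $h(\mathbb{SH}_{\mathcal{M}}^T(\cdot)_S)$ is well-generated. This follows from Definition~\ref{DEFCATCOEFF}(3), which provides a set $\mathcal{E}$ of homotopically compact generators of $h(\mathcal{M})$; these propagate to a set of generators of the presheaf category, hence of the $\mathbb{A}^1$-local and $T$-stable localizations (localizations of well-generated triangulated categories at sets of morphisms are again well-generated, by Neeman). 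The same argument applied fiberwise over a general diagram $I$, using that $\mathbb{SH}_{\mathcal{M}}^T(I)$ is the homotopy category of a combinatorial model category and hence presentable, gives well-generatedness for all $I$; and again by \cite[Lemma~4.1.6]{Hor15}-type reductions it is enough to check $I=\{\cdot\}$.

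The main obstacle, to the extent that there is one beyond citing \cite{Hor17}, is verifying that the six structure isomorphisms fit into a genuine pseudo-functor on the $2$-multicategory $\Dia^{\cor}(\SSS^{\cor})$ of diagrams of multicorrespondences rather than merely over point-diagrams — i.e.\@ that Ayoub's pointwise constructions are sufficiently functorial in the diagram variable and that the base-change and projection-formula isomorphisms are compatible with the Kan extensions $\beta_!^{(T)}$ appearing in $(\xi,f)_\bullet := \beta_!^{(T)} f_\bullet(\alpha_1^*-,\dots,\alpha_n^*-)$. This is precisely the content that \cite[Section~15]{Hor17} carries out in detail, using that Ayoub's $\mathbb{SH}$ is already a \emph{derivator} (not just a collection of triangulated categories) in the scheme variable; so the proof here reduces to invoking that construction and then reading off properties~1--3, the first two by inspection of the defining formulas and the third by the stability and generation hypotheses bundled into the notion of ``category of coefficients''. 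I would therefore write the proof as: ``This is \cite[Section~15, in particular Corollary~15.x]{Hor17}. Property~3 follows because $\mathcal{M}$ is stable and admits a set of homotopically compact generators, so that the fibers are localizations of well-generated triangulated categories, hence well-generated, and (Der1${}^\infty$) is clear from the pointwise definition.''
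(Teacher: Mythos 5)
Your proposal takes essentially the same approach as the paper: the paper's proof of this theorem is literally the single citation \cite[Corollary~15.9]{Hor17}, which is exactly what you land on after your (accurate) recapitulation of how the construction from \cite[Section~15]{Hor17} works and why properties 1--3 follow from the hypotheses on $\mathcal{M}$.
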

\begin{proof}
\cite[Corollary~15.9]{Hor17}.
\end{proof}

\begin{BEM}
By \cite[Corollary~15.9]{Hor17}, there is also a {\em proper} symmetric derivator six-functor-formalism in the sense of \cite[Definition~10.2]{Hor17} (encoding also the morphism $f_! \rightarrow f_*$, which is an isomorphism for projective $f$), i.e.\@ an oplax left fibered multiderivator, resp.\@ a lax right fibered multi-derivator 
\[  \mathbb{SH}_{\mathcal{M}}^T \rightarrow \SSS^{\cor,0,\oplax} \qquad \mathbb{SH}_{\mathcal{M}}^T \rightarrow \SSS^{\cor,0,\lax}.  \]
However, those will not be used (yet) for the extension to stacks envisioned in this article. 
\end{BEM}

Essential for the extension to stacks is the following:

\begin{SATZ}[Ayoub]\label{SATZAYOUB3}
The symmetric derivator six-functor-formalism 
\[ \mathbb{SH}_{\mathcal{M}}^T \rightarrow \SSS^{\cor} \]
of Corollary~\ref{KORAYOUB} is {\em strongly local} (cf.\@ Definition~\ref{DEFSTRONGLYLOCAL}) w.r.t.\@ 
to the ``Nisnevich-smooth'' pre-topology, in which coverings are collections $\{U_i \rightarrow X\}$ of smooth morphisms which have (as collection) a Nisnevich refinement (and thus are, in particular, jointly surjective). 
\end{SATZ}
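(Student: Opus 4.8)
The plan is to verify the four axioms (O1)--(O4) of Definition~\ref{DEFSTRONGLYLOCAL} for coverings $\{f_i \colon U_i \to X\}$ in the Nisnevich-smooth pre-topology, all phrased purely in terms of the underlying classical six-functor-formalism $\mathbb{SH}_{\mathcal{M}}^T(\cdot) \to \mathcal{S}^{\cor}$, which is Ayoub's six-functor-formalism on $\mathcal{SCH}_S$ (Corollary~\ref{KORAYOUB}). First I would recall that for a smooth morphism $f \colon U \to X$ of quasi-projective $S$-schemes, Ayoub's formalism provides a left adjoint $f_\#$ to $f^*$ (the ``$\mathrm{Sm}$-structure'' built into the construction, cf.\@ \cite[Scholie~1.4.2]{Ayo07I}), and that the relative dualizing object $f^! 1_X$ is tensor-invertible: for smooth $f$ one has the purity/relative-dualizing identification $f^! \cong \mathrm{Th}(\Omega_f) \otimes f^*$, where $\mathrm{Th}(\Omega_f)$ is the Thom space of the (locally free) relative cotangent sheaf, and Thom spaces of vector bundles are $\otimes$-invertible in $\mathbb{SH}^T_{\mathcal{M}}$ because $T$ (a $\mathbb{P}^1$- or $\mathbb{G}_m$-type object) is inverted by construction (this is exactly \cite[4.5.18]{Ayo07II} and Ayoub's relative purity theorem \cite[Th\'eor\`eme~1.6.19]{Ayo07I} or its analogue in \cite{Ayo07II}). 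This gives (O1) with $f_i^! 1_X = \mathrm{Th}(\Omega_{f_i})$ invertible, and simultaneously (O2): the projection-formula exchange $f^! 1_X \otimes f^* - \to f^! -$ is an isomorphism for smooth $f$, since it \emph{is} the relative purity isomorphism. A \emph{Nisnevich-smooth} covering consists of smooth morphisms, so (O1) and (O2) hold for each member $f_i$ of such a covering.

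Next, (O4): for a smooth $f_i$ and an \emph{arbitrary} base change $g \colon S' \to S$ forming a Cartesian square with pullback $F_i \colon X' \to S'$ and $G \colon X' \to X$, the exchange $G^* f_i^! 1_S \to F_i^! g^* 1_S = F_i^! 1_{S'}$ is an isomorphism. Since $f_i$ is smooth, $F_i$ is smooth as well (smoothness is stable under base change), and both relative dualizing objects are computed by the same Thom-space formula, compatibly with pullback of the relative cotangent bundle: $G^* \mathrm{Th}(\Omega_{f_i}) \cong \mathrm{Th}(G^* \Omega_{f_i}) \cong \mathrm{Th}(\Omega_{F_i})$ because $\Omega_{F_i} \cong F_i$-pullback is the base change of $\Omega_{f_i}$. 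Concretely one invokes Ayoub's smooth base change / the compatibility of relative purity with base change (\cite[Scholie~1.4.2]{Ayo07I}, the ``$(!,*)$'' and purity axioms). The only axiom that genuinely uses the \emph{topology} rather than just smoothness is (O3): the $f_i^*$ (equivalently, assuming (O1)--(O2), the $f_i^!$) are jointly conservative. For this I would reduce, using that a Nisnevich-smooth covering by definition admits a Nisnevich refinement, to the case of a Nisnevich covering (conservativity passes up along refinements: if $\{V_j \to X\}$ refines $\{U_i \to X\}$ through maps $V_j \to U_{i(j)}$, and the $(V_j \to X)^*$ are jointly conservative, then so are the $(U_i \to X)^*$, because $(V_j \to X)^* = (V_j \to U_{i(j)})^* (U_{i(j)} \to X)^*$). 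Joint conservativity of pullbacks along a Nisnevich cover is one of the defining properties of the Nisnevich-local model structure underlying $\mathbb{SH}^T_{\mathcal{M}}$ — it follows from Nisnevich descent, i.e.\@ from the fact that $\mathbb{SH}^T_{\mathcal{M}}$ is built by Nisnevich-localizing presheaves, so that a map is an equivalence iff it is a stalkwise (Nisnevich-point) equivalence, and the Nisnevich points factor through the cover.

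The main obstacle I anticipate is not (O3) but rather assembling (O1), (O2), (O4) with the correct bookkeeping of Ayoub's formalism: Ayoub does not literally phrase things in terms of $f^!1_S$ and the exchange morphisms of Definition~\ref{DEFSTRONGLYLOCAL}, so the work is to match Ayoub's relative purity isomorphism $\mathbf{p}_f \colon f^! \cong f_\# (\text{twist}) \circ \ldots$ and the axioms of his ``$\mathrm{Sm}$-fibered category / cross-functor'' with the precise morphisms named (O1)--(O4) here, and to check that the Thom-space twist is indeed invertible in the derivator sense (not just objectwise), which follows from $T$-stability but must be stated. A secondary, purely formal point is that Definition~\ref{DEFSTRONGLYLOCAL} only constrains the underlying classical formalism $\mathbb{SH}^T_{\mathcal{M}}(\cdot) \to \mathcal{S}^{\cor}$, so no derivator-level argument is needed — one may cite \cite[Section~15]{Hor17} (or directly Ayoub \cite{Ayo07I,Ayo07II}) for all the classical inputs and simply record that they yield (O1)--(O4). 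Once this is done, Proposition~\ref{PROPSTRONGLYLOCAL} upgrades ``strongly local'' to ``local'', and Main Theorem~\ref{HAUPTSATZ} applies, giving Theorem~\ref{SATZEXTAYOUB}. If, moreover, one works with $\mathbb{DA}^{et}$ (\'etale motives without transfers), conservativity even holds for all \'etale covers by \cite{Ayo07II}, so the same argument with the smooth pre-topology in place of the Nisnevich-smooth one goes through verbatim, yielding strong locality w.r.t.\@ the smooth pre-topology in that case — as indicated in the Introduction.
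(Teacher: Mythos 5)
Your proof is correct and follows essentially the same route as the paper: verify (O1), (O2), (O4) for smooth morphisms via Ayoub's relative purity isomorphism $f^! 1 \cong \mathrm{Th}(\Omega_f)(1)$ (tensor-invertibility of the Thom twist, the projection-formula exchange, and the base-change compatibility of $\mathrm{Th}$, all from \cite[Scholie~1.4.2, Th.~1.5.9, \S 2.3.3--2.3.4]{Ayo07I}), and verify (O3) by Nisnevich conservativity \cite[Cor.~1.4.4]{Ayo07I} via the refinement argument. The paper's proof is a terse list of citations to exactly these points; your version supplies the missing bookkeeping (e.g.\@ the ``refinement preserves joint conservativity'' step) in a way the paper leaves implicit.
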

\begin{proof}
(O1) Follows from the isomorphism $f^! 1 \cong \mathrm{Th}(\Omega_f)(1)$ for $f$ smooth \cite[Scholie~1.4.2, 3]{Ayo07I} using \cite[Section~2.3.3]{Ayo07I} and the fact that $\mathrm{Th}(\Omega_f)$ is an equivalence.

(O2) is stated in \cite[Section~2.3.4]{Ayo07I}.

(O3) follows from Nisnevich conservativity \cite[Corollaire~1.4.4]{Ayo07I}. 

(O4) follows from the isomorphism $f^! 1 \cong \mathrm{Th}(\Omega_f)(1)$ \cite[Scholie~1.4.2, 3]{Ayo07I} together with \cite[Theorem~1.5.9]{Ayo07I}.
\end{proof}

\begin{BEISPIEL}\label{EXDA}
For example take $\mathcal{M}$ to be (unbounded) complexes of $\Lambda$-modules for a commutative ring $\Lambda$, $T = (\PP^1_S, \infty_S) \otimes \Lambda$, and $\tau$ the etale topology. Then \cite[\S 3]{Ayo14}: 
\[ \mathbb{SH}_\mathcal{M}^T(\cdot)_X \cong \mathcal{DA}^{et}(X, \Lambda)  \]
where the right hand side is the category of etale Voevodsky motives (without transfers) on $X$.
Denote by $\mathbb{DA}^{et}_{S, \Lambda}$ the corresponding derivator six-functor-formalism. 
 For this example, one has even the following strengthening: 
\end{BEISPIEL}

\begin{SATZ}[Ayoub]\label{SATZAYOUB4}
The symmetric derivator six-functor-formalism 
\[ \mathbb{DA}^{et}_{S, \Lambda} \rightarrow \SSS^{\cor} \]
of Example~\ref{EXDA} is {\em strongly local} (cf.\@ Definition~\ref{DEFSTRONGLYLOCAL}) w.r.t.\@ 
to the smooth pre-topology, in which coverings are collections $\{U_i \rightarrow X\}$ of smooth morphisms which are jointly surjective.
\end{SATZ}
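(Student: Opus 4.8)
The plan is to follow the same strategy that proved Theorem~\ref{SATZAYOUB3} (strong locality w.r.t.\ the Nisnevich-smooth pre-topology), verifying the four axioms (O1)--(O4) of Definition~\ref{DEFSTRONGLYLOCAL}, now for the larger class of coverings given by jointly surjective families of smooth morphisms. The key point is that for this particular six-functor-formalism $\mathbb{DA}^{et}_{S,\Lambda}$ we have \emph{\'etale} conservativity at our disposal --- not merely Nisnevich conservativity --- so that the only axiom whose proof genuinely changes is (O3). The axioms (O1), (O2), (O4) depend only on each individual smooth morphism $f_i$ (not on the family being a covering in any particular sense), so their proofs carry over verbatim from Theorem~\ref{SATZAYOUB3}.

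Concretely, first I would dispose of (O1): for a smooth morphism $f$ one has the purity isomorphism $f^!1 \cong \mathrm{Th}(\Omega_f)(1)$ (\cite[Scholie~1.4.2, 3]{Ayo07I}), and $\mathrm{Th}(\Omega_f)$ is an equivalence by \cite[Section~2.3.3]{Ayo07I}, hence $f^!1$ is tensor-invertible. Next (O2): the exchange morphism $f^!1 \otimes (f^*-) \to f^!-$ of the projection formula is an isomorphism for smooth $f$ by \cite[Section~2.3.4]{Ayo07I}. Then (O4): again using $f^!1 \cong \mathrm{Th}(\Omega_f)(1)$ together with the smooth base change isomorphism \cite[Theorem~1.5.9]{Ayo07I} (and compatibility of Thom spaces with pullback, since $\Omega_f$ pulls back to $\Omega_{F_i}$ along a Cartesian square), the exchange $G^* f_i^!1_S \to F_i^!1_{S'}$ is an isomorphism. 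Each of these is purely local at the morphism $f_i$ and makes no use of the covering hypothesis beyond smoothness.

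The main --- and really only --- new ingredient is (O3): joint conservativity of the $f_i^*$ for a jointly surjective family $\{f_i: U_i \to X\}$ of smooth morphisms. The hard part is precisely that such a family need not admit a Nisnevich refinement, so \cite[Corollaire~1.4.4]{Ayo07I} does not apply directly. Here I would invoke \'etale descent for $\mathcal{DA}^{et}$: by \cite[\S 3]{Ayo14} the localized category $\mathcal{DA}^{et}(-,\Lambda)$ satisfies \'etale (hyper)descent, hence $f^*$ is conservative for any \'etale covering, and in particular for any \'etale surjection. Given a jointly surjective family of smooth morphisms, one refines it to a family admitting an \'etale (even Nisnevich-locally-trivial) refinement as follows: each smooth morphism $U_i \to X$ can be covered Zariski-locally on the source by \'etale-over-affine-space charts, and the images still cover $X$; thus the family is refined by a jointly surjective family of \'etale morphisms, which generates the same topology as an honest \'etale covering up to surjectivity. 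Conservativity of $f^*$ along the refined family then follows from \'etale conservativity, and since the original $f_i^*$ sit over these (i.e.\ the refined pullbacks factor through the $f_i^*$), joint conservativity of the $f_i^*$ follows. I expect the only subtlety to be bookkeeping in this refinement argument --- making precise that ``jointly surjective smooth'' can be refined to ``jointly surjective \'etale'' using generic smoothness / the local structure of smooth morphisms --- but this is standard and causes no real difficulty.

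With (O1)--(O4) established for the smooth pre-topology, the statement follows, and --- although not part of the assertion itself --- one then gets for free via Proposition~\ref{PROPSTRONGLYLOCAL} that $\mathbb{DA}^{et}_{S,\Lambda}$ is local w.r.t.\ the smooth pre-topology, so that Main Theorem~\ref{HAUPTSATZ} applies and $\mathbb{DA}^{et}_{S,\Lambda}$ extends to higher Artin stacks (locally of finite type over $S$) taken with respect to the smooth topology.
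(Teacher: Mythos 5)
Your overall reduction to checking (O1)--(O4) is the right strategy and matches the paper's proof, and your cited references for (O1), (O2), (O4) are the correct ones. There are, however, two points worth flagging.

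First, there is a discrepancy with the paper over which axiom is ``new.'' You argue that (O1), (O2), (O4) carry over verbatim and (O3) is the one requiring a fresh argument; the paper's own proof instead states that (O1), (O3), (O4) were already established and that (O2) is the new ingredient, quoting \cite[Lemme~3.4]{Ayo14} together with the \'etale refinement of smooth coverings. Your accounting is actually the more logically coherent one: (O1), (O2), (O4) are statements about a single smooth morphism and so extend automatically from the Nisnevich-smooth case, whereas (O3) is a joint-conservativity statement whose proof in Theorem~\ref{SATZAYOUB3} explicitly leaned on the Nisnevich refinement of the covering. The fact that the paper invokes the \'etale-refinement-of-smooth-coverings fact (Tag~055V) in its discussion --- a fact that is only relevant for a statement about coverings, not about a single $f$ --- strongly suggests the axiom labels got swapped in the published text. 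So you and the paper agree on substance.

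Second, your justification that a jointly surjective family of smooth morphisms is refined by an \'etale covering contains a genuine gap. You argue: cover each $U_i$ Zariski-locally by charts $V_{ij}$ \'etale over $\A^{n}_X$; the images of the $V_{ij}$ still cover $X$; therefore we have refined the family by a jointly surjective \'etale family. The conclusion does not follow: the composite $V_{ij}\to \A^{n}_X\to X$ is smooth of relative dimension $n$, not \'etale, so you have only produced another jointly surjective \emph{smooth} family. What one actually needs is the fact that a smooth and surjective morphism admits a section \'etale-locally on the base (this is exactly \cite[Tag~055V]{SP}, which the paper cites); applying this to the disjoint union $\coprod U_i\to X$ produces an honest \'etale covering $\{V_j\to X\}$ together with factorizations $V_j\to U_{i(j)}\to X$, and then joint conservativity of the $f_i^*$ follows from \'etale conservativity for $\mathcal{DA}^{et}$ as you say. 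The conclusion you want is standard and true, but the argument you wrote down for it doesn't work; replace the ``\'etale-over-$\A^n$ charts'' reasoning by the existence-of-sections result.
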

\begin{proof}
The verification (O1) and (O3--4) has been done in \ref{SATZAYOUB3} for all smooth morphisms already. (O2) is \cite[Lemme~3.4]{Ayo14} taking into account that a smooth covering has an \'etale refinement \cite[Tag~055V]{SP}. 
\end{proof}

Let $\mathcal{S}$ the category of quasi-affine schemes of finite type over $S$. Note that  $\mathcal{S}$ has a small skeleton, hence w.l.o.g.\@ $\mathcal{S}$ is small. 
Equip $\mathcal{S}$ with the smooth pre-topology, or with the Nisnevich-smooth pre-topology, respectively (according to whether axiom (O3) holds for Nisnevich coverings or for all etale (hence smooth) coverings). 
Recall that we denoted $\SSS^{\hcor, (\infty)} := \HH^{\cor, (\infty)}(\mathcal{M})$ the 2-pre-multiderivator associated with
 a category of fibrant objects $\mathcal{M}$  representing the \v{C}ech localization of simplicial pre-sheaves on $\mathcal{SCH}_S$, restricted to higher geometric stacks. Actually, this category is the same whether we replace $\mathcal{S}$ with all schemes of finite type over $S$, or --- in the case of the smooth pre-topology --- even with algebraic spaces of finite type over $S$. 

\begin{SATZ}\label{SATZEXTAYOUB}
There exists a symmetric derivator six-functor-formalism with domain $\Cat$
\[ \mathbb{SH}_\mathcal{M}^{T, (\infty)} \rightarrow \SSS^{\hcor, (\infty)}\]
on higher geometric stacks which is an admissible extension of the symmetric derivator six-functor-formalism
\[ \mathbb{SH}_\mathcal{M}^{T} \rightarrow \SSS^{\cor} \]
from Theorem~\ref{SATZAYOUB3}. 
\end{SATZ}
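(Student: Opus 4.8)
The plan is to deduce Theorem~\ref{SATZEXTAYOUB} by combining the input result that the six-functor-formalism $\mathbb{SH}_{\mathcal{M}}^T$ is strongly local (Theorem~\ref{SATZAYOUB3}, or Theorem~\ref{SATZAYOUB4} in the case of the smooth pre-topology on $\mathbb{DA}^{et}$) with the Main Theorem~\ref{HAUPTSATZ}. The only genuine work is verifying that $\mathbb{SH}_{\mathcal{M}}^T$ meets all the hypotheses of Main Theorem~\ref{HAUPTSATZ}, namely: that it is an infinite symmetric fibered multiderivator over $\SSS^{\cor}$ with domain $\Cat$, with stable, well-generated fibers, that the site $\mathcal{S}$ (quasi-affine, or equivalently quasi-projective, schemes of finite type over $S$) is geometric in the sense of Definition~\ref{DEFGEOMETRIC}, and that $\mathbb{SH}_{\mathcal{M}}^T$ is local w.r.t.\@ the chosen pre-topology in the sense of Definition~\ref{DEFLOCAL6FU}.

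First I would recall from Theorem~\ref{KORAYOUB} that $\mathbb{SH}_{\mathcal{M}}^T \to \SSS^{\cor}$ is a symmetric fibered multiderivator with domain $\Cat$ which is infinite and has stable, well-generated fibers; this is exactly item~3 of that theorem, so nothing new is needed here. Second, I would invoke the geometricity of $\mathcal{S}$: the category of quasi-affine (equivalently quasi-projective) schemes of finite type over $S$ with its (étale, Nisnevich, or smooth) pre-topology satisfies the axioms of Definition~\ref{DEFGEOMETRIC} --- fibre products are representable in the relevant sense and one has the sheafification/geometric-realization functor $\iota L$ used in Appendix~\ref{APPENDIXGEOMETRIC}; this is a standard fact about these sites and I would cite the appendix. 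Third, and this is the crux, I would feed Theorem~\ref{SATZAYOUB3} (resp.\@ Theorem~\ref{SATZAYOUB4}) into Proposition~\ref{PROPSTRONGLYLOCAL}, which states precisely that ``strongly local implies local''. Thus $\mathbb{SH}_{\mathcal{M}}^T$ is local w.r.t.\@ the Nisnevich-smooth pre-topology (and, when conservativity for étale covers holds, e.g.\@ for $\mathbb{DA}^{et}$, w.r.t.\@ the smooth pre-topology); in either case axioms (H1$''$), (H2), (H3) for $\DD^!$, axioms (C1$''$), (C2), (C3) for $\DD^*$, and axiom (CH) all hold.

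With all hypotheses of Main Theorem~\ref{HAUPTSATZ} in place, that theorem produces directly an infinite symmetric fibered multiderivator $\DD^{(\infty)} \to \SSS^{\hcor,(\infty)}$ with domain $\Cat$, with stable, well-generated fibers, which is an admissible extension of $\DD = \mathbb{SH}_{\mathcal{M}}^T$ in the sense of Definition~\ref{DEFADMEXT}; I would simply set $\mathbb{SH}_{\mathcal{M}}^{T,(\infty)} := \DD^{(\infty)}$. The admissibility clauses of Definition~\ref{DEFADMEXT} (equivalence of the pull-backs along $\mathbb{M}^{(\infty)} \to \SSS^{\hcor,(\infty)}$ and its opposite with the naive extensions of $\DD^!$ and $\DD^*$, and equivalence of the pull-back along $\SSS^{\cor} \to \SSS^{\hcor,(\infty)}$ with $\DD$) are exactly items~1 and~2 asserted in Main Theorem~\ref{HAUPTSATZ}, so the statement follows. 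Finally, for the last remark of the section --- that $\mathbb{SH}_{\mathcal{M}}^T(\Delta)_{\widetilde{X}_\bullet^{\op}}$ is a value of Ayoub's algebraic derivator --- one combines item~1 of Theorem~\ref{KORAYOUB} with the equivalence $\DD^{(\infty)}(\cdot)_X \cong \DD(\Delta)_{\widetilde{X}_\bullet^{\op}}^{\cocart}$ from Main Theorem~\ref{HAUPTSATZ}.

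The main obstacle, such as it is, is purely bookkeeping: one must check carefully that the pre-topology chosen on the small site $\mathcal{S}$ of quasi-affine schemes is exactly the one for which Theorem~\ref{SATZAYOUB3} establishes strong locality --- in particular that passing to a small skeleton and restricting Ayoub's formalism (originally defined on all quasi-projective schemes) to quasi-affine schemes does not disturb the verification of axioms (O1)--(O4), since those axioms concern only morphisms appearing in coverings and their base changes, all of which stay within the chosen site. There is also the minor point of matching the ``degree'' conventions (e.g.\@ that higher Artin stacks locally of finite type over $S$ are exactly the higher geometric stacks for the smooth pre-topology on this site), which is handled by the discussion surrounding Remark~\ref{BEMTV} and the examples there. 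Once these identifications are made, the theorem is a formal consequence of the machinery already developed.
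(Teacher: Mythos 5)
Your proposal is correct and matches the paper's intended (but unwritten) proof exactly: the theorem is stated without a proof block precisely because it is a direct application of Main Theorem~\ref{HAUPTSATZ}, with the hypotheses supplied by Theorem~\ref{KORAYOUB} (infinite symmetric fibered multiderivator with stable, well-generated fibers), Theorems~\ref{SATZAYOUB3}/\ref{SATZAYOUB4} together with Proposition~\ref{PROPSTRONGLYLOCAL} (strongly local $\Rightarrow$ local), and the geometricity of the site (handled by Appendix~\ref{APPENDIXGEOMETRIC}). You also correctly identify the only point needing care, namely that the pre-topology on the small site of quasi-affine schemes is exactly the one for which strong locality was verified.
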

Recall that ``admissible extension'' means, in particular, that $\mathbb{SH}_\mathcal{M}^{T, (\infty)}$ restricts to $\mathbb{SH}_\mathcal{M}^{T} \rightarrow \SSS^{\cor}$, i.e.\@ on quasi-affine schemes of finite type over $S$, and that the fibers in the underlying category, as well as the pull-back and push-forward functors between them, are given by 
the ``naive'' extensions constructed via homological and cohomological descent (the latter existing even on {\em all} simplicial pre-sheaves).

\appendix

\section{Geometric sites}\label{APPENDIXGEOMETRIC}

\begin{PAR}\label{PARINTROGEOM}
Let $\mathcal{S}$ be a small category with finite limits and Grothendieck (pre)topology. 
Let $\mathcal{M} = \mathcal{SET}^{\mathcal{S}^{\op} \times \Delta^{\op}}$ equipped with the \v{C}ech weak equivalences $\mathcal{W}$.
Let $\mathcal{SH}$ be the category of ordinary sheaves of sets on $\mathcal{S}$.
We have  the adjunctions
\[ \xymatrix{  \mathcal{SET}^{\mathcal{S}^{\op}\times \Delta^{\op}}  \ar@<-2pt>[rr]_{\pi_0} && \ar@<-2pt>[ll]_{c} \mathcal{SET}^{\mathcal{S}^{\op}}  \ar@<-2pt>[rr]_{L} && \ar@<-2pt>[ll]_{\iota} \mathcal{SH}  }\] 
in which $\pi_0$ and  $L$ (sheafification) are the left adjoints. There is the
(extension of the) Yoneda embedding
\[ h: \mathcal{S}^{\amalg} \rightarrow \mathcal{SET}^{\mathcal{S}^{\op}}. \]
We denote by $\mathcal{SH}^{(0)}$, and $\mathcal{M}^{(0)}$, respectively, the (homotopy) strictly full subcategories of $\mathcal{SH}$, and $\mathcal{M}$, respectively, containing the objects in the image of 
the Yoneda embedding $ch$ (resp.\@ $Lh$). Consider $\mathcal{M}^{(0)}$ as category with weak equivalences with the restriction of the \v{C}ech weak equivalences $\mathcal{W}^{(0)}$ . 
\end{PAR}

\begin{LEMMA}\label{LEMMAEQUIVSHEAVES}
The functors $L \pi_0$ and $c \iota$ restrict to equivalences of categories with weak equivalences
\[ (\mathcal{M}^{(0)}, \mathcal{W}^{(0)}) \cong (\mathcal{SH}^{(0)}, \mathrm{Iso}(\mathcal{SH}^{(0)})) . \]
\end{LEMMA}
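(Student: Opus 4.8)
The plan is to establish the equivalence by checking that the two adjunctions restrict appropriately to the indicated subcategories and that on these subcategories the \v{C}ech weak equivalences become isomorphisms of sheaves. First I would observe that $\mathcal{M}^{(0)}$ consists, by definition, of the objects weakly equivalent to $c h(X)$ for $X \in \mathcal{S}^\amalg$ (or rather of objects of the form $L h$ in the sheaf picture, cf.\@ \ref{PARINTROGEOM}), so it suffices to understand $L\pi_0$ and $c\iota$ on these. The composite $L\pi_0 c$ is naturally isomorphic to the sheafification functor $\mathcal{SET}^{\mathcal{S}^{\op}} \to \mathcal{SH}$, since $\pi_0 c \cong \mathrm{id}$ on constant simplicial pre-sheaves; hence $L\pi_0$ sends $c h(X)$ to $L h(X) \in \mathcal{SH}^{(0)}$, i.e.\@ $L\pi_0$ does restrict to a functor $\mathcal{M}^{(0)} \to \mathcal{SH}^{(0)}$. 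In the other direction, $c\iota$ applied to a representable sheaf $L h(X)$ lands in $\mathcal{M}^{(0)}$ essentially by definition. So both restricted functors are well-defined up to the homotopy-strict-fullness of the subcategories.

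Next I would verify the two triangle-type statements. On one side, for a sheaf $F \in \mathcal{SH}^{(0)}$ the counit/adjunction composite $L\pi_0 c \iota F \to F$ is the sheafification of the underlying pre-sheaf of $\iota F$, which is already a sheaf, hence an isomorphism; this gives $L\pi_0 \circ c\iota \cong \mathrm{id}_{\mathcal{SH}^{(0)}}$ on the nose (as functors to the category of sheaves with isomorphisms as weak equivalences). On the other side, for $X \in \mathcal{M}^{(0)}$, say $X$ weakly equivalent to $c h(Y)$, one has $c\iota L\pi_0 X$ weakly equivalent (via the unit maps) to $c\iota L\pi_0 c h(Y) = c\iota L h(Y)$, and the natural map $c h(Y) \to c\iota L h(Y)$ is a \v{C}ech weak equivalence because $h(Y) \to \iota L h(Y)$ is a local isomorphism of pre-sheaves (sheafification is a local isomorphism) and $c$ sends local isomorphisms of sets to \v{C}ech weak equivalences of simplicial pre-sheaves. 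Thus $c\iota \circ L\pi_0$ is weakly equivalent to the identity on $\mathcal{M}^{(0)}$, which is exactly what is needed for an equivalence of categories with weak equivalences.

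Finally I would check the statement about weak equivalences: a morphism in $\mathcal{M}^{(0)}$ is a \v{C}ech weak equivalence if and only if its image under $L\pi_0$ is an isomorphism in $\mathcal{SH}^{(0)}$. One direction is automatic since $L\pi_0$ preserves \v{C}ech weak equivalences (it is a left Quillen-type functor inverting them, being the composite of $\pi_0$ and sheafification). For the converse, use that on $\mathcal{M}^{(0)}$, where all objects are $0$-truncated up to weak equivalence, a \v{C}ech weak equivalence between (coproducts of) representables is detected on $\pi_0$-sheaves; this is where the hypothesis that $\mathcal{S}$ is geometric enters, guaranteeing that $\mathcal{M}^{(0)}$ is closed under homotopy pull-back and that representable pre-sheaves behave well under the localization (compare Lemma~\ref{LEMMAWEM0}). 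I expect the main obstacle to be precisely this last point: pinning down that a \v{C}ech weak equivalence between objects of $\mathcal{M}^{(0)}$ is the same as an isomorphism after applying $L\pi_0$, i.e.\@ that the homotopy type of an object of $\mathcal{M}^{(0)}$ is completely captured by its associated sheaf of connected components. Once that is in hand, the equivalence of categories with weak equivalences follows formally from the two triangle identities established above.
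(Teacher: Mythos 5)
Your plan matches the paper's argument for the substantive part: both approaches observe that $L\pi_0$ sends \v{C}ech weak equivalences to isomorphisms (so both functors are functors of categories with weak equivalences), that the counit $L\pi_0 c\iota F \to F$ is an isomorphism since $c$ and $\iota$ are fully faithful, and that the unit $ch(Y) \to c\iota L\pi_0 ch(Y) = c\iota L h(Y)$ is a \v{C}ech weak equivalence because $h(Y) \to \iota L h(Y)$ is a local isomorphism; then one transports this to all of $\mathcal{M}^{(0)}$ using preservation of weak equivalences. Up to this point the proposal is correct and follows the paper.

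The final paragraph is a misstep on two counts. First, it is not needed: an ``equivalence of categories with weak equivalences'' (in the sense used in Lemma~\ref{LEMMAFUNCTHOCOR}) requires only that both functors preserve weak equivalences and that the two composites are naturally weakly equivalent to the identity. The ``converse'' that a morphism in $\mathcal{M}^{(0)}$ whose image under $L\pi_0$ is an isomorphism must itself be a \v{C}ech weak equivalence is a \emph{consequence} of what you already established (apply 2-out-of-3 to the naturality square of the unit, which you have shown to be object-wise a weak equivalence), not something to prove separately. Second, your appeal to geometricity of $\mathcal{S}$ there is out of place: this lemma neither assumes nor can assume geometricity, since it precedes Definition~\ref{DEFGEOMETRIC} and is in fact used to justify the equivalence of the two defining conditions in that very definition. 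Geometricity is only invoked in Lemma~\ref{LEMMAWEM0}. Drop the last paragraph and the proof stands.
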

\begin{proof}
By definition $\mathcal{M}^{(0)}$ is the full subcategory of objects weakly equivalent to those of the form $ch(Y)$ for $Y \in \mathcal{S}^{\amalg}$. 
We know that $\pi_0$ maps \v{C}ech weak equivalences  to local isomorphisms hence $L \pi_0$ maps those to isomorphisms. Hence $L \pi_0$ and $c \iota$ are functors of categories with weak equivalences.  

The functors $c$ and $\iota$ are fully faithful, hence both counits are isomorphisms.
We claim that the unit
\[  X \rightarrow  c \iota L \pi_0 X  \]
is a \v{C}ech weak equivalence for $X$ in $\mathcal{M}^{(0)}$. Indeed, the unit $ch(Y) \rightarrow c \pi_0 c h(Y)$ is an isomorphism and
the morphism $Y \rightarrow \iota LY$ is mapped by $c$ to a \v{C}ech weak equivalence for all $Y \in \mathcal{SET}^{\mathcal{S}^{\op}}$ \cite[Theorem A.6]{DHI04}. Hence the unit is a \v{C}ech weak equivalence for all $X \in ch(\mathcal{S}^{\amalg})$. Because $L \pi_0$ and $c \iota$ preserve  weak equivalences this is the case also for all $X \in  \mathcal{M}^{(0)}$. 
\end{proof}

\begin{DEF}\label{DEFGEOMETRIC}
In the setting of \ref{PARINTROGEOM}
we will say that $\mathcal{S}$ is {\bf geometric}, if it is subcanonical, and the following equivalent conditions hold
\begin{enumerate}
\item For every $X \in \mathcal{SH}^{(0)}$ the diagonal $X \rightarrow X \times X$ is representable, i.e.\@ for all $Y \in \mathcal{S}$ and morphisms $Lh(Y) \rightarrow X \times X$ the fiber product $Lh(Y) \times_{X \times X} X$ is representable. 
\item For every $X \in \mathcal{M}^{(0)}$ the diagonal $X \rightarrow X \times X$ is representable, i.e.\@ for all $Y \in \mathcal{S}$ and morphisms $ch(Y) \rightarrow X \times X$ the homotopy fiber product $ch(Y) \widetilde{\times}_{X \times X} X$ is representable. 
\end{enumerate}
\end{DEF}
The equivalence of the two statement follows immediately from Lemma~\ref{LEMMAEQUIVSHEAVES}.

\begin{LEMMA}\label{LEMMAWEM0}
In the setting of \ref{PARINTROGEOM} assume that $\mathcal{S}$ is geometric. 
\begin{enumerate}
\item For $I \in \Inv$ with finite matching diagrams.  Every object $X \in (\mathcal{SH}^{(0)})^I$ (considered as subcategory of $\mathcal{SET}^{\mathcal{S}^{\op} \times \Delta^{\op} \times I}$), and hence every object in $(\mathcal{M}^{(0)})^I$, is weakly equivalent to an object in $(\mathcal{S}^{\amalg})^I$.
\item
Every morphism in $\mathcal{SH}^{(0)}$ (considered as subcategory of $\mathcal{SET}^{\mathcal{S}^{\op}\times \Delta^{\op}}$), and hence every morphism in $\mathcal{M}^{(0)}$, is weakly equivalent to a morphism in $\mathcal{S}^{\amalg}$. 
\item $\mathcal{M}^{(0)}$ is closed under homotopy pull-back. 
\end{enumerate}
\end{LEMMA}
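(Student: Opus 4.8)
The plan is to reduce the whole statement to a question about ordinary sheaves of sets via Lemma~\ref{LEMMAEQUIVSHEAVES}, and then to combine the geometricity hypothesis with the fact that the objects of $\mathcal{S}$ are ``small'' (quasi-compact, of finite type). By Lemma~\ref{LEMMAEQUIVSHEAVES} the functors $L\pi_0$ and $c\iota$ are mutually inverse equivalences of categories with weak equivalences $(\mathcal{M}^{(0)},\mathcal{W}^{(0)})\cong(\mathcal{SH}^{(0)},\mathrm{Iso}(\mathcal{SH}^{(0)}))$; since the sheaf condition only involves $\mathcal{S}$, the same holds objectwise for the diagram categories indexed by any $I$. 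Using in addition that $ch(C)\to c\iota Lh(C)$ is a \v{C}ech weak equivalence for every $C\in\mathcal{S}^\amalg$ (as in the proof of Lemma~\ref{LEMMAEQUIVSHEAVES}, via \cite[Theorem~A.6]{DHI04}), it suffices to prove: (1$'$) every $X\in(\mathcal{SH}^{(0)})^I$ with $I\in\Invf$ (more precisely: inverse with finite matching diagrams) is isomorphic to the image of an object of $(\mathcal{S}^\amalg)^I$; (2$'$) every morphism of $\mathcal{SH}^{(0)}$ is isomorphic, as an arrow, to the image of a morphism of $\mathcal{S}^\amalg$; and (3$'$) $\mathcal{SH}^{(0)}$ is closed under fibre products. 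For the passage from (3$'$) to part~3 one observes that objects of $\mathcal{M}^{(0)}$ are $0$-truncated, so that their homotopy fibre product in $\mathcal{M}$ is computed as the strict fibre product of the associated sheaves of sets, which by (3$'$) lies again in $\mathcal{SH}^{(0)}$; then \ref{PARHSF} applies.

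For (3$'$) I would first record that every object of $\mathcal{SH}^{(0)}$ is, up to isomorphism, of the form $Lh(C)$ with $C\in\mathcal{S}^\amalg$, hence a small coproduct $\coprod_c h(W_c)$ of representables with $W_c\in\mathcal{S}$, and that $\mathcal{S}^\amalg$ has finite limits with $Lh$ preserving them (as $Lh$ preserves coproducts, $h$ and sheafification preserve finite limits, and in both $\mathcal{S}^\amalg$ and the topos $\mathcal{SH}$ finite products distribute over coproducts). Then, given $X\to Z\leftarrow Y$ in $\mathcal{SH}^{(0)}$, write $X\times_Z Y=(X\times Y)\times_{Z\times Z}Z$ via the diagonal of $Z$; write $X\times Y\cong\coprod_w h(W_w)$ with $W_w\in\mathcal{S}$; since fibre products commute with coproducts in $\mathcal{SH}$ one obtains $X\times_Z Y\cong\coprod_w\left(h(W_w)\times_{Z\times Z}Z\right)$, and each factor is representable by Definition~\ref{DEFGEOMETRIC}(1). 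Hence $X\times_Z Y\in\mathcal{SH}^{(0)}$, and in particular $\mathcal{SH}^{(0)}$ has all finite limits.

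For (2$'$) let $f\colon X\to Y$ with, w.l.o.g., $Y=Lh(\coprod_b B_b)$ and $X=Lh(\coprod_a A_a)$, all $A_a,B_b\in\mathcal{S}$. Each component $h(A_a)\to\coprod_b h(B_b)$ pulls the complemented subobjects $h(B_b)\subseteq\coprod_{b'}h(B_{b'})$ back to a decomposition of $h(A_a)$ into complemented subobjects; because $A_a$ is quasi-compact and of finite type only finitely many are non-empty, and — for the topologies under consideration — each is represented by a clopen subscheme $A_{a,c}\in\mathcal{S}$ together with a map $A_{a,c}\to B_{b(c)}$. Then $(\{A_{a,c}\}_{a,c})\in\mathcal{S}^\amalg$ satisfies $Lh(\{A_{a,c}\})\cong X$, and the maps $A_{a,c}\to B_{b(c)}$ assemble to a morphism in $\mathcal{S}^\amalg$ whose image is isomorphic to $f$. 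This step — that a representable mapping to a coproduct of representables decomposes into finitely many representable pieces — is the only part of the proof that is not purely formal; it is where the specific nature of the site (quasi-compactness and finite type of its objects, and the coincidence of complemented subobjects of representables with clopen subschemes) must be used, and it is the main obstacle.

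Finally, for (1$'$) I would fix a degree function witnessing that $I$ is inverse and build $\widetilde X\colon I\to\mathcal{S}^\amalg$ with $Lh\widetilde X\cong X$ by induction on the degree. Having constructed $\widetilde X$ on the objects of degree $<n$ compatibly with $X$, for each $i$ of degree $n$ form the matching object $M_i\widetilde X\in\mathcal{S}^\amalg$ (a finite limit, available by the second paragraph and the finiteness of the matching diagrams); since $Lh$ preserves finite limits, $Lh(M_i\widetilde X)\cong M_i X$. The diagram $X$ supplies a morphism $X(i)\to M_i X$ with $X(i)\in\mathcal{SH}^{(0)}$, and by (2$'$) this lifts to a morphism $\widetilde X(i)\to M_i\widetilde X$ in $\mathcal{S}^\amalg$ with $Lh\widetilde X(i)\cong X(i)$; no earlier choices need revision since all non-identity maps of $I$ decrease degree. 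This completes the induction and hence (1$'$); part~2 is the special case $I=\Delta_1\in\Invf$ (also used in the induction), and part~3 follows from (3$'$) together with the first paragraph. Everything except the third paragraph is formal once geometricity and Lemma~\ref{LEMMAEQUIVSHEAVES} are in hand.
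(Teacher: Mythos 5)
Your approach departs from the paper's in a way that introduces a genuine gap. The lemma is stated for an arbitrary small site with finite limits and a geometric Grothendieck pre-topology in the sense of Definition~\ref{DEFGEOMETRIC} — there is no assumption that objects of $\mathcal{S}$ are quasi-compact, of finite type, or that they admit any kind of clopen-subobject decomposition. Your entire reduction hinges on step $(2')$: that every morphism of sheaves between objects of $\mathcal{SH}^{(0)}$ is isomorphic, as an arrow, to the $Lh$-image of a morphism in $\mathcal{S}^\amalg$. As you yourself flag, this needs the fact that the preimages in $h(A)$ of the summands of a coproduct of representables are representable (by ``clopen subschemes'') and that only finitely many are nonempty. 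Neither claim is available in the paper's generality — geometricity only controls fibre products along diagonals — and there are sites where $(2')$ simply fails. Since your $(1')$ induction at each degree invokes $(2')$ to lift $X(i)\to M_i X$ to $\mathcal{S}^\amalg$, the whole inductive construction is blocked.

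The paper avoids this entirely by being less ambitious: it does not produce a sheaf isomorphism $X \cong Lh(\widetilde X)$, only a levelwise \emph{local isomorphism} $F\Rightarrow X$ with $F\in(\mathcal{S}^\amalg)^I$, which is all the statement of the lemma requires (a local isomorphism is a \v{C}ech weak equivalence). Concretely, one applies Proposition~\ref{PROPCOVERDIRECTED} with $\mathcal{S}_0=\mathcal{S}^\amalg$, $\overline{C}=$ local isomorphisms, and ``$\mathcal{S}$'' the geometric presheaves admitting a local isomorphism from $\mathcal{S}^\amalg$. The inductive step then only needs to \emph{cover} the relative matching object $M_i(\mu)$ by a coproduct of representables via a local isomorphism — which is automatic from the definition of that subcategory — rather than to \emph{represent} a sheaf-theoretic fibre of a map into a coproduct, which is what your $(2')$ demands. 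Geometricity enters only to ensure the subcategory is closed under pull-back along $\overline{C}$. Your step $(3')$ (closure of $\mathcal{SH}^{(0)}$ under fibre products) and the passage from $(3')$ to part~3 via $0$-truncatedness are fine, and they essentially reproduce what geometricity buys, but they cannot be bootstrapped into $(2')$. If you weaken your $(1')$–$(2')$ from ``isomorphic'' to ``related by a local isomorphism'' and run the same matching-object induction, you recover the paper's argument.
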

\begin{proof}
1.\@ The unit $X \rightarrow c \iota L \pi_0 X$ is a \v{C}ech weak equivalence for all $X \in \mathcal{M}^{(0)}$, we may thus assume w.l.o.g.\@ that $X \in (\mathcal{SH}^{(0)})^I$.
Now apply Proposition~\ref{PROPCOVERDIRECTED} to ($\mathcal{S}$, $\mathcal{S}_0$, $\overline{C}$) in which $\mathcal{S}$ is the full subcategory of $\mathcal{SET}^{\mathcal{S}^{\op}}$ consisting of geometric pre-sheaves $X$ (i.e.\@ with the property that $X \rightarrow X\times X$ is representable) which have the property that there is a local isomorphism
$\widetilde{X} \rightarrow X$ with $\widetilde{X} \in \mathcal{S}^\amalg$, in which $\mathcal{S}_0 := \mathcal{S}^{\amalg}$, and $\overline{C}$ is the class consisting of the local isomorphisms. 
By assumption $\mathcal{SH}^{(0)} \subset \mathcal{S}$.
We have to see that $\mathcal{S}$ admits pull-back along 
morphisms in $\overline{C}$. Indeed, $\mathcal{S}$ is closed under pull-back. 
For consider a diagram in $\mathcal{SET}^{\mathcal{S}^{\op}}$  with Cartesian squares in which $\widetilde{Z}$ and $\widetilde{X}$ are in $\mathcal{S}^{\amalg}$:
\[ \xymatrix{ \widetilde{W} \ar[r]^{\mathcal{W}} \ar[d]_{\mathcal{W}} & \Box \ar[d]^{\mathcal{W}} \ar[r] & \widetilde{Z} \ar[d]^{\mathcal{W}}  \\
\Box \ar[r]_{\mathcal{W}} \ar[d] & W \ar[r] \ar[d] & Z \ar[d]\\
 \widetilde{X} \ar[r]_{\mathcal{W}} & X \ar[r] & Y  } \]
 Then $\widetilde{W}$ is in $\mathcal{S}^{\amalg}$ because $Y$ is geometric. One checks that also geometricity is closed under pull-back and thus $W \in \mathcal{S}$. 

2.\@ follows from 1.\@ applied to $I = \rightarrow$.

3.\@ follows from 1.\@ applied to $I = \righthalfcup$ and the fact that $ch: \mathcal{S}^\amalg \hookrightarrow \mathcal{M}$ commutes with (homotopy) fiber products. 
\end{proof}

\begin{LEMMA}
In the setting of \ref{PARINTROGEOM} assume that the topology is subcanonical.
Let $\kappa$ be a family of Cardinals (closed under decomposition) and assume that $\mathcal{S}$ is $\kappa$-superextensive, i.e.\@
\begin{enumerate}
\item  Every covering in $\mathcal{S}$ contains a subcovering with Cardinality in $\kappa$.
\item  $\mathcal{S}$ is $\kappa$-extensive and the pre-topology is finer than the $\kappa$-extensive topology (i.e.\@ coproduct covers of Cardinality in $\kappa$ are coverings). 
\end{enumerate}
Then $\mathcal{S}$ is geometric. 
\end{LEMMA}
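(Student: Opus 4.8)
By Lemma~\ref{LEMMAEQUIVSHEAVES} and the remark following Definition~\ref{DEFGEOMETRIC}, it suffices to verify condition~(1): for every $X\in\mathcal{SH}^{(0)}$, every $Z\in\mathcal{S}$, and every $(g_1,g_2)\colon h(Z)\to X\times X$, the pullback of the diagonal $X\to X\times X$ --- that is, the equalizer $E:=\mathrm{eq}\bigl(g_1,g_2\colon h(Z)\rightrightarrows X\bigr)$ --- lies in $\mathcal{SH}^{(0)}$. Write $X\cong Lh(Y)$ with $Y=\coprod_{i\in I}Y_i\in\mathcal{S}^\amalg$. Since the topology is subcanonical, each $h(Y_i)$ is a sheaf, $h$ is fully faithful, and $Lh(Y_i)=h(Y_i)$; since sheafification preserves colimits, $X\cong\coprod_{i\in I}h(Y_i)$ in the topos $\mathcal{SH}$, and (coproducts being disjoint in a topos) the $h(Y_i)\hookrightarrow X$ are pairwise-disjoint subobjects, so $h(Y_i)\times_X h(Y_j)=\emptyset$ for $i\neq j$. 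Finally, $h\colon\mathcal{S}^\amalg\to\mathcal{SET}^{\mathcal{S}^{\op}}$ preserves finite limits (presheaf coproducts are disjoint and universal), hence so does $Lh=L\circ h$.

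To prove $E\in\mathcal{SH}^{(0)}$ I would argue as follows. The family $\{h(Y_i)\to X\}_i$ is jointly epimorphic, so its pullbacks along $g_1$ and along $g_2$ are jointly epimorphic families of subsheaves of $h(Z)$; refining the two covering sieves of $Z$ they generate to a common pretopology covering and passing, by $\kappa$-superextensivity, to a subcovering of cardinality in $\kappa$, we get $\{Z_a\to Z\}_{a\in A}$ with $|A|\in\kappa$ such that for each $a$ both $g_1|_{Z_a}$ and $g_2|_{Z_a}$ factor through single components $h(Y_{i(a)})$ and $h(Y_{j(a)})$. Put $c:=(i,j)\colon A\to I\times I$, so $|c(A)|\in\kappa$ (as $\kappa$ is closed under decomposition). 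If $c(a)\neq c(b)$, then $h(Z_a\times_Z Z_b)\to X$ (induced by $g_1$ or by $g_2$) factors through $h(Y_{i(a)})\times_X h(Y_{i(b)})=\emptyset$ or $h(Y_{j(a)})\times_X h(Y_{j(b)})=\emptyset$; since the topology is subcanonical and $\mathcal{S}$ has an initial object (being $\kappa$-extensive), this forces $Z_a\times_Z Z_b\cong 0$. Hence the subfamilies of $\{Z_a\to Z\}$ indexed by distinct fibres of $c$ are pairwise disjoint over $Z$, and the two pullbacks to $\widetilde Z\times_Z\widetilde Z$ (where $\widetilde Z:=\coprod_a Z_a\in\mathcal{S}$) of the coproduct decomposition $\widetilde Z=\coprod_{\lambda\in c(A)}\bigl(\coprod_{c(a)=\lambda}Z_a\bigr)$ coincide. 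Because $\mathcal{S}$ is $\kappa$-extensive and the pretopology is finer than the $\kappa$-extensive topology, $\kappa$-ary coproduct decompositions satisfy descent along coverings, so this decomposition descends to $Z$: we obtain $Z\cong\coprod_{\lambda\in c(A)}Z^\lambda$ in $\mathcal{S}$, $\lambda=(k,l)$, over which $g_1$ factors through $h(Y_k)$ and $g_2$ through $h(Y_l)$ (by the epi-mono factorisation in the topos). Restricting, $E\cong\coprod_\lambda E^\lambda$ with $E^\lambda=\emptyset$ when $k\neq l$ (disjointness), and, when $k=l$, $g_1|_{Z^\lambda},g_2|_{Z^\lambda}$ are a pair of morphisms $Z^\lambda\rightrightarrows Y_k$ in $\mathcal{S}$ by Yoneda, so $E^\lambda\cong h\bigl(Z^\lambda\times_{Y_k\times Y_k}Y_k\bigr)$ using that $\mathcal{S}$ has finite limits and $Lh$ preserves them. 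Therefore $E\in\mathcal{SH}^{(0)}$.

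The step I expect to require the most care is the descent statement that the $\kappa$-indexed disjoint decomposition of $\widetilde Z$ assembled from the fibres of $c$ actually arises from a coproduct decomposition of $Z$ in $\mathcal{S}$; this is precisely where both ingredients of $\kappa$-superextensivity --- $\mathcal{S}$ being $\kappa$-extensive, and coproduct covers of cardinality in $\kappa$ being coverings --- together with subcanonicity and closure of $\kappa$ under decomposition are used, while the first part ($|A|\in\kappa$) uses only the subcovering clause. Everything else is soft: universality and disjointness of coproducts in the topos $\mathcal{SH}$, and closure of $\mathcal{S}$ under finite limits. I would isolate the descent fact as a short preliminary lemma about $\kappa$-superextensive sites before running the main argument.
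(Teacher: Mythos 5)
Your proof takes a genuinely different and considerably longer route than the paper's, and it hinges on precisely the step you flag as delicate. You decompose the \emph{source} $Z$ into $\kappa$-many pieces indexed by pairs of components of $X$ hit by $g_1,g_2$, and then compute the pullback piecewise. The paper instead reduces the \emph{target}: writing $X = L(\coprod_i h(X_i))$, by part~(1) of $\kappa$-superextensivity there are $\kappa$-sized coverings of $Y_0$ and $Y_1$ over which the two maps land in single summands, identifying a $\kappa$-sized index set $\mathcal I_1 \subset I$; by part~(2), $Lh$ commutes with $\kappa$-coproducts, so $Lh(\coprod_{i\in\mathcal I_1}X_i)\hookrightarrow X$ is a monomorphism of sheaves through which both maps factor (a morphism into a sheaf factors through a subsheaf once it does so after pulling back to a cover). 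The fiber product is then $Lh\bigl(Y_0\times_{\coprod_{\mathcal I_1}X_i}Y_1\bigr)$, a single object of $\mathcal S$. No decomposition of the source, and no descent of decompositions, is needed.

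The gap in your argument is the asserted principle that ``$\kappa$-ary coproduct decompositions satisfy descent along coverings.'' This is not one of the stated axioms and does not follow directly from $\kappa$-extensivity plus ``coproduct covers are coverings'': those say that $\kappa$-coproducts are disjoint, universal, and covering, but assert nothing about a decomposition of a covering object $\widetilde Z$ descending to a decomposition of $Z$. The statement is in fact true under the hypotheses at hand, but needs an argument: in a $\kappa$-extensive category with terminal object $1$, a $\Lambda$-indexed coproduct decomposition of $Z$ (with $|\Lambda|\in\kappa$) is equivalent to a morphism $Z\to\coprod_\Lambda 1$, recovering the pieces by pulling back the summand inclusions; since the topology is subcanonical, $\Hom(-,\coprod_\Lambda 1)$ is a sheaf, so the compatible family $\coprod_a Z_a\to\coprod_\Lambda 1$ produced by your partition (the cocycle condition is exactly what you verified) descends to a unique $Z\to\coprod_\Lambda 1$, giving the decomposition of $Z$. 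You should spell this out; as written, it reads as a citation to a nonexistent standard fact. Since the paper's reduction of the target avoids the descent question altogether and is far shorter, I would recommend that route.
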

\begin{proof}We will show property 1.\@ of the definition of geometric. 
Let $f_0: Lh(Y_{0}) \rightarrow L(\coprod_i h(X_i))$ and $f_1: Lh(Y_{1}) \rightarrow L(\coprod_i h(X_i))$ be two morphisms from a representable into a coproduct of representables. 
By definition of sheafification and 1., there are coverings
$\{ U_j \rightarrow Y_i \}$, of Cardinality in  $\kappa$, such that the morphisms $h(U_j) \rightarrow L(\coprod_i h(X_i))$ factor through one of the $h(X_i)$. By assumption 2.\@ we can form the coproduct over the corresponding indices and obtain
\[ L(\coprod_i h(X_i)) \cong L((\coprod_{i \in \mathcal{I}_0} h(X_i)) \amalg h(\coprod_{i \in \mathcal{I}_1} X_i) ) \]
because, by 2., $Lh: \mathcal{S} \rightarrow \mathcal{SH}$ commutes with coproducts of Cardinality in $\kappa$. 
Obviously the $f_i$ factor through the inclusion 
\[ Lh(\coprod_{i \in \mathcal{I}_1} X_i) \hookrightarrow L(\coprod_i h(X_i))    \]
which is a monomorphism before (and hence also after) sheafification by 2.\@ Hence
\[ Lh(Y_0) \times_{L(\coprod_i h(X_i))} Lh(Y_1) \cong L( h(Y_0) \times_{h(\coprod_{i \in \mathcal{I}_1} X_i)} h(Y_1)) = Lh(Y_0 \times_{\coprod_{i \in \mathcal{I}_1} X_i} Y_1).    \]
\end{proof}

\section{Constructions on inverse diagram categories}\label{APPENDIXINVERSE}

This sections contains some fairly simple constructions of inverse diagrams in a category. This is used, for instance, to construct coherent atlases of higher geometric  stacks in Section~\ref{SECTCOHATLAS}. 

\begin{PAR}We fix the following setting for this section. 
Let $I$ be an inverse diagram (cf.\@ \ref{EXDIA}) and denote by $I_{<n}$, resp.\@ $I_{\le n}$ the full subcategory of objects of degree $< n$, resp.\@ $\le n$.
For $i \in I$ we denote my $M_i$ the matching diagram at $i$, i.e.\@ the subcategory of $i \times_{/I} I$ of non-identity morphisms.  
We assume that $I$ has furthermore {\em finite} matching diagrams.

Let $\mathcal{S}$ be a category with finite products and $F: I \rightarrow \mathcal{S}$ a functor. 
We denote by $M_i(F):=\lim_{M_i} \iota^* F$  where $\iota: M_i \rightarrow I$ is the forgetful functor. We need also a relative matching object $M_i(\mu)$ for a morphism
$\mu: F \Rightarrow G$. If $M_i(F)$ and $M_i(G)$ exist then this is defined by the pullback square
\[ \xymatrix{ 
 M_i(\mu) \ar[r] \ar[d] & G_i \ar[d] \\
 M_i F \ar[r] & M_i G.  } \]
Otherwise denote by 
\[ M_i' := \nabla \left( \vcenter{\xymatrix{ & \cdot \ar@{<-}[d] \\ M_i \ar@{<-}[r] & M_i  }} \right) = \int \left( \vcenter{\xymatrix{  M_i \ar[r]&  i \times_{/I} I }} \right) \]
A morphism $\mu$ defines an obvious functor $\mu': M_i' \rightarrow \mathcal{S}$ and we define $M_i(\mu) := \lim_{M_i'} \mu'$. 
It it is clear that this coincides with the previous definition, if $M_i(F)$ and $M_i(G)$ exist. 
\end{PAR}

\begin{DEF}\label{DEFCMORPH}
Let $\mathcal{S}$ be a category with finite products (thus also with a final object) and
let $C$ be a class of morphisms that is stable under pull-back (i.e.\@ arbitrary pull-backs along morphisms in $C$ exist and are in $C$ again). 

A functor $F: I \rightarrow \mathcal{S}$ is called a {\bf $C$-diagram} if all $M_i(F)$ exist and
the induced morphisms $F_i \rightarrow M_i(F)$ are in $C$. 
A morphism (natural transformation) $\mu: F \Rightarrow G$ is called a {\bf $C$-morphism}, if all
$M_i(\mu)$ exist and the induced morphism
\[ F_i \rightarrow M_i(\mu) \]
is in $C$. A functor $F$ is thus a $C$-diagram if and only if the morphism $F \Rightarrow \cdot$ is a $C$-morphism. 
\end{DEF}

\begin{LEMMA}
Let $\iota: M_i \rightarrow I$ be the forgetful functor, and let $\mu: F \Rightarrow G$ be a $C$-morphism in $\mathcal{S}^I$. Then also $\iota^* F \Rightarrow \iota^* G$ is a $C$-morphism. 
\end{LEMMA}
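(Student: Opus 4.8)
The plan is to reduce the statement, object by object over $M_i$, to the defining condition for a $C$-morphism, and then to compare the matching constructions over $M_i$ with those over $I$. First I would record the harmless preliminary that $M_i$ is itself an inverse diagram with finite matching diagrams: it is finite, being a matching diagram of $I$ (which by hypothesis has finite matching diagrams), and the function sending an object $a\colon i\to j$ of $M_i$ to $\deg(\iota(a)) = \deg(j)$ witnesses that $M_i$ is inverse, since non-identity morphisms of $I$ strictly decrease the degree. Hence the notion of $C$-morphism applies to $\iota^*\mu\colon \iota^*F\Rightarrow\iota^*G$, and it suffices to prove: for every object $a\colon i\to j$ of $M_i$, the relative matching object $M_a(\iota^*\mu)$ exists and the induced morphism $(\iota^*F)(a) = F_j \to M_a(\iota^*\mu)$ lies in $C$.

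The heart of the argument is an identification of indexing categories. For $a\colon i\to j$ an object of $M_i$, let $N_a$ be the matching diagram of $M_i$ at $a$ (the full subcategory of $a\times_{/M_i}M_i$ on non-identity morphisms), and let $M_j$ be the matching diagram of $I$ at $j$. I would construct an isomorphism of categories $\Phi\colon N_a \iso M_j$. An object of $N_a$ is a non-identity morphism $a\to b$ in $M_i$, i.e.\ a non-identity $g\colon j\to k$ in $I$ together with $b := ga\colon i\to k$; here $b$ is a legitimate non-identity object of $M_i$ \emph{automatically}, since $\deg(k)<\deg(j)<\deg(i)$ forces $k\neq i$, and $b$ is determined by $g$ — so $\Phi$ should send this object to $g$. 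On morphisms, a morphism of $N_a$ is given by $\phi\colon b\to b'$ in $M_i$ with $\phi\circ g = g'$, which is exactly a morphism $g\to g'$ in $M_j$, and $\Phi$ sends it to $\phi$. Checking that $\Phi$ is well defined, functorial, and bijective on objects and morphisms is routine. Crucially, $\Phi$ intertwines the diagrams whose limits compute the matching objects: pulling $\iota^*F$ back along the forgetful functor $N_a\to M_i$ and along $\Phi$ gives the same functor $N_a\to\mathcal{S}$ as the restriction of $F$ along the forgetful functor $M_j\to I$ (both send $g\colon j\to k$ to $F_k$), and similarly for $G$. The same transport applies to the auxiliary category $M_j' = \int(M_j \to j\times_{/I}I)$ entering the general recipe $M_j(\mu) = \lim_{M_j'}\mu'$, as well as to the pull-back square $G_j\to M_jG\leftarrow M_jF$ in the case the absolute matching objects exist, since all of these are built functorially out of the matching diagram together with the restricted diagram.

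Granting this comparison, I would conclude as follows. Transport along $\Phi$ yields an isomorphism $M_a(\iota^*\mu)\cong M_j(\mu)$; in particular $M_a(\iota^*\mu)$ exists, because $M_j(\mu)$ does ($\mu$ being a $C$-morphism over $I$). Under this isomorphism the canonical morphism $(\iota^*F)(a) = F_j \to M_a(\iota^*\mu)$ is carried to the canonical morphism $F_j\to M_j(\mu)$, which lies in $C$ by hypothesis; hence so does the former. Since $a$ was an arbitrary object of $M_i$, this shows $\iota^*\mu$ is a $C$-morphism.

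I expect the only genuinely non-formal point to be pinning down $\Phi\colon N_a\cong M_j$ precisely and checking that it intertwines \emph{all} the data in the definition of the relative matching object — including the slightly cumbersome $M_j'$-construction via $\nabla$/$\int$ — and not merely the absolute matching objects $M_j(F)$ and $M_j(G)$. The remaining ingredients (that $M_i$ is inverse with finite matching diagrams, and that limits are preserved under an isomorphism of the indexing category) are bookkeeping.
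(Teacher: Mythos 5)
Your proof is correct and follows essentially the same route as the paper's: both rest on the canonical identification of the matching diagram of $M_i$ at an object $a\colon i\to j$ with the matching diagram $M_j$ of $I$ (and of the corresponding auxiliary categories $M'_{a}$ and $M'_j$), which transports $M_a(\iota^*\mu)\cong M_j(\mu)$ and hence the $C$-condition. The paper simply asserts this identification in one line; you spell out the isomorphism $\Phi$ and its compatibilities, which is the content the paper leaves implicit.
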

\begin{proof}
The diagram $M_{i \rightarrow j}'$ for $M_i$ is isomorphic to $M_j'$ and hence we
have isomorphisms $M_{i \rightarrow j} (\iota^* \mu) \cong M_{j}(\mu)$. The statement follows.  
\end{proof}

\begin{LEMMA}\label{LEMMAACYCLXI}
For $n \in \N_0$ the morphism 
\[ \xi_n: \Xi_{n}:= \nabla \left( \vcenter{ \xymatrix{ 
& \coprod_{|i|=n} \{i\} \ar@{<-}[d]  \\
 I_{<n} \ar@{<-}[r] & \coprod_{|i|=n} M_i  } } \right) \rightarrow I_{\le n} \]
is acyclic.
\end{LEMMA}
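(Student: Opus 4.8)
The statement asserts that the comparison morphism $\xi_n\colon \Xi_n \to I_{\le n}$ is acyclic, where $\Xi_n$ is the category built from $I_{<n}$ by, for each object $i$ of degree $n$, gluing in a new terminal vertex $\{i\}$ over the matching diagram $M_i$ (via the $\nabla$-construction, i.e.\ taking opposites of the Grothendieck construction of the displayed cospan of categories). Recall that a functor is \emph{acyclic} (a weak equivalence in $\Cat$, i.e.\ inverted by every derivator / equivalently inducing an equivalence on nerves) precisely when it satisfies Quillen's Theorem A-type condition: for every object $x$ of the target, the comma fiber $\xi_n \times_{/I_{\le n}} x$ has contractible nerve. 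So the plan is to run Quillen's Theorem A. Since $I_{\le n}$ has objects of degree $\le n$, there are two cases: $x$ of degree $<n$ and $x$ of degree $=n$.

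\textbf{Case analysis via comma categories.} First I would describe $\Xi_n$ concretely: its objects are the objects of $I_{<n}$ together with one new object $v_i$ for each $i$ with $|i|=n$; the morphisms are those of $I_{<n}$, plus, for each $v_i$, a morphism $v_i \to m$ for every object $m$ of $M_i$ (i.e.\ every non-identity arrow $i \to j$ in $I$ with target necessarily of degree $<n$), with composition inherited in the evident way; there are no non-identity morphisms out of any $v_i$ besides these, and none into $v_i$. The functor $\xi_n$ sends $v_i \mapsto i$ and is the identity on $I_{<n}$. Now for $x \in I_{\le n}$ I analyze $\xi_n \times_{/I_{\le n}} x$, whose objects are pairs $(y, y \to x)$ with $y \in \Xi_n$ and $\xi_n(y)\to x$ in $I_{\le n}$. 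If $|x|<n$: the objects are either objects $z$ of $I_{<n}$ with a map $z \to x$, or pairs $(v_i, i \to x)$ with $i \to x$ a (necessarily non-identity) map in $I$, $|i|=n$. This comma category has the object $(x,\id_x)$ as a terminal object — indeed any $(z, z\to x)$ maps canonically to it, and any $(v_i, i \to x)$ maps to it since the arrow $i \to x$ exhibits $x$ as an object of $M_i$, so $v_i \to (\text{that object of }M_i) = x$ is a morphism in $\Xi_n$ over $i \to x = \id_x \circ (i\to x)$. A category with a terminal object has contractible nerve. If $|x|=n$, say $x = i_0$: any map $\xi_n(y) \to i_0$ in $I_{\le n}$ forces $\xi_n(y)$ to have degree $\ge n$ (since $I$ is inverse), hence degree exactly $n$, hence $y = v_{i_0}$ and the map is $\id_{i_0}$; so the comma category is the terminal category $\{\ast\}$, trivially contractible.

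\textbf{Assembling the conclusion.} Having checked that every comma fiber of $\xi_n$ is nonempty with contractible nerve, Quillen's Theorem A yields that $\xi_n$ induces a weak homotopy equivalence on nerves, i.e.\ $\xi_n$ is acyclic in the sense used throughout the paper (it is inverted by every derivator; equivalently, by the Corollary after Proposition~\ref{PROPALTCOR}, it is a weak equivalence in $(\Cat, \mathcal{W}_\infty)$). I would phrase the argument directly in terms of the existence of a final (terminal) object in each comma category rather than invoking nerves, since ``has a terminal object'' $\Rightarrow$ ``acyclic as a category'' $\Rightarrow$ the comma-category criterion is satisfied, which is the cleanest route and avoids any appeal to simplicial machinery beyond what is already implicitly available.

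\textbf{Expected main obstacle.} The genuine work is purely bookkeeping: correctly unwinding the $\nabla$-of-Grothendieck-construction definition of $\Xi_n$ so that the morphisms $v_i \to m$ (for $m \in M_i$) and their compositions with morphisms of $I_{<n}$ are described accurately, and then verifying that the candidate terminal object $(x,\id_x)$ really receives a \emph{unique} morphism from each object of the comma category — in particular that for $(v_i, i \xrightarrow{f} x)$ the morphism to $(x,\id_x)$ corresponding to viewing $x$ as the object $f$ of $M_i$ is the only one, which comes down to the fact that arrows in $\Xi_n$ out of $v_i$ are in bijection with objects of $M_i$ and compatibility with the projection to $I$. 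The inverse-diagram hypothesis (with finite matching diagrams, though finiteness is not actually needed for acyclicity, only for earlier constructions) is used exactly once and cleanly: it forces any morphism in $I$ into an object of degree $n$ to come from an object of degree $\ge n$, pinning down the degree-$n$ case. No estimate or limit computation is involved; the only risk is an indexing slip in the comma-category description.
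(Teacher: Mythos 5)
Your overall strategy — check contractibility of the comma fibers $\xi_n\times_{/I_{\le n}}x$, with the case split on $|x|=n$ versus $|x|<n$ — is the paper's strategy, and your treatment of the degree-$n$ case is correct. The gap is in your description of $\Xi_n$, which makes the degree-$<n$ case fail as written.

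You describe $\Xi_n$ as having only two classes of objects: those of $I_{<n}$ plus one new vertex $v_i$ per $i$ of degree $n$, with new morphisms $v_i\to j$ for each $(i\to j)\in M_i$. This is not the category $\nabla$ produces. The $\nabla$-construction used here is a Grothendieck-type construction on the span $I_{<n}\leftarrow\coprod_{|i|=n}M_i\rightarrow\coprod_{|i|=n}\{i\}$: it has \emph{three} disjoint classes of objects — one for each vertex of the span — namely the objects of $I_{<n}$, the objects of $\coprod M_i$ (i.e.\ the non-identity arrows $i\to j$ themselves, \emph{not} identified with their targets $j$), and the points $v_i$. Moreover the morphisms go from the $I_{<n}$-layer and from the $\{v_i\}$-layer \emph{into} the $\coprod M_i$-layer, not the other way: a morphism $(I_{<n}\ni z)\to(i\to j)$ is a map $z\to j$, and a morphism $v_i\to(i'\to j)$ exists exactly when $i=i'$. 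There are no morphisms out of the $\coprod M_i$-layer to either of the other two layers. You can confirm this from the identity $\nabla(I\leftarrow I\to\cdot)=\int(I\to I^{\triangleleft})$ stated in the proof of Proposition~\ref{PROPLIMDIRECTED}, or just from the example $I=\{i\to j\}$, $n=1$: there $\Xi_1$ is the cospan $j\to(i\to j)\leftarrow v_i$, with three objects, not the arrow $v_i\to j$ your description gives.

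With the correct $\Xi_n$, the comma fiber at $x$ with $|x|<n$ has a third class of objects, namely $((i\to j),\,j\to x)$ coming from the $\coprod M_i$-layer, and the object $(x,\id_x)$ (sitting in the $I_{<n}$-layer) is \emph{not} terminal: no morphism goes from any $\coprod M_i$-layer object to any $I_{<n}$-layer object, so $(x,\id_x)$ receives nothing from the $((i\to j),\,j\to x)$'s. Your argument for the $v_i$'s also lands on the wrong object — the morphism out of $(v_i,\,i\to x)$ you exhibit goes to $((i\to x),\,\id_x)$ in the $\coprod M_i$-layer, which is a different object from $(x,\id_x)$. In fact this comma fiber generally has neither a terminal nor an initial object (try $I$ a wedge of two arrows into a common $j$, $n=1$: the comma at $j$ is a five-object zigzag). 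This is exactly why the paper does not produce a (co)terminal object directly, but instead constructs an explicit adjunction from the comma fiber to a simpler ``fan'' category $\nabla(\{i'=i'\}\leftarrow\coprod_{i\to i'}\{i\to i'=i'\})$ which \emph{does} have an initial object; contractibility then transfers along the adjunction. That extra step is the genuine content of the degree-$<n$ case and cannot be shortcut by exhibiting $(x,\id_x)$ as terminal.
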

\begin{proof}
Let $i' \in I$ and 
consider the diagram $\Xi_{n} \times_{/I_{\le n}} i'$. If $|i'|=n$ then $\Xi_{n} \times_{/I_{\le n}} i'$ consists just of the object $i'$ sitting in the first row (together with $\id_{i'}$). This is contractible. 

If $|i'|<n$ then $\Xi_{n} \times_{/I_{\le n}} i'$ is the diagram 
\[ \nabla \left( \vcenter{ \xymatrix{ 
& \coprod_{|i|=n, i \rightarrow i'} \{i\} \ar@{<-}[d]  \\
I_{<n} \times_{/I} i' \ar@{<-}[r] & \coprod_{|i|=n, i \rightarrow i'} M_{i}  \times_{/I} i' } } \right)  \]

There is an obvious adjunction between this diagram and 
\[ \nabla \left( \vcenter{ \xymatrix{ 
\{i' = i'\} \ar@{<-}[r] & \coprod_{|i|=n, i \rightarrow i'} \{ i \rightarrow i' = i' \}} } \right)  \]
And the latter has an initial object so it is contractible. 
\end{proof}

\begin{PROP}\label{PROPLIMDIRECTED}
Let $\mathcal{S}$ be a category with finite products and
let $C$ be a class of morphisms that is stable under pull-back. 
Let $I$ be a finite inverse diagram and $\mu: F \Rightarrow G$ is a $C$-morphism then pull-backs along 
\[ \lim F \rightarrow \lim G \]
exist and are in $C$ and also all components $F_i \rightarrow G_i$ are in $C$. 
It is not assumed or claimed that $\lim F$ and $\lim G$ exist. If they do not exist the statement means more precisely that for $H$ with $p_I^*H \Rightarrow G$ the limit in
\[ \lim_{\int(I \rightarrow I^{\triangleleft})} (H, \mu) \rightarrow H \]
exists and the morphism is in $C$.  
\end{PROP}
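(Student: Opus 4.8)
The plan is to proceed by induction on the number of degrees of $I$, using the filtration $I_{\le 0} \subset I_{\le 1} \subset \cdots \subset I$ and the acyclic comparison diagrams $\Xi_n$ from Lemma~\ref{LEMMAACYCLXI}. First I would reduce to the case where $G$ is the constant functor $\cdot$: a morphism $\mu\colon F \Rightarrow G$ is a $C$-morphism precisely when $F \Rightarrow \cdot$ factors through $G$ with the first factor a $C$-morphism in the appropriate relative sense, so it suffices to show that for a $C$-diagram $H$ (in the sense of Definition~\ref{DEFCMORPH}), $\lim H$ (or, in the non-existent case, the limit over $\int(I \to I^\triangleleft)$) exists, the projection is in $C$, and each $H_i \to M_i(H)$-type component maps are in $C$. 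The relative statement then follows by applying the absolute statement to the functor $(H,\mu)$ over $\int(I \to I^\triangleleft)$, using that pull-backs along morphisms in $C$ exist.

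For the induction: suppose the statement holds for $I_{<n}$, and write $I = I_{\le n}$. By Lemma~\ref{LEMMAACYCLXI} the functor $\xi_n\colon \Xi_n \to I_{\le n}$ is acyclic, so $\lim_{I_{\le n}} H \cong \lim_{\Xi_n} \xi_n^* H$ (and this isomorphism is compatible with the limit-existence caveat, since acyclic functors induce isomorphisms of limits/colimits when either side exists — more carefully, in the non-existence case one compares the two cone categories). Now $\Xi_n = \nabla$ of the span with apex $\coprod_{|i|=n}\{i\}$, legs $I_{<n}$ and $\coprod_{|i|=n} M_i$; hence $\lim_{\Xi_n}$ is computed as an iterated fiber product: first take $\lim_{I_{<n}}$ of the restriction (which exists and whose structure morphisms are in $C$ by the inductive hypothesis, since the restriction of a $C$-diagram to $I_{<n}$ is a $C$-diagram), then for each $i$ with $|i|=n$ form
\[
H_i \times_{M_i(H)} \Big(\lim_{I_{<n}} H|_{I_{<n}} \times_{\prod_{|j|=n,\, j\ne i} H_j}\!\!(\text{remaining factors})\Big),
\]
where $H_i \to M_i(H)$ is in $C$ by hypothesis and $M_i(H)$ receives a canonical morphism from $\lim_{I_{<n}} H|_{I_{<n}}$ (using that $M_i$ is a full subcategory of $I_{<n}$ and matching diagrams are finite). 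Since $C$ is stable under pull-back, the fiber product $H_i \times_{M_i(H)} (-)$ exists and the projection to $(-)$ is in $C$; composing over all $i$ of degree $n$ (a finite set, as $I$ is finite) and then with the structure morphisms from level $<n$, we get that $\lim_{\Xi_n} \xi_n^* H \to \cdot$ factors as a finite composite of morphisms in $C$, hence — if $C$ is also stable under composition, which I would either assume or note is automatic here since the statement only needs composability of the specific morphisms produced — the total projection is in $C$; and the componentwise claim $H_i \to M_i(H)$ being in $C$ is exactly the hypothesis, while $H_i \to G_i$ (in the relative reduction) is obtained as a base-change of $\mu_i$ which one checks directly from the pullback-square definition of $M_i(\mu)$.

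The main obstacle I expect is the careful handling of the \emph{non-existence} of $\lim F$ and $\lim G$: the statement is phrased so that one works with the limit over $\int(I \to I^\triangleleft)$, i.e.\ with cones rather than with an actual limit object, and one must verify that the acyclicity isomorphism from Lemma~\ref{LEMMAACYCLXI} and the iterated-fiber-product decomposition of $\lim_{\Xi_n}$ are still valid at the level of these relative (co)cone constructions — in particular that forming the fiber product $H_i \times_{M_i(\mu)} (\text{partial limit})$ makes sense and lands back in $C$ even when no ambient limit object is available. This is essentially a bookkeeping issue: one replaces "$\lim$" throughout by the appropriate comma-category limit and checks that each fiber-product step only ever pulls back along a morphism \emph{in $C$}, so that stability of $C$ under pull-back supplies existence at every stage. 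Once that is set up, the inductive step is the routine computation sketched above, and the base case $I = I_{\le 0}$ (a coproduct of discrete objects, so $M_i$ is empty and $M_i(\mu) = G_i$, whence $F_i \to G_i$ in $C$ is immediate) is trivial.
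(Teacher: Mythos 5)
Your overall strategy --- induct on the length of $I$ and use Lemma~\ref{LEMMAACYCLXI} to peel off the top-degree objects --- matches the paper's. But the opening reduction to the absolute case ($G=\cdot$) is a genuine gap, and your sketch of the inductive step is carried out only in that absolute setting, so the gap propagates. The proposed reduction is to ``apply the absolute statement to the functor $(H,\mu)$ over $\int(I\to I^\triangleleft)$.'' For that to work you would need $(H,\mu)$ to be a $C$-diagram on the enlarged inverse category $J:=\int(I\to I^\triangleleft)$, and that fails at two places. At the vertex $\ast$ of $J$ (which carries $H$) the matching object of $(H,\mu)$ is the terminal object, so one would need $H\to\cdot$ to lie in $C$, which is not assumed: $H$ is arbitrary. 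At the vertices of the ``$F$-copy'' of $I$ inside $J$, the matching object of $(H,\mu)$ is $M_i(F)$, so one would need $F$ itself to be a $C$-\emph{diagram}, which again is not assumed --- only $\mu:F\Rightarrow G$ is a $C$-\emph{morphism}, and $F_i\to M_i(\mu)$ being in $C$ says nothing about $F_i\to M_i(F)$. So the absolute case is strictly weaker and one cannot get the proposition from it this way.

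The paper avoids the reduction entirely: it packages $H$, $F$, $G$ together as a single diagram $Q$ in $\Cat^{\op}(\mathcal{S})$, built from $\Xi_n$ and a small auxiliary diagram, so that by acyclicity $\lim Q = \lim_{\int(I\to I^\triangleleft)}(H,\mu)$, while ``switching the order of construction'' exhibits $\lim Q$ as a pullback of $\prod_{|i|=n}F_i\to\prod_{|i|=n}M_i(\mu)$ along $\lim_{\int(I_{<n}\to I_{<n}^\triangleleft)}(H,\mu_{<n})\to\prod_{|i|=n}M_i(\mu)$. The relative matching object $M_i(\mu)$ --- not $M_i(F)$ --- is what enters, and that is exactly what the hypothesis controls. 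Your inductive formula with ``$H_i\times_{M_i(H)}(\cdots)$'' uses the wrong matching object, and your final remark that $F_i\to G_i$ ``is obtained as a base-change of $\mu_i$'' is not right: $F_i\to G_i$ \emph{is} $\mu_i$, and the paper shows it lies in $C$ by factoring it as $F_i\to M_i(\mu)\to G_i$, the first arrow in $C$ by hypothesis and the second in $C$ by the induction hypothesis applied to $\iota^*\mu$ over $M_i$ with $H:=G_i$. (Your flag about composition-closure of $C$ is fair --- the paper's proof also uses it implicitly --- but that is a side issue compared to the failed reduction.)
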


\begin{BEM}
We have
\[ \int(I \rightarrow I^{\triangleleft}) = \nabla  \left( \vcenter{\xymatrix{ & \cdot \ar@{<-}[d] \\ I \ar@{<-}[r] & I  }} \right)  \]
and $(H, \mu)$ denotes the diagram in $\Cat^{\op}(\mathcal{S})$: 
\[ \nabla  \left( \vcenter{\xymatrix{ & (\cdot, H) \ar@{<-}[d] \\ (I, F) \ar@{<-}[r] & (I, G)  }} \right).  \]
\end{BEM}

\begin{proof}
By induction on the length $n$ of $I$. 
Define a diagram in $\Cat^{\op}(\mathcal{S})$ 

\[ Q:= \nabla \left( \vcenter{ \xymatrix{ &  (\righthalfcup, D) \ar@{<-}[d] \\  (\Xi_n, \xi_n^*F) \ar@{<-}[r] & (\Xi_n, \xi_n^*G)   }} \right) \]
where
\[  D:= \left( \vcenter{ \xymatrix{ & \prod_{|i|=n} G_i \ar[d] \\ H \ar[r] & \prod_{|i|=n} G_i  }} \right).  \]
and in which $H$ is any object equipped with a morphism $p_I^* H \Rightarrow G$. 

Lemma~\ref{LEMMAACYCLXI} implies that 
\[ \lim Q = \lim_{\int (I \rightarrow I^{\triangleleft})} (H, \mu).   \]
On the other hand, switching the order of construction of $Q$, we have
\[ \lim Q = \lim \left( \vcenter{ \xymatrix{ & \prod_{|i|=n} F_i \ar[d]   \\ \lim_{\int (I_{<n} \rightarrow I_{<n}^{\triangleleft})} (H,\mu_{< n}) \ar[r] &  \prod_{|i|=n} M_i(\mu)     } } \right) \]

By induction on $n$ we have that $\lim_{\int (I_{<n} \rightarrow I_{<n}^{\triangleleft})} (H,\mu_{< n})$ exists and
the morphism  \[ \lim_{\int (I_{<n} \rightarrow I_{<n}^{\triangleleft})} (H,\mu_{< n}) \rightarrow H\] is in $C$. By assumption the $M_i(\mu)$ exist and the morphism $F_i \rightarrow M_i(\mu)$ is in $C$. Therefore also the composition 
\[  \lim_{\int (I \rightarrow I^{\triangleleft})} (H, \mu) \rightarrow H \]
is in $C$. Since the morphism $F_i \rightarrow M_i(\mu)$ is in $C$ and we just saw that $M_i(\mu) \rightarrow G_i$ is in $C$ (applying the Lemma to the morphism $\iota^*\mu$ in $\mathcal{S}^{M_i}$ and $H:=G_i$) the morphism $F_i \rightarrow G_i$ is in $C$. 
\end{proof}

\begin{PROP}\label{PROPCOVERDIRECTED}
Let $\mathcal{S}$ be a category with finite products and
let $C$ be a class of morphisms that is stable under pull-back\footnote{in the sense that pull-backs of morphisms in $C$ along arbitrary morphisms exist and are in $C$ again. }. 
Let $I$ be an inverse diagram with finite matching diagrams and assume
there is a subcategory $\mathcal{S}_0$ such that for every object $X \in \mathcal{S}$ there is a $X_0 \rightarrow X$ in $C$ with $X_0$ in $\mathcal{S}_0$. 
Then
\begin{enumerate} 
\item for any object $G \in \mathcal{S}^I$ there is $C$-morphism $F \Rightarrow G$ with $F \in \mathcal{S}_0^I$;
\item a $C$-morphism $F' \Rightarrow \iota^* G$ on any final subdiagram $\iota: I' \hookrightarrow I$ can be extended to a $C$-morphism $F \Rightarrow G$;
\item any two $C$-morphisms $F \Rightarrow G, F' \Rightarrow G$ with $F,F' \in \mathcal{S}_0$ can be dominated by a third of this kind.
\end{enumerate}
\end{PROP}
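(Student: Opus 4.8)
The three assertions will all be proven by induction on the degree of the objects of $I$, the engine throughout being Proposition~\ref{PROPLIMDIRECTED}: it is this result that guarantees, for a $C$-morphism $\mu$, that the relative matching objects $M_i(\mu)$ of Definition~\ref{DEFCMORPH} actually exist in $\mathcal{S}$ --- although $\mathcal{S}$ is only assumed to have finite products, not all finite limits --- and that the canonical morphism $M_i(\mu)\to G_i$ lies in $C$. For part~1 I would construct $F\in\mathcal{S}_0^I$ together with a $C$-morphism $\mu\colon F\Rightarrow G$ by recursion on $n$, maintaining the inductive hypothesis that $F$ is defined on $I_{\le n}$, lies in $\mathcal{S}_0^{I_{\le n}}$ there, and that $\mu|_{I_{\le n}}$ is a $C$-morphism.

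For the step, let $|i|=n+1$. The matching diagram $M_i$ is finite and contained in $I_{\le n}$, so $F$ is already defined on it, and $\iota^*\mu$ is a $C$-morphism on $M_i$ (restriction of a $C$-morphism to a matching diagram is again a $C$-morphism, by the Lemma preceding Proposition~\ref{PROPLIMDIRECTED}). Applying Proposition~\ref{PROPLIMDIRECTED} to the finite inverse diagram $M_i$, to the $C$-morphism $\iota^*\mu$, and to the cone $p^*G_i\Rightarrow\iota^*G|_{M_i}$ furnished by the structure maps of $G$, I obtain that $M_i(\mu)$ exists and that $M_i(\mu)\to G_i$ lies in $C$. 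The hypothesis on $\mathcal{S}_0$ then provides a morphism $F_i\to M_i(\mu)$ in $C$ with $F_i\in\mathcal{S}_0$; I set $\mu_i$ to be the composite $F_i\to M_i(\mu)\to G_i$, and the universal property of $M_i(\mu)$ supplies the remaining structure maps so that $F$ is a functor and $\mu$ is natural. Since by construction $F_i\to M_i(\mu)$ lies in $C$, this is precisely the statement that $\mu$ is a $C$-morphism at $i$, and the recursion proceeds; as every object of $I$ has finite degree this produces $F$ on all of $I$. Part~2 is the identical recursion, except that $F$ is prescribed to equal $F'$ on the final subdiagram $I'$ and the objects of $I\setminus I'$ are treated in order of increasing degree; one checks that for $i\in I'$ the matching object of $i$ computed in $I$ agrees with that computed in $I'$ --- this is exactly where finality of $\iota$ enters --- so that $F'$ remains a $C$-morphism over $I$ on the part $I'$, and that for $i\notin I'$ the (still finite) matching diagram $M_i$ lies in the region where $F$ has already been built, after which Proposition~\ref{PROPLIMDIRECTED} and the hypothesis on $\mathcal{S}_0$ apply as before.

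For part~3, given $C$-morphisms $\mu\colon F\Rightarrow G$ and $\mu'\colon F'\Rightarrow G$ with $F,F'\in\mathcal{S}_0^I$, I would form the objectwise fiber product $H:=F\times_G F'$ in $\mathcal{S}^I$; this exists because each $F_i\to G_i$ lies in $C$ (Proposition~\ref{PROPLIMDIRECTED}) and $C$ is stable under pull-back. The projections $H\to F$ and $H\to F'$ are $C$-morphisms, the class of $C$-morphisms being stable under pull-back (their relative matching objects are computed by the very same pull-backs), and composing $H\to F'$ with $\mu'$ exhibits $H\to G$ as a $C$-morphism. Applying part~1 to $H$ yields $F''\in\mathcal{S}_0^I$ with a $C$-morphism $F''\Rightarrow H$, and then $F''\Rightarrow H\Rightarrow G$ is again a $C$-morphism which is dominated over $G$ by the composites $F''\to H\to F$ and $F''\to H\to F'$.

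The difficulties here are organizational rather than conceptual. The main point is to arrange the degreewise recursion so that at each stage the hypotheses of Proposition~\ref{PROPLIMDIRECTED} are literally satisfied --- in particular that the only limits one ever needs are the relative matching objects along $C$-morphisms, which that proposition shows to exist in the finite-product category $\mathcal{S}$. The two smaller points to be nailed down are, for part~2, the precise reading of ``final subdiagram'' that makes matching objects of objects of $I'$ insensitive to the ambient diagram, and, for part~3, the routine verifications that the class of $C$-morphisms is stable under pull-back and composition --- the latter using that $C$ itself is stable under composition in all the situations in which this proposition is applied.
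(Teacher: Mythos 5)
Your proposal is correct and follows essentially the same strategy as the paper's own proof: a degreewise recursion driven by Proposition~\ref{PROPLIMDIRECTED}, choosing $F_i\to M_i(\mu)$ in $C$ with $F_i\in\mathcal{S}_0$ at each stage, and handling part~3 via the objectwise fibre product $F\times_G F'$ followed by an application of part~1. You are in fact more explicit than the paper on two points --- that finality of $I'$ is precisely what makes matching objects of $i\in I'$ insensitive to whether they are computed in $I'$ or $I$, and that part~3 silently requires $C$-morphisms to be closed under composition (hence $C$ to be, an unstated hypothesis that is satisfied in every application of the proposition) --- and both observations are accurate.
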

\begin{proof}
1.\@ By induction on the length of $I$. If the length is 1, $I$ is discrete and the statement is clear. Otherwise, construct $\mu: F \rightarrow G$ by induction up to degree $k-1$. For any $i$ of degree $n$ by Proposition~\ref{PROPLIMDIRECTED} applied to the diagram $M_i$ the morphism $\iota^* \mu$ and $H:=G_i$ the diagram $M_i (\mu)$ exists.
Now choose on object $F_i \in \mathcal{S}_0$ together with a morphism $F_i \rightarrow M_i (\mu)$ in $C$. By construction $M_i (\mu)$ comes equipped with a morphism $p_{M_i}^* (M_i (\mu)) \rightarrow \iota^*F$. This allows to extend $F$ to degree $\le n$. 

2.\@ follows from the construction in 1\@.

3.\@ For two $C$-morphisms $F \Rightarrow G$ and $F' \Rightarrow G$ also the pull-back $F' \times_G F \rightarrow G$ exists (because 
the morphism $F \rightarrow G$ is point-wise in $C$ by Proposition~\ref{PROPLIMDIRECTED}) and is a $C$-morphism. We can find a $C$-morphism
$F'' \rightarrow F' \times_G F$ with $F'' \in \mathcal{S}_0^I$ and the compositon with $F' \times_G F \rightarrow G$ does the job.  
\end{proof}

\newpage
\bibliographystyle{abbrvnat}
\bibliography{6fu}

\end{document}